\documentclass[fleqn]{report}
\author{Scott Hudson}
\date{mathematics.scott@gmail.com \endgraf \bigskip July 2026}
\title{A Paper on Calculating the Height and Relational Complexity of the Primitive Actions of $ PSL_{2} (q) $ and $ PGL_{2} (q) $}
\usepackage[
top    = 2cm,
bottom = 2cm,
left   = 2cm,
right  = 2cm,
marginparsep = 0pt,
marginparwidth=0pt,
]{geometry}
\usepackage{lastpage}
\usepackage{fancyhdr}
\fancypagestyle{plain}{%
\fancyhf{} 

\pagestyle{fancy}

\fancyhead{}
\cfoot{\thepage}
\lfoot{}
\rfoot{}}
\usepackage{titling}
\usepackage{blkarray}
\usepackage{bm}
\usepackage{amsmath}
\usepackage[toc]{appendix}
\usepackage{amsfonts}
\usepackage{mathtools}
\usepackage[dvipsnames]{xcolor}
\usepackage{amsthm}
\usepackage{amssymb}
\usepackage{sectsty} 
\chaptertitlefont{\huge}
\usepackage{etoolbox}
\usepackage{mathrsfs}
\usepackage{listings}
\usepackage{setspace}
\theoremstyle{plain}
\newtheorem{thrm}{Theorem}[section]

\theoremstyle{plain}
\newtheorem{deft}[thrm]{Definition}
\newtheorem{exmp}[thrm]{Example}
\newtheorem{lema}[thrm]{Lemma}
\newtheorem{corl}[thrm]{Corollary}
\newtheorem{tble}[thrm]{Table}
\newtheorem{cjct}[thrm]{Conjecture}
\usepackage{hyperref}

\begin{document}

\maketitle

\newpage
\pagenumbering{arabic}
\begin{center}
\vspace*{\fill}
\subsection*{Abstract}
For a finite group acting on a finite set, a statistic called relational complexity can be calculated for the action. This notion was defined by Gregory Cherlin and motivated by considerations in model theory. Another related statistic is the height of the action, which provides an upper bound for relational complexity. In this paper, both concepts are defined and some basic results proved. The main focus later on is examining the primitive actions of $ PSL_{2} (q) $ and $ PGL_{2} (q) $ and computing both the height and relational complexity for each one.
\vspace*{\fill}
\end{center}

\tableofcontents

\newpage
\chapter{Introduction}
Given an action of a finite group $G$ on a finite set $ \Omega $, a number can be determined known as the \textbf{relational complexity} of the action. This is defined below, with greater detail provided in the main text later.

\medskip
Let $ r \in \mathbb{N} $. Let $ I , J \in \Omega ^{r} $. For $ 1 \leq s \leq r $, an $ s $-subtuple of $ I $ is an $ s $-tuple whose entries are taken from $ I $ and placed in the same order as they were in $ I $. The notation $ I \sim _{s} J $ is used to mean for any $ s $-subtuple $ I ^{ \prime } $ of $ I $ and $ s $-subtuple $ J ^{ \prime } $ formed from entries in corresponding places of $ J $, there exists $ g \in G $ such that $ ( I ^{ \prime } ) ^{g} =  J ^{ \prime } $.

\medskip
The relational complexity of the action, denoted as $ RC(G, \Omega ) $, is the least $k \in \mathbb{N} $ such that $k \geq 2$ and whenever the following conditions are met,
\begin{itemize}
\item $ m \in \mathbb{N} $ where $k \leq m$,
\item $ I, J \in \Omega ^{m} $,
\item $ I \sim _{k} J $,
\end{itemize}
then $I \sim _{m} J $.

\medskip
Relational complexity was first defined in a paper by Cherlin, Martin and Saracino, see \cite{CHERLIN2}, but was known as \textbf{arity}. The term relational complexity started being used in \cite{CHERLIN1}, again by Cherlin.

\medskip
Model theory is where interest in this area came from and, although the model theoretic side of relational complexity will not be discussed, this is what has motivated group theorists to look into the group theoretic side of the subject. This connection between both sides is explored by Cherlin in \cite{CHERLIN3}.

\medskip
Relational complexity is a relatively unexplored topic for which little is yet known. Some recent papers and books have been produced however. For example Gill, Liebeck and Spiga classify the primitive binary permutation groups in \cite{GILL4} (binary actions being those with relational complexity $ 2 $). This built on previous work by Cherlin in \cite{CHERLIN1}, Gill, Hunt and Spiga in \cite{GILL1}, Dalla Volta, Gill and Spiga in \cite{GILL5} and Wiscons in \cite{WISCONS3}. A general upper bound for relational complexity of the actions of primitive permutation groups is found in \cite{@KELSEY} by Kelsey and Roney-Dougal.

\medskip
Another central topic of this paper is the \textbf{height} of a group action, written as $ Ht(G , \Omega ) $. Height is defined as being the size of the largest \textbf{independent set} in $ \Omega $. An independent set $ \Delta $ is a non-empty subset of $ \Omega $ such that the point stabilizer of any non-empty proper subset of $ \Delta $ is not equal to the point stabilizer of $ \Delta $.

\medskip
Height can often be easier to calculate than relational complexity and provides an upper bound for the latter; $ RC(G, \Omega ) \leq Ht(G , \Omega ) + 1 $.

\medskip
As height has largely been studied alongside relational complexity, most of what has been written down can be found in the above papers mentioned for relational complexity. Others texts include \cite{GILL2}, where Gill, Lod{\'a} and Spiga discover an upper bound for the height of actions of finite permutation groups, and \cite{GILL6} where Gill and Lod{\'a} look at the height of $ S_{n} $ acting on $ k $-subsets of $ \{ 1, \dots , n \} $ when $ 1 \leq k \leq \tfrac{n}{2} $.

\medskip
In ultimate goal of this paper is to calculate both the height and relational complexity for the primitive actions of $ PSL_{2} (q) $ and $ PGL_{2} (q) $. The final result being the theorems below. The chapter numbers given are where the corresponding results are proved. Since primitive actions are equivalent to the action on the right cosets of some maximal subgroup, the actions are listed by the names/structure of the maximal subgroups in $ PSL_{2} (q) $ and $ PGL_{2} (q) $. The information below is summarized in more detail at the start of each of the relevant chapters.
\begin{thrm}\label{thermintrofrst}
For $ q \geq 11 $, the height and relational complexity of the primitive actions of $ PSL_{2} (q) $ are:

\medskip
\begin{tabular}{|l|l|l|l|}
\hline
Chapter Number & Action Name & Height & Relational Complexity \\
\hline
\ref{chapfive} & Borel & 3 & 4 \\
\ref{chapsix} & Dihedral & 3 & 3 \\
\ref{chapseven} & $ A_{4} $ & 2 & 3 \\
\ref{chapeight} & $ S_{4} $ & 3 & 3 or 4 \\
\ref{chapnine} & $ A_{5} $ & 3 & 4 \\
\ref{chapten} & Subfield (point stabilizer isomorphic to $ PSL_{2} (3) $\upshape{)} & 2 & 3 \\
\ref{chapten} & Subfield (point stabilizer not isomorphic to $ PSL_{2} (3) $\upshape{)} & 3 & 4 \\
\hline
\end{tabular}
\end{thrm}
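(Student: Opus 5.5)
The theorem is a summary of the chapter-by-chapter computations, so the plan is to treat each maximal subgroup class of $ PSL_{2}(q) $ in turn. For each I prove a matching lower and upper bound for the height, and then pin down the relational complexity between $2$ and $ Ht + 1 $ using the general inequality $ RC(G,\Omega) \leq Ht(G,\Omega)+1 $ from the introduction. Throughout I use Dickson's list of maximal subgroups for $ q \geq 11 $, identifying $ \Omega $ with the right cosets of $ M $, so that point stabilizers are conjugates of $ M $. Independence of $ \Delta $ then becomes a statement about how intersections of conjugates of $ M $ strictly decrease.

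\textbf{Height.} For the upper bound I will show that the intersection of any few conjugates of $ M $ collapses quickly, which caps the length of a strictly decreasing chain. In the Borel case ($ \Omega $ the projective line) three points already have trivial stabilizer. For the dihedral, $ A_4 $, $ S_4 $, $ A_5 $ and subfield cases, I analyse $ M \cap M^{g} $: it is typically a small elementary abelian or cyclic group, and a third conjugate forces the intersection to be trivial or to repeat an earlier stabilizer. The upper bound is $3$ in general, and $2$ for $ A_4 $ and $ PSL_{2}(3) $-subfield stabilizers. In those two cases I expect to show that any two non-equal conjugates meet in a subgroup such that no third conjugate can strictly cut it without one point becoming redundant. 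For the lower bounds I exhibit explicit independent sets. For example, in the Borel case I take $ \{\infty,0,1\} $, whose proper subsets have stabilizers $ B $, the torus, and the trivial group. In the other cases I choose conjugates whose pairwise intersections are distinct involution subgroups.

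\textbf{Relational complexity.} The upper bound is immediate from the heights, giving $ RC \leq 4 $ or $ RC \leq 3 $. For lower bounds I build pairs of tuples $ I,J $ with $ I \sim_{k-1} J $ but $ I \not\sim_{k} J $. In the Borel case I take $4$-tuples of points whose $3$-subtuples are all matched, since $ PSL_{2}(q) $ is transitive on ordered triples up to a square-class condition, but whose cross-ratios differ. I then check that every triple is equivalent, adjusting for the index-$2$ issue with non-squares.

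For the dihedral case, reaching $3$ requires two triples that are pairwise equivalent but not equivalent as triples. I will get these from orbits of $ M $ on pairs of conjugates with the same intersection type. For the cases where $ RC = Ht $, the upper bound needs more than the height: I will show that if $ I \sim_{Ht} J $ then $ I \sim_{m} J $ directly, using that a base of size $ Ht $ determines the group element.

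\textbf{Main obstacle.} The hardest part is the $ S_{4} $ case, where the answer is ``3 or 4''. Deciding when a $ 3 $-equivalent $4$-tuple fails to be $4$-equivalent depends on $ q $ modulo small numbers and on fixed-point counts of involutions and elements of order $3$ on the coset space. I would first try to compute the suborbit structure of $ S_{4} $ explicitly via character-theoretic fixed-point counts. From that I would decide, case by case in $ q \bmod 8 $ or $ q \bmod 3 $, whether a witness pair exists. Where this cannot be resolved uniformly, the theorem's statement accommodates the ambiguity.
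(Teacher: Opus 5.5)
\textbf{Verdict: genuine gap.} The Borel row is essentially the paper's argument. Three points of the projective line have trivial stabilizer, and the witness is a pair of $4$-tuples agreeing in three fixed points and differing in the fourth under square-class conditions, as in Theorem~\ref{xckjdsqwmmmm}.

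The step that fails is the one you propose for the rows with $RC = Ht$. You want to pass from $I \sim_{h} J$ to $I \sim_{m} J$, with $h = Ht(G,\Omega)$, ``using that a base of size $h$ determines the group element''. The Borel row of the same table shows this cannot be enough: there $h = 3$, any three distinct points form a base with trivial pointwise stabilizer, and yet $RC = 4$. A base only tells you that, once you normalise $J_i = I_i$ for $i \le 3$, the identity is the only candidate. The real task is to prove $J_4 = I_4$, and that needs the shape of the stabilizers. Theorem~\ref{mxmldhedrlrc} does this for dihedral $H$ in several steps:
\begin{itemize}
\item It discards $4$-tuples with repeated entries or with a non-independent proper subset (Lemmas~\ref{rptdntrs}, \ref{ntndpndntntrs}). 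What remains is an almost independent set whose two-point stabilizers have order $2$ or $4$ (Lemma~\ref{klnfrntrsctn}).
\item The elements of $G_{I_1}$ sending $I_4$ to $J_4$ form one coset of $G_{I_1,I_4}$. That coset must contain the elements of $G_{I_1,I_2}$ and $G_{I_1,I_3}$ that do the same, which forces relations such as $r_{1,2} = r_{1,4}r_{1,3}$, where $r_{1,j}$ generates $G_{I_1,I_j}$.
\item The resulting rotations of order $2$ in $G_{I_1}$, or Klein four-groups normalised by an involution outside $G_{I_1}$, are excluded. This uses the rotation/reflection structure and the fact that distinct conjugates of $H$ share no non-trivial normal subgroup (Lemma~\ref{ntnrmlmxml}).
\end{itemize}
Without an argument of this kind you have no bound $RC \le 3$ for the dihedral row. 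For $RC \ne 2$ in the dihedral, $A_4$ and $S_4$ rows the paper cites \cite{GILL1}. Your plan to build witness triples from suborbits is a legitimate alternative, but you do not carry it out in any case.

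The height upper bounds beyond the Borel and dihedral rows are likewise not argued.
\begin{itemize}
\item \emph{$S_4$, $A_5$ and subfield rows.} ``A third conjugate forces the intersection to be trivial'' is the dihedral Lemma~\ref{trsndstszthr}. It works because two dihedral conjugates meet in a group of order at most $4$. For $S_4$, $A_5$ and subfield stabilizers, $M \cap M^{g}$ can be $S_3$, $A_4$ or $E_{q_0}$, and you give no reason why triple intersections should be trivial there. The paper's own analysis of a putative independent $4$-set has to allow non-trivial $3$-point stabilizers (of order $2$ for $S_4$ and $A_5$). It rules such sets out using Lemmas~\ref{cntthnkfnm}, \ref{sdhssdfone}, \ref{sdhssdftwo} with Appendix~\ref{appa}; Corollary~\ref{oiweuvw} and Lemma~\ref{afrmdntrsctnx} with Appendices~\ref{appc} and~\ref{appd}; and the classification of $H \cap H^{g}$ in Lemma~\ref{dffrntsbgrps}, which feeds Lemma~\ref{hghtsbfldac}.
\item \emph{$A_4$ row.} Your sentence only restates that no independent $3$-set exists. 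The missing idea (Lemma~\ref{frctnndpndn}) is that the pairwise intersections must be $C_2$ or $C_3$. Each of the four resulting patterns either contradicts the normalizer of a $K_4$ or of an involution (Lemma~\ref{csefrafr}), or makes $PSL_2(q)$ generated by three elements satisfying relations of a group of order $96$ or of $A_5$, which is impossible for $q \ge 11$.
\item \emph{Your ``main obstacle''.} The $S_4$ row asserts only ``$3$ or $4$''. That needs $Ht = 3$ and $RC \ne 2$, not a choice between the two values. The paper's data also show that congruences modulo $8$ and $3$ will not decide it: $RC = 4$ for $p = 23$ but $RC = 3$ for $p = 47 \equiv 23 \pmod{24}$. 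The hard step in that row is $Ht \le 3$, which your plan does not address.
\item \emph{$A_5$ and subfield rows.} These need explicit pairs that are $3$-subtuple complete but not $4$-subtuple complete. The paper builds them from the five $A_4$'s of an $A_5$, and from Borel subgroups of $H$ inside a Borel subgroup of $G$. Once you have them, $Ht \ge 3$ follows from $RC \le Ht + 1$ without exhibiting independent sets.
\end{itemize}
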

\begin{thrm}\label{thermintroscnd}
For $ q \geq 11 $, the height and relational complexity of the primitive actions of $ PGL_{2} (q) $ are:

\medskip
\begin{tabular}{|l|l|l|l|}
\hline
Chapter Number & Action Name & Height & Relational Complexity \\
\hline
\ref{chapfour} & $ PSL_{2} (q) $ & 1 & 2 \\
\ref{chapfive} & Borel & 3 & 4 \\
\ref{chapsix} & Dihedral & 3 & 3 \\
\ref{chapeight} & $ S_{4} $ & 3 & 3 or 4 \\
\ref{chapten} & Subfield & 3 & 4 \\
\hline
\end{tabular}
\end{thrm}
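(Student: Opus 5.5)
The statement is a summary table, so I would prove it row by row, one maximal-subgroup class of $PGL_{2}(q)$ at a time. By Dickson's classification, for $q \geq 11$ the maximal subgroups of $PGL_{2}(q)$ are: $PSL_{2}(q)$; the Borel subgroup of order $q(q-1)$; the dihedral groups $D_{2(q\pm1)}$; $S_{4}$ when $q=p\equiv\pm3 \pmod 8$; and the subfield subgroups $PGL_{2}(q_{0})$ with $q=q_{0}^{r}$, $r$ prime. Each primitive action is the action on right cosets of one of these. For every row I would prove an upper bound $Ht \leq h$ and then exhibit an independent set of size $h$. For relational complexity I would use $RC \leq Ht+1$ for the upper bound, and for the lower bound construct tuples $I,J$ of length $h+1$ with $I\sim_{h}J$ but $I\not\sim_{h+1}J$. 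In the dihedral and $S_{4}$ rows the bound $RC \leq Ht+1$ is not sharp, so there I would prove $RC\leq 3$ (respectively $RC \leq 4$, leaving the ambiguity "3 or 4") directly.

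The $PSL_{2}(q)$ row is immediate. The action is regular of degree $2$ on the two cosets, so every point stabilizer is $PSL_{2}(q)$ and the kernel is the same group. Hence $Ht=1$, and $RC=2$ because $RC\geq 2$ by definition.

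For the Borel row the action is equivalent to the $3$-transitive action on the projective line. The stabilizer of three points is trivial, and the stabilizer of any two points has order $q-1$. So three points form an independent set and no four do, giving $Ht=3$. For $RC=4$, I would take $I=(\infty,0,1,a)$ and $J=(\infty,0,1,b)$ with $a\neq b$. Any three-point subtuples are conjugate by $3$-transitivity, but the full tuples are not, since $PGL_{2}(q)$ acts sharply $3$-transitively.

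For the dihedral row I would identify cosets with the pairs $\{\alpha,\alpha^{q}\}$ in $\mathbb{F}_{q^{2}}\setminus\mathbb{F}_{q}$ (for $D_{2(q+1)}$), or with unordered pairs of projective points (for $D_{2(q-1)}$). Stabilizers of two such pairs are then intersections of dihedral groups, and these are small (order $\le 4$). Via the Klein four-groups this gives chains of length $3$, so $Ht=3$. To show $RC=3$, I would check that the $3$-types already determine orbits. The key step is that a pair of points determines a stabilizer of order at most $2$, and a $3$-subtuple pins down the involution.

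For the $S_{4}$ row I would get $Ht=3$ from subgroup chains $S_{4}>D_{8}>C_{2}^{2}>1$ realized as intersections of conjugates, using the conjugacy class counts of involutions. For RC I would only prove the bounds $3\leq RC\leq 4$, which is why the table entry is "3 or 4".

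For the subfield row I would show that the intersection of two conjugates of $PGL_{2}(q_{0})$ is contained in a Borel or dihedral subgroup. From this, $Ht\leq3$, with equality realized inside a Borel. For $RC=4$ I would lift the projective-line example by choosing cosets whose stabilizers meet in the stabilizers of $\infty,0,1,a$ with $a\in\mathbb{F}_{q}\setminus\mathbb{F}_{q_{0}}$.

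I expect two obstacles. The first is the lower-bound constructions for $RC$ in the subfield case: one must verify that every $3$-subtuple is conjugate, which requires counting orbits of $PGL_{2}(q)$ on triples of subfield subgroups. I would attack this through the fixed-point and intersection structure that Dickson's theorem provides. The second is the exact value of $RC$ in the $S_{4}$ case, which the table itself leaves open.
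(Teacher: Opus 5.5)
Your $PSL_{2}(q)$ and Borel rows are correct, but the $S_{4}$ row has a genuine hole. You give no argument for $Ht \leq 3$, i.e.\ that no four conjugates of the maximal $S_{4}$ form an independent set. This is the substantive part, since the subgroup lattice of $S_{4}$ alone only gives $Ht \leq 5$, and your bound $RC \leq 4$ depends on it through $RC \leq Ht+1$. The paper needs Lemma~\ref{afrntrsctsfr}, Lemma~\ref{cntthnkfnm} (a three-point stabilizer inside an independent $4$-set has order $2$), the GAP enumeration of Appendix~\ref{appa} behind Lemma~\ref{sdhssdfone}, and Lemma~\ref{sdhssdftwo} for the leftover configurations.

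Your lower-bound witness also fails. The chain $S_{4} > D_{8} > C_{2}^{2} > 1$ has four terms, so as a stabilizer chain it describes four points, not an independent $3$-set. Worse, two distinct maximal $S_{4}$ of $PGL_{2}(p)$ with $p \equiv \pm 3 \pmod{8}$ never meet in a $D_{8}$. Each is the normalizer of its normal Klein four-group, so their two normal $K_{4}$ would be the two Klein four-subgroups of the common $D_{8}$. Since $8 \nmid |PSL_{2}(p)|$, each $S_{4}$ meets $PSL_{2}(p)$ in its $A_{4}$, so both $K_{4}$ lie in $PSL_{2}(p)$. They would then generate a $D_{8}$ inside $PSL_{2}(p)$, whose Sylow $2$-subgroups have order $4$. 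The paper instead obtains an $S_{3}$ intersection $H \cap a^{-1}Ha$ from a central involution $a$ of $N_{G}(C_{3}) \cong D_{2(p\pm 1)}$ (Lemmas~\ref{sfrlwrbnd} and~\ref{sfrhghtlwrbnd}). You also never justify $RC \geq 3$ in either the $S_{4}$ or the dihedral row; the paper takes this from Gill, Hunt and Spiga \cite{GILL1}.

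In the subfield row, knowing that each $H \cap H^{g}$ lies in a Borel or dihedral subgroup does not give $Ht \leq 3$, because those subgroups have long subgroup chains. The paper excludes an independent $4$-set only after two steps. First it pins down the exact types $C_{2}$, $E_{q_{0}}$, $C_{(q_{0}\pm1)/\delta_{0}}$, $D_{2(q_{0}\pm1)/\delta_{0}}$, which requires first ruling out $K_{4}$, $A_{4}$, $S_{4}$, $A_{5}$ (Lemmas~\ref{nnnafafsf}, \ref{kfrsbgrpcllctn}, \ref{dffrntsbgrps}). Then it analyses the triple intersections case by case (Lemma~\ref{hghtsbfldac}).

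Your $RC = 4$ construction is not actually specified. ``Cosets whose stabilizers meet in the stabilizers of $\infty,0,1,a$'' does not define four conjugates of $PGL_{2}(q_{0})$, and the check $I \sim_{3} J$ that you defer is the whole content. Theorem~\ref{fnalthrmdndit} does this with $H_{2} = a^{-1}H_{1}a$, $H_{3} = u_{2}^{-1}H_{1}u_{2}$, $H_{4} = u_{1}^{-1}H_{2}u_{1}$ and $H_{4}^{*} = v^{-1}H_{4}v$, all built inside one Borel subgroup. It uses the explicit elements $1$, $b$, $v$, $u_{2}^{-1}bu_{2}$ for the four $3$-subtuples, together with $H_{1}\cap H_{2}\cap H_{3} = 1$.

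In the dihedral row, your key step that two points of $\Omega$ have a stabilizer of order at most $2$ is false. For $q$ odd, two disjoint pairs $\{P,Q\}$, $\{R,S\}$ of projective points are both preserved by a Klein four-group exactly when they are harmonic. This $K_{4}$ case is precisely what needs separate treatment (Lemma~\ref{szfrntrstsctn}, Corollary~\ref{crlstblzrszfr}). Likewise, the existence of an independent $3$-set is asserted rather than shown; the paper needs the counting in Examples~\ref{expslqevn} and~\ref{fnlxmpatlst}.
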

For $ 4 \leq q < 11 $ the above statistics are also calculated for $ PSL_{2} (q) $ and $ PGL_{2} (q) $, but often require dealing with special cases that do not fit the general pattern, so are discussed further at the beginning of each of the above chapters.

\medskip
You may have spotted the relational complexity for these actions is never less than height. The tables in \cite{WISCONS1} list the primitive actions of degree at most $ 100 $ along with their height and relational complexity, calculated using the GAP \cite{GAP4} code provided by Wiscons at \cite{WISCONS2}. The majority of the time in those tables it can be seen the relational complexity is at least equal to the height. But not always.

\medskip
If $ Ht(G , \Omega ) \leq RC( G , \Omega ) \leq Ht(G , \Omega ) + 1 $ then the possibility $ RC( G , \Omega ) = Ht(G , \Omega ) + 1 $ can be ruled out if there do not exist any \textbf{almost independent sets} in $ \Omega $. These are subsets of $ \Omega $ that are not independent, but all proper subsets are independent. Almost independent sets are a new concept that have been defined and looked at for the first time is in this paper and play a part in later calculations for the above theorems.

\medskip
This text is structured in the following way.

\medskip
The second chapter covers the definition of relational complexity. In the second section of the second chapter, some basic results are given and upper bounds are found for $ RC (G, \Omega ) $ that are dependent on the size of $ \Omega $. The relational complexity of actions on some small sets is calculated as well. Unfortunately the best general upper bound based on set size is quite large and not always so useful. In the third section the height of an action is given to provide a more useful bound. Almost independent sets and their uses are introduced in the fourth section of the chapter. The fifth section looks at some results for multiply transitive actions that will be made use of in later chapters.

\medskip
Most of the second chapter is not new material.

\medskip
The third chapter gives a description of the maximal subgroups of $ PSL_{2} (q) $ and $ PGL_{2} (q) $, so that we can look at their primitive actions. Many basic results for these groups are provided, that will be needed later on. The scene is then set for the final chapters. None of this is new research.

\medskip
The remaining six chapters of this paper are devoted to calculating the height and relational complexity of these primitive actions. In the theorems above, each of the statistics in the table are calculated exactly, with the exception of the relational complexity for the $ S_{4} $ action. We will see later that for the $ S_{4} $ action, the relational complexity can be $ 3 $ or $ 4 $ depending on the value of $ q $ and that trying to find out when each possibility happens would likely take a huge amount of casework. So, based on evidence collected in Chapter 7, a conjecture is made on when the relational complexity is $ 4 $.

\medskip
When dealing with the dihedral actions in the fifth chapter, the main theorems are general enough to be applied to other simple groups acting on maximal dihedral subgroups. An example of this is given with the Suzuki groups acting on maximal dihedral subgroups.

\medskip
Calculating the height and relational complexity for these primitive actions in the final six chapters is new research.

\medskip
Notation is generally going to be defined as it appears, but to clear up some ambiguity the following conventions will be used throughout the text: $ \mathbb{N} $ is used for the set of positive integers $\{ 1, 2, 3, \dots \} $. For a subset that is not necessarily proper the notation $ \subseteq $ is used. A strictly proper subset will be written using $ \subset $.

\newpage
\chapter{Relational Complexity Definition and General Results}
This chapter is largely devoted to finding upper bounds for relational complexity. The first section defines relational complexity. The second section develops some general lemmas and gives some bounds based on the size of the set being acted on.

\medskip
In the third section another statistic that can be calculated for an action is introduced, called height. Finding the height of various actions is one of the aims of this paper, so results are proved to assist with this. We will also see how height presents us with another upper bound for relational complexity. Most of the material in these first three sections is not new research. Although not collected in a single text, or even necessarily written down before, some of the lemmas and examples can be found in \cite{GILL4} and \cite{GILL2}.

\medskip
Almost independent sets are the focus of the fourth section. These are subsets of the set being acted on with properties that can help us narrow down the relational complexity of an action. This section consists of new research. Particularly, the notion of almost independent sets are defined and studied for the first time.

\medskip
The final section of the chapter is on bounds for the relational complexity of $ m $-transitive actions. Both upper and lower bounds are provided, which will be needed for the Borel actions of $ PSL_{2} (q) $ and $ PGL_{2} (q) $ in later chapters.
\section{The Definition of Relational Complexity}
Throughout this text all groups will be finite, as will sets being acted on.  Unless stated otherwise, $G$ is a group and $ \Omega $ will be a set, usually acted on by $ G $. Throughout this text group elements will act on the right. An element $g \in G $ acting on an element $ \omega \in \Omega $ will be denoted as $ \omega ^{g} $. When a specific group action $ \phi : \Omega \times G \rightarrow \Omega $ needs referring to, the notation $ ( \omega ,   g ) \phi $ may also be used instead of $ \omega ^{g} $ and the action will be called $ \phi $.

\medskip
The kernel of a group action $ \phi $ will be written as $ \ker ( \phi ) = \{ g \in G : \omega ^{g} = \omega \text{ for all } \omega \in \Omega \} $. The notation $ \Omega ^{m} $ will be used for the set of all $m$-tuples with entries in $ \Omega $.

\medskip
The definition of relational complexity formulated in this section uses subtuples.
\begin{deft}[Subtuple] Let $m \in \mathbb{N} $ and let $ I \in \Omega ^{m} $, where $ I = ( I_{1} , I_{2} , \dots , I_{m} ) $. For $r \leq m $, an \textbf{$r$-subtuple} of $I$ is an $r$-tuple $( I_{p_{1} } , \dots , I_{ p_{r} } ) $ where $1 \leq p_{1}  <  \dots < p_{r} \leq m $.
\end{deft}
\begin{exmp}
Let $I = (1, 2, 3, 4) \in \mathbb{Z} ^{4} $. A $3$-subtuple of $I$ is $(1,3,4) $ and another is $(2,3,4) $. An example of a $2$-subtuple of $I$ is $(2,3) $.
\end{exmp}
If a group $G$ acts on a set $ \Omega $ with group action $\phi _{1} $, then another group action $\phi _{2} $ on the set of $m$-tuples $ \Omega ^{m} $ can be defined such that for each $I  = ( I_{1} , I_{2} , \dots , I_{m} ) \in \Omega ^{m} $ and $g \in G$ we have $ I ^{g} = ( I_{1} ^{ g} , I_{2} ^{g } , \dots , I_{m} ^{g } ) $. This action on $ \Omega ^{m} $ allows us to explain what it means for a pair for $m$-tuples to be $k$-subtuple complete.
\begin{deft}[$k$-subtuple complete] Let $ m \in \mathbb{N} $ and $ k \in \{ 1, \dots , m \} $. Let $I, J \in \Omega ^{m} $, where $I =  ( I_{1}  , \dots , I_{m} )  $ and  $J =  ( J_{1} , \dots , J_{m} )  $. The pair $(I , J ) $ is $k$-\textbf{subtuple complete} if and only if for each corresponding pair of $k$-subtuples $ ( I_{p_{1} }  , \dots , I_{ p_{k} } ) $ and $( J_{p_{1} }  , \dots , J_{ p_{k} } ) $ there exists $g \in G$ such that $ ( I_{p_{1} } , \dots , I_{ p _{k} } ) ^{g} = ( J_{p_{1} } ,  \dots , J_{ p_{k} } ) $, that is $I_{ p_{i} } ^{g} = J_{ p_{i} } $ for all $i \in \{ 1, \dots , k \} $. The notation $ I \sim _{k} J $ is used to say that $(I,J)$ is $k$-subtuple complete.
\end{deft}
It is straightforward to show that being $k$-subtuple complete is an equivalence relation.
\begin{lema}\label{qvrltn}
For any positive integer $k \leq m $ the relation $ \sim _{k} $ is an equivalence relation on $ \Omega ^{m} $.
\end{lema}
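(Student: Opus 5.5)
The plan is to check the three axioms directly from the definition of $k$-subtuple completeness. The key observation is that for a fixed index set $1 \leq p_{1} < \dots < p_{k} \leq m$, the condition on that index set just says that the two $k$-subtuples lie in the same orbit of $G$ acting on $\Omega ^{k}$ via the induced componentwise action. That orbit relation is an equivalence relation because $G$ is a group. Then $\sim _{k}$ is the intersection, over all choices of index sets, of these equivalence relations (pulled back to $\Omega ^{m}$ by projection onto the chosen coordinates). An intersection of equivalence relations is again an equivalence relation.

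Concretely, I would write out the three axioms for arbitrary $I, J, K \in \Omega ^{m}$ and an arbitrary index set $p_{1} < \dots < p_{k}$.

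\textbf{Reflexivity.} Take $g = 1$, the identity of $G$. Then $I_{p_{i}} ^{1} = I_{p_{i}}$ for all $i$, since the identity acts trivially in any group action.

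\textbf{Symmetry.} Suppose $g$ satisfies $I_{p_{i}} ^{g} = J_{p_{i}}$ for all $i$. Then $g ^{-1}$ satisfies $J_{p_{i}} ^{g^{-1}} = (I_{p_{i}} ^{g})^{g^{-1}} = I_{p_{i}} ^{g g^{-1}} = I_{p_{i}}$, using the compatibility axiom of a right action.

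\textbf{Transitivity.} Suppose $g$ sends the $k$-subtuple of $I$ to the corresponding $k$-subtuple of $J$, and $h$ sends that of $J$ to that of $K$. Then $gh$ works: $I_{p_{i}} ^{gh} = (I_{p_{i}} ^{g})^{h} = J_{p_{i}} ^{h} = K_{p_{i}}$.

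The only point needing care is that in each axiom the group element is chosen per index set. It may depend on $(p_{1}, \dots, p_{k})$, and transitivity composes the elements obtained for the same index set. So no uniformity across index sets is required, and nothing is lost.

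I do not expect a genuine obstacle. The proof is a routine verification, and the main thing is to state the quantifier order correctly (for each index set there exists $g$) so that the transitivity step is clearly legitimate.
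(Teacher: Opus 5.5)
Your verification is correct: for each fixed index set the condition is the orbit relation of the componentwise action on $\Omega^{k}$, so reflexivity, symmetry and transitivity follow using $1$, $g^{-1}$ and $gh$, with the group element chosen separately for each index set. The paper omits this proof as straightforward, and your direct check of the three axioms is exactly the routine argument it has in mind.
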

\begin{proof}
Omitted.
\end{proof}
Now we are ready to introduce the definition of relational complexity.
\begin{deft}[Relational complexity]\label{first} Let $G$ be a finite group acting non-trivially on a finite set $ \Omega $. The \textbf{relational complexity} of the action is the least $k \in \mathbb{N} $ such that $k \geq 2$ and whenever the following conditions are met,
\begin{itemize}
\item $ m \in \mathbb{N} $ where $k \leq m$,
\item $ I, J \in \Omega ^{m} $,
\item $ I \sim _{k} J $,
\end{itemize}
then $I \sim _{m} J $.
\end{deft}
The notation $ RC( \phi ) $ will be used to mean the relational complexity of an action $ \phi $. If it is clear what the action of a group $G$ on $ \Omega $ is, it will often be written as $ RC( G , \Omega  ) $.

\medskip
From the definition it is not clear that the relational complexity of the action of a finite group acting on a finite set exists. It will be shown that it does exist after some preliminary results. Each of these lemmas will be used repeatedly in this paper, so are worth remembering.
\begin{lema}\label{equale}
Let $m \in \mathbb{N} $ where $m \geq 2$. Let $I,J \in \Omega ^{m} $ where $I = (I_{1} , I_{2} , \dots , I_{m} )$ and $J = (J_{1} , J_{2} , \dots , J_{m} )$. Suppose $I \sim _{k} J $ for some $k \geq 2$. Let $i,j \in \{ 1, \dots , m \} $. Then $I_{i} = I_{j} $ if and only if $J_{i} = J_{j} $.
\end{lema}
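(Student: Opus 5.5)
The plan is to reduce to the $2$-subtuple case and then use that group elements act as bijections on $\Omega$. Since $k \geq 2$ and $I \sim_{k} J$, I first want $I \sim_{2} J$. The natural route is to observe that any pair of $2$-subtuples in corresponding positions extends to a pair of $k$-subtuples in corresponding positions. This is possible because $k \leq m$ by the definition of $\sim_{k}$. An element $g$ mapping the $k$-subtuple of $I$ to the corresponding $k$-subtuple of $J$ then maps the $2$-subtuple to the $2$-subtuple as well. This observation (that $I \sim_{k} J$ implies $I \sim_{s} J$ for $s \leq k$) is the only point requiring a little care, and it is the step I would write out explicitly.

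Now fix $i,j \in \{1,\dots,m\}$. If $i = j$ the statement is trivial, so assume $i \neq j$. By symmetry of the indices, assume $i < j$, so that $(I_{i}, I_{j})$ is a $2$-subtuple of $I$ and $(J_{i}, J_{j})$ is the corresponding $2$-subtuple of $J$. By the previous paragraph there is $g \in G$ with $I_{i}^{g} = J_{i}$ and $I_{j}^{g} = J_{j}$.

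For the forward direction, if $I_{i} = I_{j}$ then $J_{i} = I_{i}^{g} = I_{j}^{g} = J_{j}$. For the converse, if $J_{i} = J_{j}$ then $I_{i}^{g} = I_{j}^{g}$. Applying $g^{-1}$ (the map $\omega \mapsto \omega^{g}$ is a bijection of $\Omega$ since we have a group action) gives $I_{i} = I_{j}$. Alternatively, the converse follows from the forward direction applied to the pair $(J, I)$, using the symmetry of $\sim_{k}$ from Lemma \ref{qvrltn}.

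I expect no real obstacle. The only subtlety is the reduction from $k$-subtuples to $2$-subtuples, which relies on $2 \leq k \leq m$ so that a $k$-subtuple containing positions $i$ and $j$ exists.
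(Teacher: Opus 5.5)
Your proposal is correct and is essentially the paper's argument. The paper skips the explicit detour through $I \sim_{2} J$ and directly picks a $k$-subtuple containing positions $i$ and $j$ to obtain $g$ with $I_{i}^{g} = J_{i}$ and $I_{j}^{g} = J_{j}$, which is exactly what your reduction amounts to. It then handles the converse using $g^{-1}$, just as you do.
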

\begin{proof}
If $ i = j $ then the result is true. So suppose $ i \neq j $. Let $ I ^{ \prime } $ be a $k$-subtuple of $I$ containing $I_{i} $ and $I_{j} $ as entries. Write $I ^{ \prime } = ( I_{p _{1} } , I_{p_{2}} , \dots , I_{p_{k}} ) $ where $1 \leq p_{1} < \dots <  p_{k} \leq m $. The corresponding $k$-subtuple $J^{ \prime } =  (J_{p_{1}} , J_{p_{2}} , \dots , J_{p_{k}} ) $ of $J$ contains $J_{i}$ and $J_{j} $ as entries. As $(I,J)$ is $k$-subtuple complete, there exists $g \in G $ such that $I_{p_{s} } ^{g}  = J _{p_{s} } $ for each $s \in \{ 1, \dots , k \} $. In particular $I_{i} ^{g} = J_{i} $ and $I_{j} ^{g} = J_{j} $. So if $I_{i} = I_{j} $ then $J_{i} = I_{i} ^{g} = I_{j} ^{g} = J_{j} $.

\medskip
Similarly $I_{p_{s} } ^{g ^{-1} }  = J _{p_{s} } $ for each $s \in \{ 1, \dots , k \} $. In particular $J_{i} ^{g^{-1} } = I_{i} $ and $J_{j} ^{g ^{-1}} = I_{j} $. Therefore if $J_{i} = J_{j} $ then $I_{i} = J_{i} ^{g ^{-1} } = J_{j} ^{g ^{-1} } = I_{j} $.
\end{proof}
\begin{lema}\label{existrc}
Suppose $G$ acts non-trivially on $ \Omega $. Put $n := | \Omega | $. Suppose the following conditions are met,
\begin{itemize}
\item $ k \in \mathbb{N} $ where $n \leq k$,
\item $ I, J \in \Omega ^{k} $,
\item $I \sim _{n} J $.
\end{itemize}
Then $ I \sim _{k} J $.
\end{lema}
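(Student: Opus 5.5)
The idea is that a $k$-tuple over a set of size $n$ has at most $n$ distinct entries, so one well-chosen $n$-subtuple already carries all the information in $I$. We may assume $n \geq 2$. Otherwise $\Omega$ is a single point, every element of $G$ fixes it, and the action is trivial, contrary to hypothesis. Hence $n \geq 2$, and Lemma \ref{equale} applies to the pair $(I,J)$ with the hypothesis $I \sim_n J$. Consequently, for all $i,j \in \{1,\dots,k\}$ we have $I_i = I_j$ if and only if $J_i = J_j$.

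Next we choose a set of positions that represents every distinct entry of $I$. Let $d$ be the number of distinct entries among $I_1,\dots,I_k$; then $d \leq n \leq k$. For each distinct value, choose the first position at which it occurs. This gives positions $q_1 < \dots < q_d$. Since $d \leq n \leq k$, we can enlarge $\{q_1,\dots,q_d\}$ by adding further positions from $\{1,\dots,k\}$ until it has exactly $n$ elements. Write the result as $p_1 < \dots < p_n$. Because $I \sim_n J$, there is $g \in G$ with $I_{p_s}^g = J_{p_s}$ for every $s \in \{1,\dots,n\}$.

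We now show that this single $g$ maps all of $I$ to $J$. Take any position $i \in \{1,\dots,k\}$. By the choice of positions, $I_i = I_{q_t}$ for some $t$, and $q_t$ is one of the $p_s$. By the first step, $I_i = I_{q_t}$ gives $J_i = J_{q_t}$. Therefore
$$I_i^g = I_{q_t}^g = J_{q_t} = J_i,$$
so $I^g = J$. Since $I^g = J$, the same $g$ maps every subtuple of $I$ to the corresponding subtuple of $J$. In particular, restricting $g$ to any $k$-subtuple does this, so $I \sim_k J$. (In fact $I \sim_r J$ for every $r \leq k$.)

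The only delicate point is the first step, where we need $n \geq 2$ before Lemma \ref{equale} can be invoked. This is exactly where the non-triviality assumption is used. The padding of the chosen positions up to exactly $n$ needs only $d \leq n \leq k$, which holds.
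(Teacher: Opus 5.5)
Your proof is correct and follows essentially the same route as the paper: choose an $n$-subtuple whose positions include a representative of every distinct entry of $I$, take the $g$ supplied by $I \sim_n J$, and extend to every position using Lemma \ref{equale}, with non-triviality giving $n \geq 2$. By making the representative positions $q_t$ themselves part of the chosen $n$-subtuple, you get $I_{q_t}^g = J_{q_t}$ directly, whereas the paper only requires the values in $X$ to appear in $I'$ and so tacitly needs Lemma \ref{equale} once more at that step.
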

\begin{proof}
Since the action is non-trivial, it must be that $n \geq 2$. Write $I = (I_{1}  , \dots , I_{k} )$ and $J = (J_{1} ,  \dots , J_{k} )$. Since $ | \Omega |  = n$, the entries of $I$  contain at most $n$ unique elements of $ \Omega $. Let $s$ be the number of unique entries in $I$ and let $ X = \{ I_{x_{1} } , \dots , I_{x_{s} }  \} $ be a set of unique entries of $I$, where $1 \leq x_{1} <  \dots <  x_{s} \leq k $.

\medskip
As $s \leq n \leq k $, there exists a $n$-subtuple of $I$, say $ I^{ \prime } =  (I_{p_{1}}  , \dots , I_{p_{n}} ) $ where $ 1 \leq p_{1} <  \dots <  p_{n} \leq k $, such that each of the elements of $X$ appear in the components of $I ^{ \prime } $. Let $J^{ \prime } =  (J_{p_{1}} , \dots , J_{p_{n}} ) $ be the corresponding $n$-subtuple of $J$. The supposition $ I \sim _{n} J $ implies there exists $g \in G $ such that $I_{p_{i} } ^{g}  = J _{p_{i} } $ for each $i \in \{ 1, \dots , n \} $. Since every element of $X$ appears in the entries of $I ^{ \prime } $ we have $ I_{x_{j}} ^{g}  = J_{x _{j} } $ for each $ j \in \{ 1, \dots , s \} $.

\medskip
Now let $ b \in \{ 1, \dots , k \} $. The entry $I_{b} $ of $I$ is equal to an element of $X$, say $I_{b} = I_{x _{t} } $ for some $ t \in \{ 1, \dots , s \} $. By Lemma \ref{equale} we then have $J_{b} = J_{x _{t} } $. Therefore $I_{b} ^{g} = I_{x _{t} } ^{g} = J_{x _{t} }  = J_{b}  $. It follows that $I ^{g} = J$, that is $I \sim _{k} J $.
\end{proof}
As there exists a number $ | \Omega | \geq 2$ satisfying the conditions of Lemma \ref{existrc}, there must exist a least natural number greater than $1$ that satisfies the same conditions. This number is the relational complexity of the action. Thus we have our next corollary.
\begin{corl}\label{rccor}
The relational complexity of a non-trivial action exists and $ RC(G, \Omega ) \leq  | \Omega | $.
\end{corl}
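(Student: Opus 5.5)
The plan is to deduce Corollary \ref{rccor} directly from Lemma \ref{existrc} together with the well-ordering of $\mathbb{N}$. First I will observe that because the action is non-trivial, some $g \in G$ moves some $\omega \in \Omega$, so $\omega^{g} \neq \omega$. Hence $\Omega$ contains at least two points and $n := |\Omega| \geq 2$.

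Next I will show that $k = n$ satisfies every condition in Definition \ref{first}. The requirement $k \geq 2$ holds by the observation above. For the main condition, take any $m \in \mathbb{N}$ with $n \leq m$ and any $I, J \in \Omega^{m}$ with $I \sim_{n} J$. Lemma \ref{existrc}, applied with its $k$ replaced by this $m$, gives exactly $I \sim_{m} J$. So $n$ belongs to the set
\[
S := \{ k \in \mathbb{N} : k \geq 2 \text{ and } k \text{ satisfies the condition of Definition \ref{first}} \}.
\]

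Finally, $S$ is then a non-empty subset of $\mathbb{N}$, so it has a least element by well-ordering. That least element is by definition $RC(G,\Omega)$, which proves existence. Since $n \in S$, the least element is at most $n$, giving $RC(G,\Omega) \leq |\Omega|$.

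There is no real obstacle, because Lemma \ref{existrc} does all the substantive work. The only point needing care is checking that the quantifier structure of Lemma \ref{existrc} (for all $m \geq n$ and all tuples of length $m$) matches the universally quantified condition in Definition \ref{first}, and that the side condition $k \geq 2$ is met. That side condition is exactly where non-triviality of the action is used.
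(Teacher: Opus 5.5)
Your proposal is correct and matches the paper's argument: by Lemma~\ref{existrc}, $|\Omega|$ satisfies the condition of Definition~\ref{first}, and non-triviality gives $|\Omega|\geq 2$. Well-ordering then produces a least such $k\geq 2$, which is $RC(G,\Omega)$, and it is at most $|\Omega|$. Your explicit check that the quantifiers line up and that non-triviality supplies the side condition $k\geq 2$ is a little more careful than the paper, which leaves these points implicit.
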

\begin{exmp}
Consider the regular action of a group $ G $ on itself. Let $ m \in \mathbb{N} $ and $ m \geq 2 $. Let $ I $ and $ J $ be $ m $-tuples whose entries are elements of $ G $. Write $ I = ( I_{1} , \dots , I_{m} ) $ and $ J = ( J_{1} , \dots , J_{m} ) $. For each $ i \in \{ 1, \dots , m \} $ there is only one $ g_{i} \in G $ such that $ I_{i} ^{ g_{i} } = J_{i} $. If $ I \sim _{2} J $, then for each $ j \in \{ 1 , \dots , m \} \setminus \{ i \} $ we have $ (I_{i} , I_{j} ) ^{g _{i} } = (I_{i} ^{g _{i} } , I_{j} ^{g _{i} } ) = (J_{i} , J_{j} ) $. Therefore $ I ^{g _{i} } = J $ and $ I \sim _{m} J $. Thus the relational complexity of this action is $ 2 $.
\end{exmp}
\section{Basic Results}
With relational complexity defined, we want to establish some basic results on it and look at how to determine the relational complexity of specific basic group actions.

\medskip
Assume from now on, unless said otherwise, that $ \Omega $ is acted on by $ G $. Also for the rest of this chapter take the action of $ G $ to be non-trivial. We will need to refer to the entries of some tuples quite often, as in the last section, so if $ m \in \mathbb{N} $ and $ I \in \Omega^{m} $ we will always take the entries of such an $ m $-tuple to be written as $ I = ( I_{1} , \dots , I _{m} ) $.

\medskip
Finding group actions that have the same relational complexity as each other reduces the number of actions to look at and equivalent group actions are an example of this.
\begin{deft}[Equivalent Group Actions]\label{quivdfn}
Suppose a group $G$ acts on sets $ \Omega $ and $ \Gamma $. The actions are \textbf{equivalent} if there exists a bijection $ \phi : \Omega \rightarrow \Gamma $ such that $ ( \omega ^{ g } ) \phi = (( \omega ) \phi )^{ g } $ for all $ \omega  \in \Omega $ and $ g \in G $.
\end{deft}
In the definition of an equivalent group action, the bijection between the sets is essentially swapping the symbols used for each element whilst leaving the group action unchanged otherwise. So the relational complexity of both actions is the same. Similar results are listed below. Formal proofs are long but not difficult, so have been left out.
\begin{lema}\label{equct}
The following statements are true:
\begin{itemize}
\item Suppose $G$ acts on $ \Omega $ and $ \Gamma $. If the actions are equivalent, then $RC( G, \Omega ) = RC (G, \Gamma ) $.
\item Denote the action of $G$ on $ \Omega $ by $ \phi _{1} $. Let $K = \ker ( \phi _{1} ) $. Let $ \phi _{2} $ be the action of $G / K $ on $ \Omega $ where $ \omega ^{K g } = \omega ^{g} $ for all $ \omega \in \Omega $ and $g \in G $. Then $ RC( \phi_{1} ) = RC( \phi_{2} ) $.
\end{itemize}
\end{lema}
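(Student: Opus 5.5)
The plan is to show that, in each case, the relation $\sim_k$ on $m$-tuples is literally the same relation for the two actions (after transporting along the appropriate bijection). The relational complexity is defined purely in terms of which pairs $(I,J)$ satisfy $I \sim_k J$ for the various $k$ and $m$. So once the relations agree, the set of admissible $k \geq 2$ in Definition \ref{first} is the same for both actions. Hence the least such $k$ agrees, and it exists by Corollary \ref{rccor}.

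For the first statement, take the bijection $\phi : \Omega \rightarrow \Gamma$ from Definition \ref{quivdfn}. Extend it componentwise to a bijection $\Phi : \Omega^{m} \rightarrow \Gamma^{m}$ for each $m$. The equivariance $(\omega^{g})\phi = ((\omega)\phi)^{g}$ gives $(I^{g})\Phi = ((I)\Phi)^{g}$ for every tuple $I$. Taking $r$-subtuples commutes with $\Phi$, so for $r$-subtuples $I'$ and $J'$ we have $I'^{g} = J'$ if and only if $((I')\Phi)^{g} = (J')\Phi$; here injectivity of $\phi$ is used for the reverse direction. Hence $I \sim_{k} J$ in $\Omega^{m}$ if and only if $(I)\Phi \sim_{k} (J)\Phi$ in $\Gamma^{m}$, for all $k \leq m$. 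Since $\Phi$ is surjective, every pair in $\Gamma^{m}$ arises this way. Therefore the implication ``$I \sim_k J \Rightarrow I \sim_m J$'' holds for all pairs in $\Omega^m$ exactly when it holds for all pairs in $\Gamma^m$. One should also note that non-triviality transfers: if $\omega^g \neq \omega$ then $(\omega)\phi^{g} \neq (\omega)\phi$. So both relational complexities are defined, and they are equal.

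For the second statement, first check that $\phi_2$ is a well-defined action. If $Kg = Kh$ then $h = kg$ with $k \in K$, so $\omega^{h} = (\omega^{k})^{g} = \omega^{g}$; the action axioms are then inherited from $\phi_1$. Non-triviality is preserved, since the two actions move exactly the same points. The key observation is that the set of permutations of $\Omega$ induced by $G$ equals the set induced by $G/K$. For tuples $I', J'$ there exists $g \in G$ with $I'^{g} = J'$ under $\phi_1$ if and only if there exists $Kg \in G/K$ with $I'^{Kg} = J'$ under $\phi_2$, using the same $g$ in both directions. Thus $\sim_k$ is identical for the two actions on every $\Omega^m$, and the relational complexities coincide.

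I expect no real obstacle. The only points needing care are the well-definedness of $\phi_2$ and making explicit that the definition of $RC$ depends only on the relations $\sim_k$, which are determined by the set of induced permutations.
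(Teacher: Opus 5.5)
Your proposal is correct and follows the same idea the paper relies on: the paper omits the proof, remarking only that an equivalence just relabels the points of $\Omega$, so the relations $\sim_k$, and with them $RC$, are unchanged; the quotient statement is likewise left as routine. You have supplied those routine details, including the injectivity step, the well-definedness of $\phi_2$, and the transfer of non-triviality needed for $RC$ to be defined.
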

\begin{proof}
Omitted.
\end{proof}
Now the relational complexity of specific actions can be looked at. For groups acting on very small sets, this can be found by looking at the size of the set, starting with actions on sets of size $2$.
\begin{lema}\label{transtwo}
If $|  \Omega | = 2$, then $ RC(G , \Omega ) = 2$.
\end{lema}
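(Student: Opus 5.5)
The plan is to squeeze $RC(G,\Omega)$ between two bounds that already coincide. By Corollary \ref{rccor}, $RC(G,\Omega) \leq |\Omega| = 2$. The definition of relational complexity requires the value to satisfy $k \geq 2$, so $RC(G,\Omega) \geq 2$ automatically. Combining these gives $RC(G,\Omega)=2$.

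One point needs checking: Corollary \ref{rccor} applies only to non-trivial actions. This hypothesis is in force throughout the chapter, and Definition \ref{first} is only made for non-trivial actions anyway, so the corollary may be invoked directly.

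If a more self-contained argument is wanted, I would verify directly that $k=2$ works. Take $m \geq 2$ and $I,J \in \Omega^{m}$ with $I \sim_{2} J$. By Lemma \ref{equale}, the positions where $I$ has equal entries are exactly the positions where $J$ has equal entries. Since $|\Omega| = 2$, $I$ takes at most two distinct values, so we can choose a $2$-subtuple of $I$ that contains every distinct value of $I$. The element $g$ supplied by $I \sim_{2} J$ for this subtuple then maps every entry of $I$ to the corresponding entry of $J$, exactly as in the proof of Lemma \ref{existrc}. This is really the special case $n=2$ of that lemma, so I would simply cite it.

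I expect no real obstacle here. The only care needed is the trivial-action caveat noted above.
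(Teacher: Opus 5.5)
Your proposal is correct and matches the paper's proof: the lower bound $RC(G,\Omega)\ge 2$ comes from the definition and the upper bound from Corollary~\ref{rccor}. Your caveat about the chapter's standing assumption of a non-trivial action is exactly what licenses citing that corollary.
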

\begin{proof}
The relational complexity of the action must be at least $2$ from the definition of relational complexity. By Corollary \ref{rccor} the relational complexity of the action is at most $2$, so must be equal to $2$.
\end{proof}
Corollary \ref{rccor} shows that the size of the set being acted on is an upper bound for the relational complexity of an action. Lemma \ref{transtwo} shows that the relational complexity of an action on a set of size $2$ is equal to this upper bound. For most of this section results will be developed showing that for a set $ \Omega $ of size $ | \Omega | \geq 3 $ we can find a slightly better upper bound of $ | \Omega | - 1 $ and that there exists an action with relational complexity equal to this bound. First several basic results will be established. The following two lemmas will be useful many times throughout this text.
\begin{lema}\label{nimpk}
Suppose $G$ acts on $ \Omega $. Let $r \in \mathbb{N} $ and $I, J \in \Omega ^{r} $. Suppose that $I \sim _{m} J $ for some $m \leq r$. Then $I \sim _{k} J $ for all $1 \leq k \leq m $.
\end{lema}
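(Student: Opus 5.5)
The plan is to prove the statement directly from the definition of $k$-subtuple completeness, by extending each $k$-subtuple to an $m$-subtuple. Fix $k$ with $1 \leq k \leq m$. Take an arbitrary pair of corresponding $k$-subtuples $I^{\prime} = (I_{p_{1}}, \dots, I_{p_{k}})$ and $J^{\prime} = (J_{p_{1}}, \dots, J_{p_{k}})$, where $1 \leq p_{1} < \dots < p_{k} \leq r$. The aim is to produce $g \in G$ with $(I^{\prime})^{g} = J^{\prime}$.

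Since $k \leq m \leq r$, the index set $\{p_{1}, \dots, p_{k}\}$ can be enlarged to a set of exactly $m$ indices inside $\{1, \dots, r\}$. To do this, adjoin any $m-k$ further indices from the complement, which has size $r - k \geq m - k$. List the enlarged set in increasing order as $q_{1} < \dots < q_{m}$. This gives an $m$-subtuple $I^{\prime\prime} = (I_{q_{1}}, \dots, I_{q_{m}})$ of $I$ and the corresponding $m$-subtuple $J^{\prime\prime} = (J_{q_{1}}, \dots, J_{q_{m}})$ of $J$. By construction, $I^{\prime}$ is a $k$-subtuple of $I^{\prime\prime}$, and $J^{\prime}$ sits in the corresponding positions of $J^{\prime\prime}$.

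The hypothesis $I \sim_{m} J$ now gives $g \in G$ with $I_{q_{i}}^{g} = J_{q_{i}}$ for every $i \in \{1, \dots, m\}$. Each $p_{j}$ is one of the $q_{i}$, so in particular $I_{p_{j}}^{g} = J_{p_{j}}$ for all $j \in \{1, \dots, k\}$, that is, $(I^{\prime})^{g} = J^{\prime}$. The $k$-subtuples were arbitrary, so $I \sim_{k} J$.

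There is no real obstacle here. The only point needing care is the index bookkeeping: the extension to $m$ indices must be possible, which is exactly where $m \leq r$ is used, and the chosen indices must be re-sorted so that the enlarged tuple is a genuine subtuple in the sense of the Subtuple definition. The case $k = m$ is trivial and is also covered by the same argument.
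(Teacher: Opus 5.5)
Your proposal is correct and follows essentially the same argument as the paper: extend each $k$-subtuple of $I$ to an $m$-subtuple, apply $I \sim_{m} J$ to get $g$, and restrict $g$ to the original $k$ positions. Your explicit index bookkeeping (using $r-k \geq m-k$ to find the extra indices and re-sorting them) is a bit more careful than the paper's version, which leaves that step implicit and writes $n$ where $r$ is meant.
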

\begin{proof}
Let $I ^{ \prime } = ( I_{p_{1} } , \dots , I_{p_{k} } )$ be a $k$-subtuple of $I$ where $1 \leq p_{1} < \dots < p_{k} \leq n $ and let $J ^{ \prime} =  ( J_{p_{1} } , \dots , J_{p_{k} } )$ be the corresponding $k$-subtuple of $J$.

\medskip
Since $k \leq m \leq n$, the $k$-tuple $I^{ \prime } $ is a $k$-subtuple of some $I ^{ \prime \prime } \in \Omega ^{m} $ that is also an $m$-subtuple of $I$. Similarly $J ^{ \prime } $ is a $k$-subtuple of some $m$-tuple $J^{ \prime \prime } $ which is an $m$-subtuple of $J$, where $J ^{ \prime \prime } $ contains the $i$-th entry of $J$ if and only if $I ^{ \prime \prime } $ contains the $i$-th entry of $I$.

\medskip
As $ I \sim _{m} J $ we have $(I ^{ \prime \prime } ) ^{g} = J ^{ \prime \prime }$ for some $g \in G $. In particular, for the entries of $I^{ \prime } $ we have $I_{p_{j} } ^{g} = J_{p_{j} } $ for all $j \in \{ 1, \dots , k \} $. Hence $ ( I ^{ \prime } )^{g} =  ( I_{p_{1} } ^{g} , \dots , I_{ p_{k} } ^{g} ) = ( J_{p_{1} } , \dots , J_{ p_{k} } ) = J ^{ \prime }$. Thus $ I \sim _{k} J $.
\end{proof}
\begin{lema}\label{uniqset}
Suppose $ G $ acts on $ \Omega $. Let $m \in \mathbb{N} $ where $m \geq 2$ and let $I,J \in \Omega ^{m}$. Suppose $I \sim _{k} J $ for some $k \geq 2$. Let $r$ be the number of distinct entries of $I$. Let $ \{ I_{p_{1}}  , \dots , I_{p_{r} } \} $ be the set of distinct entries of $I$, where $1 \leq p_{1} < \dots < p_{r} \leq n $. Then there are $r$ distinct entries of $J$ and $  \{ J_{p_{1} } , \dots , J_{p_{r} } \} $ is the set of such entries. Furthermore if there exists $g \in G$ such that $ ( I_{p_{1}} ^{g}  , \dots , I_{p_{r} } ^{g}  ) = ( J_{p_{1}}  , \dots , J_{p_{r} } ) $, then $ I^{g} = J $.
\end{lema}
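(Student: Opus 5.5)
The whole argument rests on Lemma \ref{equale}, which applies because $I \sim_{k} J$ with $k \geq 2$. I read the statement as saying that $I_{p_{1}}, \dots, I_{p_{r}}$ are pairwise distinct and that every entry of $I$ equals one of them. (The index bound ``$\leq n$'' should be $\leq m$.) There are three things to show: the $J_{p_{i}}$ are pairwise distinct, every entry of $J$ is among them, and the final claim about $g$.

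First, distinctness. For $i \neq j$ in $\{1, \dots, r\}$ we have $I_{p_{i}} \neq I_{p_{j}}$. Lemma \ref{equale} gives the equivalence $I_{p_{i}} = I_{p_{j}} \iff J_{p_{i}} = J_{p_{j}}$, so $J_{p_{i}} \neq J_{p_{j}}$. Hence $J_{p_{1}}, \dots, J_{p_{r}}$ are $r$ distinct entries of $J$.

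Second, coverage. Take any $b \in \{1, \dots, m\}$. Since $\{I_{p_{1}}, \dots, I_{p_{r}}\}$ is the set of all distinct entries of $I$, we have $I_{b} = I_{p_{t}}$ for some $t$. Lemma \ref{equale} then gives $J_{b} = J_{p_{t}}$. So every entry of $J$ lies in $\{J_{p_{1}}, \dots, J_{p_{r}}\}$. Together with the first step, $J$ has exactly $r$ distinct entries and this set is the set of them.

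Third, the claim about $g$. Suppose $I_{p_{i}}^{g} = J_{p_{i}}$ for all $i$. For any $b$, choose $t$ with $I_{b} = I_{p_{t}}$; as above $J_{b} = J_{p_{t}}$. Then
\[
I_{b}^{g} = I_{p_{t}}^{g} = J_{p_{t}} = J_{b},
\]
so $I^{g} = J$. This is the same argument as the last paragraph of the proof of Lemma \ref{existrc}.

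Nothing here is a real obstacle. The only point needing care is to check that Lemma \ref{equale} is applicable, i.e. that $k \geq 2$ (and implicitly $k \leq m$ so that $\sim_{k}$ is defined), which is exactly what the hypotheses provide.
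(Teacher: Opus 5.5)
Your proof is correct and follows essentially the same approach as the paper: Lemma \ref{equale} gives both the pairwise distinctness of $J_{p_1},\dots,J_{p_r}$ and the fact that every entry of $J$ lies among them, and then the entrywise computation $I_b^{g} = I_{p_t}^{g} = J_{p_t} = J_b$ yields $I^{g} = J$. Your remark that the index bound should read $\leq m$ rather than $\leq n$ is also correct.
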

\begin{proof}
As $I \sim _{k} J $ and $k \geq 2$, for $i,j \in \{1, \dots , m \} $ we have $I_{i} = I_{j} $ if and only if $J_{i} = J_{j} $ by Lemma \ref{equale}. Since each of the entries of $I$ are equal to one of $  \{ I_{p_{1}}  , \dots , I_{p_{r} } \} $, each of the entries in $J$ must be equal to one of $  \{ J_{p_{1} } , \dots , J_{p_{r} } \} $ and the $r$ elements of this set are distinct (if not then $J_{p_{s} } = J_{p_{t} } $ for some $s, t \in \{1, \dots , r \} $ with $ s \neq t$, which means by Lemma \ref{equale} that  $I_{p_{s} } = I_{p_{t} } $, giving a contradiction).

\medskip
Suppose there exists $g \in G$ such that $ ( I_{p_{1}} ^{g}  , \dots , I_{p_{r} } ^{g}  ) = ( J_{p_{1}}  , \dots , J_{p_{r} } ) $. For each $a \in \{ 1 , \dots , m \} $ we have $I_{a} = I_{p_{b} } $ for some $ b \in \{ 1 , \dots , r \} $. So by Lemma \ref{equale} we have $ J_{a} = J_{p_{b} } $. It follows that $ I_{a} ^{g} = I_{ p _{b} }  ^{g} = J_{p_{b} } = J_{a} $. Thus $I^{g} =  (I_{1} ^{g} , \dots , I_{m} ^{g} )  =  (J_{1} , \dots , J_{m} )  = J $.
\end{proof}
An important use of this lemma is that in order to check whether $I \sim _{m} J $, the calculation can be reduced to looking at whether an $r$-subtuple of the unique entries of $I$ can be sent to the corresponding $r$-subtuple of unique entries of $J$.

\medskip
The next lemma is obvious but useful.
\begin{lema}\label{obvi}
Let $n \in \mathbb{N} $ and let $I,J \in \Omega ^{n} $. Suppose $I ^{ \prime } $ is an $m$-subtuple of $I$ and $J ^{ \prime }  $ is the corresponding $m$-subtuple of $J$. If $I \sim _{k} J $ for some $k \leq m$, then $ I^{ \prime} \sim _{k} J ^{ \prime}  $.
\end{lema}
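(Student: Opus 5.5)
The plan is to unwind the definitions and observe that every $k$-subtuple of $I^{\prime}$ is also a $k$-subtuple of $I$, with the corresponding $k$-subtuple of $J^{\prime}$ being the corresponding $k$-subtuple of $J$. Write $I^{\prime} = ( I_{q_{1}} , \dots , I_{q_{m}} )$ with $1 \leq q_{1} < \dots < q_{m} \leq n$, so that $J^{\prime} = ( J_{q_{1}} , \dots , J_{q_{m}} )$. Since $k \leq m$, the relation $I^{\prime} \sim_{k} J^{\prime}$ makes sense, and we must show that for every choice of $1 \leq s_{1} < \dots < s_{k} \leq m$ there is $g \in G$ with $I_{q_{s_{i}}}^{g} = J_{q_{s_{i}}}$ for all $i \in \{1, \dots , k\}$.

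The key step is the index composition. Put $p_{i} := q_{s_{i}}$. Because both index sequences are strictly increasing, we have $1 \leq p_{1} < \dots < p_{k} \leq n$. Hence $( I_{p_{1}} , \dots , I_{p_{k}} )$ is a $k$-subtuple of $I$, and $( J_{p_{1}} , \dots , J_{p_{k}} )$ is the corresponding $k$-subtuple of $J$.

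Since $I \sim_{k} J$, there exists $g \in G$ with $( I_{p_{1}} , \dots , I_{p_{k}} )^{g} = ( J_{p_{1}} , \dots , J_{p_{k}} )$. This is exactly the statement that the chosen $k$-subtuple of $I^{\prime}$ is mapped by $g$ to the corresponding $k$-subtuple of $J^{\prime}$. As the $k$-subtuple of $I^{\prime}$ was arbitrary, it follows that $I^{\prime} \sim_{k} J^{\prime}$.

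There is no real obstacle here. The only point needing care is bookkeeping: checking that the composed indices $q_{s_{i}}$ are strictly increasing, so that they genuinely define a $k$-subtuple of $I$ in the sense of the definition. This follows at once from the monotonicity of both index sequences.
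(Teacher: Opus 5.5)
Your proof is correct and takes the same approach as the paper: every $k$-subtuple of $I^{\prime}$ is a $k$-subtuple of $I$, and its corresponding $k$-subtuple in $J^{\prime}$ is the corresponding $k$-subtuple of $J$, so $I \sim_{k} J$ supplies the required $g$. Your check that the composed indices $q_{s_{i}}$ are strictly increasing makes explicit a step the paper states without comment.
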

\begin{proof}
Suppose $I \sim _{k} J $ for some $k \leq m$. Any $k$-subtuple $I ^{ \prime \prime } $ of $I ^{ \prime } $ is also an $k$-subtuple of $I$ and the corresponding $k$-subtuple $J ^{ \prime  \prime } $ of $J ^{ \prime } $ is a $k$-subtuple of $J$ corresponding to $ I^{ \prime \prime } $ in $ I $. So there exists $g \in G$ such that $( I ^{ \prime \prime }  ) ^{g} = J ^{ \prime \prime } $. Hence $ I^{ \prime} \sim _{k} J ^{ \prime} $.
\end{proof}
These lemmas allow another definition of relational complexity to be formulated.
\begin{lema}\label{rcalt}
Let $ G $ be a group acting non-trivially on a finite set $ \Omega $ of size $| \Omega | = n \geq 2 $. Then $RC(G, \Omega ) $ is the least integer $k$ such that $2 \leq k \leq n $ and if
\begin{itemize}
\item $I,J \in \Omega ^{n} $,
\item $I \sim _{k} J $,
\end{itemize}
then $I \sim _{n} J $.
\end{lema}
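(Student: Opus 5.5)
Let $k_0$ be the least integer with $2 \leq k_0 \leq n$ satisfying the condition of the statement: whenever $I,J \in \Omega^{n}$ and $I \sim_{k_0} J$, then $I \sim_{n} J$. Such an integer exists because $k = n$ trivially qualifies. Write $k_1 := RC(G,\Omega)$, which exists and satisfies $2 \leq k_1 \leq n$ by Corollary \ref{rccor}. The plan is to prove $k_0 \leq k_1$ and $k_1 \leq k_0$.

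For $k_0 \leq k_1$: the integer $k_1$ satisfies the condition of Definition \ref{first}. Apply that definition with $m = n \geq k_1$ and $I,J\in\Omega^n$. This shows that $I \sim_{k_1} J$ implies $I \sim_n J$. Hence $k_1$ lies in the set whose minimum is $k_0$, so $k_0 \leq k_1$.

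For $k_1 \leq k_0$, it suffices to show that $k_0$ satisfies Definition \ref{first}, since $k_1$ is the least such integer. Fix $m \geq k_0$ and $I,J \in \Omega^m$ with $I \sim_{k_0} J$; we must show $I \sim_m J$. We split into two cases.

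\emph{Case $m \leq n$.} Pad both tuples to length $n$: let $\tilde I = (I_1,\dots,I_m,I_m,\dots,I_m)$ and $\tilde J = (J_1,\dots,J_m,J_m,\dots,J_m)$. The key step is checking that $\tilde I \sim_{k_0} \tilde J$. Take any $k_0$-subtuple of $\tilde I$. Each repeated copy of $I_m$ may be replaced by the original entry $I_m$, and likewise $J_m$ on the $J$ side. The set of positions then collapses to at most $k_0$ distinct positions of $I$; since $m \geq k_0$, we can pad these to a genuine $k_0$-subtuple of $I$. Because $I \sim_{k_0} J$, some $g$ maps this $k_0$-subtuple of $I$ to the corresponding one of $J$, and the same $g$ then sends the original subtuple of $\tilde I$ to the corresponding subtuple of $\tilde J$. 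By the defining property of $k_0$ we get $\tilde I \sim_n \tilde J$, so some $g$ satisfies $\tilde I^g = \tilde J$. In particular $I^g = J$, so $I \sim_m J$ (or apply Lemma \ref{obvi} directly).

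\emph{Case $m > n$.} Here I reduce to distinct entries. Since $k_0 \geq 2$, Lemma \ref{uniqset} says that if $r$ is the number of distinct entries of $I$, at positions $p_1<\dots<p_r$, then $r \leq n$ and it suffices to find $g$ sending $(I_{p_1},\dots,I_{p_r})$ to $(J_{p_1},\dots,J_{p_r})$. Extend this list of positions to an $n$-subtuple $I'$ of $I$, which is possible since $m > n$, and let $J'$ be the corresponding $n$-subtuple of $J$. By Lemma \ref{obvi}, $I' \sim_{k_0} J'$ because $k_0 \leq n$. The defining property of $k_0$ then gives $I' \sim_n J'$, so some $g$ maps $I'$ to $J'$. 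This $g$ handles the distinct entries, and Lemma \ref{uniqset} finishes the case.

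The main obstacle is the padding argument in the case $m \leq n$, where repeated entries must be handled with care. The cleanest way is to note that any subtuple of $\tilde I$ agrees, after merging duplicated positions, with a subtuple of $I$ of length at most $k_0$. Then Lemma \ref{nimpk} and Lemma \ref{equale} ensure that the element $g$ found for that subtuple respects the duplicates.
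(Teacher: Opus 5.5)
Your proof is correct and follows the same strategy as the paper: you get $k_0 \le RC(G,\Omega)$ by taking $m=n$ in Definition~\ref{first}, and the reverse inequality by showing $k_0$ satisfies that definition, via padding tuples to length $n$ with a repeated entry together with Lemmas~\ref{uniqset} and~\ref{obvi}. The only difference is organisational. The paper first passes to the $s$-tuple of distinct entries and splits on $s \le k$ versus $s > k$, so it never needs a separate case $m > n$. You instead split on $m \le n$ versus $m > n$ and pad the original tuple, and you spell out the check $\tilde I \sim_{k_0} \tilde J$ that the paper dismisses as ``clear''.
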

\begin{proof}
Suppose $k \geq 2$ is the least integer such that whenever $I,J \in \Omega ^{n} $ and $I \sim _{k} J $, then $I \sim _{n} J$. Put $r = RC(G, \Omega ) $. Then $r \leq n $ by Corollary \ref{rccor}. Also $k \leq r $ because if $K,L \in \Omega ^{n} $ and $K \sim _{r} L $, then $K \sim _{n} L $ by the definition of relational complexity.

\medskip
Let $m \in \mathbb{N} $ with $m \geq k $. Suppose $M,N \in \Omega ^{m} $ and that $M \sim _{k} N $. Let $s$ be the number of unique entries in $M$ and let $A = \{ M_{p_{1} } , \dots , M_{p_{s} } \} $ be the set of unique entries of $ M $, where $p_{1} < \dots < p_{s} $. Lemma \ref{uniqset} shows that there are $ s $ unique entries of $ N $ and $  \{ N_{p_{1} } , \dots , N_{p_{s} } \} $ is the set of these unique entries.

\medskip
Let $M^{ \prime} =  ( M_{p_{1} } , \dots ,M_{p_{s} } ) $ and $N^{ \prime} =  ( N_{p_{1} } , \dots , N_{p_{s} } ) $. Note that these are $ s $-subtuples of $ M $ and $ N $ respectively. Also observe that $ s \leq n $ because $ A \subseteq \Omega $.

\medskip
If $ s \leq k  $ then from Lemma \ref{nimpk} we see that $ M \sim _{k} N $ implies $ M^{ \prime} \sim _{s} N^{ \prime} $. So in this case we have $ M \sim _{m} N $ by Lemma \ref{uniqset}.

\medskip
If $ k < s $, then the $ s $-th entries of $  M^{ \prime} $ and $ N^{ \prime} $ can be repeated $ n - s $ times to create $ n $-tuples
\newline $ M^{ \prime \prime } =  ( M_{p_{1} } , \dots , M_{p_{s} } , \dots , M_{p_{s} } ) $ and $ N^{ \prime \prime } =  ( N_{p_{1} } , \dots , N_{p_{s} } , \dots , N_{p_{s} } ) $. Observe that $ M \sim _{k} N $ implies $ M^{ \prime} \sim _{k} N^{ \prime} $ by Lemma \ref{obvi}. So clearly $ M^{ \prime \prime } \sim _{k} N^{ \prime \prime } $. Thus $ M^{ \prime \prime } \sim _{n} N^{ \prime \prime } $. It follows that $ (M^{ \prime }  )^{g} = N^{ \prime }  $ for some $ g \in G $. Again Lemma \ref{uniqset} can be used to show $ M \sim _{m} N $ in this case.

\medskip
From Definition \ref{first} of relational complexity we get $ r \leq k $. Thus $ r = k $.
\end{proof}
Lemma \ref{rcalt} is often a much easier definition of relational complexity to work with than the earlier original definition. From this theorem we can see that for an action on a set of size $n$, to show the relational complexity is $k$, it is sufficient to show that all $k$-subtuple complete pairs of $n$-tuples are also $n$-subtuple complete.

\medskip
This puts a limit on the amount of calculation required to find the relational complexity, which was not clear from earlier definitions. An application of this theorem can be seen in calculating the relational complexity of the symmetric group $S_{n} $.
\begin{exmp}\label{symrc}
For $n \geq	2$, the natural action of the symmetric group $S_{n} $ on $ \Omega =  \{ 1, \dots , n \} $ has relational complexity $2$. To see this, let $I,J \in \Omega ^{n} $. Suppose $I \sim _{2} J $. Let $s$ be the number of unique elements of $I$ and let $A =  \{ I_{p_{1} } , \dots , I_{p_{s} } \} $ be the set of unique entries of $I$ where $ 1 \leq p_{1} < \dots < p_{s} \leq n $. Then $B =  \{ J_{p_{1} } , \dots , J_{p_{s} } \} $ is the set of unique entries of $J$ by Lemma \ref{uniqset}. Since $A, B \subseteq \Omega $, a permutation of the following form, written in two line notation, exists in $S_{n} $;
\begin{align*}
g = 
\begin{pmatrix}
  I_{p_{1} } & I_{p_{2} } & \dots & I_{p_{s} } & \dots  \\
 J_{p_{1} } & J_{p_{2} } & \dots & J_{p_{s} } & \dots
\end{pmatrix}
\end{align*}
Now
\[
(I_{p_{1} } , \dots , I_{p_{s} }  ) ^{g} = (I_{p_{1} } ^{g} , \dots , I_{p_{s} } ^{g}  )  = (J_{p_{1} } , \dots , J_{p_{s} }  )  .
\]
So $I^{g} = J$ by Lemma \ref{uniqset}. Therefore $I \sim _{n} J$. Since $ | \Omega | = n $, this shows that $RC ( S_{n} , \Omega ) = 2$ by Lemma \ref{rcalt}.
\end{exmp}
We are now in a position to find an upper bound for the relational complexity of actions on sets of size $3$ or more.
\begin{thrm}\label{kminuso}
If $G$ acts on a set $ \Omega $ of size $|  \Omega  |  \geq 3$, then $ RC (G, \Omega ) \leq | \Omega | -1$.
\end{thrm}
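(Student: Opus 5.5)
By Lemma \ref{rcalt}, with $n := | \Omega | \geq 3$, it suffices to show the following: whenever $I, J \in \Omega ^{n}$ satisfy $I \sim _{n-1} J$, we also have $I \sim _{n} J$. Note that $n - 1 \geq 2$, so the hypotheses of Lemma \ref{uniqset} and Lemma \ref{equale} are available.

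Let $s$ be the number of distinct entries of $I$, and let $\{ I_{p_{1}} , \dots , I_{p_{s}} \}$ be the set of these entries, with $p_{1} < \dots < p_{s}$. By Lemma \ref{uniqset}, $J$ has exactly $s$ distinct entries, namely $J_{p_{1}}, \dots , J_{p_{s}}$. Moreover, if some $g \in G$ sends $(I_{p_{1}}, \dots , I_{p_{s}})$ to $(J_{p_{1}}, \dots , J_{p_{s}})$, then $I^{g} = J$. We split into two cases according to $s$.

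\textbf{Case $s \leq n-1$.} Here $(I_{p_{1}}, \dots , I_{p_{s}})$ is an $s$-subtuple of $I$. Since $s \leq n-1$, Lemma \ref{nimpk} gives $I \sim _{s} J$. Hence some $g$ maps this subtuple to the corresponding subtuple of $J$, and Lemma \ref{uniqset} yields $I^{g} = J$.

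\textbf{Case $s = n$.} This is the main point. Now $I$ and $J$ are both orderings of all of $\Omega$. Take the $(n-1)$-subtuple of $I$ obtained by deleting the last entry, and the corresponding subtuple of $J$. By $I \sim _{n-1} J$ there is $g \in G$ with $I_{i}^{g} = J_{i}$ for all $i \leq n-1$. The point $I_{n}$ is the unique element of $\Omega$ outside $\{ I_{1}, \dots , I_{n-1} \}$. Since $g$ is a bijection of $\Omega$ mapping $\{ I_{1}, \dots , I_{n-1} \}$ onto $\{ J_{1}, \dots , J_{n-1} \}$, it must map the complement to the complement, so $I_{n}^{g} = J_{n}$. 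Thus $I^{g} = J$.

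In both cases $I \sim _{n} J$, and therefore $RC(G, \Omega ) \leq n - 1$. The only step that needs any care is the counting argument in the case $s = n$; the rest is direct bookkeeping with Lemma \ref{uniqset}.
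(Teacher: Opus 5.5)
Your proof is correct and follows the paper's route: reduce to $n$-tuples via Lemma \ref{rcalt}, and split on whether the entries of $I$ are all distinct, handling the all-distinct case with the same complement argument. The one difference is the repeated-entry case, where you use Lemma \ref{nimpk} and Lemma \ref{uniqset}; the paper instead deletes one of two equal positions $r \neq s$ and concludes directly from $I_{r}^{h} = I_{s}^{h} = J_{s} = J_{r}$, and both versions work.
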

\begin{proof}
Put $ n:= | \Omega | $. By Lemma \ref{rcalt}, it is sufficient to show that for all $I,J \in \Omega ^{n} $, if $I \sim _{ (n-1) } J $ then $ I \sim _{n} J $. So assume that $I \sim _{ (n-1) } J $.

\medskip
There are two cases to look at. First suppose that the components of $I$ are all unique. Since $ n-1 \geq 2 $ and $I \sim _{ (n-1) } J $, Lemma \ref{equale} shows that the components of $J$ are all unique.

\medskip
The entries of $(n-1)$-tuple $( I_{1} , \dots I_{n-1} ) $ contain all except one of the elements of $ \Omega $, namely $I_{n} $. Similarly the entries of $( J_{1} , \dots J_{n-1} ) $ contains all elements of $ \Omega $ except $ J_{n} $. Now there exists $g \in G $ such that
\[
( I_{1} , \dots I_{n-1} ) ^{g} = ( J_{1} , \dots J_{n-1} ) .
\]
So $I_{i} ^{g} = J_{i} $ for each $i \in \{ 1, \dots n-1 \} $. Also it must be that $I_{n} ^{g} = J_{n}  $. If not then $I_{n} ^{g} = J_{j} $ for some $j \in \{ 1, \dots , n -1 \} $, which implies that $I_{n} = I_{j} $, contradicting the assumption that each of the entries of $I$ are unique. Thus $I^{g} = J $ and $I \sim _{n} J $ in this case.

\medskip
Next suppose the entries of $I$ are not all unique. Then there exists $r,s \in \{1, \dots n \} $ with $r \neq s$ such that $I_{r} = I_{s} $. Also $J_{r} = J_{s} $ by Lemma \ref{equale}.

\medskip
Let $I^{ \prime } = ( I_{p_{1} } , \dots , I_{p_{n-1} } ) $ be the $(n-1) $-subtuple of $I$ that does not contain $I_{r} $. Then $I ^{ \prime } $ contains all of the remaining $n-1$ entries of $I$. Let $ J^{ \prime } = ( J_{p_{1} } , \dots , J_{p_{n-1} } ) $ be the corresponding $(n-1) $-subtuple of $J$ that contains all entries of $J$ except $J_{r} $.

\medskip
As $I \sim _{ (n-1) } J $, there exists $h \in G $ such that $ ( I ^{ \prime } ) ^{h} = J ^{ \prime } $. Therefore $I_{k} ^{h} = J_{k} $ for all $ k \in \{ 1 , \dots , n \} \setminus \{ r \} $. Also $ I_{r} ^{h} = I_{s} ^{h} = J_{s} = J_{r} $ . Hence $ I ^{h} = J $ and $I \sim _{n} J $ in this case.
\end{proof}
An immediate use of this theorem is to calculate the relational complexity for all non-trivial actions on sets of size $3$.
\begin{exmp}\label{setthre}
Suppose $ | \Omega | = 3$. Then $2 \leq RC ( G , \Omega ) $ by the definition of relational complexity. Theorem \ref{kminuso} shows that $ RC ( G , \Omega ) \leq 2 $. Thus $ RC ( G , \Omega ) = 2 $.
\end{exmp}
The next example shows if we are looking for a bound that depends on the size of the set being acted on, then the bound given by Theorem \ref{kminuso} cannot be improved upon in general.
\begin{exmp}\label{rcgrpaltg}
It can be shown that for $n \geq	3$, the natural action of the alternating group $A_{n} $ on $\Omega =  \{ 1, \dots , n \} $ has relational complexity $n-1$. First suppose that $n = 3$. Then Example \ref{setthre} shows that the relational complexity of the action is $2$.

\medskip
Next suppose that $n \geq 4$. By Lemma \ref{rcalt}, it is sufficient to find $ I , J \in \Omega ^{n} $ such that $I \sim _{k} J $ for all $2 \leq k \leq n - 2$, but $I \not\sim _{n} J $. This will mean that $  RC ( A_{n} , \Omega ) > n - 2 $ and so $  RC ( A_{n} , \Omega )  = n-1 $ by Theorem \ref{kminuso}.

\medskip
Let $I = (1, 2, 3, 4, \dots , n-1 , n ) $ and $J =  (2, 1, 3, 4 , \dots  , n-1 , n )  $. Since $I$ and $J$ both contain every element of $ \Omega $ in their entries, there is a unique permutation $g := (1  \ 2 ) \in S_{n} $ such that $I ^{g} = J$. As $g$ is an odd permutation, $g \notin A_{n} $, so $ I \not\sim _{n} J $.

\medskip
Rather than checking $ I \sim _{k} J $ for all $ 2 \leq k \leq n -2$, it is enough to check $ I \sim _{ (n-2) } J $ since that implies $ I \sim _{k} J $ by Lemma \ref{nimpk}. Let $I ^{ \prime } = ( I_{1} ^{ \prime } , \dots , I_{n-2} ^{ \prime } ) $ be an $ (n-2) $-subtuple of $I$ and $ J ^{ \prime } = ( J_{1} ^{ \prime }  , \dots , J_{n-2} ^{ \prime }  ) $ be the corresponding $ (n-2) $-subtuple of $J$. If $I ^{ \prime } $ does not contain $1$ or $ 2$ as one of its entries, then neither does $J ^{ \prime } $ and the identity sends $I ^{ \prime } $ to $J ^{ \prime } $.

\medskip
If $I^{ \prime } $ contains both $1$ and $2 $ in its entries, then so does $J ^{ \prime } $. Therefore $I_{1} ^{ \prime } = 1 = J _{2} ^{ \prime }  $ and $I_{2} ^{ \prime } = 2 = J _{1} ^{ \prime } $. Also $I_{j} ^{ \prime } = J_{j}  ^{ \prime } $ for all $j \in \{ 3, \dots , n-2 \} $. There exists two numbers $ a , b \in \Omega \setminus \{ I_{1} ^{ \prime } , \dots , I_{n-2} ^{ \prime } \} $. So the permutation $ (1 \ 2 ) ( a \ b )  \in A _{n} $ sends $I ^{ \prime } $ to $J ^{ \prime } $ in this case.

\medskip
Next suppose that $I^{ \prime} $ contains $1$ as an entry, but not $2$. Then $J^{ \prime}$ contains $2$ as an entry but not $1$. Hence $I_{1} = 1 $ and $J _{1} = 2 $ and for all $i \in \{ 2, \dots , n-2  \} $ it must be that $I_{i} ^{ \prime } = J_{i} ^{ \prime }  \notin \{ 1, 2 \}$. There are two entries of $I$ that do not appear in the entries of $I ^{ \prime } $. One of the missing entries, say $c$, must not be equal to $1$ or $2$. But then $c$ is also missing from the entries of $J ^{ \prime} $. The element $g = (1 \ 2 \ c ) \in A_{n} $. Observe that $( I_{1} ^{ \prime }  ) ^{g} = 1^{g} = 2 = J_{1} ^{ \prime }  $ and that $( I_{i} ^{ \prime }  )  ^{g} = J_{i} ^{ \prime }  $ for all $i  \in \{ 2, \dots , n-2  \} $.  Hence $ (I ^{ \prime } ) ^{g} = J ^{ \prime } $.

\medskip
Similar reasoning shows that if $I^{ \prime} $ contains $2$ as an entry, but not $1$ then there is some element of $A_{n} $ that sends $I ^{ \prime } $ to $J ^{ \prime } $. Thus $ I \sim _{(n-2)} J $ and $ I \not\sim _{n} J $ as required.
\end{exmp}
When dealing with primitive actions of $ PSL_{2} (q) $ and $ PGL_{2} (q) $ later, we will be calculating the relational complexity of some $2$-transitive and $ 3 $-transitive actions. For any $ m \in \mathbb{N} $ a lower bound can be found for $m$-transitive group actions.
\begin{lema}\label{trnprm}
Suppose $ | \Omega | \geq 2 $ and $ K $ is the kernel of the action of $ G $ on $ \Omega $. Suppose the action is $m$-transitive for some $2 \leq m \leq | \Omega | $. Then exactly one of the following is true;
\begin{itemize}
\item $ G / K  \cong Sym ( \Omega ) $ as permutation groups and $ RC ( G , \Omega )  = 2$,
\item $ G / K  \not \cong Sym ( \Omega ) $ as permutation groups and $ RC ( G, \Omega  )  > m $,
\end{itemize}
where $ Sym ( \Omega ) $ acts naturally on $ \Omega $.
\end{lema}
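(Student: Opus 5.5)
The two alternatives are mutually exclusive, so it suffices to show the first conclusion when $G/K \cong Sym(\Omega)$ and the second otherwise. The plan is to reduce to the faithful action, settle the symmetric case with Example \ref{symrc}, and then exhibit an explicit pair of $n$-tuples for the inequality.

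First, by the second part of Lemma \ref{equct}, $RC(G,\Omega) = RC(G/K,\Omega)$, and $G/K$ acts faithfully and is still $m$-transitive. So we may assume $K=1$ and view $G$ as a subgroup of $Sym(\Omega)$.

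If $G = Sym(\Omega)$, then $G$ is the natural symmetric group action on $n = |\Omega|$ points, and Example \ref{symrc} (together with the first part of Lemma \ref{equct} for relabelling points) gives $RC(G,\Omega) = 2$.

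Now suppose $G \neq Sym(\Omega)$. We need $RC(G,\Omega) > m$, that is, by Lemma \ref{rcalt}, a pair $I,J \in \Omega^{n}$ with $I \sim_{m} J$ but $I \not\sim_{n} J$.
\begin{itemize}
\item Take a permutation $\sigma \in Sym(\Omega) \setminus G$.
\item List $\Omega = \{\omega_1,\dots,\omega_n\}$, and put $I = (\omega_1,\dots,\omega_n)$ and $J = (\omega_1^{\sigma},\dots,\omega_n^{\sigma})$.
\item Since $I$ has all entries distinct and exhausts $\Omega$, the only permutation of $\Omega$ sending $I$ to $J$ is $\sigma$. Because $\sigma \notin G$, we get $I \not\sim_{n} J$.
\item For $I \sim_{m} J$: any $m$-subtuple of $I$ consists of $m$ distinct points, and the corresponding subtuple of $J$ is their images under $\sigma$, hence also $m$ distinct points.
\item By $m$-transitivity, some $g \in G$ maps the first ordered $m$-tuple of distinct points to the second. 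Hence $I \sim_{m} J$.
\end{itemize}
This shows $RC(G,\Omega) \geq m+1$.

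The one subtlety is the edge case $m = n$. An $n$-transitive faithful group is all of $Sym(\Omega)$, so this case falls into the first alternative and the second is vacuous. Likewise, $(n-1)$-transitivity forces $G$ to be $Sym(\Omega)$ as well, which is consistent with Theorem \ref{kminuso}. Exclusivity is automatic, since exactly one of $G/K \cong Sym(\Omega)$ or $G/K \not\cong Sym(\Omega)$ holds.

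There is no real obstacle. The only care needed is to interpret ``$\cong$ as permutation groups'' correctly: the faithful image of $G/K$ equals $Sym(\Omega)$, which by order considerations means $|G/K| = n!$.
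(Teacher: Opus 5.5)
Your proof is correct. It differs from the paper's mainly in the choice of witness.

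The paper lets $k$ be the largest degree of transitivity of $G/K$. If $k=|\Omega|$, then $G/K$ is all of $Sym(\Omega)$. Otherwise the failure of $(k+1)$-transitivity supplies two $(k+1)$-tuples of distinct points lying in different $G$-orbits. These are $r$-subtuple complete for every $r\le k$, so $RC(G,\Omega)>k\ge m$. The paper actually writes ``both with $k$ distinct entries'' at this point. That must mean $k+1$ distinct entries, since by Lemma \ref{rptdntrs} a tuple with a repeated entry could never give such a pair.

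You instead reduce to the faithful case, pick $\sigma\in Sym(\Omega)\setminus G$, and take the full enumeration $I$ of $\Omega$ together with $J=I^{\sigma}$. This plugs directly into the $n$-tuple characterization of Lemma \ref{rcalt}. Non-completeness at length $n$ is immediate from faithfulness, you never need the maximal transitivity degree, and you avoid the repeated-entry issue entirely. You also justify $RC=2$ in the symmetric case explicitly via Example \ref{symrc} and Lemma \ref{equct}, which the paper leaves implicit.

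The paper's formulation is tied to the maximal degree $k$ and uses shorter tuples. Applying your argument with $m=k$ gives the same bound, so nothing is lost.

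One step you should state explicitly: going from a single witness at level $m$ to $RC(G,\Omega)>m$ uses that the property in Lemma \ref{rcalt} is upward closed, which follows from Lemma \ref{nimpk}. Hence the same pair rules out every $k\le m$ as well.
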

\begin{proof}
Suppose $ k \in \{ 2, \dots , | \Omega | \} $ is the the greatest integer such that the action of $ G / K $ is $ k $-transitive.

\medskip
If $ k = | \Omega | $ then for every permutation on $ \Omega $ there exists a corresponding element in $ G / K $. So $ G / K \cong Sym ( \Omega ) $.

\medskip
Now suppose $ k < | \Omega | $. Then $ G / K \not \cong Sym ( \Omega ) $. There exists $ I , J \in \Omega ^{k+1} $, both with $ k $ distinct entries, where $ I \not \sim _{(k+1) } J $.  For all $ r \leq k $ the action is $ r $-transitive. Therefore $ I \sim _{r} J $. It follows from Definition \ref{first} that $ RC ( G, \Omega  )  > k $.
\end{proof}
Returning to Example \ref{rcgrpaltg}, it was shown that the relational complexity of the natural action of $ A_{n} $ for $ n \geq 3 $ is $ n -1 $. An alternative way of calculating this is to use the well known fact that $ A_{n} $ is $ (n-2) $-transitive, which means the relational complexity is at least $ n-1 $ by the above lemma. Using Lemma \ref{kminuso} we see that it must be exactly $ n - 1 $.

\medskip
This section is finished off with a simple but extremely useful lemma.
\begin{lema}\label{frsnrtytql}
Let $ 1 \leq r \leq m $. Let $I,J \in \Omega ^{m} $. Let $ k_{1} , \dots , k_{r} $ be distinct elements of $ \{ 1 , \dots , m \} $. Then $ I \sim _{r} J $ for some $r \leq m $ if and only if $ I \sim _{r} J ^{a} $ for all $ a \in G $. Additionally, $ I \sim _{r} J $ if and only if there exists $g \in G $ such that $ I \sim _{r} J ^{g} $ and $ I _{k_{i}} = J _{k_{i}} ^{g} $ for all $ i \in \{ 1, \dots , r \} $.
\end{lema}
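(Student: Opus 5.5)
Both parts come from one observation. By Lemma \ref{qvrltn}, $\sim_{r}$ is an equivalence relation on $\Omega^{m}$. Also, $J \sim_{r} J^{a}$ holds for every $a \in G$. The second fact is immediate from the definition: for any $r$-subtuple $J^{\prime}$ of $J$, the corresponding $r$-subtuple of $J^{a}$ is exactly $(J^{\prime})^{a}$, so the element $a$ itself witnesses the required relation for every pair of corresponding subtuples.

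For the first statement I will use symmetry and transitivity. Suppose $I \sim_{r} J$. Since $J \sim_{r} J^{a}$, transitivity gives $I \sim_{r} J^{a}$ for every $a \in G$. Conversely, if $I \sim_{r} J^{a}$ for all $a \in G$, take $a = 1$ to get $I \sim_{r} J$. (The hypothesis ``for some $r \leq m$'' only says that $\sim_{r}$ is defined on $\Omega^{m}$.)

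For the second statement, first suppose $I \sim_{r} J$. Apply the definition to the $r$-subtuple of $I$ with positions $k_{1}, \dots, k_{r}$, listed in increasing order. This gives $h \in G$ with $I_{k_{i}}^{h} = J_{k_{i}}$ for all $i$. Put $g := h^{-1}$, so $J_{k_{i}}^{g} = I_{k_{i}}$ for all $i$. By the first part, $I \sim_{r} J^{g}$, so this $g$ satisfies both conditions. For the converse, suppose $g$ satisfies $I \sim_{r} J^{g}$ together with the equalities. Then $J^{g} \sim_{r} J$ by symmetry (or via $J^{g} \sim_{r} (J^{g})^{g^{-1}}$), and transitivity gives $I \sim_{r} J$. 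The equalities $I_{k_{i}} = J_{k_{i}}^{g}$ are not needed in this direction.

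Nothing here is a real obstacle. The only points that need care are keeping the subtuple indices in increasing order when applying the definition, and using $g = h^{-1}$ rather than $h$, given the right-action convention.
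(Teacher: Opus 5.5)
Your proposal is correct and follows the paper's proof. The paper calls the first equivalence clear, and you justify it via $J \sim_{r} J^{a}$ and transitivity. For the second part, the paper likewise takes $h$ from the $r$-subtuple at positions $k_{1}, \dots, k_{r}$, sets $g = h^{-1}$, and gets the converse from $I \sim_{r} J^{gg^{-1}} = J$.
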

\begin{proof}
The statement $ I \sim _{r} J $ for some $r \leq m $ if and only if $ I \sim _{r} J ^{a} $ for all $ a \in G $ is clearly true.

\medskip
If $ I \sim _{r} J $, there exists $ h \in G $ such that $ I_{k_{i} } ^{h} = J_{k_{i}} $ for all $ i \in \{ 1 , \dots , r \} $, and so $ J_{k_{i}} ^{h^{-1} } = I_{k_{i} } $. Since $ I \sim _{r} J^{h^{-1} } $, this proves one direction.

\medskip
Conversely, suppose there exists $ g \in G $ such that $ I \sim _{r} J ^{g} $ and $ I _{k_{i}} = J _{k_{i}} ^{g} $ for all $ i \in \{ 1, \dots , r \} $. Then $ I \sim _{r} J^{gg^{-1}} = J $.
\end{proof}
The above lemma tells us that if $m$-tuples $ I $ and $ J $ are $ r $-subtuple complete for some $ r \leq m $ and we want to check if they are $ m $-subtuple complete, then we can always carefully choose $ J $ so that $ r $ of the entries of $ I $ are equal to the corresponding $ r $ entries of $ J $. This means that any element of $ G $ that sends $ I $ to $ J $ must lie inside the intersection of the point stabilizers of those $ r $ entries, a fact that will be used often.
\section{Height}
So far all of the bounds on relational complexity have depended on the size of the set. A different bound can be determined for each group action that depends on the height of the action. To define what this is, the notion of an independent set is needed. First some notation. If $ \Delta = \{ \delta _{1} , \dots , \delta _{m} \} \subseteq \Omega $ is a set of $m \in \mathbb{N} $ elements, then we will write either $ G _{ \delta _{1} , \dots , \delta _{m} } $ or $ G _{ ( \Delta ) } $ for the point stabilizer of $ \Delta $. 
\begin{deft}[Independent Set]\label{stdfnind}
A non-empty set $ \Delta \subseteq \Omega $ is an \textbf{independent set} if for any proper non-empty subset $ \Gamma \subset \Delta $ we have $ G_{ ( \Delta ) } \neq G_{ ( \Gamma ) } $. Any set of size $1$ is always defined to be independent.
\end{deft}
\begin{exmp}\label{xmpind}
Consider the natural action of $G := S_{4} $ on $ \Omega =  \{ 1 , 2, 3, 4 \} $. Let $ \Delta = \{ 1, 2 \} $. Then $ G _{ ( \Delta ) } = \{ e, (3 \  4 ) \}  $, where $  e $ is the identity element. The only subsets of $ \Delta $ are $ \{ 1 \} $ and $ \{ 2 \} $, whose stabilizers contain $ ( 2 \ 3 \ 4 )  $ and $ ( 1 \  3 \ 4 )  $ respectively. Neither element is in $ G _{ ( \Delta ) } $, so $ \Delta $ is an independent set. This can be generalised to show that any two element subset of $ \Omega $ is independent and must have a point stabilizer of order $2$.
\end{exmp}
\begin{deft}[Height of a Group Action]
The \textbf{height} of a group action is the maximum size of an independent set. This will be denoted as $Ht( G, \Omega  ) $ if it is clear what the action of a group $G$ on $ \Omega $ is, or $Ht( \phi ) $ if we want to refer to a specific action $ \phi  $.
\end{deft}
Note that for any group action on a finite set, the height of the action must exist because a one element subset is independent and the largest independent set is at most the size of the set being acted on.
\begin{exmp}\label{xmtwo}
Again look at the natural action of $G := S_{4} $ on $\Omega =  \{ 1 , 2, 3, 4 \} $. The point stabilizer of $ \Omega $ is $ G_{ ( \Omega ) } = \{ e \} $, the trivial group. The subset $ \Gamma = \{ 1, 2, 3 \} $ also has point stabilizer $ G_{ ( \Gamma ) } = \{ e \} $ because any element of $ G $ that fixes three points of $ \Omega $ must also fix the fourth point. As $  G_{ ( \Gamma ) } =  G_{ ( \Omega ) } $, the set $ \Omega $ is not independent. Hence $Ht(G, \Omega ) < 4 $. However Example \ref{xmpind} shows that all one and two element subsets of $ \Gamma $ must have non-trivial point stabilizers. Therefore $ \Gamma $ is independent. Thus $Ht(G, \Omega )  = 3 $. 
\end{exmp}
There is a result for height that is analogous to Lemma \ref{equct} for group actions that are equivalent. As with that lemma, the proof essentially involves swapping the symbols used for the set being acted on, so is omitted.
\begin{lema}\label{qvlnthght}
Suppose a group $G$ acts on finite sets $ \Omega $ and $ \Gamma $. If the actions are equivalent, then $Ht( G, \Omega ) = Ht (G, \Gamma ) $.
\end{lema}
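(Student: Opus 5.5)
The plan is to transport independent sets along the bijection that realises the equivalence. Let $ \phi : \Omega \rightarrow \Gamma $ be a bijection with $ ( \omega ^{g} ) \phi = ( ( \omega ) \phi ) ^{g} $ for all $ \omega \in \Omega $ and $ g \in G $, as in Definition \ref{quivdfn}. The first step is to show that $ \phi $ preserves pointwise stabilizers: for any subset $ \Delta \subseteq \Omega $ we want $ G_{ ( \Delta ) } = G_{ ( ( \Delta ) \phi ) } $, where both sides are subgroups of the same group $ G $.

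To see this, take $ g \in G_{ ( \Delta ) } $. For each $ \delta \in \Delta $ we have $ ( ( \delta ) \phi ) ^{g} = ( \delta ^{g} ) \phi = ( \delta ) \phi $, so $ g $ fixes every point of $ ( \Delta ) \phi $. Conversely, suppose $ g $ fixes $ ( \delta ) \phi $ for every $ \delta \in \Delta $. Then $ ( \delta ^{g} ) \phi = ( \delta ) \phi $, and since $ \phi $ is injective this gives $ \delta ^{g} = \delta $. Hence the two stabilizers are equal.

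Next, because $ \phi $ is a bijection, it induces an inclusion-preserving bijection between the subsets of $ \Omega $ and the subsets of $ \Gamma $. This bijection sends non-empty proper subsets of $ \Delta $ exactly onto the non-empty proper subsets of $ ( \Delta ) \phi $, and it preserves cardinality. Combining this with the equality of stabilizers, the condition in Definition \ref{stdfnind} holds for $ \Delta $ if and only if it holds for $ ( \Delta ) \phi $. Note that the convention that singletons are independent is also respected. So $ \Delta $ is independent in $ \Omega $ if and only if $ ( \Delta ) \phi $ is independent in $ \Gamma $, and the two sets have the same size.

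It follows that every independent subset of $ \Omega $ gives an independent subset of $ \Gamma $ of the same size, so $ Ht( G , \Omega ) \leq Ht( G , \Gamma ) $. Applying the same argument to $ \phi ^{-1} $, which also satisfies the equivalence condition, gives the reverse inequality, and hence $ Ht( G , \Omega ) = Ht( G , \Gamma ) $. There is no real obstacle here. The only step that needs care is the reverse inclusion for the stabilizers, and that is exactly where the injectivity of $ \phi $ is used.
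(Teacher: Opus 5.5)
Your proof is correct and matches the paper's approach: the paper omits the proof, remarking only that it amounts to relabelling the points of the set via the bijection. Your argument supplies those details: pointwise stabilizers agree under $\phi$ (with injectivity giving the reverse inclusion), so $\phi$ carries independent sets to independent sets of the same size.
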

\begin{proof}
Omitted.
\end{proof}
We also have a similar result to Lemma \ref{equct} for the height of the action of the quotient of a group by its kernel.
\begin{lema}\label{fthflhght}
Let $K$ be the kernel of the action of $ G $ on $ \Omega $. Let $G / K $ act on $ \Omega $ where $ \omega ^{K g } = \omega ^{g} $ for all $ \omega \in \Omega $ and $g \in G $. Then a subset of $ \Omega $ is independent under the action of $G$ if and only if it is independent under the action of $G / K$. Furthermore $ Ht( G , \Omega ) = Ht( G/K , \Omega ) $.
\end{lema}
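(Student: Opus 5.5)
The plan is to compare point stabilizers in $G$ and in $G/K$ directly, and then read off both parts of the statement from that comparison. Write $\pi : G \rightarrow G/K$ for the natural map $g \mapsto Kg$.

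First we check that the action of $G/K$ is well defined. If $Kg = Kh$ then $hg^{-1} \in K$, so $\omega^{h} = \omega^{g}$ for all $\omega \in \Omega$.

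The key observation is the following. Let $\Gamma \subseteq \Omega$ be non-empty, and write $(G/K)_{(\Gamma)}$ for its point stabilizer under the quotient action. Then
\[
(G/K)_{(\Gamma)} = G_{(\Gamma)} / K = \pi ( G_{(\Gamma)} ) ,
\]
and $K \leq G_{(\Gamma)}$.

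To verify this, note first that $K$ fixes every point of $\Omega$, so $K \leq G_{(\Gamma)}$. Next, $Kg$ fixes every point of $\Gamma$ exactly when $\gamma^{g} = \gamma$ for all $\gamma \in \Gamma$, which is exactly when $g \in G_{(\Gamma)}$.

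Now use the correspondence theorem. It gives a bijection between subgroups of $G$ containing $K$ and subgroups of $G/K$, via $H \mapsto H/K$. Since every point stabilizer $G_{(\Gamma)}$ contains $K$, this bijection shows that for non-empty $\Gamma, \Delta \subseteq \Omega$,
\[
G_{(\Gamma)} = G_{(\Delta)} \iff (G/K)_{(\Gamma)} = (G/K)_{(\Delta)} .
\]
One direction is immediate, since equal subgroups have equal images under $\pi$. For the converse, suppose the images agree. Take $g \in G_{(\Gamma)}$. Then $Kg = Kh$ for some $h \in G_{(\Delta)}$, so $g \in hK \subseteq G_{(\Delta)}$ because $K \leq G_{(\Delta)}$. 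This gives $G_{(\Gamma)} \subseteq G_{(\Delta)}$, and the reverse inclusion follows by symmetry.

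Applying this equivalence to $\Delta$ and each proper non-empty subset $\Gamma \subset \Delta$, we see that the defining condition in Definition \ref{stdfnind} holds for $G$ exactly when it holds for $G/K$. Sets of size $1$ are independent under both actions by convention. Hence the independent sets for the two actions coincide, so their maximum sizes agree and $Ht(G,\Omega) = Ht(G/K,\Omega)$.

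The only delicate point is the converse direction of the equivalence, where we need $K \leq G_{(\Delta)}$ so that a coset representative can be lifted back into $G_{(\Delta)}$. Once that is in place, the rest is routine.
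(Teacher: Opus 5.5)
Your proposal is correct and follows essentially the same route as the paper: both identify $(G/K)_{(\Gamma)}$ with $G_{(\Gamma)}/K$ and use $K \leq G_{(\Gamma)}$ to transfer (in)equalities of point stabilizers between $G$ and $G/K$. The only difference is that you prove a single biconditional on stabilizer equality, which makes explicit the converse direction that the paper covers only by ``similar reasoning''.
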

\begin{proof}
Let $ \Delta \subseteq \Omega $ be an independent set under the action of $G$. If $| \Delta | = 1$ then $ \Delta $ is independent under the action of $G/K $. So suppose $ | \Delta | > 1 $. Let $ \Delta ^{ \prime } \subset \Delta $ be a non-empty subset. The set $ \Delta $ being independent means that $ G _{ ( \Delta ^{ \prime } ) } \neq G _{ ( \Delta  ) } $. So $K \leq G _{ ( \Delta  ) } < G _{ ( \Delta ^{ \prime } ) } $. Hence, $ G _{ ( \Delta  ) } / K < G _{ ( \Delta ^{ \prime } ) } / K $. From the way the action of $ G/K$ is defined, $  G _{ ( \Delta  ) } / K  = (G/K ) _{ ( \Delta ) } $ and $  G _{ ( \Delta ^{ \prime }  ) } / K  = (G/K ) _{ ( \Delta ^{ \prime } ) } $. Therefore $(G/K ) _{ ( \Delta ) }  \neq  (G/K ) _{ ( \Delta ^{ \prime } ) } $, which shows that $ \Delta $ is an independent set under the action of $ G/ K $. It follows that $ Ht ( G, \Omega ) \leq Ht ( G/K , \Omega ) $.

\medskip
Similar reasoning shows that independent sets under the action of $G / K$ are independent under the action of $G$. So $ Ht ( G / K , \Omega ) \leq Ht ( G , \Omega ) $.
\end{proof}
In Examples \ref{xmpind} and \ref{xmtwo} we saw the set being acted on, $ \Omega $, is not independent. This is no coincidence, as we will now see.
\begin{lema}\label{htbnd}
If $ | \Omega | \geq 2 $, the height of an action on $ \Omega $ has the bounds $ 1 \leq Ht( G, \Omega ) \leq | \Omega | - 1 $.
\end{lema}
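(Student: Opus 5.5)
The lower bound is immediate. Since $|\Omega| \geq 2$, the set $\Omega$ is non-empty, so it contains a one-element subset, and by Definition \ref{stdfnind} every set of size $1$ is independent. Hence $Ht(G,\Omega) \geq 1$. The real content is the upper bound: it suffices to show that $\Omega$ itself is never independent. An independent set is a subset of $\Omega$, so if it has size $|\Omega|$ it must equal $\Omega$; ruling this out gives every independent set size at most $|\Omega| - 1$.

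To show $\Omega$ is not independent, I exhibit a proper non-empty subset with the same pointwise stabilizer. Fix any $\omega \in \Omega$ and set $\Gamma = \Omega \setminus \{\omega\}$. Because $|\Omega| \geq 2$, $\Gamma$ is non-empty, and it is clearly proper. The claim is that $G_{(\Gamma)} = G_{(\Omega)}$. The containment $G_{(\Omega)} \leq G_{(\Gamma)}$ is trivial. For the reverse, take $g \in G_{(\Gamma)}$. Then $g$ fixes every point of $\Gamma$, and because $g$ acts as a bijection on $\Omega$, the image $\omega^{g}$ cannot equal any $\gamma \in \Gamma$. Otherwise $\omega^{g} = \gamma = \gamma^{g}$, which forces $\omega = \gamma \in \Gamma$, a contradiction. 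So $\omega^{g} = \omega$ and $g \in G_{(\Omega)}$. This is the same "last point is forced" argument used in Example \ref{xmtwo} and in the first case of Theorem \ref{kminuso}.

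This shows that $\Omega$ fails the condition in Definition \ref{stdfnind}, so no independent set has size $|\Omega|$, and $Ht(G,\Omega) \leq |\Omega| - 1$. There is no real obstacle here. The only point needing care is the hypothesis $|\Omega| \geq 2$, which guarantees that $\Gamma$ is non-empty. Without it, the convention that singletons are independent would make $\Omega$ independent when $|\Omega| = 1$.
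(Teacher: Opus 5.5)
Your proof is correct and follows essentially the same route as the paper: singletons give the lower bound, and removing one point $\omega$ from $\Omega$ leaves a proper non-empty subset whose pointwise stabilizer already fixes $\omega$, so $\Omega$ is not independent. Your injectivity argument for $\omega^{g} = \omega$ is, if anything, slightly more explicit than the paper's one-line remark.
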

\begin{proof}
Put $ n:= | \Omega | $. Pick an element $ \omega _{1} \in \Omega $. Label the remaining elements of $ \Omega $ as $ \omega _{2} , \dots , \omega _{n} $. Put $ \Gamma := \{  \omega _{2} , \dots , \omega _{n} \} $. Let $g \in G_{ ( \Gamma ) }$. Then $\omega _{1} ^{g} = \omega _{1} $ since $ \omega $ cannot be sent to an element of $ \Gamma $. Hence $     G_{  ( \Gamma ) }  \leq G_{  ( \Omega )  } $. But $ G_{  ( \Omega )  } \leq  G_{  ( \Gamma ) }  $. Therefore $ G_{  ( \Omega )  } =  G_{  ( \Gamma ) }  $. Since $ \Gamma \subset \Omega $, this shows that $ \Omega $ is not an independent set, so $ Ht( G, \Omega ) \leq | \Omega | - 1  $. The bound $ 1 \leq Ht( G, \Omega )  $ follows from the definition of height.
\end{proof}
Using the original definition of an independent set to find the height of an action would take a lot of calculation. So next up is an alternative definition of an independent set.
\begin{lema}\label{indsbstsntqual}
A non-empty set $ \Delta \subseteq \Omega $ is independent if and only if $ G_{ ( \Gamma _{1} ) } \neq G_{ ( \Gamma _{2} ) } $ for all non-empty subsets $ \Gamma _{1} , \Gamma _{2} \subseteq \Delta $ with $ \Gamma_{1} \neq \Gamma_{2} $.
\end{lema}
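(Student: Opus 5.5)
The plan is to prove both directions separately, using only Definition \ref{stdfnind} and the fact that pointwise stabilizers reverse inclusion: if $\Gamma \subseteq \Gamma^{\prime}$ then $G_{(\Gamma^{\prime})} \leq G_{(\Gamma)}$, and $G_{(\Gamma \cup \Gamma^{\prime})} = G_{(\Gamma)} \cap G_{(\Gamma^{\prime})}$.

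The reverse direction is immediate. Suppose all distinct non-empty subsets of $\Delta$ have distinct pointwise stabilizers. For any proper non-empty $\Gamma \subset \Delta$, take $\Gamma_{1} = \Gamma$ and $\Gamma_{2} = \Delta$; these are distinct, so $G_{(\Gamma)} \neq G_{(\Delta)}$. Hence $\Delta$ is independent. If $|\Delta| = 1$ there is nothing to check, since the only non-empty subset is $\Delta$ itself.

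For the forward direction, assume $\Delta$ is independent and suppose, for a contradiction, that $\Gamma_{1} \neq \Gamma_{2}$ are non-empty subsets of $\Delta$ with $G_{(\Gamma_{1})} = G_{(\Gamma_{2})}$. Put $\Gamma := \Gamma_{1} \cup \Gamma_{2}$. Then
\[
G_{(\Gamma)} = G_{(\Gamma_{1})} \cap G_{(\Gamma_{2})} = G_{(\Gamma_{1})}.
\]
Since $\Gamma_{1} \neq \Gamma_{2}$, without loss of generality $\Gamma_{1} \subset \Gamma$ properly, so $\Gamma_{1}$ is a proper non-empty subset of $\Gamma$ with the same stabilizer. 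It remains to push this equality up to $\Delta$. Set $\Lambda := \Delta \setminus (\Gamma \setminus \Gamma_{1})$, that is, $\Gamma_{1} \cup (\Delta \setminus \Gamma)$. Then $\Lambda \cup \Gamma = \Delta$, and
\[
G_{(\Delta)} = G_{(\Lambda)} \cap G_{(\Gamma)} = G_{(\Lambda)} \cap G_{(\Gamma_{1})} = G_{(\Lambda)},
\]
because $\Gamma_{1} \subseteq \Lambda$ gives $G_{(\Lambda)} \leq G_{(\Gamma_{1})}$. Now $\Lambda$ is non-empty, since it contains $\Gamma_{1}$, and it is a proper subset of $\Delta$, since it misses the non-empty set $\Gamma \setminus \Gamma_{1}$. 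So $G_{(\Lambda)} = G_{(\Delta)}$ contradicts the independence of $\Delta$.

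The only real obstacle is this last step: independence only controls comparisons with $\Delta$ itself, not between arbitrary subsets. The fix is the construction of $\Lambda$, which transfers the redundancy of the points in $\Gamma \setminus \Gamma_{1}$ from $\Gamma$ to $\Delta$. The case $|\Delta| = 1$ is trivial throughout.
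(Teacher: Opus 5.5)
Your proof is correct and follows essentially the same route as the paper. The converse is immediate from the definition. For the forward direction you exhibit a proper non-empty subset of $\Delta$ that contains one of the two sets and misses points of the other, so its stabilizer equals $G_{(\Delta)}$.

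The only difference is cosmetic. The paper deletes a single point $\lambda_{1} \in \Gamma_{1} \setminus \Gamma_{2}$ and checks $G_{(\Delta \setminus \{\lambda_{1}\})} \leq G_{(\Gamma_{2})} = G_{(\Gamma_{1})} \leq G_{\lambda_{1}}$. This avoids your detour through $\Gamma_{1} \cup \Gamma_{2}$ and lands directly in the one-point-deletion form used in Lemma \ref{ltdfndpndnt}.
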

\begin{proof}
If $ G_{ ( \Gamma _{1} ) } \neq G_{ ( \Gamma _{2} ) } $ for all non-empty subsets $ \Gamma _{1} , \Gamma _{2} \subseteq \Delta $ with $ \Gamma _{1} \neq \Gamma _{2} $ then $ \Delta $ is independent by definition.

\medskip
Next suppose there exists $ \Lambda _{1} , \Lambda _{2} \subseteq \Delta $ with $ \Lambda _{1} \neq \Lambda _{2} $ such that $ G_{ ( \Lambda _{1} ) } = G_{ ( \Lambda _{2} ) } $. Then we can assume without loss of generality that there exists $ \lambda _{1} \in \Lambda _{1} \setminus ( \Lambda _{1} \cap \Lambda _{2} ) $. Since $ \Lambda _{2} \subseteq \Delta \setminus \{ \lambda _{1} \} $, it must be that $ G_{ ( \Delta \setminus \{ \lambda _{1} \} ) } \leq G_{ ( \Lambda _{2} ) } = G_{ ( \Lambda _{1} ) } \leq G_{ \lambda _{1} } $. Thus $ G_{ ( \Delta ) } = G_{ ( \Delta \setminus \{ \lambda _{1} \} ) } $, which shows that $ \Delta $ is not an independent set.
\end{proof}
From Lemma \ref{indsbstsntqual} we immediately get the following corollary.
\begin{corl}\label{indsub}
Every non-empty subset of an independent set is independent.
\end{corl}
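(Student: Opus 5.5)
The plan is to deduce Corollary \ref{indsub} directly from the characterisation of independence in Lemma \ref{indsbstsntqual}. That characterisation is inherited by subsets, because it quantifies over all pairs of distinct non-empty subsets.

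Let $ \Delta \subseteq \Omega $ be an independent set and let $ \Sigma \subseteq \Delta $ be non-empty. If $ | \Sigma | = 1 $, then $ \Sigma $ is independent by Definition \ref{stdfnind}, so assume $ | \Sigma | \geq 2 $. Applying the forward direction of Lemma \ref{indsbstsntqual} to $ \Delta $, we get $ G_{ ( \Gamma _{1} ) } \neq G_{ ( \Gamma _{2} ) } $ for all distinct non-empty $ \Gamma _{1} , \Gamma _{2} \subseteq \Delta $.

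Now take any distinct non-empty subsets $ \Gamma _{1} , \Gamma _{2} \subseteq \Sigma $. Since $ \Sigma \subseteq \Delta $, these are also distinct non-empty subsets of $ \Delta $, so $ G_{ ( \Gamma _{1} ) } \neq G_{ ( \Gamma _{2} ) } $. In particular, taking $ \Gamma _{2} = \Sigma $ and $ \Gamma _{1} $ any proper non-empty subset of $ \Sigma $ gives exactly the condition in Definition \ref{stdfnind}. Alternatively, one can apply the reverse direction of Lemma \ref{indsbstsntqual} to $ \Sigma $. Either way, $ \Sigma $ is independent.

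There is no real obstacle here. The only point needing care is that pointwise stabilizers $ G_{ ( \Gamma ) } $ are computed in $ G $ and depend only on $ \Gamma $, not on the ambient set $ \Delta $ or $ \Sigma $; this is what makes the inequality for $ \Delta $ transfer verbatim to $ \Sigma $.
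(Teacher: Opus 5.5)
Your proposal is correct and follows the paper's route: the paper obtains this corollary immediately from Lemma \ref{indsbstsntqual}. The key observation, in both your argument and the paper's, is that the condition $G_{(\Gamma_{1})} \neq G_{(\Gamma_{2})}$ for all distinct non-empty $\Gamma_{1}, \Gamma_{2} \subseteq \Delta$ automatically restricts to subsets of $\Delta$.
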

\begin{lema}\label{ltdfndpndnt}
A non-empty set $ \Delta \subseteq \Omega $ is not independent if and only if $ | \Delta | \neq 1 $ and there exists $ \delta \in \Delta $ such that $ G _{ ( \Delta ) }  = G _{ ( \Delta \setminus \{ \delta \}  ) }$.
\end{lema}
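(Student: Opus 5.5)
The plan is to prove the two directions separately, leaning on the definition of independence (Definition \ref{stdfnind}) and on the monotonicity of point stabilizers: if $\Gamma \subseteq \Gamma' \subseteq \Omega$ then $G_{(\Gamma')} \leq G_{(\Gamma)}$.

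For the backward direction, suppose $|\Delta| \neq 1$ and there is $\delta \in \Delta$ with $G_{(\Delta)} = G_{(\Delta \setminus \{\delta\})}$. Since $\Delta$ is non-empty and $|\Delta| \neq 1$, we have $|\Delta| \geq 2$. Hence $\Gamma := \Delta \setminus \{\delta\}$ is a non-empty proper subset of $\Delta$. Its stabilizer equals $G_{(\Delta)}$, so $\Delta$ fails the condition in Definition \ref{stdfnind} and is not independent.

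For the forward direction, suppose $\Delta$ is non-empty and not independent. Any set of size $1$ is independent by definition, so $|\Delta| \neq 1$. By Definition \ref{stdfnind} there is then a non-empty proper subset $\Gamma \subset \Delta$ with $G_{(\Gamma)} = G_{(\Delta)}$. Since $\Gamma$ is proper, we can choose $\delta \in \Delta \setminus \Gamma$, so that $\Gamma \subseteq \Delta \setminus \{\delta\} \subseteq \Delta$. Monotonicity of stabilizers then gives
\[
G_{(\Delta)} \leq G_{(\Delta \setminus \{\delta\})} \leq G_{(\Gamma)} = G_{(\Delta)} ,
\]
so $G_{(\Delta)} = G_{(\Delta \setminus \{\delta\})}$, as required. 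This is essentially the same sandwich argument used in the proof of Lemma \ref{indsbstsntqual}, and one could alternatively just cite that argument.

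There is no real obstacle here. The only point needing care is the degenerate case: we must check that $\Delta \setminus \{\delta\}$ is non-empty so that it qualifies as a proper non-empty subset, and that case is exactly what the hypothesis $|\Delta| \neq 1$ handles. Otherwise the proof is a direct unwinding of the definition together with the inclusion-reversing property of pointwise stabilizers.
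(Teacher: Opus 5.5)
Your proposal is correct and follows essentially the same route as the paper: the backward direction is a direct application of the definition (you are slightly more careful than the paper in noting that $|\Delta| \geq 2$ makes $\Delta \setminus \{\delta\}$ a non-empty proper subset), and the forward direction uses the same sandwich $G_{(\Delta)} \leq G_{(\Delta \setminus \{\delta\})} \leq G_{(\Gamma)} = G_{(\Delta)}$ with $\delta \in \Delta \setminus \Gamma$.
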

\begin{proof}
If $ | \Delta | \neq 1 $ and there exists $ \delta \in \Delta $ such that $ G _{ ( \Delta ) }  = G _{ ( \Delta \setminus \{ \delta \}  ) }$ then $ \Delta $ is not independent by definition.

\bigskip
For the opposite implication, suppose that $ \Delta $ is not independent. Since sets of size $1$ are independent, it must be that $ | \Delta | \neq 1 $. So there exists $ \Gamma \subset \Delta $ such that $  G _{ ( \Delta )} = G_{ ( \Gamma ) }$. Now $ \Gamma \subseteq \Gamma ^{ \prime }  $ for some $ \Gamma ^{ \prime } \subset \Delta $ where $ \Gamma ^{ \prime } = \Delta \setminus \{ \delta ^{ \prime } \} $ for some $ \delta ^{ \prime } \in \Delta $. Observe that $ G _{ ( \Delta ) } \leq G_{ ( \Gamma ^ { \prime } ) } \leq G_{ ( \Gamma  ) } = G _{ ( \Delta ) } $. Hence $ G _{ ( \Delta ) }  = G_{ ( \Gamma ^ { \prime } ) } = G _{ ( \Delta \setminus \{ \delta ^{ \prime }  \}  ) }$.
\end{proof}
The contrapositive of the above lemma gives another definition of an independent set, which is stated below for reference later.
\begin{lema}\label{scndfnt}
Suppose $G$ acts on $ \Omega $. A non-empty set $ \Delta \subseteq \Omega $ is independent if and only if one of the following is true;
\begin{itemize}
\item $ | \Delta | = 1 $ or
\item $ | \Delta | > 1 $ and $ G _{ ( \Delta ) }  \neq G _{ ( \Delta \setminus \{ \delta \}  ) }$ for all $ \delta \in \Delta $.
\end{itemize}
\end{lema}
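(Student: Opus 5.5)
This is the contrapositive of Lemma \ref{ltdfndpndnt}, so I would derive it from that lemma by negating both sides of its biconditional. That lemma says that, for non-empty $ \Delta \subseteq \Omega $, $ \Delta $ is not independent if and only if both $ | \Delta | \neq 1 $ and there exists $ \delta \in \Delta $ with $ G_{ ( \Delta ) } = G_{ ( \Delta \setminus \{ \delta \} ) } $. Taking the negation of each side gives that $ \Delta $ is independent if and only if either $ | \Delta | = 1 $, or $ G_{ ( \Delta ) } \neq G_{ ( \Delta \setminus \{ \delta \} ) } $ for every $ \delta \in \Delta $.

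The only step needing care is converting this disjunction into the two bullet points of the statement. Here I would split according to the size of $ \Delta $. If $ | \Delta | = 1 $, then $ \Delta $ is independent by Definition \ref{stdfnind}, and the first bullet holds. If $ | \Delta | > 1 $, the first disjunct fails, so $ \Delta $ is independent exactly when $ G_{ ( \Delta ) } \neq G_{ ( \Delta \setminus \{ \delta \} ) } $ for all $ \delta \in \Delta $, which is the second bullet.

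The word ``exactly one'' is not required, since the two bullets are mutually exclusive by their size conditions anyway. I do not expect any real obstacle: the content lies entirely in Lemma \ref{ltdfndpndnt}, and this proof is just the logical negation together with a case split on $ | \Delta | $.
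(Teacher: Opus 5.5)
Your proposal is correct and matches the paper, which simply states this lemma as the contrapositive of Lemma \ref{ltdfndpndnt} without further argument. Your explicit case split on $|\Delta|$ to turn the negated conjunction into the two bullets is a harmless extra detail.
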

Example \ref{xmtwo} illustrated that the height of the natural action of $ S_{4} $ is $ 3 $. In general the height of the natural action of the symmetric group on a finite set of size $ n $ is $n - 1 $, which can now be proved.
\begin{exmp}\label{hghtsym}
Let $ \Omega $ be a finite set of size $n \geq 2 $. Put $G := Sym ( \Omega ) $. By Lemma \ref{htbnd}, $ Ht ( Sym ( \Omega ) , \Omega ) \leq n -1 $, so it is sufficient to show that there exist independent sets of size $n-1$. Let $ \Delta $ be a set of size $ | \Delta | = n-1 $. If $n = 2 $ then $ | \Delta | = 1 $ and $ \Delta $ is independent by definition. So suppose $ n \geq 3 $. Then $ | \Delta | > 1 $.

\medskip
Label the elements as $ \Delta =  \{ \delta _{1} , \dots , \delta _{n-1} \} $. Let $ g \in G _{ ( \Delta ) } $. There is only one element $ \omega \in \Omega \setminus \Delta $ and since $g$ fixes all elements of $ \Delta $ it must be that $ \omega ^{g} = \omega $. Therefore $g$ stabilizes every element of $ \Omega $ and so $ g = 1_{G} $. Thus $ G _{ ( \Delta ) } = \{ 1_{G} \} $.

\medskip
Now for each $ i \in \{ 1, \dots , n-1 \} $ put $ \Delta _{i} := \Delta \setminus \{ \delta _{i} \} $. Then $ \delta _{i} , \omega \notin \Delta _{i} $ and so the permutation $ ( \delta _{i} \ \omega ) \in G _{ ( \Delta _{i} ) } $. Hence $ G _{ ( \Delta _{i} ) }  \neq G _{ ( \Delta  ) }  $. As It follows from Lemma \ref{scndfnt} that $ \Delta $ is an independent set. Thus $  Ht ( Sym ( \Omega ) , \Omega ) = n -1 $.
\end{exmp}
We can get the height of the natural action of $ A_{n} $ while we are at it.
\begin{exmp}\label{hghtalt}
Let $ n \in \mathbb{N} $ with $ n \geq 3 $. Consider the action of $ A_{n} $ on $ \Omega := \{ 1 , \dots , n \} $. Let $  \omega _{1} , \omega _{2} , \omega _{3} \in \Omega $ be distinct points. There is no even permutation that stabilizes the points of $ \Omega \setminus \{ \omega _{1} , \omega _{2} \} $, so it has trivial point stabilizer. Same goes for $ \Omega \setminus \{ \omega _{1} \} $. Therefore $ \Omega \setminus \{ \omega _{1} \} $ is not an independent set and $ Ht( A_{n} , \Omega ) \leq n - 2 $.

\medskip
If $ n = 3 $ then $ | \Omega \setminus \{ \omega _{1} , \omega _{2} \} | = 1$ and is independent by definition, implying $ Ht( A_{3} , \Omega ) = n - 2 = 1 $. Now suppose $ n \geq 4 $. The permutation $ ( \omega _{1} \  \omega _{2}  \ \omega _{3} ) $ is in the point stabilizer of $ \Omega \setminus \{ \omega _{1} , \omega _{2} , \omega _{3} \} $. Hence $ \Omega \setminus \{ \omega _{1} , \omega _{2} \} $ is an independent set by Lemma \ref{scndfnt}. This shows that $ Ht( A_{n} , \Omega ) = n - 2 $.
\end{exmp}
In Examples \ref{symrc} and \ref{rcgrpaltg} we saw that the relational complexity for the natural actions of $ S_{n} $ and $ A_{n} $ is $ 2 $ and $ n - 1 $ respectively, the least and greatest values possible on a set of size $ n $. However the above two examples show that the height is $ n - 1 $ and $ S_{n} $ and $ n -2 $ for $ A_{n} $. These two actions exhibit very different behaviour when looking at these statistics even though the natural action of $ A_{n} $ is the restriction of the action of $ S_{n} $.

\medskip
Some remarks on point stabilizers are now made that will be of use shortly. If $ \Gamma \subseteq \Delta \subseteq \Omega $, then the point stabilizer of $ \Delta $ must stabilize all points of $ \Gamma $. Hence $ G _{ ( \Delta ) } \leq G _{ ( \Gamma ) } $. This means that if we have a chain of subsets, there also exists a chain of stabilizers, which leads to the next definition and lemma.
\begin{deft}[Stabilizer Chain]
For an ordered collection of $m \in \mathbb{N}$ elements $ \omega _{1} , \omega _{2} , \dots , \omega _{m} \in \Omega $, a \textbf{stabilizer chain} is a chain of subgroups $G_{ \omega _{1}  } \geq G_{ \omega_{1} , \omega _{2}  } \geq   \dots  \geq  G_{ \omega_{1} , \omega _{2}  \dots , \omega _{m} } $. If the inclusions are all strict then the stabilizer chain is said to be \textbf{irredundant}.
\end{deft}
It is worth noting that whilst the order of elements of independent sets does not matter (because they are sets), the order of elements does make a difference for stabilizer chains. It is possible for a stabilizer chain can go from being irredundant to redundant by reordering.
\begin{lema}\label{stach}
Let $m \in \mathbb{N} $ and $ \omega _{1} , \omega _{2} , \dots , \omega _{m} \in \Omega $. If $ \{  \omega _{1} , \omega _{2} ,  \dots , \omega _{m}  \} $ is an independent set then $G_{ \omega _{1} }  > G_{ \omega _{1} , \omega _{2} } > \dots > G_{ \omega _{1} , \omega _{2} , \dots , \omega _{m} } $ is an irredundant stabilizer chain.
\end{lema}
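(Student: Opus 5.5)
The plan is to argue by contradiction, using Corollary \ref{indsub} and Lemma \ref{scndfnt}. Suppose $\Delta = \{ \omega _{1} , \dots , \omega _{m} \}$ is independent. Since $\Delta$ is a set, the $\omega_i$ are distinct. The case $m = 1$ is vacuous, so assume $m \geq 2$.

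Fix $i$ with $2 \leq i \leq m$ and put $\Delta_i := \{ \omega_1, \dots, \omega_i \}$.

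1. Containment. Since $\Delta_{i-1} \subseteq \Delta_i$, the earlier remark on chains of subsets gives $G_{(\Delta_i)} \leq G_{(\Delta_{i-1})}$. That is,
\[
G_{ \omega _{1} , \dots , \omega _{i} } \leq G_{ \omega _{1} , \dots , \omega _{i-1} } .
\]

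2. Independence of $\Delta_i$. The set $\Delta_i$ is a non-empty subset of $\Delta$, so it is independent by Corollary \ref{indsub}.

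3. Strictness. Because $|\Delta_i| = i > 1$, Lemma \ref{scndfnt} gives $G_{(\Delta_i)} \neq G_{(\Delta_i \setminus \{\delta\})}$ for every $\delta \in \Delta_i$. Take $\delta = \omega_i$. Then $\Delta_i \setminus \{\omega_i\} = \Delta_{i-1}$, because the $\omega_j$ are distinct. Hence $G_{(\Delta_i)} \neq G_{(\Delta_{i-1})}$, and together with step 1 the inclusion is strict.

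Alternatively, step 3 can be done straight from Definition \ref{stdfnind}: $\Delta_{i-1}$ is a proper non-empty subset of $\Delta_i$, so $G_{(\Delta_i)} \neq G_{(\Delta_{i-1})}$ directly.

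Since step 3 holds for every $i$ from $2$ to $m$, every inclusion in $G_{\omega_1} \geq \dots \geq G_{\omega_1, \dots, \omega_m}$ is strict, so the chain is irredundant. No step is a real obstacle. The only point to watch is that distinctness of the $\omega_j$ is what makes $\Delta_{i-1}$ a proper subset of $\Delta_i$.
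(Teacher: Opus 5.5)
Your proposal is correct and is essentially the paper's argument: both take the prefix sets $\Delta_i = \{\omega_1,\dots,\omega_i\}$, get their independence from Corollary~\ref{indsub}, and read off strictness of each inclusion $G_{(\Delta_{i-1})} \geq G_{(\Delta_i)}$ from the definition of independence (your alternative to step 3 is exactly what the paper does). One small correction: distinctness of the $\omega_i$ does not follow just from $\Delta$ being a set. The lemma fails if, say, $\omega_1 = \omega_2$, since $\{\omega_1\}$ is independent yet $G_{\omega_1} = G_{\omega_1,\omega_2}$, so distinctness should be stated as an implicit hypothesis, which the paper's proof also uses tacitly.
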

\begin{proof}
If $m = 1$ then there is only one stabilizer in the stabilizer chain and the result holds. So suppose $m \geq 1 $. For each $ i \in \{ 1, \dots , m \} $ put $ \Delta _{i} := \{ \omega _{1} , \dots , \omega _{i } \} $. So $G _{ ( \Delta _{i } ) } =  G_{  \omega _{1} , \dots , \omega _{i } } $. The set $ \Delta _{m} $ being independent implies that each of the subsets $ \Delta _{i} $ are independent by Corollary \ref{indsub}. For all $j \in \{ 1, \dots , m - 1 \} $ we have $ G _{ ( \Delta _{ j } ) } \geq G _{ ( \Delta _{j + 1}  ) } $ and by the definition of an independent set this inclusion must be strict. Therefore $G_{ \omega _{1} }  > G_{ \omega _{1} , \omega _{2} } > \dots > G_{ \omega _{1} , \omega _{2} , \dots , \omega _{m} } $.
\end{proof}
Three more lemmas are needed before establishing the main result of this section, a bound on the relational complexity of the action that depends on height.
\begin{lema}\label{rodr}
Let $m \in \mathbb{N} $ and $I,J \in \Omega ^{m} $. Suppose $I \sim _{k} J $ for some $k \in \mathbb{N} $. If $K $ is an $m$-tuple obtained by permuting the entries of $I$ and $L $ is the corresponding $n$-tuple obtained by permuting the entries of $J$ in the same way, then $K \sim _{k} L $.
\end{lema}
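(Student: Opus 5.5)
The plan is to formalise ``permuting the entries'' by a permutation $\sigma \in S_{m}$ of the index set $\{1, \dots , m\}$. Then $K = (I_{1\sigma}, \dots , I_{m\sigma})$ and $L = (J_{1\sigma}, \dots , J_{m\sigma})$ (the ``$n$-tuple'' in the statement is an $m$-tuple). We may assume $k \leq m$, since $\sim_{k}$ is only defined in that range. The whole argument reduces to showing that each $k$-subtuple of $K$, together with the corresponding $k$-subtuple of $L$, comes from a pair of corresponding $k$-subtuples of $I$ and $J$, up to a common reordering of positions.

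First, I take a $k$-subtuple $K' = (K_{p_{1}}, \dots , K_{p_{k}})$ of $K$ with $p_{1} < \dots < p_{k}$, and the corresponding $L' = (L_{p_{1}}, \dots , L_{p_{k}})$ of $L$. By definition $K_{p_{i}} = I_{p_{i}\sigma}$ and $L_{p_{i}} = J_{p_{i}\sigma}$. The indices $p_{1}\sigma, \dots , p_{k}\sigma$ are distinct because $\sigma$ is a bijection. So I sort them increasingly as $q_{1} < \dots < q_{k}$, which gives a $k$-subtuple $I' = (I_{q_{1}}, \dots , I_{q_{k}})$ of $I$ and the corresponding $J' = (J_{q_{1}}, \dots , J_{q_{k}})$ of $J$. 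Since $I \sim_{k} J$, there is $g \in G$ with $I_{q_{j}}^{g} = J_{q_{j}}$ for all $j$.

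Next, because $\{q_{1}, \dots , q_{k}\} = \{p_{1}\sigma, \dots , p_{k}\sigma\}$, each $p_{i}\sigma$ equals some $q_{j}$. Hence $K_{p_{i}}^{g} = I_{p_{i}\sigma}^{g} = J_{p_{i}\sigma} = L_{p_{i}}$ for every $i$, that is $(K')^{g} = L'$. As $K'$ was an arbitrary $k$-subtuple, this gives $K \sim_{k} L$.

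The only real obstacle is notational. The reordered indices $p_{i}\sigma$ need not be increasing, so $(I_{p_{1}\sigma}, \dots , I_{p_{k}\sigma})$ is not literally a subtuple of $I$. The sorting step above handles this: the condition ``$I_{x}^{g} = J_{x}$ for all $x$ in an index set'' does not depend on the order in which the indices are listed. Everything else follows directly from the definition of $k$-subtuple completeness, and no earlier lemma is needed beyond that definition.
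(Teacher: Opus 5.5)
Your proof is correct and takes essentially the same route as the paper. Both take a $k$-subtuple of $K$, reorder its entries into a $k$-subtuple of $I$ (and correspondingly for $L$ and $J$), apply $I \sim_{k} J$ to obtain $g$, and note that $g$ still matches corresponding entries after the reordering is undone. Your version simply makes the reordering explicit through $\sigma \in S_{m}$ and the sorted indices $q_{1} < \dots < q_{k}$, a step the paper leaves informal.
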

\begin{proof}
Let $K^{ \prime } $ be a $k$-subtuple of $K$ and $L ^{ \prime } $ the corresponding $k$-subtuple of $L$. The entries of $K ^{ \prime } $ can be reordered to give a $k$-subtuple $I ^{ \prime } $ of $I$ and similarly the entries of $L^{ \prime } $ can be reordered in the same way to give the corresponding $k$-subtuple $J ^{ \prime } $ of $J$. Since $I \sim _{k} J $, there exists $g \in G $ such that $ (I ^{ \prime } ) ^{g} = J ^{ \prime } $. So $g$ sends the $i$-th entry of $I ^{ \prime } $ to the $i$-th entry of $J ^{ \prime } $ for each $i \in \{1, \dots , k \} $. Therefore $g$ sends the $j$-th entry of $ K ^{ \prime } $ to the $j$-entry of $L ^{ \prime } $ for each $j \in \{ 1, \dots , k \} $. Thus $K \sim _{k} L $.
\end{proof}
\begin{lema}\label{horrid}
Let $ G$ be a group acting on a finite set $ \Omega $. Given a collection of $m \in \mathbb{N}$ (not necessarily distinct) elements of $ \Omega$, they can be labeled as $\omega _{1} , \dots , \omega _{m} \in \Omega $ so that there exists some $t \leq Ht( G, \Omega )  $ such that $ t \leq m $ and both of the following hold;
\begin{enumerate}
\item $G_{ \omega _{1} } > G_{ \omega _{1} , \omega _{2} } > \dots > G_{ \omega _{1} , \omega _{2} , \dots , \omega _{t} } $ is an irredundant stabilizer chain.
\item $G_{ \omega _{1} , \omega _{2} , \dots , \omega _{t} } =  G_{ \omega _{1} , \omega _{2} , \dots , \omega _{s} }$ for all $ s \in \{ t, t+1,  \dots , m \} $.
\end{enumerate}
\end{lema}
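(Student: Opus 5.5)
We build the labelling greedily and then use Lemma \ref{stach} in reverse: an irredundant chain whose points form an independent set has length at most $Ht(G,\Omega)$.

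\textbf{Construction.} Let the given collection be a multiset $M$ of $m$ elements. Choose $\omega_{1}$ to be any element of $M$. Suppose $\omega_{1},\dots,\omega_{i}$ have been chosen, with
\[
G_{\omega_{1}} > G_{\omega_{1},\omega_{2}} > \dots > G_{\omega_{1},\dots,\omega_{i}}
\]
strict throughout. Among the remaining elements of $M$, look for one, $\omega$, with $G_{\omega_{1},\dots,\omega_{i},\omega} < G_{\omega_{1},\dots,\omega_{i}}$. If such an element exists, call it $\omega_{i+1}$ and continue. If none exists, stop, set $t:=i$, and label the remaining elements of $M$ arbitrarily as $\omega_{t+1},\dots,\omega_{m}$. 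The process terminates because $M$ is finite, so $1\le t\le m$, and condition 1 holds by construction.

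\textbf{Condition 2.} Every leftover element $\omega_{j}$ with $j>t$ satisfies $G_{\omega_{1},\dots,\omega_{t},\omega_{j}} = G_{\omega_{1},\dots,\omega_{t}}$. Hence $G_{\omega_{1},\dots,\omega_{t}} \le G_{\omega_{j}}$ for each $j>t$. Intersecting over $j=t+1,\dots,s$ gives $G_{\omega_{1},\dots,\omega_{s}} = G_{\omega_{1},\dots,\omega_{t}}$ for every $s \in \{t,\dots,m\}$.

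\textbf{The bound $t\le Ht(G,\Omega)$.} This is the main obstacle, since an irredundant chain does not by itself make $\{\omega_{1},\dots,\omega_{t}\}$ independent in the sense of Lemma \ref{scndfnt}. Removing an early point may leave the stabilizer unchanged. My first attempt is to modify the greedy choice so that independence is maintained directly. At step $i+1$, require in addition that $\{\omega_{1},\dots,\omega_{i+1}\}$ is independent.

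Once the chain $\omega_{1},\dots,\omega_{t}$ is built and the process has stopped, there is a possible defect. Some $\omega_{j}$ with $j\le t$ may satisfy $G_{(\Delta)} = G_{(\Delta\setminus\{\omega_{j}\})}$, where $\Delta = \{\omega_{1},\dots,\omega_{t}\}$. In that case delete $\omega_{j}$ and move it to the tail. This keeps the stabilizer $G_{(\Delta)}$ unchanged, so condition 2 still holds for the moved element.

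The remaining points still give an irredundant chain in the induced order. If $G_{(\Gamma\cup\{x\})} = G_{(\Gamma)}$ for some prefix $\Gamma$, then intersecting with the later points would contradict the strictness in the original chain. So the deletion preserves condition 1 and strictly decreases $t$.

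Repeat until no such $\omega_{j}$ exists. Then the points $\omega_{1},\dots,\omega_{t}$ are distinct. By Lemma \ref{scndfnt} the set $\{\omega_{1},\dots,\omega_{t}\}$ is independent, so $t\le Ht(G,\Omega)$.

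Checking that the relabelling keeps the chain strict is the delicate point. Simplest of all would be to choose $t$ minimal among all labellings satisfying condition 2 with a strict chain. Minimality then forbids any removable point, and independence follows immediately from Lemma \ref{scndfnt}.
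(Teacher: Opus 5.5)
Your argument is correct, but it runs in the opposite direction to the paper's.

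The paper extracts the independent core first. Among the distinct points $\Gamma$ it takes a non-empty $\Gamma'\subseteq\Gamma$ of minimum size with $G_{(\Gamma')}=G_{(\Gamma)}$. Minimality makes $\Gamma'$ independent, so $|\Gamma'|\le Ht(G,\Omega)$. Lemma~\ref{stach} then makes every ordering of $\Gamma'$ irredundant, and condition 2 follows from $G_{(\Gamma)}\le G_{\omega_{1},\dots,\omega_{a}}\le G_{(\Gamma')}=G_{(\Gamma)}$.

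You instead build an irredundant chain greedily, which gives conditions 1 and 2 immediately. You correctly note that such a chain need not be independent; a point fixed by all of $G$ followed by a point with smaller stabilizer already shows this. You then prune removable points, which forces you to check that deleting a point from an irredundant chain leaves it irredundant.

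That check is true, but the reason is monotonicity with respect to the \emph{deleted} point, not the later ones. If $G_{(\Gamma)}\le G_{x}$ for a new prefix $\Gamma$, then $G_{(\Gamma\cup\{\omega_{j}\})}\le G_{x}$, contradicting strictness of the original chain at $x$. In fact you can skip the check altogether. Pruning only needs the prefix stabilizer to stay equal to $G_{(\Gamma)}$, and once pruning stops the prefix is independent, so Lemma~\ref{stach} gives irredundance for free.

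What each route buys: yours is algorithmic and shows that any greedy chain contains an inclusion-minimal independent core with the same stabilizer. The paper's is shorter, because irredundance comes out of independence rather than having to be preserved. Your closing idea of minimising $t$ over labellings is close to the paper's. As phrased, though, it still relies on the deletion step, since you need the pruned labelling to keep a strict chain. Minimising over subsets, as the paper does, avoids that.

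Two small clean-ups. First, drop the sentence about requiring independence during the greedy step. That variant on its own can violate condition 2: if $\omega_{1}$ is fixed by all of $G$, no second point can ever be added while keeping independence, even though other points may lower the stabilizer. Second, stop pruning once $|\Delta|=1$. Otherwise your deletion rule, read literally with $G_{(\emptyset)}=G$, would remove a lone point fixed by all of $G$.
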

\begin{proof}
Denote the collection of $m $ elements as $ \gamma _{1} , \dots , \gamma_{m} $ so that the first $r$ elements are distinct and the remaining $m - r $ are repeated elements. Put $ \Gamma := \{ \gamma _{1} , \dots , \gamma _{r} \} $. Let $ \Gamma ^{ \prime } \subseteq \Gamma $ be a non-empty set such that $ G _{ ( \Gamma ^{ \prime } ) } = G _{ ( \Gamma ) } $ and $ | \Gamma ^{ \prime } | $ is as small as possible, which must exist because $ G _{ ( \Gamma ) } = G _{ ( \Gamma ) } $. Put $ t ^{ \prime } := | \Gamma ^{ \prime } | $ and arbitrarily label the elements of $ \Gamma ^{ \prime } $ as $ \omega _{1} , \dots , \omega _{t ^{ \prime } } $. Since $ G _{ ( \Gamma ^{ \prime \prime }  ) } \neq G _{ ( \Gamma ) }  = G _{ ( \Gamma ^{ \prime } ) } $ for every non-empty proper subset $ \Gamma ^{ \prime \prime } \subset \Gamma ^{ \prime } $, the set $ \Gamma ^{ \prime } $ is an independent. Therefore $ t ^{ \prime } \leq Ht( G, \Omega )  $ and the chain $ G_{ \omega _{1} } > G_{ \omega _{1} , \omega _{2} } > \dots > G_{ \omega _{1} , \omega _{2} , \dots , \omega _{t ^{ \prime } } } $ is irredundant by Lemma \ref{stach}.

\medskip
Next arbitrarily label the $r - t ^{ \prime } $ elements of $ \Gamma \setminus \Gamma ^{ \prime } $ as $ \omega _{ t^{ \prime } + 1 } ,   \ \omega _{ t^{ \prime } + 2 } , \dots ,  \  \omega _{ r } $. Then $ \Gamma = \{ \omega _{1} , \omega _{2} , \dots , \omega _{r}  \} $. For each $ a \in \{ t  ^{ \prime } ,  t  ^{ \prime } + 1 , \dots , r \} $ we have $ \Gamma ^{ \prime } \subseteq \{ \omega _{1} , \dots , \omega _{a} \} \subseteq \Gamma $ and so
\[
G _{ ( \Gamma ) }  \leq G_{ \omega _{1} , \dots , \omega _{a} }  \leq  G _{ ( \Gamma ^{ \prime } ) } =  G _{ ( \Gamma ) } .
\]
Thus $ G_{ \omega _{1} , \dots , \omega _{a} }  =   G _{ ( \Gamma ^{ \prime } ) } =  G_{ \omega _{1} , \omega _{2} , \dots , \omega _{t ^{ \prime } } }  $. If $r = m$ then we can put $t = t^{ \prime } $ and we are done. So suppose that $ r < m $. Set $ \omega _{i} := \gamma _{i}  $ for each $i \in \{ r +1 , \dots , m \} $. Observe that for all $k \in \{ r+1 , \dots , m \} $,
\[
G _{  \omega _{1} , \dots , \omega_{k} } = \bigcap _{i = 1} ^{k} G_{ \omega _{i} } = \bigcap _{i = 1} ^{r} G_{ \omega _{i} } = G _{  \omega _{1} , \dots , \omega_{r} } = G _{ ( \Gamma ) }
= G _{ ( \Gamma ^{ \prime } ) } 
=  G_{ \omega _{1} , \omega _{2} , \dots , \omega _{t ^{ \prime } } } 
\]
because for each $i \in \{ r +1 , \dots , m \} $ we have $ G_{ \gamma _{i} } = G _{ \gamma _{j} } $ for some $ j \in \{ 1 , \dots , r \} $. Again putting $ t ^{ \prime } = t $, when $ r < m $ it has been shown that $G_{ \omega _{1} , \omega _{2} , \dots , \omega _{t} } =  G_{ \omega _{1} , \omega _{2} , \dots , \omega _{s} }$ for all $ s \in \{ t, t+1,  \dots , m \} $.
\end{proof}
Earlier it was shown the relational complexity of the natural actions of $ S_{n} $ and $ A_{n} $ is at most $ 1 $ more than the height. We now prove the main result of this section; that height always bounds relational complexity in this way.
\begin{thrm}\label{rchgt}
Suppose $ n:= | \Omega | \geq 2 $. Let $h :=  Ht( G , \Omega ) $. If $ I, J \in \Omega ^{n} $ and $I \sim _{(h+1) } J $ then $ I \sim _{n} J $. In particular $ RC(G, \Omega ) \leq Ht( G , \Omega ) + 1 $.
\end{thrm}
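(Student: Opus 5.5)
Let $I, J \in \Omega^{n}$ with $I \sim_{(h+1)} J$; the aim is an element $g \in G$ with $I^{g} = J$. If $h+1 \geq n$ then $I \sim_{n} J$ already by Lemma \ref{nimpk} (when $h + 1 > n$ we simply replace $h+1$ by $n$, since $h \leq n-1$ by Lemma \ref{htbnd}). So assume $h+1 < n$. By Lemma \ref{uniqset}, since $h+1 \geq 2$, it suffices to send the tuple of distinct entries of $I$ to the corresponding entries of $J$. Apply Lemma \ref{horrid} to the $n$ entries of $I$ and reorder them as $\omega_{1}, \dots, \omega_{n}$ with some $t \leq h$ such that $G_{\omega_1,\dots,\omega_t} = G_{\omega_1,\dots,\omega_s}$ for all $s \geq t$. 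Reorder the entries of $J$ in the same way. By Lemma \ref{rodr}, the reordered tuples $K = (\omega_1,\dots,\omega_n)$ and $L$ still satisfy $K \sim_{(h+1)} L$, and $K \sim_n L$ implies $I \sim_n J$. So we may work with $K$ and $L$.

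Since $t \leq h < h+1$, the $t$-subtuple $(\omega_1,\dots,\omega_t)$ can be sent to $(L_1,\dots,L_t)$. By Lemma \ref{frsnrtytql}, replacing $L$ by a suitable $L^{a}$ (which preserves $\sim_{(h+1)}$ and does not affect whether $K \sim_n L$), we may assume $L_i = \omega_i$ for $i \leq t$. Now fix $j > t$. The $(t+1)$-subtuple $(\omega_1,\dots,\omega_t,\omega_j)$ is sent to $(\omega_1,\dots,\omega_t,L_j)$ by some $g_j \in G$, because $t+1 \leq h+1$ and Lemma \ref{nimpk} applies. Then $g_j \in G_{\omega_1,\dots,\omega_t}$, and $\omega_j^{g_j} = L_j$.

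The key step, and the only real point to check, is that a single element works for every $j$. Take $g := 1_G$, or more precisely note that $G_{\omega_1,\dots,\omega_t} = G_{\omega_1,\dots,\omega_n} \leq G_{\omega_j}$. Hence $g_j$ fixes $\omega_j$, so $L_j = \omega_j$ for every $j > t$. Therefore $L = K$ after the normalisation, so the single element $a^{-1}$ from Lemma \ref{frsnrtytql} sends $K$ to the original $L$. This gives $K \sim_n L$, and hence $I \sim_n J$. Finally, Lemma \ref{rcalt} gives $RC(G,\Omega) \leq h+1$; when $h = 1$ this is consistent with the requirement $k \geq 2$.

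I expect the main obstacle to be bookkeeping rather than ideas: making sure the reordering from Lemma \ref{horrid} and the translation by $a$ from Lemma \ref{frsnrtytql} are applied compatibly to both tuples. The collapse of the stabilizer chain then forces all remaining entries to be fixed.
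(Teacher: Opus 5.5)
Your proof is correct and follows essentially the same route as the paper. Both reorder the entries via Lemmas \ref{horrid} and \ref{rodr}, normalize with Lemma \ref{frsnrtytql}, and then use the $(t+1)$-subtuples together with $G_{\omega_1,\dots,\omega_t} = G_{\omega_1,\dots,\omega_n}$ to force $L = K$. The differences are cosmetic: you normalize only the first $t$ entries where the paper normalizes the first $h+1$, and your appeal to Lemma \ref{uniqset} is not actually needed.
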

\begin{proof}
Note that $2 \leq  h + 1  \leq n$ by Lemma \ref{htbnd}. Lemma \ref{rodr} shows that the entries of $I$ can be reordered and the entries of $J$ reordered correspondingly and the resulting pair of $n$-tuples will still be $(h+1) $-subtuple complete. Therefore by Lemma \ref{horrid} we can assume that the entries of $I$ and $J$ are ordered so that 
\[
G_{I_{1} } > G_{ I_{1} , I_{2} } > \dots > G_{ I_{1} , I_{2} , \dots , I_{t} }
\]
for some $t \leq h $ and that
\[
 G_{ I_{1} , I_{2} , \dots , I_{t} } =  G_{ I_{1} , I_{2} , \dots , I_{s} }    \tag{*}
\]
for all $s \in \{ t, t+1,  \dots , n \} $. We may also assume $ I_{j} = J_{j} $ for all $ j \in \{ 1, \dots , h+1 \} $ by Lemma \ref{frsnrtytql}.

\medskip
For each $ k \in \{t + 1 , t+ 2 , \dots , n \} $ the $ (t+1) $-tuples $ I^{ \prime } =  ( I_{1} , \dots , I_{t} , I_{k} ) $ and $ J^{ \prime } = ( J_{1} , \dots , J_{t} , J_{k} ) = ( I_{1} , \dots , I_{t} , J_{k} ) $ are $ (t+1) $-subtuples of $ I $ and $ J $ respectively. The supposition $ I \sim _{ (h+1) } J $ implies $ I \sim _{ (t+1) } J $ by Lemma \ref{nimpk}. Hence $ (I^{ \prime } ) ^{g} = J^{ \prime }  $ for some $ g \in G_{ I_{1} , I_{2} , \dots , I_{t} } =  G_{ I_{1} , I_{2} , \dots , I_{k} } $. Therefore $ J_{k} = I^{g} _{k} = I_{k} $. From this we get $ I = J $ and $ I \sim _{n} J $. Lemma \ref{rcalt} then shows that $ RC(G, \Omega ) \leq h+1 = Ht( G , \Omega ) + 1 $.
\end{proof}
This lemma can be used to give another definition of relational complexity, which can cut down the calculation needed if we already know the height of an action.
\begin{corl}\label{rcshsbtplcmp}
Let $ n:= | \Omega |  \geq 2 $. Let $h =  Ht (G , \Omega ) $. Suppose $ k $ is the least element in $  \{ 2, \dots , h +1 \}  $ such that if $ I, J \in  \Omega ^{h+1} $ and $ I \sim _{k} J $ then $ I \sim _{ (h+1) } J $. Then $ k = RC ( G , \Omega )  $.
\end{corl}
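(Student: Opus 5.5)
We need to prove: with $h = Ht(G,\Omega)$ and $k$ the least element of $\{2,\dots,h+1\}$ such that $(h+1)$-tuples that are $k$-subtuple complete are always $(h+1)$-subtuple complete, we have $k = RC(G,\Omega)$. First, such a $k$ exists: $k = h+1$ works trivially, since $\sim_{h+1}$ on $\Omega^{h+1}$ is the conclusion itself. Put $r := RC(G,\Omega)$. By Theorem \ref{rchgt}, $2 \leq r \leq h+1$, so $r$ lies in the set over which $k$ is minimised. The plan is to prove the two inequalities $k \leq r$ and $r \leq k$ separately, mirroring the proof of Lemma \ref{rcalt}.

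For $k \leq r$: let $I, J \in \Omega^{h+1}$ with $I \sim_r J$. Since $r \leq h+1$, Definition \ref{first} applied with $m = h+1$ gives $I \sim_{h+1} J$. So $r$ has the defining property of $k$, and minimality of $k$ gives $k \leq r$.

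For $r \leq k$: by Lemma \ref{rcalt} it suffices to show that for $I, J \in \Omega^{n}$, $I \sim_k J$ implies $I \sim_n J$. We will show $I \sim_{h+1} J$ and then invoke Theorem \ref{rchgt}. So take any $(h+1)$-subtuple $I'$ of $I$ and the corresponding subtuple $J'$ of $J$. By Lemma \ref{obvi}, $I' \sim_k J'$ (valid as $k \leq h+1$). Both are $(h+1)$-tuples, so the defining property of $k$ gives $I' \sim_{h+1} J'$, that is, some $g \in G$ with $(I')^{g} = J'$. Since this holds for every corresponding pair of $(h+1)$-subtuples, $I \sim_{h+1} J$, and Theorem \ref{rchgt} yields $I \sim_n J$. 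Hence $k$ satisfies the criterion of Lemma \ref{rcalt}, and since $2 \leq k \leq h+1 \leq n$, the minimality in Lemma \ref{rcalt} gives $r \leq k$.

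Combining the two inequalities, $k = r$. The only point needing care is checking the index constraints: $h+1 \leq n$ by Lemma \ref{htbnd}, which is needed to apply Lemma \ref{rcalt}, and $k \leq h+1$, which is needed for Lemma \ref{obvi}. Both follow directly from the earlier results, so I expect no real obstacle.
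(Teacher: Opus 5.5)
Your proposal is correct and follows essentially the same argument as the paper. For $r \leq k$ you use Lemma~\ref{obvi}, the defining property of $k$, Theorem~\ref{rchgt} and Lemma~\ref{rcalt}; for $k \leq r$ you use $r \leq h+1$ (from Theorem~\ref{rchgt}) together with Definition~\ref{first} at $m = h+1$.
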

\begin{proof}
Put $r := RC (G , \Omega ) $. Let $K,L \in \Omega ^{n} $ and suppose $K \sim _{k} L $. Note that $2 \leq  h + 1  \leq n$ by Lemma \ref{htbnd}. Let $K ^{*} $ be a $(h+1)$-subtuple of $K $ and $L  ^{*} $ the corresponding $(h+1)$-subtuple of $L $. Lemma \ref{obvi} shows that $K ^{*} \sim _{ k } L ^{*} $. Hence $K ^{*} \sim _{ (h+1) } L ^{*} $ by the definition of $k$. Therefore $ K \sim _{ (h+1) } L $ and it follows from Theorem \ref{rchgt} that $K \sim _{n} L $. The definition of relational complexity provided by Lemma \ref{rcalt} gives $r \leq k $.

\medskip
Next let $M, N \in \Omega ^{h+1} $. Note that $ r \leq h+1 $ by Theorem \ref{rchgt}. Suppose $M \sim _{r} N  $. Then $ M \sim _{(h+1)} N $ by Definition \ref{first}. Therefore $ k \leq r $.
\end{proof}
\section{Almost Independent Sets}
The tables in \cite{WISCONS1} list the primitive actions of degree at most $ 100 $, along with their height and relational complexity, calculated using the GAP code provided at \cite{WISCONS2}. For most of these actions, either $ RC(G, \Omega ) = Ht(G, \Omega ) $ or $ RC(G, \Omega ) = Ht(G, \Omega ) + 1$. So when calculating relational complexity, if we have narrowed it down to $ Ht(G, \Omega ) $ or $ Ht(G, \Omega ) + 1 $ it would be helpful to have some techniques to hand to rule one or the other out. We begin with a couple of lemmas.
\begin{lema}\label{rptdntrs}
Let $m \geq 2 $ and $I , J \in \Omega ^{m+1} $. If $I \sim_{m} J$ and any of the entries of $I$ are repeated then $I \sim _{(m+1)} J $.
\end{lema}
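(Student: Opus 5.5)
The plan is to mimic the second case in the proof of Theorem \ref{kminuso}, where a repeated entry is dropped and the remaining entries are matched. Suppose $I_{r} = I_{s}$ for some $r \neq s$ in $\{ 1, \dots , m+1 \}$. Since $m \geq 2$ and $I \sim_{m} J$, Lemma \ref{equale} gives $J_{r} = J_{s}$ as well.

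Next, let $I^{\prime}$ be the $m$-subtuple of $I$ obtained by deleting the $r$-th entry. Let $J^{\prime}$ be the corresponding $m$-subtuple of $J$, also with its $r$-th entry deleted. Because $I \sim_{m} J$, there is some $g \in G$ with $(I^{\prime})^{g} = J^{\prime}$, so $I_{k}^{g} = J_{k}$ for every $k \neq r$. In particular $I_{s}^{g} = J_{s}$.

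For the deleted position we compute $I_{r}^{g} = I_{s}^{g} = J_{s} = J_{r}$. Hence $I^{g} = J$, and $(m+1)$-subtuple completeness is immediate, since the only $(m+1)$-subtuple of an $(m+1)$-tuple is the tuple itself.

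There is no real obstacle. The only point needing care is that the hypothesis $m \geq 2$ is exactly what Lemma \ref{equale} requires to transfer the equality $I_{r} = I_{s}$ to $J_{r} = J_{s}$. Alternatively, one could apply Lemma \ref{uniqset}: $I$ has at most $m$ distinct entries, and the $m$-subtuple $I^{\prime}$ contains all of them. The direct argument above is shorter, so that is the one I would write.
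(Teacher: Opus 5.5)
Your proof is correct and follows essentially the same argument as the paper. Both use Lemma \ref{equale} to get $J_r = J_s$, then take $g$ sending the $m$-subtuple that omits position $r$ to its counterpart in $J$. The omitted entry is then recovered from $I_r^{g} = I_s^{g} = J_s = J_r$.
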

\begin{proof}
Suppose $I \sim _{ m } J $. If $I$ has any repeated entries then $I_{s} = I _{t} $ for some $s, t \in \{ 1 , \dots , m+1 \} $ with $ s \neq t $. Since $m \geq 2$, Lemma \ref{equale} shows that $ J_{t} = J_{s} $. As $ I \sim _{m} J$, there exists $g \in G$ such that $ I_{i} ^{g} = J_{i} $ for all $ i \in \{ 1 , \dots , m + 1 \} \setminus \{ s \}  $. Therefore $ I_{s} ^{g} = I_{t} ^{g} = J_{t} = J_{s} $. Hence $ I^{g} = J $ and $I \sim _{(m + 1)} J $.
\end{proof}
\begin{lema}\label{ntndpndntntrs}
Let $m \geq 2 $ and $I , J \in \Omega ^{m+1} $. Let $ \Delta $ be the set of entries of $I$. If $I \sim _{ m } J $ and if there exists $ \Gamma \subset \Delta   $ such that $ \Gamma $ is not an independent set, then $I \sim _{(m+1)} J $.
\end{lema}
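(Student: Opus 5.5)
The plan is to reduce to the case where all entries of $I$ are distinct, and then use the non-independence of $\Gamma$ to locate one entry of $I$ that is redundant for the stabilizer. If some entry of $I$ is repeated, Lemma \ref{rptdntrs} already gives $I \sim _{(m+1)} J$. So I will assume that the $m+1$ entries of $I$ are distinct, which means $\Delta = \{ I_{1} , \dots , I_{m+1} \}$ has size $m+1$. I note in passing that a set of size $1$ is independent, so $|\Gamma| \geq 2$; I will not actually need this.

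The key step is to find an index $s$ such that $G_{(\Delta \setminus \{I_{s}\})} \leq G_{I_{s}}$. Since $\Gamma$ is not independent, Lemma \ref{ltdfndpndnt} gives some $\delta \in \Gamma$ with $G_{(\Gamma)} = G_{(\Gamma \setminus \{\delta\})}$. Write $\delta = I_{s}$. Because $\Gamma \setminus \{\delta\} \subseteq \Delta \setminus \{I_{s}\}$, we get
\[
G_{(\Delta \setminus \{ I_{s} \})} \leq G_{(\Gamma \setminus \{\delta\})} = G_{(\Gamma)} \leq G_{I_{s}} .
\]
Any element fixing every entry of $I$ other than $I_{s}$ therefore also fixes $I_{s}$.

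Next I transport the problem to $J$. By Lemma \ref{frsnrtytql}, applied to the $m$ indices $\{1,\dots,m+1\}\setminus\{s\}$, I may replace $J$ by a $G$-translate and assume $I_{i} = J_{i}$ for all $i \neq s$; note that $m \geq 2$ is needed for Lemma \ref{equale} later, and it holds by hypothesis. Now choose an index $t \neq s$. The $m$-subtuple of $I$ that omits the $t$-th entry (and so contains $I_{s}$) maps to the corresponding subtuple of $J$ under some $g \in G$. This $g$ fixes $I_{i}$ for every $i \notin \{s,t\}$, and it sends $I_{s}$ to $J_{s}$.

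The main obstacle is that $g$ need not fix $I_{t}$, so the inclusion above cannot be applied to $g$ directly. My approach is to choose $t$ so that the same inclusion argument applies to $\Delta \setminus \{I_{s}, I_{t}\}$. Concretely, if $|\Gamma| < m+1$ I take $I_{t} \in \Delta \setminus \Gamma$. Then $\Gamma \setminus \{\delta\} \subseteq \Delta \setminus \{I_{s}, I_{t}\}$, so $g \in G_{(\Gamma\setminus\{\delta\})} \leq G_{I_{s}}$. This gives $J_{s} = I_{s}^{g} = I_{s}$, hence $I = J$, and we conclude $I \sim _{(m+1)} J$.

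The statement requires $\Gamma \subset \Delta$ to be proper, so this choice of $t$ is always available and the argument covers every case. This is precisely where the properness hypothesis is used. To finish, I will undo the replacement of $J$ by its translate via Lemma \ref{frsnrtytql}.
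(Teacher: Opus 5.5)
Your proof is correct and follows essentially the same route as the paper. Both arguments dispose of repeated entries with Lemma \ref{rptdntrs}, use Lemma \ref{ltdfndpndnt} to find a point whose stabilizer is forced by the others, normalize $J$ via Lemma \ref{frsnrtytql}, and apply $I \sim_{m} J$ to the $m$-subtuple omitting a suitable index. The differences are cosmetic: you apply Lemma \ref{ltdfndpndnt} to $\Gamma$ directly rather than first enlarging $\Gamma$ to $\Delta \setminus \{\delta\}$ via Corollary \ref{indsub}, and you swap the roles of the two special indices. You normalize $J$ away from the redundant point and omit a point outside $\Gamma$, ending with $J = I$; the paper normalizes $J$ away from the point outside $\Gamma$ and omits the redundant one.
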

\begin{proof}
Suppose $I \sim _{ m } J $. If $I$ has any repeated entries then $I \sim _{(m+1)} J $ by Lemma \ref{rptdntrs} and we are done. So suppose that $I$ has no repeated entries. Then $ \Delta = \{ I_{1} , \dots , I _{m+1} \} $. Suppose there exists $ \Gamma \subset \Delta $ where $ \Gamma $ is not an independent set. Then there exists $ \Delta ^{ \prime } \subset \Delta $  such that $ \Gamma \subseteq \Delta ^{ \prime } $ and $ \Delta ^{ \prime } = \Delta \setminus \{ \delta \} $ for some $ \delta \in \Delta $. Observe that $ \Delta ^{ \prime } $ is not independent by Corollary \ref{indsub}. So there exists $ \delta ^{ \prime } \in \Delta ^{ \prime } $ such that $ G_{ ( \Delta ^{ \prime } ) } = G _{ ( \Delta ^{ \prime } \setminus \{ \delta ^{ \prime } \} ) } $ by Lemma \ref{ltdfndpndnt}.

\medskip
Using Lemma \ref{rodr}, we can assume that the entries of $I$ are ordered so that $ \delta = I _{m+1} $. Hence $ \Delta = \{ I_{1} , \dots , I_{m} \} $. Using the same lemma again it can be assumed that $ \delta ^{ \prime } = I_{m} $. Therefore $ G_{ I_{1} , \dots , I_{m-1} } = G_{ I_{1} , \dots , I_{m} } $.

\medskip
From Lemma \ref{frsnrtytql}, we can assume $ J_{i} = I_{i} $ for all $ i \in \{ 1, \dots , m \} $. Since $ I \sim _{m} J $, there exists $ g \in G $ that sends
\[( I_{1} , \dots , I_{m-1} , I_{m+1} ) 
\]
to
\[
 ( J_{1} , \dots , J_{m-1} , J_{m+1} ) =  ( I_{1} , \dots , I_{m-1} , J_{m+1} ) .
\] 
It must be that $ g \in G_{ I_{1} , \dots , I_{m-1} } $, so $ I_{m} ^{g} = I_{m} = J_{m} $. Thus $ I^{g} = J $ and $ I \sim _{(m+1)} J $.
\end{proof}
Put $ h:= Ht(G, \Omega ) $. If $ RC(G, \Omega ) = h + 1$ then there exists $ I , J \in \Omega ^{h+1} $ such that $ I \sim _{h} J $ and $ I \not\sim _{h+1} J $ (otherwise for all $ n \geq h +1 $, each pair of $ n $-tuples that are $ h $-subtuple complete would be $ (h+1) $-subtuple complete, hence $ n $-subtuple complete, by Theorem \ref{rchgt}, implying $ RC(G,  \Omega ) = Ht(G , \Omega ) $).

\medskip
The first of the above lemmas tells us that the entries of $ I $ contain no repeats. If $ \Delta $ is the set of entries of $ I $, then all proper subsets of $ \Delta $ are independent by the second lemma. However $ | \Delta | = h +1 $, so $ \Delta $ is not independent.

\medskip
We see from this that sets with the same properties as $ \Delta $ play an important role in determining if $ RC(G , \Omega ) = h + 1 $ or not, so give them a name.
\begin{deft}\label{almstndstdfn}
A set $ \Delta \subseteq \Omega $ is \textbf{almost independent} if $ \Delta $ is not independent but every proper subset of $ \Delta $ is independent.
\end{deft}
From the above discussion, if $ \Omega $ does not contain any almost independent sets of size $ h + 1 $ then $ RC( G , \Omega ) \leq h $. So we have proved our next lemma.
\begin{lema}
Suppose $ RC(G, \Omega ) = Ht( G , \Omega ) + 1 $. Then there exist almost independent sets of size $ Ht( G , \Omega ) + 1 $ in $ \Omega $.
\end{lema}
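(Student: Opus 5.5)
I will argue directly: from $ RC(G, \Omega ) = h + 1 $, where $ h := Ht(G, \Omega ) $, I will extract a pair of $ (h+1) $-tuples that witnesses the failure of relational complexity $ h $, and then show that the set of entries of the first tuple is almost independent of size $ h+1 $. This is the discussion preceding Definition \ref{almstndstdfn}, organised into a proof.

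\medskip
The first step is to produce the witness pair. Since $ RC(G,\Omega) = h+1 $, the number $ k = h $ does not satisfy Definition \ref{first}. Note that $ h \geq 2 $ here: if $ h = 1 $ then $ RC = 2 = h+1 $, and I will treat that degenerate case separately at the end. By Corollary \ref{rcshsbtplcmp}, $ RC(G,\Omega) $ is the least $ k \in \{2,\dots,h+1\} $ such that $ k $-subtuple completeness of $ (h+1) $-tuples forces $ (h+1) $-subtuple completeness. Since this least value is $ h+1 $, the value $ k = h $ fails, so there exist $ I, J \in \Omega^{h+1} $ with $ I \sim_h J $ and $ I \not\sim_{h+1} J $.

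\medskip
The second step is to analyse the entries. Let $ \Delta $ be the set of entries of $ I $.
\begin{itemize}
\item By Lemma \ref{rptdntrs} with $ m = h \geq 2 $, the tuple $ I $ has no repeated entries, so $ |\Delta| = h+1 $.
\item By Lemma \ref{ntndpndntntrs}, every proper subset of $ \Delta $ is independent. The empty set needs no attention, since Definition \ref{almstndstdfn} is intended for non-empty subsets.
\item Since $ |\Delta| = h+1 > Ht(G,\Omega) $, the set $ \Delta $ is not independent.
\end{itemize}
Hence $ \Delta $ is almost independent of size $ h+1 $.

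\medskip
The main obstacle is the edge case $ h = 1 $, where Lemmas \ref{rptdntrs} and \ref{ntndpndntntrs} (which need $ m \geq 2 $) do not apply. Here I need a $ 2 $-element set that is not independent; the non-empty proper subsets of such a set are singletons, which are independent by definition. Height $ 1 $ means every $ 2 $-subset $ \{\alpha,\beta\} $ satisfies $ G_\alpha = G_{\alpha,\beta} $ or $ G_\beta = G_{\alpha,\beta} $, so it is not independent. Such a subset exists because $ |\Omega| \geq 2 $. So the conclusion holds in this case too, and this is the only place where care beyond quoting the earlier lemmas is needed.
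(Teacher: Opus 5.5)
Your proof is correct and is essentially the paper's argument, namely the discussion immediately preceding Definition \ref{almstndstdfn}: obtain $I \sim_h J$ with $I \not\sim_{h+1} J$ (you do this via Corollary \ref{rcshsbtplcmp}, the paper directly via Theorem \ref{rchgt}), then apply Lemmas \ref{rptdntrs} and \ref{ntndpndntntrs} and use $|\Delta| = h+1 > h$. Your separate treatment of $h = 1$ is a worthwhile addition: those two lemmas need $m \geq 2$, the paper's discussion passes over this case, and there a witness pair can in fact have repeated entries.
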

Although it is necessary for there to be almost independent subsets of size $ h + 1 $ in $ \Omega $ to have $ RC( G , \Omega ) = h + 1 $, there are examples of group actions where such almost independent sets exist and $ RC( G , \Omega ) \leq h $. We see this with the natural action of $ S_{n} $ when $ n \geq 3 $. In this case the set $ \{ 1, \dots , n \} $ is almost independent, but $ RC( S_{n} , \{ 1, \dots , n \} ) = 2 $, as was seen in Example \ref{symrc}.
\section{Independent Sets of Multiply Transitive Actions}
The main aim of this paper is to calculate the height and relational complexity of primitive actions of  $ PSL_{2} (q) $ and $ PGL_{2} (q) $, which include some $ 2 $ and $ 3 $-transitive actions. In this final section of the chapter we first find a lower bound on height when an action is $m$-transitive, then an upper bound and finally develop a method to decide whether an $m$-transitive action has height exactly $m$ or not.
\begin{lema}\label{krnstvndpndnt}
Suppose $G$ acts $m$-transitively on $ \Omega $ and that $ | \Omega | \neq m $. Let $ \Delta \subseteq \Omega $. If $1 \leq | \Delta | \leq m $, then $ \Delta $ is independent.
\end{lema}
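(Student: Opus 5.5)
I will use the characterisation of independence in Lemma \ref{scndfnt}. If $|\Delta| = 1$ there is nothing to prove. So suppose $2 \leq |\Delta| \leq m$, and fix $\delta \in \Delta$. It then suffices to exhibit an element of $G_{(\Delta \setminus \{\delta\})}$ that moves $\delta$, since such an element lies in $G_{(\Delta\setminus\{\delta\})}$ but not in $G_{(\Delta)}$.

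Write $\Delta = \{\delta_1, \dots, \delta_r\}$ with $r = |\Delta| \leq m$ and $\delta = \delta_r$. I first need a point $\omega \in \Omega \setminus \Delta$. This is where the hypothesis $|\Omega| \neq m$ comes in. Any $m$-transitive action has $|\Omega| \geq m$, because $m$-transitivity is defined on $m$-tuples of distinct points. So $|\Omega| > m \geq r$, and therefore $\Omega \setminus \Delta$ is non-empty.

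Next I use that $m$-transitivity implies $r$-transitivity for all $r \leq m$. Concretely, extend the $r$-tuples below to $m$-tuples of distinct points by appending the same extra points to both. This is possible because $|\Omega| \geq m$, and the extra points must be chosen distinct from $\delta_1, \dots, \delta_r$ and from $\omega$, which needs $|\Omega| \geq m+1$ when $r = m$. Taking a group element sending the first $m$-tuple to the second, I obtain $g \in G$ with
\[
(\delta_1, \dots, \delta_{r-1}, \delta_r)^g = (\delta_1, \dots, \delta_{r-1}, \omega).
\]
Both tuples consist of distinct entries, since $\omega \notin \Delta$. Then $g$ fixes every point of $\Delta \setminus \{\delta_r\}$, so $g \in G_{(\Delta \setminus \{\delta_r\})}$. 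On the other hand $\delta_r^g = \omega \neq \delta_r$, so $g \notin G_{(\Delta)}$. Hence $G_{(\Delta)} \neq G_{(\Delta \setminus \{\delta\})}$ for every $\delta \in \Delta$, and Lemma \ref{scndfnt} shows that $\Delta$ is independent.

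The only delicate point is the counting needed to pad the $r$-tuples up to $m$-tuples with distinct entries while avoiding $\omega$. This is exactly where $|\Omega| \geq m+1$, that is the hypothesis $|\Omega| \neq m$, is used. Otherwise the argument is routine.
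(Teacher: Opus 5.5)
Your proof is correct and follows the paper's argument. The paper proceeds by contradiction via Lemma~\ref{ltdfndpndnt} whereas you argue directly via Lemma~\ref{scndfnt}, but the key step is the same: pick $\omega\in\Omega\setminus\Delta$ (using $|\Omega|>m$) and an element fixing $\Delta\setminus\{\delta\}$ pointwise while sending $\delta$ to $\omega$. One harmless slip: for $r=m$ no padding is needed at all, and the bound $|\Omega|\ge m+1$ is used for the common-point padding when $r<m$ and for the existence of $\omega$, not ``when $r=m$''.
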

\begin{proof}
Put $s:= | \Delta | $ and $ \Delta : = \{ \delta _{1} , \dots , \delta _{s} \} $. Suppose $ \Delta $ is not independent. Then by Lemma \ref{ltdfndpndnt}, $ s \neq 1 $ and the elements can be labelled so that $ G _{ ( \Delta ) }  = G _{ ( \Delta \setminus \{ \delta _{s} \}  ) }$. Assume $ s \leq m $. Then, since $ m < | \Omega | $, there exists $ \omega \in \Omega \setminus \Delta $. As the action is $m$-transitive, it is also $s$-transitive. So there exists $g \in G $ such that
\[
( \delta _{ 1}  ^{g} , \dots , \delta _{s - 1}  ^{g} , \delta _{s} ^{g} )
= ( \delta _{ 1}   , \dots , \delta _{s - 1}  , \omega ) .
\]
Therefore $ g \in G _{ ( \Delta \setminus \{ \delta _{s} \}  ) } =  G _{ ( \Delta ) } $. It follows that $  \delta _{s} = \delta _{s} ^{g} =  \omega \in \Omega \setminus \Delta $, which is a contradiction. Thus the assumption that $ s \leq m $ must be wrong. Hence $  | \Delta | > m $.
\end{proof}
\begin{corl}\label{crltrnstvhght}
For an $ m $-transitive action, if $ m \neq | \Omega | $ then $Ht ( G , \Omega )  \geq m $.
\end{corl}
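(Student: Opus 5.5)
This corollary follows almost immediately from Lemma \ref{krnstvndpndnt}, so the plan is simply to exhibit an independent set of size $m$ and then appeal to the definition of height as the maximum size of an independent set.

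First I note the range of $m$. An $m$-transitive action in the sense used here requires $m$ distinct points, so $m \leq | \Omega | $. Together with the hypothesis $m \neq | \Omega | $ this gives $m < | \Omega | $. In particular $\Omega$ contains at least $m$ points, so I can choose a subset $\Delta \subseteq \Omega$ with $| \Delta | = m$.

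Since $m \geq 1$, the set $\Delta$ satisfies $1 \leq | \Delta | \leq m$. Lemma \ref{krnstvndpndnt} applies because the action is $m$-transitive and $| \Omega | \neq m$, so $\Delta$ is independent. Height is the maximum size of an independent set, hence $Ht(G,\Omega) \geq | \Delta | = m$.

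The only point requiring any care is the degenerate case $m = | \Omega |$. There Lemma \ref{krnstvndpndnt} fails, and indeed Lemma \ref{htbnd} shows the bound is false, since it gives $Ht(G,\Omega) \leq | \Omega | - 1$. That case is exactly what the hypothesis excludes, so there is no real obstacle.
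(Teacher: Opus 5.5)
Your proof is correct and is the same as the paper's: choose any $\Delta \subseteq \Omega$ with $|\Delta| = m$, apply Lemma \ref{krnstvndpndnt} to conclude that $\Delta$ is independent, and use the definition of height. Your explicit remark that $m \leq |\Omega|$ (so that such a $\Delta$ exists) is a detail the paper leaves implicit.
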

\begin{proof}
Let $ \Delta \subseteq \Omega $ and suppose $ | \Delta  | = m $. Then $ \Delta $ is independent by Lemma \ref{krnstvndpndnt} and so $Ht ( G , \Omega )  \geq m $.
\end{proof}
Next we will find an upper bound for the height of an $ m $-transitive action under particular conditions. Some general results on stabilizers and intersections of subgroups are required, the first of which is not difficult to prove. More notation is needed for this. For $ g \in G $ and $ \Delta \subseteq \Omega $ let $ \Delta ^{g} = \{ \delta ^{g} : \delta \in \Delta \} $.

\medskip
\begin{lema}\label{conjeq}
Let $n \in \mathbb{N} $. Let $ G _{1} , \dots , G _{n}  $ be subgroups of $G$. Let $g \in G$. Then
\[
\bigcap _{i = 1 } ^{n} g ^{-1}  G_{ i } g = g ^{-1} \bigg( \bigcap _{i = 1 } ^{n}   G_{i }  \bigg) g
\]
\end{lema}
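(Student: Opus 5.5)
The plan is to prove the equality by showing mutual inclusion, using only the fact that conjugation by $g$, the map $x \mapsto g^{-1} x g$, is a bijection of $G$ onto itself.

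For the inclusion $\supseteq$, take an element of the right-hand side. It has the form $g^{-1} x g$ with $x \in \bigcap_{i=1}^{n} G_{i}$. Then $x \in G_{i}$ for every $i$, so $g^{-1} x g \in g^{-1} G_{i} g$ for every $i$. Hence this element lies in the left-hand intersection.

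For the inclusion $\subseteq$, take $y \in \bigcap_{i=1}^{n} g^{-1} G_{i} g$ and set $x := g y g^{-1}$, so that $y = g^{-1} x g$. For each $i$ we have $y = g^{-1} x_{i} g$ for some $x_{i} \in G_{i}$. Cancelling $g^{-1}$ on the left and $g$ on the right shows $x_{i} = g y g^{-1} = x$ for every $i$. So $x \in \bigcap_{i=1}^{n} G_{i}$ and $y = g^{-1} x g$ lies in the right-hand side.

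Equivalently, one can just observe that a bijection preserves intersections of subsets, and apply this to the conjugation map. The only point needing any care is this cancellation step, which makes the element $x_{i}$ independent of $i$, that is, uses the injectivity of conjugation. There is no real obstacle, and no induction on $n$ is needed.
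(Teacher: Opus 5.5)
Your argument is correct and complete: the mutual-inclusion check, with the cancellation step showing that $x_{i} = g y g^{-1}$ is independent of $i$, is precisely the injectivity of $x \mapsto g^{-1} x g$ that makes conjugation preserve intersections. The paper omits this proof as routine, so your argument supplies the standard verification it leaves out.
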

\begin{proof}
Omitted.
\end{proof}
\begin{corl}\label{crlcnj}
Let $ \Delta _{1} , \Delta _{2} \subseteq \Omega $ where $ \Delta _{2} = \Delta _{1} ^{g} $ for some $g \in G$. Then $G _{ ( \Delta _{2} ) } = g^{-1} G _{ ( \Delta _{1} ) } g $.
\end{corl}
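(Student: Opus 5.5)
The plan is to reduce the statement to two facts: the standard identity $G_{\delta^{g}} = g^{-1} G_{\delta} g$ for a single point, and Lemma \ref{conjeq}, which says that conjugation by $g$ commutes with taking intersections of subgroups.

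First I prove the single-point identity. Let $\delta \in \Omega$ and $x \in G$. Since the action is on the right, $(\delta^{g})^{x} = \delta^{gx}$. Hence $x \in G_{\delta^{g}}$ if and only if $\delta^{gxg^{-1}} = \delta$, that is, if and only if $gxg^{-1} \in G_{\delta}$. This in turn holds if and only if $x \in g^{-1} G_{\delta} g$. So $G_{\delta^{g}} = g^{-1} G_{\delta} g$.

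Next I handle the pointwise stabilizer of the whole set. If $\Delta_{1}$ is empty, then so is $\Delta_{2}$, both stabilizers equal $G$, and $g^{-1} G g = G$, so the claim holds. Otherwise write $\Delta_{1} = \{ \delta_{1}, \dots, \delta_{n} \}$. Because $\Delta_{2} = \Delta_{1}^{g}$, we have $\Delta_{2} = \{ \delta_{1}^{g}, \dots, \delta_{n}^{g} \}$. By definition $G_{(\Delta_{2})} = \bigcap_{i=1}^{n} G_{\delta_{i}^{g}}$. Substituting the single-point identity gives $\bigcap_{i=1}^{n} g^{-1} G_{\delta_{i}} g$. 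Lemma \ref{conjeq}, applied with $G_{i} := G_{\delta_{i}}$, rewrites this as $g^{-1} \bigl( \bigcap_{i=1}^{n} G_{\delta_{i}} \bigr) g = g^{-1} G_{(\Delta_{1})} g$.

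There is no real obstacle here. The only point needing care is that the conjugation goes in the right direction under the right-action convention: it is $g^{-1} G_{\delta} g$, not $g G_{\delta} g^{-1}$. The single-point computation above is written out explicitly to confirm this.
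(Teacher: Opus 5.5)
Your proof is correct and matches the paper's argument: write $G_{(\Delta_{2})}$ as an intersection of point stabilizers, substitute $G_{\delta^{g}} = g^{-1}G_{\delta}g$, and pull the conjugation out of the intersection with Lemma \ref{conjeq}. The only differences are that you check the single-point identity explicitly under the right-action convention and treat the empty set, whereas the paper uses that identity without comment.
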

\begin{proof}
It follows from Lemma \ref{conjeq} that
\begin{align*}
g^{-1} G _{ ( \Delta _{1} ) } g = g^{-1} \Bigg( \bigcap _{ \delta \in \Delta _{1} } G_{ \delta }  \Bigg) g
= \bigcap _{ \delta \in \Delta _{1} } g^{-1} G_{ \delta } g
= \bigcap _{ \delta \in \Delta _{1} } G_{ \delta ^{g} }
= \bigcap _{ \delta ^{ \prime } \in \Delta _{2} } G_{ \delta ^{ \prime } }
= G _{ ( \Delta _{2} ) }
.
\end{align*}
\end{proof}
\begin{lema}\label{hghtkrnlllsts}
Suppose $G$ acts on $ \Omega $ with kernel $K$. If there exists $m \in \mathbb{N} $ such that $ G _{ ( \Delta ) } = K$ for all subsets $ \Delta \subseteq \Omega $ of size $ | \Delta | = m $, then $ Ht( G, \Omega ) \leq m $.
\end{lema}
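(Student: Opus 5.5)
The plan is to argue by contradiction. Suppose $ Ht(G,\Omega) > m $. Then there is an independent set $ \Delta $ with $ | \Delta | = Ht(G,\Omega) \geq m+1 $. By Corollary \ref{indsub}, every non-empty subset of $ \Delta $ is again independent. So we may pass to a subset $ \Delta^{\prime} \subseteq \Delta $ with exactly $ | \Delta^{\prime} | = m+1 $ that is still independent. Since $ m \geq 1 $, we have $ | \Delta^{\prime} | \geq 2 $, and Lemma \ref{scndfnt} applies.

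Next, pick any $ \delta \in \Delta^{\prime} $ and set $ \Gamma := \Delta^{\prime} \setminus \{ \delta \} $, so $ | \Gamma | = m $. The hypothesis gives $ G_{(\Gamma)} = K $. On the other hand, $ K \leq G_{(\Delta^{\prime})} \leq G_{(\Gamma)} = K $, because the kernel fixes every point of $ \Omega $ and $ \Gamma \subseteq \Delta^{\prime} $. Hence $ G_{(\Delta^{\prime})} = G_{(\Delta^{\prime} \setminus \{\delta\})} $. By Lemma \ref{scndfnt} (or directly Lemma \ref{ltdfndpndnt}), $ \Delta^{\prime} $ is therefore not independent, which is a contradiction. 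So $ Ht(G,\Omega) \leq m $.

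One edge case needs a check: if $ m \geq | \Omega | $, there may be no subset of size $ m+1 $ at all. But then $ Ht(G,\Omega) \leq | \Omega | \leq m $ holds trivially, since an independent set is a subset of $ \Omega $. (Lemma \ref{htbnd} even gives $ | \Omega | - 1 $ when $ | \Omega | \geq 2 $.) The case $ | \Omega | = 1 $ is similarly trivial. There is no real obstacle here. The only care needed is the sandwich $ K \leq G_{(\Delta^{\prime})} \leq G_{(\Gamma)} $, and the reduction to a subset of size exactly $ m+1 $ via Corollary \ref{indsub}.
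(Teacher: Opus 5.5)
Your proof is correct and is essentially the paper's argument: the sandwich $K \leq G_{(\Delta')} \leq G_{(\Gamma)} = K$ for an $m$-subset $\Gamma$ of a larger set. The paper skips your reduction to a set of size exactly $m+1$ and the edge-case discussion, applying the sandwich directly to any $\Lambda$ with $|\Lambda| > m$ and an $m$-subset $\Lambda' \subset \Lambda$, which already shows $\Lambda$ is not independent.
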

\begin{proof}
Suppose $ \Lambda \subseteq \Omega $ with $ | \Lambda | > m $. Then there exists $ \Lambda ^{ \prime } \subset \Lambda $ of size $ | \Lambda ^{ \prime } | = m $. Since $K$ stabilizes all points of $ \Lambda $, we have
\[
K \leq G_{ ( \Lambda ) } \leq G_{ ( \Lambda  ^{ \prime } ) } = K .
\]
Thus $ G_{ ( \Lambda ) } = K = G_{ ( \Lambda  ^{ \prime } ) } $. This implies that $ \Lambda $ is not an independent set. Hence $ Ht( G, \Omega ) \leq m $.
\end{proof}
If $ n \geq 2 $, any subset of $ \{ 1 , \dots , n \} $ of size $ n -1 $ has trivial point stabilizer under the natural action of $ S_{n} $. Also the action is $ (n-1) $-transitive. So, using the above lemma, this action has height at most $ n -1 $. In fact, it was proved to be exactly $ n - 1 $ in Example \ref{hghtsym}. The final result of this section gives a way of finding out whether an $m$-transitive action has height exactly $m$ or not: check to see if a set of of size $m$ has the kernel as a stabilizer.
\begin{lema}\label{trnsqvhght}
Suppose $G$ is a finite group acting sharply $m$-transitively on a finite set $ \Omega $. Let $K$ be the kernel of the action. Then the following are equivalent;
\begin{itemize}
  \item[(i)] $  Ht ( G , \Omega )  = m $,
  \item[(ii)] $K = G_{ ( \Lambda ) } $ for all subsets $ \Lambda \subset \Omega $ of size $ | \Lambda | = m $,
    \item[(iii)] $K = G_{ ( \Delta ) } $ for some subset $ \Delta \subset \Omega $ of size $ | \Delta | = m $.
\end{itemize}
\end{lema}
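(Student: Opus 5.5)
I would prove the cycle (iii) $\Rightarrow$ (ii) $\Rightarrow$ (i) $\Rightarrow$ (iii). First I would note that (ii) and (iii) only concern strictly proper subsets $\Lambda \subset \Omega$ of size $m$, so either of them forces $|\Omega| > m$. Likewise, (i) together with Lemma \ref{htbnd} gives $m = Ht(G,\Omega) \leq |\Omega| - 1$. So in every direction we may assume $|\Omega| \neq m$, which is exactly the hypothesis needed for Lemma \ref{krnstvndpndnt} and Corollary \ref{crltrnstvhght}.

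For (iii) $\Rightarrow$ (ii), let $\Lambda \subset \Omega$ with $|\Lambda| = m$. Listing the points of $\Delta$ and of $\Lambda$ in some order, $m$-transitivity gives $g \in G$ with $\Lambda = \Delta^{g}$. Corollary \ref{crlcnj} then gives $G_{(\Lambda)} = g^{-1} G_{(\Delta)} g = g^{-1} K g = K$, since the kernel is normal in $G$.

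For (ii) $\Rightarrow$ (i), Lemma \ref{hghtkrnlllsts} applied to all $m$-subsets gives $Ht(G,\Omega) \leq m$. Corollary \ref{crltrnstvhght} gives $Ht(G,\Omega) \geq m$, which applies because $|\Omega| \neq m$.

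The direction (i) $\Rightarrow$ (iii) is where sharpness is used, and I expect it to be the only real point. Here I read ``sharply $m$-transitive'' as saying that $G/K$ acts regularly on ordered $m$-tuples of distinct points. Take any $m$-subset $\Delta$, which exists since $|\Omega| > m$, and list it as $(\delta_{1}, \dots, \delta_{m})$. An element of $G_{(\Delta)}$ sends this tuple to itself, and so does the identity. Regularity of $G/K$ on such tuples then forces the element to lie in $K$. Hence $G_{(\Delta)} \leq K$, and $K \leq G_{(\Delta)}$ trivially, so $G_{(\Delta)} = K$. The height hypothesis in (i) is only needed here to guarantee $|\Omega| > m$.

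If the intended definition were weaker, I would instead argue as follows. Take $\Delta \cup \{\omega\}$ of size $m+1$. It is not independent by (i), so by Lemma \ref{ltdfndpndnt} some $m$-subset $\Gamma$ satisfies $G_{(\Gamma)} = G_{(\Gamma \cup \{\omega'\})}$. I would then try to combine this with $m$-transitivity and the fact, from Lemma \ref{krnstvndpndnt}, that all $m$-subsets are independent, to show that $G_{(\Gamma)}$ fixes every point. I expect that step to be the main obstacle, and it is precisely where sharpness should enter.
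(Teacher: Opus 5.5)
Your argument is correct for the statement as written, but it gets the key implication by a different route from the paper. The paper never uses sharpness. It proves (i)$\Rightarrow$(ii) by contraposition. It takes an $m$-set $\Gamma_1$ with $G_{(\Gamma_1)} \neq K$ and a point $\omega_1$ moved by some element of $G_{(\Gamma_1)}$. By $m$-transitivity every $m$-subset of $\Gamma_1 \cup \{\omega_1\}$ is conjugate to $\Gamma_1$, so its stabilizer has order $|G_{(\Gamma_1)}| > |G_{(\Gamma_1\cup\{\omega_1\})}|$. Lemma \ref{scndfnt} then makes $\Gamma_1 \cup \{\omega_1\}$ an independent set of size $m+1$.

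You instead use regularity of $G/K$ on ordered $m$-tuples. Under sharpness this makes (ii) and (iii) hold automatically once $|\Omega| > m$, so the lemma collapses to ``a sharply $m$-transitive action on more than $m$ points has height $m$''. That is shorter, but the paper's argument is valid for every $m$-transitive action, and that is how the lemma is actually used. In Theorem \ref{slbrlhght} it is applied to $SL_{2}(q)$, $q \geq 4$, on the projective line. This action is $2$-transitive but not sharply so, since the two-point stabilizer contains the diagonal matrix with entries $\lambda, \lambda^{-1}$, $\lambda \neq \pm 1$. The lemma is used there to deduce $Ht(G,\Omega) > 2$ from $G_{(\Delta)} \neq K$, and your proof would not support that step.

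Your fallback paragraph is also not blocked where you expected. Take $\Sigma = \Delta \cup \{\omega\}$ with $G_{(\Sigma)} = G_{(\Sigma\setminus\{\sigma\})}$ from Lemma \ref{ltdfndpndnt}. Then $m$-transitivity makes $\Sigma \setminus \{\sigma\}$ conjugate to $\Delta$, so $|G_{(\Delta)}| = |G_{(\Sigma)}|$, and hence $G_{(\Delta)} = G_{(\Sigma)} \leq G_{\omega}$. Since $\omega \notin \Delta$ was arbitrary, $G_{(\Delta)} = K$. This is the same conjugation trick the paper uses, and sharpness plays no role.

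One inaccuracy, shared with the paper: (ii) does not force $|\Omega| > m$, because it is vacuously true when $|\Omega| = m$. In that case (ii)$\Rightarrow$(i) actually fails. For example, $Sym(\Omega)$ is sharply $|\Omega|$-transitive but has height $|\Omega| - 1$ by Example \ref{hghtsym}. The paper also applies Corollary \ref{crltrnstvhght} at this step without checking $m \neq |\Omega|$. The lemma needs the extra hypothesis $|\Omega| > m$.
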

\begin{proof}
$\textbf{(i)} \Rightarrow \textbf{(ii)}:$ Suppose $K \neq G_{ ( \Gamma _{1} ) } $ for some subset $ \Gamma _{1} \subset \Omega $ of size $ | \Gamma _{1} | = m $. Then $ G_{ ( \Gamma _{1} ) }  $ does not stabilize all elements of $ \Omega $. So there exists some $ \omega _{1} \in \Omega \setminus \Gamma _{1} $ and $g \in G_{ ( \Gamma _{1} ) } $ such that $ \omega _{1} ^{g} \neq \omega _{1} $. Put $ \Gamma ^{ \prime }  = \Gamma _{1} \cup \{ \omega _{1} \} $. Then
\[
| G_{ ( \Gamma ^{ \prime }  ) } | 
= | G_{ ( \Gamma _{1} ) } \cap G _{ \omega _{1} } |
< | G_{ ( \Gamma _{1} ) } | .
\]
Let $ \Gamma _{2} \subset \Gamma ^{ \prime }  $ be a subset of size $ |  \Gamma _{2}  |   = | \Gamma ^{ \prime }  | - 1 = m $. Then $ \Gamma _{2} = \Gamma ^{ \prime }  \setminus \{ \omega _{2}  \} $ for some $ \omega _{2} \in \Gamma ^{ \prime }  $. As the action is $m$-transitive, there exists some $ h \in G $ such that $ \Gamma _{2} = \Gamma _{1} ^{h} $. Hence $ G_{ ( \Gamma _{2} ) } = h ^{ -1} G_{ ( \Gamma _{1} ) } h $ by Corollary \ref{crlcnj}. It follows that 
\[
 | G_{ ( \Gamma ^{ \prime }  ) } | < | G_{ ( \Gamma _{1} ) } | 
 = | h^{-1} G_{ ( \Gamma _{1} ) } h | 
 =  | G_{ ( \Gamma _{2} ) } | .
 \]
So $ G_{ ( \Gamma ^{ \prime }  ) } \neq G_{ ( \Gamma _{2} ) } $. Therefore $ \Gamma ^{ \prime }  $ is independent by Lemma \ref{scndfnt} and so $ Ht ( G , \Omega ) \geq | \Gamma ^{ \prime }  | =  m + 1 $. Thus $ Ht ( G , \Omega ) \neq m $.

\medskip
$\textbf{(ii)} \Rightarrow \textbf{(i)} $: Suppose $K = G_{ ( \Lambda ) } $ for all subsets $ \Lambda \subset \Omega $ of size $ | \Lambda | = m $. Then $  Ht ( G , \Omega )  \leq m $ by Lemma \ref{hghtkrnlllsts}. Corollary \ref{crltrnstvhght} shows that $  Ht ( G , \Omega )  \geq m $ because the action is $m$-transitive. Hence $  Ht ( G , \Omega )  = m $.

\medskip
$\textbf{(ii)} \Rightarrow \textbf{(iii)} $: If $K = G_{ ( \Lambda ) } $ for all subsets $ \Lambda \subset \Omega $ of size $ | \Lambda | = m $ then clearly  $K = G_{ ( \Delta ) } $ for some subset $ \Delta \subset \Omega $ of size $ | \Delta | = m $.

\medskip
$\textbf{(iii)} \Rightarrow \textbf{(ii)} $: Suppose $K = G_{ ( \Delta ) } $ for some subset $ \Delta \subset \Omega $ of size $ | \Delta | = m $. Let $ \Sigma \subseteq \Omega $ where $ |  \Sigma  | = m $. As the action is $m$-transitive, there exists $a \in G$ such that $ \Delta ^{a} = \Sigma $. Using Corollary \ref{crlcnj} and the fact that $K \trianglelefteq G$, it follows that $G _{ (  \Sigma ) } = a^{-1} G _{ ( \Delta ) } a = a ^{-1} K a = K$.
\end{proof}

\newpage
\chapter{Primitive Actions of General and Special Linear Groups}\label{chapfour}
For the remainder of this text, the height and relational complexity of the primitive actions of $ PSL_{2} (q) $ and $ PGL_{2} (q) $ will be calculated. This chapter will provide the foundations, giving a description of primitive actions, $ PGL_{2} (q) $ and $ PSL_{2} (q) $. Some general results will be given to support the later chapters.
\section{Primitive Actions}
The following theorems help describe primitive group actions.
\begin{thrm}\label{transq}
Let $G$ be a group acting transitively on a set $ \Omega $. Let $ \omega \in \Omega $. The action of $G$ on $ \Omega $ is equivalent to the action by right multiplication of $G$ on the set of right cosets of the stabilizer $ G_{ \omega } $ in $G$.
\end{thrm}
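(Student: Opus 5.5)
The plan is to write down an explicit bijection and check that it intertwines the two actions, which is exactly what Definition \ref{quivdfn} asks for. Put $H := G_{ \omega }$ and let $\Gamma$ be the set of right cosets $Hg$, with $G$ acting by $(Hg)^{x} = Hgx$. First I will confirm that this is a well-defined action: if $Hg = Hg'$ then $Hgx = Hg'x$, and the identity and compatibility axioms follow from associativity.

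Next I will define $\phi : \Gamma \rightarrow \Omega$ by $(Hg)\phi = \omega ^{g}$. Well-definedness and injectivity come from the same short calculation. We have $Hg = Hg'$ if and only if $g' g^{-1} \in H$. This holds if and only if $\omega ^{g' g^{-1}} = \omega$, which is equivalent to $\omega ^{g'} = \omega ^{g}$. Read in one direction, this says $\phi$ is well defined; read in the other, it says $\phi$ is injective. Surjectivity uses transitivity: for each $\alpha \in \Omega$ there is some $g \in G$ with $\omega ^{g} = \alpha$, and then $(Hg)\phi = \alpha$.

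Finally I will check the equivariance condition $((Hg)^{x})\phi = ((Hg)\phi)^{x}$. The left side is $(Hgx)\phi = \omega ^{gx}$, and the right side is $(\omega ^{g})^{x} = \omega ^{gx}$ because the action is a right action. So $\phi$ is a bijection satisfying Definition \ref{quivdfn}, and the actions are equivalent. Since Definition \ref{quivdfn} is stated with the bijection going from the first set to the second, I will use $\phi^{-1} : \Omega \rightarrow \Gamma$ if that orientation is required; it is also equivariant.

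None of these steps is difficult. The only place that needs care is getting the coset condition $g'g^{-1} \in H$ the right way round for right cosets and right actions, because that condition is what makes $\phi$ both well defined and injective.
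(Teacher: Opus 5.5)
Your proposal is correct: $G_{\omega} g \mapsto \omega^{g}$ is well defined and injective by the equivalence $g'g^{-1} \in G_{\omega} \iff \omega^{g'} = \omega^{g}$, surjective by transitivity, and equivariant, and passing to $\phi^{-1}$ matches the orientation required in Definition \ref{quivdfn}. The paper gives no argument of its own here and simply cites \cite{ROSE}, Theorem 4.20; your self-contained proof is the standard orbit--stabilizer argument for this fact.
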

\begin{proof}
See \cite{ROSE}, Chapter 4, page 76, Theorem 4.20.
\end{proof}
\begin{thrm}\label{maxg}
Let $G$ be a group acting on a set $ \Omega $ where $ | \Omega | \geq 2 $. Then $G$ is primitive if and only if the action is transitive and for each $ \omega \in \Omega $ the point stabilizer $G_{ \omega } $ is a maximal subgroup of $G$. 
\end{thrm}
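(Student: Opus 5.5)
I will prove the two implications separately, using Theorem~\ref{transq} to identify $\Omega$ with the right cosets of a point stabilizer. The statement implicitly requires that $G_{\omega}$ be maximal in $G$, meaning a proper subgroup not contained in any other proper subgroup. For the non-trivial direction I take the standard definition: $G$ is primitive if it is transitive and preserves no non-trivial block system. A block is a subset $B \subseteq \Omega$ with $B^{g} = B$ or $B^{g} \cap B = \emptyset$ for all $g \in G$, and it is non-trivial if $1 < |B| < |\Omega|$.

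\textbf{Primitive implies transitive with maximal stabilizers.} Transitivity is part of the definition, so only maximality needs proof. Since $|\Omega| \geq 2$ and the action is transitive, $G_{\omega} \neq G$. Suppose $G_{\omega} \leq H \leq G$, and put $B := \omega^{H} = \{ \omega^{h} : h \in H \}$.
\begin{itemize}
\item $B$ is a block. If $\omega^{h_{1} g} = \omega^{h_{2}}$ for some $h_{1}, h_{2} \in H$, then $h_{1} g h_{2}^{-1} \in G_{\omega} \leq H$. Hence $g \in H$, and so $B^{g} = B$.
\item By primitivity, either $B = \{ \omega \}$ or $B = \Omega$.
\item If $B = \{ \omega \}$, then $H \leq G_{\omega}$, so $H = G_{\omega}$.
\item If $B = \Omega$, then for any $g \in G$ there is $h \in H$ with $\omega^{g} = \omega^{h}$. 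Then $g h^{-1} \in G_{\omega} \leq H$, so $g \in H$ and $H = G$.
\end{itemize}
Thus $G_{\omega}$ is maximal.

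\textbf{Transitive with maximal stabilizers implies primitive.} Let $B$ be a block with $\omega \in B$, and set $H := \{ g \in G : B^{g} = B \}$, the setwise stabilizer of $B$.
\begin{itemize}
\item $G_{\omega} \leq H$: for $g \in G_{\omega}$ we have $\omega \in B \cap B^{g}$, so $B^{g} = B$.
\item By maximality, $H = G_{\omega}$ or $H = G$.
\item If $H = G$, then transitivity gives $\Omega = \omega^{G} \subseteq B$, so $B = \Omega$.
\item If $H = G_{\omega}$, take any $\beta \in B$. By transitivity $\beta = \omega^{g}$ for some $g$. Then $\beta \in B \cap B^{g}$ forces $B^{g} = B$, so $g \in H = G_{\omega}$ and $\beta = \omega$. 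Hence $B = \{ \omega \}$.
\end{itemize}
So every block is trivial and $G$ is primitive.

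This argument does not use the earlier results of the paper beyond Theorem~\ref{transq}. The main care point is the block-system definition of primitivity: the step "$H \supseteq G_{\omega}$ is the full stabilizer of the block $\omega^{H}$" must be done carefully, and I expect this to be the only delicate point. Alternatively, one could simply cite a standard text such as \cite{ROSE}, as the paper does for Theorem~\ref{transq}.
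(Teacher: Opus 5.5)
Your proof is correct and self-contained. The paper gives no argument of its own here; it simply cites Corollary 1.5A of \cite{DANDM}. That corollary rests on the correspondence, established just before it in \cite{DANDM}, between blocks containing $\omega$ and subgroups $H$ with $G_{\omega} \leq H \leq G$. The correspondence is given by $H \mapsto \omega^{H}$ in one direction and $B \mapsto G_{\{B\}}$ in the other, and these are exactly the two maps you use in your two implications. So you have essentially reproduced the cited argument, restricted to the trivial/non-trivial dichotomy.

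What your version adds is independence from the reference. It also states explicitly the block definition of primitivity, which the paper only uses implicitly, in the proof of Lemma \ref{ktrnstiveprmt}.

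One small remark: your opening appeal to Theorem \ref{transq} is superfluous. Nothing in your argument identifies $\Omega$ with a coset space, so your proof does not depend on that result at all.
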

\begin{proof}
See \cite{DANDM}, Chapter 1, page 14, Corollary 1.5A.
\end{proof}
Theorems \ref{transq} and \ref{maxg} together show that, up to equivalence, the primitive actions of a group are precisely the actions of right multiplication on the right cosets of maximal subgroups. This means that to look at the primitive actions of groups, it will be useful to understand the conjugacy classes of maximal subgroups.

\medskip
Most of the primitive actions that will be looked at will be those on the right cosets of a non-normal maximal subgroup. The following lemmas show that we can consider these the same as the conjugation action on the same maximal subgroup.
\begin{lema}\label{mxmlnrmlizr}
Let $ G$ be a group and $ H < G $ a maximal subgroup that is not normal in $ G $. Then $ N_{G} (H) = H $.
\end{lema}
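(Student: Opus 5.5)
The argument uses only the maximality of $H$ and the fact that $H$ is not normal. The plan is to place the normalizer $N_{G}(H)$ between $H$ and $G$ and let maximality force it to equal one of the two ends.

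\medskip
First, $H \leq N_{G}(H) \leq G$ always holds, since every element of $H$ conjugates $H$ to itself. Because $H$ is a maximal subgroup of $G$, the only subgroups $M$ with $H \leq M \leq G$ are $M = H$ and $M = G$. Hence either $N_{G}(H) = H$ or $N_{G}(H) = G$.

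\medskip
Next, we rule out $N_{G}(H) = G$. If it held, then $g^{-1} H g = H$ for every $g \in G$, so $H \trianglelefteq G$, contradicting the hypothesis that $H$ is not normal in $G$. Therefore $N_{G}(H) = H$.

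\medskip
There is no real obstacle here. The only point needing care is to use the paper's convention that a maximal subgroup is a proper subgroup with no subgroup strictly between it and $G$. This convention is what justifies the dichotomy in the first step.
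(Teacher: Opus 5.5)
Your proof is correct and follows the same route as the paper: place $N_{G}(H)$ between $H$ and $G$, use non-normality to exclude $N_{G}(H) = G$, and conclude $N_{G}(H) = H$ by maximality. The only difference is that you state the dichotomy given by maximality explicitly before ruling out the case $N_{G}(H) = G$.
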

\begin{proof}
The subgroup $ H $ normalizes itself, so $ H \leq N_{G} (H) \leq G $. Since $ H $ is not normal, $  N_{G} (H) \neq G $. it follows from the maximality of $ H $ that $ H = N_{G} (H) $.
\end{proof}
\begin{lema}\label{cstquivtcnj}
Let $ G $ be a group. Suppose $ H < G$ is maximal and $ H \ntriangleleft G $. Then the action by right multiplication on $ \Omega = \{ Hg : g \in G \} $ is equivalent to the action by conjugation on $ \Gamma = \{ g^{-1} H g : g \in G \} $.
\end{lema}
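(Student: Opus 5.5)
We write down an explicit bijection and check that it intertwines the two actions. Define $\phi : \Omega \rightarrow \Gamma$ by $(Hg)\phi = g^{-1} H g$. Verifying Definition \ref{quivdfn} then splits into three checks, carried out in this order: $\phi$ is well defined, $\phi$ is a bijection, and $\phi$ is compatible with the actions.

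\textbf{Well-definedness.} Suppose $Hg_{1} = Hg_{2}$. Then $g_{2} = h g_{1}$ for some $h \in H$. Hence $g_{2}^{-1} H g_{2} = g_{1}^{-1} h^{-1} H h g_{1} = g_{1}^{-1} H g_{1}$, so the image does not depend on the coset representative.

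\textbf{Bijectivity.} Surjectivity is immediate from the definition of $\Gamma$. For injectivity, this is the one place where the hypotheses are used, so it is the step I expect to be the real content, though it is short. Suppose $g_{1}^{-1} H g_{1} = g_{2}^{-1} H g_{2}$. Conjugating gives $(g_{2} g_{1}^{-1})^{-1} H (g_{2} g_{1}^{-1}) = H$, so $g_{2} g_{1}^{-1} \in N_{G}(H)$. Since $H$ is maximal and not normal, Lemma \ref{mxmlnrmlizr} gives $N_{G}(H) = H$. Therefore $g_{2} g_{1}^{-1} \in H$, that is $Hg_{1} = Hg_{2}$.

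\textbf{Compatibility with the actions.} The action on $\Gamma$ is right conjugation, $K^{x} = x^{-1} K x$, which matches the paper's convention that group elements act on the right. For $x \in G$,
\[
((Hg)^{x})\phi = (Hgx)\phi = (gx)^{-1} H (gx) = x^{-1}(g^{-1} H g)x = ((Hg)\phi)^{x}.
\]
This is exactly the condition in Definition \ref{quivdfn}, so the two actions are equivalent.
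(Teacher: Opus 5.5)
Your proof is correct and follows the paper's argument: the same map $Hg \mapsto g^{-1}Hg$, injectivity via $N_{G}(H) = H$ from Lemma \ref{mxmlnrmlizr}, surjectivity by definition of $\Gamma$, and the same compatibility computation. You also check that $\phi$ is well defined, which the paper's proof leaves implicit.
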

\begin{proof}
Define a map
\begin{align*}
\phi : \Omega & \rightarrow \Gamma \\
Hg & \mapsto g^{-1} H g .
\end{align*}
Suppose $g_{1} , g_{2} \in G $ and $ g^{-1} _{1} H g_{1}  = g^{-1} _{2} H g _{2}  $. Then $ H = g_{1} g^{-1} _{2} H g_{2} g^{-1} _{1} $. Hence $ g_{2} g^{-1} _{1} \in H $ by Lemma \ref{mxmlnrmlizr} and so $ g_{2} \in H g_{1} $. Therefore $ H g_{1} = H g_{2} $. It follows that $ \phi $ is injective.

\medskip
Clearly $  \phi $ is surjective, so the map is a bijection.

\medskip
Let $ g_{3} , g_{4} \in G $. Observe that $ ( H g_{3} g_{4} ) \phi = g_{4} ^{-1} g_{3} ^{-1} H g_{3} g_{4} =  g_{4} ^{-1} ( H g_{3} ) \phi g_{4}$.

\medskip
Following Definition \ref{quivdfn}, this shows that the actions are equivalent.
\end{proof}
This section is finished off with a few of results on primitive actions for use later.
\begin{lema}\label{nrmlprstmtsk}
Let $ G $ be group such that it is either simple or all proper non-trivial normal subgroups are maximal. Let $ M < G $ be a maximal subgroup. If $  H \triangleleft M $ and $ H \neq \{ 1_{G} \} $ then $ N_{G} (H) = M $.
\end{lema}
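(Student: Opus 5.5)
The plan is to squeeze $N_{G}(H)$ between $M$ and $G$ and use maximality. Since $H \trianglelefteq M$, every element of $M$ normalizes $H$, so $M \leq N_{G}(H) \leq G$. As $M$ is maximal in $G$, this leaves only two options: $N_{G}(H) = M$ or $N_{G}(H) = G$. The proof then consists of ruling out $N_{G}(H) = G$, i.e.\ ruling out $H \trianglelefteq G$, under the hypotheses on $G$.

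Suppose $H \trianglelefteq G$. We have $\{1_{G}\} \neq H \leq M < G$, so $H$ is a proper non-trivial normal subgroup of $G$.
\begin{itemize}
\item If $G$ is simple, no such subgroup exists, which is a contradiction.
\item Otherwise, by hypothesis $H$ is maximal in $G$. From $H \leq M < G$ and the maximality of $H$ we get $H = M$, and therefore $M \trianglelefteq G$.
\end{itemize}

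The second case is the main obstacle, and I expect it to need an extra assumption. Nothing in the statement as written excludes $M$ itself being normal in $G$. For instance, take $G = PGL_{2}(q)$ and $H = M = PSL_{2}(q)$: then $N_{G}(H) = G \neq M$. So in the proof I would make explicit the assumption that $M$ is not normal in $G$. This is the situation of Lemma \ref{mxmlnrmlizr} and Lemma \ref{cstquivtcnj}, which are the non-normal maximal subgroups the paper actually uses.

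Under that assumption, $M \trianglelefteq G$ contradicts $M \ntriangleleft G$. Hence $N_{G}(H) \neq G$, and so $N_{G}(H) = M$. If $M$ is allowed to be normal, the argument shows the only exception is $H = M \trianglelefteq G$, in which case $N_{G}(H) = G$.
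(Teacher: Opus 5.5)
Your argument (sandwich $M \leq N_{G}(H) \leq G$, then rule out $N_{G}(H) = G$) is the paper's argument. The only difference is the last step, and it comes from how you read $H \triangleleft M$.

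In this paper $\triangleleft$ is the strict counterpart of $\trianglelefteq$, just as $<$ is to $\leq$ and $\subset$ is to $\subseteq$. So the hypothesis says $H$ is a \emph{proper} non-trivial normal subgroup of $M$, and $H \neq M$ is given. The paper stops exactly where you reach $H = M$. It writes ``$H = M \neq H$'': maximality of $H$ together with $H \leq M < G$ forces $H = M$, which contradicts $H$ being proper in $M$. No assumption on $M$ being non-normal is needed. The intended reading is confirmed in the proof of Lemma \ref{ntnrmlmxml}, where the author first checks that $K$ is a proper subgroup of both $H_{1}$ and $H_{2}$ before invoking this lemma. Your example $H = M = PSL_{2}(q) \leq PGL_{2}(q)$ is correct under the non-strict reading, so flagging the notation is fair. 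It simply does not satisfy the actual hypothesis.

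Replacing properness of $H$ by the assumption $M \ntriangleleft G$ proves a different lemma rather than filling a gap in this one. Your version adds the case $H = M$ for non-normal $M$, which is just Lemma \ref{mxmlnrmlizr}. It loses the case of a normal maximal $M$ with a proper non-trivial normal subgroup $H$. The stated lemma covers that case by the same argument: $H \trianglelefteq G$ would make $H$ maximal while $H < M < G$. So keep the hypothesis as written, read it as properness, and finish with $H = M$ contradicting $H < M$.
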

\begin{proof}
Since $ M \leq N_{G} (H) \leq G $, either $ N_{G} (H) = M  $ or $ N_{G} (H) = G  $. If $ G $ is simple then $ N_{G} (H) = M  $. So suppose $ G $ is non-simple and all proper non-trivial normal subgroups are maximal. If $ N_{G} (H) = G $ then $ H $ is maximal and $ H = M \neq H $, a contradiction.
\end{proof}
\begin{lema}\label{ntnrmlmxml}
Let $ G$ be group such that it is either simple or all proper non-trivial normal subgroups are maximal. Let $ H_{1} $ and $ H_{2} $ be distinct maximal subgroups. Suppose $ K \leq H_{1} \cap H_{2} $ and that $ K \neq \{ 1_{G} \} $. Then at least one of the following is true;
\begin{itemize}
\item $ K \ntrianglelefteq H_{1} $
\item $ K \ntrianglelefteq H_{2} $.
\end{itemize}
\end{lema}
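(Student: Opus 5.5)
Argue by contradiction, using Lemma \ref{nrmlprstmtsk} (whose hypothesis on $G$ is exactly the one assumed here). Suppose both $ K \trianglelefteq H_{1} $ and $ K \trianglelefteq H_{2} $.

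Since $ H_{1} $ is maximal, $ K $ is normal in $ H_{1} $, and $ K \neq \{ 1_{G} \} $, Lemma \ref{nrmlprstmtsk} applied with $ M = H_{1} $ and $ H = K $ gives $ N_{G} (K) = H_{1} $. Applying the same lemma with $ M = H_{2} $ gives $ N_{G} (K) = H_{2} $. Hence $ H_{1} = N_{G} (K) = H_{2} $, which contradicts the assumption that $ H_{1} $ and $ H_{2} $ are distinct. So $ K $ fails to be normal in at least one of $ H_{1} $ and $ H_{2} $, which is the statement.

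The only point needing care is that the hypotheses of Lemma \ref{nrmlprstmtsk} hold in each application. The group condition is assumed verbatim, the maximality of $ H_{1} $ and $ H_{2} $ is given, and non-triviality of $ K $ is given. Nothing about $ K $ being proper in $ H_{i} $ is needed, since that lemma only requires $ H \triangleleft M $ with $ H $ non-trivial. So there is no real obstacle. The substantive work (ruling out $ N_{G}(H) = G $ in the non-simple case) was already done in Lemma \ref{nrmlprstmtsk}, and this statement is essentially a corollary of it.
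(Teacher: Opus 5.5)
Your argument is the paper's own proof: assuming $K$ is normal in both $H_{1}$ and $H_{2}$, Lemma \ref{nrmlprstmtsk} gives $H_{1} = N_{G}(K) = H_{2}$, a contradiction.

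One correction: your remark that properness of $K$ in $H_{i}$ is not needed is wrong. In the non-simple case, Lemma \ref{nrmlprstmtsk} genuinely requires $H \neq M$; its proof ends with ``$H = M \neq H$''. Without that hypothesis the lemma fails, for example for $G = PGL_{2}(q)$ with $q$ odd and $H = M = PSL_{2}(q)$, where $N_{G}(H) = G$.

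The fix costs nothing here. $K \leq H_{1} \cap H_{2}$, and $H_{1} \cap H_{2}$ is proper in each $H_{i}$: if $H_{1} \cap H_{2} = H_{1}$ then $H_{1} \leq H_{2}$, which forces $H_{1} = H_{2}$ by maximality. This is exactly the observation the paper's proof opens with, and you should include it rather than dismiss it.
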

\begin{proof}
Since $ H_{1} \neq H_{2} $ it follows that $ H_{1} \neq H_{1} \cap H_{2} \neq H_{2} $ and $ K $ is a proper subgroup of both $ H_{1} $ and $ H_{2} $. If $ K \trianglelefteq H_{1} $ and $ K \trianglelefteq H_{2} $ then $ H_{1} = N_{G} (K) = H_{2} $ by Lemma \ref{nrmlprstmtsk}, which is a contradiction. 
\end{proof}
\begin{corl}\label{cntrcnjgt}
Let $ G$ be group such that it is either simple or all proper non-trivial normal subgroups are maximal. Suppose $ H_{1} $ and $ H_{2} $ are maximal subgroups of $ G $. If $  Z(H_{1} ) \cap Z(H_{2} )  \neq \{  1_{G} \} $ then $ H_{1} = H_{2} $.
\end{corl}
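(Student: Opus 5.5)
We take $ K := Z(H_{1}) \cap Z(H_{2}) $, assume $ K \neq \{ 1_{G} \} $, and apply Lemma \ref{ntnrmlmxml}; the only thing to check is that its normality hypothesis is met.

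First, since $ Z(H_{1}) \leq H_{1} $ and $ Z(H_{2}) \leq H_{2} $, we have $ K \leq H_{1} \cap H_{2} $. Next, $ K $ is normal in each $ H_{i} $. Indeed, every element of $ K $ lies in $ Z(H_{i}) $ and so commutes with every element of $ H_{i} $. Hence $ h^{-1} k h = k $ for all $ k \in K $ and $ h \in H_{i} $, which gives $ h^{-1} K h = K $. Thus $ K \trianglelefteq H_{1} $ and $ K \trianglelefteq H_{2} $.

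Now suppose, for contradiction, that $ H_{1} \neq H_{2} $. Then $ H_{1} $ and $ H_{2} $ are distinct maximal subgroups of a group $ G $ satisfying the hypothesis (simple, or all proper non-trivial normal subgroups maximal). Moreover $ K $ is a non-trivial subgroup of $ H_{1} \cap H_{2} $. Lemma \ref{ntnrmlmxml} therefore says that $ K $ fails to be normal in at least one of $ H_{1} $, $ H_{2} $. This contradicts the previous paragraph, so $ H_{1} = H_{2} $.

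There is no real obstacle here. The only point needing care is the observation that any subgroup of a centre is automatically normal, since this is exactly what allows the normality conclusion of Lemma \ref{ntnrmlmxml} to be contradicted.
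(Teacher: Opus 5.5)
Your proposal is correct and follows the paper's proof: both note that $Z(H_{1}) \cap Z(H_{2})$ is a non-trivial subgroup of $H_{1} \cap H_{2}$ that is normal in each $H_{i}$, and then apply Lemma \ref{ntnrmlmxml}. The only differences are cosmetic: you phrase the argument as a contradiction, and you spell out why a subgroup of a centre is normal, which the paper leaves implicit.
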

\begin{proof}
We have $ Z(H_{1} ) \cap Z(H_{2} )  \trianglelefteq H_{1} $ and $ Z(H_{1} ) \cap Z(H_{2} )  \trianglelefteq H_{2} $. Also $ Z(H_{1} ) \cap Z(H_{2} )  \neq \{ 1 _{G} \} $. Thus $ H_{1} = H_{2} $ by Lemma \ref{ntnrmlmxml}.
\end{proof}
\section{General and Special Linear Groups}
The general linear group of invertible $ 2 \times 2 $ matrices over a finite field $ \mathbb{F} _{q} $ will be written as $ GL_{2} (q) $. The special linear subgroup of matrices with determinant $ 1 $ is denoted as $ SL_{2} (q) $.
\begin{lema}\label{cntrpglndpsltwq}
The centre of $ GL_{2} (q) $ is
\[
Z( GL_{2} (q) ) = \{ \lambda I : \lambda \in \mathbb{F} _{q} ^{*} \}
\]
and the centre of $ SL_{2} (q) $ is
\[
Z( SL_{2} (q) ) =  SL _{2} (q) \cap Z( GL_{2} (q) )  = \{ I, -I \}
.
\]
where $ I $ is the identity matrix.
\end{lema}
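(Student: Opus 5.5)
The plan is to compute the centre of $ GL_{2} (q) $ directly by commuting a central matrix with a few well-chosen elementary matrices, and then to deduce the centre of $ SL_{2} (q) $ by the same argument restricted to matrices of determinant $ 1 $.

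\medskip
\textbf{Centre of $ GL_{2} (q) $.} Clearly every scalar matrix $ \lambda I $ with $ \lambda \in \mathbb{F} _{q} ^{*} $ is invertible and commutes with everything, so $ \{ \lambda I \} \subseteq Z( GL_{2} (q) ) $. For the reverse inclusion, take
\[
A = \begin{pmatrix} a & b \\ c & d \end{pmatrix} \in Z( GL_{2} (q) ) .
\]
I will compare $ AE $ with $ EA $ for the transvections
\[
E = \begin{pmatrix} 1 & 1 \\ 0 & 1 \end{pmatrix}
\quad \text{and} \quad
F = \begin{pmatrix} 1 & 0 \\ 1 & 1 \end{pmatrix} .
\]
Both have determinant $ 1 $, so they lie in $ SL_{2} (q) \leq GL_{2} (q) $. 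Comparing entries, $ AE = EA $ forces $ c = 0 $ and $ a = d $, while $ AF = FA $ forces $ b = 0 $ and $ a = d $. Hence $ A = aI $ with $ a \neq 0 $, since $ A $ is invertible.

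\medskip
\textbf{Centre of $ SL_{2} (q) $.} The same computation works verbatim inside $ SL_{2} (q) $, because $ E , F \in SL_{2} (q) $. So every element of $ Z( SL_{2} (q) ) $ is scalar, which gives $ Z( SL_{2} (q) ) \subseteq SL_{2} (q) \cap Z( GL_{2} (q) ) $. Conversely, scalar matrices commute with everything, so $ SL_{2} (q) \cap Z( GL_{2} (q) ) \subseteq Z( SL_{2} (q) ) $, and the two sets are equal.

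\medskip
\textbf{Identifying the scalars.} It remains to find which scalar matrices have determinant $ 1 $. Since $ \det ( \lambda I ) = \lambda ^{2} $, these are exactly the $ \lambda $ with $ \lambda ^{2} = 1 $, namely $ \lambda = \pm 1 $. This gives $ \{ I , -I \} $, a set of size $ 1 $ when $ q $ is even (as then $ -1 = 1 $).

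\medskip
There is no real obstacle in this argument. The only point needing care is that the test matrices $ E $ and $ F $ are chosen to have determinant $ 1 $, so that a single computation handles both centres at once.
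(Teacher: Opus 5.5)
Your proof is correct and complete. The computations check out: $AE = EA$ forces $c = 0$ and $a = d$, and $AF = FA$ forces $b = 0$ and $a = d$. Because $E, F \in SL_{2}(q)$, the same computation gives both centres at once.

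The paper gives no argument here (the proof is marked ``Omitted'' as standard), so there is no route to compare against. Your remark that $\{ I , -I \}$ collapses to $\{ I \}$ when $q$ is even is consistent with the factor $d = \gcd(2, q-1)$ in Lemma~\ref{lnsz}.
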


\newpage
\begin{proof}
Omitted.
\end{proof}
The quotient of $GL_{2} (q) $ by its centre is called the projective linear group and will be written as $ PGL_{2} (q)  = GL_{2} (q) / Z( GL_{2} (q) ) $.

\medskip
The projective special linear group is the quotient of $ SL_{2} (q) $ by its centre and will be written as $ PSL_{2} (q)  = SL_{2} (q) / Z( SL_{2} (q) ) $.
\begin{lema}\label{lnsz}
Let $d = gcd ( 2, q-1 ) $, where gcd is the greatest common divisor. The orders of the groups $ GL _{2} (q) $, $SL _{2} (q) $, $PGL _{2} (q) $ and $PSL _{2} (q) $ are
\begin{align*}
& | GL _{2} (q) | = q (q ^{2} - 1) ( q - 1) , \\
& | PGL _{2} (q) | = | SL _{2} (q) | = q (q ^{2} - 1)  , \ \ \ \  \text{ and} \\
& | PSL _{2} (q) |  = d ^{-1} q (q ^{2} - 1)  .
\end{align*}
\end{lema}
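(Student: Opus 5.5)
We need to prove Lemma \ref{lnsz}, the orders of $GL_2(q)$, $SL_2(q)$, $PGL_2(q)$ and $PSL_2(q)$. The plan is a direct count for $GL_2(q)$, followed by three applications of Lagrange's theorem / the first isomorphism theorem.

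First, count $GL_{2}(q)$ by choosing rows. A matrix is invertible if and only if its two rows are linearly independent over $\mathbb{F}_{q}$.
\begin{itemize}
\item The first row may be any non-zero vector in $\mathbb{F}_{q}^{2}$, giving $q^{2}-1$ choices.
\item The second row may be any vector not in the span of the first row. That span is a line containing $q$ vectors, so there are $q^{2}-q$ choices.
\end{itemize}
Hence
\[
|GL_{2}(q)| = (q^{2}-1)(q^{2}-q) = q(q^{2}-1)(q-1).
\]

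Second, the determinant map $\det : GL_{2}(q) \rightarrow \mathbb{F}_{q}^{*}$ is a homomorphism. It is surjective, since $\mathrm{diag}(\lambda,1)$ has determinant $\lambda$. Its kernel is $SL_{2}(q)$, so by the first isomorphism theorem
\[
|SL_{2}(q)| = \frac{|GL_{2}(q)|}{q-1} = q(q^{2}-1).
\]

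Third, by Lemma \ref{cntrpglndpsltwq} the centre $Z(GL_{2}(q))$ consists of the scalar matrices $\lambda I$ with $\lambda \in \mathbb{F}_{q}^{*}$. These are pairwise distinct, so the centre has order $q-1$. Therefore
\[
|PGL_{2}(q)| = \frac{|GL_{2}(q)|}{q-1} = q(q^{2}-1),
\]
which equals $|SL_{2}(q)|$.

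Finally, Lemma \ref{cntrpglndpsltwq} gives $Z(SL_{2}(q)) = \{I,-I\}$. This set has size $2$ when $-1 \neq 1$, i.e.\ when $q$ is odd, and size $1$ when $q$ is even. In both cases its size is $d = \gcd(2,q-1)$. Dividing gives
\[
|PSL_{2}(q)| = d^{-1}q(q^{2}-1).
\]

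The only step needing any care is this last one: identifying $|\{I,-I\}|$ with $\gcd(2,q-1)$ by splitting according to the characteristic of $\mathbb{F}_{q}$. Every other step is routine.
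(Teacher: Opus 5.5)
Your proof is correct; the paper gives no argument of its own here and simply cites \cite{TAYLOR}, Chapter 4, page 19. Your argument is exactly the standard derivation behind that reference: count ordered bases for $GL_{2}(q)$, take the kernel of the surjective determinant for $SL_{2}(q)$, and divide by the centres from Lemma \ref{cntrpglndpsltwq}, with the case split giving $|\{I,-I\}| = \gcd(2,q-1)$. It therefore makes the lemma self-contained, except for Lemma \ref{cntrpglndpsltwq}, whose proof the paper also omits.
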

\begin{proof}
See \cite{TAYLOR}, Chapter 4, page 19.
\end{proof}
The next three facts are well known or easy to prove, so proofs have been left out.
\begin{lema}\label{sqrsltwq}
Suppose $q$ is odd. Let $I \in GL _{2} (q) $ be the identity matrix. If $ g \in GL _{2} (q) $ and $ g ^{2} = \lambda I $ for some $ \lambda \in \mathbb{F} _{q} ^{*} $, then either $ g = \mu I $ for some $ \mu  \in \mathbb{F} _{q} ^{*} $ or $ g$ has the form
\begin{align*}
g =
\begin{pmatrix*}[r]
w & x \\
y & -  w
\end{pmatrix*}
, \ \ \ \ \ w, x, y \in \mathbb{F} _{q} .
\end{align*}
\end{lema}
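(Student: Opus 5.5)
Write $g = \begin{pmatrix} a & b \\ c & d \end{pmatrix}$ and expand $g^{2}$ directly. This gives
\[
g^{2} = \begin{pmatrix} a^{2} + bc & b(a+d) \\ c(a+d) & d^{2} + bc \end{pmatrix} .
\]
The hypothesis $g^{2} = \lambda I$ then becomes three conditions: $b(a+d) = 0$, $c(a+d) = 0$ and $a^{2} + bc = d^{2} + bc = \lambda$. The argument splits according to whether the trace $a+d$ is zero.

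\textbf{Case $a + d = 0$.} Here $d = -a$, so $g$ already has the required form with $w = a$, $x = b$, $y = c$. Nothing further is needed.

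\textbf{Case $a + d \neq 0$.} The two off-diagonal equations force $b = c = 0$. The diagonal equations then give $a^{2} = d^{2}$, so $(a-d)(a+d) = 0$, and since $a + d \neq 0$ we get $a = d$. Hence $g = aI$. Finally $a \neq 0$ because $g$ is invertible, so $g = \mu I$ with $\mu = a \in \mathbb{F}_{q}^{*}$.

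The only delicate point is the role of $q$ being odd. In the second case it guarantees that $a = d$ is consistent with $a + d \neq 0$, since $a + d = 2a \neq 0$. More to the point, in characteristic $2$ the form $\begin{pmatrix} w & x \\ y & -w \end{pmatrix}$ would coincide with trace zero matrices that include $\mu I$, so the dichotomy is only meaningful for odd $q$. I will remark on this, but the proof itself is purely computational.
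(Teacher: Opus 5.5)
Your proof is correct and complete. The paper omits the proof of this lemma as easy, and expanding $g^{2}$ entrywise and then splitting on whether $a+d=0$ is exactly the routine computation required.

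One small correction to your commentary: your derivation never uses the hypothesis that $q$ is odd. In characteristic $2$ the case $a+d\neq 0$ is simply vacuous, because $a=d$ forces $a+d=2a=0$. So you do not need oddness to make $a=d$ ``consistent'' with $a+d\neq 0$. Oddness only serves to make the two alternatives mutually exclusive, as you note afterwards.
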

\begin{proof}
Omitted.
\end{proof}
\begin{lema}\label{pslsnpgl}
For any $ q $, the group $ PGL _{2} (q) $ contains a normal subgroup isomorphic to $ PSL _{2} (q) $.
\end{lema}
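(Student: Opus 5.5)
The natural candidate is the image of $SL_{2}(q)$ in $PGL_{2}(q)$. Consider the canonical projection $\pi : GL_{2}(q) \rightarrow PGL_{2}(q)$, $g \mapsto g Z(GL_{2}(q))$. Since $SL_{2}(q) \trianglelefteq GL_{2}(q)$ (it is the kernel of the determinant homomorphism), its image $\pi(SL_{2}(q)) = SL_{2}(q) Z(GL_{2}(q)) / Z(GL_{2}(q))$ is normal in $PGL_{2}(q)$. Surjective homomorphisms carry normal subgroups to normal subgroups, so normality needs no further argument.

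It remains to identify this image with $PSL_{2}(q)$. Restrict $\pi$ to $SL_{2}(q)$, obtaining a surjective homomorphism $SL_{2}(q) \rightarrow \pi(SL_{2}(q))$. Its kernel is $SL_{2}(q) \cap Z(GL_{2}(q))$, and by Lemma \ref{cntrpglndpsltwq} this equals $Z(SL_{2}(q))$. The first isomorphism theorem then gives
\[
\pi ( SL_{2} (q) ) \cong SL_{2} (q) / Z( SL_{2} (q) ) = PSL_{2} (q) .
\]
(Equivalently, one can cite the second isomorphism theorem, $SL_{2}(q)Z/Z \cong SL_{2}(q)/(SL_{2}(q)\cap Z)$ with $Z = Z(GL_{2}(q))$.)

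There is no real obstacle here. The only point needing care is the kernel computation $SL_{2}(q) \cap Z(GL_{2}(q)) = Z(SL_{2}(q))$, and this is supplied by Lemma \ref{cntrpglndpsltwq}. As a sanity check, Lemma \ref{lnsz} gives index $|PGL_{2}(q)|/|PSL_{2}(q)| = d = \gcd(2,q-1)$. So when $q$ is even the subgroup is all of $PGL_{2}(q)$; this is still a normal subgroup isomorphic to $PSL_{2}(q)$, so the statement holds for every $q$.
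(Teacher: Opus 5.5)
Your argument is correct and complete: the image of $SL_{2}(q)$ under the projection $GL_{2}(q) \rightarrow PGL_{2}(q)$ is normal because $SL_{2}(q) \trianglelefteq GL_{2}(q)$. By Lemma \ref{cntrpglndpsltwq}, the kernel of the restricted map is exactly $SL_{2}(q) \cap Z(GL_{2}(q)) = Z(SL_{2}(q))$, so the image is isomorphic to $PSL_{2}(q)$.

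The paper omits this proof entirely, and what you give is the standard argument. Note also that, by Lemma \ref{lnsz}, any subgroup of $PGL_{2}(q)$ isomorphic to $PSL_{2}(q)$ has index $\gcd(2,q-1) \leq 2$ and is therefore automatically normal; this is the form in which the paper later invokes the lemma.
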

\begin{proof}
Omitted.
\end{proof}
Note that if $q$ is even then the above lemmas show that $PSL_{2} (q)$ has the same number of elements as $PGL_{2} (q)$ and $PSL_{2} (q) $ is a subgroup of $PGL_{2} (q)$. So the two groups are isomorphic and do not need to be considered separately when examining their primitive actions.
\begin{lema}\label{nrmlsbgrpspgl}
Suppose $ q \geq 4 $. The only normal subgroups of $ PGL _{2} (q) $ are itself, the trivial subgroup and one subgroup isomorphic to $ PSL _{2} (q) $. 	
\end{lema}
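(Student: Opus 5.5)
The plan is to reduce to the simplicity of $PSL_{2}(q)$ for $q \geq 4$, which I take as a standard fact. Write $G = PGL_{2}(q)$, and let $S \cong PSL_{2}(q)$ be the normal subgroup given by Lemma \ref{pslsnpgl}. In matrix terms, $S$ is the image of $SL_{2}(q)$ together with the scalars, equivalently the image of the matrices whose determinant is a square. From Lemma \ref{lnsz}, $|G : S| = d = \gcd(2,q-1)$.

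If $q$ is even then $d = 1$, so $G = S$ is simple and the only normal subgroups are $G$ and the trivial subgroup. In that case the listed "one subgroup isomorphic to $PSL_{2}(q)$" is $G$ itself, and the statement holds trivially.

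Now suppose $q$ is odd, so $|G:S| = 2$. Let $N \trianglelefteq G$. Then $N \cap S$ is normal in $S$, and since $S$ is simple, either $N \cap S = S$ or $N \cap S = 1$.

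In the first case $S \leq N$. Because $|G:S| = 2$ is prime, $N$ is either $S$ or $G$.

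In the second case $N \cap S = 1$, so $|N| = |NS : S| \leq 2$. If $N$ is non-trivial it has order $2$, and a normal subgroup of order $2$ is central. The remaining task is to show $Z(G) = 1$. Take the image in $G$ of a matrix $g$ that commutes with every element of $G$ modulo scalars. Then for every $h \in GL_{2}(q)$ we have $h^{-1}gh = \lambda_{h} g$ for some scalar $\lambda_{h}$.

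I would test this against a diagonal matrix $h = \mathrm{diag}(a,1)$ with $a \neq 1$ and against the unipotent matrix $\begin{pmatrix} 1 & 1 \\ 0 & 1 \end{pmatrix}$. Conjugation by the unipotent matrix cannot rescale a nonzero matrix by a scalar other than $1$, since comparing the trace, or the entries, forces $\lambda = 1$. So $g$ commutes with that unipotent matrix, which makes $g$ upper triangular with equal diagonal entries. Conjugating by the transpose unipotent matrix, or by the diagonal matrix, then kills the off-diagonal entry. Hence $g$ is scalar and $Z(G) = 1$. Alternatively, a central subgroup $N$ of order $2$ with $N \cap S = 1$ would give $G \cong S \times N$, and then $S$ would have a complement; I would still rely on the direct center computation.

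This yields $N \in \{1, S, G\}$. It remains to confirm that only one subgroup isomorphic to $PSL_{2}(q)$ is normal, that is, uniqueness. Any normal subgroup $N \cong PSL_{2}(q)$ with $N \neq G$ falls into one of the cases above, and the only possibility is $N = S$.

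I expect the main obstacle to be simplicity of $PSL_{2}(q)$ for $q \geq 4$. The paper has not proved it, so I will cite it, for example \cite{TAYLOR}. The center computation is routine but needs a little care over the scalar factors $\lambda_{h}$.
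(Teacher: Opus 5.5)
Your proof is correct. The paper omits its own proof of this lemma, so there is no argument to compare with. Your route is the standard one: simplicity of $S \cong PSL_{2}(q)$ (which the paper also takes as well known), $|G:S| = \gcd(2,q-1)$, the dichotomy $N \cap S \in \{1, S\}$, and triviality of $Z(PGL_{2}(q))$ to exclude a normal subgroup of order $2$. You also handle the even case correctly by observing that $S = G$.

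Two small remarks on the centre computation.

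\begin{itemize}
\item Comparing traces alone does not force $\lambda = 1$ when $\mathrm{tr}(g) = 0$, but the entrywise comparison does. Write $g$ with first row $(a,b)$ and second row $(c,d)$, and let $u$ be your unipotent matrix. If $u^{-1} g u = \lambda g$ with $\lambda \neq 1$, you get in turn $c = 0$, then $a = d = 0$, then $b = 0$, which is impossible for invertible $g$.
\item A second way to see $\lambda = 1$: since $u^{p} = 1$, iterating the relation gives $\lambda^{p} = 1$, hence $(\lambda - 1)^{p} = 0$ and $\lambda = 1$ in characteristic $p$.
\end{itemize}

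It is also right to compute $Z(PGL_{2}(q))$ directly. Lemma \ref{cntrpglndpsltwq} only describes $Z(GL_{2}(q))$, so it cannot be used here. The involution class sizes in Lemma \ref{pgltqnvcnjc} would be circular, since the paper's proof of that lemma itself invokes the present one.
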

\begin{proof}
Omitted.
\end{proof}
\begin{lema}
Suppose $ q $ is odd. The action of $ PGL_{2} (q) $ on the right cosets of its $ PSL_{2} (q) $ subgroup has height $ 1 $ and relational complexity $ 2 $.
\end{lema}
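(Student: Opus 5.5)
Since $ q $ is odd, $ d = 2 $ and Lemma \ref{lnsz} gives $ | PGL_{2}(q) : PSL_{2}(q) | = 2 $. So the set $ \Omega $ of right cosets of the $ PSL_{2}(q) $ subgroup $ N $ (normal by Lemma \ref{nrmlsbgrpspgl}, or because it has index $2$) has $ | \Omega | = 2 $. The plan is to reduce everything to this fact. The action is transitive and non-trivial, since any element outside $ N $ swaps the two cosets.

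For the relational complexity, Lemma \ref{transtwo} applies directly: any action on a set of size $2$ has relational complexity $2$. Nothing more is needed there.

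For the height, Lemma \ref{htbnd} gives $ 1 \leq Ht \leq | \Omega | - 1 = 1 $, so the height is $1$. As a sanity check against the definitions, the stabilizer of either coset $ Ng $ is $ g^{-1} N g = N $ by normality. Hence the two one-point stabilizers coincide with the stabilizer of $ \Omega $, and $ \Omega $ itself is not independent. This agrees with Lemma \ref{scndfnt}.

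There is no real obstacle. The only point to state carefully is that $ q $ odd makes the index exactly $2$. This comes from comparing the orders $ q(q^{2}-1) $ and $ \tfrac{1}{2} q(q^{2}-1) $ in Lemma \ref{lnsz}, together with Lemma \ref{pslsnpgl}, which puts $ PSL_{2}(q) $ inside $ PGL_{2}(q) $ so that the cosets are well defined.
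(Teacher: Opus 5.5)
Your proof is correct and follows the paper's argument: the index of $PSL_{2}(q)$ in $PGL_{2}(q)$ is $2$ when $q$ is odd, so Lemma \ref{htbnd} forces the height to be $1$ and Lemma \ref{transtwo} gives relational complexity $2$. Your remarks that the action is non-trivial and that both point stabilizers equal the normal subgroup are accurate, though only the non-triviality matters, since Lemma \ref{transtwo} assumes a non-trivial action.
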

\begin{proof}
The index of $ PSL_{2} (q) $ in $ PGL_{2} (q) $ is $ 2 $. This is the size of the set being acted on, so the height of the action is $ 1 $ by Lemma \ref{htbnd}. Lemma \ref{transtwo} tells us the relational complexity is $ 2 $.
\end{proof}
With this example out of the way, all other primitive actions of $ PSL_{2} (q) $ and $ PGL_{2} (q) $ will be on subgroups that are not normal.

\medskip
As the height and relational complexity of all primitive actions of $ PGL_{2} (q) $ will be calculated, we want to know its maximal subgroups. For $ q \leq 3 $, the height and relational complexity have been computed using GAP in \cite{WISCONS1} for both $ PSL_{2} (q) $ and $ PGL_{2} (q) $. So we will only consider $ q \geq 4 $. When $ q $ is odd, the subgroup of $ PGL_{2} (q) $ isomorphic to $ PSL_{2} (q) $ has index $ 2 $, so is maximal. The remaining maximal subgroups are given in \cite{GUIDICI1}, Theorem 3.5 and listed in tables below. The notation for subgroups used in the tables, and for the rest of this text, is $ C_{m} , D_{m} $ and $ E_{m} $ for cyclic, dihedral and elementary abelian groups respectively, each having $ m $ elements.

\medskip
The names that these subgroups are commonly called are provided in the table as well.
\begin{tble}\label{tableone}
Let $ q = p^{m} $ where $ p $ is an odd prime and $ m \in \mathbb{N} $. Suppose $ q \geq 5 $. The maximal subgroups of $ PGL_{2} (q) $ are;

\medskip
\begin{tabular}{|l|l|c|c|c|}
\hline
 Subgroup Name & \begin{tabular}{@{}c@{}} Subgroup \\ Structure \end{tabular} & Order & \begin{tabular}{@{}c@{}} No. of \\ Conjugacy Classes \end{tabular} & Notes \\
\hline
 $ PSL_{2} (q) $ & $ PSL_{2} (q) $ & $ \tfrac{1}{2} q (q+1)(q-1) $  & 1 &  \\
 Borel & $ E_{q} \rtimes C_{q-1} $ & $ q (q-1) $  & 1 &  \\
 Split Torus &  $ D_{2(q-1)} $ & $ 2(q-1) $  & 1 & $ q \neq 5 $ \\
 Non-Split Torus & $ D_{2(q+1)} $ & $ 2(q+1) $  & 1 & \\
 $ S_{4} $ & $ S_{4} $ & $ 24 $ & 1 & $ q=p \equiv \pm 3 \pmod{8} $ \\
 Subfield & $ PGL_{2} (q_{0} ) $ & $ q_{0} (q_{0} +1)(q_{0} -1) $ & 1 & \begin{tabular}{@{}c@{}} $ q= q_{0} ^{r} $, for \\ some odd prime $ r $ \end{tabular} \\
\hline
\end{tabular} 
\end{tble}
For $ q $ even, the maximal subgroups of $ PSL_{2} (q) $ are provided in \cite{BRAYHOLTRONEYDOUGAL}, page 377.
\begin{tble}\label{tabletwo}
Suppose $q $ is even and $ q \geq 4 $. The maximal subgroups of $ PSL_{2} (q) \cong PGL_{2} (q) $ are;

\medskip
\begin{tabular}{|l|l|c|c|c|}
\hline
 Subgroup Name & \begin{tabular}{@{}c@{}} Subgroup \\ Structure \end{tabular} & Order & \begin{tabular}{@{}c@{}} No. of \\ Conjugacy Classes \end{tabular} & Notes \\
\hline
 Borel & $ E_{q} \rtimes C_{q-1} $ & $ q (q-1) $  & 1 &  \\
 Split Torus &  $ D_{2(q-1)} $ & $ 2(q-1) $  & 1 &  \\
 Non-Split Torus & $ D_{2(q+1)} $ & $ 2(q+1) $  & 1 & \\
 Subfield & $ PSL_{2} (q_{0} ) $ & $ q_{0} (q_{0} +1)(q_{0} -1) $ & 1 & \begin{tabular}{@{}c@{}} $ q= q_{0} ^{r} $, for some $ q_{0} \neq 2 $ \\  and some prime $ r $ \end{tabular} \\
\hline
\end{tabular} 
\end{tble}
For $ q $ odd, the maximal subgroups of $ PSL_{2} (q) $ are provided in \cite{BRAYHOLTRONEYDOUGAL}, page 380.
\begin{tble}\label{tablethree}
Let $ q = p^{m} $ where $ p $ is an odd prime and $ m \in \mathbb{N} $. Suppose $ q \geq 5 $. The maximal subgroups of $ PSL_{2} (q) $ are;

\medskip
\begin{tabular}{|l|l|c|c|c|}
\hline
 Subgroup Name & \begin{tabular}{@{}c@{}} Subgroup \\ Structure \end{tabular} & Order & \begin{tabular}{@{}c@{}} No. of \\ Conjugacy Classes \end{tabular} & Notes \\
\hline
Borel & $ E_{q} \rtimes C_{(q-1)/2} $ & $ \tfrac{1}{2} q (q-1) $  & 1 &  \\
Split Torus &  $ D_{(q-1)} $ & $ (q-1) $  & 1 & $ q \neq 5,7,9,11 $ \\
Non-Split Torus & $ D_{(q+1)} $ & $ (q+1) $  & 1 & $ q \neq 7 , 9 $ \\
$ A_{4} $ & $ A_{4} $ & $ 12 $  & 1 & $ q = p \equiv \pm 3, 5, \pm 13 \pmod{40} $ \\
$ S_{4} $ & $ S_{4} $ & $ 24 $  & 2 & $ q = p \equiv \pm 1 \pmod{8} $ \\
Subfield & $ PSL_{2} (q_{0} ) $ & $ \tfrac{1}{2} q_{0} (q_{0} +1)(q_{0} -1) $ & 1 & \begin{tabular}{@{}c@{}} $ q= q_{0} ^{r} $, for some \\ prime $ r $ \end{tabular} \\
Subfield & $ PGL_{2} (q_{0} ) $ & $ q_{0} (q_{0} +1)(q_{0} -1) $ & 2 & \begin{tabular}{@{}c@{}} $ q= q_{0} ^{2} $ \end{tabular} \\
$ A_{5} $ & $ A_{5} $ & $ 60 $  & 2 & \begin{tabular}{@{}c@{}} $ q = p \equiv \pm 1 \pmod{10} $ or \\
$ q = p^{2} $ and $ p \equiv \pm 3 \pmod{10} $  \end{tabular} \\
\hline
\end{tabular} 
\end{tble}

\newpage
\section{General Results for Linear Groups}
Various results on finite fields and linear groups are collected in this section, to be used in later chapters. It is likely the reader is familiar with most (if not all) of these lemmas. Proofs are provided for completeness.
\begin{lema}\label{smrphsmspsltrn}
We have the following isomorphisms
\begin{itemize}
\item $ PSL_{2} (2) \cong S_{3} $,
\item $ PSL_{2} (3) \cong A_{4} $,
\item $ PSL_{2} (4) \cong PSL_{2} (5) \cong A_{5} $,
\item $ PSL_{2} (9) \cong A_{6} $,
\item $ PGL_{2} (3) \cong S_{4} $,
\item $ PGL_{2} (5) \cong S_{5} $.
\end{itemize}
\end{lema}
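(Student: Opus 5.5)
The plan is to prove every isomorphism with one method. Let $PGL_{2}(q)$ act on the projective line $\mathbb{P}^{1}(\mathbb{F}_{q})$, the set of $q+1$ one-dimensional subspaces of $\mathbb{F}_{q}^{2}$. Scalar matrices act trivially, so the action of $GL_{2}(q)$ factors through $PGL_{2}(q)$. The action is faithful, since a matrix fixing the three lines spanned by $e_{1}$, $e_{2}$ and $e_{1}+e_{2}$ must be scalar. Hence $PGL_{2}(q)$ embeds in $S_{q+1}$, and $PSL_{2}(q)$ embeds as the image of $SL_{2}(q)$. The orders from Lemma \ref{lnsz} will then identify the images.

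\emph{Step 1: $q=2,3$.} For $q=2$ the embedding $PSL_{2}(2)\hookrightarrow S_{3}$ is between groups of order $6$, so it is an isomorphism. For $q=3$ we get $PGL_{2}(3)\hookrightarrow S_{4}$, with both groups of order $24$, so $PGL_{2}(3)\cong S_{4}$. Also $PSL_{2}(3)$ has order $12$ and embeds in $S_{4}$. By Lemma \ref{pslsnpgl} its image is normal of index $2$, and the only index-$2$ subgroup of $S_{4}$ is $A_{4}$.

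\emph{Step 2: $q=4,5$.} We have $PSL_{2}(4)\hookrightarrow S_{5}$ with image of order $60$. An index-$2$ subgroup of $S_{5}$ is normal, so it must be $A_{5}$. Similarly, $PGL_{2}(5)$ has order $120$ by Lemma \ref{lnsz} and embeds in $S_{6}$, which does not immediately give $S_{5}$. Instead I would find a degree-$5$ action. For $q=5$ the group $PSL_{2}(5)$ has order $60$. By Sylow's theorem it has $6$ Sylow $5$-subgroups, since it is not $5$-solvable with a normal Sylow subgroup, being simple. Its action on the $6$ points of $\mathbb{P}^{1}(\mathbb{F}_{5})$ gives a subgroup of index $6$ in $A_{6}$. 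For a quicker argument: the number of Sylow $2$-subgroups of $PSL_{2}(5)$ is $5$, because a Klein four group $V$ has normaliser isomorphic to $A_{4}$. Conjugation on these five subgroups gives a homomorphism $PSL_{2}(5)\to S_{5}$. The same construction applied to $PGL_{2}(5)$, with $N(V)\cong S_{4}$, gives a homomorphism $PGL_{2}(5)\to S_{5}$. These maps are injective because the kernel is a normal subgroup, and by Lemma \ref{nrmlsbgrpspgl} (or simplicity) the only candidates are too large to act trivially. Comparing orders then gives $PGL_{2}(5)\cong S_{5}$ and $PSL_{2}(5)\cong A_{5}$, using the same index-$2$ argument as for $q=4$.

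\emph{Step 3: $q=9$.} Here $|PSL_{2}(9)|=360$. I would use the standard fact that a simple group of order $360$ is isomorphic to $A_{6}$. The argument needs a subgroup of index $6$: take the normaliser of a Sylow $3$-subgroup, or one of the two classes of $A_{5}$ subgroups from Table \ref{tablethree} (valid since $9=3^{2}$ with $3\equiv\pm 3 \pmod{10}$). The coset action then embeds $PSL_{2}(9)$ in $S_{6}$, and simplicity puts the image inside $A_{6}$. Equal orders give equality.

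\emph{Main obstacle.} Everything outside $q=9$ is order counting. The main obstacle is exhibiting the index-$6$ subgroup of $PSL_{2}(9)$ without circularity. I would do this concretely: write $\mathbb{F}_{9}=\mathbb{F}_{3}(i)$ and check that $PSL_{2}(9)$ has $36$ Sylow $5$-subgroups, so their normalisers have order $10$. That count alone does not yield an index-$6$ subgroup. If the direct route is too messy, I would cite the standard reference (e.g.\ \cite{TAYLOR}) for $PSL_{2}(9)\cong A_{6}$, as the paper does for the other classical facts.
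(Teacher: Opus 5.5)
Your route differs from the paper's, which gives no argument and simply cites \cite{HUPPERT1} and \cite{COXETERMOSER1}. The faithful action on the $q+1$ points of the projective line, combined with order counting, is complete and correct for $q=2,3,4$. The two cases that need more than this, $q=5$ and $q=9$, are not yet proved as written.

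\textbf{The case $q=5$.} Everything hinges on $PSL_2(5)$ having exactly five Sylow $2$-subgroups, and you only assert $N(V)\cong A_4$. The detour through Sylow $5$-subgroups and the degree-$6$ action leads nowhere and should be dropped. You can supply the missing reason in either of two ways.
\begin{itemize}
\item Table~\ref{tablethree} gives a maximal $A_4$ in $PSL_2(5)$, since $5$ is among the listed classes mod $40$. Its normal Klein four-group $V$ is a Sylow $2$-subgroup. Then $A_4\le N(V)<PSL_2(5)$, and maximality forces $N(V)=A_4$.
\item Alternatively, rule out $n_2\in\{1,3\}$ by simplicity. Rule out $n_2=15$ by Burnside's normal $p$-complement theorem: in that case $V=N(V)$ would be central in its own normaliser, giving a normal subgroup of order $15$.
\end{itemize}
Once $n_2=5$ is established, your $PGL_2(5)$ step is fine. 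The isomorphism type of $N_{PGL_2(5)}(V)$ plays no role there.

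\textbf{The case $q=9$.} The normaliser of a Sylow $3$-subgroup is the Borel subgroup $E_9\rtimes C_4$ of order $36$. Its index is $10$, not $6$, so that option fails. Your other option, a maximal $A_5$ from Table~\ref{tablethree}, does work, and it finishes the proof without the theorem on simple groups of order $360$:
\begin{itemize}
\item the action on the six cosets of $A_5$ is faithful, by simplicity;
\item the image lies in $A_6$, because its intersection with $A_6$ has index at most $2$ in the simple image and so is all of it;
\item $|PSL_2(9)|=360=|A_6|$, so the image is $A_6$.
\end{itemize}
Your circularity worry is unfounded. The table is imported from \cite{BRAYHOLTRONEYDOUGAL}, and nothing in the paper derives it from this lemma. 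So you need neither the Sylow-$5$ count (whose normalisers have index $36$) nor a fallback citation.

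With these repairs your proposal becomes a self-contained proof resting only on simplicity and the cited tables. That is more informative than the paper's bare citation.
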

\begin{proof}
See \cite{HUPPERT1}, Theorem 6.14, page 183 and \cite{COXETERMOSER1}, page 96.
\end{proof}
\begin{lema}\label{sqnmbrf}
If $ q $ is odd, the number of elements that are squares in $ \mathbb{F} _{q} ^{*} $ is $ \tfrac{1}{2} (q-1) $. If $ q $ is even there are $ q-1 $ squares in $ \mathbb{F} _{q} ^{*} $.
\end{lema}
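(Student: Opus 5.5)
The plan is to use the squaring map on the multiplicative group $ \mathbb{F} _{q} ^{*} $. Recall that $ \mathbb{F} _{q} ^{*} $ is a group of order $ q-1 $. Define $ \sigma : \mathbb{F} _{q} ^{*} \rightarrow \mathbb{F} _{q} ^{*} $ by $ x \mapsto x^{2} $. Since $ \mathbb{F} _{q} $ is commutative, $ (xy)^{2} = x^{2} y^{2} $, so $ \sigma $ is a group homomorphism. Its image is exactly the set of squares in $ \mathbb{F} _{q} ^{*} $. By the first isomorphism theorem, the number of squares is $ (q-1) / | \ker ( \sigma ) | $, so it suffices to compute the kernel in each case.

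The kernel consists of the $ x \in \mathbb{F} _{q} ^{*} $ with $ x^{2} = 1 $, that is $ (x-1)(x+1) = 0 $. Since a field has no zero divisors, $ x = 1 $ or $ x = -1 $, so $ \ker ( \sigma ) = \{ 1 , -1 \} $.

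If $ q $ is odd, the characteristic $ p $ of $ \mathbb{F} _{q} $ is odd, so $ 1 \neq -1 $ (otherwise $ 2 = 0 $ in $ \mathbb{F} _{q} $). Then $ | \ker ( \sigma ) | = 2 $ and there are $ \tfrac{1}{2} (q-1) $ squares. If $ q $ is even, the characteristic is $ 2 $, so $ -1 = 1 $. The kernel is trivial, and $ \sigma $ is an injective map from a finite set to itself, hence a bijection. So all $ q-1 $ elements of $ \mathbb{F} _{q} ^{*} $ are squares. (In the even case one could also note that $ \sigma $ is the Frobenius automorphism $ x \mapsto x^{p} $ with $ p = 2 $.)

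Nothing here is a real obstacle. The only point that needs care is that the kernel has size exactly $ 2 $ in odd characteristic. This rests on two facts: a polynomial of degree $ 2 $ over a field has at most $ 2 $ roots, and $ -1 \neq 1 $ because $ p \neq 2 $.
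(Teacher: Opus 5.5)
Your proof is correct and is essentially the paper's argument: the squaring homomorphism on $\mathbb{F}_{q}^{*}$ has kernel $\{1,-1\}$, which has order $2$ when $q$ is odd and order $1$ when $q$ is even. You spell out more than the paper does, namely why the kernel is exactly $\{\pm 1\}$ and why $-1 \neq 1$ in odd characteristic, but the route is the same.
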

\begin{proof}
The map $  (x) \psi = x^{2} $ is a group homomorphism from $  \mathbb{F} _{q} ^{*}  $ to $ \mathbb{F} _{q} ^{*} $. Since $ \ker ( \psi ) = \{ 1 , -1 \} $, the image of $ \psi $ has $ \tfrac{1}{2} (q-1) $ elements if $ q $ is odd and $ q - 1 $ if $ q $ is even.
\end{proof}
\begin{lema}\label{mxmlsbpslntr}
Put $ G := PGL_{2} (q) $ where $ q $ is odd and $ q \geq 13 $. Let $ M < G $ be a maximal subgroup that is not normal in $ G $. Also suppose that if $ q $ is prime then $ M \not\cong S_{4} $. Let $ S \leq G $ where $ S \cong PSL_{2} (q) $. Then $ M \cap S $ is maximal in $ S $ and $ | M \cap S | = \tfrac{1}{2} | M | $.
\end{lema}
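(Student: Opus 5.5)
The plan has two steps: first the index statement $|M \cap S| = \tfrac{1}{2}|M|$, then maximality of $M \cap S$ in $S$, obtained by matching against Table \ref{tablethree}. Since $q$ is odd and $q \geq 13$, Table \ref{tableone} lists the non-normal maximal subgroups $M$ of $G$ as the Borel, split torus $D_{2(q-1)}$, non-split torus $D_{2(q+1)}$, $S_{4}$ (only when $q = p \equiv \pm 3 \pmod 8$, excluded by hypothesis since then $q$ is prime), and subfield $PGL_{2}(q_{0})$ with $q = q_{0}^{r}$, $r$ an odd prime. By Lemma \ref{nrmlsbgrpspgl}, $S$ is the unique subgroup isomorphic to $PSL_{2}(q)$ and $S \trianglelefteq G$ with index $2$.

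For the index, $MS$ is a subgroup because $S$ is normal. Since $M$ is maximal and not normal, $M \neq S$, so $M \not\geq S$ and therefore $MS = G$. Then $|M\cap S| = |M||S|/|G| = \tfrac{1}{2}|M|$. (Alternatively, $M \not\leq S$ because $M$ is maximal and distinct from $S$, so $M \cap S$ has index $2$ in $M$.)

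For maximality, I would take each case in turn and identify $M \cap S$ with the corresponding entry of Table \ref{tablethree}.
\begin{itemize}
\item In the Borel case, $M \cap S$ has order $\tfrac{1}{2}q(q-1)$. It contains the unipotent radical $E_{q}$, since this is generated by elements of odd order $p$ and so lies in the index-$2$ subgroup. It is therefore the stabilizer in $S$ of a point of the projective line, i.e.\ the Borel subgroup of $S$.
\item In the torus cases, $M = N_{G}(T)$ with $T \cong C_{q\mp1}$ a torus of $G$, and $M \cap S$ has order $q \mp 1$. This is $N_{S}(T \cap S) \cong D_{q\mp1}$, which is the split or non-split torus normalizer of $S$. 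Table \ref{tablethree} shows these are maximal for $q \geq 13$, since the exceptions are only $q \le 11$.
\item In the subfield case, $M \cap S$ has index $2$ in $PGL_{2}(q_{0})$. Because $r$ is odd, a non-square of $\mathbb{F}_{q_{0}}$ remains a non-square in $\mathbb{F}_{q}$, as $(q-1)/(q_{0}-1)$ is odd. Hence the determinant classes match, giving $M \cap S = PSL_{2}(q_{0})$, which is the maximal subfield subgroup in Table \ref{tablethree} with $r$ prime.
\end{itemize}

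I expect the main obstacle to be justifying that the index-$2$ subgroup $M \cap S$ is really the conjugacy-class representative listed in Table \ref{tablethree}, rather than merely having the right order. For example, one needs $D_{q-1}$ to be the normalizer in $S$ of its cyclic part, and not some other dihedral group of the same order. I would handle this with explicit matrix representatives: diagonal and monomial matrices for the split torus, and $\mathbb{F}_{q^{2}}^{*}$ embedded in $GL_{2}(q)$ for the non-split torus. I would then use the determinant-is-a-square criterion for membership in $S$ and compare directly with the standard descriptions in \cite{BRAYHOLTRONEYDOUGAL}.
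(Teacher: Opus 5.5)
Your argument is correct. The index computation is the same as the paper's: $S \trianglelefteq G$ has index $2$ and $S \not\leq M$, so $MS = G$. For maximality, however, you take a different route. The paper simply cites Theorem 1.1 of \cite{GUIDICI1}, which lists the maximal subgroups of almost simple groups with socle $PSL_{2}(q)$ together with their intersections with the socle. You instead identify $M \cap S$ case by case with an entry of Table \ref{tablethree}. The citation is shorter. Your route is more self-contained and tells you explicitly what $M \cap S$ is, so that Lemma \ref{mxmpgldhdral}, for instance, becomes immediate. It also shows where the hypotheses are used. The condition $q \geq 13$ avoids the non-maximal $D_{q \pm 1}$ of $PSL_{2}(q)$ (the exceptions $q \leq 11$ in Table \ref{tablethree}). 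Excluding $S_{4}$ avoids $M \cap S \cong A_{4}$, which lies inside an $A_{5}$ when $p \equiv \pm 1 \pmod{10}$.

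The identification step you flag as the main obstacle needs no matrices.
\begin{itemize}
\item In the Borel case, $M = G_{\omega}$, so $M \cap S = S_{\omega}$. This is maximal because $S$ acts $2$-transitively (Lemmas \ref{psl2tr}, \ref{ktrnstiveprmt} and Theorem \ref{maxg}). Your remark about $E_{q}$ is unnecessary.
\item In the torus cases, $T \cap S$ is a non-trivial subgroup of the rotations of $M$. So Lemma \ref{dcvlkjhsdp} gives $N_{G}(T \cap S) = M$, and hence $M \cap S = N_{S}(T \cap S)$. Lemma \ref{nrmlzr}, applied inside $S$, then shows this group of order $q \mp 1$ is conjugate to the maximal dihedral subgroup listed in Table \ref{tablethree}.
\item In the subfield case, your determinant argument handles the natural copy of $PGL_{2}(q_{0})$. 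Conjugation by elements of $G$ preserves $S$, and hence preserves maximality in $S$, so this covers every conjugate.
\end{itemize}
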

\begin{proof}
Observe that $ S \trianglelefteq G $ by Lemma \ref{pslsnpgl}. Hence $ MS/S \cong M / (M \cap S ) $ by the isomorphism theorems for groups.

\medskip
As $ M $ is not normal in $ G $ we have $ M \neq S $. Using Lemma \ref{lnsz}, we see that $ | G : S | = 2 $. Hence $ S $ is maximal in $ G $. Therefore $ M \setminus ( M \cap S ) \neq \emptyset $. It follows that $ MS = G $. We then have
\[
| M / ( M \cap S ) | = | MS/S | = | G / S | = 2 .
\]
So $ | M : M \cap S | = 2 $. Theorem 1.1 of \cite{GUIDICI1} shows that $ M \cap S $ is maximal in $ S $.
\end{proof}
\begin{lema}\label{pslnvltnscnjg}
All elements of order $2$ lie in a single conjugacy class in $ PSL_{2} (q) $. The number of involutions in $ PSL_{2} ( q) $ is
\begin{itemize}
\item $ (q + 1)(q-1) $ if $ q $ is even,
\item $ \tfrac{1}{2} q (q-1) $ if $ q \equiv 3 \pmod{4} $,
\item $ \tfrac{1}{2} q (q+1) $ if $ q \equiv 1 \pmod{4} $.
\end{itemize}
\end{lema}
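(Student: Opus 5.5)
We prove Lemma \ref{pslnvltnscnjg} by counting involutions in $ SL_{2} (q) $ and lifting to $ PSL_{2} (q) $, treating $ q $ even and $ q $ odd separately.

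\textbf{Case $ q $ even.} Here $ PSL_{2} (q) \cong SL_{2} (q) $ and $ Z = \{ I \} $. A non-identity $ g $ with $ g^{2} = I $ satisfies $ (g - I)^{2} = 0 $. So $ g $ is a non-identity unipotent element, conjugate in $ GL_{2} (q) $ to
\[
u = \begin{pmatrix} 1 & 1 \\ 0 & 1 \end{pmatrix}.
\]
The centralizer of $ u $ in $ GL_{2} (q) $ consists of the matrices $ aI + bN $ with $ a \neq 0 $, where $ N = u - I $. It therefore has order $ q(q-1) $. The $ GL_{2} (q) $-class of $ u $ then has size
\[
\frac{q(q^{2}-1)(q-1)}{q(q-1)} = (q+1)(q-1),
\]
which is the claimed count.

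Next we show this is a single class of $ SL_{2} (q) $. The centralizer of $ u $ contains the scalars, and every element of $ GL_{2} (q) $ equals a scalar times an element of $ SL_{2} (q) $, because every element of $ \mathbb{F} _{q} ^{*} $ is a square when $ q $ is even (Lemma \ref{sqnmbrf}). Hence $ GL_{2} (q) = C \cdot SL_{2} (q) $, where $ C $ is the centralizer of $ u $, and the $ GL_{2} (q) $-class of $ u $ coincides with its $ SL_{2} (q) $-class.

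\textbf{Case $ q $ odd.} An involution of $ PSL_{2} (q) $ lifts to $ g \in SL_{2} (q) $ with $ g^{2} = \pm I $ and $ g \neq \pm I $. By Lemma \ref{sqrsltwq}, $ g $ has trace $ 0 $, so its characteristic polynomial is $ x^{2} + \det g = x^{2} + 1 $, giving $ g^{2} = -I $. Thus the involutions of $ PSL_{2} (q) $ correspond two-to-one, via $ \pm g $, to the trace-zero matrices in $ SL_{2} (q) $. These are the matrices
\[
\begin{pmatrix} w & x \\ y & -w \end{pmatrix}, \qquad w^{2} + xy = -1 .
\]

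We count the solutions $ (w,x,y) $ of $ w^{2} + xy = -1 $.
\begin{itemize}
\item For each of the $ q-1 $ values $ x \neq 0 $ and each $ w $, the entry $ y $ is determined. This gives $ q(q-1) $ solutions.
\item For $ x = 0 $ we need $ w^{2} = -1 $, with $ y $ arbitrary. This gives $ 2q $ solutions if $ q \equiv 1 \pmod 4 $ and none if $ q \equiv 3 \pmod 4 $.
\end{itemize}
So the totals are $ q(q+1) $ and $ q(q-1) $ respectively, and halving gives the stated numbers of involutions.

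To see that these form one class, note that all the matrices above have the same irreducible or split-semisimple characteristic polynomial $ x^{2}+1 $. They are therefore conjugate in $ GL_{2} (q) $, and the counts show that this class has size $ |GL_{2} (q)| / |C| $ with $ |C| = q^{2}-1 $ or $ (q-1)^{2} $.

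\textbf{Main obstacle.} The hard step is descending from one $ GL_{2} (q) $-class to one $ PSL_{2} (q) $-class. A $ GL_{2} (q) $-class of $ SL_{2} (q) $ elements splits into $ |GL_{2} (q) : C \cdot SL_{2} (q)| $ classes of $ SL_{2} (q) $, and this index equals $ | \mathbb{F} _{q} ^{*} : \det (C) | $. The centralizer $ C $ is a torus, isomorphic to $ \mathbb{F} _{q^{2}} ^{*} $ or to $ ( \mathbb{F} _{q} ^{*} )^{2} $, and its determinants are all of $ \mathbb{F} _{q} ^{*} $: for $ \mathbb{F} _{q^{2}} ^{*} $ because the norm map is surjective, and for the split torus since $ \det \mathrm{diag}(a,1) = a $. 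Hence the class of $ g $ does not split in $ SL_{2} (q) $, and a fortiori it gives a single class in $ PSL_{2} (q) $.
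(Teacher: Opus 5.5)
Your proof is correct, and it takes a different route from the paper: the paper gives no argument of its own for this lemma and simply cites Lemma A.3 of \cite{GURALNICK1}. You make the statement self-contained.
\begin{itemize}
\item For $q$ even, you identify involutions with the nontrivial unipotents. The class size then comes from the centralizer $\{aI+bN\}$, and non-splitting follows because every element of $\mathbb{F}_{q}^{*}$ is a square.
\item For $q$ odd, you identify involutions of $PSL_{2}(q)$ with pairs $\pm g$ of trace-zero elements of $SL_{2}(q)$, and count solutions of $w^{2}+xy=-1$ directly. You then show the single $GL_{2}(q)$-class does not split in $SL_{2}(q)$, via $|GL_{2}(q):C\cdot SL_{2}(q)|=|\mathbb{F}_{q}^{*}:\det(C)|$.
\end{itemize}
The citation buys brevity. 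Your argument buys independence from the reference, since it needs only Lemmas \ref{sqrsltwq} and \ref{sqnmbrf}. It also makes visible the split versus non-split dichotomy according to $q \bmod 4$, which reappears in the dihedral centralizers of involutions (cf.\ Lemma \ref{nrmlzr}).

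Two small points are worth stating explicitly. First, before invoking Lemma \ref{sqrsltwq}, note that a lift $g\neq\pm I$ cannot be scalar, since $\mu I\in SL_{2}(q)$ forces $\mu=\pm1$. Second, your identification of $\det|_{C}$ with the norm map can be replaced by a one-line computation: $\det(aI+bg)=a^{2}+b^{2}$, and every element of $\mathbb{F}_{q}$ is a sum of two squares. This handles the split and non-split cases at once. Your remark that the counts show the class has size $|GL_{2}(q)|/|C|$ is only a consistency check and is not needed.
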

\begin{proof}
See \cite{GURALNICK1}, Lemma A.3.
\end{proof}
\begin{lema}\label{pgltqnvcnjc}
Suppose $ q $ is odd with $ q \geq 5 $. There are two conjugacy classes of elements of order $2$ in $ PGL_{2} (q) $. One conjugacy class contains $ \tfrac{1}{2} q (q-1) $ elements and the other contains $ \tfrac{1}{2} q (q+1) $ elements.
\end{lema}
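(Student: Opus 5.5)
I will count the involutions of $ PGL_{2} (q) $ directly in $ GL_{2} (q) $ and then sort them into conjugacy classes by a determinant invariant. An element $ Zg \in PGL_{2} (q) $, with $ Z = Z( GL_{2} (q) ) $, has order $ 2 $ exactly when $ g \notin Z $ and $ g^{2} = \lambda I $ for some $ \lambda \in \mathbb{F} _{q} ^{*} $. By Lemma \ref{sqrsltwq} every such $ g $ has the trace-zero form
\[
g = \begin{pmatrix*}[r] w & x \\ y & -w \end{pmatrix*}, \quad (w,x,y) \neq (0,0,0),
\]
and every matrix of this form satisfies $ g^{2} = (w^{2} + xy) I $ with $ -\det g = w^{2} + xy \neq 0 $. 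The involutions of $ PGL_{2} (q) $ therefore correspond to the invertible trace-zero matrices modulo nonzero scalars.

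The key invariant is the class of $ -\det g $ in $ \mathbb{F} _{q} ^{*} / ( \mathbb{F} _{q} ^{*} )^{2} $, that is, whether $ -\det g $ is a square. Replacing $ g $ by $ \mu g $ multiplies the determinant by $ \mu^{2} $, and conjugation preserves the determinant, so this square class is well defined on $ PGL_{2} (q) $ and constant on conjugacy classes. Both values occur: $ \mathrm{diag}(1,-1) $ gives $ -\det = 1 $, a square, while $ \begin{pmatrix} 0 & 1 \\ \nu & 0 \end{pmatrix} $ with $ \nu $ a nonsquare gives $ -\det = \nu $. Hence there are at least two classes of involutions.

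Conversely, I will show that two trace-zero $ g, g' $ with the same square class are projectively conjugate. After rescaling we may assume $ -\det g = -\det g' $. Since $ g $ is not scalar, it is a companion matrix of $ X^{2} + \det g $ (it has a cyclic vector), so $ g $ and $ g' $ are similar in $ GL_{2} (q) $ by rational canonical form. This gives exactly two classes.

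For the sizes I will count the triples $ (w,x,y) $ with $ w^{2} + xy = c $ for a fixed $ c \neq 0 $. If $ x \neq 0 $ then $ y $ is determined, giving $ q(q-1) $ triples. If $ x = 0 $ then $ w^{2} = c $ and $ y $ is free, giving $ 2q $ triples when $ c $ is a square and none otherwise. There are $ \tfrac{1}{2}(q-1) $ values of $ c $ in each square class (Lemma \ref{sqnmbrf}), and each projective element corresponds to $ q-1 $ matrices. The square class then contains
\[
\tfrac{1}{2}(q-1)\big(q(q-1)+2q\big)/(q-1) = \tfrac{1}{2}q(q+1)
\]
elements, and the nonsquare class contains $ \tfrac{1}{2}q(q-1) $. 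The counting is routine; the step needing most care is the conjugacy argument, in particular confirming that rescaling by $ \mu $ lets us match $ -\det $ exactly within a square class.
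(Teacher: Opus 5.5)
Your proof is correct, and it takes a genuinely different route from the paper. The paper does no counting of its own. It cites Guralnick (Lemma A.3) for the facts that $PGL_{2}(q)$ has $q^{2}$ involutions lying in exactly two classes. It then takes the single conjugacy class $\Gamma$ of maximal $D_{2(q+1)}$ subgroups, which has $|PGL_{2}(q)|/(2(q+1)) = \tfrac{1}{2}q(q-1)$ members, and uses Lemma \ref{ntnrmlmxml} to show their central involutions are pairwise distinct and make up a whole conjugacy class. The other class is then the remaining $q^{2}-\tfrac{1}{2}q(q-1)$ involutions.

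You replace both the citation and the maximal-subgroup machinery with elementary linear algebra in $GL_{2}(q)$:
\begin{itemize}
\item the square class of $-\det g$ separates the two classes;
\item the rational canonical form shows this invariant is complete (strictly, a non-scalar $g$ is similar to the companion matrix of $X^{2}+\det g$ rather than equal to it, but that is all you use);
\item the point count on $w^{2}+xy=c$ gives both class sizes and recovers the total $q^{2}$.
\end{itemize}

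Your route buys self-containment and independence from Table \ref{tableone}. It also does not need $q \geq 5$: the argument works verbatim for $q=3$. Finally, it gives an explicit invariant. Comparing it with $\det g$ modulo squares immediately tells you which class lies in $PSL_{2}(q)$, according to whether $-1$ is a square, which matches Lemma \ref{pslnvltnscnjg}.

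The paper's route gives, in exchange, the structural fact that the class of size $\tfrac{1}{2}q(q-1)$ is exactly the set of central involutions of the maximal $D_{2(q+1)}$. This is the kind of information leaned on later (compare Lemmas \ref{dcvlkjhsdp} and \ref{nrmlzr}). To extract that from your approach you would still have to note that the involution of a non-split torus is, projectively, multiplication by $\sqrt{\nu}$ for a nonsquare $\nu$. That is conjugate to your matrix with $-\det = \nu$.
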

\begin{proof}
It is shown in \cite{GURALNICK1}, Lemma A.3 that there exist $ q ^{2} $ elements of order $ 2 $ in $ PGL_{2} (q) $ and that there are two conjugacy classes containing these involutions.

\medskip
In Table \ref{tableone} we see that there exists a maximal $ D < PGL_{2} (q) $ with $ D \cong D_{2 (q+1) } $ and $ D $ does not contain $ PSL_{2} (q) $ as a subgroup. Also $ D \ntriangleleft PGL_{2} (q) $ by Lemma \ref{nrmlsbgrpspgl}.

\medskip
Put $ \Gamma := \{ g^{-1} D g : g \in PGL_{2} (q) \} $. By Lemma \ref{cstquivtcnj} the action on the right cosets of $ D $ is equivalent to the action by conjugation on $ \Gamma $. Therefore $ | \Gamma | = | PGL_{2} (q) | / |D | = \tfrac{1}{2} q (q-1) $.

\medskip
Let $ D^{ \prime } , D^{ \prime \prime } \in \Gamma $ and suppose $ D^{ \prime } \neq D^{ \prime \prime } $. Since these subgroups are isomorphic to $  D_{2 (q+1) } $ and $ q + 1 $ is even with $ q + 1 > 2 $, it follows that $ |Z(D) | = 2 $. Therefore there exist elements $ a^{ \prime } \in Z(D^{ \prime } ) $ and $ a^{ \prime  \prime } \in Z(D^{ \prime \prime } ) $ of order $ 2 $. Also $ D^{ \prime \prime } = h^{-1} D^{ \prime } h $ for some $ h \in PGL_{2} (q) $, so $a^{ \prime \prime } =  h^{-1} a^{ \prime } h $.

\medskip
As $ Z(D^{ \prime } ) \triangleleft D^{ \prime } $ and $ Z(D^{ \prime \prime  } ) \triangleleft D^{ \prime \prime  } $, it follows from Lemma \ref{ntnrmlmxml} that $ Z(D^{ \prime } ) \neq Z(D^{ \prime \prime } ) $. Hence $ a^{ \prime } \neq a^{ \prime \prime }  $. This shows that the non-identity elements in the centres of the subgroups in $ \Gamma $ are distinct and lie in some conjugacy class $ C_{1} \subset PGL_{2} (q) $. Also $   \tfrac{1}{2} q (q-1) = | \Gamma |  \leq | C_{1} | $.

\medskip
If $ b \in C_{1} $ then $ b = f^{-1} a ^{ \prime } f $ for some $ f \in PGL_{2} (q) $. So $ b \in Z(f^{-1} D ^{ \prime } f ) $. Hence $ | C_{1} | \leq  | \Gamma | = \tfrac{1}{2} q (q-1) $. Thus $ | C_{1} | = \tfrac{1}{2} q (q-1) $.

\medskip
All other involutions must lie in the second conjugacy class, $ C_{2} $, which must have size $ | C_{2} | = q^{2} - | C_{1} | =  
\tfrac{1}{2} q (q+1) $.
\end{proof}
\begin{lema}\label{refconjug}
Let $ D $ be a dihedral group and suppose $ | D | $ is divisible by $ 4 $. Let $ K \leq  D $ be a Klein four-subgroup. Let $ r_{1} , r_{2} \in K $ be involutions that are also reflections in $ D $. Then $ r_{1} $ and $ r_{2} $ are conjugate to each other if and only if $ |D| $ is divisible by $ 8 $.
\end{lema}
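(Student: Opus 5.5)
I would work with an explicit presentation $D = \langle \rho , \sigma : \rho^{n} = \sigma^{2} = 1, \ \sigma \rho \sigma = \rho^{-1} \rangle$ of order $2n$, where $4 \mid 2n$, so $n$ is even. The reflections are the elements $\sigma \rho^{i}$, and a direct computation gives $\rho^{-j} (\sigma \rho^{i}) \rho^{j} = \sigma \rho^{i+2j}$. Also $\sigma \cdot \sigma\rho^{i} \cdot \sigma = \sigma \rho^{-i}$, and conjugating by a reflection is the same as conjugating by $\sigma$ followed by a rotation. Since $n$ is even, this shows that the reflections split into exactly two conjugacy classes: $\{ \sigma \rho^{i} : i \text{ even} \}$ and $\{ \sigma \rho^{i} : i \text{ odd} \}$. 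So $r_{1}$ and $r_{2}$ are conjugate precisely when their exponents $i$ have the same parity.

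Next I would pin down the Klein four-subgroups. Any Klein four-subgroup $K$ consists of the identity and three commuting involutions. The involutions of $D$ are the reflections together with the unique central rotation $z = \rho^{n/2}$. Two reflections $\sigma\rho^{i}$ and $\sigma\rho^{j}$ multiply to $\rho^{j-i}$. For this product to be an involution we need $j - i \equiv n/2 \pmod n$, and then the product is $z$. Hence $K = \{ 1, z, \sigma\rho^{i}, \sigma\rho^{i+n/2} \}$. The two reflections in $K$ are $r_{1} = \sigma\rho^{i}$ and $r_{2} = \sigma\rho^{i + n/2}$. The other possibility, a subgroup with three reflections, is impossible, because the product of two of them must be the third involution, which is a rotation. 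Likewise a subgroup with no reflections is impossible, since there is only one rotation involution. If the statement is read with $r_{1} = r_{2}$ allowed, that case is trivially conjugate, so I would assume $r_{1} \neq r_{2}$, which is the intended meaning.

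Combining the two steps, $r_{1}$ and $r_{2}$ are conjugate if and only if $i$ and $i + n/2$ have the same parity. This holds exactly when $n/2$ is even, that is, when $4 \mid n$, which is equivalent to $8 \mid |D| = 2n$.

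The main care point is the conjugacy-class computation for reflections. One must check that conjugation by reflections does not merge the two parity classes; the formula $\sigma\rho^{j} \cdot \sigma\rho^{i} \cdot \sigma\rho^{j} = \sigma\rho^{2j - i}$ shows that parity is preserved. The degenerate case $n = 2$, where $D$ is itself a Klein four-group, is consistent with the claim: $|D| = 4$, there is no conjugacy since $D$ is abelian, and $8 \nmid 4$.
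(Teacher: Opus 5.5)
Your proof is correct and follows essentially the same route as the paper: both fix a rotation generator and split the reflections (for $n = |D|/2$ even) into the two parity classes $\{\sigma\rho^{i} : i \text{ even}\}$ and $\{\sigma\rho^{i} : i \text{ odd}\}$. Both then note $r_{2} = r_{1}\rho^{n/2}$ and conclude from the parity of $n/2$. You also justify why a Klein four-subgroup must have the form $\{1, \rho^{n/2}, r, r\rho^{n/2}\}$ and check the degenerate case $|D| = 4$; the paper simply asserts the former and omits the latter.
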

\begin{proof}
Put $ n := | D | / 2 $. We can write $ D = \langle s , r_{1} \ : \ s^{n} = r_{1}^{2} = 1_{D} , \ r_{1}sr_{1} = s^{-1} \rangle $, where $ s $ generates the subgroup of rotations. We can then express $ K $ as $ K = \{ 1_{D} , s^{n/2} , r_{1} , r_{2} \} $.

\medskip
As $ n $ is even the reflections split into two conjugacy classes; $ \{ r_{1}, \ r_{1} s^{2} , \dots , \ r_{1} s^{n-2} \} $ and $ \{ r_{1} s, \ r_{1} s^{3} , \dots , \ r_{1} s^{n-1} \} $.

\medskip
If $ | D | $ is divisible by $ 8 $, then $ n/2 $ is even and $ r_{2} = r_{1} s^{n/2} $ must lie in the same conjugacy class as $ r_{1} $. If $ | D | $ is not divisible by $ 8 $, then $ n/2 $ is odd and $ r_{2} = r_{1} s^{n/2} $ must lie in opposite conjugacy classes.
\end{proof}
The notation $ K_{4} $ will always be used for a Klein-four group.
\begin{lema}\label{allkfrcnjdm}
Let $ m \geq 8 $ with $ m \equiv 0 \pmod{4} $. All $ K_{4} $ subgroups of $ D_{m} $ are conjugate if and only if $ m \equiv 4 \pmod{8} $.
\end{lema}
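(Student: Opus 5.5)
Let $ n := m/2 $, so $ n \geq 4 $ is even, and write $ D_{m} = \langle s , r : s^{n} = r^{2} = 1 , \ rsr = s^{-1} \rangle $. I would first list every $ K_{4} $ subgroup of $ D_{m} $ explicitly. The rotation subgroup $ \langle s \rangle $ is cyclic, so it contains the unique involution $ z := s^{n/2} $ and no $ K_{4} $. Hence any $ K_{4} $ contains at least two reflections. The product of two distinct reflections $ rs^{i} $ and $ rs^{j} $ is the rotation $ s^{j-i} $. For this to be an involution we need $ j - i \equiv n/2 \pmod{n} $. So every $ K_{4} $ has the form
\[
K^{(i)} := \{ 1, z, rs^{i}, rs^{i+n/2} \}, \qquad i \in \{ 0 , \dots , n/2 - 1 \} .
\]
Conversely, each such set is a subgroup, since $ z $ is central. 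This gives exactly $ n/2 $ Klein four-subgroups.

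Next I would determine the conjugacy classes of these subgroups. Conjugation fixes $ z $, and from $ s^{-1} (rs^{i}) s = rs^{i+2} $ and $ r (rs^{i}) r = rs^{-i} $ it sends reflections $ rs^{i} $ to reflections $ rs^{i'} $ with $ i' \equiv \pm i \pmod 2 $. As recalled in the proof of Lemma \ref{refconjug}, since $ n $ is even the reflections split into two classes according to the parity of the exponent. Conjugation therefore permutes the $ K^{(i)} $, and I would track the parities of the two reflections in each. There are two cases.

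If $ m \equiv 4 \pmod 8 $, then $ n/2 $ is odd. Each $ K^{(i)} $ then contains one reflection of each parity, so it contains exactly one even reflection $ rs^{2k} $. Every even reflection $ rs^{2k} $ lies in exactly one $ K^{(i)} $. All even reflections are conjugate, and conjugating $ K^{(i)} $ moves its even reflection to the even reflection of the image subgroup. Hence all $ K^{(i)} $ are conjugate to the one containing $ r $. This direction amounts to Lemma \ref{refconjug} in disguise.

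If $ m \equiv 0 \pmod 8 $, then $ n/2 $ is even. The two reflections of $ K^{(i)} $ have the same parity as $ i $. Conjugation preserves the parity class of reflections, so $ K^{(0)} $, whose reflections are even, cannot be conjugate to $ K^{(1)} $, whose reflections are odd. Here the hypothesis $ m \geq 8 $ is needed: it gives $ n/2 \geq 2 $, so that $ K^{(1)} $ exists and differs from $ K^{(0)} $. Thus the subgroups are not all conjugate.

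The main obstacle I expect is the bookkeeping in the first step: justifying carefully that no $ K_{4} $ contains only one reflection, and that the pairing $ j \equiv i + n/2 $ yields exactly these subgroups. The rest follows quickly from the two reflection classes.
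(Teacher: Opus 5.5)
Your proof is correct and follows essentially the paper's route: you sort the $K_{4}$ subgroups by which reflection class they meet (Lemma \ref{refconjug}), and when $m \equiv 4 \pmod{8}$ you use that each reflection lies in exactly one $K_{4}$ to get conjugacy. Your explicit list $K^{(i)}=\{1,z,rs^{i},rs^{i+n/2}\}$ makes precise facts the paper uses without proof: every $K_{4}$ contains $z$, each reflection lies in a unique $K_{4}$, and for $8 \mid m$ there exist $K_{4}$'s of both parities.
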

\begin{proof}
Let $ D = D_{m} $ and $ K \leq D $ with $ K \cong K_{4} $.

\medskip
If $ m \equiv 0 \pmod{8} $ then the reflections of $ D$ that lie in $ K$ are conjugate to each other by Lemma \ref{refconjug}. Same goes for all other Klein four subgroups. The Klein four subgroups split into two conjugacy classes, corresponding to the two conjugacy classes of reflections in $ D$.

\medskip
If $ m \equiv 4 \pmod{8} $ then the reflections of $ D$ that lie in $ K$ are not conjugate to each other by Lemma \ref{refconjug}. Same goes for all other Klein four subgroups. Each reflection of $ D $ lies in exactly one $ K_{4} $. Therefore all $ K_{4} $ are conjugate.
\end{proof}

\newpage
\begin{lema}\label{sylwlmma}
Let $ G $ be either $ PGL_{2} (q) $ or $ PSL_{2} (q) $ where $ q \geq 4 $. Let $ r $ be an odd prime that divides $ | G | $ but does not divide $ q $. Then the Sylow $ r $-subgroups of $ G $ are cyclic and are subgroups of some $ C_{ (q \pm 1) / \delta } $ in $ G $, where $ \delta = 2 $ if $ G = PSL_{2} (q) $ with $ q $ odd and $ \delta = 1 $ if $ G = PGL_{2} (q) $ with $ q $ odd or even.
\end{lema}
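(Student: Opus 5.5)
The plan is a counting argument. I compare $|G|$ with the orders of the cyclic tori, and then use Sylow's theorem to push a Sylow $r$-subgroup inside one of them.

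First, by Lemma \ref{lnsz}, $|G| = \delta^{-1} q(q-1)(q+1)$ when $q$ is odd, and $|G| = q(q-1)(q+1)$ otherwise. Since $r$ is odd and $r \nmid q$, the full $r$-part of $|G|$ divides $(q-1)(q+1)$. Because $\gcd(q-1,q+1)$ divides $2$ and $r$ is odd, $r$ divides exactly one of $q-1$ and $q+1$. Hence the $r$-part $r^{a}$ of $|G|$ is exactly the $r$-part of whichever of $q \pm 1$ it divides. It is also the $r$-part of $(q\pm1)/\delta$, since dividing by $\delta \in \{1,2\}$ does not change an odd prime's part.

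Next, I need the existence of cyclic subgroups $C_{(q-1)/\delta}$ and $C_{(q+1)/\delta}$ in $G$. These come from the split and non-split tori: the dihedral maximal subgroups $D_{2(q\pm1)/\delta}$ listed in Tables \ref{tableone}, \ref{tabletwo} and \ref{tablethree} contain rotation subgroups $C_{(q\pm1)/\delta}$. I intend to exhibit them concretely rather than rely on maximality, because the tables exclude small $q$ such as $q = 5,7,9,11$.

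\begin{itemize}
\item The split torus is the image of the diagonal matrices $\mathrm{diag}(\lambda, 1)$ (for $PGL$), or $\mathrm{diag}(\lambda,\lambda^{-1})$ (for $PSL$). It is cyclic of order $(q-1)/\delta$.
\item The non-split torus is the image of $\mathbb{F}_{q^{2}}^{*}$ acting on $\mathbb{F}_{q^2} \cong \mathbb{F}_q^2$. Modulo scalars it gives the cyclic group $\mathbb{F}_{q^2}^*/\mathbb{F}_q^* \cong C_{q+1}$. Its intersection with the image of $SL_2(q)$ (the norm-one elements) gives $C_{(q+1)/\delta}$.
\end{itemize}

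With these in hand, the subgroup $C_{(q\pm1)/\delta}$ of order divisible by $r^{a}$ contains a subgroup of order $r^{a}$, which is a Sylow $r$-subgroup of $G$. It is cyclic, being a subgroup of a cyclic group. By Sylow's theorem every Sylow $r$-subgroup is conjugate to it. Each Sylow $r$-subgroup is therefore cyclic and lies in a conjugate of $C_{(q\pm1)/\delta}$, which is again a cyclic subgroup of that order.

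The main obstacle is the second step: justifying uniformly for all $q \geq 4$ that these cyclic tori exist with the exact orders $(q\pm1)/\delta$, especially the non-split one and its intersection with $PSL_2(q)$. I would handle it with the explicit field-multiplication construction and a determinant/norm computation. I would not quote maximal subgroup tables, which omit some small cases.
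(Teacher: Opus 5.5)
Your proof is correct, and its skeleton matches the paper's. Both arguments first observe that the $r$-part of $|G|$ lies entirely in one of $q\pm1$. A cyclic subgroup of order $(q\pm1)/\delta$ then contains a Sylow $r$-subgroup, and Sylow conjugacy finishes the proof.

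The difference is in how you obtain the cyclic subgroups $C_{(q\pm1)/\delta}$. The paper reads them off its maximal subgroup tables:
\begin{itemize}
\item $C_{(q-1)/\delta}$ is the complement in a Borel subgroup $E_q\rtimes C_{(q-1)/\delta}$;
\item $C_{(q+1)/\delta}$ is the rotation subgroup of a maximal $D_{2(q+1)/\delta}$.
\end{itemize}
That dihedral subgroup is not maximal in $PSL_2(7)$ or $PSL_2(9)$, so the paper handles those two groups by hand. For $q=7$, the number $8$ has no odd prime divisor; for $q=9$, a Sylow $5$-subgroup is itself a $C_5$.

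You instead construct the split and non-split tori explicitly. This works uniformly for all $q\ge4$, needs no exceptional cases, and does not rely on the maximal subgroup classification quoted in the tables. The only cost is checking that multiplication by $x\in\mathbb{F}_{q^2}^*$ has determinant $N(x)=x^{q+1}$.

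One small imprecision in that step: inside $PGL_2(q)$, the intersection of $\mathbb{F}_{q^2}^*/\mathbb{F}_q^*$ with the image of $SL_2(q)$ is not just the norm-one elements. It consists of the cosets $x\mathbb{F}_q^*$ with $N(x)$ a square in $\mathbb{F}_q^*$. Every such coset contains a norm-one element, since $N(x)=c^2$ gives $N(x/c)=1$. So you still get a cyclic group of order $(q+1)/2$ for $q$ odd.

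What the paper's route gains is that its cyclic subgroups already sit inside the Borel and dihedral subgroups used in the next lemmas. For example, Lemma~\ref{nrmlzr} needs the Sylow $r$-subgroup to lie in the rotation subgroup of a $D_{2(q\pm1)/\delta}$. With your tori you would get this by adjoining an explicit normalizing involution, but the statement itself does not need it.
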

\begin{proof}
Let $ S \leq G $ be a Sylow $r$-subgroup. The order of $ G $ is $ q (q+1) (q-1) / \delta $ and $ r $ does not divide $ q $, so $ r $ must divide $ (q+1) (q-1) $. As $ r $ is an odd prime it cannot divide both $ q+1 $ and $ q-1 $, so divides exactly one of these numbers. Let $ m \in \{ q+1 , q-1 \} $ where $ r | m $. Observe that as $ | S | $ is a power of $ r $ it cannot divide $ q $ and it must divide $ m $. Since $ \delta \in \{ 1 , 2 \} $ and $ r $ is an odd prime, we see that both $ r $ and $ |S | $ divide $ m / \delta $.

\medskip
Now $ |G| $ is divisible by at least two primes. Therefore $ S \neq G $ and $ S $ lies in some maximal subgroup of $ G $.

\medskip
There exists a maximal Borel subgroup $ B < G $ and $ C < B $ with $ C \cong C_{ (q-1) / \delta } $. So if $ m = q - 1 $ then $ C $ has a cyclic subgroup with $ | S | $ elements, which is a Sylow $r$-subgroup of $ G $. Every Sylow $ r $-subgroup then lies in some conjugate of $ C $.

\medskip
If $ m = q+1 $ and $ G $ contains a maximal $ D_{2(q+1) / \delta } $, then its rotational subgroup is a $ C_{ (q+1) / \delta } $, which in turn contains a cyclic subgroup with $ | S | $ elements. Thus all Sylow $ r $-subgroups lie in some conjugate of this $ C_{ (q+1) / \delta } $.

\medskip
If $ m = q+1 $ and $ G $ does not contain a maximal $ D_{2(q+1) / \delta } $ then it can be seen from Tables \ref{tableone}, \ref{tabletwo} and \ref{tablethree} that $ G = PSL_{2} (q) $ with $ q $ odd and $ q \in \{ 7,9 \} $.

\medskip
We can rule out $ PSL_{2} (7) $ and $ m= 7+1 = 8 $ because in that case $ m $ has no odd prime divisors.

\medskip
The remaining case is $ G = PSL_{2} (9) $ and $ m = 10 $. The only odd divisor of $ m $ that is a power of a prime is $ 5 $, so we are looking for a $ C_{5} $ in $ G $. This must exist because $ 5 $ divides $ | G | $.
\end{proof}
\begin{lema}\label{dcvlkjhsdp}
Let $ G $ be $ PSL_{2} (q) $ or $ PGL_{2} (q) $, where $ q \geq 4 $. There exist dihedral subgroups of $ G $ of order $ 2(q+1) / \delta $ and $ 2(q-1) / \delta $, where $ \delta = 2 $ if $ G = PSL_{2} (q) $ with $ q $ odd and $ \delta = 1 $ if $ G = PGL_{2} (q) $ with $ q $ odd or even. If $ D \leq G $ is such a subgroup, with $ | D | \geq 4 $, and $ R < D $ is a non-trivial subgroup of the rotational subgroup (or $ R $ is any $ C_{2} $ if $ D \cong D_{4} $), then $ D = N_{G} ( R ) $.
\end{lema}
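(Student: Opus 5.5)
For existence I would write down explicit matrices rather than rely on the tables, since the tables only list maximal dihedral subgroups and miss small $q$. For the split torus I take the image in $G$ of the diagonal matrices $\mathrm{diag}(a,b)$ together with the antidiagonal matrix $w$ with $1$ and $-1$ off the diagonal (for $PGL_{2}(q)$ any antidiagonal matrix). Its cyclic part has order $(q-1)/\delta$, the image of $w$ inverts it, and so it generates a dihedral group of order $2(q-1)/\delta$. For the non-split torus I identify $\mathbb{F}_{q^{2}}$ with $\mathbb{F}_{q}^{2}$. Multiplication by elements of $\mathbb{F}_{q^{2}}^{*}$ gives a cyclic subgroup of $GL_{2}(q)$ of order $q^{2}-1$, and the Frobenius $x\mapsto x^{q}$ inverts it modulo scalars. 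Passing to $PGL_{2}(q)$ gives $C_{q+1}\rtimes C_{2}\cong D_{2(q+1)}$. In $PSL_{2}(q)$ I intersect this with the image of $SL_{2}(q)$, using the norm-one elements, and adjust the Frobenius by a scalar so that it has determinant a square. This gives $D_{q+1}$. Alternatively, for $q\geq 11$ (or whenever the tables apply) I would simply cite Tables \ref{tableone}, \ref{tabletwo} and \ref{tablethree} and the maximality of these subgroups.

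For the normalizer statement I first note that $D\leq N_{G}(R)$: the rotation group $C$ is cyclic and normal in $D$, and $R$ is characteristic in $C$. The main step is the reverse inclusion, and I would prove it by working with fixed points on the projective line $\mathbb{P}^{1}(\mathbb{F}_{q})$, or on $\mathbb{P}^{1}(\mathbb{F}_{q^{2}})$. A non-trivial element of $C$ is diagonalisable over $\mathbb{F}_{q}$ or $\mathbb{F}_{q^{2}}$, and it fixes exactly the two points $\{0,\infty\}$ (respectively a conjugate pair $\{\alpha,\alpha^{q}\}$). This holds provided the element is not an involution with coinciding eigenvalue pattern issues, which I will check next. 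Every non-trivial element of $R$ has the same fixed pair, so $N_{G}(R)$ permutes this pair and lies in its setwise stabilizer. Elements fixing both points are the diagonal elements, which form $C$ or its full analogue. The elements swapping the two points form a coset $wC$. Hence $N_{G}(R)\leq$ the setwise stabilizer of the pair in $G$, and I then compute that this stabilizer has order $2(q\mp1)/\delta$, so it equals $D$.

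The obstacle is the degenerate case, where the elements of $R$ might fix more than two points or the stabilizer is larger. Non-identity semisimple elements of $PGL_{2}$ fix at most two points, so that part is fine. The real difficulty is small $D$, namely $|D|=4$, where $R$ may be a reflection $C_{2}$, either as allowed by the statement or because $C$ itself has order $2$. A reflection is also a semisimple involution with its own fixed pair, and in the Klein four case its centralizer may be bigger than $D$, namely a dihedral group of order $2(q\pm1)/\delta$. I would handle $|D|=4$ separately. I would compute the centralizer of an involution in $G$ using Lemma \ref{pslnvltnscnjg} and Lemma \ref{pgltqnvcnjc}, via class sizes giving $|C_{G}(t)|$, and compare with $|D|$. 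If this shows the statement fails for some small $q$, for example $q=5$ in $PSL_{2}$, then I would restrict the claim to those $q$ where $|C_{G}(t)|=4$. I expect this case analysis to be the main difficulty.
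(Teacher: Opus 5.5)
Your argument is correct, but it is a genuinely different proof from the paper's. The paper works through the subgroup structure of $G$. It takes the maximal dihedral subgroups from Tables \ref{tableone}, \ref{tabletwo} and \ref{tablethree}, and gets $N_G(R)=D$ from maximality, since $G$ has no non-trivial cyclic normal subgroup. It then uses Sylow $r$-subgroups (Lemma \ref{sylwlmma}) and the conjugacy classes of involutions to show that any dihedral subgroup of the same order is one of these maximal ones. Finally it checks $PSL_2(5)$, $PGL_2(5)$ and $PSL_2(q)$ for $q\in\{7,9,11\}$ by direct inspection.

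You instead use the action on $\mathbb{P}^1(\mathbb{F}_{q^2})$. Because $|C|$ is prime to $p$, every non-trivial element of $C$ has exactly the same two fixed points $\{\beta,\gamma\}$. Hence $D\le N_G(R)\le G_{\{\beta,\gamma\}}$, and the last group has order $|D|$. This is uniform in $q$ and needs neither the classification of maximal subgroups nor any small-group inspection. It applies directly to non-maximal dihedral subgroups. It even shows that all such $D$ of a given order are conjugate, since $G$ is transitive on rational pairs and on pairs $\{\alpha,\alpha^q\}$. The paper's route, in exchange, stays inside the machinery it has already built and records the maximality facts it reuses in Lemma \ref{nrmlzr}.

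Some points to tighten:
\begin{itemize}
\item For an arbitrary $D$ (not just your constructed tori), say why the type of the pair $\{\beta,\gamma\}$ matches $|D|$. The group $C$ lies in the pointwise stabilizer of the pair, which is cyclic of order $(q\mp1)/\delta$. For $q\ge 4$, $(q\pm1)/\delta$ cannot divide $(q\mp1)/\delta$, because $\gcd(q-1,q+1)\le 2$ and $(q-1)/2$, $(q+1)/2$ are coprime.
\item The degenerate cases you worry about do not arise. An involution of $PGL_2(q)$ with $q$ odd has distinct eigenvalues $\pm\sqrt{\lambda}$. For $q$ even, $|C|$ is odd.
\item The case $|D|=4$ needs no hedging. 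It occurs only for $G=PSL_2(5)$, where Lemma \ref{pslnvltnscnjg} gives $15$ involutions, so $|C_G(t)|=60/15=4$. Since $D\cong K_4$ is abelian, $N_G(R)=C_G(t)=D$ for every $C_2\le D$, and no restriction of the statement is needed.
\item In your non-split construction for $PSL_2(q)$, rescaling the Frobenius by a scalar changes nothing projectively. Its determinant is $-1$, which is a square only when $q\equiv 1\pmod{4}$. For $q\equiv 3\pmod{4}$ (needed e.g.\ for $q=7$, where the tables do not help), compose it with multiplication by some $y\in\mathbb{F}_{q^2}^*$ of non-square norm. Alternatively, note that $D_{2(q+1)}\cap PSL_2(q)$ has index $2$ in $D_{2(q+1)}$ and does not contain the whole torus, because the norm map is onto. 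Hence it is dihedral of order $q+1$.
\end{itemize}
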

\begin{proof}
The only time we can have $ D_{ 2(q \pm 1) / \delta  } \cong D_{4} $ is when $ G = PSL_{2} (5) \cong A_{5} $. For this choice of $ G $, the elements whose order does not divide $ q $ have order $ 2 $ or $ 3 $ and it can be quickly checked the subgroups generated by these elements are normalized by a $ D_{4} $ or $ D_{6} $ as required.

\medskip
Suppose for the rest of the proof that $ G \neq PSL_{2}(5) $, so $ | D_{ 2(q \pm 1) / \delta  } | \geq 6 $. For almost all $ q \geq 4 $ there exist $ D_{2 (q \pm 1 ) / \delta } $ that are maximal. Any non-trivial subgroup of the rotational subgroup of a maximal $ D_{2 (q \pm 1 ) / \delta } $ must be normalized by the dihedral containing it because $ PSL_{2} (q) $ and $ PGL_{2} (q) $ do not contain non-trivial cyclic normal subgroups.

\medskip
To ensure that all dihedral subgroups of order $ 2 (q \pm 1 ) / \delta $ are the normalizers of subgroups in their rotational subgroup, it needs to be checked whether any non-maximal dihedrals could exist alongside maximal ones of the same order. Suppose $ H < G $ and $ H $ is isomorphic to a maximal $ D_{2 (q \pm 1 ) / \delta } $. The rotational subgroup is a $ C_{ (q \pm 1 ) / \delta } $. Let $ C$ be a subgroup of the rotational subgroup and $ r $ be a prime dividing $ | C | $. Then there exists $ c \in C $ with $ o (c) = r $. Also $ r $ divides $ q- 1 $ or $ q + 1 $ and so does not divide $ q $.

\medskip
If $ r $ is odd, the Sylow $ r $-subgroups lie in a $ C_{2 (q - 1 ) / \delta } $ or $ C_{2 (q + 1 ) / \delta } $ in $ G $ by Lemma \ref{sylwlmma}, but not both since $ r $ cannot divide both $ q-1 $ and $ q+1 $. Therefore there must be Sylow $ r $-subgroups in maximal dihedrals isomorphic to $ H $. One of these maximal dihedral contains $ \langle c \rangle $ and normalizes it. This maximal dihedral then also contains $ H$ and therefore must be $ H $.

\medskip
If $ r = 2 $ then $ c $ is the unique rotation of order $ 2 $ in $ H $. The maximal dihedral isomorphic to $ H $ also have a rotation of order $ 2 $. If $ G = PSL_{2} (q) $ then all involutions in $ G $ are conjugate by Lemma \ref{pslnvltnscnjg}. Hence $ \langle c \rangle $ is normalized by some maximal dihedral containing $ H $, which must be $ H $ itself because $ | H | $ cannot divide both $ 2(q-1) / \delta $ and $ 2(q+1) / \delta $. 

\medskip
If $ G = PGL_{2} (q) $ then both the $ D_{2(q-1) } $ and $ D_{ 2(q+1) } $ in $ G $ have involutions in their centres. Lemma \ref{pgltqnvcnjc} shows that there are two conjugacy class of involutions. So by the same reasoning as above, the involutions in one conjugacy class must be normalized by $ D_{2(q-1) } $ and the involutions in the other conjugacy class by $ D_{2(q+1) } $. Whichever type of dihedral normalizes $ \langle c \rangle $ also $ H $. Only one of these types of dihedral can contain a subgroup of order $ |H| $ and therefore $ H $. So $ H $ must be isomorphic to the same subgroups in that conjugacy class and be maximal.

\medskip
It can be seen in Tables \ref{tableone}, \ref{tabletwo} and \ref{tablethree} that the cases where maximal dihedrals of order $ 2 (q - 1 ) / \delta $ or $ 2 (q + 1 ) / \delta $ do not exist are $ PGL_{2} (5) $ and $ PSL_{2} (q) $ where $ q \in \{ 7,9,11 \} $.

\medskip
For $ PGL_{2} (5) $, the subgroup structure of $ S_{5} \cong PGL_{2} (5) $ can be examined and it is easily seen that there exist $ D_{8} $ and $ D_{10} $ subgroups, which are also the normalizers in $ PGL_{2} (5) $ of their rotational subgroups.

\medskip
For the cases $ PSL_{2} (q) $ where $ q \in \{ 7,9,11 \} $, the non-maximal dihedrals of order $ 2(q-1) / \delta $ or $ 2(q + 1) / \delta $ are contained in some maximal $ S_{4} $ or $ A_{5} $ (for $ PSL_{2} (9) $ the $ S_{4} $ containing $ D_{8} $ are subfield subgroups; $ PGL_{2} (3) \cong S_{4} $). If $ C^{ \prime } $ is in the rotational subgroup of one of these dihedrals $ H^{ \prime } < G $ and $ C^{ \prime } \neq \{ 1_{G} \} $ then $ H^{ \prime } \leq N_{G} ( C^{ \prime } ) $.

\medskip
Since $ H^{ \prime } $ is not maximal, there exists some maximal subgroup $ M < G $ such that $ C^{ \prime } < H^{ \prime } \leq N_{G} (C^{ \prime } ) \leq M < G $. It can be checked that if $ M $ is an $ S_{4} $ or $ A_{5} $ that $ C^{ \prime } $ would not be normal in any subgroup of $ M $ except $ H^{ \prime } $. Clearly we could not have $ M \cong H^{ \prime } $. Similarly we cannot have $ H^{ \prime } \cong D_{2 (q - 1 ) / \delta } $ and $ M \cong  D_{2 (q + 1 ) / \delta } $, or vice versa, because $  | H^{ \prime }  | $ would not divide $ | M | $. The only other possibility for $ M $ is that it is a Borel subgroup or in the case $ q = 9 $ a subfield subgroup isomorphic to $ PGL_{2} (3) $.

\medskip
If $ G = PSL_{2} (9) $ and $ M \cong PGL_{2} (3) \cong S_{4} $ then $ H^{ \prime } \cong D_{8} $ and $ C^{ \prime } $ is either a $ C_{2} $ or $ C_{4} $. In either case $ N_{M} (C^{ \prime } ) = H^{ \prime } $.

\medskip
The final possibility for $ M $ is that it is a Borel subgroup of $ PSL_{2} (q) $. The only way this is possible is if there exist involutions in $ M $. Looking at the order of Borel subgroups, we see the only outstanding case where there exist Borel subgroups with involutions is when $ G = PSL_{2} (9) $. Borel subgroups have order $ 36 $ in $ PSL_{2} (9) $, so do not contain a $ D_{8} $ or $ D_{10} $.
\end{proof}
\begin{lema}\label{nrmlzr}
Let $ G $ be either $ PGL_{2} (q) $ or $ PSL_{2} (q) $ where $ q \geq 3 $. Let $ g \in G $. Suppose $ o(g) $ is coprime to $ q $. Then $ N_{G} ( \langle g \rangle ) \cong D_{ 2( q \pm 1 ) / \delta } $, where $ \delta = 2 $ if $ G = PSL_{2} (q) $ or $ \delta = 1 $ if $ G = PGL_{2} (q) $. In addition, all $ D_{ 2( q \pm 1 ) / \delta } $ in $ G $ are conjugate to $ N_{G} ( \langle g \rangle ) $ and if $ G $ has maximal subgroups isomorphic to $ N_{G} ( \langle g \rangle ) $ then $ N_{G} ( \langle g \rangle ) $ is maximal.
\end{lema}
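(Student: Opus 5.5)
Let $ g \in G $ with $ o(g) $ coprime to $ q $; the trivial case $ g = 1_G $ should be excluded or handled separately. The plan is to place $ \langle g \rangle $ inside the rotational subgroup of a dihedral subgroup of order $ 2(q\pm1)/\delta $ and then apply Lemma \ref{dcvlkjhsdp}.

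\textbf{Step 1 (semisimple elements lie in a torus).} I would lift $ g $ to a matrix $ \hat g \in GL_2(q) $, or $ SL_2(q) $ when $ G = PSL_2(q) $. Since $ o(g) $ is coprime to $ p $, the matrix $ \hat g $ is semisimple modulo scalars. Its characteristic polynomial either splits over $ \mathbb{F}_q $ or is irreducible.
\begin{itemize}
\item If it splits with distinct eigenvalues, $ \hat g $ is diagonalizable. Then $ g $ lies in the image of the diagonal torus, which is a cyclic group $ C_{(q-1)/\delta} $ inside a Borel subgroup.
\item If it splits with a repeated eigenvalue, then $ \hat g $ is scalar times unipotent. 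Coprimality forces $ \hat g $ to be scalar, so $ g = 1 $.
\item If it is irreducible, then $ \hat g $ generates a copy of $ \mathbb{F}_{q^2} $. Hence $ g $ lies in the image of the non-split torus $ \mathbb{F}_{q^2}^*/\mathbb{F}_q^* \cong C_{q+1} $, intersected with the $ PSL $ image when $ \delta = 2 $. This is a cyclic group $ C_{(q+1)/\delta} $.
\end{itemize}
In both nontrivial cases the torus $ T $ is the rotation subgroup of an explicit dihedral group $ D \cong D_{2(q\pm1)/\delta} $. For the split case, $ D $ is generated by $ T $ and the antidiagonal element $ \begin{pmatrix} 0 & 1 \\ -1 & 0 \end{pmatrix} $. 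For the non-split case, $ D $ is generated by $ T $ and the Frobenius element of $ \mathbb{F}_{q^2} $, which acts by inversion on $ T $. This also gives the existence claim already recorded in Lemma \ref{dcvlkjhsdp}.

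\textbf{Step 2 (identify the normalizer).} Now $ \langle g \rangle $ is a non-trivial subgroup of the rotational subgroup of $ D $. Lemma \ref{dcvlkjhsdp} then gives $ D = N_G(\langle g\rangle) $, so $ N_G(\langle g \rangle) \cong D_{2(q\pm1)/\delta} $. Two small cases need checking by hand:
\begin{itemize}
\item When $ D \cong D_4 $, the lemma applies to any $ C_2 $.
\item When $ q = 3 $, Lemma \ref{dcvlkjhsdp} assumes $ q \ge 4 $. Here $ PSL_2(3) \cong A_4 $ and $ PGL_2(3) \cong S_4 $ by Lemma \ref{smrphsmspsltrn}, so the claim can be verified directly. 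For example, in $ S_4 $ the normalizer of a $ C_3 $ is $ S_3 \cong D_6 $, and for an involution of the relevant class it is $ D_8 $.
\end{itemize}

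\textbf{Step 3 (conjugacy and maximality).} To show that every $ D_{2(q\pm1)/\delta} $ in $ G $ is conjugate to $ N_G(\langle g\rangle) $, let $ H $ be such a dihedral group and let $ R $ be its rotation subgroup. By Lemma \ref{dcvlkjhsdp}, $ H = N_G(R) $. By Step 1, $ R $ is contained in a torus of the matching type. Tori of a given type are all conjugate, being stabilizers of a pair of points or a conjugate pair in $ \mathbb{F}_{q^2} $, and the order of $ R $ fixes the type. So $ R $ is conjugate to the full torus $ T $, hence $ H $ is conjugate to $ N_G(T) = D $. Finally, the tables list at most one conjugacy class of maximal subgroups of each dihedral order. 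Therefore, if a maximal subgroup isomorphic to $ N_G(\langle g\rangle) $ exists, it is conjugate to $ N_G(\langle g \rangle) $, and so $ N_G(\langle g \rangle) $ is itself maximal.

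\textbf{Main obstacle.} I expect the main difficulty to be the "$ \pm $" ambiguity when $ q\pm1 $ share the factor $ 2 $, that is, when $ g $ is an involution. Then both tori contain involutions, and one has to decide which dihedral group normalizes $ \langle g\rangle $.
\begin{itemize}
\item In $ PGL_2(q) $, the two classes of involutions from Lemma \ref{pgltqnvcnjc} separate the two cases.
\item In $ PSL_2(q) $, there is a single class of involutions (Lemma \ref{pslnvltnscnjg}), so only one type of torus contains involutions. This is the one with $ (q\pm1)/2 $ even.
\end{itemize}
Making this case split line up with the counting argument used in Lemma \ref{dcvlkjhsdp} is where I would be most careful.
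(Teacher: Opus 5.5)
Your argument is correct, but it follows a different route from the paper.

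\textbf{How the paper argues.} The paper never uses matrices in this proof.
\begin{itemize}
\item If $o(g)$ is divisible by an odd prime $r$, it takes $g_2 \in \langle g\rangle$ of order $r$. Lemma \ref{sylwlmma} places a Sylow $r$-subgroup in the rotation subgroup of some $D_{2(q\pm1)/\delta}$, and Lemma \ref{dcvlkjhsdp} then gives $N_G(\langle g_2\rangle)$. From $\langle g\rangle \le C_G(g_2)\le N_G(\langle g_2\rangle)$ it concludes that $g$ is a rotation.
\item If $o(g)$ is a power of $2$, it passes to an involution in $\langle g\rangle$ and uses the involution class information of Lemmas \ref{pslnvltnscnjg} and \ref{pgltqnvcnjc}.
\item Conjugacy of all dihedral subgroups of the given order comes from Sylow conjugacy.
\end{itemize}

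\textbf{What your route buys.} Your torus dichotomy (eigenvalues in $\mathbb{F}_q$ or not) treats every order coprime to $p$ uniformly, with no separate $2$-power case. It also decides the sign explicitly: an involution lies in the split torus exactly when $\hat g^2=\lambda I$ with $\lambda$ a square in $\mathbb{F}_q$. Conjugacy of the dihedral subgroups then follows from conjugacy of tori, rather than from Sylow theory and the imported involution counts.

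\textbf{What it costs.} You need standard facts the paper does not develop:
\begin{itemize}
\item split tori are conjugate, by $2$-transitivity on the projective line;
\item non-split tori are conjugate;
\item these classes do not split in $PSL_2(q)$, because every torus of $PGL_2(q)$ contains elements outside $PSL_2(q)$.
\end{itemize}
Both proofs use Lemma \ref{dcvlkjhsdp} in the same way to identify the normalizer.

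\textbf{Small points to fix.}
\begin{itemize}
\item In the $q=3$ check, $C_3$ is irrelevant because its order is not coprime to $q$. What must be checked in $S_4$ is: transpositions, with normalizer $K_4\cong D_{2(q-1)}$; double transpositions and $4$-cycles, with normalizer $D_8\cong D_{2(q+1)}$.
\item In the non-split case for $PSL_2(q)$, the Frobenius has determinant $-1$. It therefore lies in $PSL_2(q)$ only when $q\equiv 1\pmod{4}$. Use instead the Frobenius composed with multiplication by some $t\in\mathbb{F}_{q^2}^*$ of norm $-1$; this still inverts the torus modulo scalars.
\item Your ``main obstacle'' is not actually one. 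Step~1 already puts each nontrivial semisimple element in exactly one type of torus, and Lemma \ref{dcvlkjhsdp} applies to a rotation of order $2$ like any other, so no matching with involution counts is needed.
\item The appeal to the tables in Step~3 is unnecessary. Once every dihedral subgroup of that order is conjugate to $N_G(\langle g\rangle)$, any maximal one is a conjugate of it.
\end{itemize}
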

\begin{proof}
If $ G = PSL_{2} (3) \cong A_{4} $ or $ G = PGL_{2} (3) \cong S_{4} $ it is straight forward to check that $ o(g) \in \{ 2 , 4 \} $ and $ \langle g \rangle $ is normalized by a $ D_{ 4 / \delta } $ or a $ D_{ 8 / \delta } $. So suppose $ q \geq 4 $.

\medskip
First suppose $ o(g) = 2^{m} $ for some $ m  \in \mathbb{N} $. Then $ q $ must be odd. Also there exists an involution $ g_{1} \in \langle g \rangle $. Note that $ \langle g \rangle \leq C_{G} (g_{1} ) \leq N_{G} ( \langle g_{1} \rangle ) $. If $ G = PSL_{2} (q) $ then Lemma \ref{dcvlkjhsdp} shows there exists $ D < G $ with $ D \cong D_{q \pm 1} $ that has an involution $ d $ at its centre and $ D = N_{G} ( \langle d \rangle ) $. All involutions in $ G $ are conjugate by Lemma \ref{pslnvltnscnjg}. Hence $ \langle g \rangle $ is normalized by some conjugate of $ D $. 

\medskip
If $ G = PGL_{2} (q) $ then both the $ D_{2(q-1) } $ and $ D_{ 2(q+1) } $ in $ G $ have involutions in their centres. Lemma \ref{pgltqnvcnjc} shows that there are two conjugacy class of involutions. So by the same reasoning as above, the involutions in one conjugacy class must be normalized by $ D_{2(q-1) } $ and the involutions in the other conjugacy class by $ D_{2(q+1) } $.

\medskip
In both the above cases, if $ D ^{ \prime } < G $ is maximal and $ D^{ \prime } \cong D $ then it has an involution in its centre, is the normalizer of the $ C_{2} $ the involution generates and so must be conjugate to $ D $, meaning $ D $ is maximal.

\medskip
Next suppose $ o(g) $ is divisible by some odd prime $ r $. Then there exists an element of $ g_{2} \in \langle g \rangle $ with $ o(g_{2} ) = r $. As before, $ \langle g \rangle \leq C_{G} (g_{2} ) \leq N_{G} ( \langle g_{2} \rangle ) $. By Lemma \ref{sylwlmma}, a Sylow $ r $-subgroup is contained in some $ C_{(q \pm 1) /\delta } $. So Sylow $ r $-subgroups are in the rotational subgroups of either the $ D_{2(q-1) / \delta } $ or $ D_{2(q+1) / \delta } $ in $ G$, which exist by Lemma \ref{dcvlkjhsdp}. The same lemma shows that such a dihedral is the normalizer of the Sylow $ r $-subgroup it contains. As all Sylow $ r $-subgroups are conjugate, $ N_{G} ( \langle g_{2} \rangle ) \cong D_{2(q \pm 1) / \delta } $. It must be that $ \langle g \rangle $ lies in the rotational subgroup of $  N_{G} ( \langle g_{2} \rangle ) $, implying $ N_{G} ( \langle g \rangle ) = N_{G} ( \langle g_{2} \rangle )  $ by Lemma \ref{dcvlkjhsdp}.

\medskip
All subgroups isomorphic to $ N_{G} ( \langle g \rangle ) $ normalize a subgroup conjugate to $ \langle g_{2} \rangle $, thus all $ D_{ 2( q \pm 1 ) / \delta }  $ are conjugate. So if $ D^{ \prime \prime } < G $ is maximal and $ D^{ \prime \prime } \cong D $ then $ D $ is maximal.
\end{proof}
\begin{lema}\label{kfrsbgrpllcn}
Suppose $ q \equiv \pm 3 \pmod{8} $. Any Klein-four subgroups of $ PSL_{2} (q) $ are conjugate to each other.
\end{lema}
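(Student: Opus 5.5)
The plan is to reduce the statement to a fact about involutions, using that $q$ is odd. The congruence $q \equiv \pm 3 \pmod{8}$ forces $q$ odd and $q \equiv 3, 5 \pmod{8}$. In particular $q^{2} - 1 \equiv 8 \pmod{16}$, so $|PSL_{2}(q)| = \tfrac{1}{2} q (q^{2}-1)$ has $2$-part exactly $4$. A Sylow $2$-subgroup therefore has order $4$.

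First, I will show that every Klein four-subgroup is a Sylow $2$-subgroup. It has order $4$, equal to the full $2$-part of $|PSL_{2}(q)|$, so this is immediate. Sylow's theorem then gives that all Sylow $2$-subgroups are conjugate, and the statement follows.

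The only remaining point is that this argument is complete on its own: it needs no existence claim, since if no Klein four-subgroups existed the statement would be vacuous. For context, they do exist. One of $q \pm 1$ is $\equiv 4 \pmod{8}$, so Lemma \ref{dcvlkjhsdp} supplies a dihedral subgroup of order $(q \pm 1)$ divisible by $4$, and such a group contains a $K_{4}$.

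If a more structural argument in the paper's style is preferred, there is an alternative route. By Lemma \ref{pslnvltnscnjg}, all involutions are conjugate. Each $K_{4}$ is then contained in $C_{G}(t)$ for an involution $t$, and $C_{G}(t)$ is a dihedral group of order $q \pm 1$ (Lemma \ref{nrmlzr}), where $q \pm 1 \equiv 4 \pmod{8}$. Lemma \ref{allkfrcnjdm} says the Klein four-subgroups of such a dihedral group are all conjugate.

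The main obstacle in that route is the small case $D_{4}$ (when $q \pm 1 = 4$), together with checking that $t$ actually lies in the chosen $K_{4}$. This is why I would present the Sylow argument as the proof.
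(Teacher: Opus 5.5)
Your proof is correct and is essentially the paper's argument: when $q \equiv \pm 3 \pmod{8}$ the $2$-part of $|PSL_{2}(q)|$ is exactly $4$, so every Klein four-subgroup is a Sylow $2$-subgroup, and Sylow's theorem gives conjugacy. Your check that $q^{2}-1 \equiv 8 \pmod{16}$ supplies the order computation the paper only asserts. The existence remark and the alternative dihedral route are not needed.
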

\begin{proof}
For these choices of $ q $ the order of $ PSL_{2} (q) $ is not divisible by $ 8 $. So any Klein four-subgroups of $ PSL_{2} (q) $ are Sylow $ 2 $-subgroups, which lie in a single conjugacy class by Sylow's theorems.
\end{proof}

\newpage
\chapter{The Borel Action}\label{chapfive}
The remaining chapters will each be structured in the following way. First a preliminary results section will give any lemmas needed that are not specific to the action in question. Next will be a section describing the action. That will be followed by a section for calculating the height of the action and finally a section for finding the relational complexity. Where appropriate, the sections for height and relational complexity might be combined into one. In the chapter for the $ A_{5} $ action there is no section for preliminary results.

\medskip
In this chapter the Borel actions of $ PSL_{2} (q) $ and $ PGL_{2} (q) $ will be looked at. Exceptionally, the case $ q \geq 3 $ is considered here (unlike other actions where $ q \geq 4 $). The case $ q = 3 $ is required for use later in this paper. Suppose $ G $ is $ PSL_{2} (q) $ or $ PGL_{2} (q) $ acting on $ \Omega $, the right cosets of a maximal Borel subgroup. The main results of this chapter for height are:

\medskip
If $ G = PSL_{2} (3) $ then $ Ht(G, \Omega ) = 2 $. See Theorem \ref{slbrlhght}.

\medskip
If $ G = PSL_{2} (q) $ with $ q \geq 4 $ or $ G = PGL_{2} (q) $ with $ q \geq 3 $ then $ Ht(G, \Omega ) = 3 $. See Theorems \ref{glbrlhght} and \ref{slbrlhght}.

\medskip
The main results of this chapter for relational complexity are:

\medskip
If $ G = PGL_{2} (3) $ then $ RC(G, \Omega ) = 2 $. See Theorem \ref{brlglwhtever}.

\medskip
If $ G = PGL_{2} (q) $ with $ q \geq 4 $ then $ RC(G, \Omega ) = 4 $. See Theorem \ref{brlglwhtever}.

\medskip
If $ G = PSL_{2} (3) $ or $ G = PSL_{2} (5) $ then $ RC(G, \Omega ) = 3 $. See Theorem \ref{xckjdsqwmmmm}.
 
\medskip
If $ G = PSL_{2} (q) $ with $ q $ odd and $ q \geq 7 $ then $ RC(G, \Omega ) = 4 $. See Theorem \ref{xckjdsqwmmmm}.
\section{Preliminary Results}
There is just one preliminary result for this chapter.
\begin{lema}\label{ktrnstiveprmt}
A $2 $-transitive action is primitive.
\end{lema}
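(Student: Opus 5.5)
I will use Theorem \ref{maxg}. That theorem says an action on a set with at least two points is primitive exactly when it is transitive and every point stabilizer is a maximal subgroup of $G$. Let $G$ act $2$-transitively on $\Omega$, so $|\Omega| \geq 2$. A $2$-transitive action is transitive: given $\alpha, \beta \in \Omega$, pick $\gamma \neq \alpha$ and $\delta \neq \beta$, and send $(\alpha, \gamma)$ to $(\beta, \delta)$. It therefore remains to show that $G_{\omega}$ is maximal for each $\omega \in \Omega$.

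Since the action is non-trivial, $G_{\omega} \neq G$: transitivity gives some $g$ with $\omega^{g} \neq \omega$. Now take a subgroup $H$ with $G_{\omega} < H \leq G$; the aim is to show $H = G$.

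\begin{itemize}
\item Choose $h \in H \setminus G_{\omega}$ and put $\alpha := \omega^{h}$, so $\alpha \neq \omega$.
\item Let $g \in G$ be arbitrary. If $\omega^{g} = \omega$, then $g \in G_{\omega} \leq H$.
\item Otherwise $\omega^{g} \neq \omega$. By $2$-transitivity there is $k \in G$ with $(\omega, \alpha)^{k} = (\omega, \omega^{g})$. Then $k \in G_{\omega} \leq H$ and $\omega^{hk} = \omega^{g}$.
\item Hence $g (hk)^{-1} \in G_{\omega} \leq H$, and so $g \in H (hk) = H$.
\end{itemize}

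In every case $g \in H$, so $H = G$ and $G_{\omega}$ is maximal. Theorem \ref{maxg} then gives primitivity.

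An alternative route is via blocks. A nontrivial block $B$ containing $\omega$ and some $\alpha \neq \omega$ would be mapped by $G_{\omega}$ to itself. Since $G_{\omega}$ is transitive on $\Omega \setminus \{\omega\}$, this forces $B = \Omega$. The stabilizer argument above, however, fits the paper's framework better.

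The only delicate point is the convention on primitivity. The statement needs $|\Omega| \geq 2$ and a non-trivial action, so that $G_{\omega}$ is a proper subgroup. For $|\Omega| = 2$ the argument still works, because then $G_{\omega}$ has index $2$ and is automatically maximal.
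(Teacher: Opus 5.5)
Your proof is correct, but your main argument takes a different route from the paper's. The paper works directly with the definition of a block. Assuming a nontrivial block $B$, it takes distinct $\beta_1,\beta_2\in B$ and $\alpha\notin B$, and uses $2$-transitivity to find $g$ with $\beta_1^g=\beta_1$ and $\beta_2^g=\alpha$. Then $\beta_1\in B\cap B^g$ while $B^g\neq B$, which is impossible for a block.

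You instead invoke Theorem \ref{maxg} and prove that $G_\omega$ is maximal. What your computation actually shows is that $G = G_\omega \cup G_\omega h G_\omega$ for every $h\notin G_\omega$, because $G_\omega$ is transitive on $\Omega\setminus\{\omega\}$.

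The two routes trade off differently. The paper's argument is shorter and self-contained, needing only the definition of a block, whereas yours depends on the cited equivalence in Theorem \ref{maxg}. In exchange, yours directly produces the fact the paper later extracts from primitivity, for instance the maximality of the Borel stabilizer right after Corollary \ref{prmtvbrlll}. It also makes visible the two-double-coset structure behind that maximality.

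Your sketched alternative via blocks is the paper's idea, phrased as ``$G_\omega$ stabilizes $B$ and is transitive on the remaining points.''
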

\begin{proof}
Let $ G $ act $2$-transitively on $ \Omega $. Suppose the action is not primitive. Then there exists a non-trivial block of imprimitivity $ B \subset \Omega  $. Let $ \beta _{1} , \beta _{2} \in B $ with $ \beta _{1} \neq \beta _{2} $. Let $ \alpha \in \Omega \setminus B $. As the action is $2$-transitive, $ \beta _{1} ^{g} = \beta _{1} $ and $ \beta _{2} ^{g} = \alpha $ for some $ g \in G $. However $ \beta _{1} \in B \cap B^{g} $ and $ B \neq B^{g} $. We conclude $ B$ is not a block, which is a contradiction.
\end{proof}

\newpage
\section{Action Description}
The first action to be looked at is the action on the so called Borel subgroups of $ PGL_{2} (q) $ and $ PSL_{2} (q) $. Although in most chapters we will usually take $ q $ to be at least $ 4 $, for this chapter we will assume $ q \geq 3 $. The reason for including $ q = 3 $ this time is that some later chapters will rely on looking at Borel subgroups, including those in $ PGL_{2} (3) $. Lemmas will be developed in this chapter to support those later results.

\medskip
For $ q \geq 4 $, these are the subgroups of the form $ E_{q} \rtimes C_{ q-1} $ and $ E_{q} \rtimes C_{ (q-1)/2} $ in Tables \ref{tableone}, \ref{tabletwo} and \ref{tablethree}. These tables point out the Borel subgroups lie in a single conjugacy class.  For $ PGL_{2} (3) $, observe that $ PGL_{2} (3) \cong S_{4} $, which also has a conjugacy class of maximal $ E_{3} \rtimes C_{2} \cong D_{6} $. For $ PSL_{2} (3) $ we have $ PSL_{2} (3)  \cong A_{4} $, which has maximal $ E_{3} \rtimes C_{1} \cong C_{3} $.

\medskip
It is going to be easier to work with the corresponding maximal subgroups of $ GL_{2} (q) $ and $ SL_{2} (q) $.  First some maximal subgroups will be defined in these groups then shown to correspond to those in the projective groups.

\medskip
Let $ GL_{2} (q) $ or $ SL_{2} (q) $. Let $ \mathbb{F}_{q} $ be a field with $ q $ elements and let $ \Omega $ be the set of $ 1$-dimensional subspaces of $\mathbb{F} _{q} ^{2} $. It is not difficult to show $ G $ acts on $ \Omega $ with the mapping $ W ^{g} =  \langle w \rangle ^{g} = \{ w g : w \in W \}  $ for each $ W \in \Omega $ and $ g \in G $. For the rest of this text, the notation $ \mathbb{F}_{q} ^{*} $ is used for $ \mathbb{F}_{q} \setminus \{ 0 \} $.
\begin{lema}\label{szbrlst}
The set $ \Omega  $ has size $ | \Omega  | = q + 1 $.
\end{lema}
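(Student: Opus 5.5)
The plan is to count $1$-dimensional subspaces directly, by counting nonzero vectors and dividing by the number of nonzero vectors spanning a single line. First I note that $\mathbb{F}_{q}^{2}$ has $q^{2}$ vectors, of which $q^{2}-1$ are nonzero. Each $W \in \Omega$ is of the form $W = \langle w \rangle = \{ \lambda w : \lambda \in \mathbb{F}_{q} \}$ for some nonzero $w$. Since the map $\lambda \mapsto \lambda w$ is injective for $w \neq 0$, $W$ has exactly $q$ elements, so it contains exactly $q-1$ nonzero vectors.

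Next I check that every nonzero vector lies in exactly one element of $\Omega$. It lies in $\langle v \rangle$. Suppose $v \in W = \langle w \rangle$ with $v \neq 0$. Then $v = \lambda w$ with $\lambda \in \mathbb{F}_{q}^{*}$, so $w = \lambda^{-1} v$ and hence $W = \langle v \rangle$. Thus the sets $W \setminus \{0\}$ for $W \in \Omega$ partition $\mathbb{F}_{q}^{2} \setminus \{0\}$ into blocks of size $q-1$. This gives $|\Omega| = (q^{2}-1)/(q-1) = q+1$.

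As an alternative and a sanity check, the $q+1$ elements can be listed explicitly: the lines $\langle (1, a) \rangle$ for $a \in \mathbb{F}_{q}$, together with $\langle (0,1) \rangle$. Distinctness and completeness follow by scaling the first coordinate to $1$ when it is nonzero.

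There is no real obstacle here. The only point needing care is the uniqueness step, namely that two lines sharing a nonzero vector are equal, and this is immediate because $\mathbb{F}_{q}$ is a field.
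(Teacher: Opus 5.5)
Your argument is correct. The paper's proof is essentially your ``sanity check'': it notes that every element of $\Omega$ can be written uniquely as $\langle (1,0) \rangle$, $\langle (0,1) \rangle$ or $\langle (1,x) \rangle$ with $x \in \mathbb{F}_{q}^{*}$, and counts these.

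Your main route is different. You partition $\mathbb{F}_{q}^{2} \setminus \{0\}$ into the punctured lines $W \setminus \{0\}$, each of size $q-1$, and divide. This counting argument is coordinate-free and generalises verbatim to give $(q^{n}-1)/(q-1)$ lines in $\mathbb{F}_{q}^{n}$. Its one substantive step, that two lines sharing a nonzero vector coincide, is the same fact that justifies the uniqueness in the paper's normal form.

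The paper's normal-form route is less general, but it yields the explicit labelling $\omega_{(0,1)}$, $\omega_{(1,x)}$ ($x \in \mathbb{F}_{q}$). The chapter immediately uses this labelling for its stabiliser computations, so there it does double duty.
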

\begin{proof}
The elements of $ \Omega  $ are the $1$-dimensional subspaces of $ \mathbb{F} _{q} ^{2} $ and each of these can be written uniquely as $ \langle ( 1, 0 ) \rangle $, $ \langle ( 0, 1 ) \rangle $ or $ \langle ( 1, x ) \rangle $ where $ x \in \mathbb{F}_{q} ^{*} $. So $ | \Omega  | = q + 1  $.
\end{proof}
It will be useful to know what the kernels of these actions are.
\begin{lema}\label{krnlslgl}
The kernel of the action of $ GL_{2} (q) $ is
\[
Z( GL_{2} (q) ) = \Bigg\{ 
\begin{pmatrix}
a & 0 \\
0 & a
\end{pmatrix}
:
a \in \mathbb{F}_{q} ^{*}
\Bigg\}
\]
and the kernel of the action of $ SL_{2} (q) $ is
\[
Z( SL_{2} (q) ) =  SL _{2} (q) \cap Z( GL_{2} (q) )  = \Bigg\{ 
\begin{pmatrix}
 1 & 0 \\
0 & 1
\end{pmatrix}
,
\begin{pmatrix*}[r]
 -1 & 0 \\
0 & -1
\end{pmatrix*}
\Bigg\}
.
\]
\end{lema}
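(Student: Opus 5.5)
The plan is to compute the kernel directly. An element $g$ lies in the kernel exactly when it fixes every $1$-dimensional subspace, so $g$ maps every nonzero vector to a scalar multiple of itself. Write
\[
g = \begin{pmatrix} a & b \\ c & d \end{pmatrix} \in GL_{2}(q)
\]
and apply the condition to three test subspaces.

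First I use $\langle (1,0) \rangle$. Since $(1,0)g = (a,b)$, the condition $\langle (1,0) \rangle^{g} = \langle (1,0) \rangle$ forces $b = 0$. Next I use $\langle (0,1) \rangle$. Since $(0,1)g = (c,d)$, the same condition forces $c = 0$. Finally I use $\langle (1,1) \rangle$. Since $(1,1)g = (a,d)$ when $b = c = 0$, this subspace is fixed only if $(a,d)$ is a scalar multiple of $(1,1)$, which gives $a = d$. Hence $g = aI$ with $a \in \mathbb{F}_{q}^{*}$, as $g$ is invertible.

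Conversely, any scalar matrix $aI$ sends $w$ to $aw$ and so fixes every $\langle w \rangle$. The kernel of the $GL_{2}(q)$ action is therefore exactly the set of scalar matrices. By Lemma \ref{cntrpglndpsltwq} this set equals $Z(GL_{2}(q))$.

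For $SL_{2}(q)$, the kernel of the restricted action is the kernel for $GL_{2}(q)$ intersected with $SL_{2}(q)$. So it consists of the scalars $aI$ with $\det(aI) = a^{2} = 1$, that is $a = \pm 1$. This gives $\{ I, -I \}$, which matches $Z(SL_{2}(q))$ in Lemma \ref{cntrpglndpsltwq}. When $q$ is even, $I = -I$ and the set has a single element, so no separate case is needed.

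There is no real obstacle here. The only point needing care is to include the third test vector $(1,1)$, which is what forces $a = d$; without it one only obtains the diagonal matrices.
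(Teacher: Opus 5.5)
Your proof is correct, but it is not the paper's route: the paper gives no argument here and simply cites \cite{TAYLOR}, Chapter 4, page 19.

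Your direct computation with the three test subspaces $\omega_{(1,0)}$, $\omega_{(0,1)}$, $\omega_{(1,1)}$ is self-contained. It works uniformly for every $q$, including $q=2$ and even $q$ where $I=-I$. The only thing it imports is the identification of the scalar matrices with the centre from Lemma \ref{cntrpglndpsltwq}, and the statement itself requires that identification anyway.

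Your argument also yields something extra. You actually show that the pointwise stabilizer of the three points $\omega_{(1,0)}, \omega_{(0,1)}, \omega_{(1,1)}$ already consists of the scalars. That is the content of the paper's later Lemmas \ref{stblzrtwpnts} and \ref{thrkrnlind} (in the case $\delta = \omega_{(1,1)}$), and it is exactly the fact used in Theorem \ref{glbrlhght}. In the paper, Lemma \ref{thrkrnlind} refers back to the present lemma for the inclusion $Z(G) \subseteq G_{(\Delta)}$. Your observation that scalars fix every $1$-dimensional subspace gives that inclusion directly, so your route merges the two computations into one.

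The citation buys brevity and defers to a standard source. The cost is that the first computation of the chapter goes unproved in the text, even though it is the same calculation the paper carries out later.
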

\begin{proof}
See \cite{TAYLOR}, Chapter 4, page 19.
\end{proof}
As the kernel of the action is the centre of $ G $, a faithful action of $ PGL_{2} (q) $ or $ PSL_{2} (q) $ on $ \Omega $ can be defined where $ \omega ^{g} = \omega ^{ Z(G) g } $ for all $ \omega \in \Omega $ and $ g \in G $.
\begin{lema}\label{psl2tr}
The action of $  PSL_{2} (q) $ is $2$-transitive.
\end{lema}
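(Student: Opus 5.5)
We prove the statement for $ SL_{2} (q) $ acting on $ \Omega $, the set of $1$-dimensional subspaces of $ \mathbb{F}_{q}^{2} $. Since the faithful action of $ PSL_{2} (q) $ on $ \Omega $ is defined by $ \omega ^{Z(G) g} = \omega ^{g} $, its orbits on ordered pairs are exactly those of $ SL_{2} (q) $. So it suffices to show that $ SL_{2} (q) $ is transitive on ordered pairs of distinct elements of $ \Omega $.

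The plan is to send an arbitrary pair of distinct points to the fixed base pair $ ( \langle (1,0) \rangle , \langle (0,1) \rangle ) $. Let $ W_{1} = \langle u \rangle $ and $ W_{2} = \langle v \rangle $ be distinct. Then $ u , v $ are linearly independent, so the matrix $ A $ with rows $ u $ and $ v $ is invertible, with $ d := \det (A) \neq 0 $. Since elements act on row vectors from the right, we have $ (1,0) A = u $ and $ (0,1) A = v $.

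The one point needing care, and the only real obstacle, is that $ A $ need not have determinant $ 1 $. To fix this, replace $ v $ by $ d^{-1} v $. This spans the same subspace $ W_{2} $, and the matrix $ A^{\prime} $ with rows $ u $ and $ d^{-1} v $ has determinant $ d^{-1} d = 1 $. Thus $ A^{\prime} \in SL_{2} (q) $, and it sends $ ( \langle (1,0) \rangle , \langle (0,1) \rangle ) $ to $ ( W_{1} , W_{2} ) $.

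Now take any two ordered pairs $ ( W_{1} , W_{2} ) $ and $ ( W_{3} , W_{4} ) $ of distinct points. Let $ A^{\prime} $ and $ B^{\prime} $ be the corresponding matrices constructed above. Then $ (A^{\prime})^{-1} B^{\prime} \in SL_{2} (q) $ maps $ ( W_{1} , W_{2} ) $ to $ ( W_{3} , W_{4} ) $, which gives $2$-transitivity. The same argument applies verbatim to $ GL_{2} (q) $ and hence to $ PGL_{2} (q) $, without needing the determinant adjustment.
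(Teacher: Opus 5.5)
Your proof is correct. The paper gives no argument of its own at this point: it simply cites Theorem 4.1 in Chapter 4 of Taylor's book.

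What you supply is a self-contained version of the standard argument. You send the base pair $(\langle (1,0)\rangle, \langle (0,1)\rangle)$ to an arbitrary pair of distinct points using the matrix whose rows span them, and you rescale the second row by $\det(A)^{-1}$ so that the matrix lies in $SL_{2}(q)$. Your checks are exactly the ones needed:
\begin{itemize}
\item $(1,0)A$ and $(0,1)A$ are the rows of $A$;
\item $(A^{\prime})^{-1}B^{\prime}$ composes correctly under the paper's right-action convention;
\item the orbits of $PSL_{2}(q)$ on ordered pairs coincide with those of $SL_{2}(q)$, since the action factors through the kernel $Z(SL_{2}(q))$.
\end{itemize}

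The citation buys brevity. Your route keeps the chapter self-contained and stays within the paper's own row-vector conventions. It also explicitly produces an element of $SL_{2}(q)$ carrying $(\omega_{(1,0)}, \omega_{(0,1)})$ to any given pair of distinct points, or its inverse carrying the pair back. This is the form in which $2$-transitivity is actually invoked later, in the proofs of Lemma~\ref{brlntrsccyclc} and Theorem~\ref{slbrlhght}.

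Your closing remark about $GL_{2}(q)$ and $PGL_{2}(q)$ is also right. The paper obtains those cases more cheaply in Corollary~\ref{cor2t}, from $SL_{2}(q) \leq GL_{2}(q)$.
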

\begin{proof}
See \cite{TAYLOR}, Chapter 4, page 20, Theorem 4.1.
\end{proof}
\begin{corl}\label{cor2t}
The actions of $ SL_{2} (q) $, $ GL_{2} (q) $ and $ PGL_{2} (q) $ are $2$-transitive.
\end{corl}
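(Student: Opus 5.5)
The corollary says the actions of $ SL_{2} (q) $, $ GL_{2} (q) $ and $ PGL_{2} (q) $ on $ \Omega $ are $2$-transitive. I would deduce it from Lemma \ref{psl2tr} by comparing the permutation groups these groups induce on $ \Omega $ with the one induced by $ PSL_{2} (q) $. The underlying observation is that $2$-transitivity depends only on the image of the group in $ Sym( \Omega ) $. It is also inherited by overgroups: if $ H \leq G $ and $ H $ already acts $2$-transitively on $ \Omega $, then so does $ G $. The reason is that any element of $ H $ sending one pair of distinct points to another is also an element of $ G $.

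First, handle $ SL_{2} (q) $. By Lemma \ref{krnlslgl} the kernel of its action on $ \Omega $ is $ Z( SL_{2} (q) ) $. The action of $ PSL_{2} (q) $ is, by construction, $ \omega ^{ Z g } = \omega ^{g} $. Hence for any ordered pairs $ ( \alpha _{1} , \alpha _{2} ) $ and $ ( \beta _{1} , \beta _{2} ) $ of distinct points of $ \Omega $, Lemma \ref{psl2tr} gives a coset $ Z g $ with $ \alpha _{i} ^{Zg} = \beta _{i} $. Then any representative $ g \in SL_{2} (q) $ of that coset satisfies $ \alpha _{i} ^{g} = \beta _{i} $. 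So $ SL_{2} (q) $ acts $2$-transitively.

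Next, $ SL_{2} (q) \leq GL_{2} (q) $, and both act on $ \Omega $ by the same rule $ W ^{g} = \{ wg : w \in W \} $. By the overgroup observation, $ GL_{2} (q) $ acts $2$-transitively.

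Finally, $ PGL_{2} (q) $ acts via $ \omega ^{ Z( GL_{2} (q) ) g } = \omega ^{g} $, which is well defined by Lemma \ref{krnlslgl}. Every element of $ GL_{2} (q) $ moving $ ( \alpha _{1} , \alpha _{2} ) $ to $ ( \beta _{1} , \beta _{2} ) $ yields its coset in $ PGL_{2} (q) $ doing the same. So $ PGL_{2} (q) $ acts $2$-transitively as well.

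There is no real obstacle here. The only point needing care is that the projective actions are genuinely induced from the matrix actions: the actions of $ PSL_{2} (q) $ and $ PGL_{2} (q) $ are defined through coset representatives, and that these are well defined is exactly the content of Lemma \ref{krnlslgl}. Once that is noted, transitivity on ordered pairs passes back and forth between a group and its quotient by the kernel, and upward to overgroups.
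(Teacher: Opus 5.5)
Your proposal is correct and follows the same route as the paper: lift $2$-transitivity from $PSL_{2}(q)$ to $SL_{2}(q)$ via coset representatives, pass to the overgroup $GL_{2}(q)$, and then descend to the quotient $PGL_{2}(q)$. The only difference is that you spell out the lifting and quotient steps, which the paper's three-line proof leaves to the reader.
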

\begin{proof}
Since the action of $ PSL_{2} (q) $ is $ 2 $-transitive, so is the action of $ SL_{2} (q) $. The fact $ SL_{2} (q) \leq GL_{2} (q) $ means the action of $ GL_{2} (q) $ is $ 2 $-transitive. The action of $ PGL_{2} (q) $ can then be seen to be $ 2 $-transitive.
\end{proof}

\newpage
\begin{corl}\label{prmtvbrlll}
The actions of $GL_{2} (q) $, $PGL_{2} (q) $, $ SL_{2} (q) $ and $ PSL_{2} (q) $ are primitive.
\end{corl}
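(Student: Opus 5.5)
The plan is to combine the two results immediately preceding the statement: Lemma \ref{ktrnstiveprmt}, which says that every $2$-transitive action is primitive, and Corollary \ref{cor2t} together with Lemma \ref{psl2tr}, which give $2$-transitivity of all four groups on $\Omega$. For $GL_{2} (q)$ and $SL_{2} (q)$ acting on the set $\Omega$ of $1$-dimensional subspaces of $\mathbb{F}_{q} ^{2}$, Corollary \ref{cor2t} states directly that the actions are $2$-transitive. Lemma \ref{ktrnstiveprmt} then gives primitivity at once.

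For $PSL_{2} (q)$ and $PGL_{2} (q)$, I will use the induced faithful actions described after Lemma \ref{krnlslgl}, namely $\omega ^{Z(G) g} = \omega ^{g}$. These are well defined because the kernel of the action is the centre, by Lemma \ref{krnlslgl}. Their $2$-transitivity is Lemma \ref{psl2tr} for $PSL_{2} (q)$ and Corollary \ref{cor2t} for $PGL_{2} (q)$. Applying Lemma \ref{ktrnstiveprmt} once more finishes these cases.

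There is essentially no obstacle. The one point to check is that Lemma \ref{ktrnstiveprmt} applies, i.e.\ that a nontrivial block $B$ with two distinct points and a point outside it can occur. This needs $|\Omega| \geq 2$, which holds because $|\Omega| = q+1 \geq 3$ by Lemma \ref{szbrlst}.

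One could also connect this to Theorem \ref{maxg}. Primitivity means the point stabilizer, the Borel subgroup, is maximal. That justifies calling this the Borel action, but it is not needed for the corollary itself.
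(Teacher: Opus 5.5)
Your proposal is correct and is exactly the paper's argument: the paper's proof simply combines Lemma~\ref{psl2tr}, Corollary~\ref{cor2t} and Lemma~\ref{ktrnstiveprmt}. Your extra check that $|\Omega| = q+1 \geq 3$ via Lemma~\ref{szbrlst} is harmless but not strictly needed, since a nontrivial block automatically contains two distinct points and misses at least one point.
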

\begin{proof}
Combine Lemma \ref{psl2tr}, Corollary \ref{cor2t} and Lemma \ref{ktrnstiveprmt}.
\end{proof}
In $ GL_{2} (q) $ the stabilizer, of $ \langle (0,1) \rangle $ is
\[
\Bigg\{ 
\begin{pmatrix}
a & b \\
0 & c
\end{pmatrix}
:
a,c \in \mathbb{F} _{q} ^{*} , b \in \mathbb{F} _{q}
\Bigg\}
,
\]
which has order $ q (q-1)^{2} $. Since the action on $ \Omega $ is primitive, this stabilizer is maximal by Theorem \ref{maxg}. Taking the quotient of this subgroup by the kernel of the action gives a maximal subgroup of order $ q(q-1) $ in $ PGL_{2} (q) $. Looking at Tables \ref{tableone} and \ref{tabletwo} and comparing the orders of maximal subgroups, we see only the subgroups of the form $ E_{q} \rtimes C_{ q-1 } $ have order $ q(q-1) $. Hence the above stabilizer corresponds to a Borel subgroup of $ PGL_{2} (q) $.

\medskip
When $ q $ is odd, the stabilizer of the same point in $ SL_{2} (q) $ is
\[
\Bigg\{ 
\begin{pmatrix*}[l]
a & b \\
0 & a^{-1}
\end{pmatrix*}
:
a \in \mathbb{F} _{q} ^{*} , b \in \mathbb{F} _{q}
\Bigg\}
.
\]
Similar to above, this is maximal and has order $ q (q-1) $. The quotient of this by the kernel of the action of $ SL_{2} (q) $ gives a maximal subgroup of order $ \tfrac{1}{2} q (q-1) $ in $ PSL_{2} (q) $. For $ q \geq 4$, on comparing the orders of subgroups in Table \ref{tablethree} it must be that the subgroup has structure $ E_{q} \rtimes C_{ (q-1) /2 } $, a Borel subgroup. For $ q = 3 $ the only subgroups of the correct order in $ PGL_{2} (3) $ and $ PSL_{2} (3) $ are also the Borel subgroups.

\medskip
By Lemmas \ref{equct} and \ref{fthflhght}, the relational complexity and height of the actions of $ PSL_{2} (q) $ and $ PGL_{2} (q) $ are equal to the relational complexity and height of the respective actions of $ SL_{2} (q) $ and $ GL_{2} (q) $. So for the rest of the chapter we will work in $ SL_{2} (q) $ and $ GL_{2} (q) $ as it is more convenient.

\medskip
This section is finished with a lemma that seems appropriate to place here, although will not be used until later chapters.
\begin{lema}\label{brlklnfrsbgrp}
If $ q $ is odd, the Borel subgroups of $ PSL_{2} (q) $ and $ PGL_{2} (q) $ do not contain Klein four-subgroups.
\end{lema}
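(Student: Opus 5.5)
We show that every involution of a Borel subgroup is a coset of a single point of $ \Omega $ times nothing else, and that the involutions of the unipotent part are absent because $ q $ is odd. Since $ PSL_{2}(q) \leq PGL_{2}(q) $ and the Borel subgroup of $ PSL_{2}(q) $ is the intersection of a Borel subgroup of $ PGL_{2}(q) $ with $ PSL_{2}(q) $, any Klein four-subgroup of the former would lie in the latter. So it suffices to treat a Borel subgroup $ B $ of $ PGL_{2}(q) $, with $ q $ odd.

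As in the previous section, take $ B $ to be the image of the upper triangular matrices in $ GL_{2}(q) $; it has the form $ E_{q} \rtimes C_{q-1} $, with normal subgroup $ E_{q} $ the image of the matrices $ \begin{pmatrix} 1 & b \\ 0 & 1 \end{pmatrix} $. Since $ q $ is odd, $ |E_{q}| $ is odd, so $ E_{q} $ contains no involutions. Hence the image of any Klein four-subgroup $ K \leq B $ in $ B/E_{q} \cong C_{q-1} $ is isomorphic to $ K $, because the kernel $ K \cap E_{q} $ is a $2$-group inside a group of odd order and so is trivial. A cyclic group cannot contain a Klein four-subgroup, which is a contradiction.

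The only point needing care is that $ B/E_{q} $ is cyclic. This is immediate from the structure $ E_{q} \rtimes C_{q-1} $ recorded in Table \ref{tableone}. Alternatively, it follows directly from the fact that the map sending $ \begin{pmatrix} a & b \\ 0 & c \end{pmatrix} $ to $ ac^{-1} \in \mathbb{F}_{q}^{*} $ is a homomorphism from $ B $ onto the cyclic group $ \mathbb{F}_{q}^{*} $, whose kernel is exactly $ E_{q} $ modulo scalars. For $ q = 3 $ the same argument applies, and it agrees with $ B \cong D_{6} $ and $ C_{3} $ in $ PGL_{2}(3) $ and $ PSL_{2}(3) $ respectively. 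I expect no real obstacle beyond stating this homomorphism cleanly.
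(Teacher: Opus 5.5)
Your argument is correct, but it is organised differently from the paper's. The paper works with matrix lifts. It takes two distinct involutions of a putative Klein four-subgroup and lifts them to upper triangular matrices $g_1,g_2$ in $GL_{2}(q)$ or $SL_{2}(q)$. It uses Lemma \ref{sqrsltwq} to force each lift to have trace zero, i.e.\ diagonal $(a_i,-a_i)$. It then notes that $g_1g_2$, which lifts the third involution, has diagonal $(a_1a_2,a_1a_2)$ and so cannot have trace zero when $q$ is odd. It treats $PSL_{2}(q)$ and $PGL_{2}(q)$ in parallel rather than reducing one to the other.

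You instead pass to the quotient by the odd-order normal subgroup $E_q$, via the homomorphism $\begin{pmatrix} a&b\\ 0&c\end{pmatrix}\mapsto ac^{-1}$ onto the cyclic group $\mathbb{F}_q^{*}$, so that $K$ would have to embed in a cyclic group. Underneath, this is the same computation. Trace zero says an involution maps to $-1$ under your homomorphism, and the equal diagonal entries of $g_1g_2$ say that the product lies in the kernel $E_q$, which contains no involutions. Your version does not need Lemma \ref{sqrsltwq} and gives more: any subgroup of $B$ of order prime to $q$ meets $E_q$ trivially and is therefore cyclic. The paper's version is a direct matrix computation that does not need the semidirect decomposition of $B$.

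Two minor points. First, your reduction from $PSL_{2}(q)$ to $PGL_{2}(q)$ is valid but unnecessary. The same argument applies directly to $E_q\rtimes C_{(q-1)/2}$ using $\begin{pmatrix} a&b\\ 0&a^{-1}\end{pmatrix}\mapsto a^{2}$, whose kernel is again the image of the unipotent matrices. Second, your opening sentence, about involutions being ``a coset of a single point of $\Omega$ times nothing else'', does not make sense and plays no role in the proof, so delete it.
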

\begin{proof}
Since the Borel subgroups are conjugate to each other, we only need to check one. Let $ G $ be $ GL_{2} (q) $ or $ SL_{2} (q) $ and let $ G ^{ \prime}  $ be $ PGL_{2} (q) $ or $ PSL_{2} (q) $ respectively.

\medskip
In $ G $, the stabilizer, $ H $, of $ \langle (0,1) \rangle $ is
\[
\Bigg\{ 
\begin{pmatrix}
a & b \\
0 & c
\end{pmatrix}
:
a,c \in \mathbb{F} _{q} ^{*} , b \in \mathbb{F} _{q}
\Bigg\}
,
\ \ \ \ \ \text{if $ G = GL_{2} (q) $}
\]
or
\[
\Bigg\{ 
\begin{pmatrix*}[l]
a & b \\
0 & a^{-1}
\end{pmatrix*}
:
a \in \mathbb{F} _{q} ^{*} , b \in \mathbb{F} _{q}
\Bigg\}
,
\ \ \ \ \ \text{if $ G = SL_{2} (q) $.}
\]
Let $ H^{ \prime} $ be the corresponding Borel subgroup of $ G ^{ \prime} $. Let $ I \in GL_{2} (q) $ be the identity matrix and $ N $ be $ \{ \lambda I : \lambda \in \mathbb{F} _{q} ^{*} \} $ if $ G = GL_{2} (q) $ or $ \{ \pm I \} $ if $ G = SL_{2} (q) $.

\medskip
Suppose there exists a Klein four-group $ K \leq H^{ \prime } $. Let $ g_{1} ^{ \prime} , g_{2} ^{ \prime} \in K $ be involutions with $ g_{1} ^{ \prime} \neq g_{2} ^{ \prime} $. These correspond to elements $ g_{1} , g_{2} \in H \setminus N $ such that $ g_{1} ^{2} , g_{2} ^{2} \in N $. By Lemma \ref{sqrsltwq} these have the form
\begin{align*}
g_{1} =
\begin{pmatrix*}[r]
a_{1} & b_{1} \\
0 & -a_{1}
\end{pmatrix*}
\ \ \ \ \
\text{and}
\ \ \ \ \ 
g_{2} =
\begin{pmatrix*}[r]
a_{2} & b_{2} \\
0 & -a_{2}
\end{pmatrix*}
, \ \ \ \ \ a_{1} , a_{2} , b_{1}, b_{2} \in \mathbb{F} _{q} , \ a_{1} \neq 0 \neq a_{2} .
\end{align*}
Also $  g_{1} ^{ \prime} g_{2} ^{ \prime} $ is an involution in $ K $ and the corresponding element in $ H $ is
\begin{align*}
g_{1} g_{2} =
\begin{pmatrix*}[c]
a_{1} a_{2} & a_{1} b_{2} - a_{2} b_{1} \\
0 & a_{1} a_{2}
\end{pmatrix*} .
\end{align*}
As before, $ g_{1} g_{2} \in H \setminus N $ and $ (g_{1} g_{2} )^{2} \in N $. So from Lemma \ref{sqrsltwq} we have $ a_{1} a_{2} = - a_{1} a_{2} $, which implies $ 1 = - 1 $. This is a contradiction because $ q $ is odd. Thus the assumption that there exists a Klein four-subgroup of $ H ^{ \prime } $ is wrong.
\end{proof}
\section{Height of Borel Action}
The height of the Borel actions for $ GL_{2} (q) $ and $ SL _{2} (q) $ are now calculated. Occasionally, related results will be proved along the way for use in later chapters. For the remainder of the chapter, let $ \Omega $ be the set of $1$-dimensional subspaces of $ \mathbb{F}_{q} ^{2} $. We begin with an important fact on the action of $ GL_{2} (q) $.
\begin{lema}\label{pgthtrns}
The Borel action of $GL_{2} (q) $ is $3$-transitive.
\end{lema}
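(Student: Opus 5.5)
To prove Lemma \ref{pgthtrns} I will show that any ordered triple of distinct points of $ \Omega $ can be sent to the fixed triple $ ( \langle (1,0) \rangle , \langle (0,1) \rangle , \langle (1,1) \rangle ) $. Since $ GL_{2} (q) $ is a group, this suffices. Given a triple $ ( U_{1} , U_{2} , U_{3} ) $ and a triple $ ( V_{1} , V_{2} , V_{3} ) $, each of distinct points, choose $ g , h $ sending them to the standard triple. Then $ g h^{-1} $ sends $ ( U_{1} , U_{2} , U_{3} ) $ to $ ( V_{1} , V_{2} , V_{3} ) $.

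Let $ U_{i} = \langle u_{i} \rangle $ with $ u_{i} \in \mathbb{F}_{q} ^{2} \setminus \{ 0 \} $. The points are distinct, so $ u_{1} $ and $ u_{2} $ are linearly independent and form a basis of $ \mathbb{F}_{q} ^{2} $. Write $ u_{3} = \alpha u_{1} + \beta u_{2} $. Since $ U_{3} \neq U_{1} $ we get $ \beta \neq 0 $, and since $ U_{3} \neq U_{2} $ we get $ \alpha \neq 0 $.

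Next replace $ u_{1} $ by $ \alpha u_{1} $ and $ u_{2} $ by $ \beta u_{2} $. This does not change the subspaces, and now $ u_{3} = u_{1} + u_{2} $. Let $ A $ be the matrix with rows $ u_{1} $ and $ u_{2} $. It is invertible because the rows are independent. Because elements act on row vectors on the right, $ (1,0) A = u_{1} $, $ (0,1) A = u_{2} $ and $ (1,1) A = u_{1} + u_{2} = u_{3} $. So $ g := A^{-1} \in GL_{2} (q) $ sends $ U_{1} , U_{2} , U_{3} $ to $ \langle (1,0) \rangle , \langle (0,1) \rangle , \langle (1,1) \rangle $ respectively. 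This gives $ 3 $-transitivity. Note $ | \Omega | = q + 1 \geq 3 $ by Lemma \ref{szbrlst}, so triples of distinct points exist.

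There is no real obstacle. The only point needing care is the rescaling step: it uses the freedom to choose a different spanning vector of each $ 1 $-dimensional subspace, and the nonvanishing of $ \alpha $ and $ \beta $. That nonvanishing is exactly where distinctness of the three points is used.
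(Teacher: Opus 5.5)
Your proof is correct. Distinctness gives $\alpha,\beta\neq 0$, which is exactly what allows the rescaling to $u_{3}=u_{1}+u_{2}$. The matrix $A$ with rows $u_{1},u_{2}$ then sends $(\langle(1,0)\rangle,\langle(0,1)\rangle,\langle(1,1)\rangle)$ to $(U_{1},U_{2},U_{3})$ under the right action, and $gh^{-1}$ composes in the right order.

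The paper gives no argument of its own here; it just cites \cite{ROTMAN1}, Theorem 9.48. Your route differs in being a self-contained linear-algebra proof that shows exactly where distinctness of the three points is used.

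With one extra line it also gives more. Any $B\in GL_{2}(q)$ sending the standard triple to $(U_{1},U_{2},U_{3})$ has rows $\lambda u_{1}$ and $\mu u_{2}$ with $\lambda u_{1}+\mu u_{2}\in\langle u_{1}+u_{2}\rangle$. This forces $\lambda=\mu$, so $B=\lambda A$. Hence if $g$ fixes $U_{1},U_{2},U_{3}$, then $Ag$ also sends the standard triple to $(U_{1},U_{2},U_{3})$, so $Ag=\lambda A$, i.e.\ $g=\lambda I$. So the stabiliser of any three distinct points is $Z(GL_{2}(q))$. That is the content of Lemma \ref{thrkrnlind}, which the paper establishes separately by first computing the two-point stabiliser.
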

\begin{proof}
See \cite{ROTMAN1}, Theorem 9.48, Chapter 9, page 283. 
\end{proof}
To find the height, we will take a set of points and keep adding to it until the point stabilizer is eventually the kernel of the action. Note that, except for $ \langle ( 0 , 1 ) \rangle $, every $1$-dimensional subspace of $ \mathbb{F} _{q} ^{2} $ contains a vector $ (1, x ) $, where $ x \in \mathbb{F} _{q}  $. Furthermore each of these subspaces contains only one vector of the form $ (1, x) $, so they can be written uniquely as $ \langle ( 1, x) \rangle $.

\medskip
Writing the elements of $ \Omega $ in the form $  \langle ( a, b )  \rangle $ can be messy, so for the rest of the chapter they will be denoted as $ \omega _{ (0,1) } := \langle ( 0, 1) \rangle $ and $ \omega _{ (1,x) } := \langle ( 1, x ) \rangle $ for each $ x \in \mathbb{F} _{q} $.
\begin{lema}\label{stblzrtwpnts}
Consider the Borel action of  $G := GL_{2} (q) $. Let $ \Delta  = \{  \omega _{ (1,0) } ,  \omega _{ (0,1) } \} $. Then
\[
G _{ ( \Delta ) } 
=
\Bigg\{ 
\begin{pmatrix}
a & 0 \\
0 & b
\end{pmatrix}
:
a, b \in \mathbb{F} _{q}  ^{*}
\Bigg\} .
\]
\end{lema}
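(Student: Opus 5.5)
The plan is to compute the pointwise stabilizer directly from the definition of the action $ W^{g} = \{ wg : w \in W \} $, with vectors written as row vectors acting on the right. Let
\[
g = \begin{pmatrix} a & b \\ c & d \end{pmatrix} \in GL_{2} (q) .
\]
The stabilizer $ G_{ ( \Delta ) } $ is the intersection $ G_{ \omega _{(1,0)} } \cap G_{ \omega _{(0,1)} } $, so I will find the conditions on $ g $ imposed by each point separately and then combine them.

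First I will handle $ \omega _{(1,0)} $. We have $ (1,0) g = (a, b) $, and $ \omega _{(1,0)} ^{g} = \omega _{(1,0)} $ holds exactly when $ (a,b) $ is a scalar multiple of $ (1,0) $. This is equivalent to $ b = 0 $. Next I will handle $ \omega _{(0,1)} $. Similarly $ (0,1) g = (c,d) $, and $ \omega _{(0,1)} ^{g} = \omega _{(0,1)} $ holds exactly when $ c = 0 $.

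Hence $ g \in G_{ ( \Delta ) } $ if and only if $ b = c = 0 $, so $ g $ is diagonal. Invertibility then forces $ ad \neq 0 $, so both diagonal entries lie in $ \mathbb{F}_{q} ^{*} $. For the converse, any diagonal matrix with nonzero entries sends $ (1,0) $ to a nonzero multiple of $ (1,0) $ and $ (0,1) $ to a nonzero multiple of $ (0,1) $, so it fixes both subspaces. This gives the stated set, with $ a, b $ relabelled as the diagonal entries.

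There is no real obstacle here. The only point needing care is the convention: since group elements act on the right, a row vector $ w $ is multiplied as $ wg $, so the images of the basis vectors are the rows of $ g $, not its columns.
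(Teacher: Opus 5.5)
Your proposal is correct and follows the same route as the paper, which simply writes $G_{(\Delta)} = G_{\omega_{(1,0)}} \cap G_{\omega_{(0,1)}}$ and notes that each one-point stabilizer is easy to compute. You fill in those computations explicitly, using the paper's right-action convention on row vectors (the images of the basis vectors are the rows of $g$), which is consistent with the paper's later calculations such as Lemma~\ref{stbglnn}.
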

\begin{proof}
The stabilizer $ G _{ ( \Delta ) } $ is the intersection of $ G_{\omega _{ (1,0) } } $ and $ G_{ \omega _{ (0,1) } } $, each being easy to calculate.
\end{proof}
\begin{corl}\label{stbtwpntssl}
In $H:= SL _{2} (q) $, the stabilizer of $ \Delta  = \{  \omega _{ (1,0) } ,  \omega _{ (0,1) } \} $ is
\[
H_{ ( \Delta ) } : =
\Bigg\{ 
\begin{pmatrix*}[l]
a & 0 \\
0 & a^{-1}
\end{pmatrix*}
:
a \in \mathbb{F} _{q}  ^{*}
\Bigg\} .
\]
\end{corl}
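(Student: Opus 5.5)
The stabilizer in $H = SL_{2}(q)$ of the points of $\Delta$ is just the intersection of $H$ with the stabilizer in $GL_{2}(q)$. So the plan is to obtain the result directly from Lemma \ref{stblzrtwpnts}. Since $H \leq G := GL_{2}(q)$ and the action of $H$ on $\Omega$ is the restriction of the action of $G$, we have
\[
H_{ ( \Delta ) } = \{ h \in H : \omega_{(1,0)}^{h} = \omega_{(1,0)}, \ \omega_{(0,1)}^{h} = \omega_{(0,1)} \} = H \cap G_{ ( \Delta ) } .
\]

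Next I will compute this intersection explicitly. By Lemma \ref{stblzrtwpnts}, $G_{ ( \Delta ) }$ consists of the diagonal matrices $\begin{pmatrix} a & 0 \\ 0 & b \end{pmatrix}$ with $a, b \in \mathbb{F}_{q}^{*}$. Such a matrix lies in $SL_{2}(q)$ exactly when its determinant $ab$ equals $1$, that is, when $b = a^{-1}$. Hence $H \cap G_{ ( \Delta ) }$ is precisely the set of matrices $\begin{pmatrix} a & 0 \\ 0 & a^{-1} \end{pmatrix}$ with $a \in \mathbb{F}_{q}^{*}$, which is the claimed set.

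There is no real obstacle here. The only point needing a moment's care is the identification $H_{ ( \Delta ) } = H \cap G_{ ( \Delta ) }$, which holds because the point stabilizer of $\Delta$ is defined by the same conditions in both groups. After that, the determinant condition finishes the argument.
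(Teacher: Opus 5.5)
Your proposal is correct and is essentially the paper's own argument. The paper likewise writes $H_{(\Delta)} = H \cap G_{(\Delta)}$ with $G = GL_{2}(q)$ and reads off the result from Lemma \ref{stblzrtwpnts}; your explicit determinant condition $ab = 1$ is the step the paper leaves implicit.
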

\begin{proof}
Put $ G := GL_{2} (q) $. Now $ H_{ ( \Delta ) } = H \cap G_{ ( \Delta ) } $ because $ H \leq G$ and by Lemma \ref{stblzrtwpnts},
\[
H \cap G_{ ( \Delta ) } = 
\Bigg\{ 
\begin{pmatrix*}[l]
a & 0 \\
0 & a^{-1}
\end{pmatrix*}
:
a \in \mathbb{F} _{q}  ^{*}
\Bigg\}
.
\]
\end{proof}
The next lemma will not be used until the final chapter of this paper, but seems more appropriate to prove here whilst we are considering how Borel subgroups intersect.
\begin{lema}\label{brlntrsccyclc}
Let $ G $ be $ PSL_{2} (q) $ or $ PGL_{2} (q) $, where $ q \geq 3 $. Let $ B_{1} , B_{2} \leq G $ be Borel subgroups. Either $ B_{1} = B_{2} $ or $ B_{1} \cap B_{2} \cong C_{ (q-1) / \delta } $, where $ \delta = 2 $ if $ G = PSL_{2} (q) $ and $ \delta = 1 $ if $ G = PGL_{2} (q) $.
\end{lema}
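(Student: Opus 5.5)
We will identify the Borel subgroups of $G$ with point stabilizers in the Borel action on $\Omega$, the set of $1$-dimensional subspaces of $\mathbb{F}_{q}^{2}$, and compute intersections as two-point stabilizers. The setup uses the previous section. Working in $\hat{G} = GL_{2}(q)$ or $SL_{2}(q)$, with kernel $Z = Z(\hat{G})$ by Lemma \ref{krnlslgl}, the Borel subgroups of $G$ are exactly the images $\hat{G}_{\omega}/Z$ for $\omega \in \Omega$. Indeed, the stabilizer of $\omega_{(0,1)}$ was shown to map onto a Borel subgroup, the Borel subgroups form a single conjugacy class, and $\hat{G}$ is transitive on $\Omega$ by Corollary \ref{cor2t}. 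For $q = 3$ we use instead the explicit descriptions of the maximal $D_{6}$ in $PGL_{2}(3)\cong S_{4}$ and $C_{3}$ in $PSL_{2}(3)\cong A_{4}$ from the action description.

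First we show that the map $\omega \mapsto G_{\omega}$ is injective, so that distinct Borel subgroups are stabilizers of distinct points. If $G_{\omega} = G_{\omega'}$ with $\omega \neq \omega'$, then $G_{\omega}$ would fix two points. However, a Borel subgroup contains the unipotent elements $\begin{pmatrix} 1 & b \\ 0 & 1\end{pmatrix}$, which fix only $\omega_{(0,1)}$ when $b \neq 0$; for $q=3$ this is checked directly. Alternatively, the Borel subgroups are maximal and not normal, so by Lemma \ref{mxmlnrmlizr} and the proof of Lemma \ref{cstquivtcnj} the action on conjugates is equivalent to the coset action, which gives injectivity immediately.

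Next, for $B_{1} \neq B_{2}$, write $B_{i} = G_{\omega_{i}}$ with $\omega_{1} \neq \omega_{2}$. By $2$-transitivity (Lemma \ref{psl2tr}, Corollary \ref{cor2t}) there is $g$ with $(\omega_{1}, \omega_{2})^{g} = (\omega_{(1,0)}, \omega_{(0,1)})$. Corollary \ref{crlcnj} then gives $B_{1} \cap B_{2} = G_{\omega_{1},\omega_{2}}$, which is conjugate to $G_{\omega_{(1,0)},\omega_{(0,1)}}$. It therefore suffices to compute this single two-point stabilizer.

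Lemma \ref{stblzrtwpnts} and Corollary \ref{stbtwpntssl} give the diagonal groups $\{\mathrm{diag}(a,b)\}$ and $\{\mathrm{diag}(a,a^{-1})\}$. Modulo scalars, the first quotient is $\{\mathrm{diag}(1,c) : c\in\mathbb{F}_{q}^{*}\}\cong \mathbb{F}_{q}^{*}\cong C_{q-1}$. The second quotient is $\{\mathrm{diag}(a,a^{-1})\}/\{\pm I\}\cong \mathbb{F}_{q}^{*}/\{\pm1\}$. This is cyclic of order $(q-1)/2$ when $q$ is odd, and of order $q-1$ when $q$ is even, where $\delta$ should effectively be $1$ since $PSL_{2}(q) = PGL_{2}(q)$.

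The main obstacle is the bookkeeping rather than any deep step. We must make sure the identification of Borel subgroups with point stabilizers is valid, including for $q=3$, and interpret $\delta$ correctly when $q$ is even. For these points we will lean on the remarks after Lemma \ref{pslsnpgl} and on the action-description section.
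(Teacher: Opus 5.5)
Your proof is correct and is essentially the paper's argument: you identify Borel subgroups with point stabilizers on $\Omega$, use $2$-transitivity and Corollary \ref{crlcnj} to reduce to $G_{\omega_{(1,0)},\omega_{(0,1)}}$, and read off the diagonal subgroup modulo scalars from Lemma \ref{stblzrtwpnts} and Corollary \ref{stbtwpntssl}. The injectivity step is unnecessary, since $B_{1}\neq B_{2}$ already forces $\omega_{1}\neq\omega_{2}$; on the other hand, your observation that the quotient is $C_{q-1}$ when $q$ is even is more careful than the paper's proof, which gives the order as $(q-1)/2$ without treating the case $q$ even separately.
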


\newpage
\begin{proof}
Let $ G^{*} $ be $ SL_{2} (q) $ or $ GL_{2} (q) $. Let $ B_{1} ^{*} $ and $ B_{2} ^{*} $ be Borel subgroups of $ G^{*} $ corresponding to $ B_{1} $ and $ B_{2} $ respectively.

\medskip
Suppose $ B_{1} \neq B_{2} $. Then $ B_{1} ^{*} \neq B_{2} ^{*} $. Let $ \lambda \in \mathbb{F} _{q} $ be a generator of the multiplicative group $ \mathbb{F} _{q} ^{*} $. If $ G^{*} = SL_{2} (q) $ then we see from Corollary \ref{stbtwpntssl} that $ G^{*}_{ \omega _{ (1,0) } ,  \omega _{ (0,1) } } $ is cyclic and generated by
\[
\begin{pmatrix*}[l]
\lambda & 0 \\
0 & \lambda^{-1}
\end{pmatrix*} .
\]
Hence $ |G^{*}_{ \omega _{ (1,0) } ,  \omega _{ (0,1) } } | = o( \lambda ) = q - 1 $ and the corresponding subgroup of $ G $ is cyclic of order $ (q-1) / | Z(G) | = (q-1) / 2 $. The subgroups $ B_{1} ^{*} $ and $ B_{2} ^{*} $ stabilize some points $ \gamma _{1} , \gamma _{2} \in  \Omega $. The action of $ G $ on $  \Omega $ is $ 2 $-transitive by Corollary \ref{cor2t}. So $ \gamma _{1} = \omega _{ (1,0) } ^{g} $ and $ \gamma _{2} = \omega _{ (0,1) } ^{g} $ for some $ g \in G^{*} $. Hence
\[
B_{1} ^{*} \cap B_{2} ^{*} 
= (g^{-1} G^{*}_{ \omega _{ (1,0) } } g ) \cap (g^{-1} G^{*}_{ \omega _{ (0,1) } } g) 
= g^{-1} G^{*}_{ \omega _{ (1,0) } ,  \omega _{ (0,1) } } g .
\]
Thus $ B_{1} \cap B_{2} \cong C_{ (q-1) / 2 } $.

\medskip
If $ G^{*} = GL_{2} (q) $ then Lemma \ref{stblzrtwpnts} shows the an element in $ G^{*}_{  \omega _{ (1,0) } ,  \omega _{ (0,1) } } $ has the form is
\[ 
\begin{pmatrix}
a & 0 \\
0 & b
\end{pmatrix}
,
\ \ \ \ \
\text{where $ a, b \in \mathbb{F} _{q}  ^{*} $.}
\]
Such a matrix corresponds to the following element of $ G $;
\[
Z 
\begin{pmatrix}
1 & 0 \\
0 & a^{-1} b
\end{pmatrix}
,
\]
where $ Z = Z(G) $. So the subgroup $ G_{  \omega _{ (1,0) } ,  \omega _{ (0,1) } } \leq G $ corresponding to $ G^{*}_{  \omega _{ (1,0) } ,  \omega _{ (0,1) } } $ is generated by
\[
Z 
\begin{pmatrix}
1 & 0 \\
0 & \lambda
\end{pmatrix}
.
\] 
Hence $ G_{  \omega _{ (1,0) } ,  \omega _{ (0,1) } } $ is cyclic of order $ o( \lambda )  = q-1 $. As before, using the fact the action of $ G^{*} $ on $ \Omega $ is $ 2 $-transitive we have $ B_{1} \cap B_{2} \cong C_{ q-1 } $.
\end{proof}
Continuing with the task of calculating height, we look at the point stabilizer of sets of size $ 3 $.
\begin{lema}\label{thrkrnlind}
Let $G = GL_{2} (q) $. Let  $ \omega _{ (1,0) } $,  $ \omega _{ (0,1) }  $ and $ \delta $ be three distinct elements of $  \Omega  $ and let $ \Delta  = \{  \omega _{ (1,0) } ,  \omega _{ (0,1) } , \delta \} $. Then $ G _{ ( \Delta ) } = Z(G ) $, the kernel of the action.
\end{lema}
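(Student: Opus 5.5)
The plan is to first compute the stabilizer of the two points, then impose the condition coming from the third point $\delta$. By Lemma \ref{stblzrtwpnts}, $G_{\omega_{(1,0)},\omega_{(0,1)}}$ consists exactly of the diagonal matrices $\operatorname{diag}(a,b)$ with $a,b \in \mathbb{F}_q^{*}$. So $G_{(\Delta)}$ is the set of diagonal matrices that also fix $\delta$, and it remains to see which of these do.

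The next step is to identify $\delta$ explicitly. Since $\delta \neq \omega_{(0,1)}$, we can write $\delta = \omega_{(1,x)}$ for a unique $x \in \mathbb{F}_q$, as noted before Lemma \ref{stblzrtwpnts}. Since $\delta \neq \omega_{(1,0)}$, we also have $x \neq 0$. Right multiplication gives
\[
(1,x)\begin{pmatrix} a & 0 \\ 0 & b \end{pmatrix} = (a, xb).
\]
Therefore $\delta^{g} = \delta$ holds if and only if $(a,xb)$ is a scalar multiple of $(1,x)$. Comparing the first coordinates shows the scalar must be $a$, so the condition becomes $xb = ax$. Because $x \neq 0$, this gives $a = b$.

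It follows that $G_{(\Delta)} = \{ aI : a \in \mathbb{F}_q^{*}\}$. This is $Z(G)$ by Lemma \ref{cntrpglndpsltwq}, and $Z(G)$ is the kernel of the action by Lemma \ref{krnlslgl}. For the reverse inclusion, scalar matrices clearly fix every point, so there is nothing further to check.

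There is no real obstacle here. The only point needing care is to use distinctness to guarantee $x \neq 0$, since this is exactly what forces $a = b$.
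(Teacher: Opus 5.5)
Your proposal is correct and follows essentially the same route as the paper. Both proofs restrict to the diagonal stabilizer from Lemma~\ref{stblzrtwpnts} and use the distinctness of $\delta$ to force $a=b$; the paper writes $\delta=\langle (d_{1},d_{2})\rangle$ with $d_{1}\neq 0\neq d_{2}$, whereas you normalize to $\delta=\omega_{(1,x)}$ with $x\neq 0$, which is only a cosmetic difference.
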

\begin{proof}
Let $ \Delta ^{ \prime } =  \{  \omega _{ (1,0) } ,  \omega _{ (0,1) }  \} $. The element $ \delta $ can be written as $ \delta = \langle ( d_{1} , d_{2} ) \rangle $ for some non-zero $ ( d_{1} , d_{2} ) \in \mathbb{F} _{q} ^{2} $. Now $G _{  \delta  } $ consists of all matrices of $ GL_{2} (q) $ that scale $  ( d_{1} , d_{2} ) $. So if $M \in G_{ ( \Delta ) } = G_{ \delta } \cap G _{ ( \Delta ^{ \prime } ) } $, then $ (d_{1} , d_{2} ) M = \lambda (d_{1} , d_{2} ) =  ( \lambda d_{1} , \lambda d_{2} ) $ for some non-zero $ \lambda \in \mathbb{F} _{q} $. Since $M \in G _{ ( \Delta ^{ \prime } ) }  $, Lemma \ref{stblzrtwpnts} shows that it must be of the form
\[
M = 
\begin{pmatrix}
a & 0 \\
0 & b
\end{pmatrix}
,
\ \ \ \ \ \ \ 
a , b \in \mathbb{F} _{q}  ^{*}
.
\]
So $ (d_{1} , d_{2} ) M = (a d_{1} ,   b d_{2} ) $. It follows that $a d_{1} = \lambda d_{1} $ and $b d_{2} = \lambda d_{2} $. Since $ \omega _{ (1,0 ) } \neq \delta \neq  \omega _{ (0,1 ) }  $, it must be that $  ( d_{1} , d_{2} ) $ is not a scalar multiple of $ (1,0) $ or $ (0,1 ) $. Therefore $d_{1} \neq 0 \neq d_{2} $. Hence $ a = \lambda = b $. Thus
\[
M = 
\begin{pmatrix}
a & 0 \\
0 & a
\end{pmatrix}
\in 
Z(G) 
\]
and $ G_{ ( \Delta ) } \subseteq Z(G) $. By Lemma \ref{krnlslgl}, $ Z(G ) $ is the kernel of the action, so $ Z(G) \subseteq G_{ ( \Delta ) } $. Hence $ G_{ ( \Delta ) }  =  Z(G) $.
\end{proof}
\begin{corl}\label{dstnctslhght}
Consider the Borel action of  $H := SL_{2} (q) $. Let  $ \omega _{ (1,0) } $,  $ \omega _{ (0,1) }  $ and $ \delta $ be three distinct elements of $  \Omega  $ and let $ \Delta  = \{  \omega _{ (1,0) } ,  \omega _{ (0,1) } , \delta \} $. Then $ H _{ ( \Delta ) } = Z(H ) $, the kernel of the action.
\end{corl}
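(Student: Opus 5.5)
The plan is to deduce this directly from Lemma \ref{thrkrnlind} by intersecting with $H$, in the same way Corollary \ref{stbtwpntssl} was obtained from Lemma \ref{stblzrtwpnts}. Put $G := GL_{2} (q)$. Since $H \leq G$ and both groups act on $\Omega$ by the same rule, the stabilizer in $H$ of the points of $\Delta$ is $H_{ ( \Delta ) } = H \cap G_{ ( \Delta ) }$. Because $\omega _{ (1,0) }$, $\omega _{ (0,1) }$ and $\delta$ are three distinct elements of $\Omega$, Lemma \ref{thrkrnlind} applies and gives $G_{ ( \Delta ) } = Z(G)$. Hence
\[
H_{ ( \Delta ) } = H \cap Z( GL_{2} (q) ) .
\]

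Next I would identify this intersection. Lemma \ref{krnlslgl} (or Lemma \ref{cntrpglndpsltwq}) states exactly that $SL_{2} (q) \cap Z( GL_{2} (q) ) = Z( SL_{2} (q) ) = \{ I , -I \}$. The same Lemma \ref{krnlslgl} says this group is the kernel of the action of $SL_{2} (q)$ on $\Omega$. Combining these gives $H_{ ( \Delta ) } = Z(H)$, which is the kernel, as required.

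There is no real obstacle here. The only point to check is the identification $H_{ ( \Delta ) } = H \cap G_{ ( \Delta ) }$. This holds immediately, because an element of $H$ fixes each point of $\Delta$ exactly when it does so as an element of $G$. When $q$ is even, $-I = I$, so the kernel is trivial. The statement still holds in that case, since the lemmas cited are stated for all $q$.
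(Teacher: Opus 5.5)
Your proposal is correct and is essentially the paper's own proof. The paper likewise writes $H_{(\Delta)} = H \cap G_{(\Delta)}$ with $G = GL_{2}(q)$, applies Lemma \ref{thrkrnlind} to get $G_{(\Delta)} = Z(G)$, and then uses Lemma \ref{krnlslgl} to identify $H \cap Z(G) = Z(H)$ as the kernel of the action.
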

\begin{proof}
Put $ G: = GL_{2} (q) $. Since $ H \leq G$, we have $ H_{ ( \Delta ) } = H \cap G_{ ( \Delta ) } = H \cap Z(G )  =  Z(H ) $ by Lemmas \ref{krnlslgl} and \ref{thrkrnlind}.
\end{proof}
This leads us to computing the height of these actions.
\begin{thrm}\label{glbrlhght}
For $q \geq 3$, the height of the Borel action of $ GL_{2} (q) $ is $ 3  $.
\end{thrm}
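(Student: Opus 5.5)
We prove that $Ht(GL_{2}(q), \Omega) = 3$ by establishing the upper and lower bounds separately. Throughout, $G := GL_{2}(q)$ and $Z(G)$ is the kernel of the action by Lemma \ref{krnlslgl}.

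\textbf{Upper bound.} We show that every $3$-subset of $\Omega$ has point stabilizer equal to the kernel. Lemma \ref{thrkrnlind} already gives $G_{(\Delta)} = Z(G)$ for the particular set $\Delta = \{ \omega_{(1,0)}, \omega_{(0,1)}, \delta \}$. Now let $\Sigma \subseteq \Omega$ be any set of size $3$. Since $|\Omega| = q+1 \geq 4$ and the action is $3$-transitive (Lemma \ref{pgthtrns}), there is $a \in G$ with $\Delta^{a} = \Sigma$. Corollary \ref{crlcnj} then gives
\[
G_{(\Sigma)} = a^{-1} G_{(\Delta)} a = a^{-1} Z(G) a = Z(G),
\]
using that $Z(G)$ is normal. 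Lemma \ref{hghtkrnlllsts} with $m = 3$ now yields $Ht(G, \Omega) \leq 3$.

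\textbf{Lower bound.} Because $q \geq 3$, we have $|\Omega| = q+1 \geq 4 \neq 3$ by Lemma \ref{szbrlst}. The action is $3$-transitive, so Corollary \ref{crltrnstvhght} gives $Ht(G, \Omega) \geq 3$. Combined with the upper bound, $Ht(G, \Omega) = 3$.

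Alternatively, one could argue directly that $\Delta$ is independent. Its $2$-point stabilizers are conjugates of the diagonal group of Lemma \ref{stblzrtwpnts}, which has order $(q-1)^{2} > q-1 = |Z(G)|$ when $q \geq 3$. Hence removing any point strictly enlarges the stabilizer, and Lemma \ref{scndfnt} applies. The only point needing care is the hypothesis $|\Omega| \neq 3$, which is exactly where $q \geq 3$ is used; there is no real obstacle.
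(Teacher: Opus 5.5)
Your argument is correct and is essentially the paper's. The paper applies Lemma \ref{trnsqvhght} using $3$-transitivity (Lemma \ref{pgthtrns}), Lemma \ref{thrkrnlind} and $|\Omega|\geq 4$, and you have simply unpacked that lemma's (iii)$\Rightarrow$(ii)$\Rightarrow$(i) argument via Corollary \ref{crlcnj}, Lemma \ref{hghtkrnlllsts} and Corollary \ref{crltrnstvhght}. Doing it inline has the minor advantage of not relying on the ``sharply'' hypothesis in the statement of Lemma \ref{trnsqvhght}, which this action does not literally satisfy since its kernel $Z(G)$ is nontrivial for $q\geq 3$.
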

\begin{proof}
Lemma \ref{pgthtrns} shows that the action is $3$-transitive and Lemma \ref{thrkrnlind} shows that the stabilizer of 
\newline $ \{  \omega _{ (1,0) } ,  \omega _{ (0,1) } , \omega _{(1,1) } \} $ is the kernel of the action. Also there exists $x \in \mathbb{F} _{q} $ where $ 0 \neq x \neq 1 $ because $ |  \mathbb{F} _{q} | \geq 3 $. So $ \langle ( 1, x ) \rangle \in  \Omega $ and $ \langle ( 1, x ) \rangle \notin  \{  \omega _{ (1,0) } ,  \omega _{ (0,1) } , \omega _{(1,1) } \} $. Hence $ |  \Omega | \geq 4 > 3 $. Thus $ Ht (G, \Omega ) = 3  $ by Lemma \ref{trnsqvhght}.
\end{proof}
\begin{thrm}\label{slbrlhght}
The height of the Borel action of $ SL_{2} (q) $ is
\[
 Ht (SL_{2} (q), \Omega ) = 
\begin{cases}
2
\ \ \ \ \ \text{if $ q = 3 $,}
\\
3
\ \ \ \ \ \text{if $ q \geq 4 $.}
\end{cases}
\]
\end{thrm}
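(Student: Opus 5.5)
The plan is to treat the two cases separately. Throughout, write $H := SL_{2} (q)$ and use the kernel $Z(H) = \{ \pm I \}$ from Lemma \ref{krnlslgl}.

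\textbf{Upper bound for all $q \geq 3$.} Let $\Delta$ be any $3$-subset of $\Omega$. Since $H$ is $2$-transitive on $\Omega$ (Corollary \ref{cor2t}), choose $g \in H$ sending two points of $\Delta$ to $\omega_{(1,0)}$ and $\omega_{(0,1)}$. By Corollary \ref{dstnctslhght} and Corollary \ref{crlcnj}, $H_{(\Delta)}$ is conjugate to $Z(H)$. Because $Z(H)$ is normal, $H_{(\Delta)} = Z(H)$. Lemma \ref{hghtkrnlllsts} with $m = 3$ then gives $Ht(H,\Omega) \leq 3$.

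\textbf{The case $q \geq 4$.} I will show that $\Delta = \{ \omega_{(1,0)}, \omega_{(0,1)}, \omega_{(1,1)} \}$ is independent, using Lemma \ref{scndfnt}. This requires that each $2$-subset of $\Delta$ has stabilizer strictly bigger than $Z(H)$.

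For $\{ \omega_{(1,0)}, \omega_{(0,1)} \}$, Corollary \ref{stbtwpntssl} gives the stabilizer $\{ \mathrm{diag}(a,a^{-1}) \}$, which has order $q-1$. This exceeds $|Z(H)| \leq 2$ once $q \geq 4$.

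For the other two $2$-subsets I would avoid explicit matrices. By $2$-transitivity, each $2$-point stabilizer is conjugate to this diagonal group, so it has the same order $q-1 > |Z(H)|$. All three $2$-subsets therefore have stabilizers different from $H_{(\Delta)} = Z(H)$, so $\Delta$ is independent and $Ht = 3$.

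\textbf{The case $q = 3$.} Here $|\Omega| = 4$, and a $2$-point stabilizer $\{ \mathrm{diag}(a,a^{-1}) : a \in \mathbb{F}_{3}^{*} \} = \{ \pm I \}$ equals the kernel. So every $2$-subset has stabilizer $Z(H)$, and Lemma \ref{hghtkrnlllsts} with $m = 2$ gives $Ht \leq 2$.

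For the lower bound, take $\{ \omega_{(1,0)}, \omega_{(0,1)} \}$. Each single-point stabilizer has order $q(q-1)/|\text{kernel-adjusted}|$; concretely, the upper triangular matrices $\begin{pmatrix} a & b \\ 0 & a^{-1} \end{pmatrix}$ in $SL_{2}(3)$ form a group of order $6 > 2$. So this set is independent, giving $Ht = 2$. Alternatively, $PSL_{2}(3) \cong A_{4}$ acts on $4$ points (Lemma \ref{smrphsmspsltrn}), and Example \ref{hghtalt} gives height $4 - 2 = 2$ via Lemma \ref{fthflhght}.

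The main obstacle is only the bookkeeping of stabilizer orders versus $|Z(H)|$ in the small cases. The $2$-transitivity and conjugation argument handles everything else.
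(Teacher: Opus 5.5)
Your argument is correct. The upper bound is exactly the paper's: move two points of a $3$-set to $\omega_{(1,0)},\omega_{(0,1)}$, apply Corollary \ref{dstnctslhght}, use normality of $Z(H)$, then apply Lemma \ref{hghtkrnlllsts} with $m=3$. The lower bound and the case $q=3$ are handled differently.

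For $q\ge 4$ the paper notes that $\mathrm{diag}(\lambda,\lambda^{-1})$ with $\lambda\neq\pm1$ fixes $\omega_{(1,0)},\omega_{(0,1)}$ without lying in the kernel. It then invokes Lemma \ref{trnsqvhght}, which produces an independent $3$-set abstractly. You instead write one down, $\{\omega_{(1,0)},\omega_{(0,1)},\omega_{(1,1)}\}$, and check Lemma \ref{scndfnt} by comparing $q-1$ with $|Z(H)|\le 2$. This is the proof of Lemma \ref{trnsqvhght} specialised to this action, but it is fully explicit; the same three points reappear as the first three entries of $I$ in Theorem \ref{xckjdsqwmmmm}. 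It also avoids leaning on a lemma whose stated hypothesis, sharp $m$-transitivity, fails for $SL_{2}(q)$ with $m=2$. The paper's application only goes through because that direction of the lemma's proof never uses sharpness.

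For $q=3$ the paper just cites the computed tables in \cite{WISCONS1}, whereas your argument is self-contained. All $2$-point stabilizers equal $\{\pm I\}$, so Lemma \ref{hghtkrnlllsts} with $m=2$ gives $Ht\le 2$, and $Ht\ge 2$ also follows at once from Corollary \ref{crltrnstvhght}. Your $A_{4}$ alternative is fine too, provided you add, as in Theorem \ref{pslfvnme}, that a faithful transitive action of a group of order $12$ on $4$ points is the natural $A_{4}$ action, and cite Lemma \ref{qvlnthght}.

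One cosmetic point: delete ``$q(q-1)/|\text{kernel-adjusted}|$'', which is meaningless. A point stabilizer in $SL_{2}(q)$ simply has order $q(q-1)$, i.e.\ $6$ when $q=3$.
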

\begin{proof}
Put $ G:= SL_{2} (q) $. The case $ q = 3 $ is dealt with in the tables in \cite{WISCONS1}. So suppose $ q \geq 4 $. Then $ | \mathbb{F} _{q}  | \geq 4 $. Therefore there exists $ \lambda \in \mathbb{F} _{q}  ^{*} \setminus \{ 1 , -1 \} $. Let $ \Delta  = \{  \omega _{ (1,0) } ,  \omega _{ (0,1) } \} $. Corollary \ref{stbtwpntssl} shows that
\[
\begin{pmatrix*}[l]
\lambda & 0 \\
0 & \lambda ^{-1}
\end{pmatrix*}
\in 
G_{ ( \Delta ) } .
\]
By Lemma \ref{krnlslgl}, $ G_{ ( \Delta ) } $ is not equal to the kernel of the action, $ Z ( SL _{2} (q)) $. Corollary \ref{cor2t} shows the action is $2$-transitive. Also $ | \Omega | = q + 1 > 2 $ by Lemma \ref{szbrlst}. So it follows from Lemma \ref{trnsqvhght} that $  Ht (G, \Omega ) > 2 $.

\medskip
Let $ \Gamma = \{ \gamma _{1} , \gamma _{2} , \gamma _{3} \} \subseteq  \Omega $. Since the action is $2$-transitive,
\[
\Gamma ^{g} = \{  \omega _{ (1,0) } ,  \omega _{ (0,1) } , \gamma _{3} ^{g} \} 
\]
for some $ g \in G $. Now $ G _{ ( \Gamma ^{g} ) } =  Z ( SL _{2} (q)) $ by Corollary \ref{dstnctslhght}. Since $ Z ( SL _{2} (q)) \trianglelefteq G $, it follows from Corollary \ref{crlcnj} that $ G _{ ( \Gamma ) }  = g G _{ ( \Gamma ^{g} ) } g ^{-1} = Z ( SL _{2} (q)) $. So $ Ht (G, \Omega ) \leq 3 $ by Lemma \ref{hghtkrnlllsts}. Thus $ Ht (G, \Omega )  = 3 $.
\end{proof}
\section{Relational Complexity of the Borel Action}
From Lemma \ref{pslsnpgl} we see $ PGL_{2} (q) $ contains a subgroup isomorphic to $ PSL_{2} (q) $. When $q $ is even we have $ |  PGL_{2} (q) | = | PSL_{2} (q) | $ and therefore $ PGL_{2} (q) \cong PSL_{2} (q) $. Since Lemma \ref{equct} tells us the relational complexity of the action of $ GL_{2} (q) $ is equal to that of $ PGL_{2} (q) $, calculating relational complexity for each Borel action can be split into two cases; $ SL_{2} (q) $ when $ q $ is odd and $ GL_{2} (q) $ for any $ q $. First we deal with $ GL_{2} (q) $.

\newpage
\begin{thrm}\label{brlglwhtever}
The relational complexity of the Borel action of $ GL_{2} (q) $ is
\[
 RC (GL_{2} (q), \Omega ) = 
\begin{cases}
2
\ \ \ \ \ \text{if $ q = 3 $,}
\\
4
\ \ \ \ \ \text{if $ q \geq 4 $.}
\end{cases}
\]
\end{thrm}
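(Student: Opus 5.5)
We treat $q = 3$ and $q \geq 4$ separately, and in both cases work with the equivalent faithful action of $PGL_{2}(q)$ on $\Omega$, using Lemma \ref{equct}. Throughout we take $| \Omega | = q+1$ (Lemma \ref{szbrlst}).

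\textbf{The case $q = 3$.} Here $| \Omega | = 4$, so $PGL_{2}(3) \cong S_{4}$ acts faithfully on four points. Since this is the full symmetric group on $\Omega$, the natural-action example for $S_{n}$ (Example \ref{symrc}) together with Lemma \ref{equct} gives relational complexity $2$. Equivalently, the action is $4$-transitive, and Lemma \ref{trnprm} gives $RC = 2$.

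\textbf{The case $q \geq 4$, upper bound.} Theorem \ref{glbrlhght} gives height $3$, so Theorem \ref{rchgt} yields $RC(GL_{2}(q), \Omega) \leq 4$.

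\textbf{The case $q \geq 4$, lower bound.} The action is $3$-transitive by Lemma \ref{pgthtrns}. It is not the full symmetric group, because $|PGL_{2}(q)| = q(q^{2}-1) < (q+1)!$ once $q \geq 4$; indeed $(q+1)! \geq (q+1)q(q-1)(q-2) > q(q^{2}-1)$ since $q-2 \geq 2$. Lemma \ref{trnprm} with $m = 3$ then gives $RC > 3$. Combined with the upper bound, this gives $RC = 4$.

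The only point needing care is applying Lemma \ref{trnprm}, which requires $G/K \not\cong Sym(\Omega)$. This is justified by the order comparison above, using that the kernel of the action is $Z(GL_{2}(q))$ (Lemma \ref{krnlslgl}), so $G/K \cong PGL_{2}(q)$. If a more hands-on argument is wanted for the lower bound, one can give explicit $4$-tuples. Take $I = (\omega_{(1,0)}, \omega_{(0,1)}, \omega_{(1,1)}, \omega_{(1,x)})$ and $J = (\omega_{(1,0)}, \omega_{(0,1)}, \omega_{(1,1)}, \omega_{(1,y)})$ with $x \neq y$ and $x, y \notin \{0,1\}$. By $3$-transitivity, $I \sim_{3} J$. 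By Lemma \ref{thrkrnlind}, the only elements fixing the first three entries are scalars, which fix $\omega_{(1,x)}$; hence $I \not\sim_{4} J$. Such $x, y$ exist precisely because $q \geq 4$.
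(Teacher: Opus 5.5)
Your proof is correct and follows the paper's own argument: the case $q=3$ via $PGL_{2}(3)\cong S_{4}$ acting naturally on four points, and for $q\geq 4$ the upper bound from height $3$ (Theorem \ref{glbrlhght} with Theorem \ref{rchgt}) together with the lower bound from $3$-transitivity and Lemma \ref{trnprm} via an order comparison. Your optional explicit pair of $4$-tuples is just the proof of Lemma \ref{trnprm} made concrete using Lemma \ref{thrkrnlind}, and your inequality $(q+1)! > q(q^{2}-1)$ for $q\geq 4$ fills in the order comparison the paper only asserts.
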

\begin{proof}
For $ q = 3 $, use the fact $ PGL_{2} (3) \cong S_{4} $ and $ S_{4} $ has one transitive action of degree $ 4 $, the natural action. This action has relational complexity $ 2 $ by Example \ref{symrc}. Therefore $ RC( GL_{2} (3) , \Omega ) = 2 $.

\medskip
Suppose $ q \geq 4 $. Lemma \ref{pgthtrns} shows that the action is $3$-transitive. Also we see $   PGL _{2} (q) \not \cong  Sym (  \Omega  )  $ as permutation groups because the groups have different orders. So $ RC (G, \Omega ) > 3  $ by Lemma \ref{trnprm}. Theorem \ref{glbrlhght} and Theorem \ref{rchgt} together show that $ RC (G, \Omega ) \leq 4 $. Thus $ RC (G, \Omega ) = 4 $.
\end{proof}
When computing the relational complexity of the $SL_{2} (q) $ Borel Action when $q$ is odd, $ q = 3 $ and $ q = 5 $ will be dealt with as special cases at the end of the section. So until then, we will mostly deal with $ q $ odd and $ q \geq 7 $.

\medskip
To prepare for the general case, the point stabilizers of some sets of points will be determined, first for the action Borel action of $GL _{2} (q) $ and then for $SL _{2} (q) $.
\begin{lema}\label{stbglnn}
For any $q$ the point stabilizer of $  \{  \omega _{ (1,0) } ,  \omega _{ (1,1) } \} $ under the Borel action of  $G := GL_{2} (q) $ is
\[
G _{ \omega _{ (1,0) } ,  \omega _{ (1,1) } } 
=
\Bigg\{ 
\begin{pmatrix}
a & 0 \\
b-a & b
\end{pmatrix}
:
a, b \in \mathbb{F} _{q}  ^{*}
\Bigg\} 
\]
and the point stabilizer of $ \{  \omega _{ (0,1) } ,  \omega _{ (1,1) } \} $ is
\[
G _{  \omega _{ (0,1) } ,  \omega _{ (1,1) } } 
=
\Bigg\{ 
\begin{pmatrix}
a & a-b \\
0 & b
\end{pmatrix}
:
a, b \in \mathbb{F} _{q}  ^{*}
\Bigg\} 
.
\]
\end{lema}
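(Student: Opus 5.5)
Since $G$ acts on the right, a matrix $M \in GL_{2} (q)$ sends $\langle (x,y) \rangle$ to $\langle (x,y) M \rangle$. So $M \in G_{\omega}$ exactly when the row vector spanning $\omega$ is an eigenvector of $M$. The plan is to compute each two-point stabilizer as the intersection of two one-point stabilizers, in the same spirit as Lemma \ref{stblzrtwpnts}. I will write a general element as
\[
M = \begin{pmatrix} m_{11} & m_{12} \\ m_{21} & m_{22} \end{pmatrix} \in GL_{2} (q)
\]
and impose the conditions one point at a time.

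\textbf{First stabilizer.} Since $(1,0)M = (m_{11}, m_{12})$, the condition $M \in G_{\omega_{(1,0)}}$ is equivalent to $m_{12} = 0$. Next, $(1,1)M = (m_{11}+m_{21}, m_{12}+m_{22})$, so $M \in G_{\omega_{(1,1)}}$ is equivalent to $m_{11}+m_{21} = m_{12}+m_{22}$. With $m_{12}=0$, put $a := m_{11}$ and $b := m_{22}$; the condition then reads $m_{21} = b-a$. Invertibility forces $ab = \det M \neq 0$, so $a,b \in \mathbb{F}_q^{*}$. Conversely, every matrix of the stated shape with $a,b \in \mathbb{F}_q^{*}$ is invertible. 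It sends $(1,0)$ to $(a,0)$ and $(1,1)$ to $(b,b)$, so it fixes both points. This gives equality of the two sets.

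\textbf{Second stabilizer.} The computation is symmetric. From $(0,1)M = (m_{21}, m_{22})$, the condition $M \in G_{\omega_{(0,1)}}$ is equivalent to $m_{21} = 0$. Putting $a := m_{11}$ and $b := m_{22}$, the condition for $\omega_{(1,1)}$ becomes $a = m_{12} + b$, that is, $m_{12} = a-b$. The determinant is again $ab$, so $a,b \in \mathbb{F}_q^{*}$. For the converse, such a matrix sends $(0,1)$ to $(0,b)$ and $(1,1)$ to $(a,a)$.

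\textbf{Main obstacle.} There is no real difficulty here. The only care needed is to use the right-action convention (row vectors multiplied by $M$ on the right) consistently, so that the nonzero off-diagonal entry lands in the correct position. I would also check that the description is valid for every $q$, including $q$ even, where $b-a = a-b$. Nothing in the argument uses the characteristic, so no case split is needed.
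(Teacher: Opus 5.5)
Your proposal is correct and follows essentially the same route as the paper: first identify the one-point stabilizer of $\omega_{(1,0)}$ (respectively $\omega_{(0,1)}$) as the lower (respectively upper) triangular invertible matrices, then impose that $(1,1)M \in \langle (1,1) \rangle$, which forces the off-diagonal entry to be $b-a$ (respectively $a-b$). You are slightly more explicit than the paper, since you carry out the second case and the converse inclusion in full instead of appealing to ``similar reasoning''.
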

\begin{proof}
The stabilizer of $ \omega _{ (1,0) } $ is the set of invertible matrices that scale $ (1,0 ) $ by some non-zero scalar, which are
\[
G_{ \omega _{ (1,0) }  } = 
\Bigg\{ 
\begin{pmatrix}
a & 0 \\
c & b
\end{pmatrix}
:
a, b \in \mathbb{F} _{q} ^{*} , c \in \mathbb{F} _{q}
\Bigg\}
.
\]
Now $ G _{ \omega _{ (1,0) } ,  \omega _{ (1,1) } }  $ is the set of elements of $ G_{ \omega _{ (1,0) }  } $ that stabilize $ \langle (1,1) \rangle $. If
\[
\begin{pmatrix}
a & 0 \\
c & b
\end{pmatrix}
\in
G_{ \omega _{ (1,0) }  }
\]
stabilizes $ \langle (1,1) \rangle $ then
\[
(1, 1)
\begin{pmatrix}
a & 0 \\
c & b
\end{pmatrix}
= 
(a + c , b )
\in 
\langle (1,1) \rangle 
.
\]
Therefore $ (a + c , b ) = ( \lambda , \lambda ) $ for some $ \lambda \in \mathbb{F} ^{*} $. Hence $ a + c = \lambda = b $ and so $ c = b -a $. Thus the matrix has the form
\[
\begin{pmatrix}
a & 0 \\
b - a & b
\end{pmatrix}
.
\]
Also every matrix of this form stabilizes $ \langle (1,1) \rangle  $. Hence
\[
G _{ \omega _{ (1,0) } ,  \omega _{ (1,1) } } 
=
\Bigg\{ 
\begin{pmatrix}
a & 0 \\
b-a & b
\end{pmatrix}
:
a, b \in \mathbb{F} _{q}  ^{*} 
\Bigg\} 
.
\]
Similar reasoning shows that
\[
G _{ \omega _{ (0,1) } ,  \omega _{ (1,1) } } 
=
\Bigg\{ 
\begin{pmatrix}
a & a-b \\
0 & b
\end{pmatrix}
:
a, b \in \mathbb{F} _{q}  ^{*}
\Bigg\}
. 
\]
\end{proof}
\begin{corl}\label{stbslnn}
For any $q$ the point stabilizer of $  \{  \omega _{ (1,0) } ,  \omega _{ (1,1) } \} $ under the Borel action of  $H := SL_{2} (q) $ is
\[
H _{ \omega _{ (1,0) } ,  \omega _{ (1,1) } } 
=
\Bigg\{ 
\begin{pmatrix}
a  & 0 \\
a ^{-1} -a & a ^{-1}
\end{pmatrix}
:
a \in \mathbb{F} _{q}  ^{*}
\Bigg\} 
\]
and the point stabilizer of $ \{  \omega _{ (0,1) } ,  \omega _{ (1,1) } \} $ is
\[
H _{  \omega _{ (0,1) } ,  \omega _{ (1,1) } } 
=
\Bigg\{ 
\begin{pmatrix}
a  & a  -a ^{-1} \\
0 & a ^{-1}
\end{pmatrix}
:
a \in \mathbb{F} _{q}  ^{*}
\Bigg\} 
.
\]
\end{corl}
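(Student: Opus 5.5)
The plan is to follow the proof of Corollary \ref{stbtwpntssl}: intersect $H$ with the $GL_{2} (q)$ stabilizers from Lemma \ref{stbglnn}. Put $G := GL_{2} (q)$. Since $H \leq G$, the pointwise stabilizer of a set in $H$ is the intersection of $H$ with its pointwise stabilizer in $G$. Hence
\[
H _{ \omega _{ (1,0) } , \omega _{ (1,1) } } = H \cap G _{ \omega _{ (1,0) } , \omega _{ (1,1) } }
\quad \text{and} \quad
H _{ \omega _{ (0,1) } , \omega _{ (1,1) } } = H \cap G _{ \omega _{ (0,1) } , \omega _{ (1,1) } } .
\]
So it suffices to find which matrices in the two families of Lemma \ref{stbglnn} have determinant $1$.

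For the first family, the matrix with rows $(a, 0)$ and $(b-a, b)$ is lower triangular, so its determinant is $ab$. The determinant equals $1$ exactly when $b = a^{-1}$. Substituting gives the matrix with rows $(a, 0)$ and $(a^{-1} - a, a^{-1})$, where $a \in \mathbb{F} _{q} ^{*}$. Conversely, every matrix of this shape has determinant $1$ and lies in the $GL_{2} (q)$ stabilizer, so it lies in the intersection. This gives the first equality.

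The second family is handled the same way. The matrix with rows $(a, a-b)$ and $(0, b)$ is upper triangular with determinant $ab$. Setting $b = a^{-1}$ gives rows $(a, a - a^{-1})$ and $(0, a^{-1})$, which is the stated form. The reverse inclusion holds for the same reason as before.

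Nothing here is a genuine obstacle. The only care needed is to check both inclusions, that is, that every matrix of the displayed form really lies in $H$ and fixes both points. This is immediate from Lemma \ref{stbglnn} together with the determinant computation.
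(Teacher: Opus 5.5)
Your proposal is correct and follows the paper's own proof, which likewise writes each $SL_{2}(q)$ stabilizer as the intersection of $H$ with the corresponding $GL_{2}(q)$ stabilizer and then appeals to Lemma \ref{stbglnn}. The only difference is that you spell out the determinant condition $ab = 1$, i.e.\ $b = a^{-1}$, which the paper leaves implicit.
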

\begin{proof}
Put $ G := GL_{2} (q) $. Now $ H_{ \omega _{ (1,0) } ,  \omega _{ (1,1) } }  = H \cap G_{ \omega _{ (1,0) } ,  \omega _{ (1,1) } } $ because $ H \leq G$. The result then follows from Lemma \ref{stbglnn}.
\end{proof}
When calculating the relational complexity for the action of $ SL _{2} (q) $ for $q \geq 7 $, a pair of elements in $ \mathbb{F} _{q} ^{*} $ will need to be picked that satisfy a particular set of conditions. This final lemma lists these conditions and shows that such elements can be chosen.
\begin{lema}\label{xlmbdcndtns}
If $ q $ is odd and $q \geq 7 $ then there exists $ x , \lambda \in \mathbb{F} _{q} ^{*} $ such that
\begin{itemize}
\item $ \lambda ^{2} \neq 1 $,
\item $  \lambda ^{-2} \neq  x \neq 1 $ and
\item $  ( 1 - \lambda ^{2} x )  ( 1- x) ^{-1} $ is a square in $ \mathbb{F} _{q} ^{*} $.
\end{itemize}
\end{lema}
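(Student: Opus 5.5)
Fix $\lambda$ first and treat the third condition as a condition on $x$ alone. Since $q \geq 7$ there are at least six elements in $\mathbb{F}_{q}^{*}$, so we can pick $\lambda \in \mathbb{F}_{q}^{*}$ with $\lambda^{2} \neq 1$. Put $\mu := \lambda^{2} \neq 1$.

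The main step is a counting argument for $x$. Consider the map $x \mapsto (1-\mu x)(1-x)^{-1}$ on $\mathbb{F}_{q} \setminus \{1\}$. This is a M\"obius transformation with matrix determinant $\mu - 1 \neq 0$, so it is injective on $\mathbb{F}_{q} \setminus \{1\}$. It takes every value of $\mathbb{F}_{q}$ except $\mu$, which is the image of $x = \infty$. The value $0$ occurs exactly at $x = \mu^{-1}$. Hence, as $x$ ranges over $\mathbb{F}_{q} \setminus \{1, \mu^{-1}\}$, the expression ranges bijectively over $\mathbb{F}_{q}^{*} \setminus \{\mu\}$.

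Now $\mu = \lambda^{2}$ is a square. By Lemma \ref{sqnmbrf} there are $\tfrac{1}{2}(q-1)$ nonzero squares, so the number of $x \in \mathbb{F}_{q} \setminus \{1, \lambda^{-2}\}$ for which the expression is a nonzero square is $\tfrac{1}{2}(q-1) - 1$. We must still discard $x = 0$, which gives value $1$, a square. This leaves at least $\tfrac{1}{2}(q-1) - 2$ admissible values of $x \in \mathbb{F}_{q}^{*}$. This is positive once $q \geq 7$: for $q = 7$ it equals $1$.

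The only delicate point is this bookkeeping of excluded values: $x \in \{0, 1, \lambda^{-2}\}$, together with the missing value $\mu$ in the image. Double-counting these is the only way the count could fail. If needed, I would sanity-check the case $q = 7$ directly, for instance with $\lambda = 2$, so $\lambda^{2} = 4$ and $\lambda^{-2} = 2$, and then search $x \in \{3, 4, 5, 6\}$.
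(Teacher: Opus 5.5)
Your proof is correct and takes essentially the same route as the paper. Both fix $\lambda$ with $\lambda^{2}\neq 1$, use injectivity of $x\mapsto(1-\lambda^{2}x)(1-x)^{-1}$, and count squares in its image. The difference is only in the bookkeeping. The paper restricts the domain to $\mathbb{F}_{q}^{*}\setminus\{1\}$ and argues by pigeonhole: the image misses only two elements of $\mathbb{F}_{q}^{*}$, while there are at least three nonzero squares. You instead identify the missed values exactly, namely $\lambda^{2}$ and the value $1$ at $x=0$, both squares, which gives the exact count $\tfrac{1}{2}(q-5)\geq 1$.
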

\begin{proof}
Since $ q \geq 7 $, the number of squares in $ \mathbb{F} _{q} ^{*} $ is $ \tfrac{1}{2} (q-1) \geq 3 $ by Lemma \ref{sqnmbrf}. So there exists a square in $  \mathbb{F} _{q} ^{*} \setminus \{ 1 \} $. Fix $ \lambda \in  \mathbb{F} _{q} ^{*} $ so that $ \lambda ^{2} \neq   1 $. Now define a map
\begin{align*}
\phi :  \mathbb{F} _{q} ^{*} \setminus \{ 1  \} &  \rightarrow  \mathbb{F} _{q}  \\
y & \mapsto ( 1 - \lambda ^{2} y )  ( 1- y) ^{-1} .
\end{align*}
Suppose $ y_{1} , y_{2} \in \mathbb{F} _{q} ^{*} \setminus \{ 1  \} $ and $ ( y_{1} ) \phi = ( y_{2} ) \phi $. Then
\begin{align*}
( y_{1} ) \phi 
= ( y_{2} ) \phi 
& \Longleftrightarrow  ( 1 - \lambda ^{2} y _{1} )  ( 1- y_{1} ) ^{-1}  
=  ( 1 - \lambda ^{2} y _{2}  ) ( 1- y _{2} )  ^{-1} \\
& \Longleftrightarrow  ( 1 - \lambda ^{2} y _{1}  ) ( 1- y _{2} )
 =  ( 1 - \lambda ^{2} y _{2} ) ( 1- y_{1} )   \\
& \Longleftrightarrow   1  - y _{2} - \lambda ^{2} y _{1} + \lambda ^{2} y _{1} y _{2} =  1   - y_{1} - \lambda ^{2} y _{2} + \lambda ^{2} y_{1} y _{2} \\
& \Longleftrightarrow   - y _{2}  - \lambda ^{2} y _{1}  =  - y_{1}   - \lambda ^{2} y _{2}  \\
& \Longleftrightarrow   y _{1}  ( 1 - \lambda ^{2} ) =  y _{2} ( 1 - \lambda ^{2} )  \\
& \Longleftrightarrow   y _{1}   =  y _{2} . \tag{since $ \lambda ^{2} \neq 1 $}
\end{align*}
Hence $ \phi $ is one-one. Therefore $ | \text{im} ( \phi ) | = |  \mathbb{F} _{q} ^{*} \setminus \{ 1  \}  | = q - 2 $. Notice that $ ( \lambda ^{ -2 } ) \phi = 0 $, so the image of the remaining elements lie in $ \mathbb{F} _{q} ^{*} $. Therefore
\[
 | \text{im} ( \phi ) \cap \mathbb{F} _{q} ^{*}  | = | \text{im} ( \phi ) | - 1 = q - 3 = | \mathbb{F} _{q} ^{*}  | - 2 .
\]
As there are at least three squares in in $ \mathbb{F} _{q} ^{*}  $, at least one of these squares lies in $ \text{im} ( \phi ) \cap \mathbb{F} _{q} ^{*} $. Choose $ x \in \mathbb{F} _{q} ^{*} \setminus \{ 1  \} $ so that $ ( x) \phi =  ( 1 - \lambda ^{2} x ) ( 1- x) ^{-1} $ is a square in $ \mathbb{F} _{q} ^{*}  $. It must be that $ x \neq \lambda ^{ -2} $ because $ ( x) \phi \neq 0 $.
\end{proof}
And now we are in a position to finish finding the relational complexity for the Borel action of $ SL _{2} (q) $.
\begin{thrm}\label{xckjdsqwmmmm}
Suppose $q$ is odd. If $ q = 3 $ or $ q = 5 $ then $ RC( SL _{2} (q) ,  \Omega )  = 3 $. If $ q \geq 7 $ then $ RC( SL _{2} (q) , \Omega  )  = 4 $.
\end{thrm}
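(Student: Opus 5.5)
For $q \geq 7$ the plan is to show $3 < RC(SL_{2}(q), \Omega) \leq 4$. The upper bound is immediate: Theorem \ref{slbrlhght} gives $Ht(SL_{2}(q),\Omega) = 3$, so Theorem \ref{rchgt} gives $RC \leq 4$. For the lower bound, Lemma \ref{rcalt} together with Lemma \ref{obvi} (after padding by repeated entries) reduces the task to exhibiting $I, J \in \Omega^{4}$ with $I \sim_{3} J$ but $I \not\sim_{4} J$.

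The construction rests on two observations. First, $GL_{2}(q)$ acts sharply $3$-transitively on $\Omega$ modulo scalars, by Lemma \ref{pgthtrns} and Lemma \ref{thrkrnlind}. So for two $3$-tuples of distinct points there is a unique coset $Z(GL_{2}(q))g$ sending one to the other. Second, that coset meets $SL_{2}(q)$ if and only if $\det g$ is a square in $\mathbb{F}_{q}^{*}$, since $\det(\mu g) = \mu^{2}\det g$.

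I will take
\[
I = (\omega_{(1,0)}, \omega_{(0,1)}, \omega_{(1,1)}, \omega_{(1,x)})
\]
and take $J$ of the same shape with first two entries $\omega_{(1,0)}, \omega_{(0,1)}$ and last two entries built from $\lambda^{2}$ and $x$. Here $x, \lambda$ are chosen by Lemma \ref{xlmbdcndtns}, and $J$ is arranged so that its fourth point is not the image of $\omega_{(1,x)}$ under the diagonal map sending the first three entries of $I$ to those of $J$. The shape I will try first is
\[
J = (\omega_{(1,0)}, \omega_{(0,1)}, \omega_{(1,\lambda^{-2})}, \omega_{(1,1)}).
\]
I will adjust this if the determinants do not work out.

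Then $I \not\sim_{4} J$, because the cross-ratios differ (equivalently, by Lemma \ref{thrkrnlind}, the unique $GL$ coset fixing $\omega_{(1,0)}, \omega_{(0,1)}$ and matching the third entries does not match the fourth). This needs $x \neq 1$ and $x \neq \lambda^{-2}$. For $I \sim_{3} J$ there are four $3$-subtuples to check. For each I will compute the unique $GL_{2}(q)$ element, up to scalars, taking it to its counterpart, using the explicit stabilizers in Lemma \ref{stblzrtwpnts}, Lemma \ref{stbglnn} and Corollary \ref{stbslnn}, and then check that its determinant is a square. The diagonal cases reduce to $\lambda^{\pm 2}$ being squares. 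The remaining triple, the one omitting $\omega_{(1,0)}$ or $\omega_{(0,1)}$, is where the condition that $(1-\lambda^{2}x)(1-x)^{-1}$ is a square is designed to apply. This determinant bookkeeping, and getting the shape of $J$ to match exactly the three conditions of Lemma \ref{xlmbdcndtns}, is the main obstacle.

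For $q \in \{3, 5\}$ the lower bound comes from Lemma \ref{trnprm}. The action is $2$-transitive by Lemma \ref{psl2tr}, and the image $PSL_{2}(q)$ is not $Sym(\Omega)$ because the orders differ ($12 \neq 24$ and $60 \neq 720$). Hence $RC > 2$.

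For $q = 3$, $Ht = 2$ by Theorem \ref{slbrlhght}, so $RC \leq 3$ by Theorem \ref{rchgt}. For $q = 5$, $Ht = 3$ by Theorem \ref{slbrlhght}, so I must rule out $RC = 4$. By the discussion preceding Definition \ref{almstndstdfn}, any witness would be a pair $I \sim_{3} J$ in $\Omega^{4}$ with distinct entries and $I \not\sim_{4} J$. With the determinant criterion above, this requires four triples whose transporting $GL$ cosets all have square determinant while the $4$-tuples have different cross-ratios. In $\mathbb{F}_{5}$ the squares are only $\{1, 4\}$, and I expect a short finite check, or an appeal to the tables in \cite{WISCONS1} (as done for $q = 3$ in Theorem \ref{slbrlhght}), to show that no such pair exists. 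This would give $RC = 3$.
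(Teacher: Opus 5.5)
Your frame for $q\ge 7$ is right (the bound $RC\le 4$ from $Ht=3$, a witness with $I\sim_3 J$ and $I\not\sim_4 J$ for the lower bound, and the criterion that a triple of distinct points can be moved by $SL_2(q)$ exactly when the unique $PGL_2(q)$ element doing it has square determinant). But the witness you write down fails, and the failure is in the shape of $J$, not only in the bookkeeping.

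Write $t$ for $\omega_{(1,t)}$ and $\infty$ for $\omega_{(0,1)}$. The unique projective map fixing $0,\infty$ with $1\mapsto\lambda^{-2}$ is $t\mapsto\lambda^{-2}t$. So $I\not\sim_4 J$ holds exactly when $x\neq\lambda^{2}$, not $x\neq\lambda^{-2}$. The triple $(I_1,I_2,I_4)\mapsto(J_1,J_2,J_4)$ is realised by $t\mapsto t/x$, which lies in $PSL_2(q)$ only if $x$ is a square; this depends on $x$, not on $\lambda^{\pm2}$. Given that, the triples $(I_1,I_3,I_4)$ and $(I_2,I_3,I_4)$ both need $(\lambda^2-1)(x-1)$ to be a square.

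Lemma \ref{xlmbdcndtns} supplies none of these conditions. For $q=7$ the pair $\lambda=3$, $x=5$ meets all of the lemma's conditions, yet $5$ is a non-square. Nor can the parameters be adjusted: for $q=7$ your conditions force $\lambda^2$ and $x$ to be the two distinct elements of $\{2,4\}$, and then $(\lambda^2-1)(x-1)=3$ is a non-square. The underlying reason is that $\mathrm{diag}(\lambda^{-1},\lambda)\in SL_2(q)$ carries your $J$ to $(\omega_{(1,0)},\omega_{(0,1)},\omega_{(1,1)},\omega_{(1,\lambda^2)})$. You have fixed the new fourth coordinate at $\lambda^2$ where it needs to be $\lambda^2x$.

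The repair is to leave the third entry alone and take $J=(\omega_{(1,0)},\omega_{(0,1)},\omega_{(1,1)},\omega_{(1,y)})$ with $y=\lambda^2x$, which is the paper's choice. Each condition then checks out:
\begin{itemize}
\item $I\not\sim_4 J$, since only scalars fix the first three points (Corollary \ref{dstnctslhght}) and $y\neq x$.
\item The triple $(I_1,I_2,I_4)$ is moved by $t\mapsto\lambda^2 t$.
\item The triples $(I_1,I_3,I_4)$ and $(I_2,I_3,I_4)$ are moved by projective maps whose determinants lie in the square classes of $xy(x-1)(y-1)$ and $(y-1)/(x-1)$. Both equal the class of $(1-\lambda^2x)(1-x)^{-1}$, because $xy=(\lambda x)^2$.
\item The condition $x\neq\lambda^{-2}$ is just $y\neq 1$, i.e.\ distinctness of the entries of $J$.
\end{itemize}
So your determinant criterion explains exactly why the lemma's conditions are the right ones. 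It replaces the paper's explicit matrices $g_1,g_2,g_3$ by a uniform check.

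For $q\in\{3,5\}$, your lower bound via Lemma \ref{trnprm} and your $q=3$ upper bound are fine. Leaving $q=5$ to the tables is no weaker than the paper, which cites them for both small cases. Your criterion also settles $q=5$ directly. A witness has distinct entries. Apply an element of $GL_2(5)$ to both tuples (harmless, since $SL_2(5)\trianglelefteq GL_2(5)$), then an element of $SL_2(5)$ to $J$. You may then take $I=(0,\infty,1,x)$ and $J=(0,\infty,1,y)$ with $x\neq y$ in $\{2,3,4\}$. The triple $(I_1,I_2,I_4)$ forces $y/x\in\{\pm 1\}$, so $\{x,y\}=\{2,3\}$. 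Then $(y-1)/(x-1)\in\{2,3\}$ is a non-square, so no witness exists and $RC=3$.
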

\begin{proof}
The cases $ q = 3 $ and $ q = 5 $ are dealt with in the tables in \cite{WISCONS1}. So suppose $ q \geq 7 $.

\medskip
Put $ G : = SL _{2} (q) $. Corollary \ref{cor2t} shows that the action is $2$-transitive. Also we see $   PSL _{2} (q) \not \cong  Sym (  \Omega  )  $ as permutation groups because the groups have different orders. So using Lemma \ref{trnprm}, we have $  RC(G,  \Omega ) \geq 3 $. Theorem \ref{rchgt} and Lemma \ref{slbrlhght} together show that $  RC(G,  \Omega  ) \leq 4 $. So either $ RC(G,  \Omega  ) = 3 $ or $ RC(G,  \Omega  ) = 4 $.

\medskip
It will be shown that $ RC(G,  \Omega  ) \neq 3 $ by constructing $I , J \in \Omega ^{4} $ so that $I \sim _{3} J $ but $ I \not \sim _{4} J $. Using Lemma \ref{xlmbdcndtns} pick $ x , \lambda \in \mathbb{F} _{q} ^{*} $ such that
\begin{itemize}
\item $ \lambda ^{2} \neq 1 $,
\item $  \lambda ^{-2}  \neq  x \neq 1 $ and
\item $  ( 1 - \lambda ^{2} x )  ( 1- x) ^{-1} $ is a square in $ \mathbb{F} _{q} ^{*} $.
\end{itemize}
Put $ y = \lambda ^{2} x $ and observe that $ x \neq y $ because $ \lambda ^{2} \neq 1 $. Let
\begin{align*}
& I = (  \omega _{ (1,0) } \ , \ \omega _{ (0,1) } \ , \  \omega _{ (1,1) } \ , \ \omega _{ (1,x) } ) \ \ \text{and} \\
& J = (  \omega _{ (1,0) } \ , \ \omega _{ (0,1) } \ , \  \omega _{ (1,1) } \ , \ \omega _{ (1,y) } ) .
\end{align*}
If $ I \sim _{4} J $ then $ I ^{g} = J $ for some $ g \in SL _{2} (q) $ and so $ g $ stabilizes the points $  \omega _{ (1,0) } $, $  \omega _{ (0,1) } $ and $  \omega _{ (1,1) } $. By Corollary \ref{dstnctslhght}, the stabilizer of these points is $ Z(SL_{2} (q) ) $, the kernel of the action. Therefore $g \in Z(SL_{2} (q) ) $ and $ g $ stabilizes $ \omega _{ (1,x  ) } $. Hence $  \omega _{ (1,x  ) } = \omega _{ (1,x  ) } ^{g} = \omega _{ (1,y  ) } $. But $ (1, y) \notin \omega _{ (1,x  ) } $ because $ x \neq y $, which means $ \omega _{ (1,x  ) } \neq \omega _{ (1,y  ) } $. This is a contradiction, so it must be that $ I \not\sim _{4} J $.

\medskip
Next it needs to be shown that $ I \sim _{3} J $, which requires checking each $3$-subtuple of $I$ gets sent to the corresponding $3$-subtuple of $J$. There are four such pairs of $3$-subtuples of $I$ and $J$ and each case will be looked at in turn. To make it easier to tell the difference between the $3$-subtuples of $I$ and $J$, write the entries as $ I = ( I_{1} , I_{2} , I _{3} , I _{4} ) $ and $ J = ( J_{1} , J_{2} , J _{3} , J _{4} ) $.

\medskip
\textbf{Case 1 : $ ( I_{1} , I_{2} , I _{3} ) $ and $ ( J_{1} , J_{2} , J _{3} ) $.} Since $ I _{i} = J _{i} $ for each $ i \in \{ 1, 2 , 3 \} $ the identity element sends the first $ 3 $-tuple to the other.

\bigskip
\textbf{Case 2 : $ ( I_{1} , I_{2} , I _{4} ) $ and $ ( J_{1} , J_{2} , J _{4} ) $.} Let
\[
g_{1} := 
\begin{pmatrix*}[l]
\lambda ^{-1} & 0 \\
0 & \lambda 
\end{pmatrix*}
.
\]
Then
\[
(1,x) ^{g _{1} } = 
(1,x)
\begin{pmatrix*}[l]
\lambda ^{-1} & 0 \\
0 & \lambda 
\end{pmatrix*}
=
( \lambda ^{-1} , \lambda x)
=
\lambda ^{-1} ( 1 , \lambda ^{2} x)
=
\lambda ^{-1} (1, y) .
\]
Hence
\[
 I _{4} ^{ g _{1} }
 = \omega _{ (1,x) } ^{ g _{1} } 
 = \langle (1 , x ) \rangle ^{ g _{1} } 
= \langle (1 , x ) ^{ g _{1} }  \rangle 
 = \langle \lambda ^{ -1 } (1 , y )   \rangle 
  = \langle  (1 , y )   \rangle 
 = \omega _{ (1,y) } 
 =  J _{4} .
\]
Also Corollary \ref{stbtwpntssl} shows that $ g _{1} \in G _{  \omega _{ (1,0) } ,  \omega _{ (0,1) } } $. Therefore $ ( I_{1} , I_{2} , I _{4} )  ^{g _{1} }  =  ( J_{1} , J_{2} , J _{4} ) $.

\bigskip
\textbf{Case 3 : $ ( I_{1} , I_{3} , I _{4} ) $ and $ ( J_{1} , J_{3} , J _{4} ) $.} Since $  ( 1 - \lambda ^{2} x ) ( 1- x)  ^{-1} $ is a square in $ \mathbb{F} _{q} ^{*} $, there exists $ \mu \in \mathbb{F} ^{*} $ such that $ \mu ^{2} = \lambda ^{-2} ( 1 - \lambda ^{2} x )  ( 1- x) ^{-1} $. Let 
\[
g_{2} := 
\begin{pmatrix*}[c]
\mu  & 0 \\
\mu ^{-1} - \mu  & \mu  ^{ -1}
\end{pmatrix*}
.
\]
Then
\begin{align*}
(1,x) ^{g _{2} } & = 
(1,x)
\begin{pmatrix*}[c]
\mu  & 0 \\
\mu ^{-1} - \mu  & \mu  ^{ -1}
\end{pmatrix*}
\\
& =
( \mu  + \mu ^{-1} x - \mu x , \mu ^{ -1}  x ) \\
& =
(  \mu ^{ -1}  x + \mu (1- x) , \mu ^{ -1}  x ) \\
& =
\mu ^{ -1} (   x + \mu ^{ 2} (1- x) ,  x ) \\
& =
\mu ^{ -1} (   x + \lambda ^{-2} ( 1 - \lambda ^{2} x ) ( 1- x) ^{-1}  (1- x) ,  x ) \\
& =
\mu  ^{ -1}(   x + \lambda ^{-2}  ( 1 - \lambda ^{2} x )   ,  x ) \\
& =
\mu ^{ -1} (   x + \lambda ^{-2} -  x ,  x ) \\
& =
\mu ^{ -1} (   \lambda ^{-2}  ,  x ) \\
& =
\mu ^{ -1} \lambda ^{-2} (   1  , \lambda ^{2} x ) \\
& =
\mu ^{ -1} \lambda ^{-2} (   1  , y ) .
\end{align*}
Hence
\[
 I _{4} ^{ g _{2} }
 = \omega _{ (1,x) } ^{ g _{2} } 
 = \langle (1 , x ) \rangle ^{ g _{2} } 
= \langle (1 , x ) ^{ g _{2} }  \rangle 
 = \langle \mu ^{ -1} \lambda ^{-2} (   1  , y )  \rangle 
  = \langle  (1 , y )   \rangle 
 = \omega _{ (1,y) } 
 =  J _{4} .
\]
Also Corollary \ref{stbslnn} shows that $ g _{2} \in G _{  \omega _{ (1,0) } ,  \omega _{ (1,1) } } $. Therefore $ I _{1} ^{g _{2} } = I _{1} = J _{1} $ and $ I _{3} ^{g _{2} } = I _{3} = J _{3} $. It follows that $ ( I_{1} , I_{3} , I _{4} )  ^{g _{2} } =  ( J_{1} , J_{3} , J _{4} ) $.

\bigskip
\textbf{Case 4 : $ ( I_{2} , I_{3} , I _{4} ) $ and $ ( J_{2} , J_{3} , J _{4} ) $.} Since $  ( 1 - \lambda ^{2} x )  ( 1- x) ^{-1} $ is a square in $ \mathbb{F} _{q} ^{*} $ it can be written as $ \gamma ^{2} =  ( 1 - \lambda ^{2} x )  ( 1- x) ^{-1} $ for some $ \gamma \in  \mathbb{F} _{q} ^{*} $. Let
\[
g _{3}
=
\begin{pmatrix}
\gamma  ^{-1}  & \gamma ^{-1}  - \gamma  \\
0 & \gamma
\end{pmatrix}
.
\]
Then

\newpage
\begin{align*}
(1,x) ^{g _{3} } & = 
(1,x)
\begin{pmatrix}
\gamma  ^{-1}  & \gamma ^{-1}  - \gamma  \\
0 & \gamma
\end{pmatrix}
\\
& = ( \gamma ^{-1} , \gamma ^{-1}  - \gamma  + \gamma  x ) 
\\
& = \gamma ^{-1} ( 1 , 1   - \gamma ^{2} + \gamma ^{2} x )
\\
& = \gamma ^{-1} ( 1 , 1   - \gamma ^{2} (1 - x ) )
\\
& = \gamma ^{-1} ( 1 , 1   -  ( 1 - \lambda ^{2} x ) ( 1- x) ^ {-1} (1 - x ) ) 
\\
& = \gamma ^{-1} ( 1 , 1   -  ( 1 - \lambda ^{2} x ) )
\\
& = \gamma ^{-1} ( 1 ,  \lambda ^{2} x )
\\
& = \gamma ^{-1} ( 1 ,  y )
.
\end{align*}
Hence
\[
 I _{4} ^{ g _{3} }
 = \omega _{ (1,x) } ^{ g _{3} } 
 = \langle (1 , x ) \rangle ^{ g _{3} } 
= \langle (1 , x ) ^{ g _{3} }  \rangle 
 = \langle \gamma ^{-1} (   1  , y )  \rangle 
  = \langle  (1 , y )   \rangle 
 = \omega _{ (1,y) } 
 =  J _{4} .
\]
Also Corollary \ref{stbslnn} shows that $ g _{3} \in G _{  \omega _{ (0,1) } ,  \omega _{ (1,1) } } $. Therefore $ ( I_{2} , I_{3} , I _{4} )  ^{g _{3} }  =  ( J_{2} , J_{3} , J _{4} ) $.

\bigskip
These four cases together show that $ I \sim _{3} J $ and it has been shown earlier that $ I \not\sim_{4} J $. Therefore $ RC(G, \Omega ) \neq 3 $ by Definition \ref{first}. Thus $ RC(G, \Omega ) = 4 $.
\end{proof}

\newpage
\chapter{The Dihedral Actions}\label{chapsix}
Here the actions of $ PSL_{2} (q) $ and $ PGL_{2} (q) $ on maximal dihedral subgroups will be looked at, where $ q \geq 4 $. Suppose $ G $ is $ PGL_{2} (q) $ or $ PSL_{2} (q) $ acting on $ \Omega $, the right cosets of a maximal $ D_{ 2(q \pm 1) / \delta } $, where $ \delta = 1 $ if $ G = PGL_{2} (q) $ and $ \delta = 2 $ if $ G \neq PGL_{2} (q) $.

\medskip
For height, the main result that will be proved is: If $ G $ is $ PSL_{2} (4) $ or $ PSL_{2} (5) $ acting on a maximal $ D_{6} $ then $ Ht (G , \Omega ) = 2 $. Otherwise $ Ht (G , \Omega ) = 3 $.

\medskip
The relational complexity of these actions will be shown to always be $ 3 $.

\medskip
A lot of results in this chapter are quite general and can be applied to groups other than $ PSL_{2} (q) $ and $ PGL_{2} (q) $. At the end of the chapter it will also be proved that the action of any Suzuki group on a maximal dihedral also has relational complexity $ 3 $.
\section{Preliminary Results}
Each of the preliminary results here are simple to prove, so proofs are omitted. For a group $ G $ acting on a set $ \Omega $, recall the notation $ \text{Send}_{G} ( \omega_{1} , \omega _{2} ) = \{ g \in G : \omega _{1} ^{g} = \omega _{2} \} $. The next lemma is simple to prove.
\begin{lema}\label{sndcst}
Let $ G $ be a group acting on a set $ \Omega $. Let $ \omega_{1} , \omega _{2} \in \Omega $, with both elements lying in the same orbit. Suppose $ h \in \text{Send}_{G} ( \omega_{1} , \omega _{2} ) $. Then $ \text{Send}_{G} ( \omega _{1} , \omega _{2} ) = G _{\omega_{1} }  h $.
\end{lema}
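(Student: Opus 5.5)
The plan is to prove the two inclusions $G_{\omega_1}h \subseteq \text{Send}_{G}(\omega_1,\omega_2)$ and $\text{Send}_{G}(\omega_1,\omega_2) \subseteq G_{\omega_1}h$ directly from the definitions. Throughout we use that elements act on the right, so $\omega^{gh} = (\omega^{g})^{h}$. The hypothesis that $\omega_1$ and $\omega_2$ lie in the same orbit only guarantees that such an $h$ exists; the argument itself uses nothing beyond $\omega_1^{h} = \omega_2$.

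For the first inclusion, take $g \in G_{\omega_1}$. Then $\omega_1^{gh} = (\omega_1^{g})^{h} = \omega_1^{h} = \omega_2$, so $gh \in \text{Send}_{G}(\omega_1,\omega_2)$.

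For the reverse inclusion, take $k \in \text{Send}_{G}(\omega_1,\omega_2)$ and write $k = (kh^{-1})h$. Since $\omega_2^{h^{-1}} = \omega_1$, we get $\omega_1^{kh^{-1}} = \omega_2^{h^{-1}} = \omega_1$. Hence $kh^{-1} \in G_{\omega_1}$ and $k \in G_{\omega_1}h$.

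Combining the two inclusions gives $\text{Send}_{G}(\omega_1,\omega_2) = G_{\omega_1}h$. There is no real obstacle here. The only point needing care is keeping the order of composition consistent with right actions, which is why the coset is the right coset $G_{\omega_1}h$ rather than a left coset.
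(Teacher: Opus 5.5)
Your proof is correct: the two inclusions, using $\omega^{gh} = (\omega^{g})^{h}$ for right actions and checking that $kh^{-1} \in G_{\omega_1}$, are exactly the routine verification required. The paper omits the proof of this lemma as simple, and your argument is the standard one it leaves to the reader.
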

\begin{proof}
Omitted.
\end{proof}
The notation $ D_{m} $ will be used for the dihedral group of order $ m $, sometimes writing $ D_{2n} $ to put emphasis on the rotational subgroup having order $n$. The next fact is well known and used frequently.
\begin{lema}\label{dhdrlrottnnrml}
Let $ n \geq 2 $. Suppose $ K $ is a subgroup of the rotational subgroup of $ D_{2n} $. Then $ K \trianglelefteq D_{2n} $.
\end{lema}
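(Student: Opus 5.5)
The plan is to use an explicit presentation of $D_{2n}$ and show directly that $K$ is closed under conjugation. I would write
\[
D_{2n} = \langle s , r \ : \ s^{n} = r^{2} = 1 , \ r s r = s^{-1} \rangle ,
\]
as in the proof of Lemma \ref{refconjug}. Here the rotational subgroup is $R = \langle s \rangle \cong C_{n}$, and every element of $D_{2n}$ is either $s^{i}$ or $r s^{i}$ for some $0 \leq i < n$. Since $D_{2n}$ is generated by $s$ and $r$, it suffices to check that $s^{-1} K s = K$ and $r^{-1} K r = K$.

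The first step handles conjugation by rotations. The subgroup $R$ is cyclic, so it is abelian. Since $K \leq R$, every element $k \in K$ commutes with $s$, giving $s^{-1} k s = k$. This handles conjugation by all powers of $s$.

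The second step handles conjugation by reflections. $K$ is a subgroup of the cyclic group $R$, so $K = \langle s^{d} \rangle$ for some $d$ dividing $n$. From the relation $r s r = s^{-1}$ and $r = r^{-1}$ we get $r^{-1} s^{j} r = s^{-j}$ for every $j$. Consequently $r^{-1} (s^{d})^{m} r = (s^{d})^{-m} \in K$, so $r^{-1} K r \subseteq K$. Equality follows because conjugation is a bijection and $K$ is finite.

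The two steps together show that $g^{-1} K g = K$ for every $g \in D_{2n}$, so $K \trianglelefteq D_{2n}$. There is no real obstacle in this argument. The only point needing care is the degenerate case $n = 2$, where $D_{4}$ is a Klein four-group, and this case is trivial because that group is abelian. An alternative argument is that $K$ is the unique subgroup of its order in the cyclic normal subgroup $R$, which has index $2$. This makes $K$ characteristic in $R$, and hence normal in $D_{2n}$.
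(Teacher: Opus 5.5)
Your proof is correct. The paper gives no argument for this lemma at all (it is marked as well known and the proof is omitted), and your verification supplies it: you reduce to the generators $s$ and $r$, use the commutativity of $R = \langle s \rangle$ for conjugation by $s$, and use $r^{-1} s^{j} r = s^{-j}$ for conjugation by $r$.

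Your alternative argument is the shortest complete justification and avoids the presentation altogether: $K$ is the unique subgroup of its order in the cyclic group $R$, so it is characteristic in $R$, and $R$ is normal in $D_{2n}$. Your separate remark about $n = 2$ is also unnecessary, since the presentation argument covers $D_{4}$ verbatim.
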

\begin{proof}
Omitted.
\end{proof}
\section{Action Description}
In Tables \ref{tableone} and \ref{tabletwo}, for $ q \geq 4 $ and $ q \neq 5 $ we see in $ PGL_{2} (q) $ there exists a conjugacy class of maximal subgroups isomorphic to $ D_{2(q-1) } $. Also there is a conjugacy class of $ D_{ 2(q+1) } $ in $ PGL_{2} (q) $ for all $ q \geq 4 $.

\medskip
In Table \ref{tablethree}, for $ q $ odd and $ q \geq 13 $ we see in $ PSL_{2} (q) $ there exists a conjugacy class of maximal subgroups isomorphic to $ D_{(q-1) } $. For $ q $ odd and $ q \geq 5 $ there is a conjugacy class of $ D_{ (q+1) } $ in $ PSL_{2} (q) $, except when $ q = 7 $ or $ q = 9 $.

\medskip
Let $ G $ be either $ PSL_{2} (q) $ or $ PGL _{2} (q) $ and suppose $ H \leq G $ is one of the above maximal dihedral subgroups. Put $ \Omega := \{ g^{-1} H g : g \in G \} $. It follows from Lemma \ref{cstquivtcnj} that rather than  describing the action in terms of right cosets of $ H $, we can look at the action by conjugation on $ \Omega $ since the two are equivalent.

\medskip
For the rest of this chapter let $ q $, $ G $, $ H $ and $ \Omega $ be as defined above.
\section{Height of the Dihedral Actions}
Throughout this section let $ G $ be a finite group that is either simple or whose proper normal subgroups are all maximal. Also suppose $ G $ contains a maximal subgroup $ H $ that is a dihedral group of order at least $ 6 $. Also suppose $ H \ntriangleleft G $.

\medskip
Let $ \Omega $ be the set of right cosets of $ H$. Assume $ | \Omega | \geq 4 $. After all, if $ | \Omega | \in \{ 2, 3 \} $ we know the action has height at most $ 2 $ by Lemma \ref{htbnd} and relational complexity $ 2 $ by Lemma \ref{transtwo} and Example \ref{setthre}. Since $ H \ntriangleleft G $, there must exist $ \omega _{1} , \omega _{2} \in \Omega $ with stabilizers $ G_{ \omega_{1} } $ and $ G_{ \omega_{2} } $ conjugate to $ H $, where $ G_{ \omega_{1} } \neq G_{ \omega_{2} } $.

\medskip
We will work toward showing that $ Ht(G, \Omega ) \leq 3 $. First an upper bound on the height will be found, starting by looking at intersections of pairs of stabilizers.
\begin{lema}\label{klnfrntrsctn}
Let $ \omega _{1} , \omega _{2} \in \Omega $ with $ G_{ \omega_{1} } \neq G_{ \omega_{2} } $. Then $ | G_{ \omega_{1} , \omega_{2} } | \in \{ 1, 2, 4 \} $. Furthermore if $ | G_{ \omega_{1} , \omega_{2} } | = 4 $ then $  G_{ \omega_{1} , \omega_{2} }  $ is a Klein four-group whose non-identity elements consist of two reflections and a rotation in $ G_{ \omega_{1} } $, as well as two reflections and a rotation in $ G_{ \omega_{2} } $. 
\end{lema}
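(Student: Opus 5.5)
Write $D_{1} := G_{\omega_{1}}$ and $D_{2} := G_{\omega_{2}}$, both conjugates of the dihedral group $H$ with $|H| = 2n \geq 6$. Put $K := D_{1} \cap D_{2}$. The plan is to show that $K$ contains no non-trivial rotation of order greater than $2$ in either dihedral group. Together with the structure of subgroups of dihedral groups, this will leave only $K$ trivial, $K \cong C_{2}$, or $K$ a Klein four-group of the stated shape.

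The key tool is Lemma \ref{ntnrmlmxml}, or more precisely Lemma \ref{nrmlprstmtsk}. By Lemma \ref{dhdrlrottnnrml}, every subgroup of the rotational subgroup $R_{1}$ of $D_{1}$ is normal in $D_{1}$. So if $L \leq K \cap R_{1}$ is non-trivial, then $N_{G}(L) = D_{1}$ by Lemma \ref{nrmlprstmtsk}.

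\textbf{Step 1: $K \cap R_{1}$ is trivial or a $C_{2}$.} Suppose $K \cap R_{1}$ contains a rotation $r$ of order $m \geq 3$, and set $L := \langle r \rangle$. Then $L \leq D_{2}$, and since $D_{2}$ is dihedral, $L$ is cyclic of order at least $3$ and so lies in the rotational subgroup $R_{2}$ of $D_{2}$. (In a dihedral group every element outside the rotations is a reflection of order $2$, and an element of order at least $3$ cannot be a reflection.) Hence $L \trianglelefteq D_{2}$ as well, by Lemma \ref{dhdrlrottnnrml}. Lemma \ref{ntnrmlmxml} then gives $D_{1} = D_{2}$, a contradiction. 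Thus every non-trivial rotation of $D_{1}$ lying in $K$ has order $2$, so $|K \cap R_{1}| \leq 2$, and symmetrically $|K \cap R_{2}| \leq 2$.

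\textbf{Step 2: $|K| \in \{1, 2, 4\}$.} Since $|D_{1} : R_{1}| = 2$, we have $|K : K \cap R_{1}| \leq 2$, so $|K| \leq 4$. It remains to exclude $|K| = 3$. A subgroup of order $3$ would be generated by an element of order $3$, which must be a rotation of $D_{1}$, and Step 1 rules this out. Hence $|K| \in \{1, 2, 4\}$.

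\textbf{Step 3: the case $|K| = 4$.} Here $|K \cap R_{1}| = 2$, so $K$ contains exactly one rotation $z_{1}$ of $D_{1}$, which is an involution. The other two non-identity elements of $K$ lie outside $R_{1}$ and so are reflections, of order $2$. Therefore $K$ has exponent $2$ and is a Klein four-group, whose non-identity elements are one rotation and two reflections of $D_{1}$. The same argument applied to $D_{2}$ gives one rotation and two reflections of $D_{2}$.

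\textbf{Main obstacle.} The delicate point is the claim in Step 1 that a subgroup of $D_{2}$ of order at least $3$ lies in $R_{2}$. This fails for $D_{4}$, but the standing hypothesis $|H| \geq 6$ excludes that case. For a cyclic subgroup of order at least $3$ the claim is immediate from element orders, and that is all Step 1 uses. The other point to check is that the hypothesis on $G$ (simple, or every proper non-trivial normal subgroup maximal) is exactly what Lemmas \ref{nrmlprstmtsk} and \ref{ntnrmlmxml} require. It is.
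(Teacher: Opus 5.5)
Your proof is correct and follows essentially the same route as the paper: an element of order at least $3$ in $G_{\omega_{1},\omega_{2}}$ would generate a subgroup of the rotational subgroups of both dihedral stabilizers. That subgroup would be normal in both by Lemma \ref{dhdrlrottnnrml}, contradicting Lemma \ref{ntnrmlmxml}, and the possible structures of the intersection then follow. The only difference is that you bound $|K|$ via the index $|K : K \cap R_{1}| \leq 2$, whereas the paper argues that three reflections in the intersection would produce two distinct non-identity rotations; your version is slightly tidier but amounts to the same thing.
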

\begin{proof}
Both $ G_{ \omega_{1} } $ and $ G_{ \omega_{2} } $ are dihedral groups because they are conjugate to $H$. Let $ h \in  G_{ \omega_{1} , \omega_{2} } = G_{ \omega_{1} } \cap G_{ \omega_{2} }$.

\medskip
If $ o(h) > 2 $ then $ h $ must be a rotation in both $ G_{ \omega_{1} } $ and $ G_{ \omega_{2} } $ because reflections have order $ 2 $. But then $ \langle h \rangle $ is a subgroup of the rotational subgroups of $ G_{ \omega_{1} } $ and $ G_{ \omega_{2} } $. Hence $ \langle h \rangle $ is normal in both $ G_{ \omega_{1} } $ and $ G_{ \omega_{2} } $ by Lemma \ref{dhdrlrottnnrml}. But Lemma \ref{ntnrmlmxml} shows this is not possible. Thus $ o(h) \leq 2 $.

\medskip
If $ o(h) = 2 $ and $ h $ is a rotation in $ G_{ \omega_{1} } $ then any other non-identity elements of $ G_{ \omega_{1} , \omega_{2} } $ must be reflections in $ G_{ \omega_{1} } $ because there can only exist at most one rotation of order $ 2 $ in a dihedral group.

\medskip
Composing two reflections with each other gives a rotation and so there can only be at most two elements of $ G_{ \omega_{1} , \omega_{2} } $ that are reflections in $ G_{ \omega_{1} } $, otherwise there would be at least two non-identity rotations of $ G_{ \omega_{1} } $ in the intersection. Hence the intersection contains at most the identity, a rotation and two reflections in $G_{ \omega_{1} } $, implying $ |  G_{ \omega_{1} , \omega_{2} } | \leq 4 $.

\medskip
If $ |  G_{ \omega_{1}, \omega_{2} } | = 4 $ then $ G_{ \omega_{1} , \omega_{2} } $ must be a Klein four-group. Similar reasoning as above shows that $ G_{ \omega_{1} , \omega_{2} } $ must contain two reflections and a rotation in $ G_{ \omega_{2} } $, as with $ G_{ \omega_{1} } $.

\medskip
Finally $ | G_{ \omega_{1} , \omega_{2} } | \neq 3 $ because the $ G_{ \omega_{1} , \omega_{2} } $ contains no elements of order $ 3 $. Thus $ | G_{ \omega_{1} ,  \omega_{2} } | \in \{ 1, 2, 4 \} $.
\end{proof}
To aid finding the height of the action we can rule out sets being independent if too many pairs of stabilizers have intersection of order $ 4 $.
\begin{lema}\label{szfrntrsctn}
Let $ \Delta := \{ \delta _{1} , \delta _{2} , \delta _{3} \} \subseteq \Omega $. Suppose $ | G_{ \delta _{1} , \delta _{2} } | = | G_{ \delta _{1} , \delta _{3} } | = 4 $. Then $ \Delta $ is not an independent set.
\end{lema}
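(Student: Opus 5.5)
The plan is to show that $G_{(\Delta)}$ equals the stabilizer of a proper subset of $\Delta$, namely either $G_{\delta_{1},\delta_{2}}$ or $G_{\delta_{2},\delta_{3}}$.

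\textbf{Setup.} By Lemma \ref{cstquivtcnj} we may view $\Omega$ as the set of conjugates of $H$ under conjugation. The point stabilizer of $K\in\Omega$ is $N_{G}(K)=K$ by Lemma \ref{mxmlnrmlizr}, so distinct points have distinct stabilizers. If two of the $\delta_{i}$ coincide then $|\Delta|\le 2$. I would dispose of this degenerate reading briefly; the substantive case is three distinct points. There Lemma \ref{klnfrntrsctn} applies to every pair.

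\textbf{Step 1: a common involution.} Write $K_{2}:=G_{\delta_{1},\delta_{2}}$ and $K_{3}:=G_{\delta_{1},\delta_{3}}$. By Lemma \ref{klnfrntrsctn}, each is a Klein four-group containing a rotation of $G_{\delta_{1}}$ of order $2$. A dihedral group has at most one rotation of order $2$. Hence both $K_{2}$ and $K_{3}$ contain the same involution $z$, which is the central rotation of $G_{\delta_{1}}$. Therefore
\[ \langle z\rangle \le K_{2}\cap K_{3}=G_{(\Delta)}\le K_{2}, \]
so $|G_{(\Delta)}|\in\{2,4\}$. If $|G_{(\Delta)}|=4$, then $G_{(\Delta)}=K_{2}=G_{\delta_{1},\delta_{2}}$ and $\Delta$ is not independent.

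\textbf{Step 2: $z$ is not the central rotation of $G_{\delta_{2}}$.} Now suppose $G_{(\Delta)}=\langle z\rangle$. Since $z$ is central in $G_{\delta_{1}}$, the subgroup $\langle z\rangle$ is normal in the maximal subgroup $G_{\delta_{1}}$. Lemma \ref{nrmlprstmtsk} then gives $N_{G}(\langle z\rangle)=G_{\delta_{1}}$. If $z$ were also the rotation of order $2$ in $G_{\delta_{2}}$, the same argument would give $G_{\delta_{2}}=G_{\delta_{1}}$, contradicting distinctness. So, inside $G_{\delta_{2}}$, the element $z$ is a reflection. Let $r$ denote the rotation of $G_{\delta_{2}}$ lying in $K_{2}$, which exists by Lemma \ref{klnfrntrsctn}. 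Then $K_{2}=\{1,r,z,rz\}$.

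\textbf{Step 3: identify $G_{\delta_{2},\delta_{3}}$.} Because $z\in G_{(\Delta)}\le G_{\delta_{2},\delta_{3}}$, Lemma \ref{klnfrntrsctn} gives $|G_{\delta_{2},\delta_{3}}|\in\{2,4\}$. Suppose it were $4$. Then it is a Klein four-group containing the reflection $z$ of $G_{\delta_{2}}$ and, by Lemma \ref{klnfrntrsctn}, a rotation of $G_{\delta_{2}}$ of order $2$. That rotation is unique, so it is $r$. Hence $G_{\delta_{2},\delta_{3}}=\{1,r,z,rz\}=K_{2}$. This would give
\[ G_{(\Delta)}=G_{\delta_{1},\delta_{2}}\cap G_{\delta_{2},\delta_{3}}=K_{2}, \]
which has order $4$, a contradiction. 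So $G_{\delta_{2},\delta_{3}}=\langle z\rangle=G_{(\Delta)}$, and $\Delta$ is not independent by Lemma \ref{ltdfndpndnt}.

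The main obstacle is the case $|G_{(\Delta)}|=2$. It is handled by the uniqueness of the order-$2$ rotation together with Lemma \ref{nrmlprstmtsk}, which forces $z$ to be a reflection in $G_{\delta_{2}}$.
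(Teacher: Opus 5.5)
Your proof is correct and is essentially the paper's argument. Both note that the two Klein four-groups share the central involution of $G_{\delta_1}$, use the normalizer lemma to see that this involution is a reflection (not the rotation) in $G_{\delta_2}$, and then use uniqueness of the order-$2$ rotation of $G_{\delta_2}$ to conclude that $G_{(\Delta)}$ equals $G_{\delta_1,\delta_2}$ or $G_{\delta_2,\delta_3}$. The only difference is that you split cases on $|G_{(\Delta)}|$ rather than on $|G_{\delta_2,\delta_3}|$.

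One small caveat: the degenerate case $\delta_2=\delta_3$ cannot really be ``disposed of''. There $\Delta=\{\delta_1,\delta_2\}$ is genuinely independent, since $|G_{\delta_1,\delta_2}|=4<|H|$. So the three points must simply be assumed distinct, as the paper implicitly does. The case $\delta_1\in\{\delta_2,\delta_3\}$ is already excluded by the hypothesis, because $|G_{\delta_1}|=|H|\geq 6$.
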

\begin{proof}
If $ G_{ \delta _{1} } = G_{ \delta _{i} } $ for some $ i \in \{  2 , 3 \}  $ then $ G_{ \delta _{1} , \delta _{i} } =  G_{ \delta _{1} } \cap G_{ \delta _{i} } = G_{ \delta _{1} } $ and $ \Delta $ is not independent by Lemma \ref{stach}. So suppose $ G_{ \delta _{1} } $, $ G_{ \delta _{2} } $ and $ G_{ \delta _{3} } $ are distinct subgroups of $ G $.

\medskip
It follows from Lemma \ref{klnfrntrsctn} that both $ G_{ \delta _{1} , \delta _{2} }  $ and $ G_{ \delta _{1} , \delta _{3} } $ contain the unique rotation $ r_{1} \in G_{ \delta _{1} } $ of order $ 2 $. Hence $ \langle r_{1} \rangle \leq  G_{ \delta _{2} , \delta _{3} }  $ and it follows that $ | G_{ \delta _{2} , \delta _{3} } | \geq 2 $.

\medskip
If $  | G_{ \delta _{2} , \delta _{3} } | = 2 $ then $ G_{ \delta _{2} , \delta _{3} } = \langle r_{1} \rangle $. Hence $ G_{ \delta _{1} , \delta _{2} , \delta _{3} } = \langle r_{1} \rangle = G_{ \delta _{2} , \delta _{3} } $, which shows that $ \Delta $ is not an independent set by Lemma \ref{indsbstsntqual}.

\medskip
If $  | G_{ \delta _{2} , \delta _{3} } | \neq 2 $ then $ | G_{ \delta _{2} , \delta _{3} } | = 4 $ by Lemma \ref{klnfrntrsctn}. Also by the same lemma, both $ G_{ \delta _{1} , \delta _{2} } $ and $ G_{ \delta _{2} , \delta _{3} } $ contain the unique rotation $ r_{2} \in G_{ \delta _{2} } $ of order $ 2 $. Similarly $ G_{ \delta _{1} , \delta _{3} } $ and $ G_{ \delta _{2} , \delta _{3} } $ contain the unique rotation $ r_{3} \in G_{ \delta _{3} } $ of order $ 2 $.

\medskip
It must be that $ r_{1} \neq r_{2} $ otherwise Lemma \ref{dhdrlrottnnrml} shows that $ \langle r_{1} \rangle $ is normal in $ G_{ \delta _{1} }  $ and $ G_{ \delta _{2} }  $, which contradicts Lemma \ref{ntnrmlmxml}. By the same reasoning $ r_{1} \neq r_{3} $ and $ r_{2} \neq r_{3} $. Lemma \ref{klnfrntrsctn} shows that $ G_{ \delta _{1} , \delta _{2} } $ and $ G_{ \delta _{2} , \delta _{3} } $ are Klein four-groups. So $ G_{ \delta _{2} , \delta _{3} } = \{ 1_{G} , r_{1} , r_{2} , r_{3} \} $. Then we see $ r_{3} = r_{1} r_{2} \in G_{ \delta _{1} , \delta _{2} } $. Hence $ G_{ \delta _{1} , \delta _{2} } = \{ 1_{G} , r_{1} , r_{2} , r_{3} \} =  G_{ \delta _{2} , \delta _{3} } $. Thus $ \Delta $ is not independent by Lemma \ref{indsbstsntqual}.
\end{proof}
From here we can quickly get an upper bound for the height of the action on $ \Omega $.
\begin{lema}\label{trsndstszthr}
Let $ \Delta := \{ \delta _{1} , \delta _{2} , \delta _{3} \} \subseteq \Omega $. If $ \Delta $ is an independent set then $ | G_{ \delta _{1} , \delta _{2} , \delta _{3} } | = 1 $.
\end{lema}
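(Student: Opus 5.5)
Suppose $\Delta = \{\delta_1,\delta_2,\delta_3\}$ is independent. We will show that $G_{\delta_1,\delta_2,\delta_3}$ cannot be nontrivial. By Lemma \ref{indsbstsntqual}, the stabilizers $G_{\delta_1}$, $G_{\delta_2}$, $G_{\delta_3}$ are pairwise distinct. Otherwise two distinct subsets of $\Delta$ would have the same point stabilizer. So Lemma \ref{klnfrntrsctn} applies to every pair, and each $G_{\delta_i,\delta_j}$ has order in $\{1,2,4\}$.

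By Lemma \ref{stach}, applied to the orderings of $\Delta$, we have strict inclusions $G_{\delta_i,\delta_j} > G_{\delta_1,\delta_2,\delta_3}$ for every pair $i \neq j$. Hence $|G_{\delta_1,\delta_2,\delta_3}|$ is a proper divisor of each $|G_{\delta_i,\delta_j}|$. Each pairwise intersection has order at most $4$, so the triple intersection has order at most $2$. If it has order $2$, then every pairwise intersection must have order exactly $4$. In particular $|G_{\delta_1,\delta_2}| = |G_{\delta_1,\delta_3}| = 4$. Lemma \ref{szfrntrsctn} then says $\Delta$ is not independent, which is a contradiction. Therefore $|G_{\delta_1,\delta_2,\delta_3}| = 1$.

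The only points needing care are checking that Lemma \ref{klnfrntrsctn} requires distinct stabilizers, which independence guarantees, and the divisibility step. The latter is just Lagrange's theorem applied to the proper subgroup $G_{\delta_1,\delta_2,\delta_3} < G_{\delta_i,\delta_j}$: an order-$2$ subgroup forces the larger group to have order $4$, since its order is $2$ or $4$ and strictly larger than $2$.
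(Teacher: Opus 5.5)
Your proof is correct and follows the paper's argument essentially step for step: the paper likewise uses Lemma \ref{indsbstsntqual} to get distinct stabilizers, Lemma \ref{klnfrntrsctn} to get pairwise orders in $\{1,2,4\}$, and Lemma \ref{stach} to get strict inclusions. It then rules out a triple intersection of order $2$ via Lemma \ref{szfrntrsctn}, exactly as you do. The only difference is that you apply the strict-inclusion step to all three pairs, whereas the paper only needs the two pairs containing $\delta_1$.
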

\begin{proof}
From Lemma \ref{stach}, it must be that
\[
| G_{ \delta _{1} , \delta _{2}   } | 
> | G_{ \delta _{1} , \delta _{2}  , \delta _{3}  } | 
\]
and
\[
| G_{ \delta _{1} , \delta _{3}   } | 
> | G_{ \delta _{1} , \delta _{2}  , \delta _{3}  } |  .
\]
Also Lemma \ref{indsbstsntqual} tells us that $ G_{ \delta _{1} } $, $ G_{ \delta _{2} } $ and $ G_{ \delta _{3} } $ are distinct. Hence $ | G _{ \delta _{1} , \delta _{2} } | , | G _{ \delta _{1} , \delta _{3} } |  \in \{ 1, 2 , 4 \} $ by Lemma \ref{klnfrntrsctn}. Therefore $ | G_{ \delta _{1} , \delta _{2}  , \delta _{3}  } | \in \{ 1, 2 \} $. If $ | G_{ \delta _{1} , \delta _{2}  , \delta _{3}  } | = 2 $ then $ | G_{ \delta _{1} , \delta _{2} }  | = | G_{ \delta _{1} , \delta _{3} } | = 4 $. But this contradicts Lemma \ref{szfrntrsctn}. Thus $ | G_{ \delta _{1} , \delta _{2}  , \delta _{3}  } | = 1 $.
\end{proof}
\begin{corl}\label{trshght}
$ Ht(G, \Omega) \leq 3 $.
\end{corl}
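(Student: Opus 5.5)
The plan is to show directly that no subset of $\Omega$ of size $4$ is independent; Corollary \ref{indsub} then rules out independent sets of every larger size, so $Ht(G,\Omega)\leq 3$.

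Let $\Delta=\{\delta_1,\delta_2,\delta_3,\delta_4\}\subseteq\Omega$ and suppose, for a contradiction, that $\Delta$ is independent. By Corollary \ref{indsub}, the subset $\Gamma=\{\delta_1,\delta_2,\delta_3\}$ is independent. Lemma \ref{trsndstszthr} then gives $|G_{\delta_1,\delta_2,\delta_3}|=1$. Intersecting with $G_{\delta_4}$ can only shrink this group, so
\[
1 \leq |G_{(\Delta)}| \leq |G_{(\Gamma)}| = 1 ,
\]
and hence $G_{(\Delta)}=G_{(\Gamma)}$. Since $\Gamma$ is a proper non-empty subset of $\Delta$, this contradicts the definition of an independent set (Definition \ref{stdfnind}, or equivalently Lemma \ref{scndfnt}). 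Therefore no $4$-element subset of $\Omega$ is independent.

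Any subset of size at least $4$ contains a $4$-element subset, which we have just shown is not independent. By Corollary \ref{indsub} the larger set cannot be independent either. Hence every independent set has size at most $3$, which is exactly $Ht(G,\Omega)\leq 3$.

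An alternative route is to note that Lemma \ref{trsndstszthr} already forces every irredundant stabilizer chain to reach the trivial group within three steps, and then appeal to Lemma \ref{stach}. The subset argument above is more direct.

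There is no real obstacle here. All the group-theoretic content, namely the analysis of the Klein four-group intersections, was handled in Lemmas \ref{klnfrntrsctn}, \ref{szfrntrsctn} and \ref{trsndstszthr}. The only point needing care is that the standing hypotheses of this section ($G$ simple or with all proper non-trivial normal subgroups maximal, $H$ a non-normal maximal dihedral subgroup of order at least $6$, and $|\Omega|\geq 4$) are exactly those under which Lemma \ref{trsndstszthr} was proved, so it applies to any three distinct points of $\Omega$.
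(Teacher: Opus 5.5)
Your proof is correct and is essentially the paper's argument. You pass to a $3$-element subset via Corollary \ref{indsub}, get a trivial pointwise stabilizer from Lemma \ref{trsndstszthr}, and conclude that the $4$-element set is not independent. The only difference is that the paper phrases this last step through the stabilizer chain of Lemma \ref{stach}, while you appeal to the definition directly.
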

\begin{proof}
Suppose $ Ht(G, \Omega) \geq 4 $. Then there exists an independent set $ \Delta \subseteq \Omega $ of size $ 4 $. Let $ \Delta ^{ \prime } \subset \Delta $ with $ | \Delta | = 3 $. Then $ \Delta ^{ \prime } $ is independent by Corollary \ref{indsub}. It follows from Lemma \ref{trsndstszthr} that $ | G_{ ( \Delta ) } | = 1 $. But then we see from Lemma \ref{stach} that $ \Delta $ is not independent, a contradiction. Thus $ Ht(G, \Omega) \leq 3 $.
\end{proof}
Next some conditions are given that show when height can be exactly $ 3 $.
\begin{lema}\label{htthrcndtns}
Suppose $ H_{1} $ and $ H_{2} $ are subgroups of $ G$ that are conjugate to $ H $, with $H_{1} $, $H_{2} $ and $ H$ distinct. Suppose $ |  H_{1} \cap H_{2} | \geq | H_{1} \cap H | = | H_{2} \cap H | = 2 $ and $ H_{1} \cap H \neq H_{2} \cap H $. Then $ H $, $ H_{1} $ and $ H_{2} $ are the stabilizers of points of an independent set $ \{ \omega _{1} , \omega _{2} , \omega _{3} \} \subseteq \Omega $ and $ Ht(G, \Omega ) = 3 $.
\end{lema}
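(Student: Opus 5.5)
Since $H$, $H_{1}$ and $H_{2}$ are conjugates of $H$, and the action on right cosets of $H$ is equivalent to the conjugation action on $\{ g^{-1} H g : g \in G \}$ (Lemma \ref{cstquivtcnj}, which applies as $H \ntriangleleft G$), there are points $\omega _{1} , \omega _{2} , \omega _{3} \in \Omega$ with $G_{\omega _{1}} = H$, $G_{\omega _{2}} = H_{1}$ and $G_{\omega _{3}} = H_{2}$. These points are distinct because the three subgroups are distinct. I would then prove that $\Delta := \{ \omega _{1} , \omega _{2} , \omega _{3} \}$ is independent. The plan is to compute all the relevant stabilizers and use Lemma \ref{scndfnt}, which says it is enough to check $G_{(\Delta)} \neq G_{(\Delta \setminus \{\delta\})}$ for each $\delta \in \Delta$.

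First I compute the stabilizer of all three points. We have $G_{(\Delta)} = H \cap H_{1} \cap H_{2} = (H_{1} \cap H) \cap (H_{2} \cap H)$. By hypothesis $H_{1} \cap H$ and $H_{2} \cap H$ are two distinct subgroups of order $2$, so their intersection is trivial and $|G_{(\Delta)}| = 1$.

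Next I check the three two-point subsets, whose stabilizers are $H \cap H_{1}$, $H \cap H_{2}$ and $H_{1} \cap H_{2}$. The first two have order $2$ and the third has order at least $2$ by hypothesis. Each of them therefore differs from the trivial group $G_{(\Delta)}$. Lemma \ref{scndfnt} then gives that $\Delta$ is independent, so $Ht(G,\Omega) \geq 3$. Combining this with Corollary \ref{trshght} gives $Ht(G,\Omega) = 3$.

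There is no real obstacle here. The only point needing care is that the stabilizers of the three points are exactly $H$, $H_{1}$ and $H_{2}$ and that the points are distinct. This comes from the equivalence of the coset action with the conjugation action, under which the stabilizer of a conjugate $K$ is $N_{G}(K) = K$ by Lemma \ref{mxmlnrmlizr}.
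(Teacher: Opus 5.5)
Your proposal is correct and follows essentially the same route as the paper: you realize $H$, $H_{1}$, $H_{2}$ as the stabilizers of three distinct points and show that the triple intersection is strictly smaller than each pairwise intersection. You phrase this as two distinct subgroups of order $2$ meeting trivially, where the paper uses an order comparison, and both arguments then conclude with Lemma \ref{scndfnt} and Corollary \ref{trshght}.
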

\begin{proof}
There exists $ g_{1} , g_{2} \in G $ such that $ H_{1} = g_{1} ^{-1} H g_{1} $ and $ H_{2} = g_{2} ^{-1} H g_{2} $. Since $ H $ is the stabilizer of itself when considered as a point in $ \Omega $, the subgroup $ H_{1} $ stabilizes $ Hg_{1} $ and $ H_{2} $ stabilizes $ Hg_{2} $.

\medskip
If $ | H_{1} \cap H_{2} \cap H | = | H_{1} \cap H  |  $ then $  H_{1} \cap H_{2} \cap H = H_{1}  \cap H $. Also $ | H_{1} \cap H_{2} \cap H | = | H_{2} \cap H  |  $, so $  H_{1} \cap H_{2} \cap H = H_{2}  \cap H $. Hence $ H_{1} \cap H  = H_{2}  \cap H $, which is a contradiction. Therefore $ | H_{1} \cap H_{2} \cap H | < | H_{1} \cap H  |  $. Hence $ | H_{1} \cap H_{2} \cap H | < | H_{2} \cap H  |  $ and $ | H_{1} \cap H_{2} \cap H | < | H_{1} \cap H_{2}  |  $.

\medskip
This shows that $ \{ H , Hg_{1} , Hg_{2} \} $ is an independent set by Lemma \ref{scndfnt}. Thus $ Ht(G, \Omega ) \geq 3 $. Finally, $Ht(G, \Omega ) = 3 $ by Corollary \ref{trshght}.
\end{proof}
To find the height of the actions of $ PGL_{2} (q) $ and $ PSL_{2} (q) $ on a maximal dihedral subgroup $ D $, various counting arguments will be employed that usually involve picking a reflection from a dihedral subgroup, looking at how many different conjugates of the subgroup contain the same reflection and then looking at intersections of these subgroups.

\newpage
The groups $ PGL_{2} (q) $ and $ PSL_{2} (q) $ have slightly different arguments when $ q $ is odd. Also different choices of $ q $ will lead to different methods of counting, largely because of the reflections in the dihedral subgroups being contained in one conjugacy class in some cases and two conjugacy classes in others. These cases will be given as examples through the section. I apologise ahead for the repetitiveness of these cases as they are similar but different enough that various closely related arguments are needed.

\medskip
The height of the dihedral actions are given in the following Examples as we go through, then collected together in a theorem at the end;
\begin{itemize}
\item Example \ref{xmpcrect} is $ PSL_{2} (q) $ with $ q $ even, $ q \geq 4 $ and $ D \cong D_{2(q+1) } $.
\item Example \ref{expslqevn} is $ PGL_{2} (q) $ with $ q $ odd, $ q \geq 5 $ and $ D \cong D_{2(q+1) } $.
\item Example \ref{psfljdshu} is $ PSL_{2} (q) $ with $ q $ odd, $ q \geq 11 $ and $ \tfrac{1}{2} |D | $ odd.
\item Example \ref{mssngxmp} is $ PSL_{2} (q) $ with $ q $ even, $ q \geq 8 $ and $ D \cong D_{2(q-1) } $.
\item Example \ref{pslqdddvfr} is $ PSL_{2} (q) $ with $ q $ odd, $ q \geq 11 $ and $ \tfrac{1}{2} |D | $ even.
\item Example \ref{fnlxmpatlst} is $ PGL_{2} (q) $ with $ q $ odd, $ q \geq 13 $ and $ D \cong D_{ 2(q-1) } $.
\end{itemize}
Comparing with Tables \ref{tableone}, \ref{tabletwo} and \ref{tablethree} we see all maximal dihedral subgroups of $ PGL_{2} (q) $ and $ PSL_{2} (q) $ are covered above, except $ PSL_{2} (5) $ acting on a $ D_{6} $, $ PSL_{2} (4) $ acting on $ D_{6} $ and $ PGL_{2} (q) $ acting on $ D_{ 2(q-1) } $ when $ q \in \{ 7 , 9 , 11 \} $. These special cases will be included in a theorem at the end of the section.

\medskip
We begin with some propositions to count the number of subgroups of $ G $ that contain a particular reflection in $ H $. For the rest of the section set $ \Gamma = \{ g^{-1} H g : g \in G \} $. Lemma \ref{cstquivtcnj} shows that $ | \Gamma | = | \Omega | = | G | / | H | $. For any group $ G ^{ \prime } $ and for $ g \in G ^{ \prime } $ the notation $ g^{G ^{ \prime }}  $ is used for the conjugacy class containing $ g $ in $ G ^{ \prime } $.
\begin{lema}\label{sbgrpscntngh}
Let $ R $ be the set of reflections of $H$. Suppose $r \in R $. The number of subgroups in $ \Gamma $ that contain $ r $ as a reflection is
\[
\frac{|G| | r^{G} \cap R | }{ |H| | r^{G} | } .
\]
Furthermore any element of $ r^{G} $ appears as a reflection in the same number of subgroups in $ \Gamma $ as $r$.
\end{lema}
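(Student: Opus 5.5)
We will prove Lemma \ref{sbgrpscntngh} by double counting the set of pairs
\[
P := \{ (s, K) : K \in \Gamma , \ s \in r^{G} , \ s \text{ is a reflection of } K \}.
\]
The count is first made over subgroups $ K \in \Gamma $, and then over elements $ s \in r^{G} $. The key observation is that conjugation by $ G $ acts simultaneously on $ r^{G} $ and on $ \Gamma $ and preserves the relation ``$ s $ is a reflection of $ K $''. Indeed, if $ K = g^{-1} H g $ then the reflections of $ K $ are exactly $ g^{-1} R g $, because conjugation is an isomorphism $ H \to K $ carrying the rotational subgroup to the rotational subgroup. The rotational subgroup is characteristic when $ |H| \geq 6 $, being the unique cyclic subgroup of index $ 2 $. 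So $ s $ is a reflection of $ K $ if and only if $ g^{-1} s g $ is a reflection of $ g^{-1} K g $.

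The second statement of the lemma follows first. Suppose $ s = a^{-1} r a $ for some $ a \in G $. Then the map $ K \mapsto a^{-1} K a $ is a bijection of $ \Gamma $, since $ \Gamma $ is a single conjugacy class. This map sends the set of $ K \in \Gamma $ having $ r $ as a reflection onto the set of $ K \in \Gamma $ having $ s $ as a reflection. Hence every element of $ r^{G} $ lies as a reflection in the same number, $ N $ say, of subgroups in $ \Gamma $. Counting $ P $ by its first coordinate therefore gives $ |P| = N \, | r^{G} | $.

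Now count $ P $ by its second coordinate. For $ K = g^{-1} H g \in \Gamma $, the reflections of $ K $ lying in $ r^{G} $ are $ g^{-1} R g \cap r^{G} = g^{-1} ( R \cap r^{G} ) g $, using that $ r^{G} $ is closed under conjugation. This set has size $ | r^{G} \cap R | $, independently of $ K $. By Lemma \ref{cstquivtcnj}, $ | \Gamma | = | G | / | H | $. Hence
\[
|P| = \frac{ |G| \, | r^{G} \cap R | }{ |H| } ,
\]
and equating the two counts gives $ N = |G| \, | r^{G} \cap R | / ( |H| \, | r^{G} | ) $, as required.

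The only delicate point is justifying that ``reflection'' is conjugation invariant, that is, that an isomorphism between dihedral groups maps reflections to reflections. This needs $ |H| \geq 6 $, which is assumed in this section, so that the index-$ 2 $ cyclic subgroup is unique. For $ D_{4} $ the notion is ambiguous, but that case is excluded here. Everything else is routine bookkeeping.
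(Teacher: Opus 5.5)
Your proof is correct and takes essentially the same route as the paper. Both count incidences between $\Gamma$ and the elements of $r^{G}$. Both use $|\Gamma| = |G|/|H|$ from Lemma \ref{cstquivtcnj}, the uniqueness of the index-$2$ cyclic subgroup when $|H| \geq 6$ (so that conjugation carries reflections to reflections), and conjugation by an element sending $r$ to $s$ to show every element of $r^{G}$ is a reflection in the same number of subgroups. Your explicit set of pairs $P$ simply formalises the paper's ``total number of reflections counted with repeats''.
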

\begin{proof}
Since $ H $ is a maximal subgroup of $ G $ and not normal, by Lemma \ref{cstquivtcnj} the action on the right cosets of $ H $ is equivalent to the action by conjugation on $ \Gamma $. Therefore $ | \Gamma | = |G| / |H| $.

\medskip
Let $ H ^{ \prime } \in \Gamma $ and let $ R^{ \prime } \subseteq H ^{ \prime } $ be its set of reflections. Then $ H^{ \prime} = g_{1} ^{-1} H g_{1} $ for some $ g_{1} \in G $. We are assuming $ | H | \geq 6 $, so there exists a unique cyclic subgroup of $H$ of order $ \tfrac{1}{2} |H| $, the subgroup of rotations. Hence the rotational subgroup of $ H $  must be sent to the subgroup of rotations of $ H ^{ \prime } $ under conjugation.

\medskip
This means that reflections are sent to reflections, that is $ R^{ \prime } = g_{1} ^{-1} R g_{1} $. Clearly $ a \in r^{G} \cap R  $ if and only if  $ g_{1}^{-1} a g_{1} \in r^{G} \cap R^{ \prime } $. Hence $ | r^{G} \cap R  | = |  r^{G} \cap R^{ \prime } | $. So the total number of reflections in subgroups in $ \Gamma $ that also lie in $ r^{G} $ including repeats is
\[
| \Gamma | | r^{G} \cap R | = \frac{|G| | r^{G} \cap R |  }{ |H| }  .
\]
Let $ r^{ \prime } \in r^{G} $. Then there exists some $ g_{2} \in G $ such that $ r^{ \prime }  = g_{2}^{-1} r g_{2} $. Let $ m $ be the number of subgroups in $ \Gamma $ containing $ r $ as a reflection and label them as $ H_{1} , \dots , H_{m} $. Let $ n $ be the number of subgroups containing $ r^{ \prime } $ as a reflection and label them as $ H^{ \prime } _{1} , \dots , H^{ \prime } _{n} $.

\medskip
For each $ i \in \{1, \dots , m \} $ the subgroups $ g_{2}^{-1} H_{i} g_{2} $ contain $ r ^{ \prime } $. So $ m \leq n $. Similarly for each $ j \in \{1, \dots , n \} $ the subgroups $ g_{2} H_{j}^{ \prime } g_{2}^{-1} $ contain $ r  $. So $ n \leq m $. Hence $ n = m $. Thus each element of $ r^{G} $ appears as a reflection in the same number of subgroups in $ \Gamma $. Therefore the number of subgroups in $ \Gamma $ that contain $ r $ as a reflection is
\[
\frac{|G| | r^{G} \cap R | }{ |H| | r^{G} | } .
\]
\end{proof}
\begin{corl}\label{crlrsrflctn}
Let $ R $ be the set of reflections of $H$. Suppose $r \in R $. The number of subgroups in $ \Gamma $ that contain $ r $ as a reflection is
\[
\begin{dcases}
 \frac{|G|  }{ 2 | r^{G} | }  , \ \ \ \ \  \text{if $ | r^{G} \cap R | = \tfrac{1}{2} |H| $,} \\
\frac{|G|  }{ 4 | r^{G} | } , \ \ \ \ \  \text{if $ | r^{G} \cap R | \neq \tfrac{1}{2} |H| $.}
\end{dcases}
\]
\end{corl}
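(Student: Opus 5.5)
This follows from Lemma \ref{sbgrpscntngh}. By that lemma the number of subgroups in $ \Gamma $ containing $ r $ as a reflection is $ |G| | r^{G} \cap R | / ( |H| | r^{G} | ) $. It therefore suffices to show that $ | r^{G} \cap R | $ is either $ |R| = \tfrac{1}{2} |H| $ or exactly $ \tfrac{1}{4} |H| $. Substituting these values gives $ |G| / ( 2 | r^{G} | ) $ and $ |G| / ( 4 | r^{G} | ) $ respectively.

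First I record the structure of $ R $. Write $ H = D_{2n} $ with $ n = \tfrac{1}{2} |H| \geq 3 $, so $ |R| = n $. The reflections of $ H $ are partitioned by $ H $-conjugacy as follows.
\begin{itemize}
\item If $ n $ is odd, the reflections form a single $ H $-class of size $ n $.
\item If $ n $ is even, they form two $ H $-classes $ \{ r s^{2i} \} $ and $ \{ r s^{2i+1} \} $, each of size $ n/2 $. This is the split used in the proof of Lemma \ref{refconjug}.
\end{itemize}
Since $ H \leq G $, the set $ r^{G} \cap R $ is a union of $ H $-classes of reflections and it contains $ r^{H} $.

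Now consider the cases. If $ n $ is odd, then $ r^{H} = R $, so $ | r^{G} \cap R | = n = \tfrac{1}{2} |H| $ and the first formula holds. If $ n $ is even, then $ r^{G} \cap R $ is either one class or both classes. This gives $ | r^{G} \cap R | \in \{ \tfrac{1}{4} |H| , \tfrac{1}{2} |H| \} $, and substituting into the formula of Lemma \ref{sbgrpscntngh} gives the two stated cases. In particular, if $ | r^{G} \cap R | \neq \tfrac{1}{2} |H| $, then $ | r^{G} \cap R | = \tfrac{1}{4} |H| $ and the count is $ |G| / ( 4 | r^{G} | ) $.

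The only delicate point is justifying that $ r^{G} \cap R $ is a union of $ H $-conjugacy classes. This holds because conjugating by $ h \in H $ preserves both $ r^{G} $ and $ R $; the latter is the set of non-rotations of $ H $, and $ H $ has a unique cyclic subgroup of index $ 2 $ because $ |H| \geq 6 $, as noted in the proof of Lemma \ref{sbgrpscntngh}. Everything else is arithmetic.
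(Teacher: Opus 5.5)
Your proposal is correct and follows essentially the same argument as the paper: you substitute the only two possible values $\tfrac{1}{2}|H|$ and $\tfrac{1}{4}|H|$ of $|r^{G} \cap R|$ into the formula of Lemma \ref{sbgrpscntngh}. The one addition is that you explicitly justify why $r^{G} \cap R$ is a union of $H$-conjugacy classes of reflections, which the paper's short proof leaves implicit.
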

\begin{proof}
The reflections in $ H $ either lie in a single conjugacy class in $ H$, with size $ \tfrac{1}{2} | H | $, or two conjugacy classes of size $ \tfrac{1}{4} | H | $. So either $ | r^{G} \cap R | = \tfrac{1}{2} |H| $ or $ | r^{G} \cap R | = \tfrac{1}{4} |H| $. Substituting these into the formula from Lemma \ref{sbgrpscntngh} gives the result.
\end{proof}
\begin{lema}\label{intrscttpssbl}
Suppose $ H_{1} , H_{2} \in \Gamma $. Let $ R_{1} $ and $ R_{2} $ be the set of reflections of these subgroups respectively. Then $ H_{1} \neq H_{2} $ if and only if $ | R_{1} \cap R_{2} | \leq 1 $.
\end{lema}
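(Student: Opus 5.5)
The plan is to prove the two directions separately. Both rest on the fact, already used in the proof of Lemma \ref{sbgrpscntngh}, that each $H_{i}$ has order at least $6$. Hence it has a unique cyclic subgroup of index $2$, its rotation subgroup, and every element outside this subgroup is a reflection.

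\emph{The direction $| R_{1} \cap R_{2} | \leq 1 \Rightarrow H_{1} \neq H_{2}$.} This is immediate. If $H_{1} = H_{2}$ then $R_{1} = R_{2}$, so $| R_{1} \cap R_{2} | = \tfrac{1}{2} | H | \geq 3 > 1$.

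\emph{The direction $H_{1} \neq H_{2} \Rightarrow | R_{1} \cap R_{2} | \leq 1$.} We argue by contraposition. Suppose $r, s \in R_{1} \cap R_{2}$ are distinct reflections. Their product $rs$ lies in $H_{1} \cap H_{2}$. Since $rs$ is a product of two reflections in $H_{1}$, it is a rotation of $H_{1}$, and it is non-trivial because $r \neq s$. Likewise $rs$ is a non-trivial rotation of $H_{2}$.

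\begin{itemize}
\item \textbf{Case $o(rs) > 2$.} Then $\langle rs \rangle$ is a non-trivial subgroup of the rotation subgroups of both $H_{1}$ and $H_{2}$. By Lemma \ref{dhdrlrottnnrml} it is normal in both. Since $H_{1}$ and $H_{2}$ are maximal subgroups of $G$, Lemma \ref{ntnrmlmxml} then forces $H_{1} = H_{2}$.
\item \textbf{Case $o(rs) = 2$.} Then $rs$ is the unique rotation of order $2$ in each of $H_{1}$ and $H_{2}$. The subgroup $\langle rs \rangle$ is again a non-trivial subgroup of both rotation subgroups, so it is normal in both by Lemma \ref{dhdrlrottnnrml}. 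Lemma \ref{ntnrmlmxml} again gives $H_{1} = H_{2}$.
\end{itemize}

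Alternatively, one can note that $\langle r, s \rangle \leq H_{1} \cap H_{2}$ and appeal directly to Lemma \ref{klnfrntrsctn}. However, the two-reflection argument above already suffices in both cases, and the case split is not even needed: in either case $rs$ is a non-trivial rotation common to both subgroups.

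\emph{Where care is needed.} The only delicate point is justifying that $rs$ is a \emph{rotation} in each $H_{i}$. This holds because in a dihedral group the rotations form the unique cyclic subgroup of index $2$, which is where the hypothesis $| H | \geq 6$ is used. A product of two elements outside an index-$2$ subgroup lies inside it. Everything else is a direct citation of Lemmas \ref{dhdrlrottnnrml} and \ref{ntnrmlmxml}, which apply because of the standing assumptions of this section: $G$ is simple or all its proper normal subgroups are maximal, and $H$ is maximal and not normal, so all conjugates $H_{i}$ are maximal.
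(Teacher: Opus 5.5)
Your proof is correct and follows essentially the same route as the paper: the easy direction uses $|R_{1}| = \tfrac{1}{2}|H| \geq 3$. The other direction observes that two distinct common reflections give a non-trivial common rotation $rs$, so $\langle rs \rangle$ is normal in both $H_{1}$ and $H_{2}$ by Lemma \ref{dhdrlrottnnrml}, and Lemma \ref{ntnrmlmxml} then forces $H_{1} = H_{2}$. As you note yourself, the case split on $o(rs)$ is unnecessary, and the paper does without it.
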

\begin{proof}
Since $ | H | \geq 6 $ there are at least $3 $ reflections in $ H $. So $ | R_{1} | = | R_{2} | \geq 3 $. If $ H_{1} = H_{2} $ then $ R_{1} = R_{2} $ and $ | R_{1} \cap R_{2} | = | R_{1} | > 1 $.

\medskip
Next suppose $ | R_{1} \cap R_{2} | > 1 $. Then there exists $ r_{1} , r_{2} \in R_{1} \cap R_{2} $ with $ r_{1} \neq r_{2} $. It must be that $ r_{1} r_{2} $ is a rotation in both $ H_{1} $ and $ H_{2} $ and is not the identity because $ r_{1} $ and $ r_{2} $ are self inverse. Hence $ \langle  r_{1} r_{2} \rangle $ is a non-trivial subgroup of the rotational subgroups of $ H_{1} $ and $ H_{2} $. Therefore $ \langle  r_{1} r_{2} \rangle \trianglelefteq H_{1} $ and $ \langle  r_{1} r_{2} \rangle \trianglelefteq H_{2} $ by Lemma \ref{dhdrlrottnnrml}. It follows from Lemma \ref{ntnrmlmxml} that $ H_{1} = H_{2} $.
\end{proof}
\begin{lema}\label{ttlcnjgtsbrflc}
Let $ R $ be the set of reflections of $ H $. Suppose $ r \in R $. Let $ \Delta $ be the set of subgroups of $ G $ conjugate to $ H $ such that if $ H^{ \prime } \in \Delta $ and $ R^{ \prime } $ is the set of reflections of $ H^{ \prime } $ then $ R \cap R^{ \prime } \cap r^{G} \neq \emptyset $. Then
\[
| \Delta | = 
\begin{dcases}
 \frac{|G| | H |  }{ 4 | r^{G} | } - \frac{ |H| }{2} + 1  , \ \ \ \ \  \text{if $ | r^{G} \cap R | = \tfrac{1}{2} |H| $,} \\
 \frac{|G| | H |  }{ 16 | r^{G} | } - \frac{ |H| }{4} + 1  , \ \ \ \ \  \text{if $ | r^{G} \cap R | \neq \tfrac{1}{2} |H| $.}
\end{dcases}
\]
\end{lema}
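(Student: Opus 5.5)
We count by a double-counting argument over the reflections of $H$ that lie in $r^{G}$. Let $R_{r} := R \cap r^{G}$; by the proof of Corollary \ref{crlrsrflctn}, $|R_{r}|$ is either $\tfrac{1}{2}|H|$ or $\tfrac{1}{4}|H|$. For each $s \in R_{r}$, let $\Gamma_{s} \subseteq \Gamma$ be the set of conjugates of $H$ that contain $s$ as a reflection. Since $s \in r^{G}$, Lemma \ref{sbgrpscntngh} shows that every $\Gamma_{s}$ has the same size as $\Gamma_{r}$. Corollary \ref{crlrsrflctn} then gives $|\Gamma_{s}| = |G|/(2|r^{G}|)$ in the first case and $|G|/(4|r^{G}|)$ in the second.

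By definition, $\Delta = \bigcup_{s \in R_{r}} \Gamma_{s}$. The subgroup $H$ lies in every $\Gamma_{s}$. The key step is to show that a conjugate $H' \neq H$ lies in at most one $\Gamma_{s}$. This follows from Lemma \ref{intrscttpssbl}: since $H' \neq H$, the reflection sets satisfy $|R \cap R'| \leq 1$, so $H'$ shares at most one reflection with $H$. Therefore the sets $\Gamma_{s} \setminus \{H\}$, for $s \in R_{r}$, are pairwise disjoint, and
\[
|\Delta| = 1 + \sum_{s \in R_{r}} \bigl( |\Gamma_{s}| - 1 \bigr) = 1 + |R_{r}| \bigl( |\Gamma_{r}| - 1 \bigr).
\]

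It remains to substitute. In the first case we get $1 + \tfrac{1}{2}|H| \bigl( |G|/(2|r^{G}|) - 1 \bigr) = |G||H|/(4|r^{G}|) - \tfrac{1}{2}|H| + 1$. In the second case we get $1 + \tfrac{1}{4}|H| \bigl( |G|/(4|r^{G}|) - 1 \bigr) = |G||H|/(16|r^{G}|) - \tfrac{1}{4}|H| + 1$. Both match the statement.

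The main point needing care is the interpretation of the definition of $\Delta$. Membership means that $H'$ contains some $s \in r^{G}$ that is a reflection of both $H$ and $H'$, so the union over $s \in R_{r}$ is exactly $\Delta$. One also needs "contains $s$ as a reflection" in Lemma \ref{sbgrpscntngh} to have the same meaning, which it does since conjugation preserves reflections (shown in the proof of that lemma using $|H| \geq 6$). With that established, the disjointness from Lemma \ref{intrscttpssbl} is the only substantive ingredient, and the rest is arithmetic.
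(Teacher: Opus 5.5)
Your proposal is correct and follows essentially the same route as the paper: write $\Delta$ as the union over $s \in r^{G} \cap R$ of the conjugates containing $s$ as a reflection, use Lemma \ref{sbgrpscntngh} for the common size and Lemma \ref{intrscttpssbl} to make these sets disjoint once $H$ is removed, and obtain $|\Delta| = 1 + |r^{G} \cap R|\bigl(|\Gamma_{r}| - 1\bigr)$ before substituting. The paper treats separately the case where $H$ is the only conjugate containing $r$, but your uniform formula already covers that case, so nothing is lost.
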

\begin{proof}
We will develop a general formula for $ | \Delta | $ before considering the size of $ r^{G} \cap R $. Let $ \Gamma = \{ g^{-1} H g : g \in G  \} $. First suppose that $ H $ is the only subgroup in $ \Gamma $ containing $ r $ as a reflection. It follows from Lemma \ref{sbgrpscntngh} that
\[
\frac{|G| | r^{G} \cap R | }{ |H| | r^{G} | } = 1.
\]
Lemma \ref{sbgrpscntngh} also shows that each element of $ r^{G} \cap R $ is a reflection in exactly one subgroup in $ \Gamma $, namely $ H $. Therefore $ | \Delta | = 1 $. Hence 
\[
\Bigg( \frac{|G| | r^{G} \cap R | }{ |H| | r^{G} | } - 1 \Bigg)| r^{G} \cap R | + 1  = (1-1) | r^{G} \cap R | + 1  = 1 = | \Delta |  \tag{1}.
\]
Next suppose $ r $ is a reflection in at least two subgroups in $ \Gamma $. Then each element of $ r^{G} \cap R $ is a reflection in at least two subgroups in $ \Gamma $ by Lemma \ref{sbgrpscntngh}, in particular each element appears as a reflection in at least one subgroup other than $ H $.

\medskip
Lemma \ref{sbgrpscntngh} shows that an element $ r_{1} \in r^{G} \cap R $ is a reflection in
\[
\frac{|G| | r^{G} \cap R | }{ |H| | r^{G} | } - 1
\]
subgroups in $ \Gamma \setminus \{ H \} $. Suppose $ H^{ \prime} \in \Gamma \setminus \{ H \} $ and $ R^{ \prime } $ is the set of reflections of $ H^{ \prime } $. If $ r_{2} \in r^{G} \cap R \setminus \{ r_{1} \} $ and $ r_{2} \in R ^{ \prime } $ then $r_{1} \notin R^{ \prime } $, otherwise $ r_{1} , r_{2} \in R \cap R^{ \prime} $, which is not possible by Lemma \ref{intrscttpssbl}. So the subgroups in $ \Gamma \setminus \{ H \} $ containing $ r_{2} $ as a reflection are distinct from those containing $ r_{1} $ as a reflection. Hence the total number of distinct subgroups of $ \Gamma \setminus \{ H \} $ containing an element of $ r^{G} \cap R $ as a reflection is
\[
\Bigg( \frac{|G| | r^{G} \cap R | }{ |H| | r^{G} | } - 1 \Bigg)| r^{G} \cap R | .
\]
The only other subgroup in $ \Gamma $ that contains a reflection from $ r^{G} \cap R $ is $ H $ itself. Counting this gives
\[
| \Delta | = 
\Bigg( \frac{|G| | r^{G} \cap R | }{ |H| | r^{G} | } - 1 \Bigg)| r^{G} \cap R | + 1 .
\]
So in this case we again have the same formula as $ (1) $.

\medskip
Now the reflections in $ H $ either lie in a single conjugacy class in $ H$ of size $ \tfrac{1}{2} | H | $ or two conjugacy classes that each have size $ \tfrac{1}{4} | H | $. So either $ | r^{G} \cap R | = \tfrac{1}{2} |H| $ or $ | r^{G} \cap R | = \tfrac{1}{4} |H| $. Substituting these into the above formula and expanding gives the result we are after.
\end{proof}
These lemmas can be applied to the first of our examples.
\begin{exmp}\label{xmpcrect}
Let $ G = PSL_{2} (q) $ where $ q $ is even and $ q \geq 4 $, let $ H $ be a maximal $ D_{2 (q+1) } $ and let $ \Omega $ be the set of right cosets of $ H$. Then $ | H | = 2(q+1) $. Lemma \ref{lnsz} shows $ | G | =  q(q+1) (q-1) $.

\medskip
Let $ R $ be the reflections of $H $ and suppose $ r \in R$. By Lemma \ref{pslnvltnscnjg} all involutions in $ G $ are conjugate to $ r $ and $ | r^{G} | = (q+1)(q-1) $. It must be that $   r^{G} \cap R = R $, therefore $ | r^{G} \cap R | = \tfrac{1}{2} | H | $.

\medskip
From Lemma \ref{ttlcnjgtsbrflc} we see that the number of subgroups conjugate to $ H $ that contain a reflection from $ R $ is
\begin{align*}
\frac{|G| | H |  }{ 4 | r^{G} | } - \frac{ |H| }{2} + 1 
&  = \frac{ 2q(q+1)^{2} (q-1)}{ 4(q+1)(q-1) } - (q+1) + 1 \\
& = \tfrac{1}{ 2}q(q+1) - q \\
& = \tfrac{1}{ 2} q(q-1) .
\end{align*}
Let $ \Gamma  = \{ g^{-1} H g : g \in G \} $. It was noted earlier that $ | \Gamma | = | G | / | H | = \tfrac{1}{ 2} q(q-1) $. So every subgroup in $ \Gamma $ intersects $ H $ non-trivially. As $ q+1 $ is odd, $ D_{2 (q+1) } $ contains no Klein four-subgroups. So using Lemma \ref{klnfrntrsctn} we have $ | H \cap H ^{ \prime } | = 2 $ for each $ H^{ \prime } \in \Gamma \setminus \{ H \} $. However $ H $ is an arbitrary maximal $ D_{2(q+1) } $, which means that every subgroup in $ \Gamma $ intersects each of the others in a subgroup of order $ 2 $. 

\medskip
Now $ q + 1 \geq 5 $, so there exists $ r_{1} , r_{2} \in R $ with $ r_{1} \neq r_{2} $. From Corollary \ref{crlrsrflctn} we see that $ r_{1} $ is a reflection in $ \tfrac{1}{2} q \geq 2 $ subgroups in $ \Gamma $ and same goes for $ r_{2} $. Let $ H_{1} , H_{2} \in \Gamma \setminus \{ H \} $ with $ r_{1} \in H_{1} $ and $ r_{2} \in H_{2} $. Then $ H_{1} \cap H = \langle r_{1} \rangle \neq \langle r_{2} \rangle =  H_{2} \cap H $. Using Lemma \ref{htthrcndtns} and the fact $ | H_{1} \cap H_{2} | =  | H_{1} \cap H | =  | H_{2} \cap H | = 2$, we see that $ Ht(G, \Omega ) = 3 $.
\end{exmp}

\newpage
For the next lemma, note that there is no maximal $ D_{2 (q-1 ) } $ in $ PGL _{2} (5) \cong S_{5} $. So if $ q \geq 5 $ and $ H \leq  PGL _{2} (q) $ is a maximal dihedral subgroup then $ | H | \geq 6 $ and we can distinguish the reflections in $ H $ from the rotation of order $ 2 $ and can make use of earlier lemmas in this subsection (unlike if $ H $ was a Klein four group).
\begin{lema}\label{pglcnjclssrfl}
Let $ q \geq 5 $ and $ q $ odd. Suppose $ G \cong PGL_{2} (q) $ and that $ H < G $ is a maximal $ D_{2(q-1) } $ or $ D_{2(q+1) } $. Let $ R $ be the reflections of $ H $ and let $ r \in R $. Then $ | r^{G} \cap R | = \tfrac{1}{4} | H | = \tfrac{1}{2} | R | $.
\end{lema}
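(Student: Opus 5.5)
The plan is to separate the reflections of $ H $ using the normal subgroup $ S \trianglelefteq G $ with $ S \cong PSL_{2} (q) $, which exists by Lemma \ref{pslsnpgl}. Conjugacy classes of $ G $ lie either entirely inside $ S $ or entirely outside it, so it is enough to show that exactly half of the reflections of $ H $ lie in $ S $.

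Put $ n := \tfrac{1}{2} | H | = q \pm 1 $, which is even since $ q $ is odd, and write $ C \cong C_{n} $ for the rotational subgroup of $ H $. The first step is to show $ | H \cap S | = \tfrac{1}{2} | H | = n $. We have $ H \neq S $, since $ H \ntriangleleft G $ by Lemma \ref{nrmlsbgrpspgl}. As $ S $ is maximal of index $ 2 $, this forces $ HS = G $. Then $ H / ( H \cap S ) \cong G / S $ gives $ | H : H \cap S | = 2 $. This is the first part of the proof of Lemma \ref{mxmlsbpslntr}, which does not need $ q \geq 13 $.

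The second step, which I expect to be the main obstacle, is to show $ C \not\leq S $, so that $ | C \cap S | = \tfrac{1}{2} n $. Suppose instead that a generator $ g $ of $ C $ lies in $ S $. Then $ o(g) = q \pm 1 $ is coprime to $ q $, so Lemma \ref{nrmlzr} applied inside $ S $ says $ N_{S} ( \langle g \rangle ) $ is a dihedral group $ D_{ q \pm 1 } $ (with $ \delta = 2 $). Its rotational subgroup then has order $ \tfrac{1}{2} (q \pm 1) $. Since $ o(g) > 2 $ for $ q \geq 5 $, the element $ g $ is not a reflection, so $ \langle g \rangle $ must sit inside that rotational subgroup. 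This is impossible, because $ o(g) = q \pm 1 > \tfrac{1}{2} (q \pm 1) $. I would need to check carefully that the $ \pm $ signs match here. If $ g $ had order $ q \mp 1 $ in $ S $, the normalizer would be the other dihedral type, whose rotational subgroup has order $ \tfrac{1}{2} (q \mp 1) $, and the same order contradiction still applies. I would also check the small cases $ q = 5, 7, 9 $ separately, where not every dihedral subgroup of $ S $ is maximal but Lemma \ref{nrmlzr} still applies.

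Combining the two steps, $ H \cap S $ has $ n $ elements, of which $ \tfrac{1}{2} n $ are rotations. So exactly $ \tfrac{1}{2} n = \tfrac{1}{2} | R | $ reflections of $ H $ lie in $ S $, and the other $ \tfrac{1}{2} | R | $ lie in $ G \setminus S $. Since $ S \trianglelefteq G $, the class $ r^{G} $ is contained in $ S $ or in $ G \setminus S $, and therefore $ | r^{G} \cap R | \leq \tfrac{1}{2} | R | $.

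For the reverse inequality, $ n $ is even, so the reflections of $ H $ fall into two $ H $-classes, each of size $ \tfrac{1}{2} n $. One of these classes is contained in $ r^{G} \cap R $, giving $ | r^{G} \cap R | \geq \tfrac{1}{2} n $. Hence $ | r^{G} \cap R | = \tfrac{1}{2} | R | = \tfrac{1}{4} | H | $, as claimed.
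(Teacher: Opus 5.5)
Your proof is correct, but it takes a different route from the paper's.

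\textbf{The paper's route.} It never looks at $H \cap S$ directly. For $q > 5$ it uses Lemma \ref{dcvlkjhsdp} to produce a dihedral $D \leq S$ of order $\tfrac{1}{2}|H|$, with rotational subgroup $A$. It then uses Lemma \ref{nrmlzr} to identify $N_{G}(A)$ as a conjugate of $H$ containing $D$ with index $2$. From this it concludes that half the reflections of $N_{G}(A)$ lie in $D \subseteq S$, and transfers the statement back to $H$ by conjugacy. The case $q = 5$ is checked by hand inside $S_{5}$.

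\textbf{Your route.} You obtain $|H \cap S| = \tfrac{1}{2}|H|$ from the isomorphism-theorem part of Lemma \ref{mxmlsbpslntr}. Note that the fact doing the work there is $H \not\leq S$, which comes from the maximality of $H$ together with $H \neq S$, not from the maximality of $S$. You then show $C \not\leq S$ because $PSL_{2}(q)$ has no element of order $q \pm 1$. Your worry about matching signs is moot: every $p'$-element of $PSL_{2}(q)$ has order dividing $\tfrac{1}{2}(q \pm 1)$, and $q - 1 > \tfrac{1}{2}(q+1)$ for $q > 3$.

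\textbf{What your approach buys.}
\begin{itemize}
\item It needs only this element-order consequence of Lemma \ref{nrmlzr}, not its conjugacy and maximality claims.
\item It avoids the detour through an auxiliary dihedral subgroup of $S$.
\item It treats $q = 5$ uniformly rather than by inspection.
\item Your last step makes explicit something the paper only asserts: that the reflections of $H$ lying in $S$ form a single $G$-class. You get this because the $H$-class of $r$, of size $\tfrac{1}{2}|R|$, is contained in $r^{G} \cap R$.
\end{itemize}
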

\begin{proof}
For the case $q = 5 $ we have $ PGL_{2} (5) \cong S_{5} $ and it can be checked by looking at the subgroup structure of $ S_{5} $ that the $ D_{12} $ and $ D_{8} $ subgroups have reflections  from different conjugacy classes.

\medskip
Now suppose $ q > 5 $. By Lemma \ref{pslsnpgl} there exists $ S \leq G $ with $ S \cong PSL_{2} (q) $. By Lemma \ref{dcvlkjhsdp}, there exists a dihedral subgroup $ D \leq S $ of order $ \tfrac{1}{2} | H | $. Let $ A < D $ be the rotational subgroup. Observe $ | A | = \tfrac{1}{4} | H | > \tfrac{1}{2} (5-1) = 2 $. Hence $ | A | $ only divides one of $ q+1 $ or $ q-1 $. Also $ | A | $ is coprime to $ q $. So Lemma \ref{nrmlzr} tells us $ N_{S} (A) = D $. By the same lemma, $ N_{G} ( A ) \cong H $ and $ N_{G} ( A ) $ is conjugate to $ H $ in $ G$. 

\medskip
Half of the reflections of $ N_{G} ( A ) $ must lie in $ D $ and therefore $ S $ because $ | N_{G} ( A ) | = | H | = 2|D| $. Hence half of the reflections of $ N_{G} ( A ) $ do not lie in $ S $. By Lemma \ref{pslsnpgl}, the subgroup $ S $ is normal in $ G $. Thus half the reflections in $ N_{G} ( A ) $ lie in one conjugacy class in $ G $ and the other half of the reflections lie in a separate conjugacy class $ G $. It follows that the same applies to the reflections in $ H $.
\end{proof}
\begin{lema}\label{asoidspdoc}
Let $ R $ be the set of reflections of $ H $ and let $ r \in R $. Suppose $ | R | $ is even. Then there exists at most one subgroup $ H^{ \prime } \leq G $ such that;
\begin{itemize}
\item [$(1)$] $ H^{ \prime } \neq H $,
\item [$(2)$] $ H^{ \prime } \in \Gamma $,
\item [$(3)$] $ r $ is a reflection in $ H^{ \prime } $ and
\item [$(4)$] $ | H \cap H^{ \prime } | = 4 $. 
\end{itemize}
\end{lema}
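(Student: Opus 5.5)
The plan is to pin down the intersection $ H \cap H^{ \prime } $ completely from $ r $, then show that the rotation of order $ 2 $ of $ H^{ \prime } $ is forced to be one particular element, and finally use Lemma \ref{ntnrmlmxml} to see that this element determines $ H^{ \prime } $. Throughout I use the standing assumptions of this section: $ G $ is simple or all its proper non-trivial normal subgroups are maximal, and $ H $ is a non-normal maximal dihedral subgroup with $ | H | \geq 6 $. Since $ | R | = \tfrac{1}{2} | H | $ is even, the rotational subgroup of $ H $ has even order. So $ H $ has a unique rotation $ z $ of order $ 2 $, and $ z $ is central in $ H $.

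First step: determine $ H \cap H^{ \prime } $. Suppose $ H^{ \prime } $ satisfies (1)--(4). Since $ H \neq H^{ \prime } $ and $ | H \cap H^{ \prime } | = 4 $, Lemma \ref{klnfrntrsctn} shows that $ H \cap H^{ \prime } $ is a Klein four-group. Its non-identity elements are one rotation and two reflections of $ H $, and also one rotation and two reflections of $ H^{ \prime } $. The only rotation of order $ 2 $ in $ H $ is $ z $, and $ r \in H \cap H^{ \prime } $ by (3). Hence
\[
H \cap H^{ \prime } = \{ 1_{G} , z , r , rz \} ,
\]
which depends only on $ r $ and not on the choice of $ H^{ \prime } $.

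Second step: identify the rotation of order $ 2 $ in $ H^{ \prime } $; call it $ z^{ \prime } $. It lies in the Klein four-group above. It cannot be $ r $, because $ r $ is a reflection of $ H^{ \prime } $ by (3). So $ z^{ \prime } \in \{ z , rz \} $. If $ z^{ \prime } = z $, then $ \langle z \rangle $ is a non-trivial subgroup of the rotational subgroups of both $ H $ and $ H^{ \prime } $. By Lemma \ref{dhdrlrottnnrml} it is normal in both. This contradicts Lemma \ref{ntnrmlmxml}, since $ H $ and $ H^{ \prime } $ are distinct maximal subgroups. Therefore $ z^{ \prime } = rz $, for every $ H^{ \prime } $ satisfying (1)--(4).

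Third step: conclude uniqueness. Suppose $ H^{ \prime } $ and $ H^{ \prime \prime } $ both satisfy (1)--(4). By the second step, $ rz $ is a rotation of order $ 2 $ in both of them. Lemma \ref{dhdrlrottnnrml} then gives $ \langle rz \rangle \trianglelefteq H^{ \prime } $ and $ \langle rz \rangle \trianglelefteq H^{ \prime \prime } $. Both are maximal subgroups, being conjugates of $ H $, so Lemma \ref{ntnrmlmxml} forces $ H^{ \prime } = H^{ \prime \prime } $. Hence there is at most one such subgroup.

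The only delicate point is the second step. One must justify that the rotation of order $ 2 $ of $ H^{ \prime } $ is a well-defined element of the intersection. This holds because Lemma \ref{klnfrntrsctn} explicitly says the intersection contains a rotation of $ H^{ \prime } $, and because $ | H^{ \prime } | = | H | \geq 6 $, so rotations and reflections of $ H^{ \prime } $ are distinguished.
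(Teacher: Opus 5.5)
Your proof is correct and matches the paper's argument: both identify $H \cap H^{\prime} = \langle r , z \rangle$, rule out $z$ as the central rotation of $H^{\prime}$, and deduce that $rz$ is the rotation of order $2$ in every such $H^{\prime}$, which forces uniqueness. The only difference is cosmetic: the paper phrases the last two steps through Corollary~\ref{cntrcnjgt} on centres, while you use Lemma~\ref{dhdrlrottnnrml} together with Lemma~\ref{ntnrmlmxml}, which is exactly the lemma that corollary is derived from.
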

\begin{proof}
Suppose there exists $  H_{1} $ and $ H_{2} $ that satisfy the same conditions at $ H ^{ \prime } $ in $ (1) $ to $ (4) $ above.

\medskip
Since $ | R | $ is even, $ |H | $ is divisible by $ 4 $ and there exists a unique rotation $ z \in H $ of order $ 2 $. Note that $ z \in Z(H) $. Let $ i \in \{ 1, 2 \} $. The fact $ H_{i} $ is conjugate to $ H $ means it is a stabilizer of some point in $ \Omega $. So it follows from Lemma \ref{klnfrntrsctn} that $ H \cap  H_{i} $ is a Klein four-subgroup of $ H $ and contains $ z $. As $ r $ is a reflection in both $ H $ and $ H_{i} $, we must have $ \langle r , z \rangle = H _{i} \cap H $.

\medskip
If $ z \in Z( H _{i} ) $ then $ H_{i} = H $ by Corollary \ref{cntrcnjgt}, which is a contradiction. So $ z \notin Z( H _{i} ) $ and $ z $ must be a reflection in $ H _{i} $. Hence $ zr $ is the unique rotation of order $ 2 $ in $ H_{i} $ and $ zr \in Z( H _{i} ) $. That means $ zr \in Z( H _{1} ) \cap Z( H _{2} ) $, which implies $ H_{1} = H_{2} $, using Corollary \ref{cntrcnjgt} again.
\end{proof}
\begin{exmp}\label{expslqevn}
Let $ G = PGL_{2} (q) $ where $ q $ is odd and $ q \geq 5 $, let $ H $ be a maximal $ D_{2 (q+1) } $ and let $ \Omega $ be the set of right cosets of $ H$. Then $ | H | = 2(q+1) $ and Lemma \ref{lnsz} shows $ | G | =  q(q+1) (q-1) $.

\medskip
Let $ R $ be the reflections of $H $. Using Lemma \ref{pglcnjclssrfl} and the fact $ | R | = \tfrac{1}{2} | H | $, there exists $ r_{1} , r_{2} \in R$ with $ r_{2} \notin  r_{1} ^{G}  $. Lemma \ref{pglcnjclssrfl} also shows that $ | r_{1} ^{G} \cap R | = | r_{2} ^{G} \cap R | = \tfrac{1}{4} | H | $. Hence
\[
( r_{1} ^{G} \cap R ) \cup ( r_{2} ^{G} \cap R ) = R .
\]
For $ i \in \{ 1, 2 \} $ let $ \Delta _{i} $ be the set of subgroups in $ \Gamma $ that contain a reflection that is also in $ r_{i} ^{G} \cap R $. We may suppose that $ | r_{1} ^{G} | = \tfrac{1}{2} q (q-1) $ and $ | r_{2} ^{G} | = \tfrac{1}{2} q (q+1) $ by Lemma \ref{pgltqnvcnjc}. So using Lemma \ref{ttlcnjgtsbrflc} we have
\begin{align*}
| \Delta _{1} \setminus \{ H \} |
 =  \frac{|G| | H |  }{ 16 | r_{1} ^{G} | } - \frac{ |H| }{4}  
 =  \frac{ 2q(q+1) ^{2} (q-1)  }{ 8 q (q-1) } - \frac{1}{2} (q+1)  
&  =  \frac{1}{4} (q+1) ^{2}  - \frac{1}{2} (q+1)  \\
&  = \frac{1}{4} (q+1)(q-1) .
\end{align*}
and
\begin{align*}
| \Delta _{2} \setminus \{ H \}  |
 =  \frac{|G| | H |  }{ 16 | r_{2} ^{G} | } - \frac{ |H| }{4}  
 =  \frac{ 2q(q+1) ^{2} (q-1)  }{ 8 q (q+1) } - \frac{1}{2} (q+1)   
& =  \frac{ 1}{4}(q+1) (q-1)  - \frac{1}{2} (q+1)   \\
& =  \frac{ 1}{4}(q+1) (q-3)    .
\end{align*}

\medskip
Suppose $ H^{ \prime } \in \Delta _{1} \setminus \{ H \} $ and $ R^{ \prime } $ is the set of reflections in $ H^{ \prime } $. If $ R^{ \prime } $ contained an element of $ r_{2} ^{G} \cap R $ then $ | R^{ \prime } \cap R | \geq 2 $, which is not possible by Lemma \ref{intrscttpssbl}. Therefore $ R^{ \prime } $ does not contain an element of $ r_{2} ^{G} \cap R $ and it follows that $ H^{ \prime } \notin \Delta _{2} \setminus \{ H \} $. Thus
\begin{align*}
| \ (  \Delta _{1} \setminus \{ H \}  ) \ \cup \ ( \Delta _{2} \setminus \{ H \} ) \ \cup \  \{ H \} \ | 
& = | \Delta _{1} \setminus \{ H \} | + | \Delta _{2} \setminus \{ H \} | +  | \{ H \}  | \\
& = \frac{1}{4} (q+1)(q-1) + \frac{ 1}{4}(q+1) (q-3) +  1 \\
& = \frac{1}{4} (q+1)(2q-4) +  1 \\
& = \frac{1}{2} q^{2} - \frac{1}{2}q - 1 +  1 \\
& = \tfrac{1}{2} q (q-1) .
\end{align*}
Now Lemma \ref{cstquivtcnj} shows $ \Gamma = | G | / | H | = \tfrac{1}{2} q (q-1) $. Since $ \ (  \Delta _{1} \setminus \{ H \}  ) \ \cup \ ( \Delta _{2} \setminus \{ H \} ) \ \cup \  \{ H \}  \subseteq \Gamma $, it must be that  $ \ (  \Delta _{1} \setminus \{ H \}  ) \ \cup \ ( \Delta _{2} \setminus \{ H \} ) \ \cup \  \{ H \}  = \Gamma $. So every subgroup in $ \Gamma $ intersects $ H $ non-trivially. However $ H $ is an arbitrary maximal $ D_{2(q+1) } $, which means that every subgroup in $ \Gamma $ intersects each of the others non-trivially. 

\medskip
The fact $ q  \geq 5 $ means $ | r_{1} ^{G} \cap R | = \tfrac{1}{2} (q+1) \geq 3 $. Let $ s_{1} , s_{2} \in r_{1} ^{G} \cap R $ with $ s_{1} \neq s_{2} $. From Corollary \ref{crlrsrflctn} we see that $ s_{1} $ is a reflection in $ \tfrac{1}{2} (q+1) \geq 3 $ subgroups in $ \Gamma $ and same goes for $ s_{2} $. So there exists $ H_{1} , H_{2} \in \Gamma \setminus \{ H \} $ such that $ s_{1} $ and $ s_{2} $ are reflections in $ H_{1} $ and $ H_{2} $ respectively.

\medskip
Using Lemma \ref{asoidspdoc} we can assume that $ | H_{1} \cap H | \neq 4 \neq | H_{2} \cap H | $. Both $ H_{1} $ and $ H_{2} $ are stabilizers of points of $ \Omega $, so it follows from Lemma \ref{klnfrntrsctn} that $ | H_{1} \cap H | = | H_{2} \cap H | = 2 $. Hence $ H_{1} \cap H = \langle s_{1} \rangle \neq \langle s_{2} \rangle = H_{2} \cap H $.

\medskip
Finally the fact that $ H_{1} \cap H_{2} $ is non-trivial means $ | H_{1} \cap H_{2} | \geq 2 $, so $ Ht(G, \Omega ) = 3 $ by Lemma \ref{htthrcndtns}.
\end{exmp}

\medskip
In the above examples the height of some dihedral actions of $ PSL_{2} (q) $ or $ PGL_{2} (q) $ was found by counting the number of subgroups conjugate to a particular dihedral group and showing they all intersected each other non-trivially. However this is not the case for maximal dihedral subgroups in all choices of $ PSL_{2} (q) $ or $ PGL_{2} (q) $. Instead a method of counting the number of reflections in particular subgroups will be used to find the height the remaining actions. 
\begin{lema}\label{zzkfziitrw}
Let $ R $ be the set of reflections of $ H $. Suppose $ | R | $ is odd. Let $ r \in R $. Let $ \Delta $ be the set of subgroups of $ G $ conjugate to $ H $ such that if $ H^{ \prime } \in \Delta $ and $ R^{ \prime } $ is the set of reflections of $ H^{ \prime } $ then $ R \cap R^{ \prime } \neq \emptyset $. Suppose $ | \Delta | > 1 $ and $ Ht(G, \Omega ) \neq 3 $. Then the number of distinct reflections in subgroups in $ \Delta $ is
\[
 \Bigg( \frac{|G| | H |  }{ 4 | r^{G} | } - \frac{ |H| }{2} \Bigg) \Bigg( \frac{| H |}{2} - 1 \Bigg)  + \frac{ | H | }{2}  .
\]
\end{lema}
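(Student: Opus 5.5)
Since $|R| = \tfrac{1}{2}|H|$ is odd, the rotational subgroup of $H$ has odd order, so all reflections of $H$ are conjugate in $H$. Hence $r^{G} \cap R = R$, that is $|r^{G} \cap R| = \tfrac{1}{2}|H|$. The first case of Lemma \ref{ttlcnjgtsbrflc} then gives
\[
|\Delta \setminus \{H\}| = \frac{|G||H|}{4|r^{G}|} - \frac{|H|}{2} .
\]
Here $\Delta$ coincides with the set in that lemma because $R \cap r^{G} = R$. The plan is to count reflections as follows. The subgroup $H$ contributes its $\tfrac{1}{2}|H|$ reflections. Each $H' \in \Delta \setminus \{H\}$ shares exactly one reflection with $H$: at least one by the definition of $\Delta$, and at most one by Lemma \ref{intrscttpssbl}. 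So $H'$ contributes $\tfrac{1}{2}|H| - 1$ reflections outside $R$. The formula follows once we show that these "new" reflections of different members of $\Delta \setminus \{H\}$ are pairwise disjoint.

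For the disjointness, suppose $H', H'' \in \Delta \setminus \{H\}$ are distinct and $s \notin R$ is a reflection of both. Let $r'$ and $r''$ be the unique reflections of $H$ lying in $H'$ and $H''$ respectively. We split into two cases.

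If $r' = r''$, then $H'$ and $H''$ share the two distinct reflections $r'$ and $s$. Lemma \ref{intrscttpssbl} then forces $H' = H''$, which is a contradiction.

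If $r' \neq r''$, we first note that $|H|$ is not divisible by $4$. So $H$ contains no Klein four-subgroup, and Lemma \ref{klnfrntrsctn} gives $|H \cap H'|, |H \cap H''| \in \{1,2\}$. Since each of these intersections contains a reflection, we get $H \cap H' = \langle r' \rangle$ and $H \cap H'' = \langle r'' \rangle$. Both have order $2$, and they are different subgroups. Moreover $|H' \cap H''| \geq 2$ because $s \in H' \cap H''$. The three subgroups $H, H', H''$ are distinct conjugates of $H$, so Lemma \ref{htthrcndtns} yields $Ht(G,\Omega) = 3$. This contradicts the hypothesis.

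Hence the new reflections contributed by distinct members of $\Delta \setminus \{H\}$ are distinct. The total number of distinct reflections is therefore
\[
\Bigg( \frac{|G||H|}{4|r^{G}|} - \frac{|H|}{2} \Bigg)\Bigg( \frac{|H|}{2} - 1 \Bigg) + \frac{|H|}{2} .
\]
The hypothesis $|\Delta| > 1$ only ensures that the situation is non-degenerate; the count also holds trivially when $\Delta = \{H\}$.

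The main obstacle is the second case of the disjointness argument. There one has to check carefully that the hypotheses of Lemma \ref{htthrcndtns} hold exactly as stated: the three subgroups are distinct, the two intersections with $H$ both have order $2$ and are different, and $|H' \cap H''| \geq 2$. The other ingredient is the absence of Klein four-subgroups in $H$, which comes from $|R|$ being odd.
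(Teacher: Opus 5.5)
Your proposal is correct and follows essentially the same argument as the paper: you count $|\Delta\setminus\{H\}|$ via Lemma \ref{ttlcnjgtsbrflc}, observe that each member shares exactly one reflection with $H$ by Lemma \ref{intrscttpssbl}, and prove the new reflections are disjoint by splitting on whether the shared reflections coincide, using Lemma \ref{intrscttpssbl} in one case and Lemma \ref{htthrcndtns} (contradicting $Ht(G,\Omega)\neq 3$) in the other. Your second case is slightly more direct than the paper's. The paper first bounds $|H_{1}\cap H_{2}|\le 2$ and rules out a common reflection lying in $R$ before reaching the same contradiction, but the content is identical.
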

\begin{proof}
Suppose $ H_{1} , H_{2} \in \Delta  \setminus \{ H \} $ with $ H_{1} \neq H_{2} $. Let $ R_{1} $ and $ R_{2} $ be the reflections of $ H_{1} $ and $H_{2} $ respectively.

\medskip
First note that $ | R_{1} \cap R |  = 1 $. This is because by definition of $ \Delta $ it must be that $ | R_{1} \cap R | \geq 1 $ and Lemma \ref{intrscttpssbl} shows that $ | R_{1} \cap R | \leq 1 $. Also $ | R_{2} \cap R | = 1 $ can be shown in the same way.

\medskip
Next we show that each of the $ | H | / 2 - 1 $ reflections in $ R_{1} \setminus R_{1} \cap R $ are distinct from each of the $ | H | / 2 -1 $ reflections in $ R_{2} \setminus R_{2} \cap R $. This is split into two cases.

\medskip
For the first case suppose $ R_{1} \cap R  =  R_{2} \cap R  $. Then $ R_{1} \cap R = R_{1} \cap R_{2} \cap R \subseteq R_{1} \cap R_{2} $ and $ | R_{1} \cap R_{2} | \geq 1 $. It follows from Lemma \ref{intrscttpssbl} that $ | R_{1} \cap R_{2} | = 1 $. So the elements of $ R_{1} \setminus  R_{1} \cap R $ are distinct from the elements of $ R_{2} \setminus  R_{2} \cap R $.

\medskip
For the second case suppose $ R_{1} \cap R  \neq  R_{2} \cap R  $. Both $ H_{1} $ and $ H_{2} $ are conjugate to $H $, so they stabilise some elements of $ \Omega $. Therefore Lemma \ref{klnfrntrsctn} can be used to show any non-identity elements in $ H_{1} \cap H_{2} $ have order $ 2 $.

\medskip
Since $ | R | $ is odd, the rotational subgroup of $ H$ has odd order. Hence $ H$ does not contain a rotation of order $ 2 $ nor a Klein four-subgroup. Hence neither do $H_{1} $ or $ H_{2} $. So the non-identity elements of $ H_{1} \cap H_{2} $ must be reflections in both $ H_{1} $ and $H_{2} $. Lemma \ref{intrscttpssbl} shows $ | R_{1} \cap R_{2} | \leq 1 $, implying $ | H_{1} \cap H_{2} | \leq 2 $.

\medskip
Similar reasoning shows that $ | H_{1} \cap H  | = | H_{2} \cap H  | = 2 $ since $ | R_{1} \cap R | = | R_{2} \cap R | = 1 $.

\medskip
If $ | R_{1} \cap R_{2} | = 1 $ then $ | H_{1} \cap H_{2} | = 2 $ and there exists a single reflection $ s \in R_{1} \cap R_{2} $. It cannot be that $ s \in R $ (otherwise the fact that $ | R_{1} \cap R | = | R_{2} \cap R | = 1 $ would imply $  R_{1} \cap R = \{ s \} =  R_{2} \cap R \neq R_{1} \cap R $). This means that $ s \notin R_{1} \cap R $, so $ R_{1} \cap R_{2} \neq R_{1} \cap R $ and in turn $ H_{1} \cap H_{2} \neq H_{1} \cap H $. It follows that $ Ht(G, \Omega ) =3 $ by Lemma \ref{htthrcndtns}, which is a contradiction. Thus $ | R_{1} \cap R_{2} | \neq 1 $.

\medskip
Therefore $ | R_{1} \cap R_{2} | = 0 $, showing all reflections in $ R_{1} $ are distinct from those in $ R_{2} $. In particular the elements of $ R_{1} \setminus  R_{1} \cap R $ are distinct from those in $ R_{2} \setminus  R_{2} \cap R $ in this case.

\medskip
Since $ | R | $ has odd order, all the elements of $ R $ are conjugate in $ H $. Therefore they are also conjugate in $G $. Hence
\[
| \Delta \setminus \{ H \} | = 
 \frac{|G| | H |  }{ 4 | r^{G} | } - \frac{ |H| }{2}
\]
by Lemma \ref{ttlcnjgtsbrflc}. So the total number of distinct elements that appear as reflections in subgroups in $ \Delta \setminus \{ H \} $ but are not reflections in $ H $ are
\[
 | \Delta \setminus \{ H \} | | R_{1} \setminus  R_{1} \cap R | = 
 \Bigg( \frac{|G| | H |  }{ 4 | r^{G} | } - \frac{ |H| }{2} \Bigg) \Bigg( \frac{ |H| }{2} - 1 \Bigg) .
\]
The only other reflections in subgroups in $ \Delta $ that have not been counted are the reflections of $ H $ itself. Including these gives
 \[
 \Bigg( \frac{|G| | H |  }{ 4 | r^{G} | } - \frac{ |H| }{2} \Bigg) \Bigg( \frac{ |H| }{2} - 1 \Bigg) + \frac{ |H| }{2} 
\]
 distinct reflections in total.
\end{proof}
With this lemma, two more examples from the earlier list can now be ticked off.
\begin{exmp}\label{psfljdshu}
Let $ G = PSL_{2} (q) $ where $ q $ is odd and $ q \geq 11 $. Then $ | G | = \tfrac{1}{2} q (q+1) (q-1) $.

\medskip
Let $ H $ be a maximal $ D_{ (q-1) } $ or $ D_{ (q+1) } $ and let $ \Omega $ be the set of right cosets of $ H$. Furthermore let $ R $ be the reflections of $ H $ and suppose $ | R | $ is odd. Let $ r \in R$. By Lemma \ref{pslnvltnscnjg} all involutions in $ G $ are conjugate to $ r $. Therefore $   r^{G} \cap R = R $ and so $ | r^{G} \cap R | = \tfrac{1}{2} | H | $.

\medskip
Let $ \Delta $ be the set of subgroups of $ G $ conjugate to $ H $ such that if $ H^{ \prime } \in \Delta $ and $ R^{ \prime } $ is the set of reflections of $ H^{ \prime } $ then $ R \cap R^{ \prime } \neq \emptyset $.

\medskip
From here two cases will be looked at; when $ q \equiv 1 \pmod{4} $ and when $ q \equiv 3 \pmod{4} $.

\medskip
First suppose $ q \equiv 1 \pmod{4} $. Then Lemma \ref{pslnvltnscnjg} shows that $ |  r^{G}|  = \tfrac{1}{2} q ( q+ 1) $. If $ H \cong D_{q-1 } $ then $ | R | = \tfrac{1}{2} | H |  $ is even, which contradicts our earlier assumption. So $ H \cong D_{q+1 } $ and $ | H | = q + 1 $.

\medskip
From Lemma \ref{ttlcnjgtsbrflc} we see $ | \Delta | \geq 25 $ since $ q \geq 11 $. Let $ n_{1} $ be the number of distinct reflections in subgroups in $ \Delta $. These reflections lie in $  r^{G} $ so $ n_{1} \leq |  r^{G}| $ and $ n_{1} -   |  r^{G}| \leq 0 $. If $ Ht(G, \Omega ) \neq 3 $ then Lemma \ref{zzkfziitrw} shows that
\begin{align*}
n_{1} -  |  r^{G}| & = \Bigg( \frac{|G| | H |  }{ 4 | r^{G} | } - \frac{ |H| }{2} \Bigg) \Bigg( \frac{| H |}{2} - 1 \Bigg)  + \frac{ | H | }{2} -  \frac{q ( q+ 1)}{2}  \\
& = \Bigg( \frac{ \tfrac{1}{2} q (q+1) ^{2} (q-1)    }{ 2 q ( q+ 1) } - \frac{ q+1 }{2} \Bigg) \Bigg( \frac{q+1}{2} - 1 \Bigg)  + \frac{ q+1 }{2} -  \frac{q ( q+ 1)}{2} \\
& = \Bigg( \frac{   (q+1)  (q-1)    }{ 4  } - \frac{ q+1 }{2} \Bigg) \frac{q-1}{2}   - \frac{(q+1)(q-1)}{2}  \\
& =  \frac{1}{8}(q+1) (    q-1    -2 ) (q-1)   - \frac{(q+1)(q-1)}{2}  \\
& =  \frac{1}{8}(q+1) (q-3 ) (q-1)   - \frac{(q+1)(q-1)}{2}  \\  
& =  \frac{1}{8}(q+1)(q-1) ( q-3     - 4) \\  
& =  \frac{1}{8}(q+1)(q-1) ( q- 7) \\
& > 0. \tag{Since $q \geq 11$}
\end{align*}
This is a contradiction. Thus $ Ht(G, \Omega ) = 3 $ in this case.

\medskip
For the second case suppose  $ q \equiv 3 \pmod{4} $. Then Lemma \ref{pslnvltnscnjg} shows that $ |  r^{G}|  = \tfrac{1}{2} q ( q- 1) $. If $ H \cong D_{q+1 } $ then $ | R | = \tfrac{1}{2} | H |  $ is even, which contradicts our earlier assumption. So $ H \cong D_{q-1 } $ and $ | H | = q - 1 $.

\medskip
From Lemma \ref{ttlcnjgtsbrflc} we see $ | \Delta | \geq 26 $ since $ q \geq 11 $. Let $ n_{2} $ be the number of distinct reflections in subgroups in $ \Delta $. As in the first case, $ n_{2} \leq |  r^{G}| $ and $ n_{2} -   |  r^{G}| \leq 0 $. If $ Ht(G, \Omega ) \neq 3 $ then Lemma \ref{zzkfziitrw} shows that
\begin{align*}
n_{2} -   |  r^{G}| & = \Bigg( \frac{|G| | H |  }{ 4 | r^{G} | } - \frac{ |H| }{2} \Bigg) \Bigg( \frac{| H |}{2} - 1 \Bigg)  + \frac{ | H | }{2}  -  \frac{q ( q- 1)}{2} \\
& =  \Bigg( \frac{ \tfrac{1}{2} q (q+1) (q-1)  ^{2}  }{ 2 q ( q-1) } - \frac{ q-1 }{2} \Bigg) \Bigg( \frac{q-1}{2} - 1 \Bigg)  + \frac{ q-1 }{2}  -  \frac{q ( q- 1)}{2}  \\
& =  \Bigg( \frac{  (q+1) (q-1)    }{ 4} - \frac{ q-1 }{2} \Bigg)  \frac{q-3}{2}   - \frac{ (q-1)^{2} }{2}  \\
& =  \frac{ 1  }{ 8} ( q-1 ) (   q+1  - 2 ) (q-3)    - \frac{ (q-1)^{2} }{2} \\
& =  \frac{ 1  }{ 8} ( q-1 ) ^{2} (q-3)    - \frac{ (q-1)^{2} }{2} \\
& =  \frac{ 1  }{ 8} ( q-1 ) ^{2} (q-3 - 4) \\
& =  \frac{ 1  }{ 8} ( q-1 ) ^{2} (q-7) \\
& > 0. \tag{Since $q \geq 11$}
\end{align*}
Again we have a contradiction, so $ Ht(G, \Omega ) = 3 $ in this case as well.
\end{exmp}
\begin{exmp}\label{mssngxmp}
Let $ G = PSL_{2} (q) $ where $ q $ is even and $ q \geq 8 $. Then $ | G | = q (q+1) (q-1) $.
\medskip
Let $ H $ be a maximal $ D_{ 2(q-1) } $ and let $ \Omega $ be the set of right cosets of $ H$. Furthermore let $ R $ be the reflections of $ H $. Note $ | H | = 2(q - 1) $ and that $ | R | = q - 1$ is odd. 

\medskip
Let $ r \in R$. By Lemma \ref{pslnvltnscnjg} all involutions in $ G $ are conjugate to $ r $ and $ |  r^{G}|  =  ( q+ 1)( q- 1) $. Therefore $   r^{G} \cap R = R $ and so $ | r^{G} \cap R | = \tfrac{1}{2} | H | $. 

\medskip
Let $ \Delta $ be the set of subgroups of $ G $ conjugate to $ H $ such that if $ H^{ \prime } \in \Delta $ and $ R^{ \prime } $ is the set of reflections of $ H^{ \prime } $ then $ R \cap R^{ \prime } \neq \emptyset $.

\medskip
From Lemma \ref{ttlcnjgtsbrflc} we see $ | \Delta | \geq 22 $ since $ q \geq 8 $. Let $ n $ be the number of distinct reflections in subgroups in $ \Delta $. These reflections lie in $  r^{G} $ so $ n \leq |  r^{G}| $ and $ n -   |  r^{G}| \leq 0 $. If $ Ht(G, \Omega ) \neq 3 $ then Lemma \ref{zzkfziitrw} shows that
\begin{align*}
n -  |  r^{G}| & = \Bigg( \frac{|G| | H |  }{ 4 | r^{G} | } - \frac{ |H| }{2} \Bigg) \Bigg( \frac{| H |}{2} - 1 \Bigg)  + \frac{ | H | }{2} -  ( q+ 1)( q- 1)  \\
& =  \Bigg( \frac{ 2q (q+1) (q-1)^{2}  }{ 4 (q+1)(q-1) } - (q-1) \Bigg) ( q-1 - 1 )  + q-1 -  ( q+ 1)( q- 1) \\
& =  \Bigg( \frac{ q  (q-1) }{ 2 }  - (q-1) \Bigg) ( q-2 )   -   q( q- 1) \\
& =  \frac{1}{2} (q-1)  ( q-2 )^{2}   -   q( q- 1) \\
& =  \frac{1}{2} (q-1)  (( q-2 )^{2}   -   2q) \\
& =  \frac{1}{2} (q-1)  (q^{2} - 6q + 4) \\
& =  \frac{1}{2} q (q-1) (q-6) + 2 (q-1) \\
& > 0. \tag{Since $ q \geq 8 $}  
\end{align*}
This is a contradiction. Thus $ Ht(G, \Omega ) = 3 $.
\end{exmp}
\begin{lema}\label{sctnszfrcntr}
Let $ R $ be the set of reflections of $ H $. Suppose $ | R | $ is even. Let $ r \in R $ and $ z \in Z(H) $. Suppose $ R \subseteq r^{G} $ and $ z \in r^{G} $. Then there exists a unique $ H^{ \prime } \leq G $ such that
\begin{itemize}
\item [$(1)$] $ H^{ \prime } \neq H $,
\item [$(2)$] $ H^{ \prime } \in \Gamma $,
\item [$(3)$] $ r $ is a reflection in $ H^{ \prime } $ and
\item [$(4)$] $ | H \cap H^{ \prime } | = 4 $.
\end{itemize}
\end{lema}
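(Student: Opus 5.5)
Uniqueness follows at once from Lemma \ref{asoidspdoc}, since $|R|$ is even. So the real work is existence: we must exhibit a conjugate $H^{\prime}$ of $H$ that contains $r$ as a reflection and meets $H$ in a Klein four-group. Since $|R|$ is even, $|H|$ is divisible by $4$, and $z$ is the unique rotation of order $2$ in $H$; here $z$ is taken to be the non-identity central element, because $|H|\geq 6$ forces $Z(H)=\langle z\rangle$. Put $K:=\langle r,z\rangle$, a Klein four-subgroup of $H$ whose third involution is $rz$, again a reflection of $H$. The target is an $H^{\prime}\in\Gamma$ in which $z$ is a reflection and $rz$ is the central rotation. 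Then $H\cap H^{\prime}\supseteq K$, and Lemma \ref{klnfrntrsctn} gives $|H\cap H^{\prime}|=4$, because $H^{\prime}\neq H$ (their centres differ, $\langle z\rangle\neq\langle rz\rangle$, by Corollary \ref{cntrcnjgt}). Finally $r=z\cdot rz$ would be a reflection of $H^{\prime}$, being the product of a reflection and the central rotation.

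To build $H^{\prime}$, use the hypotheses $R\subseteq r^{G}$ and $z\in r^{G}$. These put $rz\in R$ in the same $G$-class as $z$, so there is $g\in G$ with $g^{-1}zg=rz$. Set $H^{\prime}:=g^{-1}Hg\in\Gamma$. Its centre is $\langle g^{-1}zg\rangle=\langle rz\rangle$, so $rz$ is the central rotation of $H^{\prime}$. It remains to arrange that $z\in H^{\prime}$. The natural approach is to look inside $C_{G}(rz)$, which contains both $H^{\prime}$ and $K$. The centralizer of the involution $z$ is $N_{G}(\langle z\rangle)$, and by Lemma \ref{dcvlkjhsdp} and Corollary \ref{cntrcnjgt} this equals $H$. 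Conjugating by $g$ gives $C_{G}(rz)=N_{G}(\langle rz\rangle)=H^{\prime}$. Since $z$ commutes with $rz$, we get $z\in C_{G}(rz)=H^{\prime}$.

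The main obstacle is justifying $C_{G}(z)=H$. One needs $N_{G}(\langle z\rangle)$ to be a proper subgroup containing the maximal subgroup $H$. Properness holds because $G$ is simple, or because its normal subgroups are maximal and cannot contain the nontrivial normal subgroup $\langle z\rangle$ of $H$ (Lemma \ref{nrmlprstmtsk}). This gives $N_{G}(\langle z\rangle)=H$ in the general setting of the section.

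Once $z\in H^{\prime}$, note that $z$ is not central in $H^{\prime}$, so it is a reflection of $H^{\prime}$. Then $r=z(rz)$ is a product of a reflection and a rotation, hence a reflection in $H^{\prime}$. This verifies conditions (1)–(4). Uniqueness, as noted at the start, comes from Lemma \ref{asoidspdoc}.
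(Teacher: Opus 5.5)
Your proof is correct and follows the paper's route: conjugate $z$ to $zr$ by some $g$, take $H^{\prime}=g^{-1}Hg$, show $\langle r,z\rangle\le H\cap H^{\prime}$, apply Lemma \ref{klnfrntrsctn}, and get uniqueness from Lemma \ref{asoidspdoc}. The only variation is in how the Klein four-group is placed inside $H^{\prime}$. You put $z$ into $H^{\prime}$ by identifying $H^{\prime}=C_{G}(rz)$ via Lemma \ref{nrmlprstmtsk}. The paper instead puts $r$ into $H^{\prime}$ by showing $r$ normalizes $H^{\prime}$ (Corollary \ref{cntrcnjgt}) and then using self-normalization (Lemma \ref{mxmlnrmlizr}). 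Both rest on the same fact, that a maximal dihedral subgroup is the centralizer of its central involution. In the section's general setting your Lemma \ref{nrmlprstmtsk} argument is the right justification; the appeal to Lemma \ref{dcvlkjhsdp} is not, since that lemma is specific to $PSL_{2}(q)$ and $PGL_{2}(q)$.
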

\begin{proof}
Observe that $ | Z(H) | = 2 $ because $ | R | $ is even. Since $ z \in r^{G}  $, it must be that $ z $ is not the identity. Hence $ o(z) = 2 $ and $ \langle r , z \rangle $ is a Klein four-subgroup of $ H$. Also $ zr \in R \subseteq r^{G} $ and so $ zr $ is conjugate to $ z $ in $ G $. Thus there exists $ g \in G $ such that $ zr = g^{-1} z g $. Hence $ zr \in Z( g^{-1} H g ) $.

\medskip
Note that $ r^{-1} (zr) r \in Z(r^{-1} ( g^{-1} H g) r ) $ and $ r^{-1} (zr) r = rzrr = rz = zr \in Z( g^{-1} H g ) $. Therefore $ r^{-1} ( g^{-1} H g) r = g^{-1} H g $ by Corollary \ref{cntrcnjgt}. We are assuming $ H $ not a normal subgroup of $ G $, which means that $ g^{-1} H g $ is not normal either. So it follows from Lemma \ref{mxmlnrmlizr} that $ r \in g^{-1} H g $.

\medskip
The rotational subgroup of $ g^{-1} H g $ is cyclic, which means it contains a unique subgroup $ \langle zr \rangle $ of order $ 2 $. Thus $ r $ is a reflection in $ g^{-1} H g $. Now $ \langle r , zr \rangle = \langle r , z \rangle \leq g^{-1} H g $. Hence $ \langle r , z \rangle \leq  H \cap g^{-1} H g $ and so $ | H \cap g^{-1} H g  | \geq 4 $. The subgroups $ H $ and $ g^{-1} H g $ stabilize the points $ H , Hg \in \Omega $, so $ | H \cap g^{-1} H g  | = 4 $ by Lemma \ref{klnfrntrsctn}.

\medskip
Finally $ g^{-1} H g  $ is the only subgroup of $ G$ that satisfies conditions $ (1) $ to $ (4 ) $ by Lemma \ref{asoidspdoc}.
\end{proof}
\begin{lema}\label{wqpodcisok}
Let $ R $ be the set of reflections of $ H $. Let $ r \in R $ and $ z \in Z(H) $. Let $ \Delta $ be the set of subgroups of $ G $ conjugate to $ H $ such that if $ H^{ \prime } \in \Delta $ and $ R^{ \prime } $ is the set of reflections of $ H^{ \prime } $ then $ R \cap R^{ \prime }  \neq \emptyset $. Suppose the followng;
\begin{itemize}
\item $ | R | $ is even,
\item $ R \subseteq r^{G} $,
\item $ z \in r^{G} $,
\item $ Ht(G, \Omega ) \neq 3 $.
\end{itemize}
Then the number of distinct reflections in subgroups in $ \Delta $ is at least
\[
 \Bigg( \frac{|G| | H |  }{ 4 | r^{G} | } - |H| \Bigg) \Bigg( \frac{| H |}{2}  - 1 \Bigg) + \frac{ | H | }{2} 
\]
\end{lema}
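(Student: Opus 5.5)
I would model the argument on Lemma \ref{zzkfziitrw}. The difference is that every reflection $r$ of $H$ now has one "bad" partner, namely the unique $H^{\prime}$ given by Lemma \ref{sctnszfrcntr} with $|H \cap H^{\prime}| = 4$. This is why $|H|$, rather than $\tfrac{1}{2}|H|$, is subtracted in the first factor.

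\textbf{Counting the subgroups.} Since $R \subseteq r^{G}$, we have $|r^{G} \cap R| = \tfrac{1}{2}|H|$. By Lemma \ref{ttlcnjgtsbrflc},
\[
|\Delta \setminus \{H\}| = \frac{|G||H|}{4|r^{G}|} - \frac{|H|}{2}.
\]
Each $r^{\prime} \in R$ lies in $R \cap R^{\prime}$ for the unique subgroup $H_{r^{\prime}}$ of Lemma \ref{sctnszfrcntr}, where $|H \cap H_{r^{\prime}}| = 4$. The map $r^{\prime} \mapsto H_{r^{\prime}}$ is at most $2$-to-$1$, because a Klein four-group meeting $H$ contains exactly two reflections of $H$ by Lemma \ref{klnfrntrsctn}. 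So at most $\tfrac{1}{2}|H|$ subgroups of $\Delta \setminus \{H\}$ meet $H$ in a Klein four-group. Removing them leaves a set $\Delta^{*}$ with
\[
|\Delta^{*}| \geq \frac{|G||H|}{4|r^{G}|} - |H|,
\]
and every $H_{1} \in \Delta^{*}$ satisfies $|H_{1} \cap H| = 2$. The intersection $H_{1} \cap H$ is generated by a reflection of $H$, and by Lemma \ref{intrscttpssbl} we get $|R_{1} \cap R| = 1$.

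\textbf{Disjointness of the new reflections.} Take distinct $H_{1}, H_{2} \in \Delta^{*}$. I claim that $R_{1} \setminus R$ and $R_{2} \setminus R$ are disjoint.

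If $R_{1} \cap R = R_{2} \cap R$, the argument of Lemma \ref{zzkfziitrw} applies: $|R_{1} \cap R_{2}| \leq 1$ by Lemma \ref{intrscttpssbl}, and the shared reflection lies in $R$.

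Otherwise $H_{1} \cap H \neq H_{2} \cap H$, and both intersections have order $2$. If $H_{1} \cap H_{2} \neq 1$, then Lemma \ref{htthrcndtns} gives $Ht(G,\Omega) = 3$, a contradiction. Hence $H_{1} \cap H_{2} = 1$, and in particular $R_{1} \cap R_{2} = \emptyset$.

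Each $H_{1}$ therefore contributes $\tfrac{1}{2}|H| - 1$ reflections outside $R$, pairwise disjoint across $\Delta^{*}$. Adding the $\tfrac{1}{2}|H|$ reflections of $H$ gives the stated lower bound.

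\textbf{Main obstacle.} The delicate step is bounding the number of discarded subgroups by $\tfrac{1}{2}|H|$, which needs the $2$-to-$1$ claim. I would first check via Lemma \ref{klnfrntrsctn} that such an $H^{\prime}$ is determined by $\langle r, z \rangle$, so that the two reflections $r$ and $rz$ yield the same $H^{\prime}$. Even if that identification fails, a crude bound of $|R| = \tfrac{1}{2}|H|$ discarded subgroups still works, since subtracting $|H|$ leaves exactly that room.
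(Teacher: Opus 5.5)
Your argument is correct and is essentially the paper's proof. Both count $\Delta\setminus\{H\}$ with Lemma \ref{ttlcnjgtsbrflc} and discard the subgroups meeting $H$ in a Klein four-group. Both then show, via Lemmas \ref{intrscttpssbl} and \ref{htthrcndtns}, that each remaining subgroup contributes $\tfrac12|H|-1$ new reflections, pairwise disjoint across subgroups.

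The one step to repair is your count of discarded subgroups.
\begin{itemize}
\item The identification $H_{r}=H_{rz}$ you propose to check is false. By the proof of Lemma \ref{sctnszfrcntr}, $rz$ is the central rotation of $H_{r}$, and the reflections of $H_r$ lying in $H\cap H_r$ are $r$ and $z$. So $r'\mapsto H_{r'}$ is actually injective, as the paper shows.
\item The inference ``at most $2$-to-$1$, hence at most $\tfrac12|H|$'' does not follow in any case, since fibre sizes do not bound the image from above.
\item What you need is your fallback, with one line of justification. Every $H'\in\Delta\setminus\{H\}$ with $|H\cap H'|=4$ equals $H_{r'}$ for any $r'\in R\cap R'$, by the uniqueness in Lemma \ref{sctnszfrcntr}. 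Hence at most $|R|=\tfrac12|H|$ subgroups are discarded, which is all the stated bound requires.
\end{itemize}
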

\begin{proof}
Let $ r_{1} \in R $. Lemma \ref{sctnszfrcntr} shows that there is a unique subgroup $ H_{1} \in \Delta \setminus \{ H \} $ such that $ r_{1} $ is a reflection in $ H_{1} $ and $ | H_{1} \cap H | = 4 $.

\medskip
Suppose $ r_{2} \in R $ and $ r_{2} \neq r_{1} $. Let $ H_{2} \in \Delta \setminus \{ H \} $ be the unique subgroup such that $ r_{2} $ is a reflection in $ H _{2} $ and $ | H_{2} \cap H | = 4 $. Observe $ r_{2} $ is not a reflection in $ H_{1} $, otherwise $ H_{1} $ would contain the non-trivial rotation $ r_{1} r_{2} $ which implies $ H_{1} = H $ by Lemma \ref{ntnrmlmxml}. Therefore $ H_{1} \neq H_{2} $.

\medskip
This means there are $ | R | $ subgroups in $ \Delta \setminus \{ H \}  $ that intersect $ H $ in a subgroup of order $ 4 $. Let $ \Delta ^{ \prime } $ be the set of such subgroups. The fact that $  R \subseteq r^{G} $ means $ |  r^{G} \cap R | = | R | = \tfrac{1}{2} | H | $. So using Lemma \ref{ttlcnjgtsbrflc} we have
\begin{align*}
| \Delta \setminus ( \Delta ^{ \prime } \cup \{ H \} ) | = | \Delta | - | \Delta ^{ \prime } | - 1 = | \Delta | - | R | - 1
= \frac{|G| | H |  }{ 4 | r^{G} | } - |H|
\end{align*}
The subgroups in $ \Delta $ are stabilizers of some points in $ \Omega $, so Lemma \ref{klnfrntrsctn} shows that the subgroups in $  \Delta \setminus ( \Delta ^{ \prime } \cup \{ H \} )  $ intersect $ H $ in a subgroup of order $ 2 $, generated by the reflection each subgroup shares with $ H $. The number of reflections in subgroups in $ | \Delta \setminus ( \Delta ^{ \prime } \cup \{ H \} ) | $ can now be counted.

\medskip
Suppose $ H_{3} , H_{4} \in  \Delta \setminus ( \Delta ^{ \prime } \cup \{ H \} ) $ with $ H_{3} \neq H_{4} $. Let $ R_{3} $ and $ R_{4} $ be the set of reflections of $ H_{3} $ and $H_{4} $ respectively. We have $ | R_{3} \cap R | = | R_{4} \cap R | = 1 $ by Lemma \ref{intrscttpssbl}.

\medskip
We show that each of the $ | H | / 2 - 1 $ reflections in $ R_{3} \setminus R_{3} \cap R $ are distinct from each of the $ | H | / 2 -1 $ reflections in $ R_{4} \setminus R_{4} \cap R $. This is split into two cases.

\medskip
For the first case suppose $ R_{3} \cap R  =  R_{4} \cap R  $. Then $ R_{3} \cap R \subseteq R_{3} \cap R_{4} $ and $ | R_{3} \cap R_{4} | \geq 1 $. It follows from Lemma \ref{intrscttpssbl} that $ | R_{3} \cap R_{4} | = 1 $. Hence $ R_{3} \cap R_{4} =  R_{3} \cap R $. So the $ \tfrac{1}{2} | H | - 1 $ elements of $ R_{3} \setminus  R_{3} \cap R $ are distinct from the $ \tfrac{1}{2} | H | - 1 $ elements of $ R_{4} \setminus  R_{4} \cap R $.

\medskip
For the second case suppose $ R_{3} \cap R  \neq  R_{4} \cap R  $. Lemma \ref{intrscttpssbl} shows $ | R_{3} \cap R_{4} | \leq 1 $.

\medskip
Lemma \ref{klnfrntrsctn} shows that $ | H_{3} \cap H  | = | H_{4} \cap H  | = 2 $ since $ | R_{3} \cap R | = | R_{4} \cap R | = 1 $ and $ | H_{3} \cap H  | \neq 4 \neq | H_{4} \cap H  | $. The fact $ R_{3} \cap R  \neq  R_{4} \cap R  $ implies $ H_{3} \cap H  \neq H_{4} \cap H $.

\medskip
If $ | R_{3} \cap R_{4} | = 1 $ then $ | H_{3} \cap H_{4} | \geq 2 $ and it follows that $ Ht(G, \Omega ) =3 $ by Lemma \ref{htthrcndtns}, which is a contradiction. Thus $ | R_{3} \cap R_{4} | \neq 1 $.

\medskip
It must be that $ | R_{3} \cap R_{4} | = 0 $, showing that the reflections in $ R_{3} $ are distinct from those in $ R_{4} $. In particular the $ \tfrac{1}{2} | H | - 1 $ reflections in $ R_{3} \setminus ( R_{3} \cap R ) $ are distinct from the $ \tfrac{1}{2} | H | - 1 $ reflections in $ R_{4} \setminus ( R_{4} \cap R ) $.

\medskip
The total number of reflections in subgroups in $ \Delta \setminus ( \Delta ^{ \prime } \cup \{ H \} )  $ excluding those in $ R $ are
\[
| \Delta \setminus ( \Delta ^{ \prime } \cup \{ H \} ) | \Bigg( \frac{| H |}{2}  - 1 \Bigg) 
= \Bigg( \frac{|G| | H |  }{ 4 | r^{G} | } - |H| \Bigg) \Bigg( \frac{| H |}{2}  - 1 \Bigg)  .
\]
The $ \tfrac{1}{2} | H | $ in $ R $ have not been counted, so including these gives at least
\[
 \Bigg( \frac{|G| | H |  }{ 4 | r^{G} | } - |H| \Bigg) \Bigg( \frac{| H |}{2}  - 1 \Bigg) + \frac{ | H | }{2}   
\]
reflections in subgroups in $ \Delta $.
\end{proof}
\begin{exmp}\label{pslqdddvfr}
Let $ G = PSL_{2} (q) $ where $ q $ is odd and $ q \geq 11 $. Then $ | G | = \tfrac{1}{2} q (q+1) (q-1) $.

\medskip
Let $ H $ be a maximal $ D_{ (q-1) } $ or $ D_{ (q+1) } $ and let $ \Omega $ be the set of right cosets of $ H$. Furthermore let $ R $ be the reflections of $ H $ and suppose $ | R | $ is even.

\medskip
Let $ r \in R$. By Lemma \ref{pslnvltnscnjg} all involutions in $ G $ are conjugate to $ r $. Therefore $   r^{G} \cap R = R $ and so $ | r^{G} \cap R | = \tfrac{1}{2} | H | $. The rotational subgroup of $ H$ must have even order, which means it contains a unique involution $ z \in Z(H) $ and this must lie in $  r^{G} $.

\medskip
Let $ \Delta $ be the set of subgroups of $ G $ conjugate to $ H $ such that if $ H^{ \prime } \in \Delta $ and $ R^{ \prime } $ is the set of reflections of $ H^{ \prime } $ then $ R \cap R^{ \prime } \neq \emptyset $.

\medskip
From here two cases will be looked at; when $ q \equiv 1 \pmod{4} $ and when $ q \equiv 3 \pmod{4} $.

\medskip
First suppose $ q \equiv 1 \pmod{4} $. Then $ q \geq 13 $. Lemma \ref{pslnvltnscnjg} shows that $ |  r^{G}|  = \tfrac{1}{2} q ( q+ 1) $. If $ H \cong D_{q+1 } $ then $ | R | = \tfrac{1}{2} | H |  $ is odd, which contradicts our earlier assumption. So $ H \cong D_{q-1 } $ and $ | H | = q - 1 $.

\medskip
Let $ n_{1} $ be the number of distinct reflections in subgroups in $ \Delta $. These reflections lie in $  r^{G} $ so $ n_{1} \leq |  r^{G}| $ and $ n_{1} -   |  r^{G}| \leq 0 $. If $ Ht(G, \Omega ) \neq 3 $ then Lemma \ref{wqpodcisok} shows that
\begin{align*}
n_{1} -   |  r^{G}|
& \geq 
 \Bigg( \frac{|G| | H |  }{ 4 | r^{G} | } - |H| \Bigg) \Bigg( \frac{| H |}{2}  - 1 \Bigg) + \frac{ | H | }{2} -  |  r^{G}| \\
& =  \Bigg( \frac{ \tfrac{1}{2} q (q+1) (q-1)^{2} }{ 2 q ( q+ 1)  } - (q-1) \Bigg) \Bigg( \frac{q-1}{2}   - 1 \Bigg) + \frac{ q-1 }{2}  -  \frac{q ( q+ 1) }{2} \\
& =  \Bigg( \frac{  (q-1)^{2} }{ 4  } - (q-1) \Bigg) \frac{q-3}{2}    -  \frac{q ^{2} + 1 }{2}  \\
& =  \frac{1}{8} (q-1) (   q-1  - 4 ) (q-3)    -  \frac{q ^{2} + 1 }{2} \\
& =  \frac{1}{8} (q-1) (   q-5 ) (q-3)    -  \frac{q ^{2} + 1 }{2} \\
& =  \frac{1}{8} (q^{3} - 9q^{2} + 23q - 15)    -  \frac{q ^{2} + 1 }{2}  \\
& =  \frac{1}{8} (q^{3} - 13q^{2} + 23q - 19)     \\
& =  \frac{1}{8} q^{2}(q - 13 ) + \frac{1}{8} (23q - 19)     \\
& > 0. \tag{Since $ q \geq 13 $}
\end{align*}
This is a contradiction. Thus $ Ht(G, \Omega ) = 3 $ in this case.

\medskip
For the second case suppose  $ q \equiv 3 \pmod{4} $. Then Lemma \ref{pslnvltnscnjg} shows that $ |  r^{G}|  = \tfrac{1}{2} q ( q- 1) $. If $ H \cong D_{q-1 } $ then $ | R | = \tfrac{1}{2} | H |  $ is odd, which contradicts our earlier assumption. So $ H \cong D_{q+1 } $ and $ | H | = q + 1 $.

\medskip
Let $ n_{2} $ be the number of distinct reflections in subgroups in $ \Delta $. These reflections lie in $  r^{G} $ so $ n_{2} \leq |  r^{G}| $ and $ n_{2} -   |  r^{G}| \leq 0 $. If $ Ht(G, \Omega ) \neq 3 $ then Lemma \ref{wqpodcisok} shows that
\begin{align*}
n_{2} -   |  r^{G}|
& \geq 
 \Bigg( \frac{|G| | H |  }{ 4 | r^{G} | } - |H| \Bigg) \Bigg( \frac{| H |}{2}  - 1 \Bigg) + \frac{ | H | }{2} -  |  r^{G}|  \\
 & =
 \Bigg( \frac{\tfrac{1}{2} q (q+1) ^{2} (q-1) }{ 2 q ( q- 1) } - ( q + 1 )  \Bigg) \Bigg( \frac{ q + 1 }{2}  - 1 \Bigg) +  \frac{  q + 1  }{2} -  \frac{ q ( q- 1) }{2}    \\
  & =
 \Bigg( \frac{  (q+1) ^{2}  }{4  } - ( q + 1 )  \Bigg)  \frac{ q - 1 }{2} + \frac{  1 }{2} (2q - q^{2} +1) \\
   & =
\frac{1}{8} ( q + 1 )  (   q+1   - 4  ) (q - 1) + \frac{  1 }{2} (2q - q^{2} +1)   \\
   & =
\frac{1}{8}  (   q-3  ) (q^{2} - 1) + \frac{  1 }{2} (2q - q^{2} +1)  \\
& =
\frac{1}{8}  (   q^{3}  - 3 q^{2} - q +3 ) + \frac{  1 }{2} (2q - q^{2} +1)  \\
& =
\frac{1}{8}  (   q^{3} -7 q^{2} +7 q +7 ) \\
& =
\frac{1}{8} q^{2} (   q - 7)  + \frac{7}{8} q + \frac{7}{8} \\
& > 0. \tag{Since $ q \geq 11 $}
\end{align*}
Again we have a contradiction, so $ Ht(G, \Omega ) = 3 $ in this case as well.
\end{exmp}
\begin{lema}\label{mxmpgldhdral}
Suppose $ q $ is odd with $ q \geq 13 $. Put $ G := PGL_{2} (q) $. Let $ H < G $ be a maximal $ D_{2 (q-1) } $ and let $ S \leq G $ where $ S \cong PSL_{2} (q) $. Then $ H \cap S $ is maximal in $ S $ and $ H \cap S \cong D_{ (q-1)} $.
\end{lema}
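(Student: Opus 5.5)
Most of the work is already done by Lemma \ref{mxmlsbpslntr}. Its hypotheses hold here: $q$ is odd with $q \geq 13$, and $H$ is a maximal subgroup of $G$ isomorphic to $D_{2(q-1)}$, so $H$ is not isomorphic to $S_{4}$. The order $2(q-1) \geq 24$ equals $24$ only when $q = 13$, and even then a dihedral group is not $S_{4}$. It also remains to check that $H$ is not normal in $G$. By Lemma \ref{nrmlsbgrpspgl} the only normal subgroups of $G$ are $1$, $G$ and the copy of $PSL_{2}(q)$. The order $|H| = 2(q-1)$ is none of $1$, $q(q^{2}-1)$ or $\tfrac{1}{2}q(q^{2}-1)$, since $q \geq 13$. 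Lemma \ref{mxmlsbpslntr} then gives that $H \cap S$ is maximal in $S$ and $|H \cap S| = \tfrac{1}{2}|H| = q-1$.

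It remains to identify the isomorphism type of $H \cap S$. I would go through Table \ref{tablethree}, the maximal subgroups of $PSL_{2}(q)$ for odd $q$, and see which have order exactly $q-1$. The candidates are as follows.
\begin{itemize}
\item The Borel subgroup has order $\tfrac{1}{2}q(q-1) > q-1$.
\item The non-split torus has order $q+1 \neq q-1$.
\item $A_{4}$, $S_{4}$ and $A_{5}$ have orders $12$, $24$ and $60$.
\item The subfield subgroups have orders $\tfrac{1}{2}q_{0}(q_{0}^{2}-1)$ or $q_{0}(q_{0}^{2}-1)$.
\item The split torus $D_{(q-1)}$ has order $q-1$, and it appears in the table for $q \neq 5,7,9,11$, so for all $q \geq 13$.
\end{itemize}
The small and subfield cases are where care is needed, because a coincidence of orders could a priori occur.

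I would not rely on order arithmetic alone. A cleaner route is to show that $H \cap S$ is dihedral directly; the table then only serves as a consistency check. Write $H = N_{G}(A)$ with $A \cong C_{q-1}$ its rotational subgroup. By Lemma \ref{dcvlkjhsdp} there is a dihedral $D \leq S$ of order $q-1$, and its rotational subgroup $A_{0}$ has order $\tfrac{1}{2}(q-1) \geq 6$. By Lemma \ref{nrmlzr}, $N_{G}(A_{0})$ is a $D_{2(q\pm1)}$ that is conjugate to $H$. Since $|A_{0}|$ divides $q-1$, is at least $6$, and $\gcd(q-1,q+1)=2$, this normalizer must be a $D_{2(q-1)}$.

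Conjugating, we may assume $A_{0} \leq A$ and $H = N_{G}(A_{0}) \supseteq D$. The subgroup $D$ lies in $H \cap S$ and has order $q-1 = |H \cap S|$, so $H \cap S = D \cong D_{(q-1)}$. The step I expect to need the most care is this conjugation argument, in particular confirming via Lemma \ref{nrmlzr} that the normalizer in $G$ of $A_{0}$ is the dihedral of order $2(q-1)$ rather than $2(q+1)$.
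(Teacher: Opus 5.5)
Your proof is correct. The first half, checking the hypotheses of Lemma \ref{mxmlsbpslntr} to get maximality and $|H\cap S| = q-1$, matches the paper, but your identification of the isomorphism type is a genuinely different argument.

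The paper notes that $H \cap S$, being a subgroup of the dihedral group $H$, is either dihedral or cyclic. It then excludes the cyclic case because Table \ref{tablethree} lists no maximal cyclic subgroups of $PSL_{2}(q)$. That one observation disposes of the order coincidences you were right to distrust, and they genuinely occur: for example $|A_{4}| = 12 = q-1$ when $q = 13$, and $|A_{5}| = 60 = q-1$ when $q = 61$.

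You instead construct the intersection. You take $D \cong D_{(q-1)}$ in $S$, show its rotation subgroup $A_{0}$ has normalizer in $G$ isomorphic to $D_{2(q-1)}$ and conjugate to $H$, and deduce $D \le H \cap S$; the order count then finishes.

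The one step you should justify explicitly is the reduction ``conjugating, we may assume $H = N_{G}(A_{0})$''. Here you conjugate $D$ and $A_{0}$ rather than $H$, and $D$ stays inside $S$ only because $S \trianglelefteq G$, which holds since $S$ has index $2$.

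With that in place, your route is more structural and in a sense more informative. It shows $H \cap S = N_{G}(A_{0}) \cap S = N_{S}(A_{0})$, which by Lemma \ref{dcvlkjhsdp} already equals $D$, so even the counting is unnecessary. The paper's route is shorter, but it relies on the maximality of $H\cap S$ together with the classification table to rule out the cyclic alternative.
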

\begin{proof}
Lemma \ref{mxmlsbpslntr} tells us $ H \cap S $ is maximal in $ S $ and $ | H \cap S | = \tfrac{1}{2} | H | = q - 1 $. As $ H \cap S $ is a subgroup of a dihedral group $ H $, it is either dihedral or cyclic. We see from Table \ref{tablethree} that there are no maximal cyclic subgroups of order $ q-1 $ in $ PSL _{2} (q) $. Thus $ H \cap S \cong D_{ (q-1) } $.
\end{proof}
\begin{exmp}\label{fnlxmpatlst}
Let $ G = PGL_{2} (q) $ where $ q $ is odd and $ q \geq 13 $. Suppose $ H \leq G $ is a maximal $ D_{ 2(q-1) } $ and let $ \Omega $ be the set of right cosets of $ H$.

\medskip
By Lemma \ref{pslsnpgl} there exists $ S \leq G $ with $ S \cong PSL_{2} (q) $. Lemma \ref{mxmpgldhdral} shows that $ H{^ \prime} := H \cap S \cong D_{(q-1)}  $ and is maximal in $ S $. Let $ \Omega ^{ \prime } $ be the set of right cosets of $ H^{ \prime } $ in $ S $.

\medskip
We see from Examples \ref{psfljdshu} and \ref{pslqdddvfr} that $ Ht( S, \Omega ^{ \prime } ) = 3 $. Therefore there exists an independent set $ \Delta ^{ \prime } := \{ \delta_{1} ^{ \prime } , \delta_{2} ^{ \prime } , \delta_{3} ^{ \prime } \} \subseteq \Omega ^{ \prime }  $. We see from Lemma \ref{scndfnt} that for each $ i , j  \in \{ 1, 2,3 \} $ with $ i \neq j $, there exists some $ g_{i,j} \in S_{  \delta _{i} ^{ \prime}   } \cap S_{  \delta _{j} ^{ \prime}   } $ such that $  g_{i,j} \notin S_{ ( \Delta ^{ \prime} ) } $.

\medskip
For each $ k \in \{1, 2 , 3 \} $, the stabilizers $ S_{  \delta _{k} ^{ \prime}   } = h_{k} ^{-1} H ^{ \prime } h_{k} $ for some $ h_{k} \in S $. So $ S_{  \delta _{k} ^{ \prime }   } = h_{k} ^{-1} H h_{k} \cap S $.

\medskip
Observe that for each $ k \in \{ 1, 2 , 3 \} $ the subgroup $ h_{k} ^{-1} H h_{k} $ is a stabilizer of some point $ \delta _{k} \in \Omega $. For each $ i , j  \in \{ 1, 2,3 \} $ with $ i \neq j $ we have $ g_{i,j} \in H_{i} \cap H_{j} $ and $ g_{i,j} \notin H_{1} \cap H_{2} \cap H_{3} $. Hence $ H_{1} $, $ H_{2} $ and $ H_{3} $ are distinct from each other and it follows that $ \delta_{1} $, $ \delta_{2} $ and $ \delta_{3} $ are distinct as well.

\medskip
By Lemma \ref{scndfnt} the set  $ \{ \delta_{1} , \delta_{2} , \delta _{3} \} $ is independent. Therefore $ Ht(G, \Omega ) \geq 3 $. Using Corollary \ref{trshght} we get $ Ht(G, \Omega ) = 3 $.
\end{exmp}
\begin{thrm}
Let $ G $ be $ PGL_{2} (q) $ or $ PSL_{2} (q) $ acting on the cosets $ \Omega $ of a maximal $ D_{2(q \pm 1) / \delta } $ subgroup, where $ \delta = 1 $ if $ G = PGL_{2} (q) $ and $ \delta = 2 $ if $ G = PSL_{2} (q) $. If $ G $ is $ PSL_{2} (4) $ or $ PSL_{2} (5) $ acting on a maximal $ D_{6} $ then $ Ht (G , \Omega ) = 2 $. Otherwise $ Ht (G , \Omega ) = 3 $.
\end{thrm}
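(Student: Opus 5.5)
This theorem collects the six examples and then handles the leftover small cases. Corollary \ref{trshght} already gives $ Ht(G,\Omega) \leq 3 $ for every maximal dihedral action with $ |H| \geq 6 $ and $ |\Omega| \geq 4 $. So in the generic cases the work is to produce an independent set of size $3$, and in the special cases to decide between $2$ and $3$.

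First I will check, against Tables \ref{tableone}, \ref{tabletwo} and \ref{tablethree}, that the generic cases are exactly those covered by the examples. Example \ref{xmpcrect} covers $q$ even with $D_{2(q+1)}$. Example \ref{mssngxmp} covers $q$ even, $q\geq 8$, with $D_{2(q-1)}$; for $q=4$ the group $D_{6}$ is the exceptional case. Example \ref{expslqevn} covers $PGL_{2}(q)$, $q$ odd, with $D_{2(q+1)}$. Examples \ref{psfljdshu} and \ref{pslqdddvfr} cover $PSL_{2}(q)$, $q$ odd, $q\geq 11$, according to the parity of $|R|$. Example \ref{fnlxmpatlst} covers $PGL_{2}(q)$ with $D_{2(q-1)}$ and $q\geq 13$.

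For $PSL_{2}(q)$ with $q$ odd and $q<11$, only $q=5$ has maximal dihedrals: $D_{6}$ from the non-split torus, since $q=7,9$ have none. For $q=5$ I will use $PSL_{2}(5)\cong A_{5}$ acting on the $10$ cosets of $D_{6}$. Two distinct $D_{6}$'s meet in at most a $C_{2}$ (Lemma \ref{klnfrntrsctn}; there are no Klein four-subgroups in $D_{6}$). If three stabilizers pairwise met in distinct $C_{2}$'s, the three involutions would lie in $D_{6}$ conjugates in a way I expect to contradict the structure of $A_{5}$. Concretely, each involution lies in exactly $|G|/(2|r^{G}|)=60/30=2$ conjugates by Corollary \ref{crlrsrflctn}. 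A triangle of pairwise intersections would then require specific configurations, and I will verify directly, or by citing the GAP tables in \cite{WISCONS1}, that none exists. That gives height $2$. The same applies to $PSL_{2}(4)\cong A_{5}$ acting on the $D_{6}$, which is the same permutation action, so $Ht=2$ there too.

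The remaining cases are $PGL_{2}(q)$ with $D_{2(q-1)}$ for $q\in\{7,9,11\}$. Here I plan to mimic Example \ref{fnlxmpatlst}: find some subgroup of $S\cong PSL_{2}(q)$ whose conjugates intersect $S$-wise in an independent triple, and lift it. The problem is that $D_{q-1}$ is not maximal in $PSL_{2}(q)$ for these $q$, and Lemma \ref{mxmlsbpslntr} needs $q\geq 13$. The alternative is to find, directly in $H\cong D_{2(q-1)}$, two conjugates $H_{1},H_{2}$ meeting $H$ in distinct $C_{2}$'s with $H_{1}\cap H_{2}\neq 1$, and then apply Lemma \ref{htthrcndtns}. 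Since these are explicit small groups ($PGL_{2}(7)$, $PGL_{2}(9)$, $PGL_{2}(11)$ on $28$, $45$ and $66$ points), I expect to settle them by the tables in \cite{WISCONS1} or a direct computation.

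The main obstacle is this small-$q$ bookkeeping, especially $PGL_{2}(q)$ for $q\in\{7,9,11\}$, where the general counting arguments break down. I would first try the counting of Lemma \ref{ttlcnjgtsbrflc} with the reflection classes from Lemma \ref{pglcnjclssrfl}, and fall back on computation if that fails.
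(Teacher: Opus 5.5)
Your proposal is correct and follows the paper's proof. The paper cites Examples \ref{xmpcrect}, \ref{expslqevn}, \ref{psfljdshu}, \ref{mssngxmp}, \ref{pslqdddvfr} and \ref{fnlxmpatlst} for the generic cases, and settles the leftover cases ($PSL_{2}(4)$ and $PSL_{2}(5)$ on $D_{6}$, and $PGL_{2}(q)$ on $D_{2(q-1)}$ for $q \in \{7,9,11\}$) simply by the tables in \cite{WISCONS1}, which is exactly your stated fallback. Your tentative direct arguments for these small cases are therefore unnecessary, although for $A_{5}$ on the $10$ cosets of $D_{6}$ (the action on $2$-subsets of a $5$-set) your setup finishes at once: an independent triple would need three pairwise disjoint $2$-subsets, which is impossible.
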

\begin{proof}
If $ G $ is $ PSL_{2} (4) $ or $ PSL_{2} (5) $ acting on a maximal $ D_{6} $, the tables in \cite{WISCONS1} show $ Ht(G, \Omega ) = 2 $. The same tables show $ Ht(G, \Omega ) = 3 $ for $ PGL_{2} (q) $ acting on $ D_{ 2(q-1) } $ when $ q \in \{ 7 , 9 , 11 \} $. 

\medskip
All other cases are covered in Examples \ref{xmpcrect}, \ref{expslqevn}, \ref{psfljdshu}, \ref{mssngxmp}, \ref{pslqdddvfr} and \ref{fnlxmpatlst}.
\end{proof}
\section{Relational Complexity of the Dihedral Actions}
Throughout this section let $ G $ be a finite group that is either simple or whose proper normal subgroups are all maximal. Also suppose $ G $ contains a maximal subgroup $ H $ that is a dihedral group of order at least $ 6 $. Also suppose $ H \ntriangleleft G $. Let $ \Omega $ be the set of right cosets of $ H$.

\medskip
Corollary \ref{trshght} shows $ Ht(G, \Omega) \leq 3 $, so it follows that $ RC(G, \Omega ) \leq 4 $ by Theorem \ref{rchgt}. The aim is to show $ RC(G, \Omega ) = 3 $.

\medskip
Let $ I , J \in \Omega^{4} $, where $ I = ( I_{1} , \dots , I_{4} ) $ and $ J = ( J_{1} , \dots , J_{4} ) $. By Corollary \ref{rcshsbtplcmp} is is sufficient to show that if $ I \sim _{3} J $ then $ I \sim _{4} J $. So suppose $ I \sim _{3} J $.

\medskip
If any of the entries of $ I $ are repeated then $ I \sim _{4} J $ by Lemma \ref{rptdntrs} and there is nothing more to show. So suppose none of the entries of $I$ are repeated. By Lemma \ref{frsnrtytql} we may assume that $ I _{i} = J _{i} $ for all $ i \in \{ 1, 2 , 3 \} $.

\medskip
Let $ X = \{ I_{1} , \dots , I _{4} \} $, the set of entries of $ I$. If any proper subset of $ X $ is not independent, then $ I \sim _{4} J $ by Lemma \ref{ntndpndntntrs}. First we deal with a case where this happens.
\begin{lema}\label{rttnordrtwndpdn}
Suppose $ H$ does not contain a rotation of order $ 2 $. Then there exists $ \Gamma \subset X $ such that $ \Gamma $ is not independent.
\end{lema}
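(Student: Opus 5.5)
The plan is to exploit the fact that, when $ H $ has no rotation of order $ 2 $, the rotational subgroup of every conjugate of $ H $ has odd order. So no point stabilizer contains a Klein four-subgroup. Lemma \ref{klnfrntrsctn} then gives $ | G_{ \omega_{1} , \omega_{2} } | \in \{ 1 , 2 \} $ for any two points whose stabilizers differ. When the intersection has order $ 2 $, it is generated by an involution that is a reflection in both stabilizers, since there is no rotation of order $ 2 $. I will assume for contradiction that every proper non-empty subset of $ X $ is independent, and derive a contradiction.

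\textbf{Step 1 (pairs).} Every $ 2 $-subset $ \{ I_{i} , I_{j} \} $ of $ X $ is independent. By Lemma \ref{indsbstsntqual} the four stabilizers $ G_{ I_{1} } , \dots , G_{ I_{4} } $ are then pairwise distinct. Hence each $ G_{ I_{i} , I_{j} } $ has order $ 1 $ or $ 2 $.

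\textbf{Step 2 (triples).} Take any $ 3 $-subset $ \{ I_{i} , I_{j} , I_{k} \} $. By Lemma \ref{scndfnt}, its independence forces
\[
G_{ I_{i} , I_{j} , I_{k} } < G_{ I_{i} , I_{j} } .
\]
So $ G_{ I_{i} , I_{j} } $ cannot be trivial. Since every pair lies in some triple, all six intersections $ G_{ I_{i} , I_{j} } = \langle s_{ij} \rangle $ have order $ 2 $, and every triple intersection is trivial. Applying Lemma \ref{indsbstsntqual} inside each triple shows that $ s_{ij} , s_{ik} , s_{jk} $ are pairwise distinct. In particular, for each fixed $ i $, the involutions $ s_{ij} $ with $ j \neq i $ are three distinct reflections of $ G_{ I_{i} } $. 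By Lemma \ref{intrscttpssbl}, $ s_{ij} $ is the \emph{only} reflection shared by $ G_{ I_{i} } $ and $ G_{ I_{j} } $.

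\textbf{Step 3 (contradiction).} This is the step I expect to be the main obstacle. The configuration of Steps 1 and 2 is locally consistent, so some global property of $ G $ is needed.
\begin{itemize}
\item First I would use the odd-order rotational subgroup of $ G_{ I_{i} } $. This gives $ C_{ G_{ I_{i} } } ( s_{ij} ) = \langle s_{ij} \rangle $.
\item I would then study the element $ s_{12} s_{34} $, and the rotations $ s_{12} s_{13} \in G_{ I_{1} } $ and $ s_{12} s_{23} \in G_{ I_{2} } $.
\item I would try to show that some non-trivial rotation is common to two of the $ G_{ I_{i} } $. That contradicts Lemma \ref{ntnrmlmxml} together with Lemma \ref{dhdrlrottnnrml}.
\end{itemize}
If that fails, the fallback is to bring in the hypotheses $ I \sim_{3} J $ and $ I_{i} = J_{i} $ for $ i \le 3 $. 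From them, elements of $ \langle s_{14} \rangle $, $ \langle s_{24} \rangle $ and $ \langle s_{34} \rangle $ must carry $ I_{4} $ to $ J_{4} $, while fixing two of $ I_{1} , I_{2} , I_{3} $ in each case. The aim would be to force $ I_{4} = J_{4} $, or to force a repeated reflection among the $ s_{ij} $, either of which contradicts Step 2.

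Once a contradiction is reached, some $ \Gamma \subset X $ is not independent, as claimed. The rest of the section would then conclude $ I \sim_{4} J $ via Lemma \ref{ntndpndntntrs}.
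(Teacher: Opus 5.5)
Your Steps 1 and 2 are correct and match the paper's opening: all six two-point stabilizers are $\langle s_{ij}\rangle$ of order $2$, and all three-point stabilizers are trivial. Step 3, however, has a genuine gap, and your primary plan for it cannot succeed. The configuration of Steps 1--2 actually occurs, so no argument using only the structure of $G$ and $X$ can refute it.

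A concrete example is $G=PSL_{2}(4)\cong PSL_{2}(5)$ acting on the cosets of a maximal $D_{10}$. This is the $2$-transitive action on the $6$ points of the projective line over $\mathbb{F}_{5}$. Its two-point stabilizers have order $60/30=2$, its three-point stabilizers are trivial, and $D_{10}$ has no rotation of order $2$. Hence every $4$-subset of $\Omega$ has all of its proper subsets independent.

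So the lemma is false as a statement about $X$ alone. It holds only in the running context $I\sim_{3}J$ and $I_{i}=J_{i}$ for $i\le 3$, together with $J_{4}\neq I_{4}$. The last condition is harmless, since $J_{4}=I_{4}$ gives $I=J$ directly. The paper's proof uses exactly this extra hypothesis.

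Your fallback points in the right direction but is not yet a proof.

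First, you have chosen the wrong involutions. The elements $s_{14},s_{24},s_{34}$ fix $I_{4}$, so they cannot carry it to $J_{4}$. It is $s_{12},s_{13},s_{23}$ that do this. For example, a $g$ with $(I_{1},I_{2},I_{4})^{g}=(I_{1},I_{2},J_{4})$ lies in $G_{I_{1},I_{2}}=\langle s_{12}\rangle$, and once $J_{4}\neq I_{4}$ it must equal $s_{12}$. Likewise $s_{13}$ sends $I_{4}$ to $J_{4}$.

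Second, the decisive step is missing. Both $s_{12}$ and $s_{13}$ lie in $\text{Send}_{G_{I_{1}}}(I_{4},J_{4})$. By Lemma \ref{sndcst} this set is the coset $G_{I_{1},I_{4}}\,s_{13}=\{s_{13},\,s_{14}s_{13}\}$. Since $s_{12}\neq s_{13}$, you get $s_{12}=s_{14}s_{13}$. The right-hand side is a product of two distinct reflections of $G_{I_{1}}$, hence a nontrivial rotation. It is also an involution, so it is a rotation of order $2$ in $G_{I_{1}}$, contradicting the hypothesis on $H$.

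Note that the contradiction is with that hypothesis, not with Step 2. Also, merely deriving $I_{4}=J_{4}$ would not establish the stated conclusion unless $J_{4}\neq I_{4}$ has been assumed.
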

\begin{proof}
Suppose for a contradiction that every proper subset of $ X $ is independent. By Lemma \ref{klnfrntrsctn} we have $ | G_{ I _{j} , I _{k} } | \neq 4 $ for all $ j , k \in \{ 1, \dots , 4 \} $ with $ j \neq k $. Lemmas \ref{stach}, \ref{klnfrntrsctn} and \ref{trsndstszthr} together show that $ | G_{ I _{j} , I _{k} } | = 2 $. So these stabilizers can be written as
\begin{align*}
& G_{ I _{1} , I _{2} } = \{ 1_{G}   , \ r_{1,2}  \} \\
& G_{ I _{1} , I _{3} } = \{ 1_{G}   , \ r_{1,3} \} , \\
& G_{ I _{1} , I _{4} } = \{ 1_{G}   , \ r_{1,4} \} .
\end{align*}
Since $ \{ I _{1} , I _{2} , I_{3}  \} $ is an independent subset of $ X$, we have $ | G_{ I _{1} , I _{2} } \cap G_{ I _{1} , I _{3} }  | = | G_{ I _{1} , I _{2} , I_{3} } | = 1 $ by Lemma \ref{trsndstszthr}. By the same reasoning, any pair of the above stabilizers have trivial intersection. So each of their non-identity elements are distinct in $ G $.

\medskip
Now consider the restriction of the action of $ G$ to the subgroup $ G _{I_{1} } $. Recall the notation $ \text{Send} _{ G_{I_{1} } } ( I_{4} , J_{4} ) := \{ g \in G_{I_{1}} : I_{4} ^{g} = J_{4} \} $. Since $ I \sim _{3} J $, there exists $ h_{1} \in G $ such that
\[
(I_{1} , I_{2} , I_{4} ) ^{h_{1} } = (J_{1} , J_{2} , J_{4} ) = (I_{1} , I_{2} , J_{4} ) .
\]
Hence $ h_{1} \in G_{ I _{1} , I _{2} } \cap \text{Send} _{ G_{I_{1} } } ( I_{4} , J_{4} ) $. Since $ J_{4} \neq I_{4} $, it must be that $ h_{1} =  r_{1,2} $. Similarly $ r_{1,3} \in G_{ I _{1} , I _{3} } \cap \text{Send} _{ G_{I_{1} } } ( I_{4} , J_{4} ) $. By Lemma \ref{sndcst} we have
\[
\text{Send} _{ G_{I_{1} } } ( I_{4} , J_{4} ) 
= (G_{ I _{1} , I _{4} } ) r_{1,3} 
= \{ r_{1,3} , \  r_{1,4} r_{1,3} \} .
\]
So $ r_{1,2} =  r_{1,4} r_{1,3} $. However $ r_{1,4} $ and $ r_{1,3} $ are reflections in $ G_{I_{1} } $. This means that $ r_{1,2} $ must be a rotation of order $ 2 $, which is a contradiction.
\end{proof}
The above lemma and Lemma \ref{ntndpndntntrs} show that $ I \sim _{4} J $ when $ H $ does not contain a rotation of order $ 2 $. So assume from now that $ H $ contains a rotation of order $ 2 $ (so that each  stabilizer of a point in $ \Omega $ also does) and that every proper subset of $ X $ is independent.

\medskip
The set $ \{ I_{1} , I_{2} , I_{3} \} $ is then independent. Note that $ | G_{ I _{1} , I _{2} , I _{3} } | = 1 $ by Lemma \ref{trsndstszthr}. So $ | G_{ I _{1} , I _{i} } | \in \{ 2, 4 \} $ for each $ i \in \{ 2,3,4 \} $ by Lemmas \ref{stach} and \ref{klnfrntrsctn}. Both possible values of $ | G_{ I _{1} , I _{i} } | $ are now examined.
\begin{lema}\label{szfrntrstsctn}
Suppose $  | G_{ I _{1} , I _{2} } |  = 4 $. Then $ J_{4} = I_{4} $.
\end{lema}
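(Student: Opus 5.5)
The plan is to argue by contradiction: assume $J_{4} \neq I_{4}$, and show that the unique element of $G_{I_{1}, I_{2}}$ sending $I_{4}$ to $J_{4}$ would have to be both the central rotation of $G_{I_{1}}$ and the central rotation of $G_{I_{2}}$. These differ, which is the contradiction.

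\textbf{Setup.} Put $K := G_{I_{1}, I_{2}}$. By Lemma \ref{klnfrntrsctn}, $K$ is a Klein four-group. It contains the unique rotation $z_{1}$ of order $2$ in $G_{I_{1}}$ and the unique rotation $z_{2}$ of order $2$ in $G_{I_{2}}$. As in the proof of Lemma \ref{szfrntrsctn}, $z_{1} \neq z_{2}$ by Lemmas \ref{dhdrlrottnnrml} and \ref{ntnrmlmxml}. Next I pin down the other pairwise stabilizers through $I_{1}$. If $|G_{I_{1}, I_{3}}| = 4$, then Lemma \ref{klnfrntrsctn} puts $z_{1}$ in $G_{I_{1}, I_{3}}$. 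Then $z_{1} \in G_{I_{1}, I_{2}, I_{3}}$, contradicting $|G_{I_{1}, I_{2}, I_{3}}| = 1$, which holds since $\{I_{1}, I_{2}, I_{3}\}$ is independent (Lemma \ref{trsndstszthr}). Hence $G_{I_{1}, I_{3}} = \{1_{G}, s\}$. Here $s \neq z_{1}$ by the same reason, so $s$ is a reflection in $G_{I_{1}}$. The identical argument with the independent set $\{I_{1}, I_{2}, I_{4}\}$ gives $G_{I_{1}, I_{4}} = \{1_{G}, t\}$ with $t$ a reflection in $G_{I_{1}}$. Moreover, $K \cap G_{I_{1}, I_{4}}$ and $K \cap G_{I_{1}, I_{3}}$ are both trivial.

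\textbf{The main step.} Since $I \sim_{3} J$ and $I_{i} = J_{i}$ for $i \le 3$, applying this to the subtuple $(I_{1}, I_{2}, I_{4})$ gives $g \in K$ with $I_{4}^{g} = J_{4}$. Applying it to $(I_{1}, I_{3}, I_{4})$ gives $h \in G_{I_{1}, I_{3}}$ with $I_{4}^{h} = J_{4}$.

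\begin{itemize}
\item By Lemma \ref{sndcst}, $\text{Send}_{G_{I_{1}}}(I_{4}, J_{4}) = G_{I_{1}, I_{4}} g = \{g, tg\}$.
\item Likewise $\text{Send}_{K}(I_{4}, J_{4}) = G_{I_{1}, I_{2}, I_{4}}\, g = \{g\}$, so $g$ is the unique element of $K$ sending $I_{4}$ to $J_{4}$.
\item If $J_{4} \neq I_{4}$, then $g \neq 1_{G}$ and $h \neq 1_{G}$, so $h = s$. Now $s \neq g$, because $K \cap G_{I_{1}, I_{3}}$ is trivial. Hence $s = tg$, that is, $g = ts$.
\item As a product of two distinct reflections of $G_{I_{1}}$, $ts$ is a non-identity rotation of $G_{I_{1}}$. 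Since $g \in K$, it has order $2$. So $g$ is the unique order-$2$ rotation of $G_{I_{1}}$, i.e.\ $g = z_{1}$.
\end{itemize}

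\textbf{Conclusion.} The whole argument is symmetric in $I_{1}$ and $I_{2}$: the hypothesis $|G_{I_{2}, I_{1}}| = 4$ is the same, and the subtuples $(I_{2}, I_{3}, I_{4})$ and $(I_{1}, I_{2}, I_{4})$ play the roles above. Running it with $I_{2}$ as the base point shows that the unique element of $K$ sending $I_{4}$ to $J_{4}$ is $z_{2}$. Thus $z_{1} = g = z_{2}$, a contradiction, and so $J_{4} = I_{4}$.

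I expect the main obstacle to be the bookkeeping in the main step: ensuring $s$ and $t$ are reflections rather than the central rotation, which is what forces $ts$ to be a rotation. This rests entirely on the triviality of $G_{I_{1}, I_{2}, I_{3}}$ and $G_{I_{1}, I_{2}, I_{4}}$, so I would check those two points first.
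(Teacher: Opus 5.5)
Your proof is correct and follows the paper's argument. In both, $\text{Send}_{G_{I_1}}(I_4,J_4)$ is written as a coset of $G_{I_1,I_4}$ using the elements supplied by the subtuples $(I_1,I_3,I_4)$ and $(I_1,I_2,I_4)$, which forces the element of $G_{I_1,I_2}$ moving $I_4$ to $J_4$ to be the order-$2$ rotation of $G_{I_1}$; the same argument at $I_2$ then clashes with the distinct order-$2$ rotation of $G_{I_2}$. Your use of the uniqueness of that element in $K$ (because $G_{I_1,I_2,I_4}$ is trivial) is a slightly tidier way of stating the paper's final step $r_{1,2}\notin\{r_{2,3},s_{1,2}\}$.
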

\begin{proof}
Suppose $ J_{4} \neq I_{4} $. Since $ \{I_{1} , I_{3} , I_{4} \} $ and $ \{I_{2} , I_{3} , I_{4} \} $ are independent subsets of $ X $, it follows from Lemma \ref{szfrntrsctn} that $ | G_{ I _{1} , I _{3} } | = | G_{ I _{1} , I _{4} } | = | G_{ I _{2} , I _{3} } | = | G_{ I _{2} , I _{4} } | = 2 $. So these subgroups can be written as
\begin{align*}
& G_{ I _{1} , I _{2} } = \{ 1_{G}   , \ r_{1,2}  , \ s_{1,2}  , \ t_{1,2} \} \\
& G_{ I _{1} , I _{3} } = \{ 1_{G}   , \ r_{1,3} \} , \\
& G_{ I _{1} , I _{4} } = \{ 1_{G}   , \ r_{1,4} \} , \\
& G_{ I _{2} , I _{3} } = \{ 1_{G}  , \ r_{2,3} \} , \\
& G_{ I _{2} , I _{4} } = \{ 1_{G}   , \ r_{2,4} \} .
\end{align*}
Lemma \ref{klnfrntrsctn} shows that $ G_{ I _{1} , I _{2} } $ contains the rotation of order $ 2 $ and two reflections from $ G_{ I _{1} } $. So suppose $ r_{1,2} $ is the rotation in $ G_{ I _{1} } $, with $ s_{1,2} $ and $ t_{1,2} $ being reflections.

\medskip
Similarly  $  G_{ I _{1} , I _{2} } $ contains the rotation of order $ 2 $ from $ G_{ I _{2} } $. If $  r_{1,2} $ is the rotation in $ G_{ I _{2} } $ then by Lemma \ref{dhdrlrottnnrml} the subgroup $ \langle r_{1,2} \rangle $  would be normal in $ G_{ I _{1} } $ and $ G_{ I _{2} } $, which are distinct subgroups of $ G$ because $ \{ I _{1} , I_{2} \} $ is an independent subset of $ X $. But this contradicts Lemma \ref{ntnrmlmxml}, so $ r_{1,2} $ is not a rotation in $ G_{ I _{2} } $. Therefore it can be assumed that $ s_{1,2} $ is the rotation of order $ 2 $ in $ G_{ I _{2} } $.

\medskip
Now consider the restriction of the action of $ G$ to the subgroup $ G _{I_{1} } $. Since $ I \sim _{3} J $, there exists $ h_{1} \in G $ such that
\[
(I_{1} , I_{3} , I_{4} ) ^{h_{1} } = (J_{1} , J_{3} , J_{4} ) = (I_{1} , I_{3} , J_{4} ) .
\]
Hence $ h_{1} \in G_{ I _{1} , I _{3} } \cap \text{Send} _{ G_{I_{1} } } ( I_{4} , J_{4} ) $. Since $ J_{4} \neq I_{4} $, it must be that $ h_{1} =  r_{1,3} $. By Lemma \ref{sndcst} we have
\[
\text{Send} _{ G_{I_{1} } } ( I_{4} , J_{4} ) = (G_{ I _{1} , I _{4} } ) r_{1,3} = \{ r_{1,3} , \  r_{1,4} r_{1,3} \} . \tag{*}
\]
Note that $ r_{1,4} \notin G_{ I _{1} , I _{2} , I_{4} } $ because $ \{ I _{1} , I _{2} , I_{4}  \} $ is an independent subset of $ X$ and $ | G_{ I _{1} , I _{2} , I_{4} } | = 1 $ by Lemma \ref{trsndstszthr}. In particular $ r_{1,4} \notin G_{ I _{1} , I _{2} } $ and so $ r_{1,4} \neq r_{1,2} $. Using similar reasoning, $ r_{1,3} \notin G_{ I _{1} , I _{2} } $ and $ r_{1,3} \neq r_{1,2} $. It can also be shown that $ r_{2,3} \neq r_{1,2} $. Therefore $ r_{1,3} $ and $ r_{1,4} $ are reflections in $ G_{ I_{1} } $. Hence $ r_{1,4} r_{1,3} $ is a rotation.

\medskip
Similarly there exists $ h_{2} \in G $ such that
\[
(I_{1} , I_{2} , I_{4} ) ^{h_{2} } = (J_{1} , J_{2} , J_{4} ) = (I_{1} , I_{2} , J_{4} ) .
\]
So $ h_{2} \in G_{ I _{1} , I _{2} } \cap \text{Send} _{ G_{I_{1} } } ( I_{4} , J_{4} ) $. Again from Lemma \ref{sndcst} we have
\[
\text{Send} _{ G_{I_{1} } } ( I_{4} , J_{4} ) = (G_{ I _{1} , I _{4} } ) h_{2} = \{ h_{2} , \  r_{1,4} h_{2} \} .
\]
Comparing with (*) and using the fact $ r_{1,3} \notin G_{ I _{1} , I _{2} } $, we have $  r_{1,4} r_{1,3} = h_{2} \in G_{ I _{1} , I _{2} } $. Since this is a rotation of order $2$ in $ G_{ I _{1} } $, it follows that $  r_{1,4} r_{1,3} = r_{1,2} $. Also $ r_{1,2} \in \text{Send} _{ G_{I_{2} } } ( I_{4} , J_{4} ) $ because $ r_{1,2} \in G_{I_{2} }  $.

\medskip
Following the same reasoning above replacing $ I_{1} $ with $ I_{2} $ where needed, it can be shown that
\[
\text{Send} _{ G_{I_{2} } } ( I_{4} , J_{4} ) = (G_{ I _{2} , I _{4} } ) r_{2,3} = \{ r_{2,3} , \  s_{1,2} \} .
\]
But then $ r_{1,2} \notin \text{Send} _{ G_{I_{2} } } ( I_{4} , J_{4} ) $, which is a contradiction. So the assumption that $ J_{4} \neq I_{4} $ must be wrong.
\end{proof}
\begin{corl}\label{crlstblzrszfr}
Suppose $  | G_{ I _{i} , I _{j} } |  = 4 $ for some $ i , j \in \{ 1 , \dots 4 \} $ with $ i \neq j $. Then $ J_{4} = I_{4} $.
\end{corl}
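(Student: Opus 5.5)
We reduce Corollary \ref{crlstblzrszfr} to Lemma \ref{szfrntrstsctn} by relabelling, using the symmetry of the hypotheses. The standing assumptions in this section are:
\begin{itemize}
\item $ I \sim _{3} J $;
\item the entries of $ I $ are distinct;
\item every proper subset of $ X $ is independent;
\item $ I_{k} = J_{k} $ for $ k \in \{1,2,3\} $.
\end{itemize}
In the statement of Lemma \ref{szfrntrstsctn} the pair $ \{1,2\} $ plays no special role among the pairs from $ \{1,2,3\} $. Its proof only used that the stabilizer of one pair of the first three entries has order $ 4 $. It also used that the remaining entry among the first three, together with $ I_{4} $, forms independent triples with each member of that pair. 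So the plan is a case split according to whether $ 4 \in \{ i,j \} $.

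\textbf{Case 1: $ \{ i , j \} \subseteq \{1,2,3\} $.} Let $ k $ be the remaining index in $ \{1,2,3\} $. Apply Lemma \ref{rodr} to permute the first three entries of $ I $, and the same entries of $ J $, so that $ i,j $ move to positions $ 1,2 $ and $ k $ moves to position $ 3 $. Position $ 4 $ is left fixed.

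This relabelling preserves everything we need:
\begin{itemize}
\item $ I \sim_{3} J $ still holds;
\item the entries remain distinct;
\item every proper subset of $ X $ remains independent, since $ X $ is unchanged;
\item the first three entries of $ I $ and $ J $ still agree.
\end{itemize}
Lemma \ref{szfrntrstsctn} then applies directly and gives $ J_{4} = I_{4} $.

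\textbf{Case 2: $ 4 \in \{ i , j \} $.} Say $ j = 4 $ and $ i \in \{1,2,3\} $, so $ | G_{ I_{i} , I_{4} } | = 4 $. The aim is to show this situation also forces $ J_{4} = I_{4} $. Pick $ k \in \{1,2,3\} \setminus \{ i \} $. Since $ \{ I_{i} , I_{k} , I_{4} \} $ is an independent proper subset of $ X $, Lemma \ref{szfrntrsctn} shows that $ | G_{ I_{i}, I_{k} } | \neq 4 $. Hence $ | G_{ I_{i}, I_{k} } | = 2 $, and likewise $ | G_{ I_{k}, I_{4} } | = 2 $.

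Rather than reduce to Case 1, the plan is to rerun the argument of Lemma \ref{szfrntrstsctn} with the index $ 4 $ in the role of $ 2 $, working inside $ G_{I_{i}} $. Suppose for contradiction that $ J_{4} \neq I_{4} $. The sets $ \text{Send}_{G_{I_{i}}}(I_{4},J_{4}) $ are cosets of $ G_{I_{i},I_{4}} $ by Lemma \ref{sndcst}. On the other hand, $ I \sim_{3} J $ supplies elements of $ G_{I_{i},I_{k}} $ sending $ I_{4} \mapsto J_{4} $. Since $ G_{I_{i},I_{4}} $ fixes $ I_{4} $, no element of it can send $ I_{4} $ to $ J_{4} \neq I_{4} $. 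The contradiction should come from comparing these cosets with the order-$ 2 $ stabilizers.

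A cleaner route is available if it can be justified. Case 1 handles pairs among the first three positions, and its hypotheses are symmetric under permuting all four positions, provided $ J $ is re-chosen via Lemma \ref{frsnrtytql} to agree with $ I $ on the new first three positions. Concretely:
\begin{enumerate}
\item Reorder so that $ i , 4 $ occupy positions $ 1,2 $.
\item Replace $ J $ by a translate $ J^{g} $ agreeing with $ I $ on positions $ 1,2,3 $, which is allowed by Lemma \ref{frsnrtytql}.
\item Apply Lemma \ref{szfrntrstsctn} to conclude that $ J^{g} = I $, hence $ I \sim_{4} J $.
\end{enumerate}

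The main obstacle is that the conclusion then reads $ I \sim_{4} J $ rather than literally $ J_{4} = I_{4} $ for the original normalisation. We must check these agree. If $ I^{h} = J $ and $ I_{k} = J_{k} $ for $ k \le 3 $, then $ h \in G_{I_{1},I_{2},I_{3}} $, which is trivial by Lemma \ref{trsndstszthr}. So $ h = 1_{G} $ and $ J = I $, giving $ J_{4} = I_{4} $ exactly. I would present this final step explicitly, since it is what makes the relabelling legitimate.
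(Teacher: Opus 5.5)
Your proposal is correct and matches the paper's own proof. Case 1 is the same relabelling via Lemma \ref{rodr}, and your ``cleaner route'' for $4 \in \{i,j\}$ is exactly the paper's second case: reorder so the order-$4$ pair comes first, re-normalise $J$ via Lemma \ref{frsnrtytql}, apply Lemma \ref{szfrntrstsctn}, and then force the translating element to be trivial because $G_{I_{1},I_{2},I_{3}} = \{1_{G}\}$ by Lemma \ref{trsndstszthr}. The abandoned coset-comparison sketch at the start of your Case 2 is unnecessary and can simply be dropped.
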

\begin{proof}
First suppose $ i \neq 4 \neq j $. The entries of $ I$ can be reordered to give a $4$-tuple $ I^{ \prime } = ( I _{i} , I _{j} , \dots , I_{4} ) $ and the entries of $ J$ reordered correspondingly to give $ J^{ \prime } = ( J _{i} , J _{j} , \dots , J_{4} ) $. Also $ I^{ \prime } \sim _{3} J^{ \prime }  $ by Lemma \ref{rodr} and $ X $ is the set of entries of $ I^{ \prime } $, same as $I$. So Lemma \ref{szfrntrstsctn} can be applied to $ I^{ \prime } $ and $ J^{ \prime } $ to show that $ I _{4} = J _{4} $.

\medskip
Next suppose $ i = 4 $ and suppose without loss of generality that $ j = 1 $. Again the entries of $ I$ can be reordered to give a $4$-tuple $ I^{ \prime \prime } = ( I _{1} , I _{4} , I_{3} , I_{2} ) $ and the entries of $ J$ reordered correspondingly to give $ J^{ \prime \prime } = ( J _{1} , J _{4} , J_{3} , J_{2} ) = ( I _{1} , J _{4} , I_{3} , I_{2} ) $. By Lemma \ref{frsnrtytql} there exists $ g \in G $ such that $ I^{ \prime \prime } \sim_{3}  ( J^{ \prime \prime } )^{g} $ and $ I_{t} = J_{t} ^{g} $ for each $ t \in \{ 1 , 2 , 3 \} $. Note that $ g \in G_{I_{1} , I_{3} } $. Again $ X $ is the set of entries of $ I^{ \prime \prime } $ Lemma \ref{szfrntrstsctn} can be applied to give $ I_{2} = J_{2} ^{g} = I_{2} ^{g} $. Hence $ g \in G_{I_{1} , I_{2} , I_{3} }  $. Since $ \{ I_{1} , I_{2} , I_{3} \} $ is an independent subset of $ X $, it follows from Lemma \ref{trsndstszthr} that $ g = 1_{G} $. Thus $ I_{4} = J_{4} ^{g} = J_{4} $.
\end{proof}
\begin{lema}\label{llhvsztw}
Suppose $  | G_{ I _{1} , I _{i} } |  = 2 $ for all $ i \in \{ 2,3,4 \} $. Then $ J_{4} = I_{4} $.
\end{lema}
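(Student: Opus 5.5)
The plan is to argue by contradiction, in the same way as Lemma \ref{rttnordrtwndpdn} and Lemma \ref{szfrntrstsctn}. Suppose $ J_{4} \neq I_{4} $. By Corollary \ref{crlstblzrszfr}, if some $ | G_{ I_{j} , I_{k} } | = 4 $ with $ j \neq k $ then $ J_{4} = I_{4} $. So we may assume every pairwise stabilizer $ G_{ I_{j} , I_{k} } $ has order $ 2 $, not just those containing $ I_{1} $. Write $ G_{ I_{j} , I_{k} } = \{ 1_{G} , r_{j,k} \} $. Every $ 3 $-subset of $ X $ is independent, so its stabilizer is trivial by Lemma \ref{trsndstszthr}. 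Hence the six involutions $ r_{j,k} $ are pairwise distinct, and no $ r_{j,k} $ fixes a third point of $ X $.

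\textbf{Step 1: $ r_{1,2} = r_{1,4} r_{1,3} $.} Because $ I \sim_{3} J $ and $ I_{i} = J_{i} $ for $ i \leq 3 $, each of $ (I_{1},I_{2},I_{4}) $, $ (I_{1},I_{3},I_{4}) $ and $ (I_{2},I_{3},I_{4}) $ is moved to the corresponding triple of $ J $ by some element. Since $ J_{4} \neq I_{4} $, that element is non-trivial, so it is $ r_{1,2} $, $ r_{1,3} $ and $ r_{2,3} $ respectively. All three therefore lie in $ \text{Send}_{G}( I_{4} , J_{4} ) $. By Lemma \ref{sndcst}, $ \text{Send}_{G_{I_{1}}}(I_{4},J_{4}) = G_{I_{1},I_{4}} r_{1,3} = \{ r_{1,3} , r_{1,4} r_{1,3} \} $. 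This set contains $ r_{1,2} \neq r_{1,3} $, so $ r_{1,2} = r_{1,4} r_{1,3} $, exactly as in Lemma \ref{rttnordrtwndpdn}.

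\textbf{Step 2: Klein four-subgroups at each base point.} Running the same argument inside $ G_{I_{2}} $ and $ G_{I_{3}} $ gives $ r_{1,2} = r_{2,4} r_{2,3} $ and $ r_{2,3} = r_{3,4} r_{1,3} $. These identities say that $ V_{1} = \{1, r_{1,2}, r_{1,3}, r_{1,4}\} \leq G_{I_{1}} $ is a Klein four-group, and likewise $ V_{2} \leq G_{I_{2}} $ and $ V_{3} \leq G_{I_{3}} $. Multiplying the three identities shows that $ r_{1,4} r_{2,4} r_{3,4} = (r_{1,2} r_{1,3})^{2} = 1 $. So $ V_{4} = \{1, r_{1,4}, r_{2,4}, r_{3,4}\} \leq G_{I_{4}} $ is also a Klein four-group. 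The argument of Lemma \ref{klnfrntrsctn} (a dihedral group contains at most two commuting reflections together with its central rotation) then shows each $ V_{j} $ contains the unique central rotation $ z_{j} $ of $ G_{I_{j}} $, and its other two non-identity elements are reflections of $ G_{I_{j}} $.

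\textbf{Step 3: the contradiction.} Each $ r_{j,k} $ lies in both $ V_{j} $ and $ V_{k} $. By Lemmas \ref{dhdrlrottnnrml} and \ref{ntnrmlmxml}, it is the central rotation of at most one of $ G_{I_{j}} $, $ G_{I_{k}} $. So the four rotations $ z_{1}, \dots , z_{4} $ are four distinct elements among the six $ r_{j,k} $. The aim is to exploit this configuration, and this is the step I expect to be the main obstacle, since simple counting alone gives no contradiction.

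What I would try first is as follows. Suppose $ z_{1} = r_{1,k} $. Then $ z_{1} $ commutes with all of $ G_{I_{1}} $, in particular with $ r_{1,j} $ for every $ j $. It is also a reflection in $ G_{I_{k}} $, commuting with the two other elements of $ V_{k} $. From this I would try to show that the group generated by $ V_{1} $ and $ V_{k} $ is still a Klein four-group, i.e.\ $ V_{1} = V_{k} $. That would put a second element $ r_{1,j} $ with $ j \neq k $ into $ G_{I_{k}} $, so $ r_{1,j} \in G_{I_{1},I_{j},I_{k}} = \{1_{G}\} $, a contradiction.

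If that fails, the fallback is Corollary \ref{cntrcnjgt}. I would try to show some $ z_{j} $ is forced to be central in two distinct stabilizers, for example via $ z_{j} = r_{j,k} r_{j,l} $ with $ r_{j,k}, r_{j,l} $ reflections commuting with $ z_{k} $. That would contradict the maximality of $ H $.
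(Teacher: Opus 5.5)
You have a genuine gap. Your Steps 1 and 2 are correct and match the paper, which builds $K_1,K_2,K_3$ exactly as you build $V_1,V_2,V_3$; your $V_4$ is not needed. The problem is Step 3: you never reach a contradiction, and neither route you sketch supplies the missing ingredient.

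\emph{First route.} The facts you list are that $z_1=r_{1,k}$ commutes with $G_{I_1}$ and with $V_k$. These say nothing about whether $r_{1,j}$ ($j\neq k$) commutes with the elements $r_{k,a}$ ($a\neq 1$). So nothing forces $\langle V_1,V_k\rangle$ to be a Klein four-group. In fact $V_1\cap V_k=\{1_G,r_{1,k}\}$, because the six involutions are distinct. Once the commutations below are known, $\langle V_1,V_k\rangle$ turns out to be elementary abelian of order $8$, not a Klein four-group.

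\emph{Fallback.} Here you would need the reflection $r_{j,l}$, which is not in $G_{I_k}$, to commute with $z_k$. That is precisely the fact you have not established. Once you have it, you are finished immediately, without going through $z_j$.

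The missing idea is a cross-commutation. An involution $r_{a,b}$ with $1\notin\{a,b\}$ lies in both $V_a$ and $V_b$. Hence it commutes with $r_{1,a}$ and with $r_{1,b}$, and so it centralizes $V_1=\langle r_{1,a},r_{1,b}\rangle$. In particular it centralizes the central rotation $z_1\in V_1$ of $G_{I_1}$.

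Take $r_{2,3}$. It does not lie in $G_{I_1}$, because $G_{I_1,I_2,I_3}=\{1_G\}$ by Lemma~\ref{trsndstszthr}. On the other hand, $\langle z_1\rangle$ is a non-trivial normal subgroup of the maximal subgroup $G_{I_1}$. So Lemma~\ref{nrmlprstmtsk} gives $C_G(z_1)\le N_G(\langle z_1\rangle)=G_{I_1}$, which contradicts $r_{2,3}\in C_G(z_1)\setminus G_{I_1}$.

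The paper phrases the same point differently: $z_1$ is central in both $G_{I_1}$ and $r_{2,3}G_{I_1}r_{2,3}$, these are distinct by Lemma~\ref{mxmlnrmlizr}, and this contradicts Lemma~\ref{ntnrmlmxml}. With this observation, no case analysis of where the $z_j$ sit among the six $r_{j,k}$ is needed.
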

\begin{proof}
Suppose $ I_{4} \neq J_{4} $. By Corollary \ref{crlstblzrszfr} we have $ | G_{ I _{j} , I _{k} } | \neq 4 $ for all $ j , k \in \{ 1, \dots , 4 \} $ with $ j \neq k $. Since proper subsets of $ X$ are independent, Lemmas \ref{stach}, \ref{klnfrntrsctn} and \ref{trsndstszthr} together show that $ | G_{ I _{j} , I _{k} } | = 2 $. So these stabilizers can be written as
\begin{align*}
& G_{ I _{1} , I _{2} } = \{ 1_{G}   , \ r_{1,2}  \} \\
& G_{ I _{1} , I _{3} } = \{ 1_{G}   , \ r_{1,3} \} , \\
& G_{ I _{1} , I _{4} } = \{ 1_{G}   , \ r_{1,4} \} , \\
& G_{ I _{2} , I _{3} } = \{ 1_{G}  , \ r_{2,3} \} , \\
& G_{ I _{2} , I _{4} } = \{ 1_{G}   , \ r_{2,4} \} \\
& G_{ I _{3} , I _{4} } = \{ 1_{G}  , \ r_{3,4} \} .
\end{align*}
Since $ \{ I _{1} , I _{2} , I_{3}  \} $ is an independent subset of $ X$, we have $ | G_{ I _{1} , I _{2} } \cap G_{ I _{1} , I _{3} }  | = | G_{ I _{1} , I _{2} , I_{3} } | = 1 $ by Lemma \ref{trsndstszthr}. By the same reasoning, any pair of the above stabilizers have trivial intersection. So each of their non-identity elements are distinct in $ G $.

\medskip
Now consider the restriction of the action of $ G$ to the subgroup $ G _{I_{1} } $. Since $ I \sim _{3} J $, there exists $ h_{1} \in G $ such that
\[
(I_{1} , I_{2} , I_{4} ) ^{h_{1} } = (J_{1} , J_{2} , J_{4} ) = (I_{1} , I_{2} , J_{4} ) .
\]
Hence $ h_{1} \in G_{ I _{1} , I _{2} } \cap \text{Send} _{ G_{I_{1} } } ( I_{4} , J_{4} ) $. Since $ J_{4} \neq I_{4} $, it must be that $ h_{1} =  r_{1,2} $. Similarly $ r_{1,3} \in G_{ I _{1} , I _{3} } \cap \text{Send} _{ G_{I_{1} } } ( I_{4} , J_{4} ) $. By Lemma \ref{sndcst} we have
\[
\text{Send} _{ G_{I_{1} } } ( I_{4} , J_{4} ) 
= (G_{ I _{1} , I _{4} } ) r_{1,3} 
= \{ r_{1,3} , \  r_{1,4} r_{1,3} \} .
\]
Hence $ r_{1,2} =  r_{1,4} r_{1,3} $. The elements $ r_{1,2} , \ r_{1,3} $ and $ r_{1,4} $ must either be reflections or the rotation of order $ 2 $ in $ G_{I_{1} } $. Since only at most one rotation of order $ 2 $ can exist, at least two of these elements are reflections. As the product of any two of these elements is equal to the third and composing two reflections with each other gives a rotation, one of the elements must be the rotation of order $2$. Hence these elements generate a Klein four-group;
\[
K_{1} := \{ 1_{G} , \ r_{1,2} , \ r_{1,3} , \ r_{1,4} \} \leq G_{ I_{1} } .
\]
Similar reasoning shows we can find two more Klein four-groups;
\begin{align*}
& K_{2} := \{ 1_{G} , \ r_{1,2} , \ r_{2,3} , \ r_{2,4} \} \leq G_{ I_{2} } , \\
& K_{3} := \{ 1_{G} , \ r_{1,3} , \ r_{2,3} , \ r_{3,4} \} \leq G_{ I_{3} } .
\end{align*}
The subgroup $ H $ is maximal and not normal in $ G $, so the same applies to $ G_{ I_{1} } $ because it is conjugate to $ H $ in $ G $. Therefore $ N_{G} ( G_{ I_{1} } ) = G_{ I_{1} } $ by Lemma \ref{mxmlnrmlizr}. As $ | G_{ I _{1} , I _{2} , I_{3} } | = 1 $ it must be that $ \ r_{2,3} \notin G_{ I_{1} } $. Hence $ r_{2,3}  G_{ I _{1} } r_{2,3}  \neq G_{ I _{1} } $.

\medskip
In $K_{2} $ and $ K_{3} $ it can be seen that $ r_{2,3} r_{1,2} r_{2,3} = r_{1,2} $ and $ r_{2,3} r_{1,3} r_{2,3} = r_{1,3} $. Therefore $ r_{2,3} K_{1} r_{2,3} = K_{1} $. It follows that $ r_{2,3} r_{1,4} r_{2,3} = r_{1,4}$.

\medskip
Now one of the non-identity elements $ k_{1} \in K_{1} $ is the rotation of order $ 2 $ in $ G_{I_{1} } $ and since $ r_{2,3} $ fixes this element under conjugation, it must be that $ k_{1} $ is the rotation of order $ 2 $ in $ r_{2,3}  G_{ I _{1} } r_{2,3} $. Hence $ \langle k_{1} \rangle $ is normal in both $  G_{ I _{1} }  $ and $ r_{2,3}  G_{ I _{1} } r_{2,3} $ by Lemma \ref{dhdrlrottnnrml}. This contradicts Lemma \ref{ntnrmlmxml}. Thus the assumption that $ I_{4} \neq J_{4} $ must be incorrect.
\end{proof}
Collecting together these results gives the main theorem of this subsection, an upper bound on the relational complexity of the action on $ \Omega $.
\begin{thrm}\label{mxmldhedrlrc}
Let $ G $ be a group that is either simple or all non-trivial normal subgroups are maximal. Let $ H $ be a maximal, non-normal subgroup that is dihedral. Let $ \Omega $ be the set of right cosets of $ H$. Then $ RC(G, \Omega ) \leq 3 $.
\end{thrm}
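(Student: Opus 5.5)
The plan is to assemble the reductions made at the start of this section with Lemmas \ref{rttnordrtwndpdn}, \ref{szfrntrstsctn}, \ref{llhvsztw} and Corollary \ref{crlstblzrszfr}. First I deal with small $\Omega$. If $|\Omega| \in \{2,3\}$ then $RC(G,\Omega) = 2 \leq 3$ by Lemma \ref{transtwo} and Example \ref{setthre}. So assume $|\Omega| \geq 4$. If $|H| < 6$, the standing hypothesis of the section is not met. The statement as written is meant under the section's standing assumption that $|H| \geq 6$, and I will note this explicitly. (For $H$ dihedral of order $4$ one could treat it separately, but the paper's setting excludes it.) Corollary \ref{trshght} gives $Ht(G,\Omega) \leq 3$. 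Hence by Corollary \ref{rcshsbtplcmp} (or, if the height is smaller, directly by Theorem \ref{rchgt}, since then $RC \leq Ht + 1 \leq 3$), it suffices to show that for $I,J \in \Omega^{4}$ with $I \sim_{3} J$ we have $I \sim_{4} J$.

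Next come the standard reductions, in order. If $I$ has repeated entries, Lemma \ref{rptdntrs} finishes. By Lemma \ref{frsnrtytql}, replace $J$ by a translate so that $I_{i} = J_{i}$ for $i = 1,2,3$. Let $X$ be the set of entries of $I$. If some proper subset of $X$ is not independent, Lemma \ref{ntndpndntntrs} finishes. By Lemma \ref{rttnordrtwndpdn}, this case always occurs when $H$ has no rotation of order $2$. So assume $H$ (hence every point stabilizer) has a central involution and every proper subset of $X$ is independent. Then Lemma \ref{trsndstszthr} gives $|G_{I_1,I_2,I_3}| = 1$, and Lemmas \ref{stach} and \ref{klnfrntrsctn} force $|G_{I_1,I_i}| \in \{2,4\}$ for $i = 2,3,4$.

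Now split into two cases. If some $|G_{I_1,I_i}| = 4$, Corollary \ref{crlstblzrszfr} gives $J_{4} = I_{4}$. Otherwise all three have order $2$, and Lemma \ref{llhvsztw} gives $J_4 = I_4$. In either case $J = I$ after the normalisation, so the original pair satisfies $I^{g} = J$ for the translating element $g$. Thus $I \sim_{4} J$, and $RC(G,\Omega) \leq 3$ follows.

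The main obstacle is bookkeeping rather than mathematics. One must check that the reordering used in Corollary \ref{crlstblzrszfr} is compatible with the normalisation $I_i = J_i$. One must also make sure the case $Ht(G,\Omega) < 3$ is handled correctly, since Corollary \ref{rcshsbtplcmp} is phrased with $h+1$-tuples. I would handle the latter by appealing directly to Theorem \ref{rchgt} in that case.
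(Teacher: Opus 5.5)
Your proposal is correct and follows the paper's proof essentially step for step. The chain is the same: Corollary \ref{trshght} for the height bound, then Corollary \ref{rcshsbtplcmp} (or Theorem \ref{rchgt} when the height is below $3$) to reduce to $4$-tuples, then the repeated-entry, normalisation and non-independent-subset reductions, Lemma \ref{rttnordrtwndpdn} for the case with no central involution, and finally Corollary \ref{crlstblzrszfr} or Lemma \ref{llhvsztw} to force $J_4 = I_4$. Your split on $|G_{I_1,I_i}|$ rather than on all six two-point stabilizers matches the hypothesis of Lemma \ref{llhvsztw} directly. Your explicit handling of $|\Omega| \le 3$ and of the implicit standing assumption $|H| \geq 6$ is a harmless addition.
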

\begin{proof}
Corollary \ref{trshght} shows that $ Ht(G, \Omega ) \leq 3 $. If $ Ht(G, \Omega ) \leq 2 $ then $ RC(G, \Omega ) \leq 3 $ by Theorem \ref{rchgt} and there is nothing further to show. So suppose $ Ht(G, \Omega ) = 3 $.

\medskip
Let $ I , J \in \Omega^{4} $, where $ I = ( I_{1} , \dots , I_{4} ) $ and $ J = ( J_{1} , \dots , J_{4} ) $. By Corollary \ref{rcshsbtplcmp} is is sufficient to show that if $ I \sim _{3} J $ then $ I \sim _{4} J $. So suppose $ I \sim _{3} J $.

\medskip
If any of the entries of $ I $ are repeated then $ I \sim _{4} J $ by Lemma \ref{rptdntrs} and there is nothing more to show. So suppose none of the entries of $I$ are repeated.

\medskip
By Lemma \ref{frsnrtytql} we may assume that $ I _{i} = J _{i} $ for all $ i \in \{ 1, 2 , 3 \} $. By looking at what $ J_{4} $ could be, it will be shown that the only possibility is $ J_{4} = I _{4} $. 

\medskip
Let $ X = \{ I_{1} , \dots , I _{4} \} $, the set of entries of $ I$. If any proper subset of $ X $ is not independent, then $ I \sim _{4} J $ by Lemma \ref{ntndpndntntrs}. If $ H $ does not contain a rotation of order $ 2 $ then Lemma \ref{rttnordrtwndpdn} shows that there exists a proper subset of $ X $ that is not independent, and we would be done. So suppose  $ H $ contains a rotation of order $ 2 $ and that every proper subset of $ X $ is independent.

\medskip
Lemmas \ref{stach}, \ref{klnfrntrsctn} and \ref{trsndstszthr} show that $ | G_{I_{i} , I_{j} } | \in \{ 2 , 4 \} $ for each $ i , j \in \{ 1, \dots , 4 \} $ with $ i \neq j $. If some two-point stabilizer has order $ 4 $ then $ I_{4} = J_{4} $ by Corollary \ref{crlstblzrszfr}. The only case left is if $  | G_{I_{i} , I_{j} } | = 2 $ for each $ i , j \in \{ 1, \dots , 4 \} $, which again gives $ I_{4} = J_{4} $ by Lemma \ref{llhvsztw}.

\medskip
Thus $ I = J$ and $ I \sim_{4} J $. It follows that $ RC(G, \Omega ) \leq 3 $.
\end{proof}
With this theorem in place we can return to looking at the dihedral actions of $ PGL_{2} (q) $ and $ PSL _{2} (q) $.
\begin{thrm}
When $ q \geq 4 $, the action of $ PGL_{2} (q) $ on a maximal $ D_{ 2(q \pm  1) } $ or $ PSL_{2} (q) $ on a maximal $ D_{ q \pm  1 } $ has relational complexity $ 3 $.
\end{thrm}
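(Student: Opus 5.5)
The proof splits into an upper bound and a lower bound.

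\textbf{Upper bound.} First I would check that $ G $, $ H $ satisfy the hypotheses of Theorem \ref{mxmldhedrlrc}. This gives $ RC(G, \Omega ) \leq 3 $.

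For $ G = PSL_{2} (q) $ with $ q \geq 4 $, the group is simple. For $ G = PGL_{2} (q) $ with $ q $ odd, Lemma \ref{nrmlsbgrpspgl} says the only proper non-trivial normal subgroup is the index $ 2 $ copy of $ PSL_{2} (q) $, which is maximal. For $ q $ even, $ PGL_{2} (q) \cong PSL_{2} (q) $ is simple.

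The maximal dihedral subgroups in Tables \ref{tableone}, \ref{tabletwo} and \ref{tablethree} are not normal, again by Lemma \ref{nrmlsbgrpspgl} and simplicity. One must also check $ | H | \geq 6 $, which the section's standing assumptions require. This holds in every case: the smallest maximal dihedral subgroups occurring are the $ D_{6} $ in $ PSL_{2} (4) $ and $ PSL_{2} (5) $. When $ q = 5 $ the $ D_{4} $ case does not arise, since there is no maximal $ D_{q-1} $ in $ PSL_{2}(5) $.

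\textbf{Lower bound.} It remains to show $ RC(G, \Omega ) > 2 $, i.e. $ RC \neq 2 $. The cleanest route is Lemma \ref{trnprm}, but the action is generally not $ 2 $-transitive, so instead I would construct explicit $ I, J \in \Omega ^{3} $ with $ I \sim _{2} J $ but $ I \not\sim _{3} J $.

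Assume first $ Ht(G, \Omega ) = 3 $, which is the generic case by the height theorem. Take an independent set $ \{ \omega _{1} , \omega _{2} , \omega _{3} \} $, so $ G_{ \omega _{1} , \omega _{2} , \omega _{3} } = 1 $ by Lemma \ref{trsndstszthr}. Choose a non-identity $ g \in G_{ \omega _{1} , \omega _{2} } $, which exists by independence, and put
\[ I = ( \omega _{1} , \omega _{2} , \omega _{3} ), \qquad J = ( \omega _{1} , \omega _{2} , \omega _{3} ^{g} ) . \]
Since $ g \notin G_{ \omega _{3} } $, we have $ J \neq I $. Any $ h $ with $ I^{h} = J $ lies in $ G_{ \omega _{1} , \omega _{2} } $ and sends $ \omega _{3} $ to $ \omega _{3} ^{g} $, so $ h \in G_{ \omega _{1} , \omega _{2} } \cap G_{ \omega _{3} } g $.

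This shows the naive choice fails: $ g $ itself sends $ I $ to $ J $. A better plan is therefore to pick $ J_{3} $ in the $ G_{ \omega _{1} } $-orbit of $ \omega _{3} $ and in the $ G_{ \omega _{2} } $-orbit of $ \omega _{3} $, but not in the $ G_{ \omega _{1} , \omega _{2} } $-orbit of $ \omega _{3} $. Then:
\begin{itemize}
\item $ ( \omega _{1} , \omega _{2} ) $ is fixed by the identity;
\item $ ( \omega _{1} , \omega _{3} ) \mapsto ( \omega _{1} , J_{3} ) $ is achieved by some element of $ G_{ \omega _{1} } $;
\item $ ( \omega _{2} , \omega _{3} ) \mapsto ( \omega _{2} , J_{3} ) $ is achieved by some element of $ G_{ \omega _{2} } $.
\end{itemize}
So $ I \sim _{2} J $, while $ I \not\sim _{3} J $ because $ J_{3} $ is outside the $ G_{ \omega _{1} , \omega _{2} } $-orbit.

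Such a $ J_{3} $ exists by a counting argument. The group $ G_{ \omega _{1} , \omega _{2} } $ has order at most $ 4 $ by Lemma \ref{klnfrntrsctn}, so its orbit of $ \omega _{3} $ has at most $ 4 $ points. The $ G_{ \omega _{1} } $-orbit and $ G_{ \omega _{2} } $-orbit of $ \omega _{3} $ are large, of size about $ |H|/|G_{ \omega _{1} , \omega _{3} }| $. One shows their intersection has more than $ 4 $ points.

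A simpler alternative avoids counting. $ RC = 2 $ means the group is equal to the $ 2 $-closure of the permutation group. The $ 2 $-closure contains $ G $, preserves all orbitals, and one shows it is strictly larger, for example by producing an orbital-preserving permutation not in $ G $. I would try the orbit-intersection counting first.

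\textbf{Small cases.} The small cases ($ PSL_{2}(4) $, $ PSL_{2}(5) $ on $ D_{6} $, and small $ q $ such as $ q \in \{ 7, 9, 11 \} $ for $ PGL_{2} $) would be read off from the tables in \cite{WISCONS1}.

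\textbf{Main obstacle.} I expect the hardest step to be the lower bound, specifically showing that the two large point-stabilizer orbits of $ \omega _{3} $ intersect in more than the at most $ 4 $ points of the two-point-stabilizer orbit. This will need the intersection counts from the height section, such as Lemma \ref{ttlcnjgtsbrflc}.
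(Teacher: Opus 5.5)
Your upper bound is the same as the paper's. You verify the hypotheses of Theorem~\ref{mxmldhedrlrc}: simplicity of $PSL_{2}(q)$, maximality of the unique proper normal subgroup of $PGL_{2}(q)$, non-normality of $H$, and $|H| \geq 6$. From this you get $RC(G,\Omega) \leq 3$.

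The gap is the lower bound $RC(G,\Omega) \neq 2$, which is the remaining step and which your proposal does not establish. Your reduction is sound: a point $J_{3} \in \omega_{3}^{G_{\omega_{1}}} \cap \omega_{3}^{G_{\omega_{2}}}$ with $J_{3} \notin \omega_{3}^{G_{\omega_{1},\omega_{2}}}$ does give $I \sim_{2} J$ and $I \not\sim_{3} J$. However, this is just a restatement of non-binarity for triples. The existence of such a $J_{3}$ is asserted (``one shows their intersection has more than $4$ points''), not proved.

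No counting argument of the kind you sketch can succeed on size grounds. The two orbits have size at most $|H| \leq 2(q+1)$, while $|\Omega| = |G|/|H|$ is of order $q^{2}/2$. So there is no pigeonhole reason for them to meet outside the common $G_{\omega_{1},\omega_{2}}$-orbit of $\omega_{3}$. You would need a specific, structurally chosen triple. Lemma~\ref{ttlcnjgtsbrflc} only counts conjugates of $H$ sharing a reflection with $H$, so it does not by itself produce one.

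Your fallback via the $2$-closure rests on an implication, not an equivalence: binary implies $2$-closed, but not conversely. The direction you use is correct, but it only helps if you exhibit an orbital-preserving permutation outside $G$. For groups such as $PGL_{2}(p)$ with $p$ prime there is no evident candidate, so this route is unlikely to work in general.

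The paper does not prove the lower bound internally either. It cites Lemmas~3.3 and~3.4 of \cite{GILL1} to get $RC(G,\Omega) \neq 2$ for $q > 9$, and reads off $RC(G,\Omega) = 3$ from the tables in \cite{WISCONS1} for $q \leq 9$. To complete your argument you must either import that result in the same way, or actually construct, for every relevant $q$, a triple $\omega_{1},\omega_{2},\omega_{3}$ and a point $J_{3}$ with the stated orbit properties. The small cases can still be handled by the tables, as you propose.
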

\begin{proof}
For $PGL_{2} (q) $ with $ q $ odd the maximal dihedral subgroups are defined in Table \ref{tableone} only when $ q \geq 5 $. In this case Lemma \ref{nrmlsbgrpspgl} shows that the only non-trivial normal subgroup of $ PGL _{2} (q) $ other than itself is $ PSL_{2} (q) $. By Lemma \ref{lnsz} the index of $ PSL_{2} (q) $ is $ 2 $ in $ PGL_{2} (q) $. Hence all proper non-trivial normal subgroups are maximal.

\medskip
It is well known that $ PSL_{2} (q) $ is simple for all $ q \geq 4 $. So the maximal dihedral subgroups are obviously not normal in $PSL_{2} (q) $. In $ PGL_{2} (q) $ the maximal dihedral subgroups are not normal because they contain a rotational subgroup which is normal and therefore are not isomorphic to $ PSL_{2} (q) $. Hence the actions on the dihedral subgroups have relational complexity at most $ 3 $ by Theorem \ref{mxmldhedrlrc}.

\medskip
In \cite{GILL1}, Lemmas 3.3 and 3.4 show it is not $2$ when $ q > 9 $. Thus the relational complexity is equal to $ 3 $.

\medskip
For $ q \leq 9 $, it can be seen the relational complexity is $ 3 $ in the tables in \cite{WISCONS1}.
\end{proof}
Theorem \ref{mxmldhedrlrc} applies to other simple groups that have maximal dihedral subgroups. One such class of groups are the Suzuki groups, found by Suzuki in \cite{SUZUKI1} and explored in more detail in \cite{SUZUKI2}. Table 8.16 in \cite{BRAYHOLTRONEYDOUGAL}, page 385 shows there exist maximal dihedral subgroups in the Suzuki groups. Specifically if $ Sz(q) $ is a Suzuki group with $ q = 2^{e} $ where $ e $ is odd and $ e > 1 $ then there exist maximal $ D_{ 2(q-1) } $.
\begin{thrm}
Let $ Sz(q) $ is a Suzuki group with $ q = 2^{e} $ where $ e $ is odd and $ e > 1 $. The relational complexity of the action on the right cosets of a maximal $ D_{2(q-1) } $ is $ 3 $.
\end{thrm}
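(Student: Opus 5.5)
The plan is to get the upper bound from Theorem \ref{mxmldhedrlrc} and then exclude relational complexity $2$ with a single pair of $3$-tuples.

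\textbf{Upper bound.} Put $G := Sz(q)$ with $q = 2^{e}$, $e$ odd and $e > 1$. It is well known that $G$ is simple, so it meets the hypothesis of Theorem \ref{mxmldhedrlrc}.

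Let $H$ be a maximal $D_{2(q-1)}$. It exists by Table 8.16 of \cite{BRAYHOLTRONEYDOUGAL}.
\begin{itemize}
\item Since $q \geq 8$, we have $|H| = 2(q-1) \geq 14 \geq 6$. So the standing assumption of the dihedral sections holds.
\item $H$ is not normal, since $G$ is simple and $H$ is a proper non-trivial subgroup.
\end{itemize}
Theorem \ref{mxmldhedrlrc} then gives $RC(G, \Omega) \leq 3$.

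Note that $q-1$ is odd, so $H$ has no rotation of order $2$. Hence the argument already passes through Lemma \ref{rttnordrtwndpdn} and nothing new is needed here.

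\textbf{Lower bound: $RC \neq 2$.} We need $I, J \in \Omega^{3}$ with $I \sim_{2} J$ but $I \not\sim_{3} J$. I would build them from an independent set of size $3$, in the same way the proof of Lemma \ref{llhvsztw} handles the $2$-point stabilizers.

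First, find $\omega_{1}, \omega_{2}, \omega_{3}$ whose two-point stabilizers each have order $2$ and meet pairwise trivially. Write $G_{\omega_{1},\omega_{2}} = \langle a \rangle$ and $G_{\omega_{1},\omega_{3}} = \langle b \rangle$, with $a$ and $b$ distinct reflections of $G_{\omega_{1}}$.

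Set $I = (\omega_{1}, \omega_{2}, \omega_{3})$ and $J = (\omega_{1}, \omega_{2}, \omega_{3}^{a})$. Then:
\begin{itemize}
\item $(I_{1}, I_{3}) \mapsto (J_{1}, J_{3})$ via $a$.
\item $(I_{1}, I_{2}) \mapsto (J_{1}, J_{2})$ via the identity.
\item For $(I_{2}, I_{3}) \mapsto (J_{2}, J_{3})$ we need some $g \in G_{\omega_{2}}$ with $\omega_{3}^{g} = \omega_{3}^{a}$. The element $a$ itself works.
\end{itemize}
But then $a$ sends $I$ to $J$, so this pair is useless. The construction must be genuinely non-trivial.

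A better choice is $J_{3} = \omega_{3}^{a b}$, or some other image chosen so that the three pairwise senders exist while no single element fixes $\omega_{1}$ and $\omega_{2}$ and moves $\omega_{3}$ correctly. This requires counting conjugates of $H$ through given reflections, using Lemma \ref{sbgrpscntngh} and Corollary \ref{crlrsrflctn}.

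In $Sz(q)$ all involutions are conjugate. Their number is $(q^{2}+1)(q-1)$, and $|G| = q^{2}(q^{2}+1)(q-1)$. So each reflection lies in $|G|/(2|r^{G}|) = q^{2}/2$ conjugates of $H$. This gives plenty of freedom to choose the points.

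\textbf{Main obstacle.} The construction above is the hard step, and I would avoid it if possible. The easier route is to cite an existing classification of binary primitive actions. Gill, Liebeck and Spiga \cite{GILL4} classify the primitive binary permutation groups, and the Suzuki dihedral actions do not occur in that list. Hence $RC(G, \Omega) \geq 3$, which combined with the upper bound gives $RC(G, \Omega) = 3$. This mirrors how \cite{GILL1} was used for $PSL_{2}(q)$.

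Failing a citation, the fallback is the explicit counting construction above. There the delicate point is verifying $I \not\sim_{3} J$: one must check that $G_{\omega_{1},\omega_{2}} = \langle a \rangle$ does not send $\omega_{3}$ to $J_{3}$.
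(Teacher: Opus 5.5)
Your proposal is correct and matches the paper's proof: the upper bound $RC(G,\Omega)\leq 3$ comes from Theorem \ref{mxmldhedrlrc} (since $Sz(q)$ is simple and the maximal $D_{2(q-1)}$ is non-normal), and $RC(G,\Omega)\neq 2$ is obtained by citing \cite{GILL4}; the paper specifically uses Corollary 1.4 there. The explicit counting construction you sketch as a fallback is unnecessary, and as you note it is unfinished, so the citation route is the entire argument.
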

\begin{proof}
We know from Theorem \ref{mxmldhedrlrc} that the relational complexity is either $ 2 $ or $ 3 $. In \cite{GILL4}, Corollary 1.4, page 2 it is shown the relational complexity is not $ 2 $.
\end{proof}

\newpage
\chapter{The \texorpdfstring{$ A _{4} $}{A4} Action}\label{chapseven}
Here the $ A_{4} $ actions of $ PSL_{2} (q) $ when $ q $ is odd will be looked at. Although the action will initially be defined as that on the maximal $ A_{4} $ mentioned in Table \ref{tablethree}, the theorems developed will apply to $ PSL_{2} (q) $ acting on any maximal $ A_{4} $ (such as when the maximal $ A_{4} $ is a subfield subgroup). The main results of this chapter for height are:

\medskip
If $ G = PSL_{2} (5) $ then $ Ht(G, \Omega ) = 3 $. See Theorem \ref{pslfvnme}.

\medskip
If $ G = PSL_{2} (q) $ with $ q \neq 5 $ then $ Ht(G, \Omega ) = 2 $. See Theorem \ref{afrctnhght}.

\medskip
The main results of this chapter for relational complexity are:

\medskip
If $ G = PSL_{2} (5) $ then $ RC(G, \Omega ) = 4 $. See Theorem \ref{pslfvnme}.

\medskip
If $ G = PSL_{2} (q) $ with $ q \neq 5 $ then $ RC(G, \Omega ) = 3 $. See Theorem \ref{afrrcthrem}.
\section{Preliminary Results}
Group presentations will be used frequently throughout the rest of this text and the following theorems will be required.
\begin{thrm}\label{gnrtrhmmrphsm}
If $ G = \langle X : R \rangle $ and $ H = \langle X : S \rangle $, where $X$ is a set of generators and $ R $ and $ S$ are relations with $ R \subseteq S $, then there exists a surjective homomorphism $ \phi : G \rightarrow H $ fixing every $ x \in X $.
\end{thrm}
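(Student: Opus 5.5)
The approach is to realise $H$ as a quotient of the free group $F(X)$ on $X$ and factor this quotient through $G$. Write $F := F(X)$. By definition of a presentation, $G = F/N_{R}$ and $H = F/N_{S}$, where $N_{R}$ is the normal closure of $R$ in $F$ and $N_{S}$ is the normal closure of $S$ in $F$. The generator $x \in X$ is identified with the coset $N_{R} x$ in $G$ and with $N_{S} x$ in $H$.

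The first step is to check that $N_{R} \leq N_{S}$. Since $R \subseteq S \subseteq N_{S}$ and $N_{S}$ is a normal subgroup of $F$, the subgroup $N_{S}$ contains every conjugate of every element of $R$. As $N_{R}$ is the smallest normal subgroup of $F$ containing $R$, it follows that $N_{R} \leq N_{S}$.

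Next, define $\phi : F/N_{R} \rightarrow F/N_{S}$ by $(N_{R} w)\phi = N_{S} w$ for each word $w \in F$. This is well defined: if $N_{R} w_{1} = N_{R} w_{2}$, then $w_{1} w_{2}^{-1} \in N_{R} \leq N_{S}$, so $N_{S} w_{1} = N_{S} w_{2}$. It is a homomorphism, because $(N_{R} w_{1} N_{R} w_{2})\phi = N_{S} w_{1} w_{2} = (N_{R} w_{1})\phi \, (N_{R} w_{2})\phi$. Equivalently, one may invoke the third isomorphism theorem, under which $F/N_{S} \cong (F/N_{R})/(N_{S}/N_{R})$.

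Finally, surjectivity holds because every element of $H$ has the form $N_{S} w$ for some word $w$, and this element is the image of $N_{R} w$. Moreover, $(N_{R} x)\phi = N_{S} x$, so under the identifications above $\phi$ fixes each $x \in X$.

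There is no real obstacle in this argument. The only point needing care is the containment $N_{R} \leq N_{S}$, and it follows directly from the minimality of the normal closure.
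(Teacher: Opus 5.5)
Your proof is correct: the containment $N_{R} \leq N_{S}$ from the minimality of the normal closure, followed by the induced map $N_{R}w \mapsto N_{S}w$ (equivalently the natural projection $F/N_{R} \rightarrow (F/N_{R})/(N_{S}/N_{R})$), is the standard argument for this von Dyck-type statement. The paper does not prove the theorem itself but cites Johnson (Chapter 4, Proposition 2), so your write-up is a self-contained version of that standard result rather than a different route.
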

\begin{proof}
See \cite{JOHNSON1}, Chapter 4, Proposition 2, page 43.
\end{proof}
\begin{corl}\label{gnrtrcrlry}
Suppose $ G = \langle X : R \rangle $ is a finite group. Also suppose $ H = \langle X : S \rangle $,  with $ R \subseteq S $. Then $ | H | $ divides $ |G| $. If $ | G | = | H | $ then $ G = H $.
\end{corl}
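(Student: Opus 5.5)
The argument is a direct application of Theorem \ref{gnrtrhmmrphsm} followed by Lagrange's theorem and a counting step.

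First, since $ R \subseteq S $, Theorem \ref{gnrtrhmmrphsm} supplies a surjective homomorphism $ \phi : G \rightarrow H $ that fixes every generator $ x \in X $. Because $ G $ is finite and $ \phi $ is surjective, $ H $ is finite as well.

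Next, apply the first isomorphism theorem to obtain $ H \cong G / \ker ( \phi ) $. Hence $ | H | = | G | / | \ker ( \phi ) | $. By Lagrange's theorem $ | \ker ( \phi ) | $ divides $ | G | $, so $ | H | \cdot | \ker ( \phi ) | = | G | $, and therefore $ | H | $ divides $ | G | $. This proves the first claim.

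For the second claim, suppose $ | G | = | H | $. Then $ | \ker ( \phi ) | = 1 $, so $ \phi $ is injective, and being surjective it is a bijective homomorphism, i.e. an isomorphism. Since $ \phi $ fixes each $ x \in X $, it sends each generator of $ G $ to the generator of $ H $ with the same name. So $ G $ and $ H $ are identified by an isomorphism matching generators, and the defining relations in $ S $ already hold in $ G $. This is the sense in which $ G = H $ as presented groups.

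The only real subtlety is interpreting ``$ G = H $''. Literal set equality is not meaningful for abstract presentations. The statement should be read as $ \phi $ being an isomorphism that respects the generating set $ X $, so $ S $ imposes no new relations beyond those of $ R $ in the finite group. I will state this reading explicitly and otherwise expect no obstacle.
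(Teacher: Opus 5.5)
Your proposal is correct and follows the paper's proof: Theorem~\ref{gnrtrhmmrphsm} gives a surjection $\phi : G \rightarrow H$ fixing $X$, so $|H| = |\text{im}(\phi)|$ divides $|G|$, and $|G| = |H|$ forces $\phi$ to be injective. Your explicit reading of ``$G = H$'' as $\phi$ being an isomorphism that is the identity on $X$ matches what the paper's brief proof leaves implicit.
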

\begin{proof}
By Theorem \ref{gnrtrhmmrphsm} there exists a surjective homomorphism $ \phi : G \rightarrow H $. Therefore $ | H | = | \text{im} ( \phi ) | $ divides $ | G | $. If $ |G | = | H | $ then $ \phi $ is injective.
\end{proof}

\newpage
\section{Action Description}
Throughout this subsection and the next let $p$ be an odd prime that satisfies the conditions below. In Table \ref{tablethree}, we see that when
\begin{itemize}
\item $p \geq 5 $,
\item $ p \equiv \pm 3 \pmod{8} $ \ \ \ \text{and}
\item $ p \not\equiv \pm 1 \pmod{10} $,
\end{itemize}
there exists a single conjugacy class of maximal subgroups isomorphic to $ A_{4} $ in $ PSL_{2} (p) $ (alternatively, to the above conditions we can say that $ p = 5 $ or $ p \equiv \pm 3 , \pm 13 \pmod{40} $).

\medskip
It follows from Lemma \ref{cstquivtcnj} that rather than  describing the action in terms of right cosets of a maximal $ A_{4} $, we can look at the action by conjugation on $ \Omega $ since the two are equivalent.
\section{Height and Relational Complexity of the \texorpdfstring{$A _{4} $}{} Action}
Although the action description talked about maximal $ A_{4} $ in $ PSL_{2} (p) $ where $ p $ is a prime, there also exist maximal $ A_{4} $ in $ PSL_{2} (q) $ where $ q $ is a power of $ 3 $. This will be discussed further in the chapter on subfield subgroups later. However the results on height and relational complexity in this chapter apply to those later cases if we look at what happens for the action of $ PSL_{2} (q) $ on a maximal $ A_{4} $ for any $ q $. So some quite general theorems will be developed here.

\medskip
The height of such an $ A_{4} $ action will be found and this will directly lead us to its relational complexity. A special case when $ p = 5 $ will be dealt with first.
\begin{thrm}\label{pslfvnme}
The $ A_{4} $ action of $ PSL _{2} (5) $ has height $ 3 $ and relational complexity $ 4 $.
\end{thrm}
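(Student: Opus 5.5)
Since $ PSL_{2}(5) \cong A_{5} $ by Lemma \ref{smrphsmspsltrn}, and a maximal $ A_{4} $ in $ A_{5} $ is a point stabilizer of the natural action on $ \{1, \dots , 5\} $, the $ A_{4} $ action of $ PSL_{2}(5) $ is equivalent to the natural action of $ A_{5} $ on five points. The plan is to prove this equivalence and then read off both statistics from the examples on alternating groups. For the equivalence, I would use Theorems \ref{transq} and \ref{maxg}: the $ A_{4} $ action is the action on the right cosets of a maximal $ A_{4} $. In $ A_{5} $ every subgroup of order $ 12 $ has index $ 5 $, so acting on its cosets gives a homomorphism $ A_{5} \to S_{5} $. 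This map is injective because $ A_{5} $ is simple. Its image is a transitive subgroup of order $ 60 $, which is $ A_{5} $, and the subgroup is the stabilizer of a point. Thus the action is equivalent to the natural action of $ A_{5} $ on $ \{1,\dots,5\} $.

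By Lemma \ref{qvlnthght}, the height equals $ Ht(A_{5}, \{1,\dots,5\}) $, which is $ 5-2 = 3 $ by Example \ref{hghtalt}. By Lemma \ref{equct}, the relational complexity equals $ RC(A_{5}, \{1,\dots,5\}) $, which is $ 5-1 = 4 $ by Example \ref{rcgrpaltg}. Alternatively, $ A_{5} $ is $ 3 $-transitive and not the full symmetric group, so Lemma \ref{trnprm} gives $ RC > 3 $, while Theorem \ref{kminuso} gives $ RC \leq 4 $.

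The only step needing care is the identification of the coset action with the natural action. In particular, I must check that a maximal $ A_{4} $ of $ PSL_{2}(5) $ really is a point stabilizer in $ A_{5} $, and not some other embedding. The index-$5$ argument settles this without needing any conjugacy-class data.
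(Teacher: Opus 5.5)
Your proposal is correct and follows essentially the same route as the paper: it identifies the coset action as a faithful action of degree $5$, so that the image in $S_{5}$ is the unique subgroup of order $60$, namely $A_{5}$ acting naturally. It then reads off height $3$ and relational complexity $4$ from Examples \ref{hghtalt} and \ref{rcgrpaltg}.
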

\begin{proof}
Let $H \leq PSL _{2} (5) $ be a maximal $ A _{4} $ and let $ \Omega $ be its conjugacy class. Lemma \ref{lnsz} shows that $ | PSL _{2} (5) | = 60 $. Hence $ | \Omega | = | PSL _{2} (5) | / | A _{4} | = 60 / 12 = 5 $. Since the action of $ PSL _{2} (5) $ is faithful, the group can be embedded in $ S _{5} $ and the action considered as the action on $ \{ 1, \dots , 5 \} $. The only subgroup of order $ 60 $ in $ S _{5} $ is $ A _{5} $ and the natural action of $ A _{5} $ has height $ 3 $ and relational complexity $ 4 $ by Examples \ref{rcgrpaltg} and \ref{hghtalt}.
\end{proof}
For the rest of this subsection assume that $ p > 5 $, which in fact means $ p \geq 13 $. Later it will be shown that the height of the $ A_{4} $ action of $ PSL_{2} (q) $ is $2$. This will rely on first proving the action has no independent set of size $ 3$.

\medskip
It will be helpful to refer the Cayley table for $ A _{4} $ provided below. To be consistent with notation used later, the identity element is written as $ e $, the elements of order $ 2 $ will be labelled as $r , s $ and $ t $ and the elements of order $ 3 $ will be labelled as $ a, b , c $ and $ d $ (and their inverses written accordingly). Cycle notation is provided in the left column to check the multiplication of the elements is correct. Multiplication in cycle notation is read from left to right here, for example $ (1 \ 2)(3 \ 4)(1 \ 2 \ 3) = (1 \ 3 \ 4) $. The conjugacy classes in this group are $ \{ e \} $, $ \{ r,s,t \} $, $ \{ a,b,c,d \} $ and $ \{ a^{-1} , b^{-1} , c^{-1} , d^{-1} \} $.

\newpage
\begin{tble}\label{afrcylytbl} Cayley table for $ A_{4} $;

\medskip
\begin{tabular}{l | c c c c c c c c c c c c }
  & $ e $ & $ r$ & $s$ & $t$ & $a$ & $b$ & $c$ & $d$ & $a^{-1}$ & $b^{-1}$ & $c^{-1}$ & $d^{-1}$  \\
    \cline{1-13}
  $ e = () $ & $ e $ & $ r$ & $s$ & $t$ & $a$ & $b$ & $c$ & $d$ & $a^{-1}$ & $b^{-1}$ & $c^{-1}$ & $d^{-1}$ \\   
 $ r = (1 \ 2)(3 \ 4) $ & $ r $ & $ e $ & $ t $ & $ s $ & $ b $ & $ a $ & $d$ & $c$ & $ d^{-1} $ & $ c^{-1} $ & $ b^{-1} $ & $ a^{-1} $ \\
  $ s = (1 \ 4)(2 \ 3) $ & $ s $ & $ t $ & $ e $ & $ r $ & $ c$ & $d$ & $a$ & $b$ & $ b^{-1} $ & $ a^{-1} $ & $ d^{-1} $ & $ c^{-1} $ \\
  $ t = (1 \ 3)(2 \ 4) $ & $ t $ & $ s $ & $ r $ & $ e $ & $d$ & $c$ & $b$ & $a$ & $ c^{-1} $ & $ d^{-1} $ & $ a^{-1} $ & $ b^{-1} $ \\
 $ a = (1 \ 2 \ 3) $ & $ a $ & $ d $ & $ b $ & $ c $ & $a^{-1} $ & $ c^{-1} $ & $ d^{-1} $ & $ b^{-1} $ & $e$ & $r$ & $s$ & $t$ \\
 $ b = (1 \ 3 \ 4) $ & $ b $ & $ c $ & $ a $ & $ d $ & $ d^{-1} $ & $ b^{-1} $ & $ a^{-1} $ & $ c^{-1} $ & $r$ & $e$ & $t$ & $s$ \\
 $ c = (1 \ 4 \ 2) $ & $ c $ & $ b $ & $ d $ & $ a $ & $ b^{-1} $ & $ d^{-1} $ & $c^{-1} $ & $ a^{-1} $ & $s$ & $t$ & $e$ & $r$ \\
  $ d = (2 \ 4 \ 3) $ & $ d $ & $ a $ & $ c $ & $ b $ & $ c^{-1} $ & $ a^{-1} $ & $ b^{-1} $ & $d^{-1} $ & $t$ & $s$ & $r$ & $e$ \\
 $ a^{-1} = (1 \ 3 \ 2) $ & $ a^{-1} $ & $ b^{-1} $ & $ c^{-1} $ & $ d^{-1} $ & $e $ & $s$ & $t$ & $r$ & $a$ & $d$ & $b$ & $c$ \\
 $ b^{-1} = (1 \ 4 \ 3) $ & $ b^{-1} $ & $ a^{-1} $ & $ d^{-1} $ & $ c^{-1} $ & $ s $ & $ e $ & $r$ & $t$ & $c$ & $b$ & $d$ & $a$ \\
 $ c^{-1} = (1 \ 2 \ 4) $ & $ c^{-1} $ & $ d^{-1} $ & $ a^{-1} $ & $ b^{-1} $ & $ t $ & $ r $ & $e $ & $s$ & $d$ & $a$ & $c$ & $b$ \\
  $ d^{-1} = (2 \ 3 \ 4) $ & $ d^{-1} $ & $ c^{-1} $ & $ b^{-1} $ & $ a^{-1} $ & $ r $ & $ t $ & $ s $ & $e $ & $b$ & $c$ & $a$ & $d$
\end{tabular}
\end{tble}
First the height of the $ A_{4} $ action will be calculated, then used to find an upper bound for relational complexity. To narrow down the height it is going to be shown there exists no independent set of size $ 3 $. This is done by considering what the intersections of pairs of point stabilizers would be for an independent set of size $ 3 $ if such a set existed. This boils down to looking at four different types of  potential independent set. One of these cases is going to be useful in later chapters, so has been pulled out as a separate lemma before dealing with the rest.
\begin{lema}\label{csefrafr}
Let $ G ^{ * } $ be $ PSL_{2} (q) $ where $ q $ is odd and $ q \geq 11 $ ($ q $ is not necessarily prime here). Suppose $ \Gamma $ is a conjugacy class of $ A_{4} $ in $ G ^{ * } $ (not necessarily maximal). Let $ H_{1} , H_{2} , H_{3} \in \Gamma $, with no two of these equal to each other. It is not possible that $ H_{1} \cap H_{2} \cong H_{2} \cap H_{3} \cong C_{3} $ and $ H_{1} \cap H_{3} \cong C_{2} $.
\end{lema}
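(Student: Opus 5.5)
The plan is to turn the three intersection conditions into statements about specific elements and then use the rigid subgroup structure of $ PSL_{2} (q) $ to get a contradiction. Suppose for a contradiction that $ H_{1} \cap H_{2} = \langle x \rangle $ and $ H_{2} \cap H_{3} = \langle y \rangle $ with $ o(x) = o(y) = 3 $, and that $ H_{1} \cap H_{3} = \langle u \rangle $ with $ o(u) = 2 $.

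\textbf{Step 1: the two $ C_{3} $ are distinct and generate $ H_{2} $.} If $ \langle x \rangle = \langle y \rangle $, then $ x \in H_{1} \cap H_{3} = \langle u \rangle $, which is impossible. So $ \langle x \rangle $ and $ \langle y \rangle $ are distinct Sylow $ 3 $-subgroups of $ H_{2} \cong A_{4} $, and Table \ref{afrcylytbl} shows they generate $ H_{2} $. The same table shows that, after replacing $ y $ by $ y^{-1} $ if necessary, $ z := x y $ is an involution lying in the Klein four-subgroup $ V_{2} $ of $ H_{2} $.

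\textbf{Step 2: locate everything inside normalizers.} For each $ i $ let $ V_{i} $ be the Klein four-subgroup of $ H_{i} $. Then $ H_{i} \leq N_{G^{*}} ( V_{i} ) $, and this normalizer is $ A_{4} $ or $ S_{4} $, so in either case $ H_{i} $ is the unique $ A_{4} $ in it. Hence $ H_{i} $ is determined by $ V_{i} $, and $ V_{1} \neq V_{3} $ because $ H_{1} \neq H_{3} $. Both $ V_{1} $ and $ V_{3} $ contain $ u $, so both lie in $ C_{G^{*}} (u) $, which is dihedral of order $ q \pm 1 $ by Lemma \ref{nrmlzr}. A Klein four-subgroup of a dihedral group that contains the central involution $ u $ has the form $ \{ 1 , u , w , uw \} $ with $ w $ a reflection. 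Likewise, $ N_{G^{*}} ( \langle x \rangle ) $ and $ N_{G^{*}} ( \langle y \rangle ) $ are dihedral by Lemma \ref{nrmlzr}, and $ C_{G^{*}} (x) $ and $ C_{G^{*}} (y) $ are cyclic.

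\textbf{Step 3: derive the contradiction.} Here $ x $ normalizes $ V_{1} $, $ y $ normalizes $ V_{3} $, and $ x $ and $ y $ together generate $ H_{2} $, which normalizes $ V_{2} $. The aim is to show that some nontrivial element other than $ u $ must lie in both $ H_{1} $ and $ H_{3} $. The first candidates are the conjugates $ u^{x} \in V_{1} $ and $ u^{y} \in V_{3} $. I would compute how $ x $, $ y $ and $ z $ act on $ u $, and try to show that the resulting involutions are forced into both $ V_{1} $ and $ V_{3} $. If that works, it gives $ V_{1} = V_{3} $ and hence $ H_{1} = H_{3} $, which contradicts the hypothesis. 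As a fallback, I would compute explicitly in $ SL_{2} (q) $: take $ H_{2} $ to be a standard $ 2 $-dimensional lift of $ A_{4} $, parametrize the $ A_{4} $ subgroups containing a given $ C_{3} $ (they are the conjugates of $ H_{2} $ by elements of $ N_{G^{*}} ( \langle x \rangle ) $), and check that the resulting condition on the conjugating elements has no solution when $ q \geq 11 $.

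\textbf{Main obstacle.} Step 3 is the heart of the argument and the step I expect to be hardest. The difficulty is that $ u $ need not interact obviously with $ x $ and $ y $. This is why I expect to need the explicit parametrization of the $ A_{4} $ subgroups through a fixed $ C_{3} $ by its dihedral normalizer. The hypothesis $ q \geq 11 $ should enter there, by excluding small coincidences such as $ A_{4} $ lying inside $ A_{5} \cong PSL_{2} (5) $.
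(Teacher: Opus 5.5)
Your Steps 1 and 2 are harmless set-up. Step 3, which you rightly call the heart of the matter, contains no argument, so there is a genuine gap: ``compute how $x$, $y$, $z$ act on $u$'' names no mechanism, and nothing yet ties $u$ to $V_{2}$. Your target does have the right shape. Rename $x^{-1}$ as $x$ in Step 1, so that $s := x^{-1} y$ and $r := x y^{-1}$ are two distinct involutions of $V_{2}$. Then $x u x^{-1} = y u y^{-1}$ holds if and only if $u$ commutes with $s$, and $x^{-1} u x = y^{-1} u y$ holds if and only if $u$ commutes with $r$. Either equality puts a conjugate of $u$ other than $u$ into $H_{1} \cap H_{3} = \langle u \rangle$. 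That is impossible, since no element of order $3$ in $A_{4}$ centralizes an involution. So the whole content of the lemma is to show that $u$ centralizes $s$ (or $r$), and your proposal gives no route to this. Working inside $C_{G^{*}} (u)$, as in Step 2, does not help; the relevant dihedral group is $C_{G^{*}} (s)$.

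The missing idea, which is how the paper argues, is an auxiliary $A_{4}$. The elements $ux \in H_{1}$ and $uy \in H_{3}$ have order $3$, and $(ux)^{-1}(uy) = s$. So the relations $a^{3} = b^{3} = (a^{-1}b)^{2} = 1$ force $\langle ux , uy \rangle \cong A_{4}$. This group also contains $(ux)(uy)^{-1} = u r u$.

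If $u r u = s$, then $(ux)^{2} = (uy)^{2}$ lies in $H_{1} \cap H_{3}$, which is absurd. Hence $u r u$ and $s$ are commuting involutions, and both $r$ and $u r u$ lie in $D := C_{G^{*}} (s)$. This $D$ is dihedral of order $q \pm 1 \equiv 0 \pmod 4$ and is maximal because $q \geq 11$. That is where the hypothesis on $q$ enters, not in excluding $A_{5}$-type coincidences.

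Now $r$ and $u r u$ are reflections in both $D$ and $u D u$, and $u r u \neq r$ by the previous paragraph. So $\langle r \cdot u r u \rangle$ is a nontrivial rotation subgroup normal in both groups, and Lemma \ref{ntnrmlmxml} gives $u D u = D$. Thus $u$ normalizes $Z(D) = \langle s \rangle$, which is the desired contradiction.

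Your Step 2 and your fallback also assume $N_{G^{*}} ( \langle x \rangle )$ is dihedral and $C_{G^{*}} (x)$ is cyclic, via Lemma \ref{nrmlzr}. That requires $3 \nmid q$. For $q = 3^{k}$, say $q = 27$, $x$ is unipotent and its normalizer lies in a Borel subgroup. This case is genuinely needed, because $A_{4} \cong PSL_{2} (3)$ is then a subfield subgroup handled via this lemma. The argument above uses only centralizers of involutions, so it works for every odd $q \geq 11$.
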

\begin{proof}
For each $ i \in \{ 1 , 2 , 3 \} $ write
\[
H _{i} 
= \{   e   ,   r_{i}   ,  s_{i}   , t_{i} ,  a_{i} ,  b_{i} ,  c_{i} ,  d_{i} ,  a^{-1}_{i}  ,  b^{-1}_{i}  ,  c^{-1}_{i} ,  d^{-1}_{i} \}
\]
with multiplication the same as in Table \ref{afrcylytbl} when indices are ignored. So first up we have;
\[
H_{1}
= \{   e   ,   r_{1}   ,  s_{1}   , t_{1} ,  a_{1} ,  b_{1} ,  c_{1} ,  d_{1} ,  a^{-1}_{1}  ,  b^{-1}_{1}  ,  c^{-1}_{1} ,  d^{-1}_{1} \} .
\]
Suppose $ | H_{1} \cap H_{2}  | = 3$. Then we can assume that $ a_{1} \in H_{2} $ so that $  H_{1} \cap H_{2} = \{ e , a_{1} , a_{1}^{-1} \} $. We can label the elements of $ H_{2} $ so that $ a_{1} = a_{2} $, giving
\[
H_{2}
= \{   e   ,   r_{2}   ,  s_{2}   , t_{2} ,  a_{1} ,  b_{2} ,  c_{2} ,  d_{2} ,  a^{-1}_{1}  ,  b^{-1}_{2}  ,  c^{-1}_{2} ,  d^{-1}_{2} \} .
\]
For the remaining intersections we have $ | H_{1}  \cap H_{3}  | = 2 $ and $ | H_{2}  \cap H_{3}  | = 3$.

\medskip
Note that it cannot be that $ a_{1} \in H_{3} $, otherwise we would have $ H_{1} = \langle x , a_{1} \rangle = H_{3} \neq H_{1} $. Whatever elements order $ 3 $ are in $ H_{2}  \cap H_{3}  $, we can conjugate $ H_{3} $ by $ a_{1} $ or $ a_{1} ^{-1}$ if required to get some subgroup $ H_{3} ^{*} $ such that $ | H_{1}  \cap H_{3} ^{*}  | = 2 $ and $ | H_{2}  \cap H_{3} ^{*}  | = 3$ and $ b_{2} , b_{2}^{-1} \in H_{3} ^{*} $. So it is safe to assume $ b_{2} , b_{2}^{-1} \in H_{3} $. Since we have not given $ b_{2} $ a label in $ H_{3} $, we can set $ b_{3} := b_{2} $ and label other elements of $ H_{3} $ as needed to give
\[
H_{3}
= \{   e   ,   r_{3}   ,  s_{3}   , t_{3} ,  a_{3} ,  b_{2} ,  c_{3} ,  d_{3} ,  a^{-1}_{3}  ,  b^{-1}_{2}  ,  c^{-1}_{3} ,  d^{-1}_{3} \} .
\]
Label the involution in $ H_{1}  \cap H_{3} $ as $ x $ for now. Let $ y_{1} = x a_{1} $ and $ y_{2} = xb_{2} $. Observe that $ y_{1} \in H_{1} $ and $ y_{2} \in H_{3} $ and that these elements have order $3 $. Also note $ y_{1} \notin H_{3} $ otherwise $ xy_{1} = a_{1} \in H_{3} $. Using Table \ref{afrcylytbl}, it can be seen from the elements of $ H_{2} $ we get $ s_{2} = a_{1}^{-1} b_{2} $. Also $ y_{1} ^{-1} y_{2} = a_{1} ^{-1} x x b_{2} = a_{1}^{-1} b_{2} = s_{2} $. Thus $ s_{2} \in \langle y_{1} , y_{2} \rangle $.

\medskip
Now consider which group $ \langle y_{1} , y_{2} \rangle $ is isomorphic to. So far we have the following relations
\begin{itemize}
\item $ y_{1} ^{3} = y_{2} ^{3} = e $
\item $ (y_{1} ^{2} y_{2} )^{2} = (y_{1} ^{-1} y_{2} )^{2} = e $.
\end{itemize}
Use GAP code
\begin{lstlisting}
f := FreeGroup("a", "b");;
g := f/[f.1^3, f.2^3, (f.2*f.2*f.1)^2];
StructureDescription(g); 
\end{lstlisting}
to show that a group $ H $ generated by three elements satisfying only the above relations is isomorphic to $ A_{4} $. However it could be that there are additional relations that have not been considered.

\medskip
Lemma \ref{gnrtrhmmrphsm} shows that there exists a surjective homomorphism $ \phi : H \rightarrow \langle y_{1} , y_{2} \rangle $. There are at least six elements in $ \langle y_{1} , y_{2} \rangle $, namely $ e , y_{1} , y_{1} ^{-1} , y_{2} , y_{2} ^{-1} $ and $ s_{2} $. Hence $ | \text{im} ( \phi ) | \geq 6 $. Since $ | \ker ( \phi ) | | \text{im} ( \phi ) | = | H | = 12 $, it must be that either $ | \ker ( \phi ) | = 2 $ or $ | \ker ( \phi ) | = 1 $.  It cannot be that $ | \ker ( \phi ) | = 2 $ because $ A_{4} $ does not have a normal subgroup of order $ 2 $. Thus $ | \ker ( \phi ) | = 1 $, which shows $  \langle y_{1} , y_{2} \rangle \cong H \cong A_{4} $. Observe that
\begin{align*}
x r_{2} x = x a_{1} b_{2}^{-1} x = y_{1}  y_{2} ^{-1} \in \langle y_{1} , y_{2}  \rangle .
\end{align*}
If $ x r_{2} x = s_{2} $ then $y_{1}  y_{2} ^{-1} = y_{1} ^{-1} y_{2}  $ and it follows that $ y_{1} ^{2} = y_{2} ^{2} \in H_{3} $, which is a contradiction. Therefore $ x r_{2} x \neq s_{2} $.

\medskip
Both $ x r_{2} x $ and $ s_{2} $ commute with each other because they are involutions in an $ A_{4} $. Also $ r_{2} $ commutes with $ s_{2} $ because $ r_{2} , s_{2} \in H_{2} $. Hence $ r_{2} \ , \ x r_{2} x \in N_{G^{*}} ( \{ e , s_{2} \} ) $.

\medskip
Let $ m \in \{ q-1 , q+1 \} $ with $ m \equiv 0 \pmod{4} $. Since $ q \geq 11 $, Table \ref{tablethree} and Lemma \ref{nrmlzr} together show there exists a maximal $ D < G^{*} $ with $ D \cong D_{m} $ and $  D =  N_{G^{*}} ( \{ e , s_{2} \} ) $. The rotational subgroup of $ D $ has only one involution, $ s_{2} $, so $  r_{2} $ and $ x r_{2} x $ are reflections. It follows that $ r_{2} x r_{2} x $ is in the rotational subgroup and $ \langle r_{2} x r_{2} x \rangle \triangleleft D $ by Lemma \ref{dhdrlrottnnrml}.

\medskip
If $ r_{2} x r_{2} x = e  $ then  $  x r_{2} x = r_{2} $ and by looking at how elements multiply in the earlier table we have
\begin{align*}
x b_{2} y^{-1} _{1} = x b_{2} a^{-1} _{1} x = x r_{2} x = r_{2} = b_{2} a^{-1} _{1} .
\end{align*}
Rearranging gives $  a^{-1} _{1} y _{1} = b_{2} ^{-1} x b_{2}  \in H_{3}   $. But $  a^{-1} _{1} y _{1} = a_{1} ^{-1} x a_{1} $ is some involution in $ H_{1} $ and $ a_{1} ^{-1} x a_{1} \neq x $ since no involution is normalized by an element of order $ 3 $ in $ A_{4} $. Hence $ a_{1} ^{-1} x a_{1} \notin H_{3} $, a contradiction. Therefore  $ r_{2} x r_{2} x \neq e  $.

\medskip
Notice that $ r_{2} \ , \ x r_{2} x \in  x D x  $ and both of these elements must be reflections again. By the same reasoning as above $ \langle r_{2} x r_{2} x \rangle \triangleleft x D x $. Thus $ D = x D x $ by Lemma \ref{ntnrmlmxml}. Hence $ x $ normalizes $  \{ e , s_{2} \} $. In particular $ x s_{2} x = s_{2} $. Since $ s_{2} = a_{1} ^{-1} b_{2} $, we have $ x a_{1} ^{-1} b_{2} x = a_{1} ^{-1} b_{2} $, which can be rearranged to give
\begin{align*}
 a_{1} x a_{1} ^{-1} = b_{2} x b_{2} ^{-1} \in H_{3} .
\end{align*}
Also $ a_{1} x a_{1} ^{-1} \in H_{1} $. Hence $ a_{1} x a_{1} ^{-1} = x $. By the same reasoning used two paragraphs up, we have $  a_{1} x a_{1} ^{-1} \neq x $, a contradiction. Thus it is not possible that $ H_{1} \cap H_{2} \cong H_{2} \cap H_{3} \cong C_{3} $ and $ H_{1} \cap H_{3} \cong C_{2} $.
\end{proof}

\newpage
\begin{lema}\label{frctnndpndn}
Suppose $ q $ is odd and $ q \geq 11 $. Let $ \Omega $ be a conjugacy class of maximal $ A _{4} $ in $  G:= PSL _{2} (q) $. Then under the conjugation action of $ G $ on $ \Omega $, there does not exist an independent set of size $ 3 $.
\end{lema}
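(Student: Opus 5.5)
Suppose for a contradiction that $\Delta = \{\delta_1,\delta_2,\delta_3\}\subseteq\Omega$ is independent, and put $H_i := G_{\delta_i}$. Because the action is conjugation on $\Omega$, each $H_i$ is a maximal $A_4$. By Lemma \ref{mxmlnrmlizr}, $H_i$ equals its own normalizer, so the $H_i$ are pairwise distinct. Lemma \ref{scndfnt} then says that each pairwise intersection $H_i\cap H_j$ strictly contains the triple intersection $H_1\cap H_2\cap H_3$. In particular each pairwise intersection is non-trivial.

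The first step is to list the possible pairwise intersections. Each $H_i\cap H_j$ is a proper subgroup of $A_4$. It cannot contain $K_4$, because $K_4\trianglelefteq A_4$ and Lemma \ref{ntnrmlmxml} would then force $H_i=H_j$. It cannot contain a subgroup of order $6$, since $A_4$ has none. So $H_i\cap H_j\in\{C_2,C_3\}$. A $C_2$ and a $C_3$ meet trivially, and two distinct subgroups of prime order also meet trivially. Hence strict containment of the triple intersection forces it to be $\{e\}$. This leaves four shapes for the multiset of pairwise intersections:
\[
\{C_2,C_2,C_2\},\quad \{C_3,C_3,C_3\},\quad \{C_3,C_3,C_2\},\quad \{C_2,C_2,C_3\}.
\]
In the first two shapes the three intersections must also be distinct. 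Otherwise two of them coincide and equal the triple intersection, which is trivial.

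The shape $\{C_3,C_3,C_2\}$ is ruled out directly by Lemma \ref{csefrafr}, after relabelling so that the $C_2$ is $H_1\cap H_3$. The other three shapes remain to be excluded.

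\textbf{Case $\{C_3,C_3,C_3\}$.} Relabel as in Lemma \ref{csefrafr}, so that $H_1\cap H_2=\langle a_1\rangle$ and $H_2\cap H_3=\langle b_2\rangle$. Then $H_1\cap H_3=\langle y\rangle$ for some element $y$ of order $3$. I would aim to show that $\langle a_1,b_2\rangle=H_2$ together with $y$ is impossible. The plan is to look at products such as $a_1^{-1}y$ or $y b_2^{-1}$, following the pattern of Lemma \ref{csefrafr}. One shows, using a presentation and Corollary \ref{gnrtrcrlry}, that some pair of these elements generates an $A_4$. An involution of $H_2$ then lies in two distinct $A_4$'s whose intersection is larger than allowed. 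Alternatively, one can reach a contradiction via $N_G$ of an involution being a dihedral group (Lemma \ref{nrmlzr}).

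\textbf{Cases involving $C_2$'s.} Let the intersections be generated by involutions $x_{12},x_{13},x_{23}$, which are distinct. In each $H_i$ the two involutions from its two intersections commute, since both lie in the same $K_4\le H_i$. If all three intersections are $C_2$, then $x_{12},x_{13},x_{23}$ commute pairwise and generate a $K_4$. This holds because $PSL_2(q)$ has no elementary abelian subgroup of order $8$: its Sylow $2$-subgroups are dihedral. That $K_4$ is then the normal $K_4$ of $H_1$, $H_2$ and $H_3$ simultaneously. So each $H_i$ normalizes it, and since $H_i$ is maximal, $N_G(K_4)=H_i$ for all $i$ by Lemma \ref{nrmlprstmtsk}. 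This contradicts $H_1\ne H_2$.

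For $\{C_2,C_2,C_3\}$, take $H_1\cap H_2=\langle x\rangle$, $H_1\cap H_3=\langle x'\rangle$ and $H_2\cap H_3=\langle b\rangle$ with $o(b)=3$. Then $x$ and $x'$ are distinct commuting involutions in the $K_4$ of $H_1$, so $xx'$ is the third involution of that $K_4$. I would then argue with $D=N_G(\langle xx'\rangle)$, a maximal dihedral group of order $q\pm1$ by Lemma \ref{nrmlzr}, in the same way as the end of Lemma \ref{csefrafr}. The aim is to force $x$ or $b$ into a normalizer that collapses two of the $H_i$.

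\textbf{Main obstacle.} I expect the $\{C_3,C_3,C_3\}$ case to be the hardest, because no involutions are shared there and one must manufacture them. My first attempt would mimic Lemma \ref{csefrafr} with GAP-checked presentations on two order-$3$ generators.
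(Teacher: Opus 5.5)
Your reduction to the four shapes is correct, and the shape $\{C_3,C_3,C_2\}$ is exactly Lemma~\ref{csefrafr}. Your $\{C_2,C_2,C_2\}$ argument also works. You can avoid the outside fact about elementary abelian subgroups of order $8$, as the paper does: $x_{12}$ centralises $\langle x_{13},x_{23}\rangle$, which is the normal $K_4$ of $H_3$, so $x_{12}\in N_G(K_4)=H_3$ by Lemma~\ref{nrmlprstmtsk}.

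The gap is that the other two shapes, $\{C_3,C_3,C_3\}$ and $\{C_2,C_2,C_3\}$, are only stated as aims, and the mechanisms you suggest do not lead anywhere as written. Products like $a_1^{-1}y$ lie in $H_1$ and $yb_2^{-1}$ lies in $H_3$, so any pair of them generates a subgroup of a single $H_i$ and no new $A_4$ appears. In the $\{C_2,C_2,C_3\}$ case, $x$ and $x'$ are just reflections of $D=C_G(xx')$, and your sketch does not say what links $b$ to $D$.

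More fundamentally, the all-$C_3$ configuration really occurs when $q=5$. In $A_5\cong PSL_2(5)$, the stabilisers of $1,2,3$ in the natural action form an independent set of maximal $A_4$'s whose pairwise intersections are $C_3$ (cf.\ Theorem~\ref{pslfvnme}). So any proof of that case must use $q\geq 11$ in an essential way, and your local sketch never indicates where that hypothesis enters.

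The missing idea is global. In each of these two cases, elements taken from the pairwise intersections generate all of $G$, and the relations they inherit from Table~\ref{afrcylytbl} present a group that is far too small.

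\textbf{All-$C_3$ case.} In your labelling, replace $y$ and $b_2$ by their inverses if necessary, so that $y$ is in the same $H_1$-class as $a_1$ and $b_2$ is in the same $H_2$-class as $a_1$. Since $\langle a_1,b_2\rangle=H_2$ is maximal and $y\notin H_2$, we get $G=\langle a_1,b_2,y\rangle$. These generators satisfy $a_1^3=b_2^3=y^3=1$ and $(a_1y)^3=(a_1y^2)^2=(a_1b_2)^3=(a_1b_2^2)^2=1$. In addition, either $(yb_2)^3=(yb_2^2)^2=1$ or $(yb_2)^2=(yb_2^2)^3=1$, according to whether $y$ and $b_2$ lie in the same class of $H_3$. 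Coset enumeration shows these two presentations define groups of orders $96$ and $60$; the latter is $A_5$, which is precisely how $q=5$ escapes. Theorem~\ref{gnrtrhmmrphsm} then gives $|G|\leq 96$, contradicting $|G|\geq 660$.

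\textbf{The $\{C_2,C_2,C_3\}$ case.} Here $\langle b,x\rangle=H_2$ and $x'\notin H_2$, so $G=\langle b,x,x'\rangle$ with $b^3=x^2=x'^2=(bx)^3=(bx')^3=(xx')^2=1$. This again presents a group of order $96$, giving the same contradiction.

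Without an argument of this kind, or a fully worked-out local replacement that genuinely uses $q\geq 11$, these two cases remain unproved.
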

\begin{proof}
Suppose $ \Delta := \{ H _{1} , H _{2} , H _{3} \} \subseteq \Omega $ is an independent set. Using Lemma \ref{indsbstsntqual} and looking at the subgroup structure of $ A_{4} $ it must be that
\[
\{ e \} = \bigcap _{i=1} ^{3} H _{i} \ < H _{j}  \cap H _{k} \ < \  H _{j}  \tag{*}
\]
where $j, k \in \{ 1, 2 , 3 \} $ and $ j \neq k $. Also $ H _{j}  \cap H_{k} $ is either a cyclic group of order $2$ or a cyclic group of order $ 3 $ (it cannot be a Klein four group because it would be normal in both $ H_{j} $ and $ H _{k} $, which is not possible by Lemma \ref{ntnrmlmxml}). Now consider the $2$-element subsets of $ \Delta $,
\begin{align*}
& \{ H _{1} , H _{2} \} , \\
& \{ H _{1} , H _{3} \} \text{ and} \\
& \{ H _{2} , H _{3} \} .
\end{align*}
The collection of orders of the intersections of each of these pairs of stabilizers must be one the following;
\begin{itemize}
\item All three of the intersections have order $ 2 $.
\item All three of the intersections have order $3$.
\item Two of the intersections have order $2$ and one of them has order $3$.
\item Two of intersections have order $3$ and one of them has order $2$. 
\end{itemize}
These cases will be looked at in turn and shown to not be possible. I will use the following notation for the stabilizers in $ \Delta $. For each $ i \in \{ 1 , 2 , 3 \} $ write
\[
H _{i} 
= \{   e   ,   r_{i}   ,  s_{i}   , t_{i} ,  a_{i} ,  b_{i} ,  c_{i} ,  d_{i} ,  a^{-1}_{i}  ,  b^{-1}_{i}  ,  c^{-1}_{i} ,  d^{-1}_{i} \}
\]
with multiplication the same as in Table \ref{afrcylytbl} when indices are ignored, unless said otherwise. 

\bigskip
\textbf{Case 1:} Suppose all three intersections have order $ 2 $. We can start by writing 
\[
H _{1} 
= \{   e   ,   r_{1}   ,  s_{1}   , t_{1} ,  a_{1} ,  b_{1} ,  c_{1} ,  d_{1} ,  a^{-1}_{1}  ,  b^{-1}_{1}  ,  c^{-1}_{1} ,  d^{-1}_{1} \}
\]
as above. As $ | H_{1} \cap H_{2}   | = 2$ we can suppose that $ r _{1} \in H _{2} $ and that $ r _{1} = r_{2} $. So
\[
H _{2}
= \{   e   ,   r_{1}   ,  s_{2}   , t_{2} ,  a_{2} ,  b_{2} ,  c_{2} ,  d_{2} ,  a^{-1}_{2}  ,  b^{-1}_{2}  ,  c^{-1}_{2} ,  d^{-1}_{2} \} .
\]
Similarly $ | H_{1} \cap H_{3} | = 2$ so we can assume $ s_{1} \in H _{3} $ and $ s_{1} = s_{3} $. Note that it cannot be that $ r_{1} \in H _{3} $ otherwise we would have $ H _{1}  \cap H _{2} \cap H _{3} = \{ e , r_{1} \} $, contradicting $ (*) $. Again $ | H_{2} \cap H _{3}  | = 2$ so we can assume $ t_{2} \in H_{3}  $ and $ t_{2} = t_{3} $ (relabelling the elements of $ H_{2} $, except $r _{1} $, if needed). Hence
\[
H_{3} 
= \{   e   ,   r_{3}   ,  s_{1}   , t_{2} ,  a_{3} ,  b_{3} ,  c_{3} ,  d_{3} ,  a^{-1}_{3}  ,  b^{-1}_{3}  ,  c^{-1}_{3} ,  d^{-1}_{3} \} .
\]
Note that $ K:= \{   e   ,   r_{3}   ,  s_{1}   , t_{2} \} $ is a Klein four subgroup of $ H_{3} $ and $ K \triangleleft H_{3} $. Lemma \ref{nrmlprstmtsk} shows that $  H_{3} = N_{G} (K)  $. However from the multiplication in Table \ref{afrcylytbl} we have $ r_{1} ^{-1} s_{1} r_{1} = r_{1} s_{1} r_{1} = s_{1} $ and $ r_{1} ^{-1} t_{2} r_{1} = r_{1} t_{2} r_{1} = t_{2} $. Also
\begin{align*}
r_{1} ^{-1} r_{3} r_{1} 
& = r_{1} r_{3} r_{1} \\
& = r_{1} s_{1} t_{2} r_{1} \\
& = r_{1} s_{1} r_{1} r_{1} t_{2} r_{1} \\
& = s_{1} t_{2} \\
& = r_{3} .
\end{align*}
Hence $ r_{1} ^{-1} K r_{1} = K $ and it follows that $ r_{1} \in H_{3}  $, which is a contradiction. Thus the assumption that all three intersections of the form $ H_{j} \cap H_{k} $ have order $ 2 $ must be wrong.

\medskip
\textbf{Case 2:} Suppose all three intersections of the form $ H_{j} \cap H_{k}   $ have order $ 3 $. As before, we can start by writing 
\[
H_{1}  
= \{   e   ,   r_{1}   ,  s_{1}   , t_{1} ,  a_{1} ,  b_{1} ,  c_{1} ,  d_{1} ,  a^{-1}_{1}  ,  b^{-1}_{1}  ,  c^{-1}_{1} ,  d^{-1}_{1} \} .
\]
Since $ | H_{1} \cap H_{2}  | = 3$ we can assume that the elements of $ H_{1} $ are labelled so that $ a_{1} , a_{1}^{-1} \in  H_{2} $. At this point we can label the elements of $ H_{2} $ so that $ a_{1} = a_{2} $ (and $ a_{1}^{-1} = a_{2}^{-1} $) and then construct the rest of $ H_{2} $ and label the elements accordingly so that
\[
H_{2}
= \{   e   ,   r_{2}   ,  s_{2}   , t_{2} ,  a_{1} ,  b_{2} ,  c_{2} ,  d_{2} ,  a^{-1}_{1}  ,  b^{-1}_{2}  ,  c^{-1}_{2} ,  d^{-1}_{2} \} .
\]
Following the same procedure again $ | H_{1}  \cap H_{3}  | = | H_{2} \cap H_{3}  | = 3 $ so we can assume the elements of $ H_{1}  $ are labelled such that $ b_{1} , b_{1}^{-1} \in H_{3}  $ and the elements of $ H_{2} $ are labelled such that $ c_{2} , c_{2}^{-1} \in H_{3} $. However there are two conjugacy classes containing elements of order $3$ in $ A_{4} $ and it may be that $ b_{1} $ and $ c_{2} $ lie in the same conjugacy class or it could be that they lie in opposite conjugacy classes, so it is difficult to label the elements of $ H_{3} $ using the usual convention. Both of these cases are considered separately.

\medskip
First suppose  $ b_{1} $ and $ c_{2} $ lie in the same conjugacy class in $ H_{3} $. Then we can label the elements of $ H_{3} $ so that $ b_{1} = b_{3} $ and $ c_{2} = c_{3} $, giving
\[
 H_{3} = \{   e   ,   r_{3}   ,  s_{3}   , t_{3} ,  a_{3} ,  b_{1} ,  c_{2} ,  d_{3} ,  a^{-1}_{3}  ,  b^{-1}_{1}  ,  c^{-1}_{2} ,  d^{-1}_{3} \} .
\]
Using Table \ref{afrcylytbl}, it can be shown that $ b_{1} $ and $ c_{2} $ generate $ H_{3}  $. Since $ H_{3} $ is a maximal subgroup and does not contain $ a_{1} $, it must be that $ \langle a_{1} , b_{1} , c_{2} \rangle = G $. Looking at how these three generators multiply, we get the following relations:
\begin{itemize}
\item $ a_{1}^{3} = b_{1}^{3} = c_{2}^{3} = e $
\item $ (a_{1} b_{1})^{3} = (a_{1} c_{2})^{3} = (b_{1} c_{2})^{3} = e  $
\item $ (a_{1} b_{1}^{2})^{2} = (a_{1} c_{2}^{2})^{2} = (b_{1} c_{2}^{2})^{2} = e  $.
\end{itemize}
Use GAP code
\begin{lstlisting}
f := FreeGroup("a", "b", "c");;
g := f/[f.1^3, f.2^3, f.3^3, (f.1*f.2)^3, (f.1*f.3)^3, (f.2*f.3)^3,
(f.1*f.3*f.3)^2, (f.1*f.2*f.2)^2, (f.2*f.3*f.3)^2];
StructureDescription(g);
Size(g);
\end{lstlisting}
to show that a group $H$ generated by three elements satisfying only the above relations has order $96 $ and $ H \cong ((C_{2} \times C_{2} \times C_{2}) : (C_{2} \times C_{2})) : C_{3} $. However it could be that there are additional relations that have not been considered. Lemma \ref{gnrtrhmmrphsm} shows that there exists a surjective homomorphism $ \phi : H \rightarrow \langle a_{1} , b_{1} , c_{2} \rangle $. Thus $ | \text{im} ( \phi ) | = | \langle a_{1} , b_{1} , c_{2} \rangle | $. Since $ | H | = | \ker ( \phi ) | | \text{im} ( \phi ) | $, it follows that $ | PSL_{2} (q) | = | \langle a_{1} , b_{1} , c_{2} \rangle | \leq 96 $. Since $ q \geq 11 $, Lemma \ref{lnsz} shows that $ | PSL_{2} (q) | \geq \tfrac{1}{2} \cdot 11 \cdot (11^{2} - 1) = 660 $. So we have a contradiction, showing that it is not possible for $ b_{1} $ and $ c_{2} $ to lie in the same conjugacy class.

\medskip
Next suppose that $ b_{1} $ and $ c_{2} $ lie in opposite conjugacy classes in $ H _{3} $. Then we can label the elements of $ H_{3} $ so that $ b_{1} = b_{3} $ and $ c_{2} = c_{3}^{-1} $, giving
\[
H_{3} = \{   e   ,   r_{3}   ,  s_{3}   , t_{3} ,  a_{3} ,  b_{1} ,  c_{2}^{-1} ,  d_{3} ,  a^{-1}_{3}  ,  b^{-1}_{1}  ,  c_{2} ,  d^{-1}_{3} \} .
\]
As above it can be shown that $ b_{1} $ and $ c_{2} $ generate $ H_{3} $ and since $ H_{3}  $ is a maximal subgroup that does not contain $ a_{1} $, it must be that $ \langle a_{1} , b_{1} , c_{2} \rangle = G $. Looking at how these three generators multiply, we get the following relations:
\begin{itemize}
\item $ a_{1}^{3} = b_{1}^{3} = c_{2}^{3} = e $
\item $ (a_{1} b_{1})^{3} = (a_{1} c_{2})^{3} = (b_{1} c_{2})^{2} = e  $
\item $ (a_{1} b_{1}^{2})^{2} = (a_{1} c_{2}^{2})^{2} = (b_{1} c_{2}^{2})^{3} = e  $.
\end{itemize}
Use GAP code
\begin{lstlisting}
f := FreeGroup("a", "b", "c");;
g := f/[f.1^3, f.2^3, f.3^3, (f.1*f.2)^3, (f.1*f.3)^3, (f.2*f.3)^2, (f.1*f.3*f.3)^2,
(f.1*f.2*f.2)^2, (f.2*f.3*f.3)^3]; 
StructureDescription(g);
Size(g); 
\end{lstlisting}
to show that a group $H ^{\prime } $ generated by three elements satisfying only the above relations has order $60 $ and $ H ^{\prime }  \cong A_{5} $. Following the same reason as above, it must be that $ | PSL_{2} (q) | = | \langle a_{1} , b_{1} , c_{2} \rangle | \leq 60 < 660 \leq | PSL_{2} (q) | $, so we have a contradiction again. Thus it is not possible that all three intersections of the form $ H_{j} \cap H_{k}  $ have order $ 3 $.

\medskip
Note that the last part of this case gives relations that generate $ A_{5} $, which is exactly what happens when we allow $ q=5 $ and why $ PSL_{2} (5) $ had to be dealt with as a special case earlier.

\bigskip
\textbf{Case 3:} Suppose two of the intersections of the form $ H_{j} \cap H_{k}  $ have order $2$ and one of them has order $3$. Again, start by writing
\[
H_{1} 
= \{   e   ,   r_{1}   ,  s_{1}   , t_{1} ,  a_{1} ,  b_{1} ,  c_{1} ,  d_{1} ,  a^{-1}_{1}  ,  b^{-1}_{1}  ,  c^{-1}_{1} ,  d^{-1}_{1} \} .
\]
Suppose $ | H_{1}  \cap H_{2}  | = 3$. Then we can assume that $ a_{1} , a_{1}^{-1} \in H_{2}  $. Take note for later that $ H_{1}  \cap H_{2}  $ contains no element of order $ 2$, in particular does not contain $ s_{2} $ from $  H_{2} $. We can label the elements of $ H_{2}  $ so that $ a_{1} = a_{2} $ (and $ a_{1}^{-1} = a_{2}^{-1} $), giving
\[
H_{2}
= \{   e   ,   r_{2}   ,  s_{2}   , t_{2} ,  a_{1} ,  b_{2} ,  c_{2} ,  d_{2} ,  a^{-1}_{1}  ,  b^{-1}_{2}  ,  c^{-1}_{2} ,  d^{-1}_{2} \} .
\]
Now it must be that $ | H_{1} \cap H_{3}  | = | H_{2} \cap H_{3} | = 2$. We can assume that the elements of each of these stabilizers are labelled so that $ r _{1} \in H_{1} \cap H_{3} $ with $ r_{1} = r_{3} $ and that $ s_{2} \in H_{2} \cap H_{3} $ with $ s_{2} = s_{3} $. This gives
\[
H_{3} 
= \{   e   ,   r_{1}   ,  s_{2}   , t_{3} ,  a_{3} ,  b_{3} ,  c_{3} ,  d_{3} ,  a^{-1}_{3}  ,  b^{-1}_{3}  ,  c^{-1}_{3} ,  d^{-1}_{3} \} .
\]
Using Table \ref{afrcylytbl}, it can be shown that $ r_{1} $ and $ a_{1} $ generate $ H_{1} $. Since $ H_{1} $ is maximal and $ s_{2} \notin H_{1} $ it must be that $ \langle a_{1} , r_{1} , s_{2} \rangle = G $. Looking at how these three generators multiply, we get the following relations:
\begin{itemize}
\item $ a_{1}^{3} = r_{1}^{2} = s_{2}^{2} = e $
\item $ (a_{1} r_{1})^{3} = (a_{1} s_{2})^{3} = (r_{1} s_{2})^{2} = e  $.
\end{itemize}
Use GAP code
\begin{lstlisting}
f := FreeGroup("a", "b", "c");;
g := f/[f.1^3, f.2^2, f.3^2, (f.1*f.2)^3, (f.1*f.3)^3, (f.2*f.3)^2];
StructureDescription(g);
Size(g);
\end{lstlisting}
to show that a group $ H$ generated by three elements satisfying only the above relations has order $96 $ and $ H \cong ((C_{2} \times C_{2} \times C_{2}) : (C_{2} \times C_{2})) : C_{3} $. This situation appeared in the previous case and was shown to cause a contradiction with the order of $PSL _{2} (q) $. Thus it is not possible that two of the intersections of the form $ H _{j} \cap H_{k}  $ have order $2$ and one of them has order $3$.

\bigskip
\textbf{Case 4:} For the final case suppose two of the intersections of the form $ H _{j}  \cap H_{k}  $ have order $3$ and one of them has order $2$. Since $ q \geq 11 $, this case was shown to not be possible in Lemma \ref{csefrafr}.

\bigskip
Cases 1 to 4 together show that it is not possible for three maximal $ A_{4} $ subgroups to intersect each other pairwise in subgroups of order $ 2 $ and $ 3 $, contradicting $ (*) $. Thus $ \Delta $ cannot be an independent set. 
\end{proof}
With that short proof done, the height of the $ A_{4} $ action can immediately be found.
\begin{thrm}\label{afrctnhght}
Suppose $ q $ is odd and $ q \geq 11 $. Then the height of the action of $ PSL_{2} (q) $ by conjugation on a conjugacy class of maximal $ A _{4} $ is $ 2 $.
\end{thrm}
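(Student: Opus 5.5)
The plan is to sandwich the height between $2$ and $2$: an upper bound from Lemma \ref{frctnndpndn}, and a lower bound by exhibiting a single independent set of size $2$.

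For the upper bound, Lemma \ref{frctnndpndn} says that under the conjugation action on $\Omega$ there is no independent set of size $3$. By Corollary \ref{indsub}, every non-empty subset of an independent set is independent. So an independent set of size at least $4$ would contain an independent subset of size $3$, which is impossible. Hence every independent set has size at most $2$, and $Ht(G,\Omega) \leq 2$.

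For the lower bound, I would show that some two-element subset $\{H_1, H_2\}$ of $\Omega$ is independent. By Lemma \ref{scndfnt}, this means checking that $G_{H_1} \cap G_{H_2}$ differs from both $G_{H_1}$ and $G_{H_2}$.

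The point stabilizer of $H_i$ under conjugation is $N_G(H_i)$. Since $G$ is simple for $q \geq 11$ and $H_i$ is maximal and not normal, Lemma \ref{mxmlnrmlizr} gives $N_G(H_i) = H_i$. So the condition reduces to showing that any two distinct $H_1, H_2 \in \Omega$ satisfy $H_1 \cap H_2 \neq H_1$ and $H_1 \cap H_2 \neq H_2$. This holds because $H_1$ and $H_2$ are distinct subgroups of the same order $12$, so neither contains the other.

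It remains to know that $\Omega$ has at least two elements. This follows from $|\Omega| = |G|/12 \geq 660/12 > 1$, using Lemma \ref{lnsz}. Equivalently, the action is primitive on a set of size at least $2$, so $Ht(G,\Omega) \geq 2$.

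Combining the two bounds gives $Ht(G,\Omega) = 2$.

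The only potentially delicate step is the reduction of stabilizers to the subgroups themselves. It relies on the equivalence of the coset action with the conjugation action (Lemma \ref{cstquivtcnj}) and on self-normalization (Lemma \ref{mxmlnrmlizr}), both available earlier in the paper. Everything else is immediate.
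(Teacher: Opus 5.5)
Your proposal is correct and follows the paper's proof essentially step for step: for the lower bound, any two distinct conjugates $H_1, H_2$ form an independent set, since their stabilizers are the self-normalizing subgroups themselves and are therefore incomparable; for the upper bound, Lemma \ref{frctnndpndn} together with Corollary \ref{indsub} rules out independent sets of size $3$ or more. Drop the aside that primitivity on a set of size at least $2$ by itself forces $Ht \geq 2$, since a cyclic group of prime order acting regularly is a counterexample; your main argument does not depend on that remark.
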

\begin{proof}
Let $ \Omega $ be a conjugacy class of maximal $ A_{4} $ in $ PSL_{2} (q) $. Since $ PSL_{2} (q) $ is simple, $ | \Omega | \geq 2 $. So there exist $ H_{1} , H_{2} \in \Omega $ with $ H_{1} \neq H_{2} $. Hence $ H_{1} \neq H_{1} \cap H_{2} \neq H_{2} $, which shows $ \{ H_{1} , H_{2} \} $ is an independent set by Lemma \ref{indsbstsntqual}. Therefore the height of the $ A _{4} $ action is at least $2$. If the height is $ 3 $ or more then there must exist an independent set of size $3$ by Corollary \ref{indsub}. Lemma \ref{frctnndpndn} shows that no such set exists and so the height is $2$.
\end{proof}
The relational complexity of the action can quickly be found as well.
\begin{thrm}\label{afrrcthrem}
Suppose $ q $ is odd and $ q \geq 11 $. Then the relational complexity of the action of $ PSL_{2} (q) $ by conjugation on a conjugacy class of maximal $ A _{4} $ is $ 3 $.
\end{thrm}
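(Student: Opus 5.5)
The upper bound comes straight from the height. By Theorem \ref{afrctnhght}, $Ht(G,\Omega) = 2$, and Theorem \ref{rchgt} then gives $RC(G,\Omega) \leq 3$. It therefore remains to show $RC(G,\Omega) \neq 2$. Following the discussion after Lemma \ref{ntndpndntntrs}, this means producing an almost independent set of size $3$ together with explicit triples $I,J \in \Omega^{3}$ such that $I \sim_{2} J$ but $I \not\sim_{3} J$.

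To build the witness I would take three distinct maximal $A_{4}$'s $H_1,H_2,H_3$ (conjugates in $\Omega$) with pairwise intersections of order $2$ or $3$. Lemma \ref{frctnndpndn} forces $H_1\cap H_2\cap H_3$ to equal one of the pairwise intersections. I would set
\[
I=(H_1,H_2,H_3),
\]
and look for $H_3' \neq H_3$ and $g_1 \in H_1$, $g_2 \in H_2$ with $H_3^{g_1} = H_3' = H_3^{g_2}$. Since $H_1 \neq H_2$ and the action is $2$-transitive only in a weak sense, the $2$-subtuple conditions for the pairs $(1,3)$ and $(2,3)$ just say that $H_3'$ lies in both the $H_1$-orbit and the $H_2$-orbit of $H_3$. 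The pair $(1,2)$ is trivial, witnessed by the identity. The element sending $I$ to $J=(H_1,H_2,H_3')$ would have to lie in $H_1 \cap H_2$. So it suffices to arrange that $H_3'$ is not in the $(H_1\cap H_2)$-orbit of $H_3$.

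A natural choice is to let $x$ be an involution, set $C = \langle x \rangle$, and take $H_1,H_2$ to be two $A_{4}$'s meeting exactly in $C$. The involution $x$ lies in several $A_4$'s, namely the conjugates inside $N_G$ of a Klein four group; by Lemma \ref{nrmlzr} the centralizer of $x$ is a dihedral group $D$ of order $q\pm1$. Then $C$ acts on the set $\Omega$ of $A_4$'s with orbits of size $1$ or $2$. I would count the orbits of $H_1$ and $H_2$ on $\Omega$ via the fixed points of elements of order $2$ and $3$, and find $H_3$ whose $H_1$-orbit and $H_2$-orbit intersect in more than its $C$-orbit. The cleanest route is a counting argument: $|\Omega| = |G|/12$ is large, and the $H_1$-orbits and $H_2$-orbits both have size at most $12$. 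Summing over $H_3 \in \Omega$ the quantity $|H_3^{H_1} \cap H_3^{H_2}|$ equals $\#\{(h_1,h_2): H_3^{h_1}=H_3^{h_2}\}$, so I would compute it using the number of $A_4$'s normalized by given elements (from the dihedral structure in Lemma \ref{dcvlkjhsdp}) and compare it with the $C$-orbit contribution.

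I expect that counting to be the main obstacle. If it proves messy, I would try instead an explicit construction: take $H_3' = H_3^{h_1}$ for a suitable element $h_1 \in H_1$ of order $3$, and verify $H_3' = H_3^{h_2}$ using a relation like the ones in Lemma \ref{csefrafr}. Alternatively, one could appeal to the binary classification in \cite{GILL4}, or to Lemmas 3.3 and 3.4 of \cite{GILL1}, to show that this action is not binary for $q \geq 11$. Combined with $RC \leq 3$, either route gives $RC(G,\Omega)=3$.
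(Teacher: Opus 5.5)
Your proof is the paper's proof: the paper gets $RC(G,\Omega)\le 3$ from $Ht(G,\Omega)=2$ (Theorem \ref{afrctnhght}) and Theorem \ref{rchgt}, and gets $RC(G,\Omega)\neq 2$ purely by citation, namely Lemmas 2.3 and 3.3 of \cite{GILL1}; the paper cites Lemmas 3.3 and 3.4 only for the dihedral actions, so check that the lemmas you cite really treat $A_4$ (the classification in \cite{GILL4} would also do). The witness construction you sketch first is therefore unnecessary, and as written it is not an argument: the count is never carried out, the action is not $2$-transitive in any sense, and the displayed identity is wrong, since for fixed $H_3$ one has $\#\{(h_1,h_2)\in H_1\times H_2 : H_3^{h_1}=H_3^{h_2}\}=|H_3^{H_1}\cap H_3^{H_2}|\cdot|H_1\cap H_3|\cdot|H_2\cap H_3|$ (because $N_G(H_3)=H_3$).
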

\begin{proof}
Theorem \ref{afrctnhght} and Theorem \ref{rchgt} together show that the relational complexity of the action is at most $3$. In \cite{GILL1}, Lemmas 2.3 and 3.3 show it is not $2$. Thus it is equal to $ 3 $.
\end{proof}

\newpage
\chapter{The \texorpdfstring{$ S_{4} $}{S4} Action}\label{chapeight}
Let $ p $ be a prime with $ p \geq 5 $. Suppose $ G $ is $ PSL_{2} (p) $ or $ PGL_{2} (p) $ and $ G $ contains a conjugacy class $ \Omega $ of maximal $ S_{4} $. The main results of this chapter are:

\medskip
If $ G = PGL_{2} (5) $ then $ Ht(G, \Omega ) = 4 $. If $ G \neq PGL_{2} (5) $ then $ Ht(G, \Omega ) = 3 $.

\medskip
If $ G = PGL_{2} (5) $ then $ RC(G, \Omega ) = 2 $. Otherwise $ RC(G, \Omega ) \in \{ 3,4 \} $.

\medskip
Also the following conjecture is made:

\medskip
Suppose $ p > 7 $. If $ G= PGL_{2} (p) $ then $ RC(G, \Omega ) = 4 $.
\section{Preliminary Results}
\begin{lema}
Let $ H_{1} , H_{2} \leq S_{4} $ with $ H_{1} \neq H_{2} $ and $ H_{1} \cong H_{2} \cong S_{3} $. Then $ | H_{1} \cap H_{2} | = 2 $.
\end{lema}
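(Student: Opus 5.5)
The idea is to identify every $S_{3}$ subgroup of $S_{4}$ with a point stabilizer in the natural action on $\{1,2,3,4\}$. Then the intersection becomes a two-point stabilizer, which can be computed directly.

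First, show that every subgroup $H \leq S_{4}$ with $H \cong S_{3}$ equals $\mathrm{Stab}(i) = \{ g \in S_{4} : i^{g} = i \}$ for some $i \in \{1,2,3,4\}$. I would argue by elements. Such an $H$ contains an element of order $3$, which in $S_{4}$ is a $3$-cycle, say fixing the point $i$. The subgroup of order $3$ in $H$ is normal in $H$, since it has index $2$, so it is the unique Sylow $3$-subgroup of $H$. Every element of $H$ therefore normalizes this $3$-cycle subgroup. A permutation normalizing $\langle (a\ b\ c) \rangle$ must permute its support $\{a,b,c\}$, and hence must fix the remaining point $i$. So $H \leq \mathrm{Stab}(i)$, and $|H| = 6 = |\mathrm{Stab}(i)|$ gives equality. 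Alternatively, one can count: $S_{4}$ has $8$ elements of order $3$, which lie in $4$ subgroups of order $3$. Each $S_{3}$ contains exactly one of these subgroups and is contained in its normalizer, which has order $6$. Either way, there are exactly four $S_{3}$ subgroups, namely the four point stabilizers.

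Given this, $H_{1} \neq H_{2}$ means $H_{1} = \mathrm{Stab}(i)$ and $H_{2} = \mathrm{Stab}(j)$ with $i \neq j$. Then $H_{1} \cap H_{2}$ consists of the permutations fixing both $i$ and $j$. These are exactly the permutations of the remaining two points, so $H_{1} \cap H_{2} = \{ e, (k\ l) \}$, where $\{k,l\} = \{1,2,3,4\} \setminus \{i,j\}$. This is the computation already done in Example \ref{xmpind}, and it gives $|H_{1} \cap H_{2}| = 2$.

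The only step that needs care is the first one: justifying that an $S_{3}$ inside $S_{4}$ must fix a point. The normalizer argument above, which says that the support of the $3$-cycle is preserved, is the step I would write out explicitly. The rest is immediate.
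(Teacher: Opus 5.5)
Your proof is correct and takes the same route as the paper: identify each $S_{3}$ subgroup of $S_{4}$ with a point stabilizer in the natural action, so that $H_{1} \cap H_{2}$ is a two-point stabilizer of order $2$. You also justify the point-stabilizer identification, which the paper only asserts, and you correctly give the intersection as $\{ e, (k\ l) \}$; the paper's proof instead names the transposition $(i\ j)$ of the two fixed points, which moves $i$ and $j$ and so does not fix them.
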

\begin{proof}
Each $ S_{3} $ subgroup of $ S_{4} $ is the stabilizer of a point of $ \{ 1 , 2 , 3 , 4 \} $ under the natural action of $ S_{4} $. The only non-identity element that stabilizes two points $ i , j \in \{ 1 , 2 , 3 , 4 \} $ is the transposition $ ( i , j ) $.
\end{proof}
\section{Action Description}
Throughout this section and the next let $p$ be an odd prime with $ p \geq 5 $. In Table \ref{tablethree}, we see that there exists maximal subgroups isomorphic to $ S_{4} $ in $ PSL_{2} (p) $ precisely when $ p \equiv \pm 1 \pmod{8} $ and they lie in two conjugacy classes.

\medskip
Also $ PGL _{2} (p) $ has maximal $ S_{4} $ subgroups if and only if $ p \equiv \pm 3 \pmod{8}  $ (see Table \ref{tableone}) and these subgroups lie in a single conjugacy class (see \cite{GUIDICI1}, Lemma 2.3).

\medskip
Let $ G $ be either $ PSL_{2} (p) $ or $ PGL _{2} (p) $ and suppose $ H \leq G $ is a maximal $ S_{4} $ subgroup. Put $ \Omega := \{ g^{-1} H g : g \in G \} $. It follows from Lemma \ref{cstquivtcnj} that rather than  describing the action in terms of right cosets of a maximal $ S_{4} $, we can look at the action by conjugation on $ \Omega $ since the two are equivalent.

\medskip
For the rest of this chapter let $ p $, $ G $, $ H $ and $ \Omega $ be as defined above. 

\medskip
Part of the work done in this chapter will be using GAP to check the height and relational complexity of these actions. The calculations can take some time so only one conjugacy class per group is going to be calculated in general. For $ PSL_{2} (q) $ there are two conjugacy classes, so it needs to be shown that the relational complexity and height does not depend on which class is chosen.

\medskip
Let $ \Omega _{1} $ and $ \Omega _{2} $ be two conjugacy classes of maximal $ S_{4} $ in $ PSL_{2} (p) $. Lemma \ref{pslsnpgl} tells is that $ PGL_{2} (p) $ contains a normal subgroup isomorphic to $ PSL_{2} (p) $, so we may as well consider our original $ PSL_{2} (p) $ to be this subgroup. The two conjugacy classes of $ A_{4} $ in the $ PSL_{2} (p) $ subgroup are fused in $ PGL_{2} (p) $ - see \cite{GUIDICI1}, Lemma 2.3. Since $ PSL_{2} (p) $ is normal, there exists $ g \in PGL_{2} (q) $ such that $ g^{-1} \Omega_{1} g = \Omega _{2} $ and $ g^{-1} \Omega_{2} g = \Omega _{1} $. Since we are looking at the action by conjugation of $ PSL_{2} (p) $ on these two conjugacy classes, it is not difficult to show from here the height of both actions must be same. Similarly the relational complexity is equal for both actions.
\section{Height of the \texorpdfstring{$S _{4} $}{} Action}
To calculate the height of the $ S_{4} $ action of $ G $ we start with some sufficient conditions for a lower bound.
\begin{lema}\label{sfrlwrbnd}
Let $ H^{*} $ be a maximal $ S_{4} $ in $ G $ with $ H^{*} \neq H $ ($ H^{*} $ not necessarily in the same conjugacy class as $ H $ if $ G = PSL_{2} (p) $). Also suppose $ H \cap H^{*} \cong S_{3} $. Then $  Ht(G, \Omega ) \geq 3 $.
\end{lema}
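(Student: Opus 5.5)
The plan is to build an explicit independent set of size $3$ in $\Omega$ from the given $S_{3} = H \cap H^{*}$, and then apply Lemma \ref{scndfnt}. Write $K := H \cap H^{*} \cong S_{3}$. Inside $H \cong S_{4}$, the subgroup $K$ is the stabilizer of a point in the natural action on four letters. I will pick an element $h \in H$ with $h \notin N_{G}(K)$, for instance one moving that fixed point. Then $K' := h^{-1} K h$ is a different $S_{3}$ in $H$, and $K'$ lies in $h^{-1} H^{*} h$.

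Next I set $H_{1} := H$, $H_{2} := H^{*}$ and $H_{3} := h^{-1}H^{*}h$. If $G = PGL_{2}(p)$ then $H^{*}$ lies in the single conjugacy class of maximal $S_{4}$, so $H^{*} \in \Omega$. If $G = PSL_{2}(p)$, however, $H^{*}$ may lie in the other class. In that case I will instead take $\Omega$ to be the class of $H^{*}$, using the remark before the lemma that both classes give the same height. Equivalently, I take the three points $H^{*}$, $H$ and $h^{-1}Hh = H$... but that repeats $H$. So the cleaner route is to work in the class of $H^{*}$ with the points $H^{*}$, $h^{-1}H^{*}h$, and a third conjugate. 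In fact the simplest choice is to use points of the class of $H^{*}$ only: namely $H^{*}$, $h_{1}^{-1}H^{*}h_{1}$ and $h_{2}^{-1}H^{*}h_{2}$, where $h_{1}, h_{2} \in H$ are chosen so that the three conjugates $K, K^{h_{1}}, K^{h_{2}}$ are distinct point stabilizers in $H$. Each conjugate $h_{i}^{-1}H^{*}h_{i}$ contains $K^{h_{i}}$, because $h_{i} \in H$ gives $h_{i}^{-1}(H \cap H^{*})h_{i} = H \cap h_{i}^{-1}H^{*}h_{i}$.

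The key step is then to compute the pairwise intersections. The three subgroups are distinct: if $h_{i}^{-1}H^{*}h_{i} = H^{*}$, then $H \cap H^{*}$ would contain both $K$ and $K^{h_{i}}$. These two generate $H$, since two distinct $S_{3}$ in $S_{4}$ generate it, so $H \le H^{*}$, which contradicts maximality and $H \ne H^{*}$. For each pair, I need the intersection to be strictly larger than the triple intersection. Each pairwise intersection contains the intersection of the corresponding two $S_{3}$ subgroups of $H$. By the preliminary lemma this has order $2$, and it is a transposition. The three transpositions arising from the pairs $\{K, K^{h_{1}}\}$, $\{K, K^{h_{2}}\}$ and $\{K^{h_{1}}, K^{h_{2}}\}$ are distinct. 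The triple intersection $H^{*} \cap h_{1}^{-1}H^{*}h_{1} \cap h_{2}^{-1}H^{*}h_{2}$ must then be shown not to contain all of them.

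This is the main obstacle: the triple intersection need not be contained in $H$. I would argue as follows. Suppose the triple intersection contained all three transpositions $(a\,b)$, $(a\,c)$ and $(b\,c)$ of $H$, where $a, b, c$ are the fixed points of the three $S_{3}$. These generate an $S_{3}$ lying inside $H^{*}$. That $S_{3}$ fixes the fourth letter $d$ in $H$, and it is also contained in $H^{*} \cap H = K$. This is impossible, since $K$ fixes $a$ and does not contain $(a\,b)$. Therefore the triple intersection misses at least one transposition, say the one coming from a particular pair. I then need strictness for every pair, so the argument has to be refined: the triple intersection is contained in $H^{*} \cap H^{*h_{1}}$. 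I will check directly that any element lying in all three conjugates and in $H$ is trivial, and I will bound the part outside $H$ using the fact that the triple intersection is a subgroup of $S_{4}$ containing at most one of the transpositions. If this gets messy, my fallback is to replace the set by $\{H, H^{*}, h^{-1}H^{*}h\}$ and use Lemma \ref{htthrcndtns}-style counting with Lemma \ref{scndfnt}, giving $Ht(G,\Omega) \ge 3$ via the equal-height remark across the two classes.
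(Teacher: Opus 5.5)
Your construction is the same as the paper's: the three conjugates $H^{*}$, $h_{1}^{-1}H^{*}h_{1}$ and $h_{2}^{-1}H^{*}h_{2}$ with $h_{1},h_{2}\in H$. Your distinctness argument is also fine, and it extends to the pair $h_{1},h_{2}$ in the same way. The gap is that the proposal never proves the condition Lemma \ref{scndfnt} needs, namely that \emph{each} pairwise intersection strictly contains the triple intersection. Your only concrete argument shows that the triple intersection misses \emph{at least one} of the three transpositions, which is not enough. The promised refinement (bounding the part of the triple intersection outside $H$) is never carried out, and neither is the fallback. For $G = PSL_{2}(p)$ the fallback set $\{H, H^{*}, h^{-1}H^{*}h\}$ need not even lie in a single conjugacy class.

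There is also a labelling slip. If $K$, $h_{1}^{-1}Kh_{1}$ and $h_{2}^{-1}Kh_{2}$ are the stabilizers in $H$ of the letters $a,b,c$, the pairwise intersections are $\langle (c\,d)\rangle$, $\langle (b\,d)\rangle$ and $\langle (a\,d)\rangle$, not $\langle (a\,b)\rangle$ and so on. These three transpositions generate all of $H$, not an $S_{3}$.

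The obstacle you name is not real. You never need the triple intersection to lie in $H$; you only need a specific element of $H$ to be outside it. For elements of $H$, membership in a conjugate is controlled by the identity you already wrote down: $H \cap h_{i}^{-1}H^{*}h_{i} = h_{i}^{-1}Kh_{i}$.

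\begin{itemize}
\item $(a\,d)$ lies in $h_{1}^{-1}H^{*}h_{1}\cap h_{2}^{-1}H^{*}h_{2}$. If also $(a\,d)\in H^{*}$, then $(a\,d)\in H\cap H^{*}=K$, the stabilizer of $a$, which is false.
\item $(b\,d)$ lies in $H^{*}\cap h_{2}^{-1}H^{*}h_{2}$. If it were in $h_{1}^{-1}H^{*}h_{1}$, it would lie in $H\cap h_{1}^{-1}H^{*}h_{1}$, the stabilizer of $b$, which is false.
\item Likewise $(c\,d)$ would have to lie in the stabilizer of $c$.
\end{itemize}

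So every pairwise intersection strictly contains the triple intersection, and Lemma \ref{scndfnt} gives independence.

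The paper phrases this step as follows. The involution $v_{jk}$ is not in $V_{i}$, and $\langle V_{i},v_{jk}\rangle = H$ by maximality of $V_{i}$ in $H$. So $v_{jk}\in H_{i}$ would force $H\le H_{i}$.

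Your use of the equal-height remark to handle the two conjugacy classes is acceptable. The paper instead uses the symmetry of the hypothesis in $H$ and $H^{*}$: it builds the set from conjugates of $H$ by elements of $H^{*}$, so the set lies in $\Omega$ directly.
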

\begin{proof}
Since $ H \cong S_{4} $, it has four subgroups isomorphic to $ S_{3} $. Denote these as $ V_{1} , \dots ,  V_{4} $. Let $ i , j , k \in \{ 1 , \dots , 4 \} $ with $ i \neq j \neq k \neq i $. Then, by looking at the subgroup structure of $ S_{4} $, we have $ V_{i} \cap V_{j} \cong V_{i} \cap V_{k} \cong V_{j} \cap V_{k} \cong C_{2} $. Also none of $ V_{i} \cap V_{j} $, $ V_{i} \cap V_{k} $ and $ V_{j} \cap V_{k} $ are equal to each other.

\medskip
From the statement of the lemma, we may assume in $ G $ there exists a maximal $ S_{4} $ containing $ V_{i} $, which will be denoted $ H_{i} $, and further assume $ H_{i} \neq H $. The $ S_{3} $ subgroups in $ S_{4} $ are maximal. So $ V_{i} $ is maximal in $ H_{i} $. If $ V_{j} \leq H_{i} $ then $  H_{i} = \langle V_{i} , V_{j} \rangle = H $, a contradiction. Hence $ V_{j} \not\leq H_{i} $. Now $ V_{j} = h^{-1} V_{i} h $ for some $ h \in H \setminus V_{i} $. By the maximality of $ V_{i} $, we have $ \langle V_{i} , h \rangle = H $. Therefore $ h \notin H_{i} $.

\medskip
Either $ G $ is simple or Lemma \ref{nrmlsbgrpspgl} shows $ G $ has only one non-trivial normal subgroup of index $ 2 $, so $ H_{i} $ is not normal in $ G $. Therefore $ h ^{-1} H_{i} h \neq H_{i} $ by Lemma \ref{mxmlnrmlizr}. Put $ H_{j} := h ^{-1} H_{i} h $ and observe $ V_{j} \leq H_{j} $. We have $ H_{j} \neq H $, otherwise $ H = h H h^{-1} = h H_{j} h^{-1} = H_{i} $.

\medskip
The above can be repeated to find $ H_{k} \leq G $ such that $ V_{k} \leq H_{k} $ and $ H_{k} $ is conjugate to $ H_{i} $, with $ H_{k} \neq H_{i} $. If $ H_{k} = H_{j} $ then $ V_{k} \leq H_{j} $ and $ H_{j} = \langle V_{j} , V_{k} \rangle = H $, a contradiction. Hence $ H_{k} \neq H_{j} $.

\medskip
Let $ v_{jk} $ be the involution in $ V_{j} \cap V_{k} $. Then $ v_{jk} \notin V_{i} $. Since $ V_{i} $ is maximal in $ H $, we have $ \langle V_{i} ,  v_{jk} \rangle = H $. Observe $ v_{jk} \in H_{j} \cap H_{k} $. So $ v_{jk} \notin H_{i} $, otherwise $ H_{i} = H $. Hence $ H_{i} \cap H_{j} \cap H_{k} \neq H_{j} \cap H_{k} $. Similar reasoning shows $ H_{i} \cap H_{j} \cap H_{k} \neq H_{i} \cap H_{j} $ and $ H_{i} \cap H_{j} \cap H_{k} \neq H_{i} \cap H_{k} $. Thus $ \{ H_{i} , H_{j} , H_{k} \} $ is an independent subset of the conjugacy class that contains it by Lemma \ref{scndfnt}.

\medskip
If there are two conjugacy classes of maximal $ S_{4} $, say $ \Omega $ and $ \Omega ^{*} $, and $ H_{i} \in \Omega^{*} $ then $ \{ H_{i} , H_{j} , H_{k} \} \subseteq \Omega ^{*} $. So we need to make sure that an independent set of size $ 3 $ can be found in $ \Omega $. Notice from the initial assumptions earlier in the proof, all we need is a maximal $ S_{4} $ in $ \Omega $ that contains $ V_{i} $ and is not equal to $ H_{i} $. For this we can use $ H $.
\end{proof}
The above lower bound in fact always holds.
\begin{lema}\label{sfrhghtlwrbnd}
$ Ht(G, \Omega ) \geq 3 $.
\end{lema}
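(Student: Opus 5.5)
By Lemma \ref{sfrlwrbnd}, it suffices to produce a maximal $ S_{4} $, call it $ H^{*} \neq H $, with $ H \cap H^{*} \cong S_{3} $. The plan is to locate $ H^{*} $ inside the normalizer of a cyclic subgroup of order $3$ of $H$, and to use dihedral normalizers and counting to show that $ H $ is not the only maximal $ S_{4} $ containing a given $ V \cong S_{3} $.

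Fix $ V \leq H $ with $ V \cong S_{3} $, and let $ C = \langle c \rangle $ be its subgroup of order $3$. Since $ p \geq 5 $, the order $3$ is coprime to $ p $. So Lemma \ref{nrmlzr} gives $ N_{G}(C) = D \cong D_{2(p \pm 1)/\delta} $, and $ V \leq D $ because $ V $ normalizes $ C $. Inside $ H \cong S_{4} $ we have $ N_{H}(C) = V $, so $ D \cap H = V $.

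Next count the maximal $ S_{4} $ in $ \Omega $ that contain $ C $. Each such subgroup $ K $ satisfies $ N_{K}(C) \cong S_{3} $, and $ N_{K}(C) \leq D $. The count proceeds as follows.
\begin{itemize}
\item A fixed $ S_{4} $ contains exactly $4$ subgroups of order $3$.
\item The number of subgroups of order $3$ in $ G $ is $ |G:D| $, since they are all conjugate: they lie in a cyclic Sylow-type subgroup, by Lemma \ref{sylwlmma}.
\item Hence the number of $ K \in \Omega $ containing $ C $ is $ 4|\Omega| / |G:D| = 4|D|/24 = |D|/6 $.
\end{itemize}
For $ p \geq 7 $ one has $ |D| \geq 12 $, giving at least $2$ such $ K $. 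For $ PSL_{2}(p) $ with $ p \equiv \pm 1 \pmod 8 $ there are two classes; either count within one class, or accept $ H^{*} $ from the other class, which Lemma \ref{sfrlwrbnd} explicitly allows. The remaining small case is $ PGL_{2}(5) \cong S_{5} $. There, direct inspection shows two $ S_{4} $ point stabilizers meet in $ S_{3} $, and indeed the height is $4$.

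So take $ K \neq H $ with $ C \leq K $. Then $ N_{K}(C) \cong S_{3} $ is a subgroup of $ D $ containing $ C $. Both $ N_{K}(C) $ and $ V $ have the form $ C\langle r \rangle $ with $ r $ a reflection of $ D $. Such $ S_{3} $ subgroups of $ D $ correspond to cosets of reflections modulo $ C $, giving $ |D|/6 $ of them, and they are permuted transitively by conjugation by the rotation subgroup. Conjugating $ K $ by a suitable rotation of $ D $, which fixes $ C $, yields $ K' \in \Omega $ with $ N_{K'}(C) = V $. If $ K' = H $ for every such choice, the counts of $ K $ and of $ S_{3} $ subgroups of $ D $ would have to match one-to-one, i.e. each $ S_{3} \leq D $ lies in exactly one $ K $. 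The main obstacle is ruling this out, i.e. showing that some $ V $ lies in two members of the class. I would try an orbit count: $ G $ acts on pairs $ (V,K) $ with $ V \leq K $, $ V \cong S_{3} $, and there are $ 4|\Omega| $ such pairs. Compare this with the number of $ S_{3} $ subgroups of $ G $, namely $ |G|/|N_{G}(V)| $. Using $ N_{G}(V) \cong D_{12} $ or $ S_{3} $, obtained from $ N_{G}(V) \leq N_{G}(C) $, this shows $ 4|\Omega| > \#\{S_{3}\} $ whenever $ |N_{G}(V)| > 6 $. That inequality forces some $ V $ to lie in two maximal $ S_{4} $, and Lemma \ref{sfrlwrbnd} then gives $ Ht(G,\Omega) \geq 3 $. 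Where the inequality fails, for small $ p $, I would appeal to the GAP tables in \cite{WISCONS1}.
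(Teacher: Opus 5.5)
There is a genuine gap, and it sits exactly where you defer to the tables in \cite{WISCONS1}. Done correctly, your orbit count shows that a fixed $V \cong S_{3}$ in $H$ lies in exactly $|N_{G}(V)|/6$ members of $\Omega$. Equivalently, $N_{G}(V)$ acts transitively on those members, with stabilizer $N_{H}(V) = V$. Since $N_{G}(V) \leq N_{G}(C) = D$, a short computation in $D$ gives $N_{G}(V) \cong D_{12}$ when $4 \mid |D|$, and $N_{G}(V) = V$ when $4 \nmid |D|$.

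In the first case you are fine. The second member of $\Omega$ is $a^{-1}Ha$ for the central involution $a$ of $D$, exactly as in the paper. In the second case, $H$ is the \emph{only} member of $\Omega$ containing $V$, so no argument inside a single conjugacy class can produce $H^{*}$. This is not a small-$p$ exception. For $G = PSL_{2}(p)$ it happens whenever $3$ divides whichever of $p \pm 1$ is $\equiv 2 \pmod{4}$, i.e.\ for every prime $p \equiv 7, 17 \pmod{24}$ ($p = 7, 17, 31, 41, \dots$). Your assertion that $|D| \geq 12$ for $p \geq 7$ already fails at $PSL_{2}(7)$, where $D \cong S_{3}$.

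So the second class $\Omega^{*}$, which you mention only as an optional alternative, is forced. You then need an argument that some member of $\Omega^{*}$ contains $V$ itself, not merely $C$. The paper supplies it. When $4 \nmid |D|$, the rotation subgroup of $D$ has odd order, so all reflections of $D$ are conjugate. Hence all $S_{3}$ subgroups of $D$ containing $C$ are $D$-conjugate, and all $S_{3}$ subgroups of $G$ are $G$-conjugate. A suitable conjugate of any $H^{*} \in \Omega^{*}$ then contains $V$, and $H \cap H^{*} = V$ by maximality of $V$ in $H$.

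Two smaller points. First, your claim that rotations of $D$ permute the $S_{3}$ subgroups through $C$ transitively is false when $4 \mid |D|$. There are then two orbits, and only one of them consists of subgroups lying in members of $\Omega$. Second, the quantity to compare with $4|\Omega|$ is the size $|G|/|N_{G}(V)|$ of the conjugacy class of $V$, not the total number of $S_{3}$ subgroups. The total is always $|G|/6 = 4|\Omega|$, so the inequality read literally never holds.
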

\begin{proof}
Let $ V \leq H $ with $ V \cong S_{3} $. There exists $ W \leq V $ with $ W \cong C_{3} $. Since $ G = PSL_{2} (p) $ or $ G = PGL_{2} (p) $ where $ p \neq 3 $, Lemma \ref{nrmlzr} tells us $ N_{G} (W) \cong D_{2(p \pm 1) / \delta } $, where $ \delta = 2 $ if $ G = PSL_{2} (p) $ or $ \delta = 1 $ if $ G = PGL_{2} (p) $. Observe that $ V \leq N_{G} (W) $.

\medskip
If $ 4 $ divides $ | N_{G} (W) | $ then there exists an involution $ a \in Z (N_{G} (W) ) $. None of the involutions in $ V $ commute with each other. Thus $ \langle V , a \rangle $ is a dihedral subgroup of $ N_{G} (W) $ with $ |  \langle V , a \rangle | > 6 $. The $ S_{3} $ subgroups of $ S_{4} $ are maximal and $ S_{4} $ is not dihedral, therefore $ \langle V , a \rangle \not\leq H $. In particular, $ a \notin H $. Either $ G $ is simple or Lemma \ref{nrmlsbgrpspgl} shows $ G $ has only one non-trivial normal subgroup of index $ 2 $, so $ H $ is not normal in $ G $. Therefore $ a ^{-1} H a \neq H $ by Lemma \ref{mxmlnrmlizr}. The maximality of $ V $ in $ H $ implies $ H \cap a ^{-1} H a = V $. Hence $ Ht(G, \Omega ) \geq 3 $ by Lemma \ref{sfrlwrbnd}.

\medskip
Next suppose $ | N_{G} (W) | $ is not divisible by $ 4 $. By Lemmas \ref{dcvlkjhsdp} and \ref{nrmlzr}, every normalizer of a $ C_{3} $ in $ G $ is isomorphic to $ N_{G} (W ) $ and the normalizers are conjugate to each other. Hence every $ S_{3} $ in $ G $ lies in a conjugate of $ N_{G} (W ) $.

\medskip
Let $ D $ be a conjugate of $ N_{G} (W) $. The rotational subgroup of $ D $ has odd order and so all involutions in $ D $ are conjugate. If $ R_{1} , R_{2} \leq D $ and $ R_{1} \cong R_{2} \cong S_{3} $, then $ R_{1} = \langle C , r_{1} \rangle $ and $ R_{2} = \langle C , r_{2} \rangle $ where $ C $ is generated by a rotation of order $ 3 $ in $ D $ and $ r_{1} $ and $ r_{2} $ are involutions. Now $ r_{2} = d^{-1} r_{1} d $ for some $ d \in D $. So $ R_{2} = d^{-1} R_{1} d $. Thus all $ S_{3} $ are conjugate in $ D $ and it follows that all $ S_{3} $ in $ G $ are conjugate to each other.

\medskip
The only way $ 4 $ does not divide $ | N_{G} (W) | = 2(p \pm 1) / \delta $ is if $ \delta = 2 $, that is if $ G = PSL_{2} (p) $. In this case $ G $ has two conjugacy classes of maximal $ S_{4} $, say $ \Omega $ and $ \Omega ^{*} $. Since $ H \in \Omega $ we have $ H \notin \Omega ^{*} $. The fact all $ S_{3} $ are conjugate in $ G $ means we can pick any subgroup $ H^{*} \in \Omega ^{*} $ and if it does not contain $ V $ then some conjugate of $ H^{*} $ will. So $ Ht(G, \Omega ) \geq 3 $ by Lemma \ref{sfrlwrbnd}.
\end{proof}
The next step in determining the height of the actions is proving there cannot exist an independent set of size $ 4 $. If such a set did exist then there are some restrictions what groups the intersection of point stabilizers can be, as we now see.
\begin{lema}\label{afrntrsctsfr}
Suppose $ G^{ \prime } $ is a group properly containing an $ S_{4} $ subgroup. Let $ \Omega ^{ \prime } $ be the set of right cosets of such an $ S_{4} $. If $ \Delta $ is an independent set of size $ 4 $ then $ G^{ \prime } _{ \delta _{1} , \delta _{2} } \not\cong A_{4} $ for $ \delta _{1} , \delta _{2} \in \Delta $.
\end{lema}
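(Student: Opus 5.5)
Assume for a contradiction that $\Delta=\{\delta_1,\delta_2,\delta_3,\delta_4\}$ is independent and $G'_{\delta_1,\delta_2}\cong A_4$. The goal is to show that the irredundant stabilizer chains forced by Lemma \ref{stach} (applied to every ordering of $\Delta$, via Corollary \ref{indsub}) cannot fit inside $A_4$. By Lemma \ref{stach}, the chain
\[
G'_{\delta_1,\delta_2}>G'_{\delta_1,\delta_2,\delta_3}>G'_{\delta_1,\delta_2,\delta_3,\delta_4}
\]
is strict. The same holds with $\delta_3,\delta_4$ swapped. Moreover, Lemma \ref{indsbstsntqual} says that the four subgroups
\[
G'_{\delta_1,\delta_2},\quad G'_{\delta_1,\delta_2,\delta_3},\quad G'_{\delta_1,\delta_2,\delta_4},\quad G'_{(\Delta)}
\]
are pairwise distinct, and that $G'_{(\Delta)}=G'_{\delta_1,\delta_2,\delta_3}\cap G'_{\delta_1,\delta_2,\delta_4}$.

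Put $A:=G'_{\delta_1,\delta_2}\cong A_4$, $B:=G'_{\delta_1,\delta_2,\delta_3}$ and $C:=G'_{\delta_1,\delta_2,\delta_4}$. These are distinct proper subgroups of $A$ with $B\cap C<B$ and $B\cap C<C$. The proper subgroups of $A_4$ are the trivial group, the three $C_2$, the four $C_3$, and the unique Klein four-group $K$.

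The first step is to pin down $B$ and $C$. Neither is trivial, since otherwise the chain could not continue strictly. If one of them is cyclic of prime order, then $B\cap C=\{1\}$. If $B=K$, then $C$ is a $C_2$ or a $C_3$, and again either $C\le K$ (which contradicts $B\cap C<C$) or $B\cap C=1$. So in every case $G'_{(\Delta)}=1$, and $B$ and $C$ are distinct nontrivial proper subgroups of $A$.

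The next step brings in the other pairs. Lemma \ref{stach} applied to the orderings $(\delta_3,\delta_1,\delta_2,\ldots)$ and $(\delta_3,\delta_4,\ldots)$ gives
\[
G'_{\delta_3}>G'_{\delta_1,\delta_3}>G'_{\delta_1,\delta_2,\delta_3}=B>1 .
\]
Since $G'_{\delta_3}$ is conjugate to $S_4$, the chain $S_4\ge G'_{\delta_1,\delta_3}>B>1$ with $B\le A_4$ constrains the pair stabilizers. The plan is to exploit the fact that $A=G'_{\delta_1,\delta_2}$ has index $2$ in both $G'_{\delta_1}$ and $G'_{\delta_2}$. This makes $A$ the unique $A_4$ in each, hence normal in both $G'_{\delta_1}$ and $G'_{\delta_2}$. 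Consequently $A$ is normalized by $\langle G'_{\delta_1},G'_{\delta_2}\rangle$.

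I would then try to show that this forces a repetition among the stabilizers. Consider $G'_{\delta_3}\cap A=B$. The key symmetry is that every element of $G'_{\delta_1}\setminus A$ normalizes $A$ and permutes its subgroups. I would take $g\in G'_{\delta_1}\setminus A$ and compare $B^g$ with $C$, using the counting of $S_3$/$A_4$/$D_8$ intersections inside $S_4$ (the preliminary lemma on $S_3\cap S_3$) to argue that $G'_{\delta_1,\delta_3}$ and $G'_{\delta_1,\delta_4}$ must be among $S_3$, $D_8$, $C_4$, $K$, $C_2$ or $C_3$ inside $G'_{\delta_1}$. The aim is to conclude that either $G'_{\delta_1,\delta_3}\cap A=B$ equals $G'_{\delta_1,\delta_2,\delta_3}$ in a way that makes $G'_{\delta_1,\delta_3}=G'_{\delta_1,\delta_2,\delta_3}$ impossible to separate, or that $G'_{\delta_2,\delta_3,\delta_4}=G'_{(\Delta)}$ fails strictness.

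This last step is the main obstacle. The constraints internal to $A_4$ alone are consistent (for example $B=K$, $C=C_3$), so the contradiction must come from how the $S_4$-stabilizers $G'_{\delta_3}$ and $G'_{\delta_4}$ meet $G'_{\delta_1}$ and $G'_{\delta_2}$. I would first try the specific claim that $G'_{\delta_2,\delta_3,\delta_4}=1$ too (by the symmetric argument), and then derive a contradiction from $G'_{\delta_3,\delta_4}$ needing to strictly contain $1$ while its intersection with the normal $A$-structure collapses.
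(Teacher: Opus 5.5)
Your proposal has a genuine gap: it never reaches the contradiction. The reductions you make are correct: $B$ and $C$ are nontrivial, $G'_{(\Delta)}=1$, and $A$ is the unique $A_4$ in both $G'_{\delta_1}$ and $G'_{\delta_2}$. But the step you flag as the main obstacle is the whole content of the proof, and the route you sketch for it does not lead anywhere.

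The lemma imposes no normalizer hypothesis on $G'$, so $A\trianglelefteq\langle G'_{\delta_1},G'_{\delta_2}\rangle$ is not contradictory by itself; two distinct conjugate $S_4$'s can share their $A_4$. Conjugating by $g\in G'_{\delta_1}\setminus A$ only gives a point $\delta_2^g\neq\delta_2$ with $G'_{\delta_1,\delta_2^g}=A$, which is harmless. Your closing plan about $G'_{\delta_2,\delta_3,\delta_4}$ and $G'_{\delta_3,\delta_4}$ is not yet an argument.

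The missing idea is to split on the isomorphism type of $B$, which your first step shows is $C_3$, $C_2$ or $K_4$. In each case you use the rigidity of the subgroup lattice of $S_4$ inside a single point stabilizer.

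\emph{Case $B\cong C_3$.} The groups $G'_{\delta_1,\delta_3}$ and $G'_{\delta_2,\delta_3}$ are proper subgroups of $G'_{\delta_3}\cong S_4$ properly containing $B$, so each is $S_3$ or $A_4$. An $A_4$ is impossible: it would be the unique $A_4$ of $G'_{\delta_1}$ (resp.\ $G'_{\delta_2}$), namely $A$, contradicting Lemma \ref{indsbstsntqual}. So both are $S_3$'s containing the same $C_3$. A $C_3$ of $S_4$ lies in exactly one $S_3$, so they coincide, again contradicting Lemma \ref{indsbstsntqual}.

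\emph{Case $B\cong K_4$.} Your own first step forces $C\cong C_3$, and the previous case with $\delta_3,\delta_4$ swapped applies.

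\emph{Case $B\cong C_2$.} Then $C\cong C_2$ as well: $C\cong K_4$ would contain $B$, and $C\cong C_3$ is excluded as above. Work in $G'_{\delta_1}$, whose normal Klein four-group $N$ lies in $A$.
\begin{itemize}
\item Since $G'_{\delta_1,\delta_3}\cap A=B$, the group $G'_{\delta_1,\delta_3}$ properly contains the double-transposition subgroup $B$ and is not inside $A$. So it is a $C_4$, a $D_8$ or a non-normal $K_4$.
\item A $D_8$ contains $N$ and would force $N\le B$. A $C_4$ has a unique involution and would force $G'_{\delta_1,\delta_3,\delta_4}=B=G'_{\delta_1,\delta_2,\delta_3}$.
\item Hence $G'_{\delta_1,\delta_3}$, and symmetrically $G'_{\delta_1,\delta_4}$, is a non-normal $K_4$.
\item Then $G'_{\delta_1,\delta_3,\delta_4}$ is a $C_2$ not contained in $N$, since otherwise it would equal $G'_{\delta_1,\delta_3}\cap N=B$. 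So it is generated by a transposition.
\item A transposition lies in exactly one non-normal $K_4$, so $G'_{\delta_1,\delta_3}=G'_{\delta_1,\delta_4}$, the final contradiction.
\end{itemize}
This case analysis is essentially what the paper does. The paper omits the $C_4$ option in the $C_2$ case, but it is disposed of as above.
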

\begin{proof}
Write $ \Delta = \{ \delta _{1} , \delta _{2} , \delta _{3} , \delta _{4} \} $. Suppose $ G^{ \prime } _{ \delta _{1} , \delta _{2} } \cong A_{4} $.

\medskip
It follows from Lemma \ref{stach} that $ G^{ \prime } _{ \delta _{i} , \delta _{j} } > G^{ \prime } _{ \delta _{1} , \delta _{2} , \delta _{3} }  > G_{ ( \Delta ) } $ for $ i , j \in \{ 1,2,3 \} $, therefore $ G^{ \prime } _{ \delta _{1} , \delta _{2} , \delta _{3} } $ is either a $ C_{3} $, $ C_{2} $ or $ K_{4} $.

\medskip
First suppose $ G^{ \prime } _{ \delta _{1} , \delta _{2} , \delta _{3} } \cong C_{3} $. Then $ G^{ \prime } _{ \delta _{1} , \delta _{3} } $ and $ G^{ \prime } _{ \delta _{2} , \delta _{3} } $ are either $ S_{3} $ or $ A_{4} $ subgroups of $ G^{ \prime } _{ \delta _{3} } $. If either was an $ A_{4} $ then they would be equal to $ G^{ \prime } _{ \delta _{1} , \delta _{2} } $, which is ruled out by Lemma \ref{indsbstsntqual}. So they must both be isomorphic to $ S_{3} $. Each $ C_{3} $ in an $ S_{4} $ is contained in exactly one $ S_{3} $ subgroup. Hence $ G^{ \prime } _{ \delta _{1} , \delta _{3} } = G^{ \prime } _{ \delta _{2} , \delta _{3} } $. Lemma \ref{indsbstsntqual} again shows this is a contradiction. So $ G^{ \prime } _{ \delta _{1} , \delta _{2} , \delta _{3} } \not\cong C_{3} $.

\medskip
Next suppose $ G^{ \prime } _{ \delta _{1} , \delta _{2} , \delta _{3} } \cong K_{4} $. By Lemma \ref{stach} we have $ G^{ \prime } _{ \delta _{1} , \delta _{2}  }  > G^{ \prime } _{ \delta _{1} , \delta _{2} , \delta _{4} } >  G_{ ( \Delta ) }  $. Hence $ G^{ \prime } _{ \delta _{1} , \delta _{2} , \delta _{4} } $ is either a $ K_{4} $ or $ C_{2} $ (with $ C_{3} $ ruled out the same way as the above case). Lemma \ref{indsbstsntqual} prevents the possibility of $ G^{ \prime } _{ \delta _{1} , \delta _{2} , \delta _{4} } $ being a $ K_{4} $. Also if $ G^{ \prime } _{ \delta _{1} , \delta _{2} , \delta _{4} } \cong C_{2} $ then $ G^{ \prime } _{ \delta _{1} , \delta _{2} , \delta _{4} } < G^{ \prime } _{ \delta _{1} , \delta _{2} , \delta _{3} } $. This means $ G^{ \prime } _{ \delta _{1} , \delta _{2} , \delta _{4} } = G^{ \prime } _{ \delta _{1} , \delta _{2} , \delta _{3} , \delta _{4} } $, again giving a contradiction by the same lemma.

\medskip
The remaining case is $ G^{ \prime } _{ \delta _{1} , \delta _{2} , \delta _{3} } \cong C_{2} $. As in the previous case we cannot have $ G^{ \prime } _{ \delta _{1} , \delta _{3} } \leq G^{ \prime } _{ \delta _{1} , \delta _{2} } $, otherwise $ G^{ \prime } _{ \delta _{1} , \delta _{2} } = G^{ \prime } _{ \delta _{1} , \delta _{2} , \delta _{3} } $. Also $ G^{ \prime } _{ \delta _{1} , \delta _{3} } > G^{ \prime } _{ \delta _{1} , \delta _{2} , \delta _{3} } $. In $ S_{4} $, the $ C_{2} $ inside the $ A_{4} $ subgroup do not lie in any $ S_{3} $ subgroups. Therefore $ G^{ \prime } _{ \delta _{1} , \delta _{3} } $ is either a $ D_{8} $ or a non-normal $ K_{4} $ in $ G^{ \prime } _{ \delta _{1} } $. If $ G^{ \prime } _{ \delta _{1} , \delta _{3} } \cong D_{8} $ then it contains the normal $ K_{4} $ in $ G^{ \prime } _{ \delta _{1} , \delta _{2} } $ and it follows that $ G^{ \prime } _{ \delta _{1} , \delta _{2} , \delta _{3} } \cong K_{4} $, a contradiction. Hence $ G^{ \prime } _{ \delta _{1} , \delta _{3} } \not\cong D_{8} $. Thus $ G^{ \prime } _{ \delta _{1} , \delta _{3} } $ is a non-normal $ K_{4} $ in $ G^{ \prime } _{ \delta _{1} } $.

\medskip
Following the reasoning from the above cases we also have $ G^{ \prime } _{ \delta _{1} , \delta _{2} , \delta _{4} } \cong C_{2} $. Also $ G^{ \prime } _{ \delta _{1} , \delta _{4} } $ is a non-normal $ K_{4} $ in $ G^{ \prime } _{ \delta _{1} } $.

\medskip
Let $ N $ be the normal $ K_{4} $ in $ G^{ \prime } _{ \delta _{1} } $. Then looking at the subgroup structure of $ S_{4} $ we have $ G^{ \prime } _{ \delta _{1} , \delta _{3} } \cap N = G^{ \prime } _{ \delta _{1} , \delta _{2} , \delta _{3} } $ and $ G^{ \prime } _{ \delta _{1} , \delta _{4} } \cap N = G^{ \prime } _{ \delta _{1} , \delta _{2} , \delta _{4} } $.

\medskip
Since $ G^{ \prime } _{ \delta _{1} , \delta _{3}  }  > G^{ \prime } _{ \delta _{1} , \delta _{3} , \delta _{4} } > G^{ \prime } _{ \delta _{1} , \delta _{2} , \delta _{3} , \delta _{4} } \geq \{ 1_{G^{ \prime } } \} $ we have $ G^{ \prime } _{ \delta _{1} , \delta _{3} , \delta _{4} } \cong C_{2} $. Using Lemma \ref{indsbstsntqual} once again we see that $ G^{ \prime } _{ \delta _{1} , \delta _{2} , \delta _{3} } \neq G^{ \prime } _{ \delta _{1} , \delta _{3} , \delta _{4} } \neq G^{ \prime } _{ \delta _{1} , \delta _{2} , \delta _{3} } $. In particular, $ G^{ \prime } _{ \delta _{1} , \delta _{3} , \delta _{4} } $ is not a subgroup of $ N $.

\medskip
In $ S_{4} $, if a $ C_{2} $ subgroup is not contained in the normal $ K_{4} $, then it is contained in exactly one non-normal $ K_{4} $ subgroup. Therefore $ G^{ \prime } _{ \delta _{1} , \delta _{3} } =  G^{ \prime } _{ \delta _{1} , \delta _{4} } $ because $ G^{ \prime } _{ \delta _{1} , \delta _{3} , \delta _{4} } $ is a subgroup of both of these $ K_{4} $. This is a contradiction by Lemma \ref{indsbstsntqual}. Hence the original assumption $ G^{ \prime } _{ \delta _{1} , \delta _{2} } \cong A_{4} $ is wrong.
\end{proof}

\newpage
\begin{lema}\label{cntthnkfnm}
Suppose $ G^{ \prime } $ is a group properly containing an $ S_{4} $ subgroup. Let $ \Omega ^{ \prime } $ be the set of right cosets of such an $ S_{4} $. Suppose for each $ \omega \in \Omega ^{ \prime } $ that if $ K_{ \omega } $ is the normal $ K_{4} $ in $ G^{ \prime } _{ \omega } $ then $ N_{ G^{ \prime } } ( K_{ \omega } ) = G^{ \prime } _{ \omega } $. If there exists an independent set $ \Delta \subseteq \Omega ^{ \prime } $ of size $ 4 $ then $  G^{ \prime } _{( \Delta ) } = \{ 1_{G^{ \prime } } \} $ and $ | G^{ \prime } _{ \delta _{1} , \delta _{2} , \delta _{3} } | = 2 $ for $ \delta _{1} , \delta _{2} , \delta _{3}  \in \Delta $.
\end{lema}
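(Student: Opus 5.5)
The plan is to exploit the irredundant stabilizer chains of Lemma \ref{stach}, which hold for every ordering of $\Delta = \{\delta_1,\delta_2,\delta_3,\delta_4\}$:
\[
S_4 \cong G'_{\delta_1} > G'_{\delta_1,\delta_2} > G'_{\delta_1,\delta_2,\delta_3} > G'_{(\Delta)}.
\]
This is a strictly descending chain of four subgroups inside $S_4$. The strategy is to show that the only way to fit such a chain is to have the bottom term trivial and the third term of order $2$.

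\textbf{Step 1: constrain the two-point stabilizers.} First I would pin down which subgroups can occur as $G'_{\delta_i,\delta_j}$. By Lemma \ref{afrntrsctsfr}, none of them is $A_4$. None of them can be the normal $K_4$ of either point stabilizer. Indeed, suppose $G'_{\delta_i,\delta_j}$ were $K_{\delta_i}$. Then $K_{\delta_i} \le G'_{\delta_j} \cong S_4$. A $K_4$ inside $S_4$ is either the normal one or non-normal.
\begin{itemize}
\item If $K_{\delta_i} = K_{\delta_j}$, the hypothesis $N_{G'}(K_\omega)=G'_\omega$ gives $G'_{\delta_i}=G'_{\delta_j}$. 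This contradicts independence via Lemma \ref{indsbstsntqual}.
\item If $K_{\delta_i}$ is a non-normal $K_4$ in $G'_{\delta_j}$, then it equals $G'_{\delta_i,\delta_j}$ while not being normal in $G'_{\delta_j}$. I would push this through the chain analysis below, using the same kind of subgroup counting as in Lemma \ref{afrntrsctsfr}.
\end{itemize}
With these exclusions, each two-point stabilizer is one of $S_3$, $D_8$, a non-normal $K_4$, $C_4$, $C_3$ or $C_2$. The chain needs length three below it, so the two-point stabilizer must have a chain of length at least $2$ below it. This rules out $C_2$ and $C_3$ as two-point stabilizers. (As a side remark, the hypothesis on $N_{G'}(K_\omega)$ also prevents $G'_{\delta_i}\cap G'_{\delta_j}$ from being $D_8$ with the same normal $K_4$.)

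\textbf{Step 2: show $G'_{(\Delta)}$ is trivial.} Suppose $G'_{(\Delta)}\ne 1$. Then the chain forces the two-point stabilizer to have order at least $8$, so it is $D_8$, with $D_8 > K_4\text{ or }C_4 > C_2 = G'_{(\Delta)}$. So every two-point stabilizer in every ordering must be a $D_8$. A $D_8$ inside $G'_{\delta_1}$ contains $K_{\delta_1}$. Hence $G'_{\delta_1,\delta_2}\cap G'_{\delta_1,\delta_3} \supseteq K_{\delta_1}$, which gives $K_{\delta_1}\le G'_{\delta_1,\delta_2,\delta_3}$. By Step 1's treatment of the normal $K_4$, and by symmetry in the indices, this leads to a contradiction: either $K_{\delta_1}$ becomes normal in a second point stabilizer, or we obtain equal stabilizers, which is ruled out by Lemma \ref{indsbstsntqual}. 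Therefore $G'_{(\Delta)}=1$.

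\textbf{Step 3: show the three-point stabilizers have order $2$.} Now $G'_{\delta_1,\delta_2,\delta_3}$ is a nontrivial proper subgroup of a two-point stabilizer from Step 1. The candidates are $C_2$, $C_3$, $C_4$ or $K_4$. Mimicking Lemma \ref{afrntrsctsfr}, I would eliminate each of $C_3$, $C_4$ and $K_4$ using the fourth point. This uses facts like "each $C_3$ lies in a unique $S_3$" and "each $C_4$ lies in a unique $D_8$". These force two of the two-point stabilizers containing that subgroup to coincide, contradicting Lemma \ref{indsbstsntqual}. In the $K_4$ case, the $K_4$ would have to sit in a $D_8$ two-point stabilizer and in another $D_8$. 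In $S_4$, a non-normal $K_4$ lies in a unique $D_8$, and the normal one is excluded by the normalizer hypothesis.

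The main obstacle is the bookkeeping in Steps 2 and 3. The danger is carefully distinguishing the normal $K_4$ from the non-normal ones across different point stabilizers, which is exactly where the hypothesis $N_{G'}(K_\omega)=G'_\omega$ must be invoked.
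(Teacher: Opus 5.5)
Your proposal is correct and is essentially the paper's argument: the irredundant chain from Lemma \ref{stach} together with Lemma \ref{afrntrsctsfr} forces $|G'_{\delta_1,\delta_2,\delta_3}|\le 4$. Then $C_3$, $C_4$ and $K_4$ are eliminated using the unique $S_3$ over a $C_3$, the unique $D_8$ over a $C_4$ or a non-normal $K_4$, and the normalizer hypothesis when the $K_4$ is normal in two point stabilizers; in that last case you should work from a $\delta_j$ in which it is non-normal. Your separate Step 2 and the unfinished sub-case of Step 1 ($G'_{\delta_i,\delta_j}=K_{\delta_i}$ non-normal in $G'_{\delta_j}$) are unnecessary, since once the three-point stabilizer is forced to be $C_2$, the triviality of $G'_{(\Delta)}$ follows directly from the strict chain.
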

\begin{proof}
Write $ \Delta ^{*} := \{ \delta _{1} , \delta _{2} , \delta _{3} \}  \subset \Delta $. Looking at the subgroup structure of $ S_{4} $ and using Lemma \ref{stach}, we have $ 1 \leq | G^{ \prime } _{( \Delta  ) } | < | G^{ \prime } _{( \Delta ^{*} ) } | \leq 4 $.

\medskip
If $  G^{ \prime } _{( \Delta ^{*} ) } \cong C_{4} $ then it follows from Lemma \ref{stach} that $ G^{ \prime } _{ \delta _{1} , \delta _{2} } \cong G^{ \prime } _{ \delta _{1} , \delta _{3} } \cong D_{8} $. For each $ C_{4} $ in an $ S_{4} $, there is only one $ D_{8} $ subgroup that contains it. Hence $  G^{ \prime } _{ \delta _{1} , \delta _{2} } = G^{ \prime } _{ \delta _{1} , \delta _{3} } $. This contradicts Lemma \ref{indsbstsntqual}, therefore $  G^{ \prime } _{( \Delta^{*}  ) } \not\cong C_{4} $.

\medskip
Suppose $  G^{ \prime } _{( \Delta ^{*} ) } \cong C_{3} $. Notice $ G^{ \prime } _{ \delta _{1} , \delta _{2} } \not\cong A_{4} \not\cong G^{ \prime } _{ \delta _{1} , \delta _{3} } $ by Lemma \ref{afrntrsctsfr}. Hence $ G^{ \prime } _{ \delta _{1} , \delta _{2} } \cong G^{ \prime } _{ \delta_{1} , \delta_{3} } \cong S_{3} $. As above, there is only one $ S_{3} $ subgroup that contains a particular $ C_{3} $ in an $ S_{4} $. So $  G^{ \prime } _{ \delta _{1} , \delta _{2} } = G^{ \prime } _{ \delta _{1} , \delta _{3} } $, again giving a contradiction.

\medskip
Suppose $ G^{ \prime } _{( \Delta ^{*} ) } \cong K_{4} $. If $ G^{ \prime } _{( \Delta ^{*} ) } $ is normal in both $ G_{ \delta _{1} }  $ and $ G_{ \delta _{2} }  $ then $ G_{ \delta _{1} }  = N_{G^{ \prime } } ( G^{ \prime } _{( \Delta ^{*} ) }  ) =   G_{ \delta _{2} }  $. Lemma \ref{indsbstsntqual} shows this is not possible. So we can assume without loss of generality that $ G^{ \prime } _{( \Delta ^{*} ) } $ is not normal in $ G_{ \delta _{1} } $. Looking at the subgroup structure of $ S_{4} $, for a non-normal $ K_{4} $ there is only one proper subgroup properly containing it, which is a $ D_{8} $. So by Lemma \ref{stach} we have $ G_{ \delta _{1} , \delta _{2} } \cong G_{ \delta _{1} , \delta _{3} } \cong D_{8} $ and it follows $ G_{ \delta _{1} , \delta _{2} } = G_{ \delta _{1} , \delta _{3} } $. Once again this is a contradiction.

\medskip
The only remaining possibility is $ G^{ \prime } _{( \Delta ^{*} ) } \cong C_{2} $, implying $ G^{ \prime } _{( \Delta  ) } $ is trivial.
\end{proof}
\begin{lema}\label{sdhssdfone}
Suppose $ G^{ \prime } $ is a group properly containing an $ S_{4} $ subgroup. Let $ \Omega ^{ \prime } $ be the set of right cosets of such an $ S_{4} $. Suppose for each $ \omega \in \Omega ^{ \prime } $ that if $ K_{ \omega } $ is the normal $ K_{4} $ in $ G^{ \prime } _{ \omega } $ then $ N_{ G^{ \prime } } ( K_{ \omega } ) = G^{ \prime } _{ \omega } $. If there exists an independent set $ \{ \omega _{1} , \omega _{2} , \omega _{3} , \omega _{4} \} \subseteq \Omega ^{ \prime } $ then one of the following is true;
\begin{itemize}
\item $ \langle G^{ \prime }_{\omega _{1}} , G^{ \prime }_{\omega _{2}} , G^{ \prime }_{\omega _{3}} , G^{ \prime }_{\omega _{4}} \rangle \cong PGL_{2} (5) $,
\item $ \langle G^{ \prime }_{\omega _{1}} , G^{ \prime }_{\omega _{2}} , G^{ \prime }_{\omega _{3}} , G^{ \prime }_{\omega _{4}} \rangle \cong PSL_{2} (7) $,
\item $ \langle G^{ \prime }_{\omega _{1}} , G^{ \prime }_{\omega _{2}} , G^{ \prime }_{\omega _{3}} , G^{ \prime }_{\omega _{4}} \rangle $ has order dividing $ 192 $ or
\item $  G^{ \prime }_{\omega _{1} , \omega _{2}} \cong K_{4} $, as well as $ G^{ \prime }_{\omega _{1} , \omega _{3}} \cong G^{ \prime }_{\omega _{1} , \omega _{4}} \cong G^{ \prime }_{\omega _{2} , \omega _{3}} \cong G^{ \prime }_{\omega _{2} , \omega _{4}} \cong S_{3} $ and $ G^{ \prime }_{\omega _{3} , \omega _{4}} $ is isomorphic to either $ K_{4} $ or $ D_{8} $ (or an equivalent statement where the points have been permuted).
\end{itemize}
\end{lema}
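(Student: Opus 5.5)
Given an independent set $\{\omega_1,\dots,\omega_4\}$, I would first apply Lemma \ref{cntthnkfnm}. It tells us that $G^{\prime}_{(\Delta)}$ is trivial and that every three-point stabilizer has order $2$. By Lemma \ref{stach} every two-point stabilizer then properly contains a $C_2$ and is properly contained in an $S_4$. Lemma \ref{afrntrsctsfr} removes $A_4$, and Lemma \ref{indsbstsntqual} forces all these subgroups to be distinct. So each $G^{\prime}_{\omega_i,\omega_j}$ is isomorphic to one of $C_4$, $K_4$ (normal or non-normal in $G^{\prime}_{\omega_i}$), $S_3$ or $D_8$. It must also contain two distinct order-$2$ subgroups, namely $G^{\prime}_{\omega_i,\omega_j,\omega_k}$ and $G^{\prime}_{\omega_i,\omega_j,\omega_l}$. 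This rules out $C_4$, since it has a unique involution.

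The second step is a case analysis on the six two-point stabilizers, organised around $G^{\prime}_{\omega_1}\cong S_4$. Inside $G^{\prime}_{\omega_1}$ the three subgroups $G^{\prime}_{\omega_1,\omega_j}$ for $j=2,3,4$ pairwise intersect in distinct involutions $G^{\prime}_{\omega_1,\omega_j,\omega_k}$, and these three involutions have trivial common intersection. Using the subgroup lattice of $S_4$, I would list the possible configurations of three subgroups from $\{K_4,S_3,D_8\}$ with this property. A small counting fact helps here: each transposition lies in exactly one non-normal $K_4$ and in two $S_3$'s. I would repeat this at every point $\omega_i$ and record the consistent labellings of the complete graph $K_4$ on $\{\omega_1,\dots,\omega_4\}$, with edges labelled by isomorphism types. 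The configuration described in the fourth bullet should survive as one outcome.

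For every other configuration, I would write down generators for $L:=\langle G^{\prime}_{\omega_1},\dots,G^{\prime}_{\omega_4}\rangle$ together with relations coming from the shared involutions, in the spirit of Lemma \ref{frctnndpndn}. Concretely, I would take generators of each $G^{\prime}_{\omega_i}$ as a Coxeter-type presentation of $S_4$, identify the common involutions, and impose the relations forced by the intersections. The hypothesis $N_{G^{\prime}}(K_\omega)=G^{\prime}_\omega$ supplies extra information: an element normalising a normal $K_4$ must lie in that stabilizer. I would then compute the finitely presented group in GAP and use Corollary \ref{gnrtrcrlry}, which says $L$ is a quotient of it. Each such computation should give a group of order dividing $192$ (a $2$-group extended by $C_3$, as in the order-$96$ cases seen earlier), or else $PGL_2(5)\cong S_5$ or $PSL_2(7)$, possibly after checking that the relevant quotients are simple or almost simple.

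I expect the main obstacle to be the bookkeeping in the middle step. The number of labelled configurations is large, and it is delicate to translate "these two $S_4$'s meet in this $D_8$ or $S_3$" into an explicit identification of generators. In particular, when two stabilizers meet in a $K_4$ or $S_3$, the identification is only determined up to automorphism of $S_4$, which produces several presentations per case. I would try first to exploit symmetry under permuting the $\omega_i$, which is why the statement allows permuted versions of the last bullet, and to normalise labels by conjugating within stabilizers as in Lemma \ref{csefrafr}. I would also expect to need a separate argument in cases where the free presentation is infinite, adding relations derived from the normaliser hypothesis until GAP returns a finite group.
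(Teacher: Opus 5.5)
Your plan is essentially the paper's proof (Appendix \ref{appa}). You reduce via Lemmas \ref{cntthnkfnm}, \ref{afrntrsctsfr} and \ref{indsbstsntqual}, enumerate the admissible triples of two-point stabilizers inside $S_4$ up to conjugacy, encode each consistent configuration as a finite presentation, and finish with GAP plus Corollary \ref{gnrtrcrlry}, ruling out proper quotients of $S_5$ and $PSL_2(7)$ because the image must contain $S_4$. The paper's concrete generators are the four involutions spanning the triple intersections, any three of which generate the corresponding $S_4$, with relations given by the orders of a fixed list of words; the only configurations where coset enumeration fails are exactly the two in the fourth bullet, so your anticipated extra argument for infinite presentations is never needed.
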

\begin{proof}
See Appendix \ref{appa}.
\end{proof}
Now we return to looking at the $ S_{4} $ actions of $ PSL_{2} (p) $ or $ PGL_{2} (p) $.
\begin{lema}\label{sdhssdftwo}
Let $ \{ H_{1} , H_{2} , H_{3} , H_{4} \} \subseteq \Omega $. Suppose $ H_{1} \cap H_{2} \cong K_{4} $ and $ H_{1} \cap H_{3}  \cong H_{1} \cap H_{4} \cong H_{2} \cap H_{3} \cong H_{2} \cap H_{4} \cong S_{3} $. Then $ \{ H_{1} , H_{2} , H_{3} , H_{4} \} $ is not independent.
\end{lema}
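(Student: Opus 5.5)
The plan is to assume that $\Delta := \{ H_{1} , H_{2} , H_{3} , H_{4} \}$ is independent and reach a contradiction, mostly by tracking transpositions inside the various copies of $S_{4}$. Two facts are used repeatedly: by Lemma \ref{indsbstsntqual} all stabilizers of distinct non-empty subsets of $\Delta$ are distinct, and by Lemma \ref{stach} every chain of stabilizers is strict.

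\textbf{Step 1: the triple intersections.} Put $K := H_{1} \cap H_{2} \cong K_{4}$. If $K$ were normal in both $H_{1}$ and $H_{2}$, then Lemma \ref{ntnrmlmxml} (equivalently $H_{1} = N_{G}(K) = H_{2}$ via Lemma \ref{nrmlprstmtsk}) would give a contradiction. So without loss of generality $K$ is a non-normal $K_{4}$ of $H_{1}$. Such a $K$ consists of two commuting transpositions $\tau_{1}, \tau_{2}$ of $H_{1}$ and their product $z$.

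Next work inside $H_{3}$. The subgroups $H_{1} \cap H_{3}$ and $H_{2} \cap H_{3}$ are both $S_{3}$. They are distinct, since otherwise $H_{1} \cap H_{2}$ would contain an $S_{3}$, which is impossible for a $K_{4}$. By the preliminary lemma of this chapter they meet in a $C_{2} = \langle \tau \rangle$. Hence $H_{1} \cap H_{2} \cap H_{3} = \langle \tau \rangle$. The same argument in $H_{4}$ gives $H_{1} \cap H_{2} \cap H_{4} = \langle \sigma \rangle$.

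If $\tau = \sigma$, then $H_{1} \cap H_{2} \cap H_{3} = H_{1} \cap H_{2} \cap H_{4}$, contradicting Lemma \ref{indsbstsntqual}. So $\tau \neq \sigma$. Since $\tau$ lies in the $S_{3}$ subgroup $H_{1} \cap H_{3}$ of $H_{1}$, it is a transposition of $H_{1}$; likewise $\sigma$. The only transpositions of $H_{1}$ in $K$ are $\tau_{1}$ and $\tau_{2}$, so $\{ \tau , \sigma \} = \{ \tau_{1}, \tau_{2} \}$ and $z = \tau \sigma$.

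\textbf{Step 2: apply the same reasoning to $H_{2}$.} Here $\tau$ and $\sigma$ are transpositions of $H_{2}$ as well, because they lie in the $S_{3}$ subgroups $H_{2} \cap H_{3}$ and $H_{2} \cap H_{4}$. Since $K$ contains two transpositions of $H_{2}$, it is also non-normal in $H_{2}$, and $z$ is the double transposition of $K$ in both groups.

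Let $D_{i}$ be the unique $D_{8}$ in $H_{i}$ containing $K$, for $i = 1, 2$. Both $D_{1}$ and $D_{2}$ normalize $K$, and they are distinct, since otherwise $H_{1} \cap H_{2} \supseteq D_{8}$. Hence $\langle D_{1} , D_{2} \rangle \leq N_{G}(K)$ properly contains a $D_{8}$. The plan is to use the subgroup structure of $PSL_{2}(p)$ and $PGL_{2}(p)$ (Lemma \ref{nrmlzr}, Lemma \ref{kfrsbgrpllcn} and Table \ref{tablethree}) to show that $N_{G}(K)$ is an $S_{4}$, call it $M$, lying in $\Omega$ or in the other class. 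Then $\{ H_{1} , H_{2} , M \}$ forces a relation among $H_{3}$ and $H_{4}$.

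The target relation is that $H_{3} \cap H_{4}$ contains the transposition $\rho$ generating $(H_{1} \cap H_{3}) \cap (H_{1} \cap H_{4})$ together with $\tau$ or $\sigma$. That would give either $H_{1} \cap H_{3} \cap H_{4} = H_{1} \cap H_{2} \cap H_{3} \cap H_{4}$ or an equality of two triple intersections, and in either case Lemma \ref{indsbstsntqual} gives the contradiction.

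\textbf{Main obstacle and fallback.} Step 2, pinning down where the four $S_{3}$'s sit relative to $N_{G}(K)$, is where I expect the difficulty. If the structural argument stalls, I would fall back on the presentation method already used in Lemma \ref{frctnndpndn}. Choose generators: $\tau$ and $\sigma$, an element $a$ of order $3$ in $H_{1} \cap H_{3}$, and an element $b$ of order $3$ in $H_{1} \cap H_{4}$. Record the relations these satisfy from the multiplication in $H_{1}$, $H_{2}$, $H_{3}$ and $H_{4}$, and check with GAP that the finitely presented group is small (order dividing $192$, or isomorphic to $PGL_{2}(5)$ or $PSL_{2}(7)$, as in Lemma \ref{sdhssdfone}).

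Because $\langle H_{1} , H_{2} \rangle = G$ by maximality, Corollary \ref{gnrtrcrlry} then bounds $|G|$. This contradicts Lemma \ref{lnsz} except for the small primes $p \in \{ 5, 7 \}$, which are handled directly from the tables in \cite{WISCONS1}.
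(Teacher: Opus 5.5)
Your Step 1 is correct and already reaches the decisive configuration. The elements $\tau$ and $\sigma$ are commuting transpositions in both $H_{1}$ and $H_{2}$, so $z=\tau\sigma$ lies in the normal Klein four-subgroup of $H_{1}$ and in that of $H_{2}$.

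The gap is that Step 2 never turns this into a contradiction. You plan to identify $N_{G}(K)$ as an $S_{4}$ and then extract a relation on $H_{3}\cap H_{4}$, but this is not carried out. Its first part also does not follow uniformly from the subgroup structure. For $G=PGL_{2}(p)$ with $p\equiv \pm 3 \pmod{8}$ one has $H_{1}\cap PSL_{2}(p)\cong A_{4}$, so $\tau,\sigma\notin PSL_{2}(p)$ while $z\in PSL_{2}(p)$. Every element normalizing $K$ must then fix $z$, so $N_{G}(K)\le C_{G}(z)$, which is dihedral rather than $S_{4}$.

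The missing idea is the one the paper uses. By Lemma \ref{nrmlzr}, $C_{G}(z)=N_{G}(\langle z\rangle)$ is dihedral, so it has at most one cyclic subgroup of order $4$. A double transposition in $S_{4}$ is centralized by a $C_{4}$, so $z$ is centralized by a $C_{4}$ in $H_{1}$ and by one in $H_{2}$. These must coincide, which puts a $C_{4}$ inside $H_{1}\cap H_{2}=K\cong K_{4}$, a contradiction. Equivalently, your $D_{1}=C_{H_{1}}(z)$ and $D_{2}=C_{H_{2}}(z)$ both lie in the dihedral group $C_{G}(z)$, share its unique $C_{4}$ and both contain $\tau$. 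Hence $D_{1}=D_{2}$, contradicting the distinctness you yourself observed.

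The paper runs the same argument in the opposite order. It first uses this centralizer argument to show that no involution of $K$ lies in both normal Klein four-subgroups. That leaves only one involution of $K$ that is a transposition in both $H_{1}$ and $H_{2}$. Yet the distinct involutions $h_{123}\neq h_{124}$ must both be such transpositions, since they lie in $S_{3}$ subgroups of $H_{1}$ and of $H_{2}$.

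Your fallback also fails as stated. The proposed generators $\tau$, $\sigma$, $a$, $b$ all lie in $H_{1}$, so any presentation they satisfy describes a subgroup of $H_{1}\cong S_{4}$. Corollary \ref{gnrtrcrlry} then says nothing about $|G|$, and the maximality of $H_{1}$ and $H_{2}$ is of no use unless some generator lies outside $H_{1}$. Moreover, this intersection pattern ($H_{1}\cap H_{2}\cong K_{4}$, the four mixed intersections $S_{3}$) is exactly the one for which the coset enumeration in Appendix \ref{appa} failed. That failure is why the paper needs a structural argument here rather than a presentation.
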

\begin{proof}
Suppose $ \{ H_{1} , H_{2} , H_{3} , H_{4} \} $ is independent.

\medskip
Let $ a \in H_{1} \cap H_{2} $ be an involution. Let $ N_{1} $ and $ N_{2} $ be the normal $ K_{4} $ subgroups of $ H_{1}  $ and $ H_{2} $ respectively. If $ a \in N_{1} \cap N_{2} $ then we see from looking at the structure of $ S_{4} $ that $ a $ is centralized by a cyclic subgroup $ F_{1} $ of order $ 4 $ in $ H_{1} $ and similarly a cyclic subgroup $ F_{2} $ of order $ 4 $ in $ H_{2} $. Lemma \ref{nrmlzr} tells us the centralizer of an involution in $ G $ is dihedral, so only contains one cyclic subgroup of order $ 4 $. Hence $ F_{1} = F_{2} $ and $ F_{1} \leq H_{1} \cap H_{2} $, a contradiction. Thus $ a \notin N_{1} \cap N_{2} $.

\medskip
Every $ K_{4} $ in an $ S_{4} $ has a an involution that lies in the normal Klein four-subgroup. So it follows that $ H_{1} \cap H_{2} $ has one involution in $ N_{1} $ and another involution in $ N_{2} $.

\medskip
Let $ j \in \{ 3,4 \} $. By Lemma \ref{stach} we have $ H_{1} \cap H_{2} > H_{1} \cap H_{2} \cap H_{j} >  H_{1} \cap H_{2} \cap H_{3} \cap H_{4} $. So it must be that $ H_{1} \cap H_{2} \cap H_{j} \cong C_{2} $.

\medskip 
Let $ h_{123} \in H_{1} \cap H_{2} \cap H_{3} $ and $ h_{124} \in H_{1} \cap H_{2} \cap H_{4} $ be involutions. Lemma \ref{indsbstsntqual} shows $ H_{1} \cap H_{2} \cap H_{3} \neq H_{1} \cap H_{2} \cap H_{4} $. Thus $ h_{123} \neq h_{124} $.

\medskip
So at least one of $  h_{123} $ or $  h_{124} $ is in $ N_{1} $ or $ N_{2} $. This is a contradiction because these involutions also lie in $ S_{3} $ subgroups of $ H_{1} $ and $ H_{2} $ and the $ S_{3} $ subgroups of an $ S_{4} $ intersect its normal $ K_{4} $ subgroup trivially. Therefore $ \{ H_{1} , H_{2} , H_{3} , H_{4} \} $ is not independent.
\end{proof}
\begin{lema}\label{sfrhghtw}
If $ G = PGL_{2} (5) $ then $ Ht(G, \Omega ) = 4 $. If $ G \neq PGL_{2} (5) $ then $ Ht(G, \Omega ) = 3 $.
\end{lema}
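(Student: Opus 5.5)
The plan is to combine the lower bound of Lemma \ref{sfrhghtlwrbnd} with an upper bound that rules out independent sets of size $4$. The exceptional case $G = PGL_{2}(5)$ is handled directly. Here $PGL_{2}(5) \cong S_{5}$ by Lemma \ref{smrphsmspsltrn}, and a maximal $S_{4}$ is a point stabilizer, so the action is equivalent to the natural action of $S_{5}$ on five points. Example \ref{hghtsym} then gives height $5-1 = 4$ (alternatively, the tables in \cite{WISCONS1} could be cited). For every other $G$ we already have $Ht(G,\Omega) \geq 3$ from Lemma \ref{sfrhghtlwrbnd}, so it remains to show that $\Omega$ has no independent set of size $4$; Corollary \ref{indsub} then caps the height at $3$.

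To rule out a size-$4$ independent set $\{H_{1},H_{2},H_{3},H_{4}\}$, I would apply Lemma \ref{sdhssdfone} with $G' = G$. First its hypothesis must be checked: for each $H \in \Omega$ with normal Klein four-subgroup $K$, we need $N_{G}(K) = H$. This follows from Lemma \ref{nrmlprstmtsk}, since $G$ is simple or, by Lemma \ref{nrmlsbgrpspgl}, all its proper non-trivial normal subgroups are maximal, and $K \trianglelefteq H$ with $H$ maximal.

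Lemma \ref{sdhssdfone} then leaves four alternatives for $L := \langle H_{1},H_{2},H_{3},H_{4} \rangle$.
\begin{itemize}
\item[(a)] $L \cong PGL_{2}(5)$ or $L \cong PSL_{2}(7)$. Since $H_{1} < L \leq G$ and $H_{1}$ is maximal while the $H_{i}$ are distinct, we get $L = G$. So $G \cong PGL_{2}(5)$, which is the excluded case, or $G \cong PSL_{2}(7)$. For $PSL_{2}(7)$ I expect to verify directly, using GAP or the tables in \cite{WISCONS1}, that the height is $3$ on both classes; this small case should be treated separately from the start.
\item[(b)] $|L|$ divides $192$. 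Again $L = G$, but $|PSL_{2}(p)|$ and $|PGL_{2}(p)|$ are divisible by $p \geq 5$ while $192 = 2^{6}\cdot 3$ is not, a contradiction.
\item[(c)] The intersection pattern with one pair meeting in $K_{4}$ and the four mixed pairs meeting in $S_{3}$. This is exactly excluded by Lemma \ref{sdhssdftwo}, up to relabelling the points.
\end{itemize}

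The main obstacle is the bookkeeping in alternative (c) when the roles of the points are permuted, together with the small case $p = 7$. The relabelling is harmless because independence is symmetric in the points, so Lemma \ref{sdhssdftwo} applies after reindexing. For $p = 7$, I would rely on the computational tables rather than attempt a hand argument.
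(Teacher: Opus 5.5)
Your proposal is correct and follows the paper's proof essentially step for step. You identify $PGL_{2}(5)$ with $S_{5}$ to get height $4$. Otherwise, Lemmas \ref{sdhssdfone} and \ref{sdhssdftwo} together with maximality of the $S_{4}$ force $G \cong S_{5}$, $G \cong PSL_{2}(7)$, or $|G|$ dividing $192$; you dispose of $PSL_{2}(7)$ via the tables and close with the lower bound of Lemma \ref{sfrhghtlwrbnd}. Your explicit verification of the normalizer hypothesis of Lemma \ref{sdhssdfone} via Lemma \ref{nrmlprstmtsk} is a detail the paper leaves implicit.
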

\begin{proof}
If $ G = PGL_{2} (5) $, notice that $ G \cong S_{5} $ and the $ S_{4} $ subgroups are point stabilizers of the natural action on $ \{ 1, 2, 3, 4, 5 \} $. Therefore $ Ht(G, \Omega ) = 4 $ by Example \ref{hghtsym}.

\medskip
Now suppose $ G \neq PGL_{2} (5) $. If $ Ht(G, \Omega ) \geq 4 $ then there exists an independent set $ \Delta \subseteq \Omega $ of size $ 4 $ by Corollary \ref{indsub}, in which case the fact the $ S_{4} $ are maximal together with Lemmas \ref{sdhssdfone} and \ref{sdhssdftwo} show $ G $ is one of $ S_{5} $, $ PSL_{2} (7) $ or has order dividing $ 192 $.

\medskip
We have already dealt with the $ S_{5} $ case above. The tables in \cite{WISCONS1} show the $ S_{4} $ action of $ PSL_{2} (7) $ has height $ 3 $ and therefore no independent set of size $ 4 $ (the reason the anomaly $ PSL_{2} (7) $ shows up in Appendix \ref{appa} is that it has two conjugacy classes of $ S_{4} $ and the GAP code did not take into consideration whether the $ S_{4} $ were conjugate or not). Finally $ | PSL_{2} (p) | $ and $ | PGL_{2} (p) | $ do not divide $ 192 $ for any $ p \geq 5 $.

\medskip
Thus $ Ht(G, \Omega ) < 4 $. Combining this with Lemma \ref{sfrhghtlwrbnd} gives $ Ht(G, \Omega ) = 3 $.
\end{proof}
\section{Relational Complexity of the \texorpdfstring{$S _{4} $}{} Action}
\begin{thrm}\label{eopwcnhg}
If $ G = PGL_{2} (5) $ then $ RC(G, \Omega ) = 2 $. Otherwise $ RC(G, \Omega ) \in \{ 3,4 \} $.
\end{thrm}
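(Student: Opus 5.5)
The case $G = PGL_{2}(5)$ will be handled first by identification with a natural action. As in the proof of Lemma \ref{sfrhghtw}, $PGL_{2}(5) \cong S_{5}$ by Lemma \ref{smrphsmspsltrn}. The maximal $S_{4}$ subgroups are then the point stabilizers of the natural action on $\{1,\dots,5\}$. By Theorem \ref{transq} the action on $\Omega$ is equivalent to this natural action, so Lemma \ref{equct} together with Example \ref{symrc} gives $RC(G,\Omega) = 2$.

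For $G \neq PGL_{2}(5)$ the upper bound comes directly from height. Lemma \ref{sfrhghtw} gives $Ht(G,\Omega) = 3$, and Theorem \ref{rchgt} then gives $RC(G,\Omega) \leq 4$. It therefore remains only to show $RC(G,\Omega) \neq 2$, that is, that the action is not binary.

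The first route I would try is to cite the existing classification of binary actions, as was done in the dihedral and $A_{4}$ chapters. For $p$ large enough (say $p > 9$), the results of \cite{GILL1} on almost simple groups with socle $PSL_{2}(q)$ show that the primitive $S_{4}$ actions are not binary. The small remaining primes ($p = 5, 7$, and whichever others fall below the threshold) can be read off the tables in \cite{WISCONS1}. Since the action is primitive, the classification of primitive binary groups in \cite{GILL4} would serve as an alternative source.

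If a direct argument is wanted instead, the plan is to construct $I, J \in \Omega^{3}$ with $I \sim_{2} J$ but $I \not\sim_{3} J$. The idea is to exploit an independent set of size $3$ from Lemma \ref{sfrlwrbnd}, namely $\{H_{i}, H_{j}, H_{k}\}$ whose pairwise intersections contain distinct involutions $v_{jk}$ and so on. Set $I = (H_{i}, H_{j}, H_{k})$, keep $J_{1} = I_{1}$ and $J_{2} = I_{2}$, and choose $J_{3} = I_{3}^{g}$ for some $g \in G_{I_{1}}$ and, simultaneously, $J_{3} = I_{3}^{g'}$ for some $g' \in G_{I_{2}}$. These two conditions are exactly what $I \sim_{2} J$ requires here. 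To get $I \not\sim_{3} J$, one needs $J_{3} \notin I_{3}^{G_{I_{1},I_{2}}}$.

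The main obstacle is verifying that such a $J_{3}$ exists for every admissible $p$, since it requires counting the orbits of $G_{I_{1}} \cap G_{I_{2}}$ on $\Omega$. Because of this, I would rely on the citation route and treat the construction only as a fallback.
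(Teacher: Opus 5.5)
Your proposal is correct and follows the paper's proof: $PGL_{2}(5)\cong S_{5}$ acting naturally gives $RC=2$, and otherwise Lemma~\ref{sfrhghtw} with Theorem~\ref{rchgt} gives $RC\le 4$, while non-binarity comes from \cite{GILL1} (Lemma 3.3, for $p>7$) plus the tables in \cite{WISCONS1}. The only small case those tables need to cover is $PSL_{2}(7)$, since $PSL_{2}(5)$ and $PGL_{2}(7)$ have no maximal $S_{4}$, so your direct-construction fallback is never needed.
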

\begin{proof}
If $ G = PGL_{2} (5) \cong S_{5} $ then the action on maximal $ S_{4} $ is the natural action of $ S_{5} $ on $ \{ 1, 2, 3, 4, 5 \} $. In this case $ RC(G, \Omega ) = 2 $, as we saw in Example \ref{symrc}.

\medskip
Now suppose $ G \neq PGL_{2} (5) $. Theorem \ref{rchgt} and Lemma \ref{sfrhghtw} together tell us that $ RC(G, \Omega ) \leq 4 $. We see from the tables in \cite{WISCONS1}, if $ G = PSL_{2} (7) $ then $ RC(G, \Omega ) = 3 $. If $ p > 7 $ then \cite{GILL1}, Lemma 3.3 shows $ RC(G, \Omega )  \neq 2$.
\end{proof}
For the remainder of this chapter, we will see that for these $ S_{4} $ actions it is not possible to pin down the relational complexity to only one value in general. This is unlike most of the other primitive actions we look at for $ PSL_{2} (q) $ and $ PGL_{2} (q) $. This is investigated using a GAP program in Appendix \ref{appb}.

\medskip
The GAP program can be used to check the actions of $ PSL_{2} (7) $, $ PSL_{2} (17) $ and $ PSL_{2} (47) $ all have relational complexity $ 3 $. A look at $ PSL_{2} (79) $ gives output showing this also has relational complexity $ 4 $ (no need to run the whole calculation until the end for this group as it takes a very long time).

\medskip
Other arbitrary choices of $ p $ that I have checked with $ p \leq 79 $ give relational complexity $ 4 $. It does take GAP a while to process some of these larger values of $ p $. Note for later that, excluding $ PSL_{2} (7) $ (which I think is a special case), the groups here with relational complexity $ 3 $ correspond to $ p \equiv \pm 17 \pmod{64} $. The reason $ PSL_{2} (79) $ is mentioned above is to rule out the the possibility of the relational complexity being $3$ whenever $ p \equiv \pm 15 \pmod{32} $.

\medskip
For $ PGL_{2} (p) $, all cases I have checked so far have relational complexity $ 4 $.

\medskip
Recall from the comments directly before Definition \ref{almstndstdfn}, if $ RC( G , \Omega ) = 4 $ then there exists $ I, J \in \Omega ^{4} $ such that $ I \sim _{3} J $ and $ I \not\sim _{4} J $. There are conditions $ I $ and $ J $ must satisfy for the relational complexity to be $ 4 $ and these are laid out at the start of Appendix \ref{appb}, one of them being the entries of $ I $ form an almost independent set.

\medskip
If we pair the entries of $ I $ up, there are six pairs. The program in Appendix \ref{appb} does show the structure description of the intersections of each of these pairs if the relational complexity is $ 4 $.

\medskip
For $ PSL_{2} (23) $ we find the intersections always have a $ C_{2} $, a $ C_{3} $ and a $ C_{4} $. For $ PSL_{2} (31) $ we only have $ C_{2} $ and $ C_{3} $ in the intersections (sometimes the $ C_{3} $ appear in four intersections, sometimes only in three intersections and some times all six intersections are $ C_{3} $). 

\medskip
When looking at $ PGL_{2} (19) $ all intersections are $ C_{3} $, whereas $ PGL_{2} (13) $ has intersections consisting of $ C_{2} $, $C_{3} $ and $ C_{4} $.

\medskip
From analysing these results, even when the relational complexity is $ 4 $ it is impossible to find a consistent type of almost independent set for the entries of $ I $ that works for all choices of $ p $.

\medskip
What I suspect is happening is that when $ p $ is congruent to some number modulo another we get behaviour that allows or prevents certain types of almost independent set.

\medskip
A loosely explained example of this is now given when $ G = PSL_{2} (p) $ and $ p \equiv \pm 1 \pmod{10} $ (as well as the required $ p \equiv \pm 1 \pmod{8} $) - many of the facts in this argument will end up being used again in the next chapter where the details are fleshed out more. In this case $ G $ has a maximal $ A_{5} $ subgroup, say $ A \leq G $. In $ A $ we have five maximal $ A_{4} $ subgroups. Label them $ F_{1} , \dots , F_{5} $. Each of those $ A_{4} $ are normalized by an $ S_{4} $ in $ G $. For $ i \in \{ 1, \dots , 5 \} $, let $ H_{i} $ be the $ S_{4} $ that normalizes $ F_{i} $.

\medskip
Each of the $ A_{4} $ intersect each other pairwise in a $ C_{3} $. Hence the $ S_{4} $ containing them intersect pairwise in a $ C_{3} $ or $ S_{3} $. If $ H_{j} \cap H_{k} \cong S_{3} $ for some $ j,k \in \{ 1,2,3,4 \} $ then there exists an involution $ h \in H_{j} \cap H_{k} $ that normalizes $ F_{j} $ and $ F_{k} $. Now $ \langle F_{j} , F_{k} \rangle = A $, so it is straightforward to show $ s \in A $. But $ H_{j} = \langle F_{j} , h \rangle $ and so $ H_{j} \leq A $. There is no $ S_{4} $ in $ A $, so we have a contradiction. It follows $ H_{j} \cap H_{k} \not\cong S_{3} $. Hence $ H_{j} \cap H_{k} = F_{j} \cap F_{k} $. From here it can be checked that $ \{ H_{1} , H_{2} , H_{3} , H_{4} \} $ is an almost independent set.

\medskip
By looking at elements of $ A_{5} $, for $ r, s \in \{ 1, 2,3 \} $ with $ r \neq s $ there exists an element of order $ 3 $ in $ F_{r} \cap F_{s} $ (and thus in $ H_{r} \cap H_{s} $) that sends $ F_{4} $ to $ F_{5} $ by conjugation (and sends $ H_{4} $ to $ H_{5} $). It is not too difficult to then show that the tuples $ (H_{1} , H_{2} , H_{3} , H_{4} ) $ and $ (H_{1} , H_{2} , H_{3} , H_{5} ) $ are $ 3 $-subtuple complete, but not $ 4 $-subtuple complete. From this we infer the relational complexity of the action is $ 4 $.

\medskip
The above example was not too complex, but it only dealt with certain values of $ p $. Each of the double intersection of $ S_{4} $ in the example gave us a $ C_{3} $. However we have seen that there are almost independent sets with different pairwise intersections. To deal with all values of $ p $, we would need to determine all almost independent sets that can potentially exist, find which of these sets can suitably be used for the for the entries of a $4$-tuple $ I $ and then see which values of $ p $ each type of almost independent set matches to. Only once this is completely done for all possible configurations of almost independent set would we be able to rule out the values of $ p $ where no such set exist, then narrow down which values of $ p $ the relational complexity is $ 3 $ and when it is $ 4 $.

\medskip
Unfortunately, using existing methods this is going to descend into a lot of long case work. The amount of time and page space required is unsuitable for this paper, so based on the evidence we have, a conjecture is made on the $ S_{4} $ actions of $PGL _{2} (p) $. The group $ PGL_{2} (5) $ is missing from the statement below as we have already seen it has relational complexity $ 2 $ and I think it is likely a special case that does not fit the general pattern, as is often the case for small values of $ p $.
\begin{cjct}
Suppose $ p > 7 $. If $ G= PGL_{2} (p) $ then $ RC(G, \Omega ) = 4 $.
\end{cjct}
For $PSL_{2} (p)$ the situation is less clear. From the cases looked at with GAP so far, it might be tempting to conjecture that if $ p > 7 $ and $ p \equiv \pm 17 \pmod{64} $ then the $S_{4}$ actions of $PSL_{2} (p)$ have relational complexity $3$, with the relational complexity being $ 4$ in other cases. However, the number $ 239 $ is prime. Also $ 239 \equiv - 1 \pmod{8} $ and $ 239 \equiv -1 \pmod{10} $. So $ PSL_{2} (239) $ has maximal $ S_{4} $ and maximal $A_{5} $ subgroups, which means the relational complexity of the $ S_{4} $ actions is $ 4 $ by above discussions. Since $ 239 = 4 \times 64 - 17 $, this shows that there are cases where the relational complexity is not $ 3 $ when $ p \equiv \pm 17 \pmod{64} $.

\newpage
\chapter{The \texorpdfstring{$ A_{5} $}{A5} Action}\label{chapnine}
Here the $ A_{5} $ actions of $ PSL_{2} (q) $ will be looked at, defined in the section below. Suppose $ G $ is $ PSL_{2} (q) $ acting on $ \Omega $, the right cosets of a maximal $ A_{5} $. The main results of this chapter are:

\medskip
If $ G = PSL_{2} (9) $ then $ Ht(G, \Omega ) = 4 $. If $ G \neq PSL_{2} (9) $ then $ Ht(G, \Omega ) = 3 $.

\medskip
If $ G = PSL_{2} (9) $ then $ RC(G, \Omega ) = 5 $. Otherwise $ RC(G, \Omega ) = 4 $.

\medskip
These are proved in Theorems \ref{lblfrstthrem} and \ref{dfapoivqp}. There is no preliminary results section for this chapter.
\section{Action Description}\label{sctionafv}
Throughout this section and the next let $p$ be an odd prime with $ p \geq 5 $. In \cite{BRAYHOLTRONEYDOUGAL}, Table 8.7, page 380, we see that there exists maximal subgroups isomorphic to $ A_{5} $ in $ PSL_{2} (q) $ when either
\begin{itemize}
\item $ q = p $ and $ p \equiv \pm 1 \pmod{10} $ or
\item $ q = p^{2} $ and $ p \equiv \pm 3 \pmod{10} $.
\end{itemize}
In each case there exists two conjugacy classes of maximal $ A_{5} $.

\medskip
Let $ G $ be $ PSL_{2} (q) $ with $ q $ satisfying the above conditions. Suppose $ H \leq G $ is a maximal $ A_{5} $ subgroup. Put $ \Omega := \{ g^{-1} H g : g \in G \} $. It follows from Lemma \ref{cstquivtcnj} that rather than  describing the action in terms of right cosets of a maximal $ A_{5} $, we can look at the action by conjugation on $ \Omega $ since the two are equivalent.

\medskip
For the rest of this chapter let $ p $, $ q $, $ G $, $ H $ and $ \Omega $ be as defined above.
\section{Height and Relational Complexity of the \texorpdfstring{$A _{5} $}{} Action}
We begin the section by dealing with the $ A_{5} $ action of $ PSL_{2} (9) $ because this is a special case that does not fit the general pattern.
\begin{thrm}\label{lblfrstthrem}
For $ G := PSL_{2} (9) $ we have $ Ht(G, \Omega) = 4 $ and $ RC(G , \Omega ) = 5 $.
\end{thrm}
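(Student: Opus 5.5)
Since $PSL_{2}(9) \cong A_{6}$ by Lemma \ref{smrphsmspsltrn}, the plan is to identify the $A_{5}$ action concretely and then reuse the alternating-group examples. The group $A_{6}$ has $|A_{6}|/|A_{5}| = 360/60 = 6$ cosets of a maximal $A_{5}$. So the action of $G$ on $\Omega$ is a faithful transitive action of degree $6$, faithful because $G$ is simple. It therefore gives an embedding of $G$ into $Sym(\Omega) \cong S_{6}$. The image has order $360$ and index $2$, hence is normal in $S_{6}$, and the only such subgroup is $A_{6}$. Thus, up to equivalence (Lemma \ref{equct} and Lemma \ref{qvlnthght}), the action is the natural action of $A_{6}$ on $\{1, \dots , 6\}$.

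I would justify the identification of the image with $A_{6}$ as follows. A subgroup of index $2$ in $S_{6}$ contains all squares. The $3$-cycles are squares of $3$-cycles and generate $A_{6}$, so the subgroup contains $A_{6}$ and has the same order as $A_{6}$. This avoids any worry about the two classes of $A_{5}$ in $A_{6}$, related by the outer automorphism. Both classes give degree-$6$ actions whose permutation images equal $A_{6}$ in its natural action, so neither class needs separate treatment.

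Then Example \ref{hghtalt} gives $Ht(A_{6}, \{1,\dots,6\}) = 6 - 2 = 4$, and Example \ref{rcgrpaltg} gives $RC = 6 - 1 = 5$. Alternatively, $A_{6}$ is $4$-transitive, so Lemma \ref{trnprm} gives $RC > 4$, while Theorem \ref{kminuso} gives $RC \leq 5$. The only step needing care is the identification of the image with $A_{6}$, and the argument above settles it.
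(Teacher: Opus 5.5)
Your proposal is correct and follows the paper's proof, which simply notes that $PSL_{2}(9)\cong A_{6}$ acts on the cosets of a maximal $A_{5}$ as $A_{6}$ in its natural action on six points and then quotes Examples \ref{hghtalt} and \ref{rcgrpaltg}. Your argument is the same in substance. The index-$2$ argument only supplies the justification for that identification, covering both conjugacy classes of $A_{5}$, which the paper leaves implicit.
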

\begin{proof}
Since $ PSL_{2} (9) \cong A_{6} $, the action on $ A_{5} $ is the natural action of $ A_{6} $ on $ \{ 1, 2, \dots , 6 \} $. Examples \ref{hghtalt} and \ref{rcgrpaltg} show $ Ht(G, \Omega) = 4 $ and $ RC(G , \Omega ) = 5 $.
\end{proof}
From now on assume $ q \geq 11 $. The general plan is to show the height is $ 3 $. This will be done by finding an upper bound of $ 4 $ for the height, then ruling out $ 4 $ as a possibility, followed by an example of an independent set of size $ 3 $. That same independent set will be key to finding the relational complexity of the action.

\newpage
GAP will play a large part in determining the upper bound for height, similar to the approach used in the $ S_{4} $ chapter. Various assumptions are needed to prevent the code taking a long time to run. To put those assumptions on solid ground, several lemmas on the intersection of stabilizers are now proved.
\begin{lema}\label{afvactntrvl}
Suppose $ G^{ \prime } $ is a group properly containing an $ A_{5} $ subgroup. Let $ \Omega ^{ \prime } $ be the set of right cosets of such an $ A_{5} $. If there exists an independent set $ \Delta \subseteq \Omega ^{ \prime } $ of size $ 4 $ then $ G^{ \prime } _{ ( \Delta ) } = \{ 1_{  G^{ \prime } } \} $.
\end{lema}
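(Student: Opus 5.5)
By Lemma \ref{stach}, an independent set $\Delta = \{\delta_1,\delta_2,\delta_3,\delta_4\}$ gives a strict chain
\[
G'_{\delta_1} > G'_{\delta_1,\delta_2} > G'_{\delta_1,\delta_2,\delta_3} > G'_{(\Delta)} .
\]
Each term is a subgroup of $G'_{\delta_1} \cong A_5$, and the same holds for every ordering of the four points. The plan is to show that a chain of this shape cannot end in a non-trivial subgroup, using only the subgroup lattice of $A_5$. This mirrors the arguments of Lemmas \ref{afrntrsctsfr} and \ref{cntthnkfnm} for $S_4$.

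First I list the proper subgroups of $A_5$ up to isomorphism: $A_4$, $D_{10}$, $S_3$ (that is, $D_6$), $C_5$, $K_4$, $C_3$, $C_2$ and $1$. The inclusions among them are $C_2<K_4<A_4$, $C_3<A_4$, $C_2<S_3$, $C_3<S_3$, $C_2<D_{10}$ and $C_5<D_{10}$. The longest chains from $A_5$ have length $4$, for example $A_5>A_4>K_4>C_2>1$. Suppose $G'_{(\Delta)}\neq 1$. Since $G'_{(\Delta)}$ sits at the bottom of a chain of four strict inclusions inside $A_5$, the only possibility is
\[
G'_{\delta_1}>G'_{\delta_1,\delta_2}\cong A_4 > G'_{\delta_1,\delta_2,\delta_3}\cong K_4 > G'_{(\Delta)}\cong C_2 .
\]
This forced shape must hold for \emph{every} ordering of the points, so every two-point stabilizer $G'_{\delta_i,\delta_j}$ is an $A_4$ and every three-point stabilizer is a $K_4$.

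Now fix $\delta_1$ and compare the two-point stabilizers $G'_{\delta_1,\delta_2}$ and $G'_{\delta_1,\delta_3}$. Both are $A_4$ subgroups of $G'_{\delta_1}\cong A_5$, and they are distinct by Lemma \ref{indsbstsntqual}. Two distinct $A_4$ subgroups of $A_5$, being stabilizers of two different points of $\{1,\dots,5\}$, intersect in a $C_3$. But their intersection is $G'_{\delta_1,\delta_2,\delta_3}$, which we have just shown must be a $K_4$. This contradiction gives $G'_{(\Delta)}=1$.

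The main thing to check carefully is the chain-length claim: that $A_5>A_4>K_4>C_2$ is the only chain of the required length with non-trivial bottom term. This follows from the inclusions listed above. For instance, $D_{10}$ and $S_3$ only admit chains $D_{10}>C_5>1$, $D_{10}>C_2>1$, $S_3>C_3>1$ and $S_3>C_2>1$, each of which reaches $1$ too early to leave room for a non-trivial fourth term. The remaining steps are routine.
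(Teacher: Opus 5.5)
Your proof is correct and follows essentially the same route as the paper: Lemma \ref{stach} together with the subgroup lattice of $A_5$ forces the two-point stabilizers to be $A_4$ and the three-point stabilizer to be $K_4$. The only difference is cosmetic and lies in the last step. The paper uses that a $K_4$ lies in a unique $A_4$ of $A_5$ to get $G'_{\delta_1,\delta_2}=G'_{\delta_1,\delta_3}$, contradicting Lemma \ref{indsbstsntqual}. You instead use the equivalent fact that two distinct $A_4$ subgroups meet in a $C_3$, so their intersection cannot be the $K_4$ you derived.
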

\begin{proof}
Let $ \delta _{1} , \delta _{2} , \delta _{3} \in \Delta $, with no two of these elements equal to each other. Using Lemma \ref{stach} and looking at the subgroup structure of $ A_{5} $, the only way that $ G^{ \prime } _{ ( \Delta ) } \neq \{ 1_{  G^{ \prime } } \} $ is if $ G^{ \prime } _{ \delta _{1} , \delta _{2} , \delta _{3} } \cong K_{4} $ and $ G^{ \prime } _{ \delta _{1} , \delta _{2}  } \cong G^{ \prime } _{ \delta _{1} , \delta _{3}  } \cong G^{ \prime } _{ \delta _{2} , \delta _{3}  } \cong A_{4} $.

\medskip
For each $ K_{4} $ subgroup of $ A_{5} $ there is exactly one proper subgroup of $ A_{5} $ that properly contains a particular $ K_{4} $, which is an $ A_{4} $. This means that  $ G^{ \prime } _{ \delta _{1} , \delta _{2}  } = G^{ \prime } _{ \delta _{1} , \delta _{3}  } $. This is a contradiction by Lemma \ref{indsbstsntqual}. Hence $ G^{ \prime } _{ ( \Delta ) } = \{ 1_{  G^{ \prime } } \} $.
\end{proof}
The above lemma tells us the height of any group acting on an $ A_{5} $ is at most $ 4 $. Most of the rest of this section will be devoted to showing there does not exist an independent set of size $ 4 $.
\begin{lema}\label{afrnrmlzbysfr}
Suppose $ A \leq G $ and $ A \cong A_{4} $. If $ q \equiv \pm 3 \pmod{8} $ then $ N_{G} (A) \cong A_{4} $. If $ q \equiv \pm 1 \pmod{8} $ then $ N_{G} (A) \cong S_{4} $. Also, if $ K \leq A $ and $ K \cong K_{4} $ then $ N_{G} (K) = N_{G} (A) $.
\end{lema}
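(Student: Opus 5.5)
The plan is to handle the second assertion first, since it drives the other two. Let $K \leq A$ with $K \cong K_{4}$. Because $K$ is characteristic in $A$ (it is the unique Klein four-subgroup of $A_{4}$), every element normalizing $A$ normalizes $K$, so $N_{G}(A) \leq N_{G}(K)$. For the reverse inclusion we identify $N_{G}(K)$ directly.

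Pick an involution $z \in K$. By Lemma \ref{nrmlzr}, $C_{G}(z) \leq N_{G}(\langle z \rangle) \cong D_{q \pm 1}$, taking the sign with $q \pm 1 \equiv 0 \pmod{4}$. Hence $C_{G}(K) = \bigcap_{z \in K} C_{G}(z)$ is a subgroup of a dihedral group that centralizes a Klein four-subgroup of it. Such a centralizer is only $K$ itself, since a dihedral group of order at least $8$ has no element outside a $K_{4}$ commuting with all of it, apart from elements of that $K_{4}$. So $C_{G}(K) = K$, and $N_{G}(K)/K$ embeds in $\mathrm{Aut}(K) \cong S_{3}$. Since $A \leq N_{G}(K)$, we get $|N_{G}(K)| \in \{12, 24\}$, with $N_{G}(K)/K$ equal to $C_{3}$ or $S_{3}$. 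In the first case $N_{G}(K) = A$. In the second, $N_{G}(K)$ has order $24$, contains $A$ with index $2$, so $A \trianglelefteq N_{G}(K)$ and $N_{G}(K) \leq N_{G}(A)$. Either way $N_{G}(K) = N_{G}(A)$, and $N_{G}(A)$ is $A_{4}$ or a group of order $24$ with $C_{G}(K) = K$, which is $S_{4}$.

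To decide which case occurs, we use Sylow $2$-subgroups. If $q \equiv \pm 3 \pmod{8}$, then $8 \nmid |G|$ by Lemma \ref{lnsz}, so no subgroup of order $24$ exists and $N_{G}(A) \cong A_{4}$. If $q \equiv \pm 1 \pmod{8}$, the maximal dihedral $D = N_{G}(\langle z\rangle)$ has order divisible by $8$. Then $D$ contains a $D_{8}$ containing some Klein four-subgroup $K'$. An element of order $4$ in that $D_{8}$ normalizes $K'$ without centralizing it, so $N_{G}(K')$ has order $24$. To transfer this to our $K$, we need $K$ conjugate to $K'$. Here I would argue that all $K_{4}$ in $G$ containing $z$ lie in $C_{G}(z) \cong D_{q\pm1}$, and all involutions are conjugate by Lemma \ref{pslnvltnscnjg}. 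If necessary, Lemma \ref{allkfrcnjdm} handles the two classes of $K_{4}$ in the dihedral group. For the class not containing $K'$, I would instead apply the same $D_{8}$ argument to a $D_{8}$ through the other reflection class, which also gives normalizer order $24$.

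The main obstacle is this last transfer step. Every Klein four-subgroup sits inside a $D_{8}$ of some dihedral centralizer, and a $D_{8}$ already contributes an element of order $4$ normalizing it. So what is needed is to check that every $K_{4}$ lies in some $D_{8}$ when $8 \mid |D|$. This holds because every Klein four-subgroup of $D_{m}$ with $8 \mid m$ is contained in a $D_{8}$: take the rotation of order $4$ together with its reflections. With that, $N_{G}(K) \cong S_{4}$ uniformly.
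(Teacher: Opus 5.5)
Your argument is correct, but it takes a different route from the paper. The paper argues globally from the maximal subgroup tables. For $q\equiv\pm3 \pmod 8$ it forces $q=p$ and excludes Borel subgroups (no $K_4$, Lemma \ref{brlklnfrsbgrp}) and dihedral subgroups (no $A_4$), so $N_G(A)$ and $N_G(K)$ lie in a maximal $A_5$. For $q\equiv\pm1\pmod 8$ it splits into two cases: $q=p$, where it matches classes of $K_4$ with the two classes of maximal $S_4$, and $q=p^2$, where it locates the normalizer as an $S_4$ inside a maximal $PGL_2(p)$.

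You argue locally instead. Since $K$ lies in the dihedral group $C_G(z)$ for $z\in K$, you get $C_G(K)=K$, so $N_G(K)/K$ embeds in $\mathrm{Aut}(K_4)\cong S_3$. The subgroup $A$ supplies the factor $3$. The factor $2$ is settled by $8\nmid|G|$ in one case and by a rotation of order $4$ in $C_G(z)$ in the other. This gives $N_G(K)=N_G(A)$ in both cases at no extra cost; the paper leaves this largely implicit when $q\equiv\pm1\pmod 8$. It also avoids the $q=p$ versus $q=p^2$ split and uses nothing about $q$ beyond $q$ odd and $q\ge 11$, so it does not depend on the $A_5$-chapter hypotheses.

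Two things need tidying.

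\textbf{The detour through $K'$.} As stated, it is not justified. An element of order $4$ normalizing $K'$ only gives $8\mid|N_G(K')|$, because nothing supplies an element of order $3$ normalizing an arbitrary $K'$. You also never need conjugacy between $K$ and $K'$. Apply the $D_8$ argument to $K$ itself: $K=\{1,z,r,rz\}$ with $z$ the central rotation of $C_G(z)$. A rotation $t$ of order $4$ (so $t^2=z$) normalizes $K$ and swaps $r$ and $rz$. Together with $A\le N_G(K)$ this gives $|N_G(K)|=24$ directly, which is what your final paragraph amounts to.

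\textbf{Identifying $S_4$.} You should justify in a line that a group $N$ of order $24$ with $C_N(K)=K\trianglelefteq N$ is $S_4$. For example, $N$ acts by conjugation on the four Sylow $3$-subgroups of $A$; all Sylow $3$-subgroups of $N$ lie in $A$, since $|N:A|=2$. The kernel meets $A$ trivially, so it has order at most $2$ and is therefore central. A central subgroup lies in $C_N(K)=K\le A$, so the kernel is trivial. Hence the action is faithful and $N\cong S_4$.
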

\begin{proof}
Suppose $ q \equiv \pm 3 \pmod{8} $. If $ q \neq p $ then $ q = p^{2} $. But $ p $ is odd and so $ p^{2} \equiv 1 \pmod{8} $, a contradiction. Thus $ q = p $.

\medskip
As $ G $ is simple, $ N_{G} (A) $ lies inside a maximal subgroup. Table \ref{tablethree} tells us the maximal subgroups of $ PSL_{2} (p) $ are either Borel, $ D_{p \pm 1} $ or $ A_{5} $. Borel subgroups cannot contain a $ K_{4} $ by Lemma \ref{brlklnfrsbgrp}, so cannot contain an $ A_{4} $. Neither can dihedral groups. So $ N_{G} (A) $ is in an $ A_{5} $ and it must be $ N_{G} (A) \cong A_{4} $. If $ K \leq A $ and $ K \cong K_{4} $ then $ K $ is normal in $ N_{G} (A) $. Since $ G $ is simple, $ N_{G} (K) $ is in a maximal subgroup of $ G $. By the above reasoning, that maximal subgroup is an $ A_{5} $. In $ A_{5} $ all $ K_{4} $ are normalized by $ A_{4} $. Thus $ N_{G} (K) = N_{G} (A) $.

\medskip
Next suppose $ q \equiv \pm 1 \pmod{8} $. The Klein four-subgroups of $ G $ normalize the involutions they contain, so by Lemma \ref{nrmlzr} are subgroups of some $ D _{m} $, where $ m = q \pm 1 $. Observe $ m $ must be divisible by $ 8 $. Lemma \ref{refconjug} tells us that each $ K_{4} $ in $ D _{m} $ has two reflections that are conjugate. Since the reflections in $ D_{m} $ are split into two conjugacy classes, the $ K_{4} $ are split into two conjugacy classes in $ D_{m} $. So the $ K_{4} $ in $ G $ are split into at most two conjugacy classes.

\medskip
Now consider what $ N_{G} ( A ) $ might be. This will be covered in two cases.

\medskip
First suppose $ q = p $. Then looking at the maximal subgroups of $ PSL_{2} (p) $ in Table \ref{tablethree} we see there exist two conjugacy classes of maximal $ S_{4} $. A maximal $ S_{4} $ is the normalizer in $ G $ of the normal $ K_{4} $ it contains, because $ PSL_{2} (p) $ is simple. So there are two conjugacy classes of $ K_{4} $ in $ G $, corresponding to the conjugacy classes of $ S_{4} $ that normalize them. Therefore there is exactly one $ A_{4} $ that normalizes each $ K_{4} $. Any subgroup that normalizes an $ A_{4} $ must also normalize the $ K_{4} $ inside it. Thus $ N_{G} ( A ) \cong S_{4} $.

\medskip
Next suppose $ q \neq p $. Then $ q = p^{2} $. Table \ref{tablethree} shows $ G $ has two conjugacy classes of maximal subgroups isomorphic to $ PGL_{2} (p) $. Note that $ p \geq 7 $ because we are assuming $ q \geq 11 $ and $ q \equiv \pm 1 \pmod{10} $.

\medskip
If $ p \equiv \pm 3 \pmod{8} $ then, by Table \ref{tableone}, $ PGL_{2} (p) $ has a conjugacy class of maximal $ S_{4} $. If $ p \equiv \pm 1 \pmod{8} $ then $ PSL_{2} (p) $ has two conjugacy classes of maximal $ S_{4} $ and $ PGL_{2} (p) $ has a subgroup isomorphic to $ PSL_{2} (p) $. Since $ 8 $ divides $ m $, the $ K_{4} $ in $ A $ is a normal subgroup of some $ D_{8} $ in a $ D_{m} $. Looking through the maximal subgroups of $ G $ and $ PGL_{2} (p) $ in Tables \ref{tableone} and \ref{tablethree}, we see the only subgroup that normalizes a $ K_{4} $ and contains a $ D_{8} $ and $ A_{4} $ is one of these $ S_{4} $ in a maximal $ PGL_{2} (p) $. Hence $ N_{G} (A ) \cong S_{4} $ as well in this case.
\end{proof}

\newpage
\begin{corl}\label{crlkfrnrmlz}
If $ H_{1} , H_{2} $ are distinct subgroups in $ \Omega $ and $ H_{1} \cap H_{2} $ contains a $ K_{4} $ then $ H_{1} \cap H_{2} \cong A_{4} $.
\end{corl}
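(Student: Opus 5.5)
Let $K \leq H_{1} \cap H_{2}$ with $K \cong K_{4}$. The plan is to pin down $H_{1} \cap H_{2}$ from above and from below inside $H_{1} \cong A_{5}$.

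\textbf{Step 1 (subgroups of $A_{5}$ above $K$).} In $A_{5}$ the only subgroups containing a given $K_{4}$ are the $K_{4}$ itself, the unique $A_{4}$ normalizing it, and $A_{5}$ itself. This is the same fact about the subgroup structure of $A_{5}$ used in Lemma \ref{afvactntrvl}. Since $H_{1} \neq H_{2}$ and both have the same order, $H_{1} \cap H_{2}$ is a proper subgroup of $H_{1}$. So $H_{1} \cap H_{2}$ is either $K$ or the $A_{4}$ of $H_{1}$ containing $K$.

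\textbf{Step 2 (rule out $H_{1} \cap H_{2} = K$).} For $i \in \{1,2\}$, let $A_{i} \leq H_{i}$ be the $A_{4}$ containing $K$, so $A_{i} \leq N_{G}(K)$. By Lemma \ref{afrnrmlzbysfr}, $N_{G}(K) = N_{G}(A_{1})$, and this is isomorphic to $A_{4}$ or $S_{4}$. Because $K \trianglelefteq N_{G}(K)$, the group $N_{G}(K)$ contains a unique $A_{4}$ containing $K$. In $A_{4}$ this is trivial. In $S_{4}$ the normal $K_{4}$ lies in exactly one $A_{4}$, and the normal $K_{4}$ is the only one normal in $S_{4}$. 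Hence $A_{1} = A_{2} \leq H_{1} \cap H_{2}$, so $H_{1} \cap H_{2} \cong A_{4}$.

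\textbf{Main obstacle.} The delicate point is the $S_{4}$ case, where the identification of $K$ with the normal Klein four-subgroup of $N_{G}(K)$ must be justified. This follows because $K$ is normal in its own normalizer. Apart from that, the argument only needs Lemma \ref{afrnrmlzbysfr}, and no case split on $q \bmod 8$ is required beyond what that lemma already provides.
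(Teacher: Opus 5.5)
Your proof is correct and takes essentially the same approach as the paper. Both arguments use Lemma \ref{afrnrmlzbysfr} to show that the $A_{4}$ subgroups of $H_{1}$ and $H_{2}$ normalizing the given $K_{4}$ must coincide, and then use that this $A_{4}$ is maximal in $A_{5}$; you simply make explicit the two steps the paper leaves implicit, namely the uniqueness of the $A_{4}$ inside $N_{G}(K)$ and the fact that $H_{1} \cap H_{2}$ is a proper subgroup of $H_{1}$.
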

\begin{proof}
In $ A_{5} $, each $ K_{4} $ is normalized by a maximal $ A_{4} $. By Lemma \ref{afrnrmlzbysfr}, only one $ A_{4} $ in $ G $ normalizes a particular $ K_{4} $. Hence $ H_{1} \cap H_{2} \cong A_{4} $.
\end{proof}
If an independent set of size $ 4 $ exists, then Lemma \ref{stach} tells us the point stabilizer of any subset of size $ 3 $ must be non-trivial. The GAP code used later will look at which groups could possibly contain an independent set of size $ 4 $ when acting on an $ A_{5} $ subgroup. The code became too complex if subsets of size $ 3 $ have point stabilizers of order more than $ 2 $. So it will be convenient to show this is not possible for the $ A_{5} $ action of $ PSL_{2} (q) $, which the next few lemmas work toward.
\begin{lema}\label{rdrelmnts}
Suppose $ s , a \in A_{5} $ with $ o(s) = 2 $ and $ o(a) = 3 $. Then one of the following holds;
\begin{itemize}
\item $ o(sa) = 3 $ and $ \langle s , a \rangle \cong A_{4} $,
\item $ o(sa) = 2 $ and $ \langle s , a \rangle \cong S_{3} $ or
\item $ o(sa) = 5 $ otherwise.
\end{itemize}
\end{lema}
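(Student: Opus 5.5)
The plan is to reduce to the standard order statistics of elements of $ A_{5} $ and then identify the subgroup generated by $ s $ and $ a $ from its order data. In $ A_{5} $ every non-identity element has order $ 2 $, $ 3 $ or $ 5 $. So $ o(sa) \in \{ 1, 2, 3, 5 \} $.

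First, $ o(sa) = 1 $ is impossible. That would mean $ a = s^{-1} = s $, contradicting $ o(a) = 3 \neq 2 $. This leaves three cases, and in each I will identify $ \langle s , a \rangle $ using group presentations and Corollary \ref{gnrtrcrlry}.

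\textbf{Case $ o(sa) = 2 $.} The generators satisfy $ s^{2} = a^{3} = (sa)^{2} = 1 $. This is the standard presentation of $ D_{6} \cong S_{3} $, a group of order $ 6 $. By Theorem \ref{gnrtrhmmrphsm}, $ \langle s , a \rangle $ is a homomorphic image of $ S_{3} $. Its order is divisible by both $ 2 $ and $ 3 $, so the image has order $ 6 $. Hence $ \langle s , a \rangle \cong S_{3} $.

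\textbf{Case $ o(sa) = 3 $.} The relations are now $ s^{2} = a^{3} = (sa)^{3} = 1 $. This is the von Dyck presentation of the $ (2,3,3) $ triangle group, which is $ A_{4} $ of order $ 12 $. One could check this with GAP in the style used earlier in the paper, or cite the standard fact. So $ \langle s , a \rangle $ is a quotient of $ A_{4} $ of order divisible by $ 6 $. The only quotients of $ A_{4} $ are $ A_{4} $, $ C_{3} $ and the trivial group, and only $ A_{4} $ has order divisible by $ 6 $. Hence $ \langle s , a \rangle \cong A_{4} $. Alternatively, $ A_{4} $ has no subgroup of order $ 6 $, which rules out a smaller image directly.

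\textbf{Case $ o(sa) = 5 $.} This is the remaining possibility, and it matches the third bullet ("otherwise"). For completeness, $ \langle s , a \rangle $ then has order divisible by $ 30 $. Since $ A_{5} $ has no subgroup of order $ 30 $, it is all of $ A_{5} $. The statement does not require this, though.

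The only step with any content is the $ A_{4} $ identification in the case $ o(sa) = 3 $, and that is a standard presentation fact. The case split itself is immediate from the element orders in $ A_{5} $.
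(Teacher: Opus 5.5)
Your proof is correct, but it runs in the opposite direction to the paper's.

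The paper argues concretely with permutations. It splits on whether $s$ and $a$ lie in a common point stabilizer $A_4$ of $A_5$ acting on $\{1,\dots,5\}$. If they do, maximality of $\langle a\rangle\cong C_3$ in $A_4$ gives $\langle s,a\rangle\cong A_4$. If not, it writes $s=(x_1\ x_2)(x_3\ x_4)$ and $a=(x_1\ x_2\ x_5)$ or $a=(x_1\ x_3\ x_5)$ and multiplies out, obtaining either a double transposition (with $\langle s,a\rangle\cong S_3$) or a $5$-cycle.

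You instead split on $o(sa)\in\{2,3,5\}$. You then identify $\langle s,a\rangle$ as a quotient of the von Dyck groups $\langle s,a\mid s^2,a^3,(sa)^2\rangle\cong S_3$ and $\langle s,a\mid s^2,a^3,(sa)^3\rangle\cong A_4$ via Theorem~\ref{gnrtrhmmrphsm}, with divisibility fixing the quotient.

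What your route buys:
\begin{itemize}
\item It proves directly the implications the paper actually uses later in Lemma~\ref{afrsmthdsk}, where $o(sh_{23})\neq 2$ and $o(sh_{23})\neq 3$ are deduced from $s$ not lying in a suitable $S_3$ or $A_4$ with $h_{23}$. These implications are $o(sa)=2\Rightarrow\langle s,a\rangle\cong S_3$ and $o(sa)=3\Rightarrow\langle s,a\rangle\cong A_4$.
\item The paper gets these only via exhaustiveness of its three configurations, and it leaves $o(sa)=3$ in the $A_4$ case implicit.
\item Your two main cases hold in any group, not just $A_5$.
\end{itemize}
The paper's route, in exchange, is entirely elementary, whereas you must cite (or check by GAP) that the $(2,3,3)$ triangle group has order $12$.

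One small slip: your alternative remark that $A_4$ has no subgroup of order $6$ is not the relevant fact, since $\langle s,a\rangle$ is a quotient of $A_4$ rather than a subgroup. What excludes an image of order $6$ is that $A_4$ has no normal subgroup of order $2$, and your list of quotients already covers this.
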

\begin{proof}
If $ s $ and $ a $ lie in some $ A_{4} $ subgroup of $ A_{5} $, then, since $ \langle a \rangle \cong C_{3} $ and each $ C_{3} $ is maximal in $ A_{4} $, we have $ \langle s , a \rangle \cong A_{4} $.

\medskip
Now suppose $ s $ and $ a $ are not in an $ A_{4} $ subgroup of $ A_{5} $. Since each $ A_{4} $ subgroup is the stabilizer some point in $ \{ 1 , 2 , 3 , 4 , 5 \} $, it must be that $ s $ and $ a $ do not stabilize any point in common. So the elements have the form $ s = ( x_{1} \ x_{2} )( x_{3} \ x_{4} ) $ and either $ a = ( x_{1} \ x_{2} \ x_{5} ) $ or $ a = ( x_{1} \ x_{3} \ x_{5} ) $ where $ \{ x_{1} , x_{2} , x_{3} , x_{4} , x_{5} \} = \{ 1 , 2, 3, 4 , 5 \}  $.

\medskip
If $ a = ( x_{1} \ x_{2} \ x_{5} ) $ then $ sa = ( x_{2} \ x_{5} )( x_{3} \ x_{4} ) $ and it is not difficult to show $ \langle s , a \rangle \cong S_{3} $.

\medskip
If $ a = ( x_{1} \ x_{3} \ x_{5} ) $ then $ sa = ( x_{1} \ x_{2} \ x_{3} \ x_{4} \ x_{5} ) $ and $ o(sa) = 5 $.
\end{proof}
\begin{lema}\label{afrsmthdsk}
Suppose $ \{ H_{1} , H_{2} , H_{3} \} \subseteq \Omega $ is an independent set and $ H_{1} \cap H_{2} \cap H_{3} \cong C_{3} $. Then $ H_{i} \cap H_{j} \cong A_{4} $ for distinct $ i,j \in \{ 1,2,3 \} $.
\end{lema}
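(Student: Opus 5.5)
The plan is to pin down which subgroups can occur as pairwise intersections, and then rule out the one bad case, $S_{3}$, by a generation argument. Put $C := H_{1} \cap H_{2} \cap H_{3} \cong C_{3}$.

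\textbf{Step 1: restrict the possibilities.} Since $\{ H_{1} , H_{2} , H_{3} \}$ is independent, Lemma \ref{stach} and Lemma \ref{indsbstsntqual} give two facts. Each $H_{i} \cap H_{j}$ properly contains $C$, and the three pairwise intersections are distinct. Each $H_{i} \cap H_{j}$ is also a proper subgroup of $H_{i}$, because $H_{i} \neq H_{j}$.

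In $A_{5}$ the proper subgroups properly containing a fixed $C_{3}$, say $\langle (1 \ 2 \ 3) \rangle$, are exactly three:
\begin{itemize}
\item one $S_{3}$, namely $N_{A_{5}}(C) = \langle (1 \ 2 \ 3) , (1 \ 2)(4 \ 5) \rangle$;
\item two $A_{4}$, namely the stabilizers of $4$ and of $5$.
\end{itemize}
So each pairwise intersection is $S_{3}$ or $A_{4}$. Moreover, $H_{i} \cap H_{j}$ and $H_{i} \cap H_{k}$ are distinct subgroups of $H_{i}$ containing $C$. Hence at most one of the two intersections through any given $H_{i}$ is an $S_{3}$. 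It therefore suffices to derive a contradiction from $H_{1} \cap H_{2} \cong S_{3}$, after relabelling. In that case $A := H_{1} \cap H_{3}$ and $B := H_{2} \cap H_{3}$ are both $A_{4}$.

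\textbf{Step 2: set up generators.} Let $s$ be an involution in $H_{1} \cap H_{2} \cong S_{3}$. Then $s \notin H_{3}$, otherwise $C$ would not be the whole triple intersection.

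The subgroup $A$ is maximal in $H_{1}$ and $s \notin A$, so $H_{1} = \langle A , s \rangle$. Likewise $H_{2} = \langle B , s \rangle$. The two $A_{4}$ subgroups $A, B \leq H_{3}$ are distinct and meet in $C$, so $\langle A , B \rangle = H_{3}$.

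Because $H_{1} \neq H_{2}$ are maximal, $\langle H_{1} , H_{2} \rangle = G$. Hence
\[
G = \langle A , B , s \rangle = \langle H_{3} , s \rangle .
\]
Inside $H_{1}$, the involution $s$ lies in $N_{H_{1}}(C) = S_{3}$. Checking this in $A_{5}$ shows that $s$ inverts $C$ and conjugates $A$ to the other $A_{4}$ of $H_{1}$ through $C$. The same holds for $B$ inside $H_{2}$.

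\textbf{Step 3: presentation and GAP.} Choose explicit generators:
\begin{itemize}
\item $c$ generating $C$;
\item an involution $x \in A$ and an involution $y \in B$, so that $A = \langle c , x \rangle$ and $B = \langle c , y \rangle$;
\item the involution $s$.
\end{itemize}
Write down the relations that hold inside the three $A_{5}$'s $H_{1} = \langle c , x , s \rangle$, $H_{2} = \langle c , y , s \rangle$ and $H_{3} = \langle c , x , y \rangle$. These are the orders of the generators and of the short products; Lemma \ref{rdrelmnts} fixes which orders $2$, $3$, $5$ occur. In addition, include $s c s = c^{-1}$.

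Then feed the finitely presented group to GAP, as in Lemma \ref{frctnndpndn}. By Theorem \ref{gnrtrhmmrphsm} and Corollary \ref{gnrtrcrlry}, $G$ is a quotient of that group. The expectation is that GAP returns a small order, certainly below $|PSL_{2}(11)| = 660$. Since $q \geq 11$, this contradicts $|G| \geq 660$. The conclusion is that no pairwise intersection is $S_{3}$, so every $H_{i} \cap H_{j} \cong A_{4}$.

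\textbf{Expected obstacle.} The hard part is Step 3: choosing enough relations that the presented group is provably small, while correctly tracking how $x$ and $y$ multiply with $s$ in their respective $A_{5}$'s. Different relative positions may force a case split, as in the two conjugacy-class subcases of Lemma \ref{frctnndpndn}.

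If the presentation stays large, the fallback is Lemma \ref{nrmlzr}. Both $s$ and $C$ lie in the dihedral $N_{G}(C) \cong D_{q \pm 1}$, and one could try to show that $s$ must normalize $H_{3}$, contradicting Lemma \ref{mxmlnrmlizr}.
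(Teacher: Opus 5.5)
Your Steps 1 and 2 are correct and follow the paper's reduction. Step 3, however, is where the proof actually happens, and it is not carried out. Worse, its predicted outcome is false.

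The configuration you want to exclude really occurs in $PSL_{2}(11)$ when $H_{1}, H_{2}$ lie in one conjugacy class of maximal $A_{5}$ and $H_{3}$ lies in the other. Concretely, let $PSL_{2}(11)$ act on the $11$ points of its $(11,5,2)$ biplane. Take $H_{1} = G_{p}$, $H_{2} = G_{p'}$, and $H_{3}$ the stabilizer of a block containing $p$ and $p'$. Then $H_{1} \cap H_{2} \cong S_{3}$, $H_{1} \cap H_{3} \cong H_{2} \cap H_{3} \cong A_{4}$, and $H_{1} \cap H_{2} \cap H_{3} \cong C_{3}$.

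Every relation you plan to impose is read off inside $H_{1}$, $H_{2}$ or $H_{3}$, or is $scs = c^{-1}$. So each one holds in this model for the corresponding choice of $c, x, y, s$. Hence the matching case of your finitely presented group maps onto $\langle H_{1}, H_{2} \rangle = PSL_{2}(11)$ and has order at least $660$. Since $q = 11$ is allowed, ``$|G| \geq 660$'' gives no contradiction. This is exactly why the paper's own GAP computation returns $PSL_{2}(11)$.

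The missing idea is a global one: no set of local relations can detect that $H_{3}$ is conjugate to $H_{1}$ in $G$. The paper supplies this after the GAP step, as follows.
\begin{itemize}
\item If the presented group is $PSL_{2}(11)$, then $G \cong PSL_{2}(11)$ with $11 \equiv 3 \pmod{8}$. By Lemma~\ref{afrnrmlzbysfr}, the $A_{4}$-subgroups of $G$ are self-normalizing.
\item Choose $g$ with $g^{-1} H_{1} g = H_{3}$. Correct it by an element of $H_{3}$ so that also $g^{-1} (H_{1} \cap H_{3}) g = H_{1} \cap H_{3}$.
\item Then $g \in H_{1} \cap H_{3} \leq H_{1}$, so $H_{3} = H_{1}$, a contradiction.
\end{itemize}

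Your relation bookkeeping is also not yet determined. Lemma~\ref{rdrelmnts} only covers an involution times an element of order $3$. It therefore says nothing about $o(xs)$, $o(ys)$ or $o(xy)$, and you have not resolved the resulting case split. The paper avoids this by using elements of order $3$ from a third $A_{4}$ of $H_{3}$, and it invokes Lemma~\ref{csefrafr} to force $o(sh_{23}) = 5$. Finally, your suggested fallback via Lemma~\ref{nrmlzr} is only a hope, not an argument.
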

\begin{proof}
Put $ \Delta := \{ H_{1} , H_{2} , H_{3} \} $. Write $ H_{ij} $ for $ H_{i} \cap H_{j} $. Similarly $ H_{123} $ for $ H_{1} \cap H_{2} \cap H_{3} $. Looking at the subgroup structure of $ A_{5} $ and using Lemma \ref{stach}, either $ H_{ij} \cong S_{3} $ or $ H_{ij} \cong A_{4} $.

\medskip
In $ A_{5} $ there is only one $ S_{3} $ that contains a particular subgroup isomorphic to $ C_{3} $. So if two or more of $ H_{12} $, $ H_{13} $ or $ H_{23} $ are isomorphic to $ S_{3} $, at least two of these double intersections are equal. Lemma \ref{indsbstsntqual} then tells us that $ \Delta $ is not independent, a contradiction. So at most one double intersection is an $ S_{3} $.

\medskip
Suppose one of the double intersections is an $ S_{3} $. Without loss of generality suppose $ H_{12} \cong S_{3} $. Then $ H_{13} \cong H_{23} \cong A_{4} $. Note $ H_{13} \neq H_{23} $ by Lemma \ref{indsbstsntqual}.

\medskip
Let $ B \leq H_{3} $ with $ B \cong A_{4} $ and $ H_{13} \neq B \neq H_{23} $.

\medskip
Put $ C:= H_{123} = ( H_{13} ) \cap ( H_{23} ) $. In $ A_{5} $ each pair of distinct $ A_{4} $ subgroups intersect in a $ C_{3} $. Also each $ C_{3} $ subgroup lies in exactly two $ A_{4} $. Hence $ C \not\leq B $ and $ B \cap H_{13} \neq B \cap H_{23} $.

\medskip
For $ k \in \{ 1,2 \} $ there exists $ F_{k} \leq H_{k} $ with $ F_{k} \cong A_{4} $ and $ F_{k} \cap H_{k3} = B \cap H_{k3} \cong C_{3} $. Note for later this implies $ F_{k} \neq H_{k3} $. Since $ C_{3} $ are maximal in $ A_{4} $, either $ B \cap F_{k} \cong A_{4} $ or $ B \cap F_{k} \cong C_{3} $. It cannot be that $ B \cap F_{k} \cong A_{4} $ otherwise $ B = F_{k} $, implying $ B $ is a subgroup of $ H_{k} $ and $ H_{3} $, so $ H_{k3} = B \neq H_{k3} $. It follows that $ B \cap F_{k} \cong C_{3} $ and $ B \cap F_{k} = B \cap H_{k3} $.

\medskip
Let $ s \in H_{12} $ be an involution. Let $ h_{13} \in B \cap H_{13} $ and $ h_{23} \in B \cap H_{23} $ be elements of order $ 3 $. Let $ c \in C $ with $ o(c) = 3 $. We will look at what group is generated by $c $, $ s $, $ h_{13} $ and $ h_{23} $. First note that
\begin{itemize}
\item $ o(cs ) = 2 $.
\end{itemize}
Now consider how $ c$, $ h_{13} $ and $ h_{23} $ interact with each other. In $ A_{4} $ the elements of order three split into two conjugacy classes, one containing the inverses of the other. So, by relabelling if needed, we can assume $ c $ and $ h_{13} $ lie in the same conjugacy class in $ H_{13} $. Also we can assume $ h_{13} $ and $ h_{23} $ lie in the same conjugacy class in $ B $. Looking at how elements combine in Table \ref{afrcylytbl} we see that
\begin{itemize}
\item $ o(ch_{13} ) = 3 $,
\item $ o(c h_{13} h_{13} ) = 2 $,
\item $ o(h_{13} h_{23} ) = 3 $ and
\item $ o(h_{13} h_{23} h_{23} ) = 2 $.
\end{itemize}
In $ H_{3} $ the subgroups $ H_{13} $ and $ B $ are conjugate. In $ A_{4} $ any two $ C_{3} $ are conjugate. So we can assume $ B = g_{13} ^{-1} H_{13} g_{13} $ and $ g_{13} ^{-1}  \langle h_{13} \rangle g_{13} = \langle h_{13} \rangle $ for some $ g_{13} \in H_{3} $. Further conjugating $ B $ by $ h_{13} $ if needed, we can also assume $ g_{13} ^{-1}  C g_{13} \neq \langle h_{23} \rangle $. Now $ g_{13} \in N_{H_{3} } ( \langle h_{13} \rangle ) \setminus \langle h_{13} \rangle $. The normalizer of a $ C_{3} $ in $ A_{5} $ is isomorphic to $ S_{3} $. So $ g_{13} $ is an involution that sends $ h_{13} $ to $ h_{13} ^{-1} $. Let $ B_{1} $ and $ B_{2} $ be the conjugacy classes of elements of order $ 3 $ in $ B $. Let $ B_{1} $ be the set containing $ h_{13} = g_{13} ^{-1} h_{13} ^{-1} g_{13} $ (and therefore $ h_{23} $), as well as $ g_{13} ^{-1}  c^{-1} g_{13} $. Then $ B_{2} $ contains the inverses of these elements.

\medskip
Following the same method we have an involution $ g_{23} \in N_{H_{3} } ( \langle h_{23} \rangle ) \setminus \langle h_{23} \rangle $ such that $ g_{23} ^{-1} B g_{23} = H_{23} $ and $ g_{23} ^{-1} g_{13} ^{-1}  C g_{13} g_{23} = C $.

\medskip
Observe that $ g_{13} g_{23} $ sends $ H_{13} $ to $ H_{23} $, because these are the only $ A_{4} $ in $ H_{3} $ containing $ C $. So $ g_{13} g_{23} \notin C $. The normalizer of $ C $ in $ H_{3} $ is an $ S_{3} $. Hence $ g_{13} g_{23} $ is an involution. In $ S_{3} $, involutions send elements of order $ 3 $ to their inverses when conjugating. So $ g_{13} g_{23} $ sends $ c $ to $ c^{-1} $. However $ h_{23} ^{-1} $ and $  g_{13} ^{-1}  c g_{13} $ are in $ B _{2} $ and $ g_{23} ^{-1} h_{23} ^{-1} g_{23} = h_{23} $, so $ c^{-1} = g_{23} ^{-1} g_{13} ^{-1}  c g_{13}  g_{23} $ is in the same conjugacy class as $ h_{23} $ in $ H_{23} $. Looking at how elements in the same conjugacy class combine in Table \ref{afrcylytbl} we get 
\begin{itemize}
\item $ o(ch_{23} ) = 2 $,
\item $ o(c h_{23} h_{23} ) = 3 $.
\end{itemize}
In $ A_{5} $, any $ S_{3} $ subgroup intersects an $ A_{4} $ subgroup non-trivially. If $ c \in F_{1} $ then $ H_{13}  = \langle c , h_{13} \rangle =  F_{1} \neq H_{13} $. Hence $ c \notin F_{1} $. Therefore $ H_{12} \cap F_{1} $ contains an involution. We can assume $ s $ is this involution. As $ F_{1} $ is an $ A_{4} $ containing $ h_{13} $, we have
\begin{itemize}
\item $ o(sh_{13} ) = 3 $.
\end{itemize}
If $ s \in F_{2} $ then $ F_{1} \cap F_{2} \cong C_{2} $ (cannot be any larger group or it would contain more non-trivial elements of $ H_{12} $ and therefore $ H_{12} \leq F_{1} \cap F_{2}$, though $ F_{1} $ has no $ S_{3} $ inside it). Then we have three subgroups $ B $, $ F_{1} $ and $ F_{2} $ that are isomorphic to $ A_{4} $ and intersect in a way that is prohibited by Lemma \ref{csefrafr}. Hence $ s \not\in F_{2} $.

\medskip
If $ s $ was in an $ S_{3} $ containing $ \langle h_{23} \rangle $ then $ s $ would normalize this subgroup and therefore normalize $  \langle h_{23} , C \rangle = H_{23} $. This is a maximal $ A_{4} $ in $ H_{2} $, so $ s \in H_{23}  $ by Lemma \ref{mxmlnrmlizr}. But then this $ A_{4} $ contains $ H_{12} = \langle s, C \rangle $, an $ S_{3} $ subgroup, which is not possible. Hence $ s $ is not in an $ S_{3} $ with $ \langle h_{23} \rangle $. It follows from Lemma \ref{rdrelmnts} that $ o(sh_{23} ) \neq 2 $.

\medskip
The only two $ A_{4} $ in $ H_{2} $ that contain $ h_{23} $ are $ F_{2} $ and $ H_{23} $, with $ s $ in neither. So $ o(sh_{23} ) \neq 3 $ by Lemma \ref{rdrelmnts}. So by the same lemma it must be that
\begin{itemize}
\item $ o(sh_{23} ) =5 $.
\end{itemize}
Finally we want a relation for $ csh_{23} $. This element is in $ H_{2} $ and $ h_{23} \notin \langle c, s \rangle = H_{12} $. So $ cs \neq h_{23} ^{-1} $ and $ csh_{23} $ can only have order $ 2 $, $ 3 $ or $ 5 $. Each possibility will be tested. The GAP code below looks at what group is generated using the relations we have.
\begin{lstlisting}
N:=[2,3,5];

for n in N do

f:=FreeGroup("h13", "h23", "c", "s");; 
g:=f/[f.1^3, f.2^3, f.3^3, f.4^2,

(f.3*f.4)^2,

(f.3*f.1)^3,
(f.3*f.1*f.1)^2,

(f.1*f.2)^3,
(f.1*f.2*f.2)^2,

(f.3*f.2)^2,
(f.3*f.2*f.2)^3,

(f.4*f.1)^3,

(f.4*f.2)^5,

(f.3*f.4*f.2)^n,
];;
Print("\n", StructureDescription(g));

od;
\end{lstlisting}
GAP shows the group generated is either the trivial group or $ PSL_{2} (11) $. The trivial group is of no interest as it contains no $ A_{5} $. Since $ H_{1} $ and $ H_{3} $ are conjugate and all $ A_{4} $ in $ A_{5} $ are conjugate, there exists $ g \in G $ such that $ g^{-1} H_{1} g = H_{3} $ and $ g^{-1} H_{13} g = H_{13} $. Lemma \ref{afrnrmlzbysfr} shows the $ A_{4} $ subgroups of $ PSL_{2} (11) $ are self normalizing. Hence $ g \in H_{13} \leq H_{1} $ and $ g $ fixes $ H_{1} $. A contradiction. (GAP generates $ PSL_{2} (11) $ because there are two conjugacy classes of $ A_{5} $ and the code does not take into account the $ A_{5} $ subgroups being conjugate.)

\medskip
So the assumption $ H_{12} \cong S_{3} $ is wrong.
\end{proof}
\begin{lema}\label{asxpsltwnn}
There is no independent set $ \{ H_{1} , H_{2} , H_{3} \} \subseteq \Omega $ such that $ H_{1} \cap H_{2} \cap H_{3} \cong C_{3} $.
\end{lema}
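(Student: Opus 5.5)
By Lemma \ref{afrsmthdsk}, if such an independent set existed then every pairwise intersection $H_{ij} := H_i \cap H_j$ would be isomorphic to $A_4$. The three subgroups $H_{12}$, $H_{13}$, $H_{23}$ are distinct by Lemma \ref{indsbstsntqual}. All of them contain $C := H_1 \cap H_2 \cap H_3 \cong C_3$. The plan is to show this configuration is impossible, by playing the structure of $A_5$ against the normalizer information in Lemma \ref{afrnrmlzbysfr}.

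First, work inside $H_1 \cong A_5$. Both $H_{12}$ and $H_{13}$ are $A_4$ subgroups of $H_1$ containing $C$. In $A_5$ each $C_3$ lies in exactly two $A_4$ subgroups, and these are swapped by the involutions in $N_{H_1}(C) \cong S_3$. So $H_{12}$ and $H_{13}$ are exactly the two $A_4$ subgroups of $H_1$ containing $C$. The same argument in $H_2$ shows $H_{12}$ and $H_{23}$ are the two $A_4$ subgroups of $H_2$ containing $C$, and in $H_3$ it shows the same for $H_{13}$ and $H_{23}$. Hence, inside $G$, the subgroup $C$ is contained in at least the three distinct subgroups $H_{12}$, $H_{13}$, $H_{23}$, each isomorphic to $A_4$.

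Next, for each $i$ pick an involution $s_i \in N_{H_i}(C) \setminus C$. Conjugation by $s_1$ swaps $H_{12}$ and $H_{13}$ and normalizes $C$, and similarly $s_2$ swaps $H_{12}$ and $H_{23}$, and $s_3$ swaps $H_{13}$ and $H_{23}$. Thus $N_G(C)$ permutes the set of $A_4$ subgroups of $G$ containing $C$, and it contains elements inducing all three transpositions on $\{H_{12}, H_{13}, H_{23}\}$. By Lemma \ref{nrmlzr}, $N_G(C)$ is dihedral, $D \cong D_{q\pm1}$. Every involution in $N_G(C)$ either inverts $C$ (a reflection of $D$) or centralizes it (the central rotation of order $2$, if present). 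Since $s_1, s_2, s_3 \notin C_G(C)$, they are all reflections of $D$. Their pairwise products are therefore rotations of $D$, and these rotations centralize $C$.

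Now the contradiction. The product $s_1 s_2$ induces a $3$-cycle on $\{H_{12}, H_{13}, H_{23}\}$, so it is a nontrivial rotation moving $H_{12}$. Some power $u$ of $s_1 s_2$ has prime order, centralizes $C$, and does not normalize $H_{12}$, since $\langle s_1 s_2 \rangle$ acts transitively on the three subgroups. Then $\langle C, u\rangle$ is an abelian group of order $3\,o(u)$ centralizing $C$, so $C_G(C)$ contains more than one rotation. The aim is to argue that this forces the $A_4$ subgroups $H_{12}$, $H_{13}$, $H_{23}$ into a configuration contradicting Lemma \ref{afrnrmlzbysfr}: each $A_4$ has normalizer $A_4$ or $S_4$, and two $A_4$ subgroups sharing a $C_3$ generate a subgroup that must lie in a maximal subgroup of $G$. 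The concrete route is to compute $K := \langle H_{12}, s_1, s_2, s_3 \rangle$, exactly as in Lemma \ref{afrsmthdsk}. Take $c \in C$, an element of order $3$ in each $H_{ij}$, and the $s_i$, and record all the orders of products forced by Table \ref{afrcylytbl} and Lemma \ref{rdrelmnts}. Then use GAP to identify the finitely presented group, expecting it to be trivial, $A_5$, or a small $PSL_2(11)$-type group, each of which is ruled out as in Lemma \ref{afrsmthdsk} (via self-normalization of $A_4$, or because $H_1 = H_2$ would be forced).

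The main obstacle is making this final step rigorous: pinning down enough relations, in particular which conjugacy class of $3$-elements in each $H_{ij}$ the element $c$ lies in, and the orders of mixed products such as $s_1 s_2 c$, so that the presentation determines the group. I would try first to handle the possible orders of $s_1 s_2$ (which divides $(q\pm1)/2$) by noting that it acts as a $3$-cycle, so $3 \mid o(s_1 s_2)$. I would then exploit that $C_G(C)$ is cyclic and contains both $C$ and $s_1 s_2$.
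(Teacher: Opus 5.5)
Your argument never reaches a contradiction, and the structural step it relies on is not justified. Conjugation by $s_1$ swaps $H_{12}$ and $H_{13}$, but it permutes the set of \emph{all} $A_4$ subgroups of $G$ containing $C$, and nothing forces it to fix $H_{23}$. So you cannot conclude that $s_1,s_2,s_3$ induce the three transpositions of $\{H_{12},H_{13},H_{23}\}$, or that $s_1s_2$ induces a $3$-cycle. In fact, when $q\equiv\pm3\pmod 8$, Lemma~\ref{afrnrmlzbysfr} gives $N_G(H_{23})=H_{23}$. Then the only elements of $N_G(C)$ normalizing $H_{23}$ lie in $N_{H_{23}}(C)=C$, so $s_1$ moves $H_{23}$ outside your three-element set. (In that case the same freeness actually finishes the job: $s_1s_3s_2$ normalizes $H_{12}$, hence lies in $C$, so $s_2\in s_3s_1C\subseteq C_G(C)$, contradicting that $s_2$ inverts $C$. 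For $q\equiv\pm1\pmod 8$, however, the $A_4$ subgroups have normalizer $S_4$, their stabilizers in $N_G(C)$ contain reflections, and no such shortcut exists.) Your observation that $C_G(C)$ contains more than one rotation is no contradiction either, since $C_G(C)$ is the cyclic rotation subgroup of order $(q\pm1)/2\ge 6$.

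The decisive step is therefore the one you defer to an unspecified GAP computation, and with your generators it cannot be set up. No $H_i$ contains two of $s_1,s_2,s_3$, so the orders of $s_1s_2$, $s_1s_2c$ and so on are not determined by the structure of $A_5$. Your expected outcome is also off. The configuration genuinely occurs in $A_6\cong PSL_{2}(9)$: take the stabilizers of $4,5,6$ in the natural action. Hence relations coming only from the $A_5$ structure of the $H_i$ will always allow $A_6$. The contradiction must use $q\ge 11$ explicitly; ruling out trivial, $A_5$ or $PSL_{2}(11)$ quotients by self-normalization, as in Lemma~\ref{afrsmthdsk}, cannot suffice.

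The paper chooses generators so that every triple lies in a single stabilizer: $a\in C$ of order $3$, and involutions $x_{12}\in H_{12}$, $x_{13}\in H_{13}$, $x_{23}\in H_{23}$. Then $\{a,x_{12},x_{13}\}\subseteq H_1$, $\{a,x_{12},x_{23}\}\subseteq H_2$ and $\{a,x_{13},x_{23}\}\subseteq H_3$, so all relations are read off from $A_5$. Checking the finitely many cases (Appendix~\ref{appc}) shows that each resulting group either has order at most $60$ or is $A_6$. Since $\langle a,x_{12}\rangle=H_{12}$ and $\langle a,x_{13}\rangle=H_{13}$ generate $H_1$, and $H_1\neq H_2$ are maximal, these four elements generate $G$. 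By Corollary~\ref{gnrtrcrlry}, $|G|\le 360$, which is impossible because $|PSL_{2}(q)|\ge 660$ for $q\ge 11$.
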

\begin{proof}
By Lemma \ref{afrsmthdsk}, $ H_{i} \cap H_{j} \cong A_{4} $ for distinct $ i, j \in \{ 1,2,3 \} $. In Appendix \ref{appc} it is shown the only way $ G $ could contain an independent set of the above form is if $ | G | $ divides $ | A_{6} | = | PSL_{2} (9) | $. We are assuming $ q \geq 11 $, so $ | G | $ does not divide $ | PSL_{2} (9) | $.
\end{proof}
\begin{corl}\label{oiweuvw}
If $ \{ H_{1} , H_{2} , H_{3} \} \subseteq \Omega $ is an independent set then $ | H_{1} \cap H_{2} \cap H_{3} | \leq 2 $.
\end{corl}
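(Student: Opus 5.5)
The plan is a case analysis on the isomorphism type of $ K := H_{1} \cap H_{2} \cap H_{3} $, using the subgroup structure of $ A_{5} $. Since $ \{ H_{1} , H_{2} , H_{3} \} $ is independent, Lemma \ref{stach} gives a strict chain $ H_{1} > H_{1} \cap H_{2} > K $. Also $ H_{1} \cap H_{2} $ is a proper subgroup of $ H_{1} $, since $ H_{1} \neq H_{2} $ by Lemma \ref{indsbstsntqual}. Hence $ K $ is a subgroup of $ A_{5} $ properly contained in some proper subgroup of $ A_{5} $.

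The subgroups of $ A_{5} $ up to isomorphism are the trivial group, $ C_{2} $, $ C_{3} $, $ K_{4} $, $ C_{5} $, $ S_{3} $, $ D_{10} $, $ A_{4} $ and $ A_{5} $. The proper subgroups that are not maximal in $ A_{5} $ are the trivial group, $ C_{2} $, $ C_{3} $, $ K_{4} $ and $ C_{5} $. So it suffices to rule out $ K \cong C_{3} $, $ K \cong K_{4} $ and $ K \cong C_{5} $, after which $ | K | \leq 2 $.

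\textbf{The case $ C_{3} $.} This is exactly Lemma \ref{asxpsltwnn}.

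\textbf{The case $ K_{4} $.} Here $ H_{1} \cap H_{2} $ contains a $ K_{4} $. By Corollary \ref{crlkfrnrmlz}, $ H_{1} \cap H_{2} \cong A_{4} $, and likewise $ H_{1} \cap H_{3} \cong A_{4} $. In $ A_{5} $ a $ K_{4} $ lies in exactly one $ A_{4} $, namely its normalizer in $ H_{1} $. Therefore $ H_{1} \cap H_{2} = H_{1} \cap H_{3} $, contradicting Lemma \ref{indsbstsntqual}. This is the same argument as in Lemma \ref{afvactntrvl}.

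\textbf{The case $ C_{5} $.} The only proper subgroup of $ A_{5} $ properly containing a given $ C_{5} $ is its normalizer, a $ D_{10} $. So $ H_{1} \cap H_{2} $ and $ H_{1} \cap H_{3} $ are both that $ D_{10} $ and hence equal, again contradicting Lemma \ref{indsbstsntqual}.

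One should also check that $ K \cong S_{3} $ and $ K \cong D_{10} $ cannot occur. These subgroups are maximal in $ A_{5} $, so any subgroup of $ H_{1} $ properly containing them would be $ H_{1} $ itself, contradicting the strict chain above.

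Once Lemma \ref{asxpsltwnn} is available there is no real obstacle. The only point needing care is verifying the containment facts in $ A_{5} $: a $ K_{4} $ lies only in its $ A_{4} $, and a $ C_{5} $ lies only in its $ D_{10} $.
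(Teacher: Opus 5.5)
Your proof is correct and matches the paper's: the same reduction via Lemma \ref{stach} and the subgroup structure of $A_5$ leaves only the cases $K_4$, $C_5$ and $C_3$, and you handle $C_5$ and $C_3$ exactly as the paper does. The only difference is in the $K_4$ case: the paper uses Lemma \ref{afrnrmlzbysfr} to put a common $A_4$ inside all three $H_i$, whereas you force $H_1 \cap H_2 = H_1 \cap H_3$ inside $H_1$, which, as you note, needs only the local $A_5$ argument of Lemma \ref{afvactntrvl}.
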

\begin{proof}
Looking at the subgroup structure of $ A_{5} $ and using Lemma \ref{stach} the intersection $  H_{1} \cap H_{2} \cap H_{3} $ is either a $ K_{4} $, $ C_{5} $, $ C_{3} $, $ C_{2} $ or trivial. If $ H_{1} \cap H_{2} \cap H_{3} \cong K_{4} $ then Lemma \ref{afrnrmlzbysfr} tells us $ H_{1} $, $ H_{2} $ and $ H_{3} $ each contain the same $ A_{4} $ that normalizes the $ K_{4} $. So $ | H_{1} \cap H_{2} \cap H_{3}  | > 4 $, a contradiction.

\medskip
Using Lemma \ref{stach} again, if $ H_{1} \cap H_{2} \cap H_{3} \cong C_{5} $ then $ H_{1} \cap H_{2} \cong H_{1} \cap H_{3} \cong D_{10} $. In $ A_{5} $ there is only one $ D_{10} $ containing each $ C_{5} $. So $ H_{1} \cap H_{2} = H_{1} \cap H_{3} $, a contradiction by Lemma \ref{indsbstsntqual}. 

\medskip
Finally, as it is assumed $ q \geq 11 $, Lemma \ref{asxpsltwnn} rules out $ C_{3} $ as a possibility.
\end{proof}
Before the height and relational complexity are finally computed, one more configuration of stabilizers must be ruled out.
\begin{lema}\label{afrmdntrsctnx}
There is no independent set $ \{ H_{1} , H_{2} , H_{3} , H_{4} \} \subseteq \Omega $ such that $ H_{1} \cap H_{2} \cong H_{3} \cap H_{4} \cong A_{4} $.
\end{lema}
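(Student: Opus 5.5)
Suppose, for a contradiction, that $ \Delta = \{ H_{1} , H_{2} , H_{3} , H_{4} \} \subseteq \Omega $ is independent with $ A := H_{1} \cap H_{2} \cong A_{4} $ and $ B := H_{3} \cap H_{4} \cong A_{4} $. The plan is to pin down the triple intersections first, and then show that $ A $ and $ B $ would have to share a Klein four-subgroup.

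\textbf{Step 1: triple intersections.} By Lemma \ref{afvactntrvl}, $ G_{ ( \Delta ) } $ is trivial. By Corollary \ref{oiweuvw}, every triple intersection has order at most $ 2 $. Lemma \ref{stach} forces the chain $ A > H_{1} \cap H_{2} \cap H_{k} > 1 $ for $ k \in \{ 3 , 4 \} $. Hence $ H_{1} \cap H_{2} \cap H_{3} = \langle x \rangle $ and $ H_{1} \cap H_{2} \cap H_{4} = \langle y \rangle $ with $ x , y $ involutions of $ A $. Lemma \ref{indsbstsntqual} gives $ \langle x \rangle \neq \langle y \rangle $, so $ x \neq y $. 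Similarly $ H_{1} \cap H_{3} \cap H_{4} = \langle u \rangle $ and $ H_{2} \cap H_{3} \cap H_{4} = \langle v \rangle $, with $ u \neq v $ involutions of $ B $. Since $ x \in H_{3} $ and $ x \in H_{1} \cap H_{2} $, and $ y $ likewise, the key observation is
\[
x , y \in A \cap H_{3} \quad \text{and} \quad x \in H_{3}, \ y \in H_{4} .
\]

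\textbf{Step 2: the involutions generate $ K_{4} $'s.} In $ A \cong A_{4} $ all involutions lie in the normal $ K_{4} $. So $ N_{A} := \langle x , y \rangle $ is the Klein four-subgroup of $ A $, and likewise $ N_{B} := \langle u , v \rangle $ is the Klein four-subgroup of $ B $. Corollary \ref{crlkfrnrmlz} and Lemma \ref{afrnrmlzbysfr} show that a $ K_{4} $ lies in a unique $ A_{4} $ of $ G $, namely the one it determines (inside $ N_{G} ( K ) $). Therefore if $ N_{A} = N_{B} $ then $ A = B $, and so $ A \leq H_{1} \cap H_{2} \cap H_{3} $, contradicting Step 1. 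The aim is thus to force $ N_{A} = N_{B} $, or at least $ N_{A} \cap N_{B} \neq 1 $ in a way that leads to a contradiction.

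\textbf{Step 3: double intersections $ H_{1} \cap H_{3} $ etc.} The subgroup $ H_{1} \cap H_{3} $ contains $ x $ and $ u $. If $ x = u $, then $ x \in H_{1} \cap H_{2} \cap H_{3} \cap H_{4} = 1 $, which is impossible, so $ x \neq u $. Lemma \ref{stach} gives $ H_{1} \cap H_{3} > \langle x \rangle $ and $ H_{1} \cap H_{3} > \langle u \rangle $. The proper subgroups of $ A_{5} $ containing two distinct involutions are $ K_{4} $, $ S_{3} $, $ D_{10} $ and $ A_{4} $. If $ H_{1} \cap H_{3} $ contains a $ K_{4} $, Corollary \ref{crlkfrnrmlz} makes it an $ A_{4} $. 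I would then go through the cases: each of $ H_{1} \cap H_{3} $, $ H_{1} \cap H_{4} $, $ H_{2} \cap H_{3} $, $ H_{2} \cap H_{4} $ is an $ S_{3} $, a $ D_{10} $ or an $ A_{4} $, and it contains the specified pair of involutions from $ \{ x , y \} \times \{ u , v \} $. Whenever one of these intersections is an $ A_{4} $, its triple intersection with another $ H_{i} $ would have to contain a $ K_{4} $ or a $ C_{3} $. The former contradicts Corollary \ref{oiweuvw}, and the latter contradicts Lemma \ref{asxpsltwnn}. This should leave only the $ S_{3} $ and $ D_{10} $ configurations.

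\textbf{Main obstacle.} The main obstacle is the remaining configuration, where all four mixed intersections are $ S_{3} $ or $ D_{10} $. There I would fall back, as in Lemma \ref{afrsmthdsk}, on choosing generators. These are $ x , y $ (commuting involutions), an element of order $ 3 $ in $ A $ conjugating $ x \mapsto y $, and similarly for $ u , v $ in $ B $. I would record the orders of the products forced by the dihedral intersections ($ o ( x u ) \in \{ 3 , 5 \} $ and so on) and use GAP coset enumeration, as in Appendix \ref{appc}, to show that the subgroup generated has order dividing $ | A_{6} | $ or is $ PSL_{2} ( 11 ) $. The small cases are then ruled out by $ q \geq 11 $ and by the conjugacy and self-normalizing argument used at the end of Lemma \ref{afrsmthdsk}.
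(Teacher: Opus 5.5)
Your Steps 1 and 3 are sound as a reduction, apart from a slip in the displayed ``key observation'': $y\notin H_{3}$, since otherwise $y\in H_{1}\cap H_{2}\cap H_{3}=\langle x\rangle$. But the proposal stops exactly where the lemma has content. In the configurations where $H_{1}\cap H_{3}$, $H_{1}\cap H_{4}$, $H_{2}\cap H_{3}$, $H_{2}\cap H_{4}$ are all $S_{3}$ or $D_{10}$, you only say you would run a coset enumeration and hope it returns a group of order dividing $|A_{6}|$, or $PSL_{2}(11)$. Nothing supports that outcome, and Step 2's aim of forcing $N_{A}=N_{B}$ is never reached either. Worse, these are precisely the configurations on which the enumeration in Appendix \ref{appd} fails to return any group (the two ``Error!'' outputs). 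That failure is exactly why this lemma has to be proved separately. Relations collected from local intersection data need not pin down a small finite group, so the fallback is not a proof.

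The missing idea is a single conjugation that makes the mixed intersections irrelevant. Since $x,y$ are involutions of $A\cong A_{4}$, they commute, and $y\in H_{4}\setminus H_{3}$. Put $H_{3}^{*}:=y^{-1}H_{3}y$.
\begin{itemize}
\item $H_{3}^{*}\neq H_{3}$ by Lemma \ref{mxmlnrmlizr}.
\item $H_{3}^{*}\neq H_{4}$, because $y$ normalizes $H_{4}$.
\item $H_{3}^{*}\cap H_{4}=y^{-1}By\neq B$, because $B$ is maximal in the simple group $H_{4}\cong A_{5}$ and $y\in H_{4}\setminus B$.
\end{itemize}
Two distinct $A_{4}$'s in $A_{5}$ meet in a $C_{3}$, so $H_{3}\cap H_{3}^{*}\cap H_{4}=B\cap y^{-1}By\cong C_{3}$. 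This is properly contained in $B$ and in $y^{-1}By$. Also $x=y^{-1}xy\in H_{3}\cap H_{3}^{*}$ while $x\notin H_{4}$, so it is properly contained in $H_{3}\cap H_{3}^{*}$ as well. By Lemma \ref{scndfnt}, $\{H_{3},H_{3}^{*},H_{4}\}$ is then an independent set whose triple intersection has order $3$, contradicting Corollary \ref{oiweuvw}. This is the paper's argument, and it needs neither a case analysis of the mixed intersections nor any new computation.
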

\begin{proof}
Suppose there exists an independent set $ \{ H_{1} , H_{2} , H_{3} , H_{4} \} \subseteq \Omega $ such that $ H_{1} \cap H_{2} \cong H_{3} \cap H_{4} \cong A_{4} $.

\medskip
For distinct $ i, j ,k \in \{ 1,2,3,4 \} $ write $ H_{ijk} = H_{i} \cap H_{j} \cap H_{k} $. Since $ q \geq 11 $, Corollary \ref{indsub}, Lemma \ref{stach} and Corollary \ref{oiweuvw} together show $ H_{ijk} \cong C_{2} $. Let $ h_{ijk} $ be the involution in $ H_{ijk} $.

\medskip
Lemma \ref{indsbstsntqual} shows $ h_{123} \neq h_{124} $. Therefore $ h_{124} \notin H_{3} $ and $ h_{123} \notin H_{4} $. Put $ H_{3} ^{*} = h_{124} ^{-1} H_{3} h_{124} $. Lemma \ref{mxmlnrmlizr} shows $ H_{3} \neq H_{3} ^{*} $. Also $ H_{3} ^{*} \neq H_{4} $, otherwise $ H_{4} = h_{124}  H_{4} h_{124} ^{-1} = h_{124}  H_{3} ^{*} h_{124} ^{-1} = H_{3} $.

\medskip
Now $ h_{124} \notin H_{3} \cap H_{4} $. Using Lemma \ref{mxmlnrmlizr} and the fact $ H_{3} \cap H_{4} $ is a maximal $ A_{4} $ in $ H_{3} $, we have  $ H_{3}^{*} \cap H_{4} = h_{124} ^{-1} H_{3} h_{124} \cap h_{124} ^{-1} H_{4} h_{124} = h_{124} ^{-1} (H_{3}  \cap  H_{4} ) h_{124} \neq H_{3} \cap H_{4} $.

\medskip
From this we also see $ H_{3}^{*} \cap H_{4} \cong H_{3} \cap H_{4} \cong A_{4} $. If $ H_{3} \cap H_{3}^{*} \cap H_{4} = H_{3}^{*} \cap H_{4} $ then $ H_{3}^{*} \cap H_{4} \leq H_{3} $, implying $ H_{3}^{*} \cap H_{4} = H_{3} \cap H_{4} $, a contradiction. Hence $ H_{3} \cap H_{3}^{*} \cap H_{4} \neq H_{3}^{*} \cap H_{4} $.

\medskip
Similar reasoning shows $ H_{3} \cap H_{3}^{*} \cap H_{4} \neq H_{3} \cap H_{4} $.

\medskip
Observe $ h_{123} $ and $ h_{124} $ are in a $ K_{4} $ together in $ H_{1} \cap H_{2} $. Thus $ h_{123} = h_{124} ^{-1} h_{123} h_{124} $ and $ h_{123} \in H_{3} \cap H_{3} ^{*} $. The fact $ h_{123} \notin H_{4} $ means $ H_{3} \cap H_{3}^{*} \cap H_{4} \neq H_{3} \cap H_{3}^{*} $.

\medskip
Therefore $ \{ H_{3} , H_{3} ^{*} , H_{4} \} $ is an independent set by Lemma \ref{scndfnt}.

\medskip
Any two $ A_{4} $ subgroups of $ A_{5} $ intersect in a $ C_{3} $. In particular $ H_{3} \cap H_{4} $ and $ H_{3} ^{*} \cap H_{4} $ intersect in a $ C_{3} $. That same $ C_{3} $ is in $ H_{3} \cap H_{3} ^{*} \cap H_{4} $. But Corollary \ref{oiweuvw} shows $ | H_{3} \cap H_{3} ^{*} \cap H_{4} | \leq 2 $, a contradiction. Therefore $ \{ H_{1} , H_{2} , H_{3} , H_{4} \}  $ is not independent.
\end{proof}
\begin{lema}\label{lwrbndafv}
The height of the action of $ G $ on $ \Omega $ is at most $ 3 $ and the relational complexity is at most $ 4 $.
\end{lema}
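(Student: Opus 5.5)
The relational complexity bound follows from the height bound via Theorem \ref{rchgt}, so the work is to show there is no independent set of size $4$ in $\Omega$ (by Corollary \ref{indsub}, this gives height at most $3$). Suppose $\Delta=\{H_{1},H_{2},H_{3},H_{4}\}\subseteq\Omega$ is independent. Lemma \ref{afvactntrvl} gives $G_{(\Delta)}=\{1_{G}\}$. Each $3$-subset is independent by Corollary \ref{indsub}, and Lemma \ref{stach} makes its stabilizer strictly larger than the trivial one. Corollary \ref{oiweuvw} then forces every triple intersection $H_{i}\cap H_{j}\cap H_{k}$ to be a $C_{2}$. Lemma \ref{indsbstsntqual} makes the four involutions generating these triple intersections pairwise distinct.

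Next I constrain the pairwise intersections $H_{i}\cap H_{j}$. By Lemma \ref{stach} each properly contains two distinct $C_{2}$'s, namely $H_{ijk}$ and $H_{ijl}$. From the subgroup lattice of $A_{5}$, a subgroup containing two distinct involutions is a $K_{4}$, an $S_{3}$, a $D_{10}$ or an $A_{4}$. Corollary \ref{crlkfrnrmlz} upgrades any $K_{4}$ to an $A_{4}$, so each $H_{i}\cap H_{j}$ is $S_{3}$, $D_{10}$ or $A_{4}$. Lemma \ref{afrmdntrsctnx} rules out two disjoint pairs both having intersection $A_{4}$. So the pairs whose intersection is $A_{4}$ form a graph on four vertices with no perfect matching, which is a star or a triangle. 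This leaves a finite list of intersection configurations, up to relabelling.

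The remaining configurations I would treat as in the $S_{4}$ chapter. Where possible I use elementary $A_{5}$ arguments: for example, two involutions in an $S_{3}$ or $D_{10}$ generate it, and two $A_{4}$'s containing the same $K_{4}$ coincide by Lemma \ref{afrnrmlzbysfr}. These show that coincidences among the pairwise intersections contradict Lemma \ref{indsbstsntqual}. For what is left, I would record the relations satisfied by the involutions $h_{ijk}$ and suitable elements of order $3$ or $5$ in the pairwise intersections. A coset enumeration in GAP, combined with Corollary \ref{gnrtrcrlry}, would then show that $\langle H_{1},\dots,H_{4}\rangle$ is too small, or is $PSL_{2}(9)$ or $PSL_{2}(11)$. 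Since each $H_{i}$ is maximal and the $H_{i}$ are distinct, this subgroup must be all of $G$. The small outcomes are then excluded by $q\ge 11$ and by comparing orders via Lemma \ref{lnsz}. The $PSL_{2}(11)$ anomaly I would exclude as in Lemma \ref{afrsmthdsk}, using that the subgroups must lie in a single conjugacy class.

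I expect this last step to be the main obstacle. Enumerating the $S_{3}/D_{10}/A_{4}$ configurations and encoding them as presentations risks missing cases. It also needs care to handle conjugacy class fusion, which is where spurious groups like $PSL_{2}(11)$ appear. I would therefore try first to settle the star and triangle $A_{4}$-configurations by hand using Lemma \ref{afrnrmlzbysfr} and self-normalizing arguments (Lemma \ref{mxmlnrmlizr}), and use GAP only for the configurations whose pairwise intersections are all $S_{3}$ or $D_{10}$.
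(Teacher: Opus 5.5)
Your reduction is correct and matches the set-up of the paper's proof. Every triple intersection is a $C_{2}$ (Lemma~\ref{afvactntrvl}, Lemma~\ref{stach}, Corollary~\ref{oiweuvw}), the four involutions are distinct, and by Corollary~\ref{crlkfrnrmlz} each $H_{i}\cap H_{j}$ is an $S_{3}$, $D_{10}$ or $A_{4}$. These are exactly the hypotheses fed into the computation of Appendix~\ref{appd}. Applying Lemma~\ref{afrmdntrsctnx} first, to cut the $A_{4}$-pattern down to a star or a triangle, is also legitimate. The paper applies it at the end instead, to the only two configurations where coset enumeration fails; both have $H_{1}\cap H_{2}\cong H_{3}\cap H_{4}\cong A_{4}$.

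The gap is that the step doing all the work is never carried out. The star and triangle cases are something you would ``try'' by hand, and for the rest you only predict what GAP would return. That prediction is wrong: the enumeration in Appendix~\ref{appd} produces $PSL_{2}(11)$, $PSL_{2}(19)$, and groups of orders $960$, $1080$ and $15360$.

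\begin{enumerate}
\item The case $q=19$, which does have maximal $A_{5}$ subgroups, is not covered by your list of outcomes.
\item The three non-simple groups are not ``too small'' for every $q$; for instance $15360>|PSL_{2}(29)|=12180$. They must be excluded by divisibility. Since $G$ would be a quotient of the finitely presented group (Corollary~\ref{gnrtrcrlry}), $|G|$ would have to divide a number of the form $2^{a}3^{b}5^{c}$. This is impossible because $p\geq 7$ divides $|G|$.
\item Removing $PSL_{2}(11)$ ``as in Lemma~\ref{afrsmthdsk}'' requires a pairwise intersection $D=H_{i}\cap H_{j}$ with $N_{G}(D)\leq H_{i}$. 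This fails for an $S_{3}$ in $PSL_{2}(11)$, which is normalized by a $D_{12}$, and for a $D_{10}$ in $PSL_{2}(19)$, which is normalized by a $D_{20}$. These are exactly the intersection types left in your GAP runs, so the device is not guaranteed to apply.
\end{enumerate}

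The paper disposes of $q\in\{11,19\}$ using the computed heights in \cite{WISCONS1}. Until every remaining configuration is actually enumerated and each resulting group eliminated, you have not proved $Ht(G,\Omega)\leq 3$, nor therefore $RC(G,\Omega)\leq 4$ via Theorem~\ref{rchgt}.
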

\begin{proof}
If $ Ht(G, \Omega) \geq 4 $ then, by Corollary \ref{indsub}, there exists an independent set of size $ 4 $, say $ \{ H_{1} , H_{2} , H_{3} , H_{4} \} \subseteq \Omega $. In Appendix \ref{appd} it is shown that the elements can be labelled so $ H_{1} \cap H_{2} \cong H_{3} \cap H_{4} \cong A_{4} $. This is not possible for $ PSL_{2} (q) $ by Lemma \ref{afrmdntrsctnx}.
\end{proof}
\begin{thrm}\label{dfapoivqp}
$ Ht(G, \Omega ) = 3 $ and $ RC(G, \Omega ) = 4 $.
\end{thrm}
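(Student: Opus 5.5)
Lemma \ref{lwrbndafv} already gives $ Ht(G, \Omega ) \leq 3 $ and $ RC(G, \Omega ) \leq 4 $. Two things therefore remain. For the height we need an independent set of size $ 3 $. For the relational complexity we need a pair $ I , J \in \Omega ^{4} $ with $ I \sim _{3} J $ but $ I \not\sim _{4} J $. Both will come from one configuration built inside a maximal $ A_{5} $.

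Fix $ H \in \Omega $ and inside it choose five $ A_{4} $ subgroups $ F_{1} , \dots , F_{5} $. Any two of them meet in a $ C_{3} $, and any two of them generate $ H $. By Lemma \ref{afrnrmlzbysfr}, $ N_{G} ( F_{i} ) $ is either $ F_{i} $ itself or an $ S_{4} $, depending on $ q \bmod 8 $. I will work with members of $ \Omega $ that meet $ H $ exactly in some $ F_{i} $.

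To produce such members, take an involution $ a \in N_{G} ( \langle c \rangle ) $ centralising a suitable element, or more directly an element $ x \in N_{G} ( F_{1} ) \setminus H $ when such an element exists. Then $ H_{1} := x^{-1} H x $ is a second member of $ \Omega $ containing $ F_{1} $. When $ N_{G} ( F_{1} ) = F_{1} $ I would fall back on counting. The number of conjugates of $ H $ containing a given $ A_{4} $ is $ | \Omega | \cdot 5 / ( \text{number of } A_{4} \text{ in } G ) $, and this exceeds $ 1 $ once $ q \geq 11 $. Corollary \ref{crlkfrnrmlz} and maximality of $ A_{4} $ in $ A_{5} $ force $ H \cap H_{1} = F_{1} $. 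Conjugating by elements of $ H $ that move $ F_{1} $ to $ F_{2} $ and to $ F_{3} $ gives $ H_{2} , H_{3} $ with $ H \cap H_{i} = F_{i} $.

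The triple $ \{ H , H_{1} , H_{2} \} $ is then the candidate independent set. Its two-point stabilisers $ F_{1} , F_{2} $ and $ H_{1} \cap H_{2} $ are nontrivial. The triple stabiliser lies in $ F_{1} \cap F_{2} \cong C_{3} $, and by Corollary \ref{oiweuvw} it has order at most $ 2 $, so it is trivial. Hence every proper inclusion is strict and Lemma \ref{scndfnt} gives independence, so $ Ht = 3 $. If the corollary's hypothesis (independence) is circular here, I will instead show directly that $ F_{1} \cap F_{2} \not\leq H_{2} \cap H_{1} $.

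For the relational complexity I mimic the $ S_{4} $ discussion in Chapter \ref{chapeight}. Put $ I = ( H , H_{1} , H_{2} , H_{3} ) $ and $ J = ( H , H_{1} , H_{2} , H_{3} ^{\prime} ) $, where $ H_{3} ^{\prime} $ is the analogous subgroup attached to $ F_{4} $ (or $ F_{5} $). Here $ H_{3} ^{\prime} = g ^{-1} H_{3} g $ for an element $ g $ of order $ 3 $ lying in $ F_{1} \cap F_{2} $ that sends $ F_{3} $ to $ F_{4} $ under conjugation in $ A_{5} $.

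The key steps are as follows. First, for each of the four $ 3 $-subtuples, find an element fixing the three common entries and sending the fourth to the corresponding one. On the triple containing $ H , H_{1} , H_{2} $ this element is $ g \in F_{1} \cap F_{2} $. On the other triples I need elements of order $ 3 $ in $ F_{1} \cap F_{3} $, in $ F_{2} \cap F_{3} $, and so on, chosen via the $ A_{5} $ combinatorics so that they also move the last point correctly. Second, show that $ I \not\sim _{4} J $: any element doing the job lies in $ G_{ H , H_{1} , H_{2} } = 1 $, and $ H_{3} \neq H_{3} ^{\prime} $.

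The main obstacle is the first step: checking that the remaining triples involving $ H_{3} $ are matched, and that the subgroups $ H_{i} $ are genuinely compatible. This requires each pairwise intersection $ H_{i} \cap H_{j} $ for $ i , j \geq 1 $ to be understood, for instance to be contained in $ H $. I would first try to choose all $ H_{i} $ as conjugates of $ H_{1} $ by elements of $ H $. Then the whole configuration is $ H $-equivariant, and the required elements can be found inside $ H \cong A_{5} $ by direct permutation computations.
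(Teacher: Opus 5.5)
There is a genuine gap: the tuple contains $H$ itself, and that breaks both halves of the argument.

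Since $F_i\le H\cap H_i$, you get $H\cap H_1\cap H_2=F_1\cap H_2=F_1\cap F_2\cong C_3$, not the trivial group. Corollary \ref{oiweuvw} therefore shows that $\{H,H_1,H_2\}$ is \emph{not} independent. Because $F_1\cap F_2$ is proper in both $F_1$ and $F_2$, this forces $H_1\cap H_2=F_1\cap F_2$. Your fallback ``show $F_1\cap F_2\not\le H_1\cap H_2$'' is false outright, since $F_1\le H_1$ and $F_2\le H_2$.

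The relational complexity step fails for the same reason. The element $g\in F_1\cap F_2$ that you use to define $H_3'=g^{-1}H_3g$ lies in $H\cap H_1\cap H_2$, so $I^{g}=J$ and $I\sim_4 J$. More generally, by Lemma \ref{ntndpndntntrs}, no $4$-tuple whose entries include $H$ and two of your $H_i$ can witness $RC(G,\Omega)\ge 4$.

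Separately, your counting fallback when $N_G(F_1)=F_1$ is wrong. This is the case $q\equiv\pm3\pmod 8$, for example $q=11$ or $19$. There all $A_4$ in $G$ are conjugate and self-normalising, so there are $|G|/12$ of them, and your quotient is $5(|G|/60)/(|G|/12)=1$. Thus $H$ is the only member of $\Omega$ containing $F_1$, and the $H_k$ must be taken from the other class $\Omega^{*}$, as the paper does.

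The missing idea is to use $H$ only as scaffolding and leave it out of the tuple. The paper proceeds as follows.
\begin{itemize}
\item It takes conjugate subgroups $H_1,\dots,H_4\neq H$ with $F_k\le H_k$ and $H_k\cap H_l\not\cong A_4$.
\item It chooses $c_{rs}\in F_r\cap F_s\le H_r\cap H_s$ of order $3$ with $c_{rs}^{-1}F_4c_{rs}=F_5$, and sets $H_5=c_{rs}^{-1}H_4c_{rs}$.
\item This $H_5$ does not depend on the pair $\{r,s\}\subseteq\{1,2,3\}$, because $c_{rs}c_{st}^{-1}\in N_H(F_4)=F_4\le H_4$. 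Hence $(H_1,H_2,H_3,H_4)\sim_3(H_1,H_2,H_3,H_5)$.
\item Meanwhile $H_1\cap H_2\cap H_3=1$. The only $C_3$ subgroups of $H_1\cap H_2$ and $H_1\cap H_3$ are $\langle c_{12}\rangle\neq\langle c_{13}\rangle$, which excludes elements of order $3$. An involution in all three would normalise $\langle c_{12},c_{13}\rangle=F_1$ inside $H_1$, hence lie in $F_1$ and put an $S_3$ inside $F_1$.
\end{itemize}
This gives $RC(G,\Omega)\ge 4$. Then $Ht(G,\Omega)\ge 3$ follows from Theorem \ref{rchgt}, and $\{H_1,H_2,H_3\}$ is an explicit independent set.

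Your $H$-equivariant construction for $q\equiv\pm1\pmod 8$ is essentially the paper's Case 2 and does give valid $H_k$. By the argument above it even gives $H_k\cap H_l=F_k\cap F_l$. So in that case only the choice of tuple needs to change.
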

\begin{proof}
Let $ \Omega $ and $ \Omega ^{*} $ be the two conjugacy classes of $ A_{5} $ in $ G $ and let $ H \in \Omega $. Let $ F_{1} , \dots , F_{5} $ be the five distinct $ A_{4} $ subgroups of $ H $. Suppose, for now, for each $ k \in \{ 1, \dots , 4 \} $ there exists $ H_{k} \in \Omega $ or $ H_{k} \in \Omega ^{*} $ with $F_{k} \leq H_{k} $ and  $ H_{k} \neq H $. Also suppose $ H_{l} $ is conjugate to $ H_{k} $ and $ H_{k} \cap H_{l} \not\cong A_{4} $ for $ l \in \{ 1, \dots , 4 \} \setminus \{ k \} $. These conditions will be shown to be true later.

\medskip
Note for later, $ H_{k} \neq H_{l} $, otherwise $ F_{l} \leq H_{k} $ and $ H_{k} = \langle F_{k} , F_{l} \rangle = H $.

\medskip
Let $ r, s \in \{ 1,2,3 \} $ with $ r \neq s $. Let $ c_{rs} \in F_{r} \cap F_{s} $ with $ o( c_{rs} ) = 3 $. In $ A_{5} $ each $ C_{3} $ is a subgroup of exactly two $ A_{4} $. Since $ A_{5} $ is simple and $ A_{4} $ are maximal, we have $ c_{rs} ^{-1} F_{4} c_{rs} \neq F_{4} $ by Lemma \ref{mxmlnrmlizr}. Also $  c_{rs} F_{4} c_{rs}  ^{-1} \neq c_{rs} ^{-1} F_{4} c_{rs}  $, otherwise $ c_{rs} ^{-1} F_{4} c_{rs} = c_{rs} ^{2} F_{4} c_{rs}  ^{-2} = F_{4} $. So one of $ c_{rs} $ or $ c_{rs} ^{-1} $ sends $ F_{4} $ to $ F_{5} $ by conjugation. Suppose without loss of generality that $ c_{rs} ^{-1} F_{4} c_{rs} = F_{5} $.

\medskip
Let $ t \in \{ 1,2,3 \} \setminus \{ r,s \} $. By the above reasoning we can assume $ c_{st} ^{-1} F_{4} c_{st} = F_{5} $. Hence $ c_{st}  c_{rs} ^{-1} F_{4} c_{rs} c_{st} ^{-1} = F_{4}  $. By Lemma \ref{mxmlnrmlizr}, we have $ c_{rs} c_{st} ^{-1}  \in F_{4} $ because $ F_{4} $ is maximal in the simple group $ H $.

\medskip
Set $ H_{5} := c_{rs} ^{-1} H_{4} c_{rs} $. Since $ F_{4} \leq H_{4} $ we have $ c_{st}  c_{rs} ^{-1} H_{4} c_{rs} c_{st} ^{-1} = H_{4}  $ and so $ c_{st} ^{-1} H_{4} c_{st} = c_{rs} ^{-1} H_{4} c_{rs} = H_{5} $. So for each double intersection of $ H_{1} $, $ H_{2} $ and $ H_{3} $ there exists an element that sends $ H_{4} $ to $ H_{5} $.

\medskip
Also $ F_{5} \leq H_{5} $. Thus $ H_{4} \neq H_{5} $, otherwise $ F_{5} \leq H_{4} $ and $ H_{4} = \langle F_{4} , F_{5} \rangle = H $.

\medskip
Consider the tuples $ I := (H_{1} , H_{2} , H_{3} , H_{4} ) $ and $ J := (H_{1} , H_{2} , H_{3} , H_{5} ) $. We have $ ( H_{r} , H_{s} , H_{4} ) ^{ c_{rs} } = ( H_{r} , H_{s} , H_{5} ) $. The identity sends $ (H_{1} , H_{2} , H_{3} ) $ to itself. Thus $ I \sim _{3} J $.

\medskip
Next we check if $ I \sim _{4} J $. Let us first look at what group $ H_{1} \cap H_{2} \cap H_{3} $ is. Since each double intersection of $ H_{1} $, $ H_{2} $ or $ H_{3} $ contains a $ C_{3} $, the double intersections must be isomorphic to one of $ A_{4} $, $ S_{3} $ or $ C_{3} $. We are assuming they are not $ A_{4} $ though.

\medskip
Suppose $ H_{1} \cap H_{2} \cap H_{3} $ is non-trivial. Observe $ \langle c_{12} \rangle $ is the only $ C_{3} $ in $ H_{1} \cap H_{2} $ and $ \langle c_{13} \rangle $ is the only $ C_{3} $ in $ H_{1} \cap H_{3}  $. Also $ \langle c_{12} \rangle \neq  \langle c_{13} \rangle $. Therefore $ H_{1} \cap H_{2} \cap H_{3} $ does not contain a $ C_{3} $. So the only possibility is $ H_{1} \cap H_{2} \cap H_{3} \cong C_{2} $, which implies $ H_{1} \cap H_{2} \cong H_{1} \cap H_{3} \cong S_{3} $.

\medskip
Let $ h \in H_{1} \cap H_{2} \cap H_{3} $ be an involution. Then $ h $ normalizes $ \langle c_{12} \rangle $ and $  \langle c_{13} \rangle $. Since $ F_{1} = \langle c_{12} , c_{13} \rangle $ we have $ h \in N_{G} (F_{1} ) = F_{1} $. But then $ H_{1} \cap H_{2} =  \langle c_{12} , h \rangle  \leq F_{1} $, which is not possible because $ F_{1} $ does not contain an $ S_{3} $. Hence $ H_{1} \cap H_{2} \cap H_{3} $ is trivial.

\medskip
Now we see that only the identity sends $ (H_{1} , H_{2} , H_{3} ) $ to itself, but does not send $ H_{4} $ to $ H_{5} $, so $ I \not\sim _{4} J $. If $ RC(G, \Omega ) < 4 $ then, by Definition \ref{first}, the fact $ I \sim _{3} J $ (and therefore also $ I \sim _{2} J $) would imply $ I \sim _{4} J $, which is not true. Thus $ RC(G, \Omega ) \geq 4 $. It follows from Theorem \ref{rchgt} that $ Ht(G, \Omega ) \geq 3 $. By Lemma \ref{lwrbndafv} we have $ RC(G, \Omega ) = 4 $ and $ Ht(G, \Omega ) = 3 $.

\medskip
The outstanding task now is to show we can find $ H_{1} , \dots , H_{4} $ that satisfy the conditions at the start of the proof. This is split into two cases.

\medskip
\textbf{Case 1:} Suppose $ q \equiv \pm 3 \pmod{8} $. The involutions in $ G$ have centralizers that are conjugate to each other and isomorphic to $ D_{q \pm 1} $ by Lemma \ref{nrmlzr}. So every Klein four-subgroup of $ G $ is in one of the centralizers. Since $ q \equiv \pm 3 \pmod{8} $, the centralizers have order not divisible by $ 8 $. They do have even order though, so the reflections split into two conjugacy classes in each centralizer.

\medskip
By Lemma \ref{refconjug}, each Klein-four subgroup of one of these dihedral subgroups contains one reflection from each conjugacy class. This means in each centralizer the Klein four-subgroups are all conjugate and therefore all Klein four-subgroups of $ G $ are conjugate.

\medskip
Now every Klein four-subgroup of $ G $ must be in some maximal $ A_{5} $ and is normalized by an $ A_{4} $ there. The normalizer of an $ A_{4} $ must also normalize the $ K_{4} $ it contains. Since $ G $ is simple, the normalizer of the $ K_{4} $ in $ F_{k} $ is contained in a maximal subgroup of $ G $.

\medskip
If $ q \neq p $ then $ q = p^{2} $. But $ p $ is odd and so $ p^{2} \equiv 1 \pmod{8} $, a contradiction. Thus $ q = p $. The maximal subgroups of $ PSL_{2} (p) $ are either Borel, $ D_{p \pm 1} $ or $ A_{5} $. Borel subgroups cannot contain a $ K_{4} $ by Lemma \ref{brlklnfrsbgrp}. The only type of the remaining subgroups that contains $ A_{4} $ are $ A_{5} $ and the $ K_{4} $ in $ A_{5} $ are normalized by $ A_{4} $. Therefore each $ K_{4} $ is normalized by exactly one $ A_{4} $ in $ G $. As all $ K_{4} $ are conjugate, the $ A_{4} $ in $ G $ are conjugate too.

\medskip
From this we infer there exists $ H_{k} , H_{l} \in \Omega ^{*} $ containing $ F_{k} $ and $ F_{l} $ respectively. We have $ H_{k} \neq H $ and $ H_{l} \neq H $; otherwise $ H \in \Omega ^{*} $ and $ \Omega = \Omega ^{*} $, a contradiction. Also $ H_{k} \neq H_{l} $, otherwise $ F_{l} \leq H_{k} $ and $ H_{k} = \langle F_{k} , F_{l} \rangle = H $.

\medskip
If a double intersection $ H_{k} \cap H_{l} \cong A_{4} $, there exists an element that sends $ H_{k} $ to $ H_{l} $ (because they are conjugate in $ G $) and fixes $ H_{k} \cap H_{l} $ (all $ A_{4} $ subgroups are conjugate in $ H_{l} $ and conjugating $ H_{l} $ by one of its own elements fixes $ H_{l} $). The $ A_{4} $ subgroups of $ G $ are self normalizing, so we would have $ H_{k} = H_{l} $, a contradiction. Hence $ H_{k} \cap H_{l} \not\cong A_{4} $. 

\medskip
\textbf{Case 2:} Suppose $ q \equiv \pm 1 \pmod{8} $. Recall $ N_{G} (F_{k} ) \cong S_{4} $ by Lemma \ref{afrnrmlzbysfr}. There is no $ S_{4} $ in $ A_{5} $. So there exists $ f \in N_{G} (F_{k} ) \setminus F_{k} $ and it must be that $ f \notin H $. Set $ H_{k} := f^{-1} H f $. Then $ F_{k} \leq H_{k} $ and by Lemma \ref{mxmlnrmlizr} we have $ H_{k} \neq H $.

\medskip
If $ H_{k} \cap H_{l} \cap H = H_{k} \cap H $ then $ F_{k} = H_{k} \cap H \leq H_{l} $ and so $ H_{l} = \langle F_{k} , F_{l} \rangle = H $, a contradiction. Hence $ H_{k} \cap H_{l} \cap H \neq H_{k} \cap H $.

\medskip
The same reasoning shows $ H_{k} \cap H_{l} \cap H \neq H_{l} \cap H $.

\medskip
Suppose $ H_{k} \cap H_{l} \cong A_{4} $. We cannot have $ H_{k} \cap H_{l}  = F_{k} $ otherwise $ H_{l} = \langle F_{k} , F_{l} \rangle = H $.

\medskip
If $ H_{k} \cap H_{l} \cap H = H_{k} \cap H_{l} $ then $  H_{k} \cap H_{l} \leq H $ and $ H = \langle F_{k} , H_{k} \cap H_{l} \rangle = H_{k} $, another contradiction. Thus $ H_{k} \cap H_{l} \cap H \neq H_{k} \cap H_{l} $.

\medskip
The requirements of Lemma \ref{scndfnt} have been met to show $ \{ H , H_{k} , H_{l} \} $ is an independent subset of $ \Omega $. In $ A_{5} $ each $ A_{4} $ intersections in a $ C_{3} $. Therefore $ H_{k} \cap H_{l} \cap H  $ contains the $ C_{3} $ subgroup $ F_{k} \cap F_{l} $. But $ | H_{k} \cap H_{l} \cap H | \leq 2 $ by Corollary \ref{oiweuvw}. So we once again have a contradiction. Hence $ H_{k} \cap H_{l} \not\cong A_{4} $.
\end{proof}

\newpage
\chapter{The Subfield Actions}\label{chapten}
Here the actions of $ PSL_{2} (q) $ or $ PGL_{2} (q) $ on the cosets of a subfield subgroup will be looked at, defined in the section below. Suppose $ G $ is $ PSL_{2} (q) $ or $ PGL_{2} (q) $ acting on $ \Omega $, the right cosets of a maximal subfield subgroup $ H $. The main results of this chapter are:

\medskip
If $ H \cong PSL_{2} (3) $ then $ Ht(G, \Omega ) = 2 $. Otherwise $ Ht(G, \Omega ) = 3 $.

\medskip
If $ H \cong PSL_{2} (3) $ then $ RC(G, \Omega ) = 3 $. Otherwise $ RC(G, \Omega ) = 4 $.

\medskip
This is proved in Theorem \ref{fnalthrmdndit}.
\section{Preliminary Results}
Often in this chapter, there will be reference to $ D_{ 2(q-1) / \delta } $ and $ C_{ (q-1) / \delta } $ subgroups of some $ G \in \{  PSL_{2} (q) , PGL_{2} (q) \} $, where $ \delta \in \{ 1 , 2 \} $. Throughout this chapter, let $ \delta = 2 $ if $ q $ is odd and $ G = PSL_{2} (q) $ and let $ \delta = 1 $ otherwise.

\medskip
Sometimes in $ PSL_{2} (q) $ or $ PGL_{2} (q) $ there there will be reference to multiple subgroups $ PSL_{2} (q_{m} ) $ or $ PGL_{2} (q_{m} ) $ where $ m \in \mathbb{N} $ and $ q_{m} $ is a divisor of $ q $. In these subgroups we will want to refer to similar sort of dihedral or cyclic subgroups as above where $ \delta $ does not apply (for example when referring to a cyclic subgroup inside $ PGL_{2} (q_{m} ) $, which in turn lies inside $ PSL_{2} (q) $). So if $ H $ is $ PSL_{2} (q_{m} ) $ or $ PGL_{2} (q_{m} ) $ we write $ D_{ 2(q_{m} -1) / \delta _{m} } $ and $ C_{ (q_{m} -1) / \delta _{m} } $, where $ \delta _{m} = 2 $ if $ q_{m} $ is odd and $ H = PSL_{2} (q_{m} ) $ and let $ \delta _{m} = 1 $ otherwise.

\medskip
The Borel subgroups of $ PSL_{2} (q) $ and $ PGL_{2} (q) $ play an important part in finding the relational complexity of the subfield actions. Almost all preliminary results here are revisiting the Borel subgroups, looking at the structure of them and how they interact with the rest of the larger group. Most of these lemmas are listed without much more commentary.

\medskip
Recall from the proof of Corollary \ref{prmtvbrlll} that there exist Borel subgroups in $ SL_{2} (q) $ and $ GL_{2} (q) $ of the form in the statement of the lemma below.
\begin{lema}\label{brlndcyclc}
Let $ G $ be $ GL_{2} (q) $ or $ SL_{2} (q) $ where $ q \geq 3 $. Let
\[
B = 
\begin{cases}
\
\Bigg\{ 
\begin{pmatrix*}[l]
a & b \\
0 & c
\end{pmatrix*}
:
a,c \in \mathbb{F} _{q} ^{*} , b \in \mathbb{F} _{q}
\Bigg\}
\ \ \ \ \ \text{if $ G = GL_{2} (q) $,}
\\
\\
\
\Bigg\{ 
\begin{pmatrix*}[l]
a & b \\
0 & a^{-1}
\end{pmatrix*}
:
a \in \mathbb{F} _{q} ^{*} , b \in \mathbb{F} _{q}
\Bigg\}
\ \ \ \ \ \text{if $ G = SL_{2} (q) $,}
\end{cases}
\]
a maximal Borel subgroup of $ G $. Let
\[
C = 
\begin{cases}
\
\Bigg\{ 
\begin{pmatrix*}[l]
a & 0 \\
0 & c
\end{pmatrix*}
:
a , c \in \mathbb{F} _{q} ^{*} 
\Bigg\}
\ \ \ \ \ \text{if $ G = GL_{2} (q) $,}
\\
\\
\
\Bigg\{ 
\begin{pmatrix*}[l]
a & 0 \\
0 & a^{-1}
\end{pmatrix*}
:
a \in \mathbb{F} _{q} ^{*} 
\Bigg\}
\ \ \ \ \ \text{if $ G = SL_{2} (q) $.}
\end{cases}
\]
Suppose $ x \in B \setminus C $. Then $ C \cap  x^{-1} C x  =  Z(G) $ and $ N_{B} (C) = C $.
\end{lema}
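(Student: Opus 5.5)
The plan is a direct matrix computation inside $ B $. Write a general element of $ B \setminus C $ as an upper triangular matrix $ x $ with non-zero top-right entry $ b $. It is convenient to factor $ x = d u $, where $ d \in C $ is the diagonal part of $ x $ and $ u $ is upper unitriangular with top-right entry $ \beta \neq 0 $. In the $ SL_{2} (q) $ case $ u $ is automatically in $ SL_{2} (q) $, and $ d \in C $ in both cases. Since $ d $ normalizes $ C $ (indeed centralizes it, as $ C $ is abelian), we get $ x^{-1} C x = u^{-1} C u $. So it suffices to treat $ x = u $ unipotent with $ \beta \neq 0 $.

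For the first claim, compute $ u^{-1} \operatorname{diag}(a,c) u $. A one-line multiplication gives the upper triangular matrix with diagonal $ (a,c) $ and top-right entry $ \beta (c - a) $; here $ c = a^{-1} $ in the $ SL_{2} (q) $ case. Such a matrix lies in $ C $ exactly when $ \beta (c-a) = 0 $, i.e. when $ a = c $. Hence
\[
C \cap u^{-1} C u = \{ \operatorname{diag}(a,a) \} \cap G .
\]
This is $ \{ \lambda I : \lambda \in \mathbb{F}_{q}^{*} \} = Z(GL_{2}(q)) $ in the general linear case. In the special linear case it is $ \{ \pm I \} = Z(SL_{2}(q)) $, since $ a = a^{-1} $ forces $ a^{2} = 1 $; the center $ Z(G) $ is given by Lemma \ref{cntrpglndpsltwq}.

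For the normalizer, certainly $ C \leq N_{B}(C) $. Conversely, suppose $ x \in B \setminus C $ normalizes $ C $. Then $ x^{-1} C x = C $, so $ C \cap x^{-1} C x = C $. By the first part this means $ C = Z(G) $. That fails when $ q \geq 3 $. In the $ GL_{2} (q) $ case, $ \operatorname{diag}(1,c) $ with $ c \neq 1 $ is non-scalar. In the $ SL_{2} (q) $ case we need $ a \in \mathbb{F}_{q}^{*} $ with $ a^{2} \neq 1 $. This exists for $ q \geq 4 $, since $ \mathbb{F}_{q}^{*} $ then has more than two elements and at most two of them satisfy $ a^{2} = 1 $.

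The only real obstacle is the small case $ G = SL_{2}(3) $. There $ C = \{ \pm I \} = Z(G) $, so $ C $ is central and $ N_{B}(C) = B \neq C $. The $ SL_{2} (3) $ case therefore has to be excluded or treated separately. I would check whether the statement is intended only for $ q \geq 4 $ in the special linear case, and otherwise record this exception explicitly. For $ GL_{2} (q) $ with $ q \geq 3 $ and $ SL_{2} (q) $ with $ q \geq 4 $ the argument above is complete; the first claim $ C \cap x^{-1} C x = Z(G) $ holds for all $ q $.
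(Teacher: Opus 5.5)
Your proposal is correct and is essentially the paper's argument: the paper likewise conjugates a non-scalar diagonal $y$ by an arbitrary upper-triangular $x\in B\setminus C$. It observes that the top-right entry $\mu^{-1}x_2x_3(y_1-y_2)$ of $x^{-1}yx$ is nonzero, and then obtains $N_B(C)=C$ from $C\neq Z(G)$. Your factorisation $x=du$ only simplifies this, and the entry you compute should be $\beta(a-c)$ rather than $\beta(c-a)$, which changes nothing.

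Your $SL_2(3)$ caveat is a genuine error in the paper, not in your argument. Since $\mathbb{F}_3^{*}=\{\pm 1\}$, one has $C=\{\pm I\}=Z(SL_2(3))$, so $N_B(C)=B\neq C$. The paper's claim that $C\neq Z(G)$ for all $q\geq 3$ is therefore false in this case. The same exception also affects Lemma~\ref{brlcyclcprts} for $PSL_2(3)$, since that lemma relies on this one.
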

\begin{proof}
Suppose first that $ G = GL_{2}(q) $. Note from Lemma \ref{cntrpglndpsltwq} that $ Z(G) = \{ \lambda I : \lambda \in \mathbb{F} _{q} ^{*} \} $. So $ Z(G) \leq C $ and $ Z(G) \leq C \cap  x^{-1} C x $. Let $ y \in C \setminus Z(G) $. We can write
\[
y =
\begin{pmatrix*}[c]
y_{1} & 0 \\
0 & y_{2}
\end{pmatrix*}
,
\ \ \ \ \ 
y_{1} , y_{2} \in \mathbb{F} _{q} ^{*} , \ y_{1} \neq y_{2}
\]
and
\[
x =
\begin{pmatrix*}[c]
x_{1} & x_{2} \\
0 & x_{3}
\end{pmatrix*}
,
\ \ \ \ \ 
x_{1} , x_{3} \in \mathbb{F} _{q} ^{*} 
,
\ x_{2} \in \mathbb{F} _{q} 
.
\]
Let $ \mu = \det (x) $. Then
\begin{align*}
x^{-1} y x
=
\mu ^{-1}
\begin{pmatrix*}[c]
x_{3} & -x_{2} \\
0 & x_{1}
\end{pmatrix*}
\begin{pmatrix*}[c]
y_{1} & 0 \\
0 & y_{2}
\end{pmatrix*}
\begin{pmatrix*}[c]
x_{1} & x_{2} \\
0 & x_{3}
\end{pmatrix*}
& =
\mu ^{-1}
\begin{pmatrix*}[c]
x_{1} x_{3} y_{1} & x_{2} x_{3} y_{1}-x_{2} x_{3} y_{2} \\
0 & x_{1} x_{3} y_{2}
\end{pmatrix*}
\\
& = 
\begin{pmatrix*}[c]
\mu ^{-1} x_{1} x_{3} y_{1} & \mu ^{-1} x_{2} x_{3} (y_{1}- y_{2} ) \\
0 & \mu ^{-1} x_{1} x_{3} y_{2}
\end{pmatrix*}
.
\end{align*}
If $ x^{-1} y x \in C $, then $ x_{2} = 0 $. Therefore $ x \in C $, a contradiction. Thus $ x^{-1} y x \not\in C $ and $ C \cap  x^{-1} C x = Z(G) $. This implies $ N_{B} (C) = C $ because $ C \neq Z(G) $ when $ q \geq 3 $.

\medskip
The case $ G = SL_{2} (q) $ is proved in a similar way.
\end{proof}
\begin{lema}\label{brlcyclcprts}
Let $ G $ be either $ PGL_{2} (q) $ or $ PSL_{2} (q) $ where $ q \geq 3 $. Let $ p $ be the prime dividing $ q $. Let $ B < G $ be a maximal Borel subgroup. Let $ U < B $ be an elementary abelian group with $ q $ elements. Suppose $ A_{1} , A_{2} < B $ and $ A_{1} \cong A_{2} \cong C_{ (q-1) / \delta } $. Then 
\begin{itemize}
\item there are $ q $ subgroups of $ B $ isomorphic to $ C_{(q-1)/ \delta } $,
\item there is exactly one element $ y \in U $ such that $ y^{-1} A_{1} y = A_{2} $,
\item either $ A_{1} \cap A_{2} = \{ 1_{G} \} $ or $ A_{1} = A_{2} $,
\item $ N_{B} (A_{1} ) = A_{1} $,
\item $ U $ is the only $ p $-group of order $ q $ in $ B $,
\item $ U \trianglelefteq B $.
\item every non-identity element of $ B$ lies either in $ U $ (if its order divides $ q $) or exactly one conjugate of $ A_{1} $ (if its order does not divide $ q $).
\end{itemize}
\end{lema}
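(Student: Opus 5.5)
I will work in the matrix groups, as in the Borel chapter. By conjugacy of Borel subgroups and of the $E_{q}$ inside them, I may take $B$ to be the image of the upper triangular group $B^{*}$ of Lemma \ref{brlndcyclc}. Let $U$ be the image of the unipotent matrices $u_{b}$, with $1$ on the diagonal and $b$ in the top right, and let $C$ be the image of the diagonal subgroup. The kernel $Z$ meets the unipotent matrices trivially, so $U\cong E_{q}$. Writing $d_{a}$ for the diagonal matrix with entries $a,a^{-1}$, the formula $d_{a}^{-1}u_{b}d_{a}=u_{a^{2}b}$ shows that $U$ is normal in $B^{*}$ (in $GL_{2}(q)$ use the entries $a,c$ instead). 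Moreover $B=U\rtimes C$ with $|B|=q|C|$ and $C\cong C_{(q-1)/\delta}$ by Lemma \ref{brlntrsccyclc}. Since $q$ is the full $p$-part of $|B|$, $U$ is a normal Sylow $p$-subgroup. It is therefore the unique $p$-subgroup of order $q$, which gives the fifth and sixth bullets.

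Next I identify the subgroups $A\cong C_{(q-1)/\delta}$ of $B$. Since $q$ and $q-1$ are coprime, $|A|$ is coprime to $p$, so $A\cap U=1$ and $AU=B$. Hence $A$ is a complement to $U$, and by Schur--Zassenhaus it is conjugate to $C$. Alternatively, avoiding that theorem: a generator $g$ of $A$ acts on the line stabilised by $B$, and since $o(g)$ is coprime to $p$ it fixes a second point, so $A$ lies in a two-point stabilizer, which is a conjugate of $C$ by $2$-transitivity; comparing orders gives equality. Writing $A=x^{-1}Cx$ with $x\in B=CU$, I may take $x\in U$.

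For the count and the uniqueness in the second bullet, I use $N_{B}(C)=C$ from Lemma \ref{brlndcyclc}. This gives $|B:N_{B}(C)|=q$ conjugates of $C$ in $B$, all the $C_{(q-1)/\delta}$ subgroups by the previous step, which is the first bullet. Conjugating, $N_{B}(A_{1})=A_{1}$ for every such $A_{1}$, which is the fourth bullet. If $y,y'\in U$ both send $A_{1}$ to $A_{2}$, then $y'y^{-1}\in N_{B}(A_{1})\cap U=A_{1}\cap U=1$. Existence follows from $A_{2}=x^{-1}A_{1}x$ with $x=cu$, $c\in A_{1}$, $u\in U$, so that $A_{2}=u^{-1}A_{1}u$.

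For the third bullet, if $A_{2}\neq A_{1}$ then $A_{2}=y^{-1}A_{1}y$ with $y\notin A_{1}$. Conjugating so that $A_{1}=C$, Lemma \ref{brlndcyclc} gives $C\cap y^{-1}Cy=Z(G)$ in the matrix group, which is trivial in $G$. For the last bullet I count. The $q$ conjugates of $C$ meet pairwise trivially, so they contribute $q\bigl(|C|-1\bigr)$ distinct non-identity elements, and $U$ contributes $q-1$ more. The total is $q|C|-1=|B|-1$, so every non-identity element lies in exactly one of these. An element of order dividing $q$ cannot lie in a conjugate of $C$, since $|C|$ is coprime to $p$, so it lies in $U$; the remaining elements lie in exactly one conjugate. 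The main obstacle is the identification of $C_{(q-1)/\delta}$ subgroups with conjugates of $C$. I would use the fixed-point argument rather than cite Schur--Zassenhaus, to stay within the paper's tools.
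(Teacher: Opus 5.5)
Your proof is correct apart from a degenerate case that the paper shares (see the second paragraph), but the central step is organised differently from the paper's.

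The paper never identifies $U$ with the unipotent image and never uses complements. It takes the given $U$ and uses Lemma \ref{brlndcyclc} to show that $x\mapsto x^{-1}Cx$ is injective on $U$, with pairwise trivial intersections. Counting $q+q(|C|-1)=|B|$ then gives the partition $B=U\cup\bigcup_{x\in U}x^{-1}Cx$ at the outset, and everything is read off from it:
\begin{itemize}
\item a generator of any $A\cong C_{(q-1)/\delta}$ has order prime to $p$, so it lies outside $U$, hence in some $x^{-1}Cx$, which forces $A=x^{-1}Cx$;
\item every $p$-element lies in $U$, so $U$ is the unique subgroup of order $q$ and is therefore normal;
\item uniqueness of $y$ is just the injectivity.
\end{itemize}
You instead get normality of $U$ from the matrix form. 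You then identify the cyclic subgroups as complements to $U$, hence conjugates of $C$, count them as $|B:N_B(C)|$, and use the element count only for the last bullet.

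Your route is more structural, but it needs an ingredient that the paper's count supplies for free. Lemma \ref{egdfdvsp} as stated gives only the splitting, not conjugacy of complements. Your fixed-point replacement also needs a better reason than coprimality, since a $p'$-element acting on $q$ points need not fix a point in general (e.g.\ $(1\,2)(3\,4\,5)$ on five points). The correct reason is that a lift of $g\notin U$ to $B^{*}$ has distinct diagonal entries. If they were equal, the lift would be a scalar times a unipotent matrix and $g$ would lie in $U$. So the lift has a second eigenline, which is the second fixed point you need.

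Two smaller points. First, the conjugation formula should read $d_a^{-1}u_bd_a=u_{a^{-2}b}$; the slip is harmless. Second, and more important, both your argument and the paper's tacitly need $|C|>1$. For $G=PSL_2(3)$ the group $C$ is trivial, so $N_B(C)=B$; the claim $C\neq Z(G)$ in Lemma \ref{brlndcyclc} fails for $SL_2(3)$. Consequently the first, second and fourth bullets are false there: $B=U\cong C_3$ has a single trivial subgroup, and every $y\in U$ normalises it. The statement should assume $(q-1)/\delta>1$.
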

\begin{proof}
Let $ G^{ * } = GL_{2} (q) $ if $ G = PGL_{2} (q) $ or let $ G^{ * } = SL_{2} (q) $ if $ G = PSL_{2} (q) $. Let $ B^{ *} < G^{ * } $ be the Borel subgroup corresponding to $ B $. Since all Borel subgroups in $ G $ are conjugate, we may assume that
\[
B^{*} = 
\begin{cases}
\
\Bigg\{ 
\begin{pmatrix*}[l]
a & b \\
0 & c
\end{pmatrix*}
:
a,c \in \mathbb{F} _{q} ^{*} , b \in \mathbb{F} _{q}
\Bigg\}
\ \ \ \ \ \text{if $ G = GL_{2} (q) $,}
\\
\\
\
\Bigg\{ 
\begin{pmatrix*}[l]
a & b \\
0 & a^{-1}
\end{pmatrix*}
:
a \in \mathbb{F} _{q} ^{*} , b \in \mathbb{F} _{q}
\Bigg\}
\ \ \ \ \ \text{if $ G = SL_{2} (q) $.}
\end{cases}
\]
Let
\[
C^{*} = 
\begin{cases}
\
\Bigg\{ 
\begin{pmatrix*}[l]
a & 0 \\
0 & c
\end{pmatrix*}
:
a , c \in \mathbb{F} _{q} ^{*} 
\Bigg\}
\ \ \ \ \ \text{if $ G = GL_{2} (q) $,}
\\
\\
\
\Bigg\{ 
\begin{pmatrix*}[l]
a & 0 \\
0 & a^{-1}
\end{pmatrix*}
:
a \in \mathbb{F} _{q} ^{*} 
\Bigg\}
\ \ \ \ \ \text{if $ G = SL_{2} (q) $.}
\end{cases}
\]
Put $ C :=  C^{*} / Z(G) $. It is straight forward to check that $ C \cong C_{ (q-1) / \delta } $. Let $ x \in U \setminus \{ 1 _{G} \} $. Then $ o(x) =  p $. Now $ p $ does not divide $ q - 1 $, and so does not divide $ (q-1) / \delta $. Hence $ x \not\in C $. So $ x $ corresponds to some $ x^{*} \in B^{*} \setminus C^{*} $. By Lemma \ref{brlndcyclc} we get $ C^{*} \cap (x^{*})^{-1} C^{*} x^{*} = Z(G^{*} ) $. Therefore $ C \cap x^{-1} C x = Z(G^{*} ) / Z(G^{*} ) = \{ 1 _{G} \} $.

\medskip
Suppose $ x_{1} \in U $ and $ x_{1} ^{-1} C x_{1} = x^{-1} C x $. Then $ C = x_{1} x^{-1} C x x_{1} ^{-1} = (x x_{1} ^{-1}) ^{-1} C (x x_{1} ^{-1} ) $. By the above discussion $ x x_{1} ^{-1} \not\in  U \setminus \{ 1_{G} \} $. So $ x x_{1} ^{-1} = 1_{G} $ and $ x = x_{1} $. Therefore there are at least $ q $ distinct conjugates of $ C $ in $ B $; one for each element of $ U $. Also if $ x_{2} \in U $ and $ x_{2} \neq x_{1} $ then $ x_{1} ^{-1} C  x_{1} \cap x_{2} ^{-1} C  x_{2} = \{ 1_{G} \} $, otherwise we would have $ C = (x_{1} x_{2} ^{-1} )^{-1}  C  (x_{1} x_{2} ^{-1} ) $ with $ x_{1} x_{2} ^{-1} \in U \setminus \{ 1_{G} \} $.

\medskip
Now we show that $ U $ and the subgroups conjugate to $ C $ contain all elements of $ B$. Count the elements in
\[
U \cup \Bigg( \bigcup _{g \in U} g^{-1} C g \Bigg) .
\]
There are $ q $ conjugates of $ C $ and the non-identity elements are distinct from each other across all conjugates. The non-identity elements of $ U $ all have order $ p $, so cannot lie in a conjugate of $ C $. Hence
\[
\Bigg| U \cup \Bigg( \bigcup _{g \in E} g^{-1} C g \Bigg) \Bigg| 
= | U| + q ( | C | - 1 ) 
= q + q((q-1) / \delta - 1) 
= q(q-1) / \delta
= | B | .
\]
Therefore $ B = U \cup ( \bigcup _{g \in E} g^{-1} C g ) $. If $ A \leq B $ and $ A \cong C_{ (q-1) / \delta } $ then $ A = \langle a \rangle $ for some $ a \in B $ where $ o(a) = (q-1) / \delta $. It cannot be that $ a \in U $ because there are no elements with this order. Hence $ a $ lies in some conjugate of $ C $, showing $ A $ is a conjugate of $ C $ and there are exactly $ q $ subgroups isomorphic to $ C_{ (q-1) / \delta } $ in $ B $. All other elements lie in $ U $, in particular all elements of order $ p $, making $ U $ the only $ p $-group of order $ q $ in $ B $. This implies $ U $ is normal in $ B $.

\medskip
If $ A_{1} $ and $ A_{2} $ are conjugates of $ C $, the fact that there is only one element of $ U $ that sends $ C $ to $ A_{1} $ (and similarly for $ A_{2} $) implies there is exactly one $ y \in U $ such that $ y^{-1} A_{1} y = A_{2} $. Finally if $ N_{B} (A_{1} ) \neq A_{1}  $ then $ N_{B} ( C ) \neq C $ and it follows that $ N_{B^{*} } ( C^{*} ) \neq C^{*} $. This contradicts Lemma \ref{brlndcyclc}. Hence $ N_{B} (A_{1} ) = A_{1}  $.
\end{proof}
\begin{lema}\label{egdfdvsp}[Schur-Zassenhaus Lemma]
Let $ G $ be a finite group. Suppose $ K \trianglelefteq G $ and that $ | K | $ and $ |G | / | K | $ are coprime. Then $ G \cong K \rtimes G/K $.
\end{lema}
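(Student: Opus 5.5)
The plan is to reduce the statement to the existence of a complement. Suppose $Q \leq G$ satisfies $|Q| = |G|/|K|$. Since $\gcd(|K|,|Q|) = 1$, we get $K \cap Q = \{1_G\}$. Then $|KQ| = |K||Q| = |G|$, so $G = KQ$. The natural map $Q \to G/K$ is then an isomorphism, and conjugation by $Q$ on the normal subgroup $K$ exhibits $G$ as $K \rtimes Q \cong K \rtimes G/K$. So the whole task is to produce such a $Q$. I would do this by induction on $|G|$. The case $K = \{1_G\}$ is trivial, so fix a prime $p$ dividing $|K|$ and a Sylow $p$-subgroup $P$ of $K$.

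\textbf{Step 1 (Frattini reduction).} Sylow $p$-subgroups of $K$ are conjugate in $K$, so the Frattini argument gives $G = K N_G(P)$. Put $N := N_G(P)$.

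If $N < G$, then $K \cap N \trianglelefteq N$ and $N/(K\cap N) \cong KN/K = G/K$. Hence $|K\cap N|$ and $|N : K\cap N|$ are coprime. By induction $N$ has a subgroup of order $|G/K|$, and this is a complement to $K$ in $G$.

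So we may assume $P \trianglelefteq G$. Let $Z := Z(P)$. It is non-trivial and characteristic in $P$, so $Z \trianglelefteq G$. Apply induction to $G/Z$ with normal subgroup $K/Z$; the coprimality hypotheses are inherited. This gives $H \leq G$ with $Z \leq H$ and $|H/Z| = |G/K|$.

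If $Z < K$, then $|H| = |Z||G/K| < |G|$, and $Z \trianglelefteq H$ has coprime order and index. Induction applied to $H$ gives a subgroup of order $|G/K|$, as required.

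This leaves the case $K = Z$, in which $K$ is an abelian $p$-group.

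\textbf{Step 2 (abelian case, the main obstacle).} Here I would use Gaschütz's averaging of transversals. Write $m := |G/K|$, which is invertible modulo $|K|$. Let $\mathcal{T}$ be the set of right transversals of $K$ in $G$. For $T, U \in \mathcal{T}$, pair each $t \in T$ with the $u \in U$ in the same coset and define
\[
T|U := \prod t u^{-1} \in K .
\]
The product is well defined because $K$ is abelian. One then checks three identities:
\[
(T|U)(U|V) = T|V, \qquad (Tg|Ug) = g^{-1}(T|U)g \ \ (g \in G), \qquad (Tk|T) = k^{m} \ \ (k \in K).
\]
The third uses the fact that $t k t^{-1}$ runs over conjugates of $k$ and $K$ is abelian; this is the step where the argument is most delicate.

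Declare $T \equiv U$ when $T|U = 1_G$. This is an equivalence relation, and right multiplication by $G$ permutes the equivalence classes. Since $k \mapsto k^m$ is a bijection of $K$, the group $K$ acts regularly on the classes. Hence the stabilizer $Q$ of one class satisfies $G = KQ$ and $K \cap Q = \{1_G\}$, so $Q$ is the desired complement.

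Combining the two steps completes the induction. Only existence of a complement is needed, not conjugacy of complements, so no solvability input such as Feit--Thompson is required.
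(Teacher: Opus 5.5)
Your Steps 0 and 1 are correct. The paper gives no proof of its own here; it only cites Rotman. The problem is in Step 2, at exactly the identity you flagged as delicate. With the conventions you fixed (right transversals, $T|U=\prod tu^{-1}$, and $G$ acting by right multiplication), that identity is false. Since $tk=(tkt^{-1})t$, the element $tk$ is paired with $t$, so
\[
(Tk|T)=\prod_{t\in T} tkt^{-1}.
\]
This is the norm of $k$ under the conjugation action of $G/K$, not $k^{m}$. Commutativity of $K$ lets you multiply the conjugates $tkt^{-1}$ in any order, but it does not make them equal to $k$. This norm can be trivial. Take $G=S_3$ and $K=A_3$, a case your reduction reaches at once, since $K$ is already an abelian normal Sylow subgroup. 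With $T=\{1,\tau\}$ for a transposition $\tau$, you get $(Tk|T)=k\cdot\tau k\tau^{-1}=kk^{-1}=1$ for every $k\in K$. So $K$ fixes every class, and the stabilizer of a class contains $K$ and is not a complement. Your second identity also fails as stated: $(tg)(ug)^{-1}=tu^{-1}$ gives $(Tg|Ug)=T|U$. That part is harmless, but it comes from the same mismatch between the side on which you form differences and the side on which $G$ acts.

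The repair is to let $G$ act on the left. Because $K\trianglelefteq G$, the set $gT$ is again a right transversal, and $gt,gu$ lie in the same coset exactly when $t,u$ do. Then $(gt)(gu)^{-1}=g(tu^{-1})g^{-1}$ gives $(gT|gU)=g(T|U)g^{-1}$, and $(kt)t^{-1}=k$ gives $(kT|T)=k^{m}$. Now $G$ permutes the classes, and $K$ acts regularly on them because $k\mapsto k^{m}$ is a bijection of $K$. The stabilizer $Q$ of one class therefore satisfies $G=KQ$ and $K\cap Q=\{1_G\}$, as you wanted. Equivalently, keep right multiplication but set $T|U=\prod t^{-1}u$; then $(Tg|Ug)=g^{-1}(T|U)g$ and $(T|Tk)=k^{m}$. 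With this change your argument is a complete proof that $K$ has a complement, which is all the lemma asserts.
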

\begin{proof}
See \cite{ROTMAN1}, Theorem 7.41, Chapter 7, page 190. 
\end{proof}
\begin{lema}\label{sbgrpfbrlq}
Put $ q = p^{m} $ where $ p $ is prime and $ m \in \mathbb{N} $. Let $ G $ be $ PSL_{2} (q) $ or $ PGL_{2} (q) $. Let $ B < G $ be a maximal Borel subgroup and write $ B =   U \rtimes T $ where $ U $ is an elementary abelian $ p $-group of order $ q $ and $ T \cong C_{(q-1) / \delta } $. If $ B_{0} \leq B $ then $ B_{0} = U_{0} \rtimes T_{0} $ where $ U_{0} \leq U $ and $ T_{0} $ is a subgroup of some $ C_{(q-1) / \delta } $ in $ B $.
\end{lema}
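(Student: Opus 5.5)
Set $U_{0} := B_{0} \cap U$. The plan is to show $U_{0}$ is a normal Sylow-type piece of $B_{0}$ with coprime index, apply the Schur--Zassenhaus Lemma (Lemma \ref{egdfdvsp}) to get a complement $T_{0}$, and then use Lemma \ref{brlcyclcprts} to locate $T_{0}$ inside a conjugate of $T$.

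First, since $U \trianglelefteq B$ by Lemma \ref{brlcyclcprts}, we have $U_{0} \trianglelefteq B_{0}$, and $U_{0} \leq U$ is a $p$-group. Next I compute $|B_{0}|/|U_{0}|$. By the second isomorphism theorem, $B_{0}/U_{0} \cong B_{0}U/U \leq B/U \cong T$, so $|B_{0}:U_{0}|$ divides $|T| = (q-1)/\delta$, which is coprime to $p$. Hence $\gcd(|U_{0}|, |B_{0}:U_{0}|) = 1$, and Lemma \ref{egdfdvsp} gives $B_{0} \cong U_{0} \rtimes B_{0}/U_{0}$. I will need this internally, i.e.\ an actual subgroup $T_{0} \leq B_{0}$ with $T_{0} \cap U_{0} = 1$ and $U_{0}T_{0} = B_{0}$. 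The cited Schur--Zassenhaus theorem guarantees a complement, so I would state it in that form.

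It remains to show $T_{0}$ lies in some $C_{(q-1)/\delta}$ of $B$. Since $T_{0} \cap U \leq B_{0}\cap U = U_{0}$ and $T_{0}\cap U_{0} = 1$, the group $T_{0}$ meets $U$ trivially, and $T_{0} \cong B_{0}/U_{0}$ embeds in the cyclic group $B/U \cong T$, so $T_{0}$ is cyclic, say $T_{0} = \langle t \rangle$. If $t = 1$ we are done trivially. Otherwise $t \notin U$, and $o(t)$ is coprime to $p$, so it does not divide $q$. By the last bullet of Lemma \ref{brlcyclcprts}, $t$ lies in exactly one conjugate $A$ of $T$, hence $T_{0} = \langle t\rangle \leq A \cong C_{(q-1)/\delta}$.

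The main obstacle is the passage from the abstract isomorphism in Lemma \ref{egdfdvsp} to an internal complement. I would handle it by invoking the standard form of Schur--Zassenhaus, which asserts the existence of a complement subgroup. Alternatively, since $B_{0}/U_{0}$ is cyclic, one can pick $b \in B_{0}$ mapping to a generator of $B_{0}/U_{0}$, write $o(b) = p^{k}n$ with $p \nmid n$, and take $t := b^{p^{k}}$. Then $t$ has order coprime to $p$, and its image in $B_{0}/U_{0}$ still generates because $p^{k}$ is coprime to $|B_{0}/U_{0}|$. Counting orders, $\langle t\rangle$ is the required complement, which avoids any subtlety in the cited lemma.
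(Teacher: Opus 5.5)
Your proposal is correct and follows the same route as the paper: set $U_{0} = B_{0} \cap U$, use $U \trianglelefteq B$ and the second isomorphism theorem to embed $B_{0}/U_{0}$ in $B/U \cong T$, note that $|B_{0}:U_{0}|$ is coprime to $p$, and apply Schur--Zassenhaus. Your additional steps make explicit two points the paper's proof leaves implicit. The first is obtaining an internal complement, for instance via $t = b^{p^{k}}$. The second is using the last bullet of Lemma~\ref{brlcyclcprts} to place $T_{0}$ inside a conjugate of $T$. The paper only records the abstract isomorphism $B_{0} \cong W \rtimes B_{0}/W$, with $B_{0}/W$ isomorphic to a subgroup of $C_{(q-1)/\delta}$.
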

\begin{proof}
If $ B_{0} $ is trivial, an elementary abelian $ p $-group or a subgroup of some $ C_{(q-1) / \delta } $ there is nothing to show. So suppose $ B_{0} $ is none of these.

\medskip
Let $ W = B_{0} \cap U $. Then $ W \trianglelefteq B_{0} $ because $ U \trianglelefteq B $. By isomorphism theorems for groups we have $ B_{0} / W \cong B_{0} U / U $. Since $ B_{0} U / U \leq B/U $ and $ B/U \cong T $, it follows that $ B_{0} / W  $ is isomorphic to a subgroup of $ C_{(q-1) / \delta } $. Therefore $ | B_{0} | / | W | $ divides $ q-1 $. As $ | W | $ divides $ | U | = q $, the numbers $ | B_{0} | / | W | $ and $ | W | $ are coprime.

\medskip
The Schur-Zassenhaus Lemma can now be applied to give $ B_{0} \cong W \rtimes B_{0} / W  $.
\end{proof}
\begin{corl}\label{crlrsmllbrl}
Let $ G $ be $ PSL_{2} (q) $ or $ PGL_{2} (q) $. Let $ B \leq G $ be a Borel subgroup and write $ B = U \rtimes T $ where $ U $ is an elementary abelian group of order $ q $ and $ T \cong C_{ (q-1) / \delta } $. Let $ B_{1} \leq B $ and write $ B_{1} = U_{1} \rtimes T_{1} $ where $ U_{1} \leq U $ and $ T_{1} $ is a subgroup of some $ C_{ (q-1) / \delta } $ in $ B $. If $ T_{1} ^{ \prime } \leq B $ and $ T_{1} ^{ \prime } \cong T_{1} $ then there exists $ B_{1} ^{ \prime }  \leq B $ such that $ B_{1} ^{ \prime } = U_{1} \rtimes T_{1} ^{ \prime}  $ and $ B_{1} ^{ \prime } = u^{-1} B_{1} u $ for some $ u \in U $.
\end{corl}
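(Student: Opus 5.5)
The plan is to reduce the statement to the conjugacy of cyclic subgroups inside $B$ given by Lemma \ref{brlcyclcprts}, and then conjugate $B_{1}$ by the resulting element of $U$.

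\textbf{Step 1: find a conjugating element of $U$.} Since $T_{1}$ is a subgroup of some $A_{1} \leq B$ with $A_{1} \cong C_{(q-1)/\delta}$, $|T_{1}|$ is coprime to $q$. The same holds for $T_{1}^{\prime}$, so every non-identity element of $T_{1}^{\prime}$ has order not dividing $q$. In particular a generator $t^{\prime}$ of $T_{1}^{\prime}$ lies outside $U$. By the last item of Lemma \ref{brlcyclcprts}, $t^{\prime}$ lies in some conjugate $A_{2}$ of $A_{1}$, so $T_{1}^{\prime} = \langle t^{\prime} \rangle \leq A_{2}$. (If $T_{1}$ is trivial, then $T_{1}^{\prime}$ is trivial and $u = 1_{G}$ works, so assume not.) Lemma \ref{brlcyclcprts} then gives a unique $u \in U$ with $u^{-1} A_{1} u = A_{2}$.

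\textbf{Step 2: match $T_{1}$ with $T_{1}^{\prime}$.} A cyclic group has a unique subgroup of each order dividing its order. Both $u^{-1} T_{1} u$ and $T_{1}^{\prime}$ are subgroups of $A_{2}$ of order $|T_{1}|$, so $u^{-1} T_{1} u = T_{1}^{\prime}$.

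\textbf{Step 3: fix $U_{1}$.} The aim is $u^{-1} U_{1} u = U_{1}$. This holds because $u \in U$ and $U$ is elementary abelian, so conjugation by $u$ fixes $U_{1}$ pointwise.

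\textbf{Step 4: conclude.} Put $B_{1}^{\prime} := u^{-1} B_{1} u$. Then $B_{1}^{\prime} = u^{-1} U_{1} u \cdot u^{-1} T_{1} u = U_{1} T_{1}^{\prime}$. Conjugation preserves normality and trivial intersection, so $U_{1} \trianglelefteq B_{1}^{\prime}$ and $U_{1} \cap T_{1}^{\prime} = \{1_{G}\}$. Hence $B_{1}^{\prime} = U_{1} \rtimes T_{1}^{\prime} \leq B$.

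The main point needing care is Step 1, namely showing that $T_{1}^{\prime}$ sits inside a full $C_{(q-1)/\delta}$ in $B$. This is handled by the coprimality of $|T_{1}^{\prime}|$ with $q$ together with the partition of $B$ in Lemma \ref{brlcyclcprts}. The remaining steps are routine.
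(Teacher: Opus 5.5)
Your proof is correct and follows the paper's argument. You place $T_{1}$ and $T_{1}^{\prime}$ inside cyclic subgroups $C_{(q-1)/\delta}$ of $B$, conjugate one to the other by the element $u \in U$ supplied by Lemma \ref{brlcyclcprts}, use the uniqueness of subgroups of a given order in a cyclic group, and observe that $u$ fixes $U_{1}$. Your Step 3, which says $u$ centralizes $U_{1}$ because $U$ is abelian, and your separate handling of trivial $T_{1}$ are slightly more careful than the paper, which only records $u^{-1} U u = U$.
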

\begin{proof}
The orders of $  T_{1} $ and $ T_{1} ^{ \prime } $ divide $ q - 1 $, hence do not divide $ q $. Therefore $ T_{1} , T_{1} ^{ \prime } \not\leq U $. So by Lemma \ref{brlcyclcprts}, there exist subgroups $ C , C^{ \prime } \leq B $ such that $ C \cong C^{ \prime } \cong C_{ (q-1) / \delta } $ and $ T_{1} \leq C $ and $ T_{1} ^{ \prime } \leq C ^{ \prime } $. Using the same lemma, $ C^{ \prime } = u^{-1} C u $ for some $ u \in U $. Since $ C $ and $ C ^{ \prime } $ are cyclic, they each contain exactly one subgroup of order $ | T_{1} | $. Therefore $ T_{1} ^{ \prime} = u^{-1} T_{1} u $. The fact $ U $ is elementary abelian means $ u^{-1} U u = U $. Thus
\[
u^{-1} B_{1} u = u^{-1} (U_{1} \rtimes T_{1} ) u = (u^{-1} U_{1} u) \rtimes (u^{-1} T_{1}  u ) = U_{1} \rtimes T_{1} ^{ \prime } .
\]
\end{proof}
\begin{lema}\label{pgrpnrmlbrl}
Let $ G $ be $ PSL_{2} (q) $ or $ PGL_{2} (q) $. Let $ B \leq G $ be a maximal Borel subgroup and write $ B = U \rtimes T $ where $ U \cong E _{q} $ and $ T \cong C_{ (q-1) / \delta } $. Suppose $ U^{ * } \trianglelefteq B $ and $ \{ 1_{G} \} < U^{ * } \leq U $. Then $ U^{ * } = U $.
\end{lema}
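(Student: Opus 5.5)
The plan is to show that conjugation by $T$ acts transitively on the non-identity elements of $U$, or at least without leaving a proper non-trivial subgroup invariant, and to argue directly with matrices. By Lemma \ref{brlcyclcprts} all Borel subgroups are conjugate, so I may take $B$ to be the image of the upper triangular group $B^{*}$ from Lemma \ref{brlndcyclc}. Then $U$ is the image of $\{ u_{b} : b \in \mathbb{F}_{q} \}$, where $u_{b}$ is the upper unitriangular matrix with top-right entry $b$, and $T$ is the image of the diagonal group $C^{*}$. By Lemma \ref{brlcyclcprts}, $U$ is the unique subgroup of order $q$ in $B$, so it is exactly this group. The map $b \mapsto Z u_{b}$ is an isomorphism $(\mathbb{F}_{q},+) \to U$. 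It is injective because a scalar unitriangular matrix is the identity.

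The key computation is the conjugation action of a diagonal element on $u_{b}$. If $t = \operatorname{diag}(a,c)$, then $t^{-1} u_{b} t = u_{a^{-1} c b}$. In the $SL_{2}(q)$ case $c = a^{-1}$, so the multiplier is $a^{-2}$ with $a$ arbitrary in $\mathbb{F}_{q}^{*}$. In the $GL_{2}(q)$ case the multiplier $a^{-1}c$ ranges over all of $\mathbb{F}_{q}^{*}$.

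Now let $U^{*}$ be normal in $B$ with $\{1_{G}\} < U^{*} \leq U$. Write $V = \{ b \in \mathbb{F}_{q} : Z u_{b} \in U^{*} \}$. Then $V$ is a non-zero additive subgroup of $\mathbb{F}_{q}$, and normality gives $\lambda V \subseteq V$ for every $\lambda$ in the multiplier set $M$. In the $PGL_{2}(q)$ case, and for $q$ even where squaring is surjective, $M = \mathbb{F}_{q}^{*}$. Then $V = \mathbb{F}_{q}$ as soon as $V$ contains one non-zero element, so $U^{*} = U$.

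The main obstacle is $PSL_{2}(q)$ with $q$ odd, where $M$ is only the set of non-zero squares. Here I would use that $V$ is closed under addition and under multiplication by squares. Closure under addition gives closure under multiplication by every element of the prime field $\mathbb{F}_{p}$, and hence under the additive span $R$ of the squares. This $R$ is closed under multiplication, so it is a subring of the finite field $\mathbb{F}_{q}$ and therefore a subfield. It contains every square, and there are $\tfrac{1}{2}(q-1)$ of them by Lemma \ref{sqnmbrf}. A proper subfield has at most $\sqrt{q}$ elements, and $\sqrt{q} < \tfrac{1}{2}(q-1) + 1$ for $q \geq 3$ except possibly $q=9$. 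For $q = 9$ the squares have $4$ elements, which exceeds the size $3$ of $\mathbb{F}_{3}$, so the bound still forces $R = \mathbb{F}_{q}$. Thus $V$ is closed under multiplication by all of $\mathbb{F}_{q}$ and is a non-zero ideal of the field, so $V = \mathbb{F}_{q}$ and $U^{*} = U$.
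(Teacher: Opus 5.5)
Your proof is correct, but it takes a different route from the paper's.

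\textbf{The paper's argument.} It makes no case split. It takes a single non-identity element of $U^{*}$, represented by the unitriangular matrix with top-right entry $a$. It conjugates this by upper triangular matrices with diagonal $(y, y^{-1})$, which lie in both the $SL_{2}(q)$ and the $GL_{2}(q)$ Borel subgroups. This produces the $\tfrac{1}{2}(q-1)$ distinct elements with top-right entry $a y^{-2}$, so $|U^{*}| \geq \tfrac{1}{2}(q+1)$. The paper then finishes by Lagrange: a proper subgroup of $U$ has order at most $q/p \leq q/2 < \tfrac{1}{2}(q+1)$.

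\textbf{Your argument.} You translate $U^{*}$ into an additive subgroup $V \leq \mathbb{F}_{q}$ that is stable under the multiplier set. In the hard case ($PSL_{2}(q)$, $q$ odd) you show that the additive span of the squares is a subfield with more than $\sqrt{q}$ elements. That subfield is therefore all of $\mathbb{F}_{q}$, so $V$ is a non-zero $\mathbb{F}_{q}$-subspace.

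\textbf{Comparison.} Your route gives a cleaner structural fact: every non-trivial $T$-stable subgroup of $U$ is closed under all of $\mathbb{F}_{q}$. The cost is a case split and the detour through subfields. The count you already use for the squares could be applied to $V$ directly. Since $V \setminus \{0\}$ is a union of cosets of the squares in $\mathbb{F}_{q}^{*}$, you get $|V| \geq \tfrac{1}{2}(q+1) > q/p$ at once, which is exactly the paper's argument.

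\textbf{Minor points.}
\begin{itemize}
\item The hedge about $q = 9$ is unnecessary. The inequality $\sqrt{q} < \tfrac{1}{2}(q+1)$ is equivalent to $(\sqrt{q}-1)^{2} > 0$, so it holds for every $q \geq 2$.
\item The conjugacy of the Borel subgroups comes from the tables of maximal subgroups, not from Lemma \ref{brlcyclcprts}, though the paper uses it the same way.
\end{itemize}
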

\begin{proof}
Let $ G^{ \prime } := SL_{2} (q) $ if $ G $ is $ PSL_{2} (q) $ or $ G^{ \prime } := GL_{2} (q) $ if $ G $ is $ PGL_{2} (q) $. Let $ B ^{ \prime } \leq G ^{ \prime } $ be the Borel subgroup corresponding to $ B $. Since all maximal Borel subgroups of $ G^{ \prime } $ are conjugate, we can assume $ B ^{ \prime } $ is the group of upper triangular matrices in $ G^{ \prime } $.

\medskip
Let $ U ^{ \prime } \trianglelefteq B ^{ \prime } $ be the subgroup corresponding to $ U $. An element $ u \in U ^{ * } \setminus \{ 1 \} $ corresponds to the matrices in the coset $ Z( G^{ \prime } ) u^{ \prime } $ where $ u^{ \prime } \in U ^{ \prime } $ has the form
\[
u^{ \prime } =
\begin{pmatrix*}[l]
1 & a \\
0 & 1
\end{pmatrix*}
\ \ \ \ \
\text{for some $ a \in \mathbb{F}_{q} ^{*} $.}
\]
The following element exists in $ B ^{ \prime } $;
\[
g =
\begin{pmatrix*}[l]
y & x \\
0 & y^{ -1}
\end{pmatrix*}
\ \ \ \ \
\text{where $ x ,y \in \mathbb{F}_{q} ^{*} $ and $ y \neq 1  $.}
\]
Now
\[
g^{-1} u^{ \prime } g = 
\begin{pmatrix*}[l]
1 & ay^{ -2 } \\
0 & 1
\end{pmatrix*}
\]
So by Lemma \ref{sqnmbrf} there are at least $ \tfrac{1}{2} (q-1) $ conjugates of $ u $ in $ B $. Including the identity there are at least $ \tfrac{1}{2} (q-1) + 1 =  \tfrac{1}{2} (q + 1) $ elements in $ U ^{*} $. We have $ q = p^{m} $ for some prime $ p $ and $ m \in \mathbb{N} $. Therefore the largest possible order for a non-trivial proper subgroup of $ U $ is $ p^{ m - 1} \leq \tfrac{1}{2} p^{m} = \tfrac{1}{2} q < \tfrac{1}{2} (q + 1) $. Hence $ U^{*} = U $.
\end{proof}
\begin{lema}\label{nrmlzrnbrlx}
Let $ G $ be $ PSL_{2} (q) $ or $ PGL_{2} (q) $ where $ q \geq 3 $. Suppose $ G \neq PSL_{2} (3) $. Write $ q = p^{m} $ where $ p $ is prime and $ m \in \mathbb{N} $. Let $ P < G $ be a non-trivial $ p $-group. Then $ N_{G} (P) $ is a subgroup of exactly one maximal subgroup $ M < G $, which is a Borel subgroup.
\end{lema}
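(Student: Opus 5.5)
I would prove this by first placing $P$ inside a Borel subgroup, then showing that every maximal subgroup containing $N_{G}(P)$ is a Borel subgroup, and finally showing that this Borel subgroup is unique.

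\textbf{Step 1: $P$ lies in a Borel subgroup and fixes a unique point.} Work in the Borel action of $G$ on $\Omega$, the $1$-dimensional subspaces of $\mathbb{F}_{q}^{2}$, with $|\Omega| = q+1$. A Borel subgroup $B = U \rtimes T$ has $|B| = q(q-1)/\delta$, so $U \cong E_{q}$ is a Sylow $p$-subgroup of $G$. By Sylow's theorem $P$ lies in a conjugate of $U$; we may assume $P \leq U$. By Lemma \ref{brlcyclcprts}, $U$ is the unique subgroup of order $q$ in $B$. Working in the matrix form, $U$ corresponds to the upper unitriangular matrices, which fix only $\omega_{(0,1)}$. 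More precisely, a non-identity element of $U$ fixes exactly one point of $\Omega$: it fixes $\omega_{(0,1)}$, and the computation in Lemma \ref{stbglnn} shows that fixing any second point forces it to be diagonal. Since $P \neq 1$, $P$ has exactly one fixed point $\omega$, and $G_{\omega}$ is the Borel subgroup $B$ containing $U$.

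\textbf{Step 2: $N_{G}(P) \leq B$.} The element $n \in N_{G}(P)$ maps the fixed point set of $P$ to the fixed point set of $n^{-1}Pn = P$. Since that set is $\{\omega\}$, we get $\omega^{n} = \omega$, so $N_{G}(P) \leq G_{\omega} = B$. Thus $N_{G}(P)$ lies in the maximal Borel subgroup $B$.

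\textbf{Step 3: no other maximal subgroup contains $N_{G}(P)$.} Suppose $N_{G}(P) \leq M$ with $M$ maximal and $M \neq B$. First rule out $M$ being another Borel subgroup $B'$. Then $N_{G}(P) \leq B \cap B'$, which by Lemma \ref{brlntrsccyclc} is cyclic of order $(q-1)/\delta$, coprime to $p$. This contradicts $P \leq N_{G}(P)$.

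For the other maximal subgroups in Tables \ref{tableone}, \ref{tabletwo} and \ref{tablethree}, use a size argument. $N_{G}(P)$ contains $U$ (as $U$ is abelian), so it contains a subgroup of order $q$. It also contains elements of $T$ normalizing $P$. In general $N_{B}(P) = U \rtimes T_{0}$, where $T_{0}$ is the stabilizer in $T$ of the $\mathbb{F}_{p}$-subspace $P$ under the scaling action $a \mapsto ay^{\pm 2}$ from Lemma \ref{pgrpnrmlbrl}. The cases are then:
\begin{itemize}
\item Dihedral $D_{2(q\pm1)/\delta}$: the order is coprime to $p$ except possibly at $p=2$. For $q$ even, a Sylow $2$-subgroup of a dihedral group of order $2(q\pm1)$ has order $2$, so it cannot contain $U$ of order $q \geq 4$.
\item $A_{4}$, $S_{4}$, $A_{5}$: these occur only for $q = p$ (or $q = p^{2}$ for $A_{5}$). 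When $q = p$, $P = U \cong C_{p}$ and $N_{G}(P) = B$ has order $p(p-1)/\delta$. This exceeds what these groups can contain once we exclude $PSL_{2}(3)$ and handle $q = 5$ (where $B \cong D_{10}$ or $A_{4}$ arises) directly.
\item Subfield $PSL_{2}(q_{0})$ or $PGL_{2}(q_{0})$ with $q_{0} < q$: the $p$-part of the order is $q_{0} < q$, so it cannot contain $U$.
\item $PSL_{2}(q)$ inside $PGL_{2}(q)$: this is the main obstacle. It contains $U$, and it contains $N_{G}(P)$ whenever $T_{0} \leq$ the index-$2$ part. For example, $P = U$ with $q$ odd gives $N_{G}(U) = B$, which does not lie in $PSL_{2}(q)$ because $B \not\leq PSL_{2}(q)$, as $|B| = q(q-1)$. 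For a proper subgroup $P < U$, $T_{0}$ may be small, for instance when $P \cong C_{p}$ and $q = p^{m}$ with $m > 1$. In that case $N_{G}(P)$ could lie inside $PSL_{2}(q)$.
\end{itemize}

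I would first try to show $T_{0} \not\leq PSL_{2}(q)$ by exhibiting the diagonal element $\mathrm{diag}(1, \lambda)$ with $\lambda \in \mathbb{F}_{p}^{*}$ a non-square in $\mathbb{F}_{q}$. This scales $a \mapsto a\lambda$ and preserves any $\mathbb{F}_{p}$-subspace $P$. Such a $\lambda$ exists exactly when $m$ is odd. If $m$ is even this can fail, and the statement may need $M$ to range over non-normal maximal subgroups. I would check this case carefully, and if necessary interpret ``exactly one maximal subgroup'' as excluding the normal subgroup $PSL_{2}(q)$, since the small cases are handled via \cite{WISCONS1}.
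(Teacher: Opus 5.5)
Your Steps 1 and 2 take a different route from the paper, and a cleaner one. The paper puts a Sylow $p$-subgroup $S \supseteq P$ inside $N_{G}(P)$ (as $S$ is abelian). It then goes through the maximal subgroups in Tables \ref{tableone}--\ref{tablethree}, discarding those whose order is not divisible by $q$ and checking $PGL_{2}(3)$ and $PGL_{2}(5)$ by hand. Finally it uses Lemma \ref{brlntrsccyclc} for uniqueness among Borel subgroups, as in your Step 3. Your fixed-point argument gives $N_{G}(P) \leq G_{\omega} = B$ without using the classification of maximal subgroups. It also gives uniqueness among Borel subgroups directly: every Borel subgroup is a point stabilizer, and any one containing $P$ must fix the unique fixed point $\omega$ of $P$.

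The case you left open is not something to ``check carefully'' and then reinterpret. It is a counterexample to the lemma as stated. The paper's proof misses it because it says the only remaining non-Borel maximal subgroups are subfield subgroups, overlooking the normal maximal subgroup $PSL_{2}(q)$ of $PGL_{2}(q)$ for $q$ odd.

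Take $G = PGL_{2}(q)$ with $q = p^{m}$, $p$ odd and $m$ even, and let $P = \{ Z \begin{pmatrix} 1 & a \\ 0 & 1 \end{pmatrix} : a \in \mathbb{F}_{p} \} \cong C_{p}$. By your Step 2, $N_{G}(P) = N_{B}(P) = U \rtimes T_{0}$. Here $T_{0}$ consists of the images of $\mathrm{diag}(1,\lambda)$ with $\lambda \mathbb{F}_{p} = \mathbb{F}_{p}$, that is, $\lambda \in \mathbb{F}_{p}^{*}$. Since $(q-1)/(p-1) = 1 + p + \dots + p^{m-1}$ is even, every element of $\mathbb{F}_{p}^{*}$ is a square in $\mathbb{F}_{q}$. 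Hence $T_{0} \leq PSL_{2}(q)$, so $N_{G}(P)$ lies in the two distinct maximal subgroups $B$ and $PSL_{2}(q)$. The smallest case is $q = 9$: $N_{PGL_{2}(9)}(C_{3})$ has order $18$ and lies both in a Borel subgroup of order $72$ and in $PSL_{2}(9) \cong A_{6}$. This is an infinite family, so \cite{WISCONS1} cannot absorb it.

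What your argument actually proves is the correct version of the lemma. $N_{G}(P)$ lies in exactly one Borel subgroup and in no other maximal subgroup except possibly $PSL_{2}(q) \trianglelefteq PGL_{2}(q)$. That exception does not occur in any of these cases:
\begin{itemize}
\item $G = PSL_{2}(q)$, or $q$ is even;
\item $m$ is odd (use your $\mathrm{diag}(1,\lambda)$ with $\lambda \in \mathbb{F}_{p}^{*}$ a non-square in $\mathbb{F}_{q}$);
\item $P$ is a Sylow $p$-subgroup.
\end{itemize}
Every later use of the lemma in the paper only needs containment in a unique Borel subgroup: Lemmas \ref{pgrpqzero}, \ref{lttlbrlnrm}, \ref{sdgiulvsldkcn}, \ref{dffrntsbgrps} and Theorem \ref{fnalthrmdndit}. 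So the downstream results are unaffected.
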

\begin{proof}
Let $ S \leq G $ be a Sylow $ p $-subgroup with $ P \leq S $. The Sylow $ p $-subgroups of $ G $ have order $ q $. Each Borel subgroup has the form $ E_{q} \rtimes C_{ (q-1) / \delta } $ where $ U $ is an elementary abelian $ p $-group of order $ q $. Each $ E_{q} $ is a Sylow $p$-subgroup. As all Sylow $ p $-subgroups are conjugate, $ S \leq B $ where $ B $ is a Borel subgroup of $ G $. As $ S $ is abelian, we have $ S \leq N_{G} ( P ) $. Also $ S $ is not normal in $ G $, so $ N_{G} ( P ) $ is a subgroup of at least one maximal subgroup of $ G $ that contains $ S $.

\medskip
If $ G  = PGL_{2} (3) \cong S_{4} $ then $ P $ can only have order $ 3 $. The Sylow $3$-subgroups are cyclic of order $ 3 $ and are normalized by a $ D_{6} $, which are Borel subgroups in this case.

\medskip
If $ G  = PGL_{2} (5) \cong S_{5} $ then the $ 5 $-subgroups of $ G $ are all cyclic of order $ 5 $. They are normal subgroups of Borel subgroups of the form $ C_{5} \rtimes C_{4} $ in $ G $, which are maximal so must be the normalizers of the $ C_{5} $ they contain, as required.

\medskip
Now suppose $ G \neq PGL_{2} (3) $ and $ G \neq PGL_{2} (5) $. The maximal $ D_{2 ( q \pm 1 ) / \delta } $ do not have order divisible by $ q $, so $ S $ will not be in one of these subgroups.

\medskip
Maximal $ A_{4} $, $ S_{4} $ and $ A_{5} $ only occur when $ q $ is odd and these only have subgroups of odd order $ 3 $ or $ 5 $, which could only be Sylow subgroups when $ q \in \{ 3 , 5 \} $. The case for $ q = 3 $ has been dealt with. Out of the above only $ A_{5} $ has a subgroup of order $ 5 $, however $ PSL_{2} (5) $ does not have a maximal $ A_{5} $. Therefore $ S $ is not a subgroup of an $ A_{4} $, $ S_{4} $ or $ A_{5} $.

\medskip
The only other non-Borel subgroup to consider are subfield subgroups. However the largest $ p $-groups in a subfield subgroup have order $ q_{0} $ where $ q = q_{0} ^{r} $ for some prime $ r $. Thus a subfield subgroup could not contain $ S $.

\medskip
Hence the only maximal subgroups containing $ N_{G} ( P ) $ are Borel subgroups. To show there is only one such Borel, suppose $ B_{1} , B_{2} < G $ are Borel subgroups containing $ N_{G} ( P ) $. If $ B_{1} \neq B_{2} $ then $ B_{1} \cap B_{2} \cong C_{ (q-1) / \delta } $ by Lemma \ref{brlntrsccyclc}. But this intersection has no element of order $ p $, implying $ P \not\leq B_{1} \cap B_{2} $, hence $ N_{G} ( P ) \not\leq B_{1} \cap B_{2} $. This is a contradiction. Thus $ B_{1} = B_{2} $.
\end{proof}
\begin{lema}\label{prmsbgrps}
Write $ q = p^{m} $ where $ p $ is prime and $ m \in \mathbb{N} $. Let $ G $ be $ PSL_{2} (q) $ or $ PGL_{2} (q) $. Let $ P_{1} , P_{2} < G $ be Sylow $ p $-subgroups. Then either $ P_{1} = P_{2} $ or $ P_{1} \cap P_{2} = \{ 1 _{G} \} $. 
\end{lema}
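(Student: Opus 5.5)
We want to show that distinct Sylow $p$-subgroups of $G$ intersect trivially. The plan is to use the Borel subgroups as the ambient objects, since every Sylow $p$-subgroup lives in exactly one of them.

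First, each Sylow $p$-subgroup has order $q$. As noted in the proof of Lemma \ref{nrmlzrnbrlx}, each $P_{i}$ lies in some maximal Borel subgroup $B_{i}$. By Lemma \ref{brlcyclcprts}, $P_{i}$ is the unique $p$-group of order $q$ in $B_{i}$, so $P_{i}$ is the subgroup $U$ of $B_{i}$ and $P_{i} \trianglelefteq B_{i}$. Conversely, any Borel subgroup containing $P_{i}$ has $P_{i}$ as its unique $E_{q}$.

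Now suppose $P_{1} \cap P_{2} \neq \{ 1_{G} \}$, and let $P := P_{1} \cap P_{2}$, a non-trivial $p$-group. The case $G = PSL_{2}(3) \cong A_{4}$ is handled directly: the Sylow $3$-subgroups are cyclic of order $3$, so two of them intersect trivially or coincide. Otherwise, Lemma \ref{nrmlzrnbrlx} shows that $N_{G}(P)$ lies in exactly one maximal subgroup, which is a Borel subgroup $B$. Since $P_{1}$ and $P_{2}$ are elementary abelian, both centralize, and hence normalize, $P$. Thus $P_{1}, P_{2} \leq N_{G}(P) \leq B$. By the uniqueness statement in Lemma \ref{brlcyclcprts}, $P_{1} = U = P_{2}$.

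An alternative ending, avoiding the normalizer, runs as follows. If $B_{1} \neq B_{2}$, then $B_{1} \cap B_{2} \cong C_{(q-1)/\delta}$ by Lemma \ref{brlntrsccyclc}. This intersection contains no element of order $p$, so $P_{1} \cap P_{2} \leq B_{1} \cap B_{2}$ is trivial. If instead $B_{1} = B_{2}$, then $P_{1} = P_{2}$ by uniqueness.

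This second route is cleaner, and I would use it. The main point to check there is that $P_{i}$ lies in a Borel subgroup at all. That holds because Sylow $p$-subgroups are conjugate and each Borel subgroup contains one.
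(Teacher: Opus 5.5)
Your second route is the paper's proof. You place each $P_{i}$ in a Borel subgroup $B_{i}$ and use uniqueness of the $E_{q}$ in a Borel (Lemma \ref{brlcyclcprts}) when $B_{1} = B_{2}$. When $B_{1} \neq B_{2}$ you use Lemma \ref{brlntrsccyclc}: the intersection is $C_{(q-1)/\delta}$, which has no elements of order $p$. The argument is correct.
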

\begin{proof}
Looking at the order of $ G $, the Sylow $ p $-subgroups have order $ q $. Lemma \ref{brlcyclcprts} shows there happens to be exactly one elementary abelian $ p $-group of order $ q $ in each Borel subgroup of $ G $, which are Sylow $ p $-subgroups. Hence $ P_{1} < B_{1} < G $ and $ P_{2} < B_{2} < G $ where $ B_{1} $ and $ B_{2} $ are Borel subgroups. 

\medskip
Suppose $ P_{1} \neq P_{2} $. Then $ B_{1} \neq B_{2} $. By Lemma \ref{brlntrsccyclc} we have $ B_{1} \cap B_{2} \cong C_{ (q-1) / \delta } $. Therefore $ B_{1} \cap B_{2} $ has no elements of order $ p $ and it follows that $ P_{1} \cap P_{2} = \{ 1_{G} \} $.
\end{proof}

\newpage
\section{Action Description}
Throughout the rest of this chapter, let $ p $ be a prime, $ q$ a power of $ p $, suppose $ q \geq 4 $ and $q$ is not prime. Then $ q = q_{0}^{r} $ where $ q_{0} $ is again some power of $ p $ and $ r $ is a prime. In Table \ref{tableone}, we see that when $ q $ is even there exists a conjugacy class of the following maximal subgroups in $ PSL_{2} (q) $;
\begin{itemize}
\item $ PSL _{2} (q_{0} ) $, if $ q = q_{0} ^{r} $ for some prime $ r $ and $ q_{0} \neq 2 $ (one conjugacy class).
\end{itemize}
When $ q $ is odd, Table \ref{tablethree}, shows that we get similar conjugacy classes of maximal subgroups for $ PSL_{2} (q) $;
\begin{itemize}
\item $ PSL _{2} (q_{0} ) $, if $ q = q_{0} ^{r} $ for some odd prime $ r $ (one conjugacy class),
\item $ PGL _{2} (q_{0} ) $, if $ q = q_{0} ^{2} $ (two conjugacy classes).
\end{itemize}
From Table \ref{tablethree}, we see that when $ q $ is odd $ PGL_{2} (q) $ has maximal subgroups
\begin{itemize}
\item $ PGL _{2} (q_{0} ) $, if $ q = q_{0} ^{r} $ for some odd prime $ r $ (one conjugacy class).
\end{itemize}
For the remainder of this section let $ q_{0} $ be defined as above, where the context will be made clear depending on the maximal subgroup we are referring to. Also let $ G $ be $ PSL_{2} (q) $ or $ PGL_{2} (q) $ and let $ H $ be a maximal $ PSL_{2} (q_{0}) $ or $ PGL_{2} (q_{0}) $.

\medskip
It follows from Lemma \ref{cstquivtcnj} that rather than  describing the action in terms of right cosets of a maximal $ PSL_{2} (q_{0}) $ or $ PGL_{2} (q_{0}) $, we can look at the action by conjugation on $ \Omega = \{ gHg^{-1} : g \in G \} $ since the two are equivalent.
\section{Height and Relational Complexity of the Subfield Actions}
There are two special cases that do not fit the general proof for finding relational complexity and height. These are $ PSL_{2} (9) $ acting on maximal $ PGL_{2} (3) $ and $ PSL_{2} (q) $ acting on maximal $ PSL_{2} (3) $ when $ q = 3^{r} $ and $ r \geq 3 $. At the end of the section these are going to be dealt with. So until said otherwise, assume $ G \neq PSL_{2} (9) $. Also assume $ H \not\cong PSL_{2} (3)  $.

\medskip
Through this section let $ H_{1} \in H^{G} \setminus H $. Before calculating height and relational complexity, a lot of this section is devoted to finding what group $ H \cap H_{1} $ might be. This takes quite a bit of work, with some particularly long lemmas later on. The next two results narrow down some of the possibilities.
\begin{lema}\label{pgrpqzero}
If $ P \leq  H \cap H_{1} $ is a non-trivial $ p $-group then there exists a $ p $-group $ P^{ * } \leq  H \cap H_{1} $ with $ P \leq P^{ * } $ and $ | P^{ * } | = q_{0} $.
\end{lema}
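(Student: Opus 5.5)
The plan is to work inside $H$, which is $PSL_{2}(q_{0})$ or $PGL_{2}(q_{0})$ and so has its own Borel structure, and to pass between $H$ and $H_{1}$ using Lemmas \ref{nrmlzrnbrlx} and \ref{prmsbgrps} (applied both in $G$ and in the subfield groups). First, $P$ is a $p$-subgroup of $H$, so it lies in a Sylow $p$-subgroup $S$ of $H$, and $|S| = q_{0}$. By Lemma \ref{prmsbgrps} applied in $H$, $S$ is the unique Sylow $p$-subgroup of $H$ containing $P$. Likewise $P$ lies in a unique Sylow $p$-subgroup $S_{1}$ of $H_{1}$, with $|S_{1}| = q_{0}$. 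The aim is to show $S = S_{1}$; then $P^{*} := S$ lies in $H \cap H_{1}$, contains $P$, and has order $q_{0}$.

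To compare $S$ and $S_{1}$, pass to $G$. Let $U$ be the Sylow $p$-subgroup of $G$ containing $P$ (order $q$). By Lemma \ref{prmsbgrps} in $G$, $U$ is unique. Both $S$ and $S_{1}$ are $p$-groups containing $P$, so each lies in some Sylow $p$-subgroup of $G$. Any such Sylow subgroup meets $U$ non-trivially, since both contain $P$, and so equals $U$. Hence $S, S_{1} \leq U$. By Lemma \ref{nrmlzrnbrlx}, $N_{G}(P)$ lies in a unique Borel subgroup $B = U \rtimes T$ of $G$.

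The key step is to identify $S$ intrinsically inside $U$, as $U \cap H$. Since $U \cap H$ is a $p$-subgroup of $H$ containing $S$ and $|S|$ is the full $p$-part of $|H|$, we get $U \cap H = S$. Similarly $U \cap H_{1} = S_{1}$. So it remains to show $U \cap H = U \cap H_{1}$, which I expect to be the main obstacle. One approach is to use the Borel of $H$: $N_{H}(S)$ is a Borel subgroup $S \rtimes T_{0}$ of $H$, and $T_{0}$ acts on $S$ by squares of scalars from $\mathbb{F}_{q_{0}}^{*}$. Concretely, conjugate so that $H$ is the standard copy of $PSL_{2}(q_{0})$ or $PGL_{2}(q_{0})$ and $U$ is the upper unitriangular group, exactly as in Lemma \ref{pgrpnrmlbrl}. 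Then $S$ corresponds to $\{a \in \mathbb{F}_{q_{0}}\}$, and $S_{1}$ corresponds to an $\mathbb{F}_{q_{0}}$-subspace $\lambda \mathbb{F}_{q_{0}}$ for some $\lambda \in \mathbb{F}_{q}^{*}$, since $H_{1} \cap B$ is a conjugate of the subfield Borel by an element of $B$. The subgroup $P$ corresponds to a non-trivial additive subgroup of $\mathbb{F}_{q_{0}} \cap \lambda\mathbb{F}_{q_{0}}$.

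Taking a non-zero $a \in P$, we have $a \in \mathbb{F}_{q_{0}}$ and $a = \lambda b$ with $b \in \mathbb{F}_{q_{0}}^{*}$. Hence $\lambda \in \mathbb{F}_{q_{0}}^{*}$, so $\lambda\mathbb{F}_{q_{0}} = \mathbb{F}_{q_{0}}$, which gives $S = S_{1}$. The detail I expect to need care with is justifying this coordinate description of $S_{1}$ inside $U$. My plan is to conjugate $H_{1}$ by an element of $G$ carrying its Borel containing $S_{1}$ into $B$, using $2$-transitivity (Corollary \ref{cor2t}) and Lemma \ref{brlcyclcprts}. I would then check that the conjugating element can be taken inside $B$, so that it acts on $U$ by diagonal scaling together with translation, and translation acts trivially on the abelian group $U$.
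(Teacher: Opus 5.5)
Your argument is correct, but it follows a different route from the paper. The paper stays coordinate-free. It takes $P'$ to be a largest $p$-subgroup of $H \cap H_{1}$ containing $P$, and realises $P' = U_{0} \cap U_{1}$ for the unipotent radicals of Borel subgroups $B_{0} = U_{0} \rtimes T_{0} \leq H$ and $B_{1} = U_{1} \rtimes T_{1} \leq H_{1}$. It then places everything inside one Borel subgroup $B = U \rtimes T$ of $G$ via Lemma \ref{nrmlzrnbrlx}. Next it uses Corollary \ref{crlrsmllbrl} (complements in $B$ are $U$-conjugate and $U$ is abelian) to see that $T_{1}$ normalizes $U_{0}$, hence $P' \trianglelefteq B_{1}$. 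Lemma \ref{pgrpnrmlbrl} then gives $P' = U_{1}$.

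You instead identify $S = U \cap H$ and $S_{1} = U \cap H_{1}$ by comparing $p$-parts, which neatly replaces the paper's normalizer argument. You then read off in matrix coordinates that $S$ and $S_{1}$ are the $\mathbb{F}_{q_{0}}$-lines $\mathbb{F}_{q_{0}}$ and $\lambda \mathbb{F}_{q_{0}}$ in $(\mathbb{F}_{q}, +)$, which either coincide or meet only in $0$. This is shorter and gives a sharper dichotomy. However, it rests on a fact the paper never proves, which you should cite: a maximal subfield subgroup is conjugate in $PGL_{2}(q)$ to the natural image of $SL_{2}(q_{0})$ or $GL_{2}(q_{0})$. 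The paper only knows $H$ through its isomorphism type from the tables, and its own argument avoids this import. For $G = PSL_{2}(q_{0}^{2})$, one of the two classes needs a conjugating element outside $G$. That is harmless, since $G \trianglelefteq PGL_{2}(q)$ and the statement is invariant under such conjugation.

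The step you flag as delicate is easier than your plan suggests and needs no $2$-transitivity. Take $g$ with $H_{1} = g^{-1} H g$, and adjust $g$ by an element of $H$ so that $g^{-1} S g = S_{1}$. Then $g^{-1} U g$ and $U$ are Sylow $p$-subgroups of $G$ both containing $S_{1} \neq \{ 1_{G} \}$, so they are equal by Lemma \ref{prmsbgrps}. Hence $g$ normalizes $U$ and is upper triangular, with diagonal entries $x, z$ say. Conjugation by $g$ then multiplies the unipotent coordinate by $\lambda = z / x$, which gives $S_{1} = \lambda \mathbb{F}_{q_{0}}$ as you claim.
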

\begin{proof}
Let $ P ^{ \prime } $ be a $ p $-group of greatest order in $ H \cap H_{1} $ that contains $ P $. The Sylow $ p $-subgroups of $ H $ have order $ q_{0} $ since that is the greatest power of $ p $ that divides $ |H | $. Each Borel subgroup of $ H $ contains a Sylow $ p $-group. So there exists a Borel subgroup $ B_{0} \leq H $ and a $ p $-subgroup $ U_{0} \leq B_{0} $ of order $ q_{0} $ where $ P ^{ \prime } \leq U_{0} $. Also $ U_{0} $ is the only $ p $-group of this order in $ B_{0} $ by Lemma \ref{brlcyclcprts}. So we can write $ B_{0} = U_{0} \rtimes T_{0} $ where $ T_{0} \cong C_{ (q_{0} - 1 ) / \delta } $.

\medskip
Similarly there exists a Borel subgroup $ B_{1} $ in $ H_{1} $ and $ p $-subgroup $ U_{1} \leq B_{1} $ of order $ q_{0} $ such that $ P ^{ \prime } \leq U_{1} $. Again we can write $ B_{1} = U_{1} \rtimes T_{1} $ where $ T_{1} \cong C_{ (q_{0} - 1 ) / \delta } $.

\medskip
Every subgroup of $ U_{0} \cap U_{1} $ is a $ p $-group. So by the way $ P ^{ \prime } $ is defined it must be that $ U_{0} \cap U_{1} = P ^{ \prime } $.

\medskip
Since $ U_{0} $ and $ U_{1} $ are elementary abelian, we have $ U_{0} , U_{1} \leq N_{G} ( P ^{ \prime } ) $. Lemma \ref{nrmlzrnbrlx} shows that $ U_{0} $ and $ U_{1} $ both lie in a Borel subgroup $ B $ of $ G $. By Lemma \ref{brlcyclcprts} there exists a Sylow $ p $-subgroup of $ U \leq G $ that is normal in $ B $ and $ U_{0} , U_{1} \leq U $. Lemma \ref{nrmlzrnbrlx} tells us that $ B $ is the only Borel subgroup of $ G $ that could contain $ U $. Hence $ U_{0} $ and $ U_{1} $ do not lie in any other Borel subgroup of $ G $ by Lemma \ref{prmsbgrps}. So using Lemma \ref{nrmlzrnbrlx} again, $ N_{G} ( U_{0} ) , N_{G} ( U_{1} ) \leq B $. Also $ T_{0} , T_{1} \leq B $ because $ T_{0} \leq N_{G} ( U_{0} ) $ and $ T_{1} \leq N_{G} ( U_{1} ) $. From Corollary \ref{crlrsmllbrl} we then have $ U_{0} \rtimes T_{1} \leq B $. Therefore $ T_{1} \leq N_{G} (U_{0} ) $. So for $ t_{1} \in T_{1} $ we have
\[
t_{1} ^{-1} P ^{ \prime } t_{1} 
= t_{1} ^{-1} (U_{0} \cap U_{1} ) t_{1} 
= ( t_{1} ^{-1} U_{0} t_{1} ) \cap (t_{1} ^{-1} U_{1} t_{1} ) 
= U_{0} \cap U_{1}
= P ^{ \prime } .
\]
This implies $ T_{1} \leq N_{G} (P ^{ \prime } ) $. It follows that $ P ^{ \prime } \trianglelefteq U_{1} \rtimes T_{1} = B_{1} $. Applying Lemma \ref{pgrpnrmlbrl} gives $ P ^{ \prime } = U_{1} $ and we get $ | P ^{ \prime }  | = q_{0} $.
\end{proof}
\begin{lema}\label{nnnafafsf}
Suppose $ q $ is odd. Then
\begin{itemize}
\item $ H \cap H_{1} \not\cong A_{5} $,
\item $ H \cap H_{1} \not\cong S_{4} $,
\item $ H \cap H_{1} \not\cong A_{4} $.
\end{itemize}
\end{lema}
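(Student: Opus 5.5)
The plan is to argue by contradiction in each of the three cases. In each case I will exhibit a characteristic subgroup of $ H \cap H_{1} $ whose normalizer in $ G $ is already pinned down by earlier results, and then show that this normalizer cannot sit inside two distinct subfield subgroups. Throughout, $ H $ and $ H_{1} $ are distinct conjugates, both isomorphic to $ PSL_{2} (q_{0}) $ or $ PGL_{2} (q_{0}) $, and $ G $ is either simple or has all proper non-trivial normal subgroups maximal. So Lemma \ref{ntnrmlmxml} and Lemma \ref{nrmlprstmtsk} apply: no non-trivial subgroup can be normal in both $ H $ and $ H_{1} $.

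First suppose $ H \cap H_{1} \cong A_{4} $ or $ S_{4} $. Let $ K \cong K_{4} $ be the normal Klein four-subgroup of $ H \cap H_{1} $ (in the $ A_{4} $ case, the normal $ K_{4} $ of that $ A_{4} $). I will work out $ N_{G} (K) $ and $ N_{H} (K) $ using the dihedral centralizers of involutions from Lemma \ref{nrmlzr}, together with Lemmas \ref{refconjug} and \ref{allkfrcnjdm}, in the same way as in Lemma \ref{afrnrmlzbysfr}. The expected outcome is that $ N_{G} (K) $ is an $ A_{4} $ or an $ S_{4} $, and that it is already contained in $ H $.

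To get the containment, I will use that $ H $ is $ PSL_{2} (q_{0}) $ or $ PGL_{2} (q_{0}) $ with $ q_{0} $ odd. Such a group contains the full normalizer of each of its $ K_{4} $'s. The reason is that this normalizer lies in the centralizer of an involution of $ K $, which is a $ D_{2(q \pm 1)/\delta} $ in $ G $, and intersecting with $ H $ gives a $ D_{2(q_{0} \pm 1)/\delta_{0}} $. The $ A_{4} $ or $ S_{4} $ is then generated by that dihedral piece together with an element of order $ 3 $ permuting $ K $. The $ 3 $-elements normalizing $ K $ already lie in $ H \cap H_{1} $ in the $ A_{4} $ case. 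Hence $ N_{G}(K) \leq H $, and by the same argument $ N_{G}(K) \leq H_{1} $.

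If $ N_{G} (K) \cong S_{4} $ strictly contains $ H \cap H_{1} \cong A_{4} $, this already contradicts the assumption that $ H \cap H_{1} $ is exactly $ A_{4} $. Otherwise $ N_{G}(K) = H \cap H_{1} $, and I will pass to a different normal subgroup: the $ A_{4} $ is normal in the $ S_{4} $, and one shows either $ N_{H}(A_{4}) = N_{H_{1}}(A_{4}) = N_{G}(A_{4}) $, or $ K $ is normal in both $ N_{H}(K) $ and $ N_{H_{1}}(K) $. In either case I intend to contradict Lemma \ref{ntnrmlmxml}. The cleanest route I would try first is to show directly that $ \langle H, H_{1} \rangle $ normalizes a subfield structure. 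I am not confident of this, so as a fallback I will compare orders: in an $ S_{4} \leq PGL_{2}(q_{0}) $, the element of order $ 4 $ centralizes an involution. That centralizer is a $ D_{2(q\pm1)/\delta} $ meeting both $ H $ and $ H_{1} $ in dihedral groups. By Lemma \ref{dcvlkjhsdp}, the rotational subgroup of order $ 4 $ then forces the two dihedral intersections to coincide with its normalizer, which gives a contradiction with maximality.

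Next suppose $ H \cap H_{1} \cong A_{5} $. This requires $ 5 \mid |H| $. Take $ C \cong C_{5} $ in the $ A_{5} $ with $ p \neq 5 $. (If $ p = 5 $, then $ C $ is a $ p $-group, and Lemma \ref{pgrpqzero} gives a $ p $-subgroup of order $ q_{0} $ in $ H \cap H_{1} $; combined with Lemma \ref{nrmlzrnbrlx}, the $ A_{5} $ would have to lie in a Borel subgroup, which is impossible.) By Lemma \ref{nrmlzr}, $ N_{G}(C) $ is dihedral, and $ N_{H}(C) $ and $ N_{H_{1}}(C) $ are dihedral groups of order $ 2(q_{0}\pm1)/\delta_{0} $ inside it. Both contain the $ D_{10} $ of the $ A_{5} $. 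I will run the same kind of argument, restricted to elements of order $ 3 $ and $ 5 $: the $ A_{5} $ is generated by two $ A_{4} $'s, and applying the previous case to an $ A_{4} \leq H \cap H_{1} $ reduces the $ A_{5} $ case to the $ A_{4} $/$ S_{4} $ analysis.

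The main obstacle is making the normalizer containment $ N_{G}(K) \leq H $ rigorous for subfield subgroups. Once that holds, each case collapses to the standard contradiction with Lemma \ref{ntnrmlmxml}.
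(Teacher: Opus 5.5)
There is a genuine gap at exactly the step you flagged, and it is more than a matter of rigour. Your reason for $N_G(K)\le H$ is that the normalizer of $K$ lies in the centralizer of an involution of $K$. That is false. In an $A_4$ or $S_4$ normalizing $K$, the elements of order $3$ permute the involutions of $K$ transitively, so only $C_G(K)$ sits inside those dihedral centralizers.

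More seriously, even granting $N_G(K)\le H\cap H_1$, you get a contradiction only when $N_G(K)$ is strictly larger than $H\cap H_1$. That never happens when $H\cap H_1\cong S_4$. It also fails for $H\cap H_1\cong A_4$ whenever $N_G(K)\cong A_4$, e.g.\ $G=PSL_2(q)$ with $q\equiv\pm3\pmod 8$.

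Your proposed ways out of this equality case do not work:
\begin{itemize}
\item Lemma \ref{ntnrmlmxml} needs a subgroup normal in the maximal subgroups $H$ and $H_1$ themselves. Normality of $K$ (or of the $A_4$) in $N_H(K)$ and $N_{H_1}(K)$ is automatic and contradicts nothing.
\item The order-$4$ fallback exists only for $S_4$, and you never say what it contradicts.
\item The $A_5$ case inherits the gap, because your ``previous case'' is about $H\cap H_1\cong A_4$ exactly, not about $H\cap H_1$ containing an $A_4$.
\item Your $p=5$ remark is wrong. $A_5$ does not normalize a $C_5$, so Lemma \ref{nrmlzrnbrlx} does not put it in a Borel subgroup.
\item The case $p=3$, where your elements of order $3$ are $p$-elements, is not addressed.
\end{itemize}

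The missing idea, which your fallback only gestures at, is to choose one element $x\in H\cap H_1$ with the following property: its centralizer (or the normalizer of $\langle x\rangle$) is dihedral of the \emph{same} order in $H$ and in $H_1$, with rotational subgroup of order greater than $2$. The corresponding subgroup of $G$ is dihedral by Lemma \ref{nrmlzr}, and its cyclic rotational subgroup has a unique subgroup of each order. So the two rotational subgroups coincide and lie in $H\cap H_1$.

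For $H\cong PSL_2(q_0)$ with $q_0\ge 7$, take an involution $h$ with $C_{H\cap H_1}(h)\cong K_4$; one exists in each of $A_4$, $S_4$, $A_5$. All involutions of $H$ are conjugate, so $C_H(h)\cong C_{H_1}(h)$ is dihedral of order divisible by $4$ and at least $8$. This puts an element of order at least $3$ into $H\cap H_1$ centralizing $h$, which is impossible.

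For $H\cong PGL_2(q_0)$ involutions are unreliable. The two involution classes have centralizers $D_{2(q_0-1)}$ and $D_{2(q_0+1)}$, and $h$ may lie in different classes in $H$ and in $H_1$. Take $x$ of order $3$ instead. Since $3$ divides exactly one of $q_0\pm1$, we get $N_H(\langle x\rangle)\cong N_{H_1}(\langle x\rangle)$ with rotational subgroup of order at least $6$, and no element of that order exists in $A_4$, $S_4$ or $A_5$.

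Before this, Lemma \ref{pgrpqzero} rules out $q_0=3^k\ge 9$. The cases $H\cong PGL_2(3)$ and $H\cong PSL_2(5)$ must be handled directly:
\begin{itemize}
\item For $PGL_2(3)\cong S_4$, an $A_4$ intersection would be normal in both $H$ and $H_1$, contradicting Lemma \ref{ntnrmlmxml}.
\item For $PSL_2(5)\cong A_5$, an $A_4$ intersection is self-normalizing in $G$. Choose $g$ with $g^{-1}Hg=H_1$ and $g^{-1}(H\cap H_1)g=H\cap H_1$; then $g$ lies in $H$, forcing $H_1=H$.
\end{itemize}
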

\begin{proof}
We start with a couple of special cases. Suppose $ H \cong PGL_{2} (3) \cong S_{4} $. Then $ H \cap H_{1} $ cannot be isomorphic to $ S_{4} $ or $ A_{5} $ because $ H $ does not have such a proper subgroup. If $ H \cap H_{1} \cong A_{4} $ then $ H \cap H_{1} $ is normal in both $ H $ and $ H_{1} $, which is not possible by Lemma \ref{ntnrmlmxml}.

\medskip
Next suppose $ H \cong PSL_{2} (5) \cong A_{5} $. Then $ G \cong PSL_{2} (5^{m} ) $ for some odd prime $ m $. For the same reasons as above we can rule out $ H \cap H_{1} $ being isomorphic to $ S_{4} $ or $ A_{5} $. Suppose $ H \cap H_{1} \cong A_{4} $. Observe $ G $ is simple and so $ N_{G} (  H \cap H_{1} ) $ is contained in some maximal subgroup $ M < G $. We cannot have $ M $ being a Borel subgroup because then $ H \cap H_{1} \leq M $ and Borel subgroups do not have Klein four-subgroups by Lemma \ref{brlklnfrsbgrp}. Also $ M $ cannot be a $ D_{ q \pm 1 } $ because dihedral groups have at most two elements of order $ 3 $, but $ A_{4} $ has more. The only other maximal subgroups $ G $ can have are subfield subgroups that are conjugate to $ H $. These are isomorphic to $ A_{5} $ and an $ A_{4} $ subgroup is self normalizing in $ A_{5} $. Hence $  N_{G} (  H \cap H_{1} ) = H \cap H_{1} $.

\medskip
Now $ H_{1} = g^{-1} H g $ for some $ g \in G $. All $ A_{4} $ subgroups are conjugate in $ A_{5} $, so we can assume that $ g^{-1} ( H \cap H_{1} ) g = H \cap H_{1} $. Therefore $ g \in N_{G} (H \cap H_{1} ) \leq H $ and $ g^{-1} H g = H \neq H_{1} $. This is a contradiction. Thus $ H \cap H_{1} \not\cong A_{4} $ when $ H  \cong PSL_{2} (5) $.

\medskip
With the special cases out of the way, the general case will now be dealt with. Suppose $ H \cap H_{1} $ is isomorphic to $ A_{4} $, $ S_{4} $ or $ A_{5} $. Then $ H $ is not isomorphic to $ PGL_{2} (3) $ or $ PSL_{2} (5) $.

\medskip
If $ q_{0} $ is a power of $ 3 $ then $ q_{0} \geq 9 $ and since $ H \cap H_{1} $ contains a subgroup of order $ 3 $ it also has a subgroup of order $ q_{0} $ by Lemma \ref{pgrpqzero}. No subgroup of this order exists in $ H \cap H_{1} $. So $ q_{0} $ cannot be a power of $ 3 $.

\medskip
Suppose $ H \cong PSL_{2} (q_{0} ) $. Then $ q_{0} \geq 7 $. Whichever of $ A_{4} $, $ S_{4} $ or $ A_{5} $ the group $ H \cap H_{1} $ is isomorphic to, it contains an involution $ h^{*} \in H \cap H_{1} $ whose centralizer is a $ K_{4} $. Lemma \ref{nrmlzr} shows that $ C_{H} ( h^{*} ) $ is either a $ D_{ q_{0} -1 } $ or $ D_{ q_{0} +1 } $ and its order must be divisible by $ 4 $. Hence $ | C_{H} ( h^{*} ) | \geq 7 + 1 = 8 $. This means there exists $ c \in C_{H} ( h^{*} ) $ with $ o(c) > 2 $, in the rotational subgroup of $ C_{H} ( h^{*} ) $. Following the same reasoning, there exists an element of order $ o(c) $ in $ C_{H_{1} } ( h^{*} ) $. We can use the same lemma to show $ C_{G } ( h^{*} ) $ is dihedral. There is only one cyclic subgroup of order $ o(c) $ in $ C_{G } ( h^{*} ) $, which must be $ \langle c \rangle $. Therefore $ \langle c \rangle \leq C_{H_{1} } ( h^{*} ) $ and $ c $ centralizes $ h^{*} $ in $ H \cap H_{1} $. This is a contradiction. Thus $ H \not\cong PSL_{2} (q_{0} ) $.

\medskip
The only possibility left is that $ H \cong PGL_{2} (q_{0} ) $ with $ q_{0} \geq 5 $. Observe that $ H \cap H_{1} $ has a cyclic subgroup $ C $ of order $ 3 $ that is normalized in $ H \cap H_{1} $ by either a $ D_{6} $ or $ C $ itself. As $ q_{0} $ is not divisible by $ 3 $, either $ N_{H} ( C ) \cong D_{2(q-1) } $ or $ N_{H} ( C ) \cong D_{2(q_{0} +1) } $ by Lemma \ref{nrmlzr}. Same applies to $  N_{H_{1} } ( C )  $. As only one of $ q-1 $ or $ q+1 $ is divisible by $ 3 $, we have $ N_{H } ( C ) \cong N_{H_{1} } ( C ) $. The rotational subgroups of $  N_{H} ( C ) $ and $  N_{H_{1} } ( C ) $ are cyclic of order say $ r $ where $ r $ is divisible by $ 3 $ and $ r \geq 6 $. Using the same lemma again, $  N_{G } ( C ) $ has only one cyclic subgroup of order $ r $. So $  N_{H} ( C ) $ and $  N_{H_{1} } ( C ) $ share the same rotational subgroup. Hence $ H \cap H_{1} $ contains this subgroup. But $ H \cap H_{1} $ has no element of order $ 6 $ or more, revealing a contradiction. Thus $ H \not\cong PGL_{2} (q_{0} ) $.

\medskip
As $ H $ cannot be any possible subfield subgroup, we have a contradiction and the assumption that $ H \cap H_{1} $ is isomorphic to $ A_{4} $, $ S_{4} $ or $ A_{5} $ is wrong.
\end{proof}
Next we will see that $ H \cap H_{1} \not\cong K_{4} $ when $ q $ is odd, which takes several lemmas.
\begin{lema}\label{nvntnntrsctin}
Let $ G $ be $ PGL_{2} (q) $ or $ PSL_{2} (q) $ where $ q $ is odd, with $ q \geq 7 $ if $ G = PGL_{2} (q) $ and $ q \geq 11 $ if $ G = PSL_{2} (q) $. Suppose there exists an involution $ h \in H \cap H_{1} $. Also suppose there exists $ c \in C_{H} (h) $ and $ c_{1} \in C_{H_{1} } (h) $ with $ o(c) = o(c_{1} ) \geq 3 $. Then $ H \cap H_{1} \not\cong K_{4} $.
\end{lema}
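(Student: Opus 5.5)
Assume for a contradiction that $ H \cap H_{1} \cong K_{4} $. Since $ h $ is an involution in $ H \cap H_{1} $, the subgroup $ H \cap H_{1} $ must be the Klein four-group containing $ h $. The plan mirrors the argument used in Lemma \ref{nnnafafsf}. The centralizer of $ h $ in $ G $ is dihedral, and it contains a unique cyclic subgroup of each order $ \geq 3 $. We use this to force $ c $ (or a power of it) into $ H_{1} $, so that $ H \cap H_{1} $ contains an element of order at least $ 3 $. That is impossible in $ K_{4} $.

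First, apply Lemma \ref{nrmlzr} to the involution $ h $, whose order $ 2 $ is coprime to $ q $ because $ q $ is odd. This gives that $ N_{G} ( \langle h \rangle ) = C_{G} (h) $ is a dihedral group $ D \cong D_{2(q \pm 1) / \delta } $. The hypotheses $ q \geq 7 $ for $ PGL_{2} (q) $ and $ q \geq 11 $ for $ PSL_{2} (q) $ ensure that the groups in Lemmas \ref{nrmlzr} and \ref{dcvlkjhsdp} exist and that $ | D | > 8 $, so $ D $ is not $ D_{4} $ or $ D_{8} $.

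Next, since $ c \in C_{H} (h) \leq D $ and $ c_{1} \in C_{H_{1} } (h) \leq D $, both elements lie in $ D $. They have order at least $ 3 $ and reflections have order $ 2 $, so both are rotations of $ D $. The rotational subgroup of $ D $ is cyclic, so it has exactly one subgroup of order $ o(c) = o(c_{1}) $. Hence $ \langle c \rangle = \langle c_{1} \rangle $. This gives $ c \in \langle c_{1} \rangle \leq H_{1} $, so $ c \in H \cap H_{1} $ with $ o(c) \geq 3 $. But every non-identity element of $ K_{4} $ has order $ 2 $, which is the required contradiction.

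The main point needing care is that $ c $ and $ c_{1} $ are rotations of $ D $ rather than reflections, and that $ D $ really is dihedral with a cyclic rotation subgroup of order greater than $ 2 $. This is where the lower bounds on $ q $ are needed. If $ | D | $ were $ 4 $, the "rotation" would not be well defined, but $ | D | \geq 2(7-1) $ for $ PGL_{2} $ and $ | D | \geq 11 - 1 $ for $ PSL_{2} $. If Lemma \ref{nrmlzr} is stated only up to isomorphism, I would add that $ C_{G}(h) \leq N_{G}(\langle h \rangle) $, and for an involution these coincide. Then any element of order $ \geq 3 $ centralizing $ h $ is a rotation of that dihedral group. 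With this the argument is complete.
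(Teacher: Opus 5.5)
Your proposal is correct and is essentially the paper's proof: Lemma \ref{nrmlzr} makes $C_{G}(h)$ dihedral, $c$ and $c_{1}$ must be rotations since their orders are at least $3$, and cyclicity of the rotation subgroup forces $\langle c \rangle = \langle c_{1} \rangle \leq H \cap H_{1}$, which is impossible in $K_{4}$. Your check that $|D| > 8$ is harmless but not needed, since the argument only uses that elements of order at least $3$ in a dihedral group are rotations.
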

\begin{proof}
We know that $ C_{G} (h) $ is dihedral of order $ | C_{G} (h) | \geq 10 $ by Lemma \ref{nrmlzr}. Both $ c $ and $  c_{1} $ lie in the rotational subgroup of $  C_{G} (h) $. Since the rotational subgroup is cyclic, $ \langle c  \rangle =  \langle c_{1} \rangle $. It follows that $ c \in H \cap H_{1} $ and $ H \cap H_{1} \not\cong K_{4} $.
\end{proof}
\begin{lema}\label{kfrsbgrpone}
Suppose $ G = PSL_{2} (5^{r}) $ with $ r $ an odd prime and $ r \geq 3 $. Suppose $ H \cong PSL_{2} (5) $. Then $ H \cap H_{1} \not\cong K_{4} $.
\end{lema}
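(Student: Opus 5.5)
The plan is to argue by contradiction, as in the $ PSL_{2} (5) $ special case of Lemma \ref{nnnafafsf}. Suppose $ K := H \cap H_{1} \cong K_{4} $. Since $ H \cong PSL_{2} (5) \cong A_{5} $ and every $ K_{4} $ in $ A_{5} $ is normalized by exactly one $ A_{4} $, we have $ N_{H} (K) \cong A_{4} $. The same holds in $ H_{1} \cong A_{5} $, so $ N_{H_{1}} (K) \cong A_{4} $. The aim is to show that $ N_{G} (K) $ is itself an $ A_{4} $. It then contains, and so equals, both $ N_{H} (K) $ and $ N_{H_{1}} (K) $. This would put a subgroup of order $ 12 $ inside $ H \cap H_{1} = K $, which is the desired contradiction.

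The first step is to locate $ N_{G} (K) $. Since $ G $ is simple and $ K \neq \{ 1_{G} \} $, the normalizer $ N_{G} (K) $ is a proper subgroup and so lies in some maximal subgroup $ M < G $. We use Table \ref{tablethree} with $ q = 5^{r} $, where $ r \geq 3 $ is an odd prime. Then $ q $ is not prime and is not $ p^{2} $, so there are no maximal $ A_{4} $, $ S_{4} $ or $ A_{5} $. Also $ r $ is odd, so there is no maximal $ PGL_{2} (q_{0}) $. The candidates for $ M $ are therefore a Borel subgroup, a $ D_{q \pm 1} $, or a subfield subgroup $ PSL_{2} (5) $, the last being a conjugate of $ H $ since there is one such class.

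Next we rule out the first two. A Borel subgroup contains no $ K_{4} $ by Lemma \ref{brlklnfrsbgrp}. Now $ N_{G} (K) \geq N_{H} (K) \cong A_{4} $, and $ A_{4} $ has eight elements of order $ 3 $, whereas a dihedral group has at most two. Hence $ M $ is not dihedral. So $ M $ is a conjugate of $ H $, and $ M \cong A_{5} $.

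Inside $ M \cong A_{5} $, the normalizer of the $ K_{4} $ subgroup $ K $ is an $ A_{4} $. Thus
\[
N_{H} (K) \leq N_{G} (K) = N_{M} (K) \cong A_{4} .
\]
Comparing orders gives $ N_{G} (K) = N_{H} (K) $, and the identical argument gives $ N_{G} (K) = N_{H_{1}} (K) $. Hence $ N_{H} (K) \leq H \cap H_{1} = K $, which is impossible since $ |N_{H}(K)| = 12 > 4 $. Therefore $ H \cap H_{1} \not\cong K_{4} $.

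The step needing most care is reading the correct list of maximal subgroups from Table \ref{tablethree} for $ q = 5^{r} $. In particular we must confirm that no maximal $ A_{4} $, $ S_{4} $, $ A_{5} $ or $ PGL_{2} (q_{0}) $ occurs, so that the only non-Borel, non-dihedral maximal subgroups are conjugates of $ H $. Everything else is the subgroup structure of $ A_{5} $. As a sanity check, $ 5^{r} \equiv 5 \pmod 8 $, so $ 8 \nmid |G| $ and Lemma \ref{kfrsbgrpllcn} says all $ K_{4} $ in $ G $ are conjugate. This is consistent with the above, though the argument does not need it.
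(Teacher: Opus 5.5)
Your proposal is correct and follows essentially the same argument as the paper: you place $N_{G}(K)$ in a maximal subgroup of $PSL_{2}(5^{r})$, rule out Borel subgroups (no $K_{4}$) and dihedral subgroups (at most two elements of order $3$), and then use that in $A_{5}$ a Klein four-subgroup is normalized by only one $A_{4}$. You also make explicit a step the paper leaves implicit, namely that $N_{H}(K)$ and $N_{H_{1}}(K)$ cannot coincide because otherwise $N_{H}(K) \leq H \cap H_{1} = K$; this is exactly why the paper can assert that $N_{G}(H \cap H_{1})$ contains at least two $A_{4}$ subgroups.
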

\begin{proof}
Every $ K_{4} $ in $ A_{5} $ is normalized by an $ A_{4} $. So the fact $ PSL_{2} (5) \cong  A_{5} $ means that if $ H \cap H_{1} \cong K_{4} $ then $ N_{G} ( H \cap H_{1} ) $ contains at least two subgroups isomorphic to $ A_{4} $. As $ r $ is prime, the only subfield subgroups it has are those conjugate to $ H $. Therefore $ N_{G} ( H \cap H_{1} ) $ would have to be contained in one of the maximal subgroups of $ PSL_{2} (5^{r} ) $, which are of the one of the following types; Borel, $ D_{ (5^{r} - 1) } $, $ D_{ (5^{r} + 1) } $ or $ PSL_{2} (5) \cong  A_{5} $, which we now show is not possible.

\medskip
Lemma \ref{brlklnfrsbgrp} shows that $ N_{G} ( H \cap H_{1} ) $ cannot be a Borel subgroup because they do not have any Klein four-subgroups.

\medskip
Also $ N_{G} ( H \cap H_{1} ) $ does not lie in a $ D_{ (5^{r} - 1) } $ or $ D_{ (5^{r} + 1) } $ as each of these have at most two elements of order $ 3 $, so do not have any $ A_{4} $ subgroups.

\medskip
The only other possibility is that $ N_{G} ( H \cap H_{1} ) $ is a subgroup of a maximal $ A_{5} $. However each Klein four-subgroup in $ A_{5} $ is normalized by only one $ A_{4} $. Thus $ N_{G} ( H \cap H_{1} ) $ is not contained in an $ A_{5} $.
\end{proof}
\begin{lema}\label{kfrsbgrptwo}
Suppose $ G = PSL_{2} (q) $ where $ q $ is odd and $ q \geq 11 $. Suppose $ H \cong PSL_{2} (q_{0} ) $ where $ q_{0} \geq 7 $. Then $ H \cap H_{1} \not\cong K_{4} $.
\end{lema}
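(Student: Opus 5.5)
We argue by contradiction and aim to apply Lemma \ref{nvntnntrsctin}. So suppose $ H \cap H_{1} \cong K_{4} $ and pick an involution $ h \in H \cap H_{1} $. Since $ G = PSL_{2} (q) $ with $ q $ odd and $ q \geq 11 $, the hypotheses on $ G $ in Lemma \ref{nvntnntrsctin} hold. It therefore suffices to find $ c \in C_{H} (h) $ and $ c_{1} \in C_{H_{1}} (h) $ with $ o(c) = o(c_{1}) \geq 3 $.

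First we compute the centralizers. By Lemma \ref{nrmlzr} applied inside $ H \cong PSL_{2} (q_{0}) $, the group $ C_{H} (h) = N_{H} ( \langle h \rangle ) $ is dihedral of order $ q_{0} - 1 $ or $ q_{0} + 1 $. Which one occurs is forced: it is the one divisible by $ 4 $, since $ h $ is central in it. Because all involutions of $ PSL_{2} (q_{0}) $ are conjugate (Lemma \ref{pslnvltnscnjg}), this order $ m := q_{0} \pm 1 $ depends only on $ q_{0} \pmod 4 $, not on the choice of $ h $. The same holds in $ H_{1} $, which is conjugate to $ H $, so $ C_{H_{1}} (h) \cong D_{m} $ as well. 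Since $ q_{0} \geq 7 $, we have $ m \geq 8 $. Hence the rotational subgroup of each centralizer is cyclic of order $ m/2 \geq 4 $, and we may take $ c $ and $ c_{1} $ to be generators of these rotational subgroups. Then $ o(c) = o(c_{1}) = m/2 \geq 3 $.

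With this choice Lemma \ref{nvntnntrsctin} gives the contradiction directly. Explicitly, $ c $ and $ c_{1} $ are rotations in the dihedral group $ C_{G} (h) $, whose rotational subgroup is cyclic, so $ \langle c \rangle = \langle c_{1} \rangle \leq H \cap H_{1} $. This forces an element of order at least $ 4 $ into $ K_{4} $, which is impossible.

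The main obstacle is justifying that $ c $ and $ c_{1} $ are genuinely rotations of the dihedral group $ C_{G} (h) $, rather than reflections or elements outside its rotational subgroup. Since $ o(c) \geq 3 $ and reflections have order $ 2 $, $ c $ must be a rotation once we know $ c \in C_{G} (h) $, and this membership holds because $ C_{H} (h) \leq C_{G} (h) $. The subtle point is ensuring $ C_{H} (h) $ really is dihedral of order divisible by $ 4 $ with $ q_{0} \geq 7 $. Here I would check the boundary case $ q_{0} = 7 $, where $ m = 8 $ and the rotations have order $ 4 $, which is still fine.
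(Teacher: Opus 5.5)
Your proposal is correct and follows essentially the same route as the paper: assume $H \cap H_{1} \cong K_{4}$, take an involution $h \in H \cap H_{1}$, use Lemma \ref{nrmlzr} to see that $C_{H}(h)$ and $C_{H_{1}}(h)$ are isomorphic dihedral groups of order at least $8$, and then apply Lemma \ref{nvntnntrsctin}. The paper leaves implicit that the chosen elements must have equal order; you make this explicit by noting both centralizers are isomorphic to $D_{m}$ and taking generators of their rotational subgroups, which gives $o(c) = o(c_{1}) = m/2 \geq 4$.
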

\begin{proof}
If $ H \cap H_{1} \cong K_{4} $ then there exists an involution $ h \in H \cap H_{1} $. Lemma \ref{nrmlzr} tells us all involutions in $ H $ have isomorphic centralizers of order at least $ 8 $. So there exists elements of order at least $ 3 $ in these centralizers, allowing us to apply Lemma \ref{nvntnntrsctin}.
\end{proof}
\begin{lema}\label{kfrsbgrpthree}
Suppose $ q $ is odd. Suppose one of the following is true:
\begin{itemize}
\item $ H \cong PGL_{2} ( 5 ) $ or
\item $ G = PGL_{2} (q) $ and $ H \cong PGL_{2} (3) $, with $ q = 3^{m} $ for some $ m \geq 3 $.
\end{itemize}
Then $ H \cap H_{1} \not\cong K_{4} $.
\end{lema}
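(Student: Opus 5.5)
The plan is to argue by contradiction, in the spirit of Lemmas \ref{nvntnntrsctin} and \ref{kfrsbgrpone}. Suppose $ H \cap H_{1} \cong K_{4} $ and pick an involution $ h \in H \cap H_{1} $. I would aim to apply Lemma \ref{nvntnntrsctin}. For that I need elements $ c \in C_{H} (h) $ and $ c_{1} \in C_{H_{1}} (h) $ of the same order at least $ 3 $. If such a pair exists for one of the three involutions of $ H \cap H_{1} $, then $ \langle c \rangle = \langle c_{1} \rangle $ inside the rotational subgroup of the dihedral group $ C_{G} (h) $, and the contradiction follows. The hypotheses on $ q $ are satisfied: in the first case $ q = 5^{r} $ with $ r $ an odd prime, so $ q \geq 125 $, and in the second case $ q \geq 27 $.

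\textbf{Case $ H \cong PGL_{2} (5) \cong S_{5} $.} The Klein four-subgroups of $ S_{5} $ fall into two classes. One class consists of $ \langle (1\,2),(3\,4) \rangle $ and its conjugates. For the other class (the normal $ K_{4} $ of some $ A_{4} $), every involution is a double transposition. Any involution of $ S_{5} $ has centralizer of order $ 8 $ or $ 12 $ ($ D_{8} $ or $ C_{2} \times S_{3} $). The centralizer of a double transposition is $ D_{8} $, which contains a $ C_{4} $. The centralizer of a transposition contains a $ C_{3} $.

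I would choose $ h $ so that its conjugacy type in $ H $ and in $ H_{1} $ forces a common element order. Every $ K_{4} $ of $ S_{5} $ contains a double transposition. So the issue is to find an $ h $ that is a double transposition in both $ H $ and $ H_{1} $, giving $ c, c_{1} $ of order $ 4 $. Alternatively, $ h $ could be a transposition in both, giving $ c, c_{1} $ of order $ 3 $.

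The expected obstacle is a mismatched $ h $: a transposition in $ H $ but a double transposition in $ H_{1} $. I would handle this with $ G $-conjugacy classes. In $ PGL_{2}(q) $ there are two involution classes by Lemma \ref{pgltqnvcnjc}, and $ PSL_{2}(q) $ has one by Lemma \ref{pslnvltnscnjg}. Since $ H_{1} $ is $ G $-conjugate to $ H $, I would count, inside a fixed $ K_{4} $, which involutions lie in $ PSL_{2}(5) \cong A_{5} $. The double transpositions are exactly the involutions of $ H \cap S $, where $ S \cong PSL_{2}(q) $ is normal. Membership in $ S $ does not depend on whether we view an element in $ H $ or in $ H_{1} $. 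Hence each $ h \in H \cap H_{1} $ has the same type in $ H $ and in $ H_{1} $, and I would just take $ h \in S $.

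\textbf{Case $ G = PGL_{2} (3^{m}) $, $ H \cong PGL_{2}(3) \cong S_{4} $.} The same device applies, with $ S \cong PSL_{2}(q) $ normal of index $ 2 $. The involutions of $ H \cap S $ are those of $ A_{4} $, which are the double transpositions, with centralizer $ D_{8} $ in $ S_{4} $. There are two kinds of $ K_{4} $ to consider.

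If $ H \cap H_{1} $ is the normal $ K_{4} $ of $ H $, then it lies inside $ S \cap H_{1} \cong A_{4} $, so it is also normal in $ H_{1} $. This contradicts Lemma \ref{ntnrmlmxml}. Otherwise $ H \cap H_{1} $ is a non-normal $ K_{4} $, which contains exactly one double transposition $ h $. This $ h $ is also the unique element of $ H \cap H_{1} $ lying in $ S $, so it is a double transposition in $ H_{1} $ as well. Its centralizers in $ H $ and in $ H_{1} $ are both $ D_{8} $ and contain elements $ c, c_{1} $ of order $ 4 $. Lemma \ref{nvntnntrsctin} then finishes this case.

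The main point to verify is the claim that membership in $ S $ detects the involution type in both subgroups. This relies on $ H \cap S $ and $ H_{1} \cap S $ being the index-$2$ subgroups $ A_{5} $ (respectively $ A_{4} $), by Lemma \ref{mxmlsbpslntr} or a direct order argument. One exception needs separate treatment: in the first case with $ G = PSL_{2}(5^{r}) $ there is no such $ S $. If that configuration occurs, I would instead use that $ PSL_{2}(q) $ has a single involution class and that $ C_{G}(h) \cong D_{q \pm 1} $ contains a unique $ C_{4} $.
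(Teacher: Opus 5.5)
There is a genuine gap in the first case. The hypothesis $H \cong PGL_{2}(5)$ also covers $G = PSL_{2}(25)$, since by Table \ref{tablethree} the subfield $PGL_{2}(q_{0})$ is maximal in $PSL_{2}(q_{0}^{2})$. So your claim that $q = 5^{r}$ with $r$ an odd prime is wrong. The exception you flag is the group $PSL_{2}(25)$, not $PSL_{2}(5^{r})$; for odd prime $r$ the subfield subgroups of $PSL_{2}(5^{r})$ are $PSL_{2}(5)$.

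In $PSL_{2}(25)$ there is no normal subgroup of index $2$, so membership in $S$ cannot tell you whether an involution of $H \cap H_{1}$ has the same type in $H$ and in $H_{1}$. Your proposed replacement does not do this job either.
\begin{itemize}
\item $G$ has a single class of involutions, so conjugacy in $G$ carries no information about the type of $h$ in $H$ versus in $H_{1}$.
\item The unique $C_{4}$ in $C_{G}(h) \cong D_{24}$ only gives a contradiction once you already know $h$ is a double transposition in both subgroups.
\item If $h$ is a double transposition in $H$ but a transposition in $H_{1}$, you only get the $C_{4}$ of $C_{G}(h)$ inside $H$ and its $C_{3}$ inside $H_{1}$. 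Nothing is contradicted.
\end{itemize}
As written, the case $G = PSL_{2}(25)$ is therefore unproved.

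The missing idea is a counting argument, which needs no normal subgroup at all. A $K_{4}$ in $S_{5}$ consists either of three double transpositions, or of one double transposition and two transpositions.
\begin{itemize}
\item If all three involutions of $H \cap H_{1}$ are double transpositions in $H$, take any double transposition of $H_{1}$ lying in $H \cap H_{1}$; there is at least one, and it is a double transposition in both subgroups. The same holds with $H$ and $H_{1}$ swapped.
\item Otherwise each of $H$ and $H_{1}$ sees exactly two transpositions among the three involutions. Two $2$-subsets of a $3$-set must meet, so some involution is a transposition in both.
\end{itemize}
In $S_{5}$ the centralizer of a double transposition contains a $C_{4}$, and that of a transposition contains a $C_{3}$. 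So Lemma \ref{nvntnntrsctin} gives the contradiction in every case and for every admissible $G$; note $q = 25 \geq 11$.

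This is how the paper handles $H \cong PGL_{2}(5)$. The index-$2$ subgroup $S$ is only needed for $H \cong S_{4}$, where the centralizer of a transposition is a $K_{4}$ and the counting argument stalls. Your treatment of that case, where $G = PGL_{2}(3^{m})$ so that $S$ exists, is correct.
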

\begin{proof}
Note that $ PGL_{2} (5) \cong S_{5} $ and $ PGL_{2} (3) \cong S_{4} $. In $ S_{5} $ and $ S_{4} $ there are two types of involution; the single-transpositions $ ( x_{1} \ x_{2} ) $ and double-transpositions $ ( x_{1} \ x_{2} )( x_{3} \ x_{4} ) $.

\medskip
The Klein four-subgroups of $ S_{5} $ and $ S_{4} $ either have two single-transpositions and are of the form
\[
 \{ e , ( x_{1} \ x_{2} ) , \ ( x_{3} \ x_{4} ) , \ ( x_{1} \ x_{2} )( x_{3} \ x_{4} ) \}
 \]
 or have three double-transpositions and are of the form
 \[
 \{ e , ( x_{1} \ x_{2} )( x_{3} \ x_{4} ) , \ ( x_{1} \ x_{3} )( x_{2} \ x_{4} ) , \ ( x_{1} \ x_{4} )( x_{2} \ x_{3} ) \}.
 \]
Suppose $ H \cap H_{1} \cong K_{4} $.

\medskip
In both $ S_{5} $ and $ S_{4} $, the double-transpositions are centralized by an element of order $ 4 $. If the involutions in $ H \cap H_{1} $ are three double-transpositions in $ H $, then there exists $ h \in H \cap H_{1} $ such that $ h $ is also a double-transposition in $ H_{1} $ and is centralized by elements of order $ 4 $ in both $ H $ and $ H_{1} $. It follows from Lemma \ref{nvntnntrsctin} that $ H \cap H_{1} \not\cong K_{4} $, which would be a contradiction. Thus the involutions in $ H \cap H_{1} $ must consist of two single-transpositions and a double-transposition in $ H $.

\medskip
By the same reasoning $ H \cap H_{1} $ must consist of two single-transpositions and a double-transposition in $ H_{1} $. So there exists $ h_{1}  \in  H \cap H_{1} $ that is a single-transposition in both $ H $ and $ H_{1} $.

\medskip
Suppose $ H \cong PGL_{2} (5) $. In $ S_{5} $, single-transpositions are centralized by an element of order $ 3 $. So $ h_{1} $ is centralized by an element of order $ 3 $ in $ H $ and similarly in $ H_{1} $. Again we get $ H \cap H_{1} \not\cong K_{4} $ using Lemma \ref{nvntnntrsctin}. Another contradiction, showing that the assumption $ H \cap H_{1} \cong K_{4} $ must be wrong in this case.

\medskip
Next suppose $ G = PGL_{2} (q) $ and $ H \cong PGL_{2} (3) $. Write the elements of the intersection as $ H \cap H_{1} = \{ 1_{G} , h_{1} , h_{2} , h_{3} \} $. Suppose $ h_{2} $ is the double-transposition in $ H $. Again using Lemma \ref{nvntnntrsctin}, $ h_{2} $ cannot be the double-transposition in $ H_{1} $. Therefore $ h_{3} $ is the double-transposition in $ H_{1} $ (and must be a single transposition in $ H $).

\medskip
Now Lemma \ref{pslsnpgl} shows there exists $ S \trianglelefteq G $ with $ S \cong PSL_{2} (q) $. Notice that $ H $ is not normal in $ G $ by Lemma \ref{nrmlsbgrpspgl} and $ q $ is non-prime with $ q \geq 3^{3} = 27 $, so $ H \cap S $ is maximal in $ S $ and $ | H \cap S | = \tfrac{1}{2} | H | $ by Lemma \ref{mxmlsbpslntr}.

\medskip
The only subgroup of $ S_{4} $ of index $ 2 $ is $ A_{4} $, which contains all the double-transpositions of $ S_{4} $ but none of the single-transpositions. Hence $ h_{2} \in H \cap S $ and $ h_{1} , h_{3} \notin H \cap S $, in particular $ h_{3} \notin S $.

\medskip
Following the same method as above with $ H_{1} $ instead of $ H $ we get $ h_{3} \in H_{1} \cap S $, which is a contradiction. Thus $ H \cap H_{1} \not\cong K_{4} $ in this case too.
\end{proof}
\begin{lema}\label{kfrsbgrpfive}
Suppose $ q $ is odd, $ G = PSL_{2} (q) $ or $ G = PGL_{2} (q) $ and $ H \cong PGL_{2} ( q_{0} ) $. If $ q_{0} \geq 7 $ then $ H \cap H_{1} \not\cong K_{4} $.
\end{lema}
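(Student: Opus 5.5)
Suppose for a contradiction that $ H \cap H_{1} \cong K_{4} $, and pick an involution $ h \in H \cap H_{1} $. The plan is to reduce to Lemma \ref{nvntnntrsctin}. We would like elements $ c \in C_{H} (h) $ and $ c_{1} \in C_{H_{1}} (h) $ of the same order at least $ 3 $. In $ H \cong PGL_{2} (q_{0}) $, Lemma \ref{nrmlzr} shows that $ C_{H}(h) $ is a dihedral group $ D_{2(q_{0} \pm 1)} $, of order at least $ 2(7-1) = 12 $. Its rotational subgroup $ C_{(q_{0} \pm 1)} $ then has order at least $ 6 $, so it contains elements of order at least $ 3 $. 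The only difficulty is making the orders match. In $ PGL_{2}(q_{0}) $ there are two conjugacy classes of involutions (Lemma \ref{pgltqnvcnjc}). Centralizers of involutions in one class are $ D_{2(q_{0}-1)} $ and in the other are $ D_{2(q_{0}+1)} $. So $ h $ may lie in different classes in $ H $ and in $ H_{1} $.

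To avoid this, we take $ c $ and $ c_{1} $ to have order $ 4 $ when possible, or more simply order $ \tfrac{1}{2}(q_{0} \pm 1) $. The cleanest choice uses the fact that $ q_{0} - 1 $ and $ q_{0} + 1 $ are both even and one of them is divisible by $ 4 $. First suppose $ 4 \mid (q_{0} - 1) $ and $ 4 \mid (q_{0}+1) $ fails, say. The rotational subgroup of $ D_{2(q_{0} \pm 1)} $ always contains the central involution $ h $ itself, so an element of order $ 4 $ squaring to $ h $ exists exactly when $ 4 \mid (q_{0} \pm 1) $ for the relevant sign. This does not immediately match. Instead, I would use Lemma \ref{nvntnntrsctin} directly in each of the four sign combinations, together with the structure of $ C_{G}(h) $. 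By Lemma \ref{nrmlzr}, $ C_{G}(h) $ is dihedral with a cyclic rotational subgroup $ R $. Both rotational subgroups $ R_{H} \le C_{H}(h) $ and $ R_{H_{1}} \le C_{H_{1}}(h) $ lie in $ R $: they have order at least $ 6 $, and an element of order greater than $ 2 $ in a dihedral group is a rotation. A cyclic group has a unique subgroup of each order, so $ R_{H} \cap R_{H_{1}} $ is the cyclic subgroup of order $ \gcd(|R_{H}|, |R_{H_{1}}|) $. Since $ \gcd(q_{0}-1, q_{0}+1) = 2 $, this gcd is either $ q_{0} \pm 1 \ge 6 $ when the signs agree, or $ 2 $ when they differ. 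If the signs agree, Lemma \ref{nvntnntrsctin} applies, giving the contradiction.

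The main obstacle is the mixed case, where $ C_{H}(h) \cong D_{2(q_{0}-1)} $ and $ C_{H_{1}}(h) \cong D_{2(q_{0}+1)} $. I would handle it as in Lemma \ref{kfrsbgrpthree}, using an involution other than $ h $. The three involutions of $ K := H \cap H_{1} $ all commute, so each lies in the centralizer of the others. For each involution $ k \in K $ we record its type (the sign $ \pm $) in $ H $ and in $ H_{1} $. By Lemma \ref{refconjug} and Lemma \ref{allkfrcnjdm} applied inside $ C_{H}(h) $, a $ K_{4} $ in $ PGL_{2}(q_{0}) $ containing a central rotation has a prescribed distribution of types among its three involutions. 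The count is determined by $ q_{0} \bmod 4 $ and by $ S = PSL_{2}(q_{0}) $-membership, as in Lemma \ref{pglcnjclssrfl}. A parity or pigeonhole count should then force some $ k \in K $ to have the same type in $ H $ and $ H_{1} $. Applying the matched-sign argument to that $ k $ gives the contradiction. If the count does not close, the fallback is the PSL-index argument from Lemma \ref{kfrsbgrpthree}: intersect with $ S \trianglelefteq G $ (or with $ PSL_{2}(q) $ when $ G = PGL_{2}(q) $) and use Lemma \ref{mxmlsbpslntr} to compare which involutions of $ K $ lie in the index-$2$ subgroups of $ H $ and $ H_{1} $.
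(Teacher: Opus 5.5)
Your matched case is correct and is the paper's first step: the rotation subgroups of $C_{H}(h)$ and $C_{H_{1}}(h)$ both lie in the cyclic rotation subgroup of $C_{G}(h)$, and when the signs agree Lemma~\ref{nvntnntrsctin} applies.

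The gap is the mixed case, which is the real content of the lemma. You only assert that ``a parity or pigeonhole count should'' produce an involution of $K$ with the same type in $H$ and in $H_{1}$, without saying what is counted. The claim you rely on, that a $K_{4}$ containing a central rotation has a prescribed distribution of types, is false as stated. If $h$ lies in the $PSL_{2}(q_{0})$ of $H$, then $|C_{H}(h)|$ is divisible by $8$. The two reflections in $\{1,h,r,rh\}$ can then both lie in $PSL_{2}(q_{0})$ or both lie outside it. Your fallback via Lemma~\ref{kfrsbgrpthree} also needs a normal subgroup of index $2$ in $G$. No such subgroup exists when $G = PSL_{2}(q)$, which is the case $q = q_{0}^{2}$, since $G$ is then simple.

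The missing observation is the following.
\begin{itemize}
\item By Lemma~\ref{pslnvltnscnjg}, the involutions of $PSL_{2}(q_{0}) \trianglelefteq H$ form a single $H$-class. So, by Lemma~\ref{pgltqnvcnjc}, the two classes of involutions of $H$ are exactly those inside and those outside $PSL_{2}(q_{0})$. Hence the type $\pm$ of an involution in $H$ is decided by membership in $H$'s own $PSL_{2}(q_{0})$. The same holds in $H_{1}$ with its own $PSL_{2}(q_{0})$.
\item Since $h_{3} = h_{1}h_{2}$, the map $K \to H/PSL_{2}(q_{0}) \cong C_{2}$ is a homomorphism. So $K$ has $0$ or $2$ involutions outside the $PSL_{2}(q_{0})$ of $H$, and likewise for $H_{1}$.
\item If every involution of $K$ had different types in $H$ and $H_{1}$, these two even numbers would sum to $3$, a contradiction.
\end{itemize}
So some $h_{i}$ has matching centralizer orders, and your matched case finishes the proof. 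This parity argument is shorter than the paper's route.

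The paper instead argues as follows. It uses pigeonhole to put two of the $C_{H}(h_{i})$ in the same conjugacy class of maximal dihedral subgroups. It then applies Lemma~\ref{refconjug} in the centralizer whose order is divisible by $8$, to show the three $h_{i}$ are pairwise conjugate in both $H$ and $H_{1}$. Finally it contradicts this using Lemmas~\ref{refconjug} and~\ref{pglcnjclssrfl} in whichever of $H$, $H_{1}$ has centralizers of order $\equiv 4 \pmod{8}$.
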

\begin{proof}
Suppose $ H \cap H_{1} \cong K_{4} $. Let $ h_{1} , h_{2} , h_{3} \in H \cap H_{1} $ be the three involutions in the group. By Lemma \ref{nrmlzr}, for each $ i \in \{ 1, 2 , 3 \} $ the subgroups $ C_{H} (h_{i} ) $ and $ C_{H_{1} } (h_{i} ) $ are dihedral of order at least $  14 $.

\medskip
If $ | C_{H} (h_{j} )| = | C_{H_{1} } (h_{j} ) | $ for some $ j \in \{ 1, 2 , 3 \}  $ then there exists $ c \in C_{H} (h_{j} ) $ and $ c_{1} \in C_{H_{1} } (h_{j} ) $ with $ o(c) = o(c_{1} ) \geq 3 $. Using Lemma \ref{nvntnntrsctin} we find $ H \cap H_{1} \not\cong K_{4} $, a contradiction. Hence $ | C_{H} (h_{j} )| \neq | C_{H_{1} } (h_{j} ) | $.

\medskip
Let $ A_{H} $ and $ B_{H} $ be the two conjugacy classes of maximal dihedrals in $ H $, one containing subgroups of order $ 2( q_{0} + 1 ) $ and the other containing subgroups of order $ 2( q_{0} - 1 ) $. One of the conjugacy classes contains at least two of the centralizers. So we can assume $ C_{H} (h_{1} ) , C_{H} (h_{2} ) \in A_{H} $.

\medskip
Let $ A_{H_{1} } $ and $ B_{H_{1} } $ be the two conjugacy class of maximal dihedrals in $ H_{1} $, where the subgroups in $ A_{H_{1} } $ have the same order as those in $ A_{H} $ and the subgroups in $ B_{H_{1} } $ have the same order as those in $ B_{H} $. It must be that $ C_{H_{1}} (h_{1} ) , C_{H_{1}} (h_{2} ) \in B_{H_{1}} $.

\medskip
Since $ q $ is odd, $ q_{0} $ is also odd. Only one of $ q_{0} - 1 $ or $ q_{0} + 1 $ is divisible by $ 4 $, so exactly one of $ A_{H} $ or $ B_{H} $ contains dihedrals whose order is divisible by $ 8 $.

\medskip
Assume the subgroups in $ A_{H} $ have order divisible by $ 8 $. As $ h_{2} $ and $ h_{3} $ are reflections in $ C_{H} (h_{1} ) $, they are conjugate in $ C_{H} (h_{1} ) $ by Lemma \ref{refconjug}. Therefore they are conjugate in $ H $. Hence $ C_{H} (h_{3} ) \in A_{H} $. It follows that $ C_{H_{1}} (h_{3} ) \in B_{H_{1}} $ and so $ h_{1} , h_{2} $ and $ h_{3} $ are all conjuate in $ H_{1} $.

\medskip
Both $ h_{2} $ and $ h_{3} $ are reflections in $ C_{H_{1}} (h_{1} ) $. The subgroups in $ B_{H_{1} } $ are not divisible by $ 8 $, so Lemma \ref{refconjug} shows $ h_{2} $ and $ h_{3} $ are not conjugate in $  C_{H_{1}} (h_{1} ) $. By Lemma \ref{pglcnjclssrfl}, the two conjugacy classes of reflections in $ C_{H_{1}} (h_{1} ) $ are contained in separate conjugacy classes in $ H_{1} $. In particular, $ h_{2} $ and $ h_{3} $ are not conjugate to each other in $ H_{1} $. A contradiction. Thus the assumption that the subgroups in $ A_{H} $ have order divisible by $ 8 $ must be wrong.

\medskip
The only possibility left is the subgroups in $ A_{H} $ are not divisible by $ 8 $, which implies the subgroups in $ B_{H_{1} } $ are divisible by $ 8 $. The same reasoning as above can be applied with the roles of $ A_{H} $ and $ B_{H_{1} } $ swapped, which again leads to a contradiction. Hence the original assumption $ H \cap H_{1} \cong K_{4} $ must be wrong. 
\end{proof}
Collecting Lemmas \ref{kfrsbgrpone}, \ref{kfrsbgrptwo}, \ref{kfrsbgrpthree} and \ref{kfrsbgrpfive} together, we get;
\begin{lema}\label{kfrsbgrpcllctn}
Suppose $q $ is odd, $ G \neq PSL_{2} (9) $ and $ H \not\cong PSL_{2} (3) $. Then $ H \cap H_{1} \not\cong K_{4} $.
\end{lema}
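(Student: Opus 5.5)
The plan is a case analysis over the possible isomorphism types of the maximal subfield subgroup $H$. Each case is dispatched by one of Lemmas \ref{kfrsbgrpone}, \ref{kfrsbgrptwo}, \ref{kfrsbgrpthree} and \ref{kfrsbgrpfive}. Since $q$ is odd and not prime, the maximal subfield subgroups from Tables \ref{tableone} and \ref{tablethree} are of three kinds:
\begin{itemize}
\item $H \cong PSL_{2}(q_{0})$ in $PSL_{2}(q)$ with $q = q_{0}^{r}$ for an odd prime $r$;
\item $H \cong PGL_{2}(q_{0})$ in $PSL_{2}(q)$ with $q = q_{0}^{2}$;
\item $H \cong PGL_{2}(q_{0})$ in $PGL_{2}(q)$ with $q = q_{0}^{r}$ for an odd prime $r$.
\end{itemize}
I would first record that $q_{0} \geq 3$ is odd, so $q_{0} \in \{3,5\}$ or $q_{0} \geq 7$.

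\textbf{Case $H \cong PSL_{2}(q_{0})$.} By hypothesis $q_{0} \neq 3$, so either $q_{0} = 5$ or $q_{0} \geq 7$. If $q_{0} = 5$, then $q = 5^{r}$ with $r$ an odd prime, hence $r \geq 3$, and Lemma \ref{kfrsbgrpone} applies. If $q_{0} \geq 7$, then $q \geq 7^{3} \geq 11$, and Lemma \ref{kfrsbgrptwo} applies.

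\textbf{Case $H \cong PGL_{2}(q_{0})$.} If $q_{0} \geq 7$, Lemma \ref{kfrsbgrpfive} applies directly, for either choice of $G$. If $q_{0} = 5$, the first bullet of Lemma \ref{kfrsbgrpthree} applies. If $q_{0} = 3$, then either $G = PSL_{2}(9)$, which is excluded by hypothesis, or $G = PGL_{2}(3^{r})$ with $r$ an odd prime. In the latter case $r \geq 3$, so the second bullet of Lemma \ref{kfrsbgrpthree} applies.

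The only real obstacle is checking that this enumeration is exhaustive and that each cited lemma's side conditions are met. The key point is that $q_{0} = 3$ with $H \cong PGL_{2}(3)$ inside $PSL_{2}(q)$ only occurs for $q = 9$. I would confirm this against Table \ref{tablethree}, where the $PGL_{2}(q_{0})$ subfield subgroups of $PSL_{2}(q)$ require $q = q_{0}^{2}$. Combining the cases gives $H \cap H_{1} \not\cong K_{4}$.
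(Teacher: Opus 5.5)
Your proposal is correct and matches the paper's argument, which obtains this lemma simply by collecting Lemmas \ref{kfrsbgrpone}, \ref{kfrsbgrptwo}, \ref{kfrsbgrpthree} and \ref{kfrsbgrpfive}. Your explicit case split over the possible maximal subfield subgroups spells out the bookkeeping the paper leaves implicit. That split uses $q_{0}\in\{3,5\}$ or $q_{0}\geq 7$, together with the observation that $H\cong PGL_{2}(3)$ inside $PSL_{2}(q)$ forces $q=9$, and each cited lemma's hypotheses are satisfied as you describe.
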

With the possibility of $ H \cap H_{1} $ being a Klein four-group ruled out, a few more lemmas look at what other group it could be.
\begin{lema}\label{cyclccntrlz}
Let $ c \in H \cap H_{1} $. Suppose $ o(c) > 2 $ and $ o(c) $ is coprime to $ q $. Then there exists $ C \leq H \cap H_{1} $ with $ c \in C $ and $ C \cong C_{ (q_{0} \pm 1) / \delta _{0} }  $.
\end{lema}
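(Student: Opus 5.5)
The plan is to work inside the normalizer of $\langle c\rangle$, using the structure results of Lemmas \ref{nrmlzr}, \ref{sylwlmma} and \ref{dcvlkjhsdp}. The element $c$ lies in both $H$ and $H_{1}$, each isomorphic to $PSL_{2}(q_{0})$ or $PGL_{2}(q_{0})$. Since $o(c)$ is coprime to $q$, it is coprime to $q_{0}$.

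First apply Lemma \ref{nrmlzr} inside $H$. The normalizer $N_{H}(\langle c\rangle)$ is dihedral of order $2(q_{0}\pm 1)/\delta_{0}$. The proof of that lemma, via Lemma \ref{sylwlmma} and Lemma \ref{dcvlkjhsdp}, shows that $c$ lies in its rotational subgroup $C_{H}\cong C_{(q_{0}\pm1)/\delta_{0}}$. The same argument in $H_{1}$ gives a cyclic subgroup $C_{H_{1}}\cong C_{(q_{0}\pm1)/\delta_{0}}$ containing $c$.

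There are two things to check here.
\begin{enumerate}
\item The sign $\pm$ is the same for $H$ and $H_{1}$. This holds because $H$ and $H_{1}$ are conjugate in $G$, and because $o(c)>2$ divides only one of $q_{0}-1$ and $q_{0}+1$ when $o(c)$ has an odd prime factor. When $o(c)$ is a power of $2$ that is at least $4$, note that $\gcd(q_{0}-1,q_{0}+1)=2$, so a $4$ divides only one of them.
\item We must handle the small exceptional subfield cases such as $PSL_{2}(3)$, but that case is excluded. The cases $q_{0}=4,5$ can be checked directly.
\end{enumerate}

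Next apply Lemma \ref{nrmlzr} in $G$ itself. The normalizer $N_{G}(\langle c\rangle)$ is dihedral with cyclic rotational subgroup $R$. Since $o(c)>2$, every element of $G$ commuting with $c$ lies in $R$. This is because reflections invert rotations of order greater than $2$, and both $C_{H}$ and $C_{H_{1}}$ are abelian groups containing $c$. So $C_{H}, C_{H_{1}}\le C_{G}(c)\le R$.

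A cyclic group has a unique subgroup of each order, and $|C_{H}|=|C_{H_{1}}|$. Hence $C_{H}=C_{H_{1}}$, and this common subgroup is the required $C\le H\cap H_{1}$.

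I expect the main obstacle to be justifying that $c$ lies in the rotational subgroup of $N_{H}(\langle c\rangle)$, and that these rotational subgroups have the same order in $H$ and $H_{1}$. The case where $o(c)$ is a power of $2$ is the delicate one, since Lemma \ref{nrmlzr} then routes through the centralizer of an involution. To handle it, I would argue that $o(c)\ge 4$ forces $c$ to be a rotation, since reflections have order $2$. Its order then determines the sign via divisibility by $4$. Conjugacy of $H$ and $H_{1}$ makes the orders match.
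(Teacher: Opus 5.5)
Your proposal is correct and follows essentially the same route as the paper. Both arguments go: $N_H(\langle c\rangle)$ and $N_{H_1}(\langle c\rangle)$ are dihedral with $c$ a rotation; their orders agree because $o(c)>2$ cannot divide both $q_0-1$ and $q_0+1$; and both cyclic rotational subgroups then sit in the cyclic rotational subgroup of $N_G(\langle c\rangle)$, so they coincide. Your separate check of $q_0=4,5$ is unnecessary, since Lemma \ref{nrmlzr} already covers every $q\geq 3$.
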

\begin{proof}
Observe $ o(c) $ is coprime to $ q_{0} $. Therefore $ N_{H} ( \langle c \rangle ) \cong D_{ 2( q_{0} \pm 1 ) / \delta _{0} } $ by Lemma \ref{nrmlzr}. Similarly $ N_{H_{1} } ( \langle c \rangle ) $ is either a $ D_{ 2( q_{0} - 1 )  / \delta  _{0} } $ or $ D_{ 2( q_{0} + 1 )  / \delta  _{0} } $. Also $ c $ belongs in the rotational subgroups of these normalizers. If $ N_{H} ( \langle c \rangle ) \not\cong N_{H_{1}} ( \langle c \rangle ) $ then $ o(c) $ divides both $ | C_{ ( q_{0} - 1 )  / \delta } | $ and $ | C_{ ( q_{0} + 1 )  / \delta } | $, in particular divides $  q_{0} - 1  $ and $  q_{0} + 1  $, which is not possible. Therefore $ N_{H} ( \langle c \rangle ) \cong N_{H_{1}} ( \langle c \rangle ) $.

\medskip
Using Lemma \ref{nrmlzr} again we have $ N_{G} ( \langle c \rangle ) \cong D_{ 2( q \pm 1 ) / \delta } $. Let $ C $ be the rotational subgroup of $ N_{H} ( \langle c \rangle ) $. Then $ C $ is cyclic, $ | C | \geq o(c) \geq 3 $ and $ C $ centralizes $ \langle c \rangle $. So $ C $ is a subgroup of the rotational subgroup of $ N_{G} ( \langle c \rangle ) $. This is the only subgroup of $ N_{G} ( \langle c \rangle ) $ of order $ | C | $. Also $ N_{G} ( \langle c \rangle ) $ contains the rotational subgroup of $ N_{H_{1}} ( \langle c \rangle ) $, which is cyclic of order $ | C | $ and thus must be $ C $.
\end{proof}
\begin{lema}\label{ntadhdrlntr}
If $ |H \cap H_{1} | > 2 $ and $ H \cap H_{1} $ is a subgroup of some $ D < H $ where $ D \cong D_{ 2(q_{0} \pm 1) / \delta _{0} } $, then either $ H \cap H_{1} \cong D_{ 2(q_{0} \pm 1) / \delta _{0} } $ or $ H \cap H_{1} \cong C_{ (q_{0} \pm 1) / \delta _{0} } $. If $ H \cap H_{1} \cong D_{ 2(q_{0} \pm 1) / \delta _{0} } $ then $ q $ is odd.
\end{lema}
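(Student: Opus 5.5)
We argue by looking at the rotational part of the intersection. Write $ R < D $ for the rotational subgroup of $ D $, so $ R \cong C_{(q_{0} \pm 1)/\delta _{0}} $, and put $ K := (H \cap H_{1}) \cap R $. Every subgroup of a dihedral group either lies in $ R $ or is dihedral with rotational part $ K $. So it suffices to show that if $ | H \cap H_{1} | > 2 $ then $ K = R $. Then $ H \cap H_{1} $ is either $ R $ itself, giving $ C_{(q_{0} \pm 1)/\delta _{0}} $, or is all of $ D $, giving $ D_{2(q_{0} \pm 1)/\delta _{0}} $.

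First we show $ K $ contains an element of order $ > 2 $, or else handle the small cases directly. Since $ | H \cap H_{1} | > 2 $, either $ H \cap H_{1} \leq R $ is cyclic of order $ \geq 3 $, or it is dihedral of order $ \geq 4 $. In the dihedral case of order at least $ 6 $, the rotational part $ K $ has order $ \geq 3 $. The dihedral case of order exactly $ 4 $ is a Klein four-group:
\begin{itemize}
\item for $ q $ odd it is excluded by Lemma \ref{kfrsbgrpcllctn};
\item for $ q $ even, $ D $ has odd rotational order, so there is no $ K_{4} $ in $ D $ at all.
\end{itemize}
Now pick $ c \in K $ with $ o(c) > 2 $. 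Since $ o(c) $ divides $ q_{0} \pm 1 $, it is coprime to $ q $. Lemma \ref{cyclccntrlz} then yields a cyclic $ C \leq H \cap H_{1} $ with $ c \in C $ and $ C \cong C_{(q_{0} \pm 1)/\delta _{0}} $.

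The key step is to identify $ C $ with $ R $. By Lemma \ref{nrmlzr} (applied in $ H $), $ N_{H}(\langle c \rangle) $ is a dihedral subgroup of $ H $ of order $ 2(q_{0} \pm 1)/\delta _{0} $. Both $ C $ and $ R $ centralize $ c $, and Lemma \ref{dcvlkjhsdp} gives $ D = N_{H}(\langle c \rangle) $. The rotational subgroup of this dihedral group is its unique cyclic subgroup of that order, since that order is at least $ 3 $. Hence $ C = R $, so $ R \leq H \cap H_{1} $ and $ K = R $, which proves the dichotomy. The main subtlety is that $ C $ and $ R $ might a priori have the orders $ q_{0}-1 $ and $ q_{0}+1 $ swapped. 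This is ruled out because $ o(c) > 2 $ divides the order of both, and $ \gcd(q_{0}-1, q_{0}+1) \leq 2 $.

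Finally, suppose $ H \cap H_{1} = D $ and $ q $ is even. Then $ R \leq H \cap H_{1} $ and $ R \trianglelefteq D $, and also $ R $ is normal in the dihedral $ N_{H_{1}}(\langle c\rangle) $, which by Lemma \ref{nrmlzr} in $ H_{1} $ has the same order as $ D $ and so equals $ D $. Consequently both $ N_{G}(R) \geq D $ and $ N_{G}(R) \cong D_{2(q \pm 1)} $. To reach a contradiction, we use the fact that for $ q $ even all involutions are conjugate and every reflection of $ D $ inverts $ R $. We then aim to show that the reflections of $ D $ must be reflections of $ N_{G}(R) $, producing a $ C_{2} \times C_{2} $ or a central involution in $ N_{G}(R) $, which cannot exist because $ q \pm 1 $ is odd.

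The main obstacle is this last parity step. If that route fails, we would instead argue via Lemma \ref{ntnrmlmxml} that $ D \trianglelefteq $ both $ H $ and $ H_{1} $ is impossible. We expect, however, to need the specific structure of $ H $ and $ H_{1} $ as $ PSL_{2}(q_{0}) $ in even characteristic.
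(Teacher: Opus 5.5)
Your proof of the dichotomy is correct and is essentially the paper's argument. You look at the rotational part of $H\cap H_{1}$ and exclude $K_{4}$, via Lemma \ref{kfrsbgrpcllctn} for $q$ odd and odd rotation order for $q$ even. You then apply Lemma \ref{cyclccntrlz} and use $\gcd(q_{0}-1,q_{0}+1)\le 2$ to see that the resulting cyclic group is all of $R$. The normalizer detour through Lemma \ref{dcvlkjhsdp} is unnecessary, because $C\le H\cap H_{1}\le D$ holds by hypothesis. Dropping it also avoids a small problem: that lemma is only stated for $q\ge 4$, while $H\cong PGL_{2}(3)$ is allowed here.

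The parity statement is not proved, and the route you sketch cannot work. For $q$ even, the containment $D\le N_{G}(R)\cong D_{2(q\pm1)}$ is perfectly consistent. $D\cong D_{2(q_{0}\pm1)}$ sits in $D_{2(q\pm1)}$ as an ordinary dihedral subgroup whose odd rotation order $|R|$ divides $q\pm1$. Its reflections are simply reflections of $N_{G}(R)$, and no Klein four-group or central involution ever appears; global conjugacy of involutions does not change this. Your fallback also fails, since $D$ is not normal in the simple group $H\cong PSL_{2}(q_{0})$, so Lemma \ref{ntnrmlmxml} gives nothing.

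What is missing is any use of the hypothesis $H\neq H_{1}$. The paper argues as follows.
\begin{itemize}
\item Write $H_{1}=h^{-1}Hh$. Then $h^{-1}Dh$ and $D$ are both $D_{2(q_{0}\pm1)}$ subgroups of $H_{1}$, so they are conjugate in $H_{1}$ by Lemma \ref{nrmlzr}. Hence there is $g\in G$ with $g^{-1}Hg=H_{1}$ and $g^{-1}Dg=D$.
\item Since $|R|$ is odd, all reflections of $D$ are conjugate under $R\le H\cap H_{1}$. Replacing $g$ by $gs$ for a suitable $s\in R$, you may assume $g$ centralizes a reflection $r\in D$.
\item $g$ normalizes $R$, which is characteristic in $D$, so $g\in C_{N_{G}(R)}(r)$. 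In a dihedral group with odd rotation order the centralizer of a reflection is $\langle r\rangle$, so $g\in D\le H$.
\item Then $H_{1}=g^{-1}Hg=H$, a contradiction.
\end{itemize}
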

\begin{proof}
Put $ K := H \cap H_{1} $. If $ | D | = 4 $ then the result is true as the only possibility is $ K = D $. So assume $ |D | > 4 $.

\medskip
Let $ C $ be the rotational subgroup of $ D$. Put $ C_{K} := K \cap C $. It must be that $ | C_{K} | > 1 $, otherwise $ | K | \leq 2 $. The only way that $  | C_{K} | = 2 $ is if $ K \cong K_{4} $. The subgroup $ D $ can only have $ K_{4} $ subgroups if $ q_{0} $ is odd, however we then have $ K \not\cong K_{4} $ by Lemma \ref{kfrsbgrpcllctn}. So $  | C_{K} | > 2 $.

\medskip
The subgroup $ C_{K} $ is cyclic, so $ C_{K} =  \langle c \rangle $ for some $ c \in K $. Note that $ o(c) $ is coprime to $ q_{0} $ because it divides $ | C | = (q_{0} \pm 1 ) / \delta _{0} $. Since $ o(c) > 2 $, there exists a $ C^{*} \leq K $ with $ c \in C^{*} $ and $ C^{*} \cong C_{ ( q_{0} + 1 )  / \delta _{0} } $ or $ C^{*} \cong C_{ ( q_{0} - 1 )  / \delta _{0} } $ by Lemma \ref{cyclccntrlz}. Clearly $ C^{*} \leq C $.

\medskip
The only way $ C^{*} \neq C $ would be if $ C^{*} \cong C_{ ( q_{0} - 1 )  / \delta _{0} } $ and $ C \cong C_{ ( q_{0} + 1 )  / \delta _{0} } $. For this to happen, $ C^{*} = \langle c^{*} \rangle $ for some non-identity $ c^{*} $ whose order divides both $ q_{0} -1 $ and $ q_{0} + 1 $. Therefore $ | C^{*} | = o(c^{*} ) = 2 $ and $ c \not\in C^{*} $, a contradiction. Hence $ C = C^{*} \leq K $. Thus either $ K \cong C $ or $ K \cong D $.

\medskip
Suppose $ K \cong D_{ 2(q_{0} \pm 1) / \delta _{0} }  $. If $ q_{0} $ is even then $ q $ is also even. Also $ q_{0} \geq 4 $, so $ K $ is maximal in both $ H $ and $ H_{1} $ by Lemma \ref{nrmlzr} and the maximal $ D_{ 2(q_{0} \pm 1) / \delta _{0} } $ in these groups lie in a single conjugacy class. Therefore there exists $ g \in G $ such that $ H_{1} = g^{-1} H g $ and $ g^{-1} K g = K $. Let $ r \in K $ be a reflection. Let $ R $ be the rotational subgroup of $ K $. This has odd order $ | R | = q_{0} \pm 1 $. So all reflections in $ K $ are conjugate and there exists $ s \in R $ such that $ r = s^{-1} (g^{-1}  r g)s = (gs) ^{-1} r (gs) $. Since $ s \in H_{1} $ and $ s \in K $, it normalizes both $ H_{1} $ and $ K $. So we may as well assume $ g^{-1} r g = r $. Using Lemma \ref{nrmlzr} again, we have $ N_{G} (R) \cong D_{ 2(q + 1) / \delta } $ or $ N_{G} (R) \cong D_{ 2(q - 1) / \delta } $ and this must contain $ g $. Since $ q -1 $ and $ q+1$ are odd, $ \langle r \rangle $ is a non-normal Sylow $2$-subgroup of $ N_{G} (R) $ and so it stabilizes itself. Therefore $ g = r $ or $ g = 1_{G} $. Either way, $ g \in K \leq H $ and $ g^{-1} H g = H \neq H_{1} $, which is a contradiction. Thus $ q_{0} $ is not even.
\end{proof}

\newpage
\begin{lema}\label{lttlbrlnrm}
Let $ B_{0} $ be a maximal Borel subgroup of $ H $. Then $ N_{G} (B_{0} ) = B_{0} $.
\end{lema}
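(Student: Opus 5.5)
The natural plan is to pass from $B_{0}$ to its unipotent radical. Write $B_{0} = U_{0} \rtimes T_{0}$, where $U_{0}$ is the elementary abelian $p$-subgroup of order $q_{0}$ and $T_{0} \cong C_{(q_{0}-1)/\delta_{0}}$; this decomposition comes from Lemma \ref{brlcyclcprts} applied inside $H$. By the same lemma, $U_{0}$ is the unique $p$-subgroup of order $q_{0}$ in $B_{0}$, so $U_{0}$ is characteristic in $B_{0}$. Hence $N_{G}(B_{0}) \leq N_{G}(U_{0})$. Since $U_{0}$ is a non-trivial $p$-group and $G \neq PSL_{2}(3)$, Lemma \ref{nrmlzrnbrlx} shows that $N_{G}(U_{0})$ lies in exactly one maximal subgroup $B$ of $G$, and $B$ is a Borel subgroup. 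So it suffices to compute $N_{B}(B_{0})$.

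Inside $B = U \rtimes T$, with $U \cong E_{q}$ containing $U_{0}$, I would take $g \in N_{B}(B_{0})$ and use the last bullet of Lemma \ref{brlcyclcprts} to split into two cases.

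\emph{Case $g \in U$.} Then $g$ centralizes $U_{0}$, and $g^{-1} T_{0} g$ is a subgroup of $B_{0}$ of order $|T_{0}|$. By Sylow/Hall theory in $B_{0}$ (or by Lemma \ref{sbgrpfbrlq} and Corollary \ref{crlrsmllbrl}), $g^{-1}T_{0}g = u^{-1} T_{0} u$ for some $u \in U_{0}$. Then $gu^{-1} \in U$ normalizes $T_{0}$. I would then show that an element of $U$ normalizing a non-trivial subgroup $T_{0}$ of a torus must be trivial: the commutator $[T_{0}, gu^{-1}]$ lies in $U \cap T_{0} = 1$, so $gu^{-1}$ centralizes $T_{0}$, and a matrix computation as in Lemma \ref{brlndcyclc} (with $y_{1} \neq y_{2}$) forces $gu^{-1} = 1$. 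This step needs $T_{0} \neq 1$, i.e.\ $(q_{0}-1)/\delta_{0} > 1$, which holds because $H \not\cong PSL_{2}(3)$ and $q_{0} \geq 3$ (for $q_{0}=2$ one has $q_{0}-1=1$; I would handle this separately, since then $H \cong PSL_{2}(2)$ is excluded by Table \ref{tabletwo}'s condition $q_{0} \neq 2$). Thus $g = u \in B_{0}$.

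\emph{Case $g \notin U$.} Write $g = u t$ with $u \in U$ and $t$ in a torus $C$ containing $T_{0}$, possible after conjugating by an element of $U_{0}$. Then $t$ normalizes $T_{0}$ (tori are abelian), and the same intersection argument reduces the question to whether $t$ normalizes $U_{0}$ and lies in $T_{0}B_{0}$.

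The main obstacle is to show that a torus element $t \in C$ normalizing $U_{0}$ must lie in $T_{0}$. In matrix terms, $t = \mathrm{diag}(a,b)$ acts on the $(1,2)$-entry of $U$ by multiplication by $a/b$, and $U_{0}$ corresponds to the additive subgroup $\mathbb{F}_{q_{0}}$ (after conjugation). So I need that $\{\lambda \in \mathbb{F}_{q}^{*} : \lambda \mathbb{F}_{q_{0}} = \mathbb{F}_{q_{0}}\} = \mathbb{F}_{q_{0}}^{*}$, which is clear since $1 \in \mathbb{F}_{q_{0}}$. One must then check that the resulting diagonal elements correspond exactly to $T_{0}$ in $PSL$ versus $PGL$ (the $\delta$, $\delta_{0}$ bookkeeping, including the case $H \cong PGL_{2}(q_{0}) \leq PSL_{2}(q)$ with $q = q_{0}^{2}$). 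That identification is where I expect the care to be needed. With it in hand, $N_{G}(B_{0}) = B_{0}$.
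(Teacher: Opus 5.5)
Your opening reduction is the paper's: $U_{0}$ is characteristic in $B_{0}$, so $N_{G}(B_{0}) \le N_{G}(U_{0}) \le B$ by Lemma~\ref{nrmlzrnbrlx}. After that your route differs.

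\textbf{Comparison of routes.} The paper partitions $B$ into $U$ and its $q$ tori. It then rules out $g \in U \setminus U_{0}$, $g \in A_{i} \setminus A_{i}^{\prime}$, and $g$ in a torus of $B$ meeting $B_{0}$ trivially, using three separate arguments: conjugation of tori, orbit sizes on $U_{0}$, and an involution/Klein four-group argument. Your factorization $g = ut$, with $u \in U$ and $t$ in the torus $C \supseteq T_{0}$, is more uniform and removes the paper's third case entirely. State the reduction precisely, though ("lies in $T_{0}B_{0}$" is muddled). Since $u$ centralizes $U_{0}$, $t \in N_{C}(U_{0})$. Once you know $N_{C}(U_{0}) = T_{0}$, you get $u = gt^{-1} \in N_{U}(B_{0})$, and your first case (conjugacy of complements in $B_{0}$ plus $C_{U}(T_{0}) = 1$) gives $u \in U_{0}$. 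You should also say why matrices are legitimate: conjugating by $PGL_{2}(q)$, which normalizes $G$, you may take $H$ to be the standard subfield subgroup and $B_{0}$ its upper-triangular Borel subgroup.

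\textbf{The gap.} The step you defer, $N_{C}(U_{0}) = T_{0}$, is not done, and the delicate case is not the one you flag. For $H \cong PGL_{2}(q_{0}) \le PSL_{2}(q_{0}^{2})$ it is immediate. Conjugation by $\mathrm{diag}(a,a^{-1})$ acts on $U$ by $x \mapsto a^{-2}x$, so it normalizes $U_{0}$ iff $a^{2} \in \mathbb{F}_{q_{0}}^{*}$, and modulo $\pm 1$ these elements are exactly $T_{0}$.

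The real difficulty is $G = PSL_{2}(q)$, $H \cong PSL_{2}(q_{0})$ with $q_{0}$ odd. There $T_{0}$ consists of the $\mathrm{diag}(a,a^{-1})$ with $a \in \mathbb{F}_{q_{0}}^{*}$, i.e.\ with $a^{2}$ a square in $\mathbb{F}_{q_{0}}$. By contrast, $N_{C}(U_{0})$ consists of those with merely $a^{2} \in \mathbb{F}_{q_{0}}^{*}$. Your observation $\{\lambda : \lambda\mathbb{F}_{q_{0}} = \mathbb{F}_{q_{0}}\} = \mathbb{F}_{q_{0}}^{*}$ does not bridge these two sets.

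What you need is that a non-square $\nu \in \mathbb{F}_{q_{0}}$ remains a non-square in $\mathbb{F}_{q}$. This holds because $r$ is odd: $\nu^{(q-1)/2} = \bigl(\nu^{(q_{0}-1)/2}\bigr)^{1+q_{0}+\cdots+q_{0}^{r-1}} = -1$, the exponent being a sum of $r$ odd terms. This is exactly where maximality of $H$ enters. For $r = 2$ the Borel subgroup of the overgroup $PGL_{2}(q_{0})$ normalizes that of $PSL_{2}(q_{0})$, so the conclusion fails. It is also the same fact the paper uses in its Case (ii), in the form $q_{0} - 1 \nmid \tfrac{1}{2}(q-1)$. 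With this supplied, your argument is complete.
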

\begin{proof}
Write $ B_{0} = U_{0} \rtimes T_{0} $ where $ U_{0} $ is elementary abelian of order $ q_{0} $ and $ T_{0} \cong C_{ (q_{0} - 1) / \delta _{0} } $. By Lemma \ref{brlcyclcprts}, $ U_{0} $ is the only subgroup of $ B_{0} $ of order $ q_{0} $. Hence $ U_{0} \trianglelefteq N_{G} (B_{0} ) $. So Lemma \ref{nrmlzrnbrlx} tells us that there exists a Borel subgroup $ B $ that is maximal in $ G $ with $ N_{G} (B_{0} ) \leq B $.

\medskip
Lemma \ref{brlcyclcprts} shows there exist $ q $ conjugate subgroups $ A_{1} , \dots , A_{q} \leq B $ isomorphic to $ C_{ (q-1) / \delta } $ and that these subgroups intersect each other trivially. The elements not in these $ C_{ (q-1) / \delta } $ lie in an elementary abelian subgroup $ U \leq B $ of order $ q $. Since $ q_{0} $ does not divide $ (q-1) / \delta $, we have $ U_{0} \leq U $.

\medskip
Similarly there exist $ q_{0} $ conjugate subgroups $ A_{1} ^{ \prime } , \dots , A_{q_{0} } ^{ \prime } \leq B_{0} $ that are isomorphic to $ C_{ (q_{0} - 1) / \delta _{0} } $ and intersect each other trivially. The elements of $ B_{0} $ not in these $ C_{ (q_{0} -1) / \delta _{0} } $ lie in $ U_{0} $. The generators of the $ C_{ (q_{0} -1) / \delta _{0} } $ have order $ (q_{0} -1) / \delta _{0} $, which does not divide $ q $. Hence they lie in one of the $ C_{ (q-1) / \delta } $ in $ B $. So we may assume these cyclic subgroups are labelled such that $ A_{i} ^{ \prime } \leq A_{i} $ for each $ i \in \{ 1 , \dots , q_{0} \} $.

\medskip
Let $ g \in  N_{G} (B_{0} ) $. It will be shown that $ g \notin B \setminus B_{0} $. This will be covered in three cases, first being $ g \notin U \setminus U _{0} $, second we will see $ g \notin A_{i} \setminus A_{i} ^{ \prime } $ for $ i \in \{ 1 , \dots , q_{0} \} $ and third it will be shown $ g \notin A_{j} \setminus \{ 1 _{G} \} $ for $ j \in \{ q_{0} + 1 , \dots , q \} $.

\medskip
\textbf{Case (i).} By Lemma \ref{brlcyclcprts}, each element of $ U_{0} $ sends $ A_{1} ^{ \prime } $ to exactly one element of $ \{ A_{1} ^{ \prime } , \dots , A_{q_{0} } ^{ \prime } \} $ under conjugation. The same applies to $ A_{1} $ being sent to each of the subgroups in $ \{ A_{1} , \dots , A_{q_{0} } \} $. So, using Lemma \ref{brlcyclcprts} again, if $ g \in U \setminus U_{0} $ we would have $ g^{-1} A_{1} g \notin \{ A_{1} , \dots , A_{q_{0} } \} $ and $ g^{-1} A_{1} ^{ \prime } g \notin \{ A_{1} ^{ \prime } , \dots , A_{q_{0} } ^{ \prime } \} $. But this implies $ g^{-1} B_{0} g \neq B_{0} $, a contradiction. Therefore $ g \notin U \setminus U_{0} $.

\medskip
\textbf{Case (ii).} For this case, first suppose $ H \cong PGL_{2} (q_{0} ) $. Then $ \delta _{0} = 1 $ and $ | A_{i} ^{ \prime } | = q_{0} - 1  $ for each $ i \in \{ 1 , \dots , q_{0} \} $. Let $ m \in \{ 1 , \dots , q_{0} \} $. Let $ a_{1} , a_{2} \in A_{m} $ and $ u \in U_{0} \setminus \{ 1 _{G} \} $. Suppose $ a^{-1} _{1} u a_{1} = a^{-1} _{2} u a_{2} $. Then $ a_{1} a_{2} ^{-1} = u^{-1} a_{1} a^{-1} _{2} u $.

\medskip
Let $ u_{1} \in U \setminus \{ 1 _{G} \} $. Lemma \ref{brlcyclcprts} shows that $ u^{-1} a_{1} a^{-1} _{2} u \in  u_{1} ^{-1} C u_{1} $ if and only if either $ u = u_{1} $ or $ u^{-1} a_{1} a^{-1} _{2} u = 1_{G} $.

\medskip
So $ a_{1} a_{2} ^{-1} = u^{-1} a_{1} a^{-1} _{2} u $ implies $ a_{1} a^{-1} _{2} \in u^{-1} A_{m} u $. Therefore $ a_{1} a_{2} ^{-1} = 1_{G} $ and $ a_{1} = a_{2} $.

\medskip
What we can take from this is that conjugating $ u $ by the elements of $ A_{m} $ provides us with a set of $ | A_{m} | = (q-1) / \delta $ non-identity elements of $ U $. Out of these, $ | A_{m} ^{ \prime } | = q_{0} - 1 $ of the elements are obtained by conjugating by elements of $ A_{m} ^{ \prime } $. This is the same number of non-identity elements in $ U_{0} $. So the elements in $ A_{m}  \setminus A_{m} ^{ \prime } $ send $ u $ outside of $ U_{0} $ and thus do not normalize $ U_{0} $. Since $ U_{0} $ is the only subgroup of order $ q_{0} $ in $ B_{0} $, we have $ g^{-1} U_{0} g = U_{0} $. Hence $ g \notin A_{m}  \setminus A_{m} ^{ \prime } $.

\medskip
Next suppose $ H \cong PSL_{2} (q_{0} ) $ and $ q_{0} $ is odd. Then $ G \cong PSL_{2} (q ) $ and $ q $ is odd. Also $ \delta = \delta _{0} = 2 $ and $ | A_{i}  | = (q - 1) / 2  $ and $ | A_{i} ^{ \prime } | = (q_{0} - 1) / 2  $ for each $ i \in \{ 1 , \dots , q_{0} \} $, so the above method will not work here.

\medskip
Put $ G^{ *} := PGL_{2} (q) $. Using Lemma \ref{pslsnpgl} we may suppose $ G \leq G^{ * } $. For these values of $ q$ and $ q_{0} $, there exists a subfield subgroup $ H^{ \prime  } \leq  G^{ * } $ with $ H^{ \prime } \cong PGL_{2} (q_{0} ) $. Now $ q_{0} \geq 5 $, so $ q \geq 5^{3} > 13 $. Also $ q $ is not prime otherwise $ PSL_{2} (q) $ would have no subfield subgroups. Therefore $ H^{ \prime } \cap G $ is a maximal subgroup of $ G $ of order $ | H^{ \prime } \cap G | = \tfrac{1}{2} | H^{ \prime } | = \tfrac{1}{2} q_{0} (q_{0} + 1)(q_{0} - 1) $ by Lemma \ref{mxmlsbpslntr}.

\medskip
Since $ q > 13 $, the only maximal subgroups of $ PSL_{2} (q) $ who order is divisible by $ p $, where $ p $ is the prime dividing $ q $, are Borel subgroups or subfield subgroups. Borel subgroups have order divisible by $ q $, whereas $ H^{ \prime } \cap G $ does not. Hence $ H^{ \prime } \cap G $ is a subfield subgroup of $ G $.

\medskip
Since all subfield subgroups in $ G $ are conjugate, we can conjugate $ H^{ \prime } \cap G $ by some element of $ G $ to get $ H $. Conjugating $  H^{ \prime } $ by the same element gives some subfield subgroup $ H^{*} \leq G^{*} $ such that $ H^{ * } \cap G = H $.

\medskip
Since $ U_{0} \leq H \leq H^{*} $, Lemmas \ref{brlcyclcprts} and \ref{nrmlzrnbrlx} together show that $ N_{H^{*} } ( U_{0} ) $ is a Borel subgroup of $ H^{*} $. Denote $ B_{0} ^{*} := N_{H^{*} } ( U_{0} ) $. Note $ U_{0} \trianglelefteq B_{0} $. So $ B_{0} \leq B_{0} ^{*} $.

\medskip
By the same reasoning there exists a Borel subgroup $ B^{*} $ of $ G^{*} $ such that $ B^{*} \cap G = B $. As in the above discussion, there exist $q $ distinct subgroups $ F_{1} , \dots , F_{q} \leq B^{*} $ each being isomorphic to $ C_{q-1} $ and intersecting trivially. By Lemma \ref{brlcyclcprts} the group $ A_{i} $ must be a subgroup of one of these $ C_{q-1} $, so we may assume (relabelling if needed) that $ A_{i} \leq F_{i} $.

\medskip
Lemma \ref{prmsbgrps} shows there is only one Sylow $p$-subgroup of $ G ^{*} $ that contains $ U_{0} $ as a subgroup and this must be $ U $. Also by Lemma \ref{nrmlzrnbrlx} we have $ N_{G^{*} } ( U) = B^{*} $ and $ B^{*} $ is the only Borel subgroup containing $ U $ (and therefore $ U_{0} $). The same lemma shows that $ N_{G^{*} } ( U_{0} ) \leq B^{*} $. Therefore $ B_{0} ^{*} \leq B ^{*} $.

\medskip
Once again $ B_{0} ^{*} $ has $ q_{0} $ subgroups isomorphic to $ C_{ q_{0} - 1 } $ and they intersect each other trivially. Using Lemma \ref{brlcyclcprts} again, $ A_{i} ^{ \prime } $ is contained in one of these $ C_{ q_{0} - 1} $, say $ F_{i} ^{ \prime } \leq B_{0} ^{*}  $. Since $ A_{i} ^{ \prime } \leq A_{i} \leq F_{i} $ and Lemma \ref{brlcyclcprts} shows $ F_{i} ^{ \prime } $ and $ A_{i} $ are contained in exactly one $ C_{ q-1} $ in $ B^{*} $, we have $ F_{i} ^{ \prime } \leq F_{i} $.

\medskip
Now $ q = q_{0} $ where $ r $ is an odd prime. The number $ q_{0} ^{r-1} + q_{0} ^{r-2} + \dots + q_{0} + 1 $ is odd because it is the sum of an odd number of odd numbers. Therefore $ | A_{i} | = \tfrac{1}{2} (q-1) = \tfrac{1}{2} (q_{0} - 1)(q_{0} ^{r-1} +  q_{0} ^{r-2} + \dots + q_{0} + 1 ) $ is not divisible by $ | F_{i} ^{ \prime } | = q_{0} - 1 $. Hence $ F_{i} ^{ \prime } \cap A_{i} $ is a proper subgroup of $ F_{i} ^{ \prime } $ that contains $ A_{i} ^{ \prime } $. Since $ A_{i} ^{ \prime } $ has index $ 2 $ in $ F_{i} ^{ \prime } $, we have $ F_{i} ^{ \prime } \cap A_{i} = A_{i} ^{ \prime }  $. Hence
\begin{align*}
g \in A_{i} \setminus A_{i} ^{ \prime } 
= ( F_{i} \cap A_{i} ) \setminus  (F_{i} ^{ \prime } \cap A_{i} )
\subseteq F_{i} \setminus F_{i} ^{ \prime } .
\end{align*}
We saw toward the start this case, when dealing with subfield subgroups isomorphic to $ PGL_{2} (q_{0} ) $, that if $ g \in F_{i} \setminus F_{i} ^{ \prime }  $ then $ g $ does not normalize $ U_{0} $. However all elements of $ N_{G} ( B_{0} ) $ normalize $ U_{0} $ because this is the only subgroup of $ B_{0} $ of order $ q_{0} $. So we get a contradiction. Thus $ g \notin A_{i} \setminus A_{i} ^{ \prime } $.

\medskip
\textbf{Case (iii).} Let $ n \in \{ q_{0} +1 , \dots , q \} $. Suppose $ g \in A_{n} \setminus \{ 1_{G} \} $. Then $ g \notin B_{0} $. Also $ g^{-1} A_{1} g = A_{k} $ for some $ k \in \{ 1 , \dots , q_{0} \} $. There exists some $ v \in U_{0} $ such that $ v^{-1} A_{k} v = A_{1} $ by Lemma \ref{brlcyclcprts}. So $ gv \in N_{G} ( A_{1} ) $. Also $ U_{0} \leq N_{G} (B_{0} ) $, so $ v \in N_{G} (B_{0} ) $. Hence $ gv \in N_{G} (B_{0} ) $. From the discussion in the above cases either $ gv \in B_{0} $ or $ gv \in A_{t} \setminus \{ 1_{G} \} $ for some $ t \in \{ q_{0} +1 , \dots , q \}  $.

\medskip
It must be that $ gv \notin B_{0} $, otherwise $ g = gvv^{-1} \in B_{0} $.  Therefore $ gv \in A_{t} $.

\medskip
Now $ N_{G} ( A_{1} ) \cong D_{ 2( q \pm 1 ) / \delta } $ by Lemma \ref{nrmlzr}. We have $ g \neq v ^{-1} $ otherwise $ g \in B_{0} $. Hence $ o(gv) > 1 $. Note $ A_{1} $ also has an element of order $ o(gv) $. If $ o(gv) > 2 $ then $ gv $ is in the rotational subgroup of $  N_{G} ( A_{1} ) $. Since the rotational subgroup is cyclic it only contains one subgroup of order $ o(gv) $, which is $ \langle gv \rangle $. This subgroup must be contained in $ A_{1} $. But Lemma \ref{brlcyclcprts} shows $ A_{1} \cap A_{t} = \{ 1_{G} \}  $, so we have a contradiction. Therefore $ o(gv) = 2 $.

\medskip
Now it must be that $ | A_{t} | = (q_{0} - 1) / \delta _{0} $ is divisible by $ 2 $ and so $ (q - 1) / \delta $ is too. Hence $ q_{0} $ and $ q $ are odd. Also $ (q - 1) / \delta > (q_{0} - 1) / \delta _{0}  \geq 2 $. So $ A_{1} $ is in the rotational subgroup of $ N_{G} ( A_{1} )  $. In particular the involution $ a_{1} \in A_{1} $ is a rotation. So $ gv $ is a reflection and $ \langle a_{1} , gv \rangle \cong K_{4} $. By Lemma \ref{brlklnfrsbgrp}, there are no subgroups of $ B $ isomorphic to $ K_{4} $. So we have a contradiction again and the assumption $ g \in A_{n} \setminus \{ 1_{G} \} $ must be wrong.

\medskip
With all three cases dealt with, the only possibility is $ g \in B_{0} $. Thus $ N_{G} (B_{0} ) = B_{0} $.
\end{proof}
\begin{lema}\label{sdgiulvsldkcn}
Suppose $ | H \cap H_{1} | > 2 $ and $ H \cap H_{1} $ is a subgroup of a Borel subgroup $ B_{0} < H $. Write $ B_{0} = U_{0} \rtimes T_{0} $ where $ U_{0} $ is an elementary abelian group of order $ q_{0} $ and $ T_{0} \cong C_{(q_{0} - 1) / \delta _{0} } $. Then $ H \cap H_{1} = U_{0} $ or $ H \cap H_{1} \cong C_{(q_{0} - 1) / \delta _{0} } $.
\end{lema}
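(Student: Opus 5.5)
Put $K := H \cap H_{1}$. By Lemma \ref{sbgrpfbrlq} (applied inside $H$, which is a $PSL_{2}$ or $PGL_{2}$ of $q_{0}$), write $K = U_{K} \rtimes T_{K}$ with $U_{K} \leq U_{0}$ and $T_{K}$ contained in some $C_{(q_{0}-1)/\delta_{0}}$ of $B_{0}$. The proof splits according to whether $U_{K}$ is trivial.

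\textbf{Case $U_{K} \neq \{1_{G}\}$.} Lemma \ref{pgrpqzero} gives a $p$-group of order $q_{0}$ in $K$ containing $U_{K}$. Since $U_{0}$ is the unique $p$-group of order $q_{0}$ in $B_{0}$ (Lemma \ref{brlcyclcprts}), we get $U_{0} \leq K$, and so $U_{K} = U_{0}$. It remains to show $T_{K}$ is trivial, and I expect this to be the main obstacle. Suppose $T_{K} \neq \{1_{G}\}$. Then $K = U_{0} \rtimes T_{K}$ normalizes $U_{0}$. The normalizer of $U_{0}$ in $H$ is $B_{0}$ by Lemma \ref{nrmlzrnbrlx}, and likewise its normalizer in $H_{1}$ is the unique Borel $B_{1} \leq H_{1}$ containing $U_{0}$. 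The aim is to force $B_{0} = B_{1}$. Both lie in the unique Borel $B$ of $G$ containing $U_{0}$, and $U_{0} \leq U$. Inside $B$, a nontrivial element of $T_{K}$ lies in exactly one conjugate $A$ of $C_{(q-1)/\delta}$. Both $B_{0}$ and $B_{1}$ meet $A$ in a $C_{(q_{0}-1)/\delta_{0}}$, because their complements containing $T_{K}$ sit in $A$, and a cyclic group has a unique subgroup of each order. Hence $B_{0}$ and $B_{1}$ contain the same $U_{0}$ and the same complement, so $B_{0} = B_{1}$. Then $B_{0} \leq H \cap H_{1}$, and Lemma \ref{lttlbrlnrm} is what I would use to derive a contradiction. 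Since $H_{1} = g^{-1}Hg$ and Borel subgroups of $H$ are conjugate, we may arrange $g^{-1}B_{0}g = B_{0}$, so $g \in N_{G}(B_{0}) = B_{0} \leq H$ and $H_{1} = H$, a contradiction. Thus $K = U_{0}$. A small subtlety is that when $T_{K}$ has order $2$ its generator is not coprime-argument friendly, but the uniqueness of the subgroup of that order in the cyclic $A$ still works.

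\textbf{Case $U_{K} = \{1_{G}\}$.} Then $K = T_{K}$ is cyclic of order coprime to $q$, and $|K| > 2$. Choose a generator $c$; it has $o(c) > 2$. By Lemma \ref{cyclccntrlz} there is $C \leq K$ with $c \in C$ and $C \cong C_{(q_{0}\pm1)/\delta_{0}}$. Since $K$ lies in a $C_{(q_{0}-1)/\delta_{0}}$, comparing orders rules out the $+$ sign, because $\gcd$ considerations force order at most $2$, as in Lemma \ref{ntadhdrlntr}. So $K = C \cong C_{(q_{0}-1)/\delta_{0}}$.
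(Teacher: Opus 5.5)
Your proof is correct, but it takes a different route from the paper in the key subcase $U_K = U_0$, $T_K \neq \{1_G\}$.

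\textbf{The paper's route.} It identifies the cyclic complements $A_0 \leq B_0$ and $A_1 \leq B_1$ containing $T_K$ by placing both inside the dihedral group $N_G(T_K)$ (Lemma \ref{nrmlzr}). It then uses that a dihedral group has only one cyclic subgroup of a given order greater than $2$. Because this needs $(q_0-1)/\delta_0 > 2$, the paper must first handle $H \cong PGL_2(3)$ and $H \cong PSL_2(5)$ separately. It then counts the $U_0$-conjugates of $A_0$ to conclude $H \cap H_1 = B_0$.

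\textbf{Your route.} You place $B_0$ and $B_1$ inside the single Borel subgroup $B$ of $G$ containing $N_G(U_0)$ (Lemma \ref{nrmlzrnbrlx}). You then use the partition of $B$ from Lemma \ref{brlcyclcprts}. A generator of $A_0$ lies in exactly one conjugate of $C_{(q-1)/\delta}$, and this must be the unique conjugate $A$ containing a fixed $1_G \neq t \in T_K$. So $A_0 \leq A$, likewise $A_1 \leq A$, and $A_0 = A_1$ because a cyclic group has one subgroup of each order.

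\textbf{What this buys.} The argument works equally when $|T_K| = 2$, so no special cases are needed. Also $B_0 = U_0A_0 = U_0A_1 = B_1$ follows directly from orders rather than from a counting argument. Your finish via Lemma \ref{lttlbrlnrm} is the same as the paper's. You should write out the one-line justification that the complements lie in $A$, as just given; your proposal only asserts it.

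\textbf{The case $U_K = \{1_G\}$.} Here your argument is essentially the paper's. The ``$+$'' sign is excluded simply because $|C| \leq |K| \leq (q_0-1)/\delta_0 < (q_0+1)/\delta_0$; no gcd argument is needed.
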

\begin{proof}
By Lemma \ref{sbgrpfbrlq} we can write $ H \cap H_{1} = U_{1} \rtimes T_{1} $ where $ U_{1} \leq U_{0} $ and $ T_{1} $ is isomorphic to a subgroup of $ C_{(q_{0} - 1) / \delta _{0} } $.

\medskip
Two special cases will be gotten out of the way first. Suppose $ H \cong PGL_{2} (3) $ or $ H \cong PSL_{2} (5) $. Note $ U_{0} \cong C_{3} $ if $ H \cong PGL_{2} (3) $ or $ U_{0} \cong C_{5} $ if $ H \cong PSL_{2} (5) $. Also $  C_{(q_{0} - 1) / \delta _{0} } \cong C_{2} $ in both cases. If $ T_{1} $ is trivial then $ H \cap H_{1} = U_{1} = U_{0} $ since $ U_{0} $ is the only non-trivial subgroup of itself.

\medskip
So suppose $ T_{1} $ is not trivial. Then we have $ T_{1} \cong C_{2} $. Also $ U_{1} $ is non-trivial otherwise we have $ | H \cap H_{1} | = | T_{1} | = 2 $. So again we get $ U_{1} = U_{0} $. It follows that  $ H \cap H_{1} = B_{0} $.

\medskip
All Borel subgroups in $ H $ are conjugate. Same for the Borel subgroups of $ H_{1} $. So there exists $ g \in G $ such that $ g^{-1} H g = H_{1} $ and $ g^{-1} B_{0} g = B_{0} $. From Lemma \ref{lttlbrlnrm} we see that $ N_{G} (B_{0} ) = B_{0} $. Hence $ g \in B_{0} $. But then $ g \in H $ and $ g^{-1} H g = H $, implying $ H = H_{1} $. This is a contradiction. Thus $ T_{1} $ is trivial in these special cases.

\medskip
Now for the general case. Suppose $ H \not\cong PGL_{2} (3) $ and $ H \not\cong PSL_{2} (5) $. Then $ | (q_{0} - 1 ) / \delta _{0} | > 2 $. If $ U_{1} $ is trivial then $ H \cap H_{1} = T_{1} $. It follows that $ | T_{1} | > 2 $, so only divides one of $ q_{0} - 1 $ or $ q_{0} + 1 $. It is not possible that $ | T_{1} | $ divides $ q_{0} + 1 $ because $  T_{1}   $ is isomorphic to a subgroup of $ C_{q_{0} - 1} $. Also $ | T_{1} | $ is coprime to $ q_{0} $, so we have  $ N_{H} (H \cap H_{1} ) \cong  D_{2(q_{0} -1) /\delta _{0} } $ by Lemma \ref{nrmlzr}. From Lemma \ref{ntadhdrlntr} we see that $ H \cap H_{1} \cong C_{ (q_{0} -1) / \delta _{0} } $.

\medskip
Now suppose $ U_{1} $ is non-trivial. Then by Lemma \ref{pgrpqzero} there exists $ P \leq H \cap H_{1} $ with $ U_{1} \leq P $ and $ | P | = q_{0} $. Lemma \ref{brlcyclcprts} shows that $ U_{0} $ is the only subgroup of $ B_{0} $ of order $ q_{0} $. Hence $ P = U_{0} $ and $ U_{0} \trianglelefteq H \cap H_{1} $.

\medskip
Suppose for a contradiction that $ T_{1} $ is non-trivial. Observe that $ H \cap H_{1} \leq N_{H_{1} } (U_{1} ) $. Hence $ H \cap H_{1} $ is a subgroup of some Borel subgroup $ B_{1} < H_{1} $ by Lemma \ref{nrmlzrnbrlx}. As $ | T_{1} | $ divides $ q_{0} - 1 $, it is coprime to $ q_{0} $. It follows from Lemma \ref{brlcyclcprts} that there exists $ A_{0} \leq B_{0} $ and $ A_{1} \leq B_{1}  $ that both contain $ T_{1} $, where $ A_{0} \cong A_{1}  \cong C_{ (q_{0} - 1 ) / \delta _{0} } $.

\medskip
Lemma \ref{nrmlzr} shows that $ N_{G} ( T_{1} ) \cong D_{2 (q \pm 1 ) / \delta } $. This normalizer contains both $ A_{0} $ and $ A_{1} $. Since $ | ( q_{0} -1 ) / \delta _{0} | > 2 $, there is only one $ C_{ ( q_{0} -1 ) / \delta _{0} } $ subgroup of $ N_{G} ( T_{1} )  $. Hence $ A_{0} = A_{1} $ and this is a subgroup of $ H \cap H_{1} $.

\medskip
Lemma \ref{brlcyclcprts} shows that
\[
\bigcup _{u \in U_{0} } u^{-1} A_{0} u
\]
contains $ | U_{0} | ( ( q_{0} -1 ) / \delta _{0} - 1 ) =  | U_{0} |  ( q_{0} -1 ) / \delta _{0} - | U_{0} | $ non-identity elements, none of which are in $ U_{0} $. As $ U_{0} \leq H \cap H_{1} $, we can count the elements above along with those in $ U_{0} $ to see that
\[
 | H \cap H_{1} | \geq  | U_{0} |  ( q_{0} -1 ) / \delta - | U_{0} | + | U_{0} | =  | U_{0} |  ( q_{0} -1 ) / \delta = | B_{0} | \geq  | H \cap H_{1} |.
\]
Hence $ | H \cap H_{1} | = | B_{0} | $ and we have $ H \cap H_{1} = B_{0}  $. Following the reasoning from the $ PGL_{2} (3) $ and $ PSL_{2} (5) $ special cases earlier in the proof, we cannot have $ H \cap H_{1} = B_{0}  $. This is a contradiction. Thus $ T_{1} $ is trivial. It follows that $ H \cap H_{1} = U_{1} $. As $ U_{1} \leq U_{0} \leq  H \cap H_{1} $ we have $ H \cap H_{1} = U_{0} $.
\end{proof}
The previous results are now collected together.
\begin{lema}\label{dffrntsbgrps}
If $ H \cap H_{1} $ is non-trivial then it is isomorphic to one of the following;
\begin{itemize}
\item $ C_{2} $,
\item $ D_{ 2(q_{0} \pm 1 ) / \delta _{0} } $ (only if $ q $ is odd),
\item $ C_{ (q_{0} \pm 1 ) / \delta _{0} } $ or
\item $ E_{q_{0}} $. 
\end{itemize}
\end{lema}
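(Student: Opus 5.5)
The plan is to reduce to the case $|H \cap H_{1}| > 2$ and then use the classification of subgroups of $H$ to see which maximal subgroup of $H$ contains $H \cap H_{1}$. If $|H \cap H_{1}| = 2$ then $H \cap H_{1} \cong C_{2}$ and there is nothing to prove. So we assume $|H \cap H_{1}| > 2$. Since $H \neq H_{1}$ are both maximal in $G$, the intersection $H \cap H_{1}$ is a proper subgroup of $H$. It therefore lies in some maximal subgroup $M$ of $H$, where $H$ is $PSL_{2}(q_{0})$ or $PGL_{2}(q_{0})$. The candidates for $M$ are read off from Tables \ref{tableone}, \ref{tabletwo} and \ref{tablethree} (applied with $q_{0}$ in place of $q$): a Borel subgroup, a dihedral $D_{2(q_{0}\pm 1)/\delta_{0}}$, an $A_{4}$, $S_{4}$ or $A_{5}$, a subfield subgroup of $H$, or (when $H \cong PGL_{2}(q_{0})$) the index $2$ subgroup $PSL_{2}(q_{0})$.

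The main cases are handled directly by earlier lemmas. If $M$ is Borel, Lemma \ref{sdgiulvsldkcn} gives $H \cap H_{1} = U_{0} \cong E_{q_{0}}$ or $H \cap H_{1} \cong C_{(q_{0}-1)/\delta_{0}}$. If $M$ is dihedral of order $2(q_{0}\pm1)/\delta_{0}$, Lemma \ref{ntadhdrlntr} gives $D_{2(q_{0}\pm1)/\delta_{0}}$ (only for $q$ odd) or $C_{(q_{0}\pm1)/\delta_{0}}$. If $M$ is $A_{4}$, $S_{4}$ or $A_{5}$, then $q$ is odd. Lemma \ref{nnnafafsf} rules out $H\cap H_{1}$ being the whole of $M$, and Lemma \ref{kfrsbgrpcllctn} rules out $K_{4}$. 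Every other subgroup of order greater than $2$ in these groups is one of $C_{3}$, $C_{4}$, $C_{5}$, $S_{3}$, $D_{8}$, $D_{10}$ or $A_{4}$. For these, note that each such subgroup other than $A_{4}$ lies in a cyclic or dihedral normalizer inside $H$. By Lemma \ref{nrmlzr} that normalizer is contained in a $D_{2(q_{0}\pm1)/\delta_{0}}$ of $H$, or in a Borel subgroup when its order is divisible by $p$. This returns us to the Borel and dihedral cases already treated.

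The remaining cases are $M$ a subfield subgroup of $H$, or $M = PSL_{2}(q_{0}) < PGL_{2}(q_{0})$. For these I would argue by the element orders present in $H \cap H_{1}$. Suppose first that $H \cap H_{1}$ contains a nontrivial $p$-element. Then Lemma \ref{pgrpqzero} produces a subgroup $P \le H\cap H_{1}$ of order $q_{0}$. This is impossible inside a proper subfield subgroup of $H$, whose Sylow $p$-subgroups are smaller. In the $PSL_{2}(q_{0})$ case, $P$ lies in a Borel subgroup of $H$ by Lemma \ref{nrmlzrnbrlx}, and I would try to show that $N$ of $P$ inside $H\cap H_{1}$ forces $H \cap H_{1}$ into that Borel subgroup. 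Suppose instead that $H \cap H_{1}$ contains an element $c$ of order greater than $2$ with order coprime to $q$. Then Lemma \ref{cyclccntrlz} gives a cyclic $C_{(q_{0}\pm1)/\delta_{0}} \le H\cap H_{1}$. This cannot lie in a proper subfield subgroup of $H$ by order considerations. In the $PSL_{2}(q_{0})$ case, I would again locate $H \cap H_{1}$ inside $N_{H}(\langle c\rangle)$, which is dihedral, and apply Lemma \ref{ntadhdrlntr}. Finally, if every nontrivial element of $H\cap H_{1}$ is an involution, then $H\cap H_{1}$ is elementary abelian $2$-group of order greater than $2$ inside $PGL_{2}$, hence $K_{4}$, which Lemma \ref{kfrsbgrpcllctn} excludes.

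I expect the main obstacle to be the case $M = PSL_{2}(q_{0})$ when $H \cong PGL_{2}(q_{0})$, together with making the recursion through subfield subgroups of $H$ rigorous. There, $H \cap H_{1}$ could a priori be a large subgroup of $PSL_{2}(q_{0})$ that is not contained in a Borel or dihedral subgroup. I would handle this by showing $H \cap H_{1}$ must contain both a $p$-element and an element of order coprime to $p$ greater than $2$, so that the two conclusions above (a full $E_{q_{0}}$ and a $C_{(q_{0}\pm1)/\delta_{0}}$) generate too large a subgroup. The contradiction would come from the fact that such a subgroup generates a Borel subgroup or more, and Lemma \ref{lttlbrlnrm} forbids $H \cap H_{1}$ containing a full Borel subgroup of $H$.
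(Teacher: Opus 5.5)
Your Borel, dihedral and $A_{4}$, $S_{4}$, $A_{5}$ cases match the paper. For a subfield maximal subgroup $M$, your element-order argument is a valid shortcut that replaces the paper's descent through chains of subfield subgroups; the one overlap, $C_{4}\le PGL_{2}(3)\cong S_{4}<PSL_{2}(9)$, is absorbed by your $S_{4}$ case.

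The genuine gap is the case you flag yourself, $M=PSL_{2}(q_{0})<H\cong PGL_{2}(q_{0})$, and what you propose there does not work. Lemma \ref{pgrpqzero} gives $P\le H\cap H_{1}$ of order $q_{0}$, and Lemma \ref{cyclccntrlz} gives a full cyclic torus. Neither says that $H\cap H_{1}$ normalizes $P$ or $\langle c\rangle$, so you cannot place $H\cap H_{1}$ inside $N_{H}(P)$ or $N_{H}(\langle c\rangle)$. Your fallback of forcing a contradiction aims at the wrong target. There is no reason $H\cap H_{1}$ must contain both a $p$-element and a coprime element of order greater than $2$. Moreover, $H\cap H_{1}=U_{0}\cong E_{q_{0}}\le PSL_{2}(q_{0})$ is a genuine outcome that must be shown to lie in the list, not ruled out; it occurs, e.g., as $H_{1}\cap H_{3}=U_{1}$ in the proof of Theorem \ref{fnalthrmdndit}.

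The missing idea is the paper's reduction. First, $H\cap H_{1}\neq PSL_{2}(q_{0})$: otherwise it has index $2$ in both $H$ and $H_{1}$, hence is normal in both, contradicting Lemma \ref{ntnrmlmxml}. So $H\cap H_{1}$ lies in a maximal subgroup $M_{1}$ of $PSL_{2}(q_{0})$, and you transfer $M_{1}$ back into a maximal subgroup of $H$ through normalizers:
\begin{itemize}
\item a Borel $M_{1}$ normalizes a nontrivial $p$-subgroup, so it lies in a Borel subgroup of $H$ by Lemma \ref{nrmlzrnbrlx}, and Lemma \ref{sdgiulvsldkcn} applies;
\item a dihedral $M_{1}$ normalizes its rotation subgroup, of order prime to $q_{0}$, so it lies in a $D_{2(q_{0}\pm1)/\delta_{0}}$ of $H$ by Lemma \ref{nrmlzr}, and Lemma \ref{ntadhdrlntr} applies;
\item $A_{4}$, $S_{4}$, $A_{5}$ are handled as before;
\item a subfield $M_{1}$ is disposed of exactly as in your subfield case.
\end{itemize}
In fact your coprime sub-case closes by orders alone: with $\delta_{0}=1$, Lemma \ref{cyclccntrlz} gives a cyclic subgroup of order $q_{0}\pm1>2$, and $PSL_{2}(q_{0})$ has no element of that order. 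It is the $p$-element sub-case that genuinely needs the reduction.
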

\begin{proof}
Once again we begin by dealing with the case $ H \cong PGL_{2} (3) \cong S_{4} $ separately. As it is being assumed $ G \neq PSL_{2} (9) $, we have $  H \cap H_{1} \not\cong K_{4} $ by Lemma \ref{kfrsbgrpcllctn}. Lemma \ref{nnnafafsf} shows that $ H \cap H_{1} \not \cong A_{4} $. So the only possible subgroups $ H \cap H_{1} $ could isomorphic to are $ C_{2} $, $ C_{3} = E_{ q_{0} } $, $ D_{6} $ (which is a Borel subgroup here), $ C_{4} = C_{  q_{0} + 1  } $ or a $ D_{8} = D_{  2(q_{0} + 1 ) } $. So suppose from now on $ H \not\cong PGL_{2} (3) $. Then $ q_{0} \neq 3 $.

\medskip
If $ H \cap H_{1} \cong C_{2} $ there is nothing further to show. So suppose not.

\medskip
We have $ H \cap H_{1} \neq H $, so $ H \cap H_{1} \leq M $ where $ M $ is a maximal subgroup of $ H $. If $ M $ is a maximal $  D_{ 2(q_{0} \pm 1 ) / \delta _{0} } $ then $ H \cap H_{1} \cong C_{ (q_{0} \pm 1 ) } $ or $ H \cap H_{1} \cong D_{ 2(q_{0} \pm 1 ) / \delta _{0} } $ by Lemma \ref{ntadhdrlntr}. The same lemma shows that $ H \cap H_{1} \cong D_{ 2(q_{0} \pm 1 ) / \delta _{0} } $ only if $ q $ is odd.

\medskip
If $ M $ is a Borel subgroup of $ H $ then $ H \cap H_{1} \cong  C_{ (q_{0} - 1 ) / \delta _{0} } $ or $ H \cap H_{1} \cong E_{q_{0}} $ by Lemma \ref{sdgiulvsldkcn}.

\medskip
If $ q_{0} $ is odd and $ H \cap H_{1} $ is a subgroup of some (not necessarily maximal) $ K \leq H $ that is isomorphic to $ A_{4} $, $ S_{4} $ or $ A_{5} $ then Lemma \ref{nnnafafsf} shows that $ H \cap H_{1} \neq K $ and $ H \cap H_{1} \not \cong A_{4} $. Also $  H \cap H_{1} \not\cong K_{4} $ by Lemma \ref{kfrsbgrpcllctn}. So $ H \cap H_{1} $ is isomorphic to one of the following;
\begin{itemize}
\item $ C_{3} $,
\item $ D_{6} $,
\item $ C_{4} $,
\item $ D_{8} $,
\item $ C_{5} $,
\item $ D_{10} $.
\end{itemize}
Each of the above groups normalizes a non-trivial cyclic subgroup. If $ H \cap H_{1} $ is isomorphic to one of these groups and does not contains a non-identity element of with order that divides $ q_{0} $, then $ H \cap H_{1} $ is a subgroup of some $ D_{ 2( q_{0} \pm 1 ) / \delta _{0} } $ by Lemma \ref{nrmlzr}. Hence $ H \cap H_{1} \cong D_{ 2( q_{0} \pm 1 ) / \delta _{0} } $ or $ H \cap H_{1} \cong C_{ ( q_{0} \pm 1 ) / \delta _{0} } $ as seen in the earlier cases. So we just need to consider what happens if $ H \cap H_{1} $ is isomorphic to one of the above groups and has a non-identity element whose order divides $ q_{0} $.

\medskip
Suppose $ q_{0} $ is a power of $ 3 $. Then the $  D_{6} $ or $ C_{3} $ normalize a $ C_{3} $ and so are subgroups of a maximal Borel subgroup of $ H $ by Lemma \ref{nrmlzrnbrlx}. The case where $ H \cap H_{1} $ is contained in a maximal Borel has already been dealt with earlier, from which we see that that only possibility is $ H \cap H_{1} \cong C_{3} = E_{q_{0} } $ (which only happens if $ q_{0} = 3 $).

\medskip
If $ q_{0} $ is a power of $ 5 $. Then the $  D_{10} $ or $ C_{5} $ normalize a $ C_{5} $ and so are subgroups of a maximal Borel subgroup of $ H $ by Lemma \ref{nrmlzrnbrlx}. Again we see the only possibility is $ H \cap H_{1} \cong C_{5} = E_{q_{0} } $ (which only happens if $ q_{0} = 5 $).

\medskip
The only other subgroups that $ M $ could be than those already given or those that $ K$ might be are subfield subgroups or $ PSL_{2} (q_{0} ) $ (if $ q_{0} $ is odd and $ H \cong PGL_{2} (q_{0} ) $). Suppose $M $ is one of these.

\medskip
If $ M $ is a subfield subgroup and $ p $ is the prime that divides $ q_{0} $ then the largest $ p $-subgroup of $ M$ has order $ q_{1} $ where $ 1 < q_{1} < q_{0} $ and $ q_{1} $ divides $ q_{0} $. Lemma \ref{pgrpqzero} shows that if $ H \cap H_{1} $ contains a $ p $-group then it has a subgroup of order $ q_{0} $. Therefore $ M \neq H \cap H_{1} $.

\medskip
If $ M = PSL_{2} (q_{0} ) $ then since $ q_{0} $ is odd and $ H \cong PGL_{2} (q_{0} ) $. So in this case we have $ M \trianglelefteq H$. Also $ M \trianglelefteq H_{1} $. This is not possible by Lemma \ref{ntnrmlmxml}. Thus $ M \neq H \cap H_{1} $ again.

\medskip
So in either of these two cases there exists a maximal subgroup $ M_{1} $ of $ M $ with $ H \cap H_{1} \leq M_{1} $. We can go through a similar process as above. If $ M_{1} $ is a Borel subgroup then it is in the normalizer of a non-trivial $ p$-group, so $ M _{1} $ and $ H \cap H_{1} $ are contained in a maximal Borel subgroup of $ H $ by Lemma \ref{nrmlzrnbrlx}. Hence $ H \cap H_{1} $ is of the required form by the earlier discussion.

\medskip
If $ M _{1} $ is a maximal $ D_{ q_{1} \pm 1  } $ (if $ M \cong PSL_{2} (q_{1} ) $) or $ D_{ 2(q_{1} \pm 1 ) } $ (if $ M  \cong PGL_{2} (q_{1} ) $) then $ M_{1} $ normalizes a rotational subgroup whose order is coprime to $ q_{0} $. Hence $ M_{1} $ and $ H \cap H_{1} $ lie in a maximal dihedral of $ H $ by Lemma \ref{nrmlzr}. Thus $ H \cap H_{1} $ is of the required form by the earlier discussion.

\medskip
If $ M_{1} $ is a maximal $ A_{4} $, $ S_{4} $ or $ A_{5} $ then $ H \cap H_{1} $ is of the required form by the earlier discussion.

\medskip
If $ M_{1} $ another maximal subgroup then it is either a subfield subgroup of $ M $ or isomorphic to $ PSL_{2} (q_{1} ) $. In either of these two cases we have $ H \cap H_{1} \neq M_{1} $ because the maximum power of $ p $ that divides $ |M _{1} | $ is strictly between $ 1 $ and $ q_{0} $, but Lemma \ref{pgrpqzero} shows $ H \cap H_{1} $ would contain a subgroup of order $ q_{0} $. Thus $ H \cap H_{1} \neq M_{1} $ and $ H \cap H_{1} \leq M_{2} $ where $ M_{2} $ is a maximal subgroup of $ M_{1} $. 

\medskip
This process can be repeated to give a chain of $ m \in \mathbb{N} $ subgroups
\[
H > M_{1} > M_{2} > \dots > M_{m}
\]
such that for $ i \in \{ 2 , \dots , m \} $ each $ M_{i} $ is a subfield subgroup or a maximal $ PSL_{2} (q_{i} ) $ in $M _{i-1} $, where $ q_{i} $ divides $ q_{0} $. This chain cannot be extended indefinitely, so assume it cannot be extended beyond $ M_{m} $. For the same reasons as the other subgroups in the chain, $ H \cap H_{1} \neq M_{m} $. Therefore $ H \cap H_{1} $ is a subgroup of a Borel, maximal dihedral, $ A_{4} $, $A_{5} $ or $ S_{4} $ in $ M_{m} $, from which we can use induction or have already been shown earlier that $ H \cap H_{1} $ is isomorphic to one of the required groups.
\end{proof}
For the next lemma we want a fact about the product of some involutions in $ A_{5} $. Suppose $ K_{1} , K_{2} \leq A_{5} $ with $ K_{1} \cong D_{6} \cong K_{2} $. Suppose $ K_{1} \cap K_{2} \cong C_{2} $. Then we can write
\[
K_{1} = \{ e , \ (r \ s \ t ) , \ (r \ t \ s ) , \ (r \ s) (u \ v) , \ (r \ t) (u \ v) , \ (s \ t) (u \ v) \}
\]
and
\[
K_{2} = \{ e , \  (t \ u \ v ) , \ (t \ v \ u ) , \ (r \ s) (u \ v) , \ (r \ s) (t \ u) , \ (r \ s) (t \ v) \} ,
\]
where $ e $ is the identity and $ \{ r,s,t,u,v \} = \{ 1, 2,3,4,5 \} $. The point here is that the product of an involution in $ K_{1} \setminus ( K_{1} \cap K_{2} ) $ with an involution in $ K_{2} \setminus ( K_{1} \cap K_{2} ) $ gives an element of order $ 5 $. For example $ (r \ t) (u \ v) \cdot (r \ s)  (t \ u) = ( r \ u \ v \ t \ s ) $ has order order $ 5 $. The other products can be checked.
\begin{lema}\label{hghtsbfldac}
The height of the action of $ G $ on $ \Omega $ has bound $ Ht( G, \Omega ) \leq 3 $.
\end{lema}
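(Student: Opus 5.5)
I would argue by contradiction. Suppose $Ht(G,\Omega)\geq 4$, so by Corollary \ref{indsub} there is an independent set $\{H_{1},H_{2},H_{3},H_{4}\}\subseteq\Omega$. By Lemma \ref{stach} every ordering gives a strictly decreasing chain
\[
H_{1} > H_{1}\cap H_{2} > H_{1}\cap H_{2}\cap H_{3} > H_{1}\cap H_{2}\cap H_{3}\cap H_{4}.
\]
So the triple intersection is a nontrivial proper subgroup of a pairwise intersection. By Lemma \ref{dffrntsbgrps}, each pairwise intersection $H_{i}\cap H_{j}$ is $C_{2}$, $D_{2(q_{0}\pm1)/\delta_{0}}$ ($q$ odd), $C_{(q_{0}\pm1)/\delta_{0}}$ or $E_{q_{0}}$. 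The $C_{2}$ option is excluded, since it leaves no room for a nontrivial triple intersection below it. The plan is to show in turn that each remaining type for the triple intersections is impossible.

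\textbf{Step 1: triple intersections of odd order or containing $p$-elements.} If $H_{1}\cap H_{2}\cap H_{3}$ contains a nontrivial $p$-element, then Lemma \ref{pgrpqzero}, applied to each pair, shows every pairwise intersection containing it contains a $p$-group of order $q_{0}$. This $p$-group lies in a unique Sylow $p$-subgroup of $G$ by Lemma \ref{prmsbgrps}. Since pairwise intersections are then forced to equal $E_{q_{0}}$, a chain argument gives two equal intersections, contradicting Lemma \ref{indsbstsntqual}.

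Similarly, suppose the triple intersection contains an element $c$ with $o(c)>2$ coprime to $q$. Lemma \ref{cyclccntrlz} applied to each pair puts the same $C_{(q_{0}\pm1)/\delta_{0}}$ (the unique cyclic subgroup of that order in the dihedral group $N_{G}(\langle c\rangle)$ of Lemma \ref{nrmlzr}) inside every pairwise intersection. Then $H_{1}\cap H_{2}\cap H_{3}$ already contains this cyclic group. The only things above it are itself and the dihedral group, and again two stabilizers in the chain coincide.

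\textbf{Step 2: the remaining case.} We are left with every triple intersection of order $2$, generated by an involution $h_{ijk}$, and $G_{(\Delta)}=1$. The pairwise intersections strictly containing $C_{2}$ and having at least two distinct involutions must be dihedral $D_{2(q_{0}\pm1)/\delta_{0}}$ (so $q$ odd), or $C_{(q_{0}\pm1)/\delta_{0}}$ with a unique involution. The cyclic option cannot contain two distinct $h_{12k}$. Hence $H_{1}\cap H_{2}$ is dihedral, containing $h_{123}\neq h_{124}$, and so contains their product, a rotation $c$.

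If $o(c)>2$, then $\langle c\rangle$ is normal in the dihedral groups $H_{1}\cap H_{2}$ and $H_{3}\cap H_{4}$. I would use Lemma \ref{cyclccntrlz} and the uniqueness of the rotation subgroup of $N_{G}(\langle c\rangle)$ to force the rotation subgroups of the pairwise intersections to coincide, giving a larger triple intersection and hence a contradiction.

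If $o(c)=2$ or $c$ is small, I expect to use the $A_{5}$ remark stated just before this lemma. In the small cases $q_{0}\in\{4,5\}$, $H\cong A_{5}$ and the relevant pairwise intersections are $D_{6}$ meeting in $C_{2}$. The product of the outside involutions then has order $5$, which must lie in some pairwise intersection. This contradicts the orders available from Lemma \ref{dffrntsbgrps}.

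\textbf{Main obstacle.} I expect Step 2 to be the hardest part: tracking which involutions are rotations versus reflections across the four dihedral intersections, and handling small $q_{0}$ separately. For generic $q_{0}$, I would first try to show that two involutions $h_{123},h_{124}$ both lying in $H_{1}\cap H_{2}$ and in $H_{3}$ or $H_{4}$ generate a subgroup whose rotation lies in a common pairwise intersection. That would make the triple intersection non-cyclic, contradicting order $2$.
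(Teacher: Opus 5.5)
Your Step 1 is sound and matches the paper's Cases 1--2 and the opening of its Case 3. One small repair: compare the two pairwise intersections $\cong E_{q_0}$ as Sylow $p$-subgroups of $H_1$, using Lemma \ref{prmsbgrps} for $H_1\cong PSL_2(q_0)$ or $PGL_2(q_0)$. Inside $G$, two distinct $E_{q_0}$ can lie in the same Sylow $p$-subgroup, so the comparison must happen in $H_1$.

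The gap is everything after Step 1. That step only shows each triple intersection consists of involutions, not that it has order $2$. A triple intersection can be a Klein four-group, with $G_{(\Delta)}\cong C_2$, and this case does arise. For $H\cong PGL_2(3)\cong S_4$ the pairwise intersections are $D_8$'s and every triple intersection is the normal $K_4$. For $H\cong PGL_2(q_0)$ with $q_0\ge 5$, the configuration is forced to have Klein four triple intersections before any contradiction appears. So the reduction to ``every triple intersection has order $2$ and $G_{(\Delta)}=1$'' throws away exactly the configurations needing the most work.

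Step 2 also contains no actual argument. You assert that $\langle c\rangle$, with $c=h_{123}h_{124}$, is normal in $H_3\cap H_4$, but $c$ need not lie in $H_3$ or $H_4$. You also never explain how the rotation subgroups of different pairwise intersections would be made to coincide. The branch $o(c)=2$ (commuting $h_{123},h_{124}$) is precisely what occurs for $H\cong PGL_2(q_0)$, where $h_{123}$ is the central involution of $H_1\cap H_2$. You treat it only via the $A_5$ remark, which covers $H\cong PSL_2(5)$ alone, and there in fact $o(c)=3$. The value $q_0=4$ is irrelevant: for $q$ even, involutions are $p$-elements and Step 1 already finishes.

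The missing idea is to fix one involution $a$ in a triple intersection $H_r\cap H_s\cap H_t$ and compare its centralizers in $H_r$, $H_s$, $H_t$. By Lemma \ref{nrmlzr} each is dihedral of type $2(q_0-1)/\delta_0$ or $2(q_0+1)/\delta_0$. Two of the same type share their rotation subgroup, because $C_G(a)$ is dihedral and so has a unique cyclic subgroup of each order at least $3$.
\begin{itemize}
\item For $H\cong PSL_2(q_0)$ with $q_0\ge 7$, divisibility by $4$ forces all three centralizers to have the same type. This puts an element of order at least $3$ into the triple intersection, which Step 1 forbids.
\item For $H\cong PGL_2(q_0)$ with $q_0\ge 5$, it forces $H_r\cap H_s=C_{H_r}(a)$. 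The triple intersections are then Klein four-groups generated by the central involutions $c_{ij}$ of the pairwise intersections. Since $c_{13}c_{14}=c_{34}$ is a rotation of $H_1\cap H_2=C_{H_1}(c_{12})$, you get $c_{34}=c_{12}$, and hence $H_1\cap H_2\cap H_3=H_1\cap H_3\cap H_4$.
\end{itemize}
The cases $H\cong S_4$ (the normal $K_4$ lies in every $D_8$) and $H\cong A_5$ (the order-$5$ product of involutions) each need their own short argument.
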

\begin{proof}
If $ Ht( G, \Omega ) > 3 $ then there exists and independent set of size $ 4 $ in $ \Omega $ by Corollary \ref{indsub}. Suppose, for a contradiction, that there exists an independent set $ \Delta := \{ H_{1} , H_{2} , H_{3} , H_{4} \} \subseteq \Omega $.

\medskip
For distinct $ r , s, t \in \{ 1, 2 , 3 , 4 \} $ with $ r \neq s \neq t \neq r $, we will consider each of the possible choices for $ H_{r} \cap H_{s} $ given in Lemma \ref{dffrntsbgrps} and then show in each case $ \Delta $ cannot be an independent set. First note that $ H_{r} \cap H_{s} \neq \{ 1_{G} \} $, otherwise $ G_{  H_{1} , H_{2} , H_{3} , H_{4} }  = G_{ H_{r} , H_{s} } = \{ 1_{G} \} $ and $ \Delta $ would not be an independent set by Definition \ref{stdfnind}. The same reasoning shows $ H_{r} \cap H_{s} \cap H_{t} \neq \{ 1 _{G} \} $.

\medskip
\textbf{Case 1:} Suppose $ H_{r} \cap H_{s} \cong C_{2} $. We have $ H_{r} \cap H_{s} \cap H_{t} < H_{r} \cap H_{s} $ by Lemma \ref{stach}. Therefore $ H_{r} \cap H_{s} \cap H_{t} = \{ 1_{G} \} $, a contradiction. Thus $ H_{r} \cap H_{s} \not\cong C_{2} $.

\medskip
\textbf{Case 2:} Suppose $ H_{r} \cap H_{s} \cong  E_{q_{0} }  $. If $ H_{s} \cap H_{t} \cong  C_{ (q_{0} \pm 1 ) / \delta _{0} } $ then $ H_{s} \cap H_{t} $ has no elements of order $ p $, so $ H_{r} \cap H_{s} \cap H_{t} = \{ 1_{G} \} $, a contradiction. By Lemma \ref{ntadhdrlntr}, we cannot have $ H_{s} \cap H_{t} \cong  D_{ 2(q_{0} \pm 1 ) / \delta _{0} } $ if $ p $ is even. If $ q $ is odd and $ H_{s} \cap H_{t} \cong  D_{ 2(q_{0} \pm 1 ) / \delta _{0} } $ then it has no element of order $ p $ and $ H_{r} \cap H_{s} \cap H_{t} = \{ 1_{G} \} $, anther contradiction. The only other possibility is $ H_{s} \cap H_{t} \cong E_{q_{0}} $. In this case both $ H_{r} \cap H_{s} $ and $ H_{s} \cap H_{t} $ are Sylow $ p $-subgroups of $ H_{s} $, so either $ H_{r} \cap H_{s} = H_{s} \cap H_{t} $ or $ (H_{r} \cap H_{s} ) \cap (H_{s} \cap H_{t} ) = H_{r} \cap H_{s} \cap H_{t} = \{ 1 _{G} \} $ by Lemma \ref{prmsbgrps} - again a contradiction. Therefore $ H_{r} \cap H_{s} \not\cong  E_{q_{0} }  $.

\medskip
\textbf{Case 3:} Suppose $ H_{r} \cap H_{s} \cong C_{ (q_{0} \pm 1) / \delta _{0} } $ or $ H_{r} \cap H_{s} \cong D_{ 2(q_{0} \pm 1) / \delta _{0} } $, since these are the only remaining possibilities for $ H_{r} \cap H_{s} $. We will consider what $ H_{r} \cap H_{s} \cap H_{t}  $ could be isomorphic to.

\medskip
Suppose there exists $ h \in H_{r} \cap H_{s} \cap H_{t} $ with $ o(h) \geq 3 $. Then $o(h) $ divies $ q_{0} \pm 1 $ and is coprime to $ q_{0} $. So, by Lemma \ref{nrmlzr}, the normalizers of $ \langle h \rangle $ in $ H_{r} , H_{s} $ and $ H_{t} $ are isomorphic to either $ D_{ 2(q_{0} - 1) / \delta _{0} } $ or $ D_{ 2(q_{0} + 1) / \delta _{0} } $. Only one of these could have an element of order $ o(h) $. Hence $ N_{H_{r} } ( \langle h \rangle ) \cong N_{H_{s} } ( \langle h \rangle ) \cong N_{H_{t} } ( \langle h \rangle ) $. Lemma \ref{nrmlzr} also shows $ N_{G} ( \langle h \rangle ) $ is dihedral, so it only has one subgroup with the same order as the rotational subgroup of $ N_{H_{r} } ( \langle h \rangle ) $. Therefore the rotational subgroups of $ N_{H_{r} } ( \langle h \rangle ) $, $ N_{H_{s} } ( \langle h \rangle ) $ and $ N_{H_{t} } ( \langle h \rangle ) $ are equal.

\medskip
Let $ R $ be the rotational subgroup of $ N_{H_{r} } ( \langle h \rangle ) $. Then $ R \leq H_{r} \cap H_{s} \cap H_{t} $. If $ H_{r} \cap H_{s} = R $ then $ H_{r} \cap H_{s} \cap H_{t} = R $ and $ \Delta $ is not independent by Lemma \ref{indsbstsntqual}. So $ H_{r} \cap H_{s} \neq R $. Since $ R \leq H_{r} \cap H_{s} $ and the order of $ R $ divides only one of $ | C_{ (q_{0} - 1) / \delta _{0} } | $ or $ | C_{ (q_{0} + 1) / \delta _{0} } | $, it must be that $  H_{r} \cap H_{s} $ is not cyclic and is instead dihedral. As $ R$ would have to be in the rotational subgroup of a dihedral subgroup of $ H_{r} $, the only dihedral of large enough order in $ H_{r} $ that contains $ R $ is $ N_{H_{r} } ( \langle h \rangle ) $. Hence $ H_{r} \cap H_{s} = N_{H_{r} } ( \langle h \rangle ) $. The same method shows that $ H_{r} \cap H_{t} = N_{H_{r} } ( \langle h \rangle ) $. But $  H_{r} \cap H_{s} = H_{r} \cap H_{t} $ implies $ \Delta $ is not independent by Lemma \ref{indsbstsntqual}. So we have a contradiction and it follows that all elements of $ H_{r} \cap H_{s} \cap H_{t} $ have order at most $ 2 $.

\medskip
Since $ H_{r} \cap H_{s} \cap H_{t}  \neq \{ 1_{G} \} $, there exists an involution in $ H_{r} \cap H_{s} \cap H_{t} $.

\medskip
Let $ u \in \{ 1,2,3,4 \} \setminus \{ r,s,t \} $. Everything in the proof so far regarding $  H_{r} \cap H_{s} \cap H_{t} $ applies to $ H_{r} \cap H_{s} \cap H_{u} $. Therefore this intersection contains an involution and has no elements of order $ 3 $ or more. Lemma \ref{indsbstsntqual} tells us that $  H_{r} \cap H_{s} \cap H_{t} \neq H_{r} \cap H_{s} \cap H_{u} $. Using the same lemma again, $  H_{r} \cap H_{s} \cap H_{t} $ and $ H_{r} \cap H_{s} \cap H_{u} $ cannot be subgroups of each other, otherwise we get $  H_{r} \cap H_{s} \cap H_{t} \cap H_{u} = H_{r} \cap H_{s} \cap H_{t} $ or $ H_{r} \cap H_{s} \cap H_{t} \cap H_{u} = H_{r} \cap H_{s} \cap H_{u} $. So there exists involutions $ a_{rst} \in H_{r} \cap H_{s} \cap H_{t} $ and $ a_{rsu} \in H_{r} \cap H_{s} \cap H_{u} $ such that $ a_{rst} \not\in H_{r} \cap H_{s} \cap H_{u} $ and $ a_{rsu} \not\in H_{r} \cap H_{s} \cap H_{t} $. Hence $ a_{rst} \neq a_{rsu} $. As $ a_{rst} , a_{rsu} \in H_{r} \cap H_{s} $, the subgroup $ H_{r} \cap H_{s} $ cannot be cyclic. Thus $ H_{r} \cap H_{s} \cong D_{ 2(q_{0} \pm 1) / \delta _{0} } $. We can similarly show every intersection of a pair of stabilizers in $ \Delta $ is isomorphic to either $ D_{ 2(q_{0} - 1) / \delta _{0} } $ or $ D_{ 2(q_{0} + 1) / \delta _{0} } $.

\medskip
Lemma \ref{dffrntsbgrps} tells us that $ q $ is odd and so $ q_{0} $ must be odd.

\medskip
Just as the involutions above were defined, there also exists an involution $ a_{rtu} \in H_{r} \cap H_{t} \cap H_{u} $ that is distinct from $ a_{rst} $ and $  a_{rsu} $. We will make use of this involution later.

\medskip
From Lemma \ref{nrmlzr} we see the normalizers of $ \langle a_{rst} \rangle $ in $ H_{r} , H_{s} $ and $ H_{t} $ are isomorphic to either $ D_{ 2(q_{0} - 1) / \delta _{0} } $ or $ D_{ 2(q_{0} + 1) / \delta _{0} } $. At least two of these normalizers are isomorphic, say $ N_{H_{r} } (  \langle a_{rst} \rangle ) \cong N_{H_{s} } (  \langle a_{rst} \rangle ) $.

\medskip
Now $ \langle a_{rst} \rangle $ lies in the rotational subgroups of these normalizers and so the order of the normalizers is divisible by $ 4 $. If $ H_{r} \cong PSL_{2} (q_{0} ) $ then only one of $ D_{ q_{0} - 1} $ or $ D_{ q_{0} + 1 } $ has order divisible by $ 4 $. Hence $ N_{H_{r} } (  \langle a_{rst} \rangle ) \cong N_{H_{s} } (  \langle a_{rst} \rangle ) \cong N_{H_{t} } (  \langle a_{rst} \rangle ) $. If we also have $ q_{0} > 5 $ then the rotational subgroup $ N_{H_{r} } (  \langle a_{rst} \rangle ) $ has an element $ b $ of order $ 3 $ or more. Lemma \ref{nrmlzr} shows that $ N_{ G } (  \langle a_{rst} \rangle ) $ is dihedral, so contains only one subgroup of order $ o(b) $, namely $ \langle b \rangle $. Hence $ \langle b \rangle $ is a subgroup of $  N_{H_{s} } (  \langle a_{rst} \rangle ) $ and $  N_{H_{t} } (  \langle a_{rst} \rangle ) $ as well. It follows that $ b \in H_{r} \cap H_{s} \cap H_{t} $. But we have determined earlier that no element of order $ o(b) $ can exist in this intersection. Thus $ H \not\cong PSL_{2} (q_{0} ) $ when $ q_{0} \geq 7 $.

\medskip
If $ q_{0} = 5 $ and $ H_{r} \cong PSL_{2} (5) \cong A_{5} $ then $ N_{H_{r} } (  \langle a_{rst} \rangle ) \cong K_{4} $ and the above argument does not work. By Lemma \ref{kfrsbgrpcllctn} we have $ H_{r} \cap H_{s} \not\cong D_{q_{0} - 1} \cong K_{4} $. Hence $ H_{r} \cap H_{s} \cong D_{q_{0} + 1 } = D_{6} $. Similarly we have $ H_{r} \cap H_{t} \cong H_{r} \cap H_{u} \cong D_{6} $. Observe $ H_{r} \cap H_{s} $ and $ H_{r} \cap H_{t} $ have an involution in common, namely $ a_{rst} $. Since $ a_{rsu} \in  H_{r} \cap H_{s} $ and $ a_{rtu} \in  H_{r} \cap H_{t} $ we have $ o(a_{rsu} a_{rtu} ) = 5 $ from the discussion before this lemma. But $ a_{rsu} , a_{rtu} \in H_{r} \cap H_{u} $ and this subgroup contains no elements of order $ 5 $. Thus we have a contradiction and it follows that $ H_{r} \not\cong PSL_{2} (5)  $ when $ q_{0} = 5 $.

\medskip
The only possibility is that $ H_{r} \cong PGL_{2} (q_{0} ) $ with $ q_{0} $ odd.

\medskip
Suppose $ q_{0} = 3 $. Then $ H_{r} \cong PGL_{2} ( 3 ) \cong S_{4} $. Then $ H_{r} \cap H_{S} $ is isomorphic to either $ K_{4} $ or $ D_{8} $. Lemma \ref{kfrsbgrpcllctn} shows that $ H_{r} \cap H_{S} \not\cong K_{4} $. Hence $ H_{r} \cap H_{S} \cong D_{8} $. Same goes for $ H_{r} \cap H_{t} $ and $ H_{r} \cap H_{u} $. There exists $ K \leq H_{r} $ with $ K \cong K_{4} $ and $ K $ is normal in $ H_{r} $. Looking at the subgroup structure of $ PGL_{2} (3) $, we see $ K $ is contained in every $ D_{8} $ in $ H_{r} $. By Lemma \ref{stach}, we have $ H_{r} \cap H_{s} \cap H_{t} < H_{r} \cap H_{s} $ and since $ K \leq H_{r} \cap H_{s} \cap H_{t} $ we must have $ K = H_{r} \cap H_{s} \cap H_{t} $. Also $ K \leq H_{r} \cap H_{s} \cap H_{t} \cap H_{u} $. Thus $ H_{r} \cap H_{s} \cap H_{t} = H_{r} \cap H_{s} \cap H_{t} \cap H_{u} $. Lemma \ref{indsbstsntqual} then shows that $ \Delta $ is not an independent set. A contradiction. Therefore $ q_{0} \neq 3 $.

\medskip
Now suppose $ q_{0} \geq 5 $. Then $ N_{H_{r} } (  \langle a_{rst} \rangle ) $, $ N_{H_{s} } (  \langle a_{rst} \rangle ) $ and $ N_{H_{t} } (  \langle a_{rst} \rangle ) $ are isomorphic to $ D_{ 2(q_{0} - 1)  } $ or $ D_{ 2(q_{0} + 1) } $ and have order at least $ 8 $. At least two of these, say $ N_{H_{r} } (  \langle a_{rst} \rangle ) $ and $ N_{H_{s} } (  \langle a_{rst} \rangle ) $, contain rotational subgroups that are isomorphic to each other with order at least $ 4 $. Let $ m $ be the order of these rotational subgroups. Lemma \ref{nrmlzr} shows that $ N_{G } (  \langle a_{rst} \rangle ) $ is dihedral so only contains one subgroup of order $ m $. So the rotational subgroups of $ N_{H_{r} } (  \langle a_{rst} \rangle ) $ and $ N_{H_{s} } (  \langle a_{rst} \rangle ) $ are equal and it lies in $ H_{r} \cap H_{s} $. Only one of $ D_{ 2(q_{0} - 1)  } $ or $ D_{ 2(q_{0} + 1) } $ has a subgroup of order $ m $, so $ H_{r} \cap H_{s} \cong N_{H_{r} } (  \langle a_{rst} \rangle ) $. Hence $ N_{H_{r} } (  \langle a_{rst} \rangle ) = H_{r} \cap H_{s} = N_{H_{s} } (  \langle a_{rst} \rangle ) $.

\medskip
Now $ H_{r} \cap H_{t} $ is isomorphic to $ D_{ 2(q_{0} - 1)  } $ or $ D_{ 2(q_{0} + 1) } $ and has an involution $ c_{rt} $ in its centre. If $ c_{rt} = a_{rst} $ then $ H_{r} \cap H_{t} $ contains elements of order $ 3 $ or more that normalize $ \langle a_{rst} \rangle $. Such elements are in $ N_{H_{r} } (  \langle a_{rst} \rangle ) $, so would lie in $ H_{r} \cap H_{s} \cap H_{t} $. But we determined earlier that no elements of order greater that $ 2 $ exist here. Hence $ c_{rt} \neq  a_{rst} $. As $ a_{rst}  \in  H_{r} \cap H_{t} $, the elements $ c_{rt} $ and $  a_{rst} $ commute with each other. Thus $ c_{rt} \in H_{r} \cap H_{s} $ and $ \langle a_{rst} , c_{rt} \rangle \cong K_{4} $. But then $ c_{rt} \in H_{r} \cap H_{s} \cap H_{t} $ and we have $ \langle a_{rst} , c_{rt} \rangle \leq H_{r} \cap H_{s} \cap H_{t} $.

\medskip
Since $ H_{r} \cap H_{s} \cap H_{t} $ is a subgroup of a dihedral and consists of involutions and the identity, it is isomorphic to $ C_{2} $ or $ K_{4} $. Therefore $ \langle a_{rst} , c_{rt} \rangle = H_{r} \cap H_{s} \cap H_{t} $.

\medskip
There exists an involution $ c_{st} $ in the centre of $ H_{s} \cap H_{t} $. Following the above discussion $ c_{st} \neq a_{rst} $ and $ c_{st} $ commutes with $ a_{rst} $. Therefore $ c_{st} \in H_{r} \cap H_{s} \cap H_{t} $. Now $ N_{H_{t}} ( c_{st} ) $ is isomorphic to $ D_{ 2(q_{0} - 1)  } $ or $ D_{ 2(q_{0} + 1) } $ by Lemma \ref{nrmlzr} and so is $ H_{s} \cap H_{t} $. Since $ N_{H_{t}} ( c_{st} ) $ cannot have subgroups isomorphic to both $ C_{q_{0} - 1} $ and $ C_{q_{0} + 1} $, we have $ H_{s} \cap H_{t} = N_{H_{t}} ( c_{st} ) $. By the same reasoning, if $ c_{rt} = c_{st} $ then $ H_{r} \cap H_{t} = H_{s} \cap H_{t} $. But $  H_{r} \cap H_{t} \neq H_{s} \cap H_{t} $, otherwise $ \Delta $ is not independent by Lemma \ref{indsbstsntqual}. Hence $ c_{rt} \neq c_{st} $. So $ a_{rst} , c_{rt} $ and $ c_{st} $ are the three involutions of $ H_{r} \cap H_{s} \cap H_{t} $.

\medskip
Following the same notation, for $ i, j \in \{ 1, 2, 3, 4 \} $ with $ i \neq j $, let $ c_{ij} $ be the involution in the centre of $ H_{i} \cap H_{j} $. This above discussion tells us that $ c_{12} , c_{13} \in H_{1} \cap H_{2} \cap H_{3} $ and $ H_{1} \cap H_{2} \cap H_{3} \cong K_{4} $. Also $ c_{12} , c_{14}  \in H_{1} \cap H_{2} \cap H_{4} $ and $ c_{13} , c_{14} \in H_{1} \cap H_{3} \cap H_{4} $; again these intersections are Klein four-groups. Now $ c_{13} $ and $ c_{14} $ are distinct and commute with $ c_{12} $, so are reflections in $ N_{H_{1} } ( c_{12} ) = H_{1} \cap H_{2} $. Observe $ o( c_{13} c_{14} ) = o(c_{34} ) = 2 $. But $ c_{13} c_{14} $ is a rotation in $ N_{H_{1} } ( c_{12} ) $. Hence $ c_{34} = c_{12} $. So we have
\[
H_{1} \cap H_{2} \cap H_{3} 
= \langle c_{12} , c_{13} \rangle 
= \langle c_{34} , c_{13} \rangle 
= H_{1} \cap H_{3} \cap H_{4} .
\]
This implies $ \Delta $ is not an independent set by Lemma \ref{indsbstsntqual}. So the assumption that $ H_{r} \cap H_{s} \cong C_{ (q_{0} \pm 1) / \delta _{0} } $ or $ H_{r} \cap H_{s} \cong D_{ 2(q_{0} \pm 1) / \delta _{0} } $ made at the start of Case 3 must be wrong.

\medskip
The conclusions of Cases 1, 2 and 3, together with Lemma \ref{dffrntsbgrps}, give us a contradiction. Thus the original assumption that there exists an independent set of size $ 4 $ is wrong.
\end{proof}
At last we are ready for the final theorem of the chapter (which is unfortunately another long one). For this theorem, the choice $ G = PSL_{2} (9) $ is allowed. Also the choice $ H \cong PSL_{2} (3) $ is allowed.
\begin{thrm}\label{fnalthrmdndit}
If $ H \cong PSL_{2} (3) $ then $ Ht(G, \Omega ) = 2 $ and $ RC(G, \Omega ) = 3 $. For all other choices of $ H $ the height of the action of $ G $ is $ 3 $ and relational complexity is $ 4 $.
\end{thrm}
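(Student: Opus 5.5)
The plan splits according to $H$. First dispose of the special cases. If $H \cong PSL_{2} (3) \cong A_{4}$, then $q = 3^{r}$ with $r$ an odd prime. For $q \geq 27$, apply Theorems \ref{afrctnhght} and \ref{afrrcthrem}: these treat $PSL_{2} (q)$ acting on any conjugacy class of maximal $A_{4}$, with $q$ odd and $q \geq 11$, and give $Ht = 2$ and $RC = 3$. The remaining special case is $G = PSL_{2} (9)$ acting on a maximal $PGL_{2} (3) \cong S_{4}$. I would read this off the tables in \cite{WISCONS1}, as is done elsewhere in the paper, expecting height $3$ and relational complexity $4$. Note that $PSL_{2} (9) \cong A_{6}$, and one class of $S_{4}$ corresponds to the stabilizers of a $2$-subset.

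In the general case, Lemma \ref{hghtsbfldac} gives $Ht(G, \Omega) \leq 3$, hence $RC(G, \Omega) \leq 4$ by Theorem \ref{rchgt}. It therefore suffices to construct $I, J \in \Omega^{4}$ with $I \sim_{3} J$ but $I \not\sim_{4} J$. This gives $RC \geq 4$, and then Theorem \ref{rchgt} forces $Ht \geq 3$, so both values are attained. I would model the construction on the Borel argument of Theorem \ref{xckjdsqwmmmm}.

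Fix a Borel subgroup $B_{0} = U_{0} \rtimes T_{0}$ of $H$. By Lemma \ref{nrmlzrnbrlx}, $B_{0}$ lies in a unique Borel subgroup $B = U \rtimes T$ of $G$ with $U_{0} \leq U$. Conjugating $H$ by elements of $U \setminus U_{0}$ produces subfield subgroups $H^{u}$ that share $U_{0}$ with $H$. Conjugating by suitable elements of $T$ (torus elements outside $H$) produces subgroups that meet $H$ in a cyclic subgroup $C_{(q_{0}-1)/\delta_{0}}$. The idea is that $G$ acts on the $q+1$ Borel subgroups, or ``points'', 3-transitively up to the determinant obstruction. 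Each subfield subgroup corresponds to a subline $\mathbb{P}^{1}(\mathbb{F}_{q_{0}})$, and the stabilizer of three sublines meeting in prescribed points should be trivial.

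Concretely, I would take $I = (H, H^{u}, H^{t}, H^{x})$ and $J = (H, H^{u}, H^{t}, H^{y})$. The elements $x, y \in B$ are chosen so that every pair among the first three entries has a known nontrivial intersection, namely $U_{0}$, $T_{0}$, or a conjugate of $T_{0}$. For each $3$-subtuple I would then exhibit an explicit element of the relevant pairwise intersection that sends $H^{x}$ to $H^{y}$, as in Cases~2--4 of Theorem \ref{xckjdsqwmmmm}. To rule out $I \sim_{4} J$, I would show $H \cap H^{u} \cap H^{t} = \{1_{G}\}$ using Lemma \ref{brlcyclcprts} ($U_{0} \cap T_{0}^{g}$ is trivial), and then check that $H^{x} \neq H^{y}$.

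The main obstacle is choosing $x$ and $y$ so that the three ``sending'' elements really lie in the respective two-point stabilizers. This amounts to a field-theoretic condition, analogous to Lemma \ref{xlmbdcndtns}, on an element of $\mathbb{F}_{q} \setminus \mathbb{F}_{q_{0}}$ together with a scalar in $\mathbb{F}_{q_{0}}^{*}$. I would attempt it by working in $GL_{2}$ or $SL_{2}$ matrices, viewing $H$ as the matrices over $\mathbb{F}_{q_{0}}$, and taking $x, y$ to be diagonal matrices $\mathrm{diag}(1, \alpha)$ and $\mathrm{diag}(1, \lambda^{2}\alpha)$ with $\lambda \in \mathbb{F}_{q_{0}}^{*}$ and $\alpha \notin \mathbb{F}_{q_{0}}$. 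I would then count solutions, as in Lemma \ref{xlmbdcndtns}, to get the required square condition. This is the step where small $q_{0}$ (for example $q_{0} = 2, 3, 4$) might need separate handling.
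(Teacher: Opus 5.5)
Your special cases and your general-case reduction are exactly the paper's. Concretely, that means Lemma~\ref{hghtsbfldac} with Theorem~\ref{rchgt} for the upper bounds, then a pair $I\sim_{3}J$, $I\not\sim_{4}J$ built from conjugates of $H$ by elements of one Borel subgroup $B$ of $G$. The gap is the construction of $I$ and $J$. You flag it as the main obstacle and leave it open, and the concrete choice you suggest cannot work. With $x=\mathrm{diag}(1,\alpha)$, $y=\mathrm{diag}(1,\lambda^{2}\alpha)$ and $\lambda\in\mathbb{F}_{q_{0}}^{*}$, you have $y=hx$ with $h=\mathrm{diag}(1,\lambda^{2})$. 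This $h$ lies in $H$, since projectively it is $\mathrm{diag}(\lambda^{-1},\lambda)$, so $H^{y}=x^{-1}h^{-1}Hhx=H^{x}$ and $I=J$.

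The failure is structural, not a bad parameter. In your matrix model $B$ is upper triangular and $T_{0}$ is the diagonal torus of $H$, so any diagonal $x$ commutes with $T_{0}$ and hence $T_{0}\le H^{x}$. Consequently no element of $T_{0}$ moves $H^{x}$, so the analogue of Case~2 of Theorem~\ref{xckjdsqwmmmm} is unavailable. Moreover, since you take $H\cap H^{t}=T_{0}$, you get $H\cap H^{t}\cap H^{x}=H\cap H^{t}$. So $\{H,H^{t},H^{x}\}$ is not independent by Lemma~\ref{indsbstsntqual}, and Lemma~\ref{ntndpndntntrs} then gives $I\sim_{3}J\Rightarrow I\sim_{4}J$ for every such choice. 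A square condition in the style of Lemma~\ref{xlmbdcndtns} is therefore the wrong target.

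The missing idea is to conjugate the fourth entry by a non-diagonal element of $B$. In your notation, proceed as follows.
\begin{itemize}
\item Take $t\notin T_{0}$ in a cyclic $C_{(q-1)/\delta}$ of $B$ containing $T_{0}$.
\item Couple $u$ to $t$ by choosing $1_{G}\ne u\in U_{0}^{t}$. Then $H\cap H^{u}=U_{0}$ and $T_{0}^{u}\le H^{t}\cap H^{u}$; a generic $u\in U\setminus U_{0}$ can make $H^{t}\cap H^{u}$ trivial.
\item Put $x=tu_{1}$ with $1_{G}\ne u_{1}\in U_{0}$. Then $H^{x}$ contains $U_{0}^{t}$ and the torus $T_{0}^{u_{1}}\ne T_{0}$.
\item Fix $1_{G}\ne b\in T_{0}$, choose $v\in U_{0}$ with $v^{-1}T_{0}^{u_{1}}v=b^{-1}T_{0}^{u_{1}}b$ (Lemma~\ref{brlcyclcprts}), and set $y=xv$.
\end{itemize}
Then $b\in H\cap H^{t}$, $v\in H\cap H^{u}$ and $u^{-1}bu\in H^{t}\cap H^{u}$ all send $H^{x}$ to $H^{y}$. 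The only real check is $b^{-1}H^{x}b=H^{y}$. It follows from Lemma~\ref{dffrntsbgrps}, because both groups contain $U_{0}^{t}$ and $b^{-1}T_{0}^{u_{1}}b$. Add $H\cap H^{t}\cap H^{u}=\{1_{G}\}$ and $H^{x}\ne H^{y}$, both checked in the paper via Lemma~\ref{dffrntsbgrps}, and this is the paper's argument. No counting is needed. The only problematic small case is $T_{0}=\{1_{G}\}$, i.e.\ the excluded $H\cong PSL_{2}(3)$; $q_{0}=2$ does not occur and $q_{0}=3,4$ need no special handling.

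For intuition, take $H=PGL_{2}(q_{0})\le PGL_{2}(q)$ in your subline picture, with $\alpha\notin\mathbb{F}_{q_{0}}$ and $\beta,\mu\in\mathbb{F}_{q_{0}}^{*}$, $\mu\ne 1$. The entries of $I$ are the sublines $\mathbb{F}_{q_{0}}$, $\alpha\mathbb{F}_{q_{0}}$, $\mathbb{F}_{q_{0}}+\alpha$ and $\alpha\mathbb{F}_{q_{0}}+\beta$, each with $\infty$ adjoined. $J$ replaces the last one by $\alpha\mathbb{F}_{q_{0}}+\mu\beta$. The scalar from $\mathbb{F}_{q_{0}}^{*}$ must rescale the translation part, not $\alpha$.
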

\begin{proof}
Let's get some special cases out of the way first. Suppose $ H \cong PSL_{2} (3) $. Then $ G = PSL_{2} (q) $ with $ q \geq 3^{3} = 27 $. Observe $ PSL_{2} (3) \cong A_{4} $. The height of $ PSL_{2} (q) $ acting on a maximal $ A_{4} $ was given in Lemma \ref{afrctnhght} and the relational complexity in Lemma \ref{afrrcthrem}.

\medskip
Next suppose $ G = PSL_{2} (9) $. The tables in \cite{WISCONS1} show $ Ht(G, \Omega ) = 3 $ and $ RC(G, \Omega ) = 4 $ in this case.

\medskip
Now on to the general case. Suppose $ G \neq PSL_{2} (9) $. Also suppose $ H \not\cong PSL_{2} (3) $. The aim here is to construct tuples in $ \Omega ^{4} $ that are $ 3 $-subtuple complete but not $ 4 $-subtuple complete.

\medskip
Let $ H_{1} \in \Omega $. Let $ B_{1} \leq H_{1} $ be a Borel subgroup. Then we can write $ B_{1} = U_{1} \rtimes T_{1} $, where $ U_{1} $ is elementary abelian of order $q_{0} $ and $ T_{1} \cong C_{ (q_{0} - 1) / \delta _{0} } $. Since $ B_{1} $ normalizes $ U_{1} $, by Lemma \ref{nrmlzrnbrlx} we have $ B_{1} \leq B $ where $ B $ is a Borel subgroup of $ G $.

\medskip
By Lemma \ref{brlcyclcprts}, there exists $ A_{1} \leq B_{1} $ with $ A_{1} \cong C_{ (q_{0} - 1) / \delta _{0} } $. The same lemma shows there exist $ A_{1} ^{ \prime } \leq B $ with $  A_{1} ^{ \prime } \cong C_{ (q - 1) / \delta } $ such that $ A_{1} < A_{1} ^{ \prime } $.

\medskip
Let $ a \in A_{1} ^{ \prime } \setminus A_{1} $. Put $ H_{2} := a^{-1} H_{1} a $. Since $ A_{1} ^{ \prime } $ is cyclic we have $ A_{1} = a^{-1} A_{1} a \in H_{2} $.

\medskip
If $ H_{1} = H_{2} $ then $ a \in H_{1} $ by Lemma \ref{mxmlnrmlizr}. Observe that $ a \not\in B_{1} $ because Lemma \ref{brlcyclcprts} tells us $ B_{1} $ has no elements of order $ o(a ) $ since $ o(a ) = (q-1) / \delta > (q_{0} -1) / \delta _{0} $. As $ B_{1} $ is maximal in $ H_{1} $, we would have $ \langle B_{1} , a \rangle = H_{1} $. But $ \langle B_{1} , a \rangle \leq B $ and $ H_{1} $ is maximal in $ G $, so $ H_{1} = B $. This is not possible because $ B $ has a subgroup of order $ q $ and $ H_{1} $ does not. Therefore $ H_{1} \neq H_{2} $.

\medskip
Let $ B_{2} := a^{-1} B_{1} a $. Then $ B_{2} \leq B $ and $ A_{1} \leq B_{2} \leq H_{2} $. By Lemma \ref{brlcyclcprts} there exists $ A_{2} \leq B_{2} $ with $ A_{2} \cong A_{1} $ and $ A_{2} \cap A_{1} = \{ 1_{G} \} $. We want to show $ A_{2} \not\leq H_{1} \cap H_{2} $.

\medskip
If $ | A_{1} | \geq 3 $ then from Lemma \ref{dffrntsbgrps} we see that $ | A_{1} | $ only divides $ | H_{1} \cap H_{2} | $ if $ H_{1} \cap H_{2} \cong C_{ (q_{0} - 1) / \delta _{0} } $ or $ H_{1} \cap H_{2} \cong D_{ 2(q_{0} - 1) / \delta _{0} } $. Whichever of these groups $ H_{1} \cap H_{2}  $ is isomorphic to, it only has one $ C_{ (q_{0} - 1) / \delta _{0} } $ subgroup. Therefore $ A_{2} \not\leq H_{1} \cap H_{2}  $.

\medskip
Suppose for a moment that $ | A_{1} |  = (q_{0} - 1) / \delta _{0} = 2 $. Then $ q_{0} $ is odd, $ | A_{2} | = 2 $ and there exists $ a_{1} \in A_{1} $ and $ a_{2} \in A_{2} $ both of order $ 2 $. If $  A_{2} \leq H_{1} \cap H_{2} $ then $ a_{1} , a_{2} \in H_{1} \cap H_{2} $ and by Lemma \ref{dffrntsbgrps} we have $ H_{1} \cap H_{2} \cong D_{ 2(q_{0} \pm 1) / \delta _{0} } $. If $ o(a_{1} a_{2} ) = 2 $ then $ \langle a_{1} , a_{2} \rangle \cong K_{4} $. Lemma \ref{brlklnfrsbgrp} tells us that $ B_{2} $ has no Klein four-subgroups. Hence $ o(a_{1} a_{2} ) > 2 $. Note $ D_{ 2(q_{0} - 1) / \delta _{0} } \cong K_{4} $, which has no elements of order $ o(a_{1} a_{2} ) $. So $ H_{1} \cap H_{2} \cong  D_{ 2(q_{0} + 1) / \delta _{0} } $ and $ o(a_{1} a_{2} ) $ divides $ (q_{0} + 1) / \delta _{0} $. But then $  o(a_{1} a_{2} ) $ does not divide $ q_{0} $ nor $  (q_{0} - 1) / \delta _{0} $. Therefore $ a_{1} a_{2} \notin B_{2} $ by Lemma \ref{brlcyclcprts}, which is a contradiction. So $  A_{2} \not\leq H_{1} \cap H_{2} $ again in this case.

\medskip
Since $ B_{2} $ is a Borel subgroup of $ H_{2} $, we can write $ B_{2} = U_{2} \rtimes T_{2} $, where $ U_{2} $ is elementary abelian of order $ q_{0} $ and $ T_{2} \cong C_{ (q_{0} - 1) / \delta _{0} } $. By Lemma \ref{brlcyclcprts} there exists $ u_{2} \in U_{2} $ such that $ A_{2} = u_{2}^{-1} A_{1} u_{2} $.

\medskip
Put $ H_{3} := u_{2}^{-1} H_{1} u_{2} $ and $ B_{3} := u_{2}^{-1} B_{1} u_{2} $. Lemma \ref{brlcyclcprts} shows that there exists an elementary abelian group $ U \leq B $ with $ U_{1} , U_{2} \leq U $. Hence $ u_{2}^{-1} U_{1} u_{2} = U_{1} $ and $ U_{1} \leq H_{1} \cap H_{3} $. Also $ A_{2} \leq H_{3}  $, implying $ H_{1} \neq H_{3} $. Thus $ U_{1} = H_{1} \cap H_{3} $ by Lemma \ref{dffrntsbgrps}. Since $ A_{2} \leq H_{2} \cap H_{3} $, the same lemma tells us $ H_{2} \cap H_{3} \not\cong E_{q_{0}} $, in particular $ H_{1} \cap H_{2} \cap H_{3} = ( H_{1} \cap H_{3} ) \cap ( H_{2} \cap H_{3} ) = \{ 1_{G} \} $.

\medskip
Note $ B_{3} $ is a Borel subgroup of $ H_{3} $. So by Lemma \ref{brlcyclcprts} there exists $ u_{1} \in U_{1} \setminus \{ 1_{G} \} $ and $ A_{3} \leq H_{3} $ such that $ A_{2} \cap A_{3} = \{ 1_{G} \} $ and $ u_{1}^{-1} A_{2} u_{1} = A_{3} $.

\medskip
Put $ H_{4} := u_{1} ^{-1} H_{2} u_{1} $ and $ A_{4} := u_{1} ^{-1} A_{1} u_{1} $. By similar reasoning as above, $ u_{1} ^{-1} U_{2} u_{1} = U_{2} $. Hence $ U_{2} , A_{3} , A_{4} \leq H_{4} $.

\medskip
Let $ b \in A_{1} \setminus \{ 1_{G} \} $. Note that $ A_{4} \leq B_{1} $. Lemma \ref{brlcyclcprts} shows $ A_{1} \cap A_{4} = \{ 1 _{G} \} $. Hence $ b^{-1} A_{4} b \neq A_{4} $ by Lemma \ref{brlcyclcprts}. Also observe $ b^{-1} A_{4} b \neq A_{1} $, otherwise $ A_{4} = b A_{1} b^{-1} = A_{1} $. Using the same lemma again, there exists $ v \in U_{1} \setminus \{ 1_{G} \} $ such that $ v^{-1} A_{4} v = b^{-1} A_{4} b $. Since $ v \in U $ and $ U_{2} \leq U $ we have $ v^{-1} U_{2} v = U_2 $.

\medskip
Put $ A_{3} ^{*} := v^{-1} A_{3} v $ and $ A_{4} ^{*} := v^{-1} A_{4} v $. Let $ H_{4} ^{*} = v^{-1} H_{4} v $. Then $ A_{3} ^{*} , A_{4} ^{*} , U_{2} \leq H_{4} ^{*} $.

\medskip
If $ H_{4}^{*} = H_{4} $ then $ v \in H_{4} $ by Lemma \ref{mxmlnrmlizr}. Since $ v \in U_{1} \leq H_{3} $, we would have $ v \in H_{3} \cap H_{4} $. We have $ o(v) $ dividing $ | U_{1} | = q_{0} $, so $ H_{3} \cap H_{4} \cong E_{q_{0} } $ by Lemma \ref{dffrntsbgrps}. But $ A_{3} \leq H_{3} \cap H_{4} $ and $ | A_{3} | = ( q_{0} - 1) / \delta _{0} $ does not divide $ q_{0} $, a contradiction. Hence $ H_{4}^{*} \neq H_{4} $.

\medskip
With these subgroups constructed, we can finally use them to define some suitable $ 4 $-tuples. Let
\newline $ I = ( H_{1} , H_{2} , H_{3} , H_{4} ) $ and $ J = ( H_{1} , H_{2} , H_{3} , H_{4} ^{*} ) $. First it will be shown that $ I \sim _{3} J $.

\medskip
The identity can be used to send $ (H_{1} , H_{2} , H_{3} ) $ to itself.

\medskip
Any element of $ G $ that sends $ (H_{1} , H_{2} , H_{4} ) $ to $ (H_{1} , H_{2} , H_{4}^{*} ) $ stabilizes $ H_{1} $ and $ H_{2} $, so lies in $ H_{1} \cap H_{2} $. We have $ b \in H_{1} \cap H_{2} $, so let's see if $ b^{-1} H_{4} b = H_{4} ^{*} $. We know that $ b^{-1} A_{4} b = A_{4} ^{*} \leq H_{4}^{*} $. Since $ U_{2} , A_{1} \leq B  $, Corollary \ref{crlrsmllbrl} shows there exists a conjugate $ B_{2} ^{*} $ of $ B_{2} $ in $ B$ where $ B_{2} ^{*} = U_{2} \rtimes A_{1} $. Therefore $ A_{1} $ normalizes $ U_{2} $, in particular $ b $ normalizes $ U_{2} $. Hence $ U_{2} \leq   b^{-1} H_{4} b $. From Lemma \ref{dffrntsbgrps} we see that $ A_{4} ^{*} , U_{2} \leq b^{-1} H_{4} b \cap H_{4}^{*} $ implies $ b^{-1} H_{4} b = H_{4}^{*} $. Thus $ (H_{1} , H_{2} , H_{4} ) ^{b} = (H_{1} , H_{2} , H_{4}^{*} ) $.

\medskip
To send $ (H_{1} , H_{3} , H_{4} ) $ to $ (H_{1} , H_{3} , H_{4}^{*} ) $ we have to pick an element from $ H_{1} \cap H_{3} $. For this we can choose $ v \in U_{1} = H_{1} \cap H_{3} $ since $ v^{-1} H_{4} v = H_{4}^{*} $. It follows that $ (H_{1} , H_{3} , H_{4} ) ^{v} = (H_{1} , H_{3} , H_{4}^{*} ) $.

\medskip
Lastly we want to send $ (H_{2} , H_{3} , H_{4} ) $ to $ (H_{2} , H_{3} , H_{4}^{*} ) $. Such an element must be in $ H_{2} \cap H_{3} $. We have $ A_{2} \leq H_{2} \cap H_{3}  $ and $ A_{2} = u_{2}^{-1} A_{1} u_{2} $. Put $ c := u_{2} ^{-1} b u_{2} $. Then $ c \in A_{2} $ and
\begin{align*}
c ^{-1} H_{4} c  & = (  u_{2} ^{-1} b u_{2} )^{-1} H_{4}  (u_{2} ^{-1} b u_{2} ) \\
& =   u_{2} ^{-1} b^{-1} u_{2}  H_{4}  u_{2} ^{-1} b u_{2}  \\
& =   u_{2} ^{-1} b^{-1}   H_{4}   b u_{2}  \tag{since $u_{2} \in U_{2} \leq H_{4} $} \\
& =   u_{2} ^{-1}  H_{4} ^{*}   u_{2}  \tag{since $b^{-1}   H_{4}   b =  H_{4} ^{*} $ from case above} \\
& = H_{4} ^{*} \tag{since $u_{2} \in U_{2} \leq H_{4} ^{*} $}.
\end{align*}
Thus $ (H_{2} , H_{3} , H_{4} ) ^{c } = (H_{2} , H_{3} , H_{4}^{*} ) $. Therefore $ I \sim _{3} J $.

\medskip
We saw earlier that $ H_{1} \cap H_{2} \cap H_{3} = \{ 1_{G} \} $. Any element that sends $ I $ to $ J$ has to stabilize $ H_{1} $, $H_{2} $ and $ H_{3} $, of which only the identity does. But we saw $ H_{4} \neq H_{4} ^{*} $. So no element sends $ I $ to $ J $. Therefore $ I \not \sim _{4} J $.

\medskip
If $ RC(G, \Omega ) < 4 $ then, by Definition \ref{first}, the fact $ I \sim _{3} J $ (and therefore also $ I \sim _{2} J $) would imply $ I \sim _{4} J $, which is not true. Thus $ RC(G, \Omega ) \geq 4 $. Theorem \ref{rchgt} and Lemma \ref{hghtsbfldac} together show $ RC(G, \Omega )  \leq 4 $. Hence $ RC(G, \Omega )  = 4 $.

\medskip
Using Theorem \ref{rchgt} again we have $ Ht(G, \Omega ) \geq 3 $. It follows from Lemma \ref{hghtsbfldac} that $ Ht(G, \Omega ) = 3 $.
\end{proof}
Although it was not pointed out during the proof, Lemma \ref{ntndpndntntrs} tells us that any proper subset of $ \{ H_{1} , H_{2} , H_{3} , H_{4} \} $ is independent, giving us some explicitly constructed independent sets of size $ 3 $. In fact, since $ Ht(G, \Omega ) = 3 $, no independent sets of size four exist, meaning $ \{ H_{1} , H_{2} , H_{3} , H_{4} \} $ is an almost independent set.

\medskip
With this chapter now complete, Theorems \ref{thermintrofrst} and \ref{thermintroscnd} have finally been proved.

\newpage
\begin{appendices}
\chapter{GAP Code for the Height of the \texorpdfstring{$ S_{4} $}{S4} Action}\label{appa}
This code is designed to study actions of a group $ X $ on cosets of a subgroup $ H_{1} $ that is isomorphic to $ S_{4} $. We want to find out what group $ X $ can be if it is generated by the point stabilizers of an independent set $ \Delta $ of size 4. Write $ \Delta = \{ \delta _{1} , \delta _{2} , \delta _{3} , \delta _{4} \} $. It is not difficult to show $ \Delta ^{x} $ is independent for each $ x \in X $. So we may assume $ H_{1} $ stabilizes $ \delta _{1} $. For $ i \in \{ 1, 2, 3, 4 \} $ let $ H_{i} $ be the stabilizer of $ \delta _{i} $. Let $ \Delta ^{*} = \{ H_{1} , H_{2} , H_{3} , H_{4} \} $.

\medskip
If $ H $ is conjugate to $ H_{1} $ and $ K $ is the normal $ K_{4} $ in $ H $, then assume $ N_{X} (K) = H $ (as is the case for maximal $ S_{4} $ in $ PSL_{2} (q) $ and $ PGL_{2} (q) $). The following facts can then be made use of;
\begin{itemize}
\item all double intersections of stabilizers in $ \Delta ^{*} $ have order at least $ 4 $ (see Lemma \ref{stach} and subgroup structure of $ S_{4} $),
\item double intersections of stabilizers in $ \Delta ^{*} $ are not isomorphic to $ A_{4} $, hence have order less than $ 12 $ (see Lemma \ref{afrntrsctsfr}),
\item all triple intersections of stabilizers in $ \Delta ^{*} $ have order $ 2 $ (see Lemma \ref{cntthnkfnm}),
\item the intersection $ H_{1} \cap H_{2} \cap H_{3} \cap H_{4} $ is trivial (see Lemma \ref{cntthnkfnm}).
\end{itemize}

\medskip
To simulate double intersections that include $ H_{1} $, we first find subgroups of $ H_{1} $ of order $ 4 $ or more and less than $ 12 $ (the notation $ H1 $ will be used from now on to match the GAP code, and similarly for the other stabilizers).
\begin{lstlisting}
H1:=SymmetricGroup(4);
Poss1:=AllSubgroups(H1);;
Poss:=Filtered(Poss1, x->Order(x)>=4 and Order(x)<12);
\end{lstlisting}
Let $ i, j , k \in \{ 1,2,3,4 \} $. We will express the intersection of $ Hi $ and $ Hj $ as $ Hij $. Also we write $ Hijk $ for the intersection of $ Hi $, $ Hj $ and $ Hk $.

\medskip
Recall from Lemma \ref{indsbstsntqual} that the point stabilizers of any two distinct subsets of $ \Delta $ are not equal. Intersecting a pair of double intersections of stabilizers, both including $ H1 $, gives a triple intersection, which has order $ 2 $. We find all possible combinations of double intersection of stabilizers in $ \Delta ^{*} $ with $ H1 $ that satisfy these conditions and put them in a list of 3-tuples called ``Tups".
\begin{lstlisting}
Tups:=[];
for H12 in Poss do
for H13 in Poss do
for H14 in Poss do
H123:=Intersection(H12, H13);
H124:=Intersection(H12, H14);
H134:=Intersection(H13, H14);
if Order(H123)=2 and Order(H124)=2 and Order(H134)=2 then
if not(H123=H124) and not(H123=H134) and not(H124=H134) and not(H12=H123) and not(H13=H123) and 
not(H12=H124) and not(H14=H124) and not(H13=H134) and not(H14=H134) then
Add(Tups, [H12, H13, H14]);
fi;
fi;
od;
od;
od;
\end{lstlisting}
It can easily be shown $ \Delta ^{h} $ is independent for each $ h \in H1 $, meaning we only need to look at one element for each orbit that the $ 3 $-tuples in ``Tups" belong to. We reduce the size of ``Tups" by retaining one representative from each orbit under $ H$.
\begin{lstlisting}
TupsSorted:=[];
TupsOrb:=Orbits(H1,Tups,OnTuples);;
for i in [1..Size(TupsOrb)] do
	Add(TupsSorted, TupsOrb[i][1]);
od;
\end{lstlisting}
We end up with 13 possible configurations.

\medskip
For each 3-tuple in ``TupsSorted", the intersection of any two entries represents a triple intersection of elements of $ \Delta ^{*} $, including $ H1 $. Since triple intersections have order $ 2 $,  the non-identity element will be chosen from each triple intersection and then information on the order of products of these elements extracted to use for presentations later.

\medskip
Since we are looking at a group $ X $ generated by four $ S_{4} $, we better make sure in each case the elements we pick generate $ S_{4} $.
\begin{lstlisting}
for x in TupsSorted do
g1:=Elements(Intersection(x[1], x[2]))[2];
g2:=Elements(Intersection(x[1], x[3]))[2];
g3:=Elements(Intersection(x[3], x[2]))[2];
Print(StructureDescription(Group([g1, g2, g3])), " ");
od;
\end{lstlisting}
We see that they do generate $ S_{4} $.

\medskip
In addition to the orders of products of elements, we will want to keep track of the double intersections the elements ultimately come from, for use later. There is no particular method to the products chosen below, only that enough different products were tried until they worked to get the results we wanted later.
\begin{lstlisting}
Presentation:=[];
for x in TupsSorted do
g1:=Elements(Intersection(x[1], x[2]))[2];
g2:=Elements(Intersection(x[1], x[3]))[2];
g3:=Elements(Intersection(x[2], x[3]))[2];
IntStructure:=[];
Add(IntStructure, StructureDescription(x[1]));
Add(IntStructure, StructureDescription(x[2]));
Add(IntStructure, StructureDescription(x[3]));
A:=[];
Add(A, Order(g1));
Add(A, Order(g2));
Add(A, Order(g3));
Add(A, Order(g1*g2));
Add(A, Order(g1*g3));
Add(A, Order(g2*g3));
Add(A, Order(g1*g2*g3*g2));
Add(A, Order(g2*g3*g1*g3));
Add(A, Order(g3*g1*g2*g1));
Add(A, Order(g1*g2*g3));
Add(A, Order(g1*g3*g2*g1*g3));
Add(A, IntStructure);
Add(Presentation, A);
od;
Presentation;
\end{lstlisting}
The list ``Presentation" is displayed at the end so we can check to see that picking the second element from each triple intersection did indeed give non-identity elements as we wanted.

\medskip
Shortly we will need to be sure that for each of the orders of products in Presentation, a group generated by three elements satisfying the above relations generates an $ S_{4} $ (also a useful check to see if we need to add more relations).
\begin{lstlisting}
for x in Presentation do
f:=FreeGroup("a", "b", "c");;
rels:=[
f.1^x[1], f.2^x[2], f.3^x[3], (f.1*f.2)^x[4], (f.1*f.3)^x[5], (f.2*f.3)^x[6], 
(f.1*f.2*f.3*f.2)^x[7], (f.2*f.3*f.1*f.3)^x[8], (f.3*f.1*f.2*f.1)^x[9], (f.1*f.2*f.3)^x[10], 
(f.1*f.3*f.2*f.1*f.3)^x[11]
];;
g := f / rels;
ct := TryCosetTableInWholeGroup(TrivialSubgroup(g) : silent);
if ct=fail then
Print("Error! No group generated for ", x, " \n");
else
Print(StructureDescription(g), " \n");
fi;
od;
\end{lstlisting}
We see that each of the sets of relations generates an $ S_{4} $. Take notice of how the relations above are the same relations that had been used to create presentations earlier, but with g1 replaced by f.1, g2 replaced by f.2 and g3 replaced by f.3. This method is going to be repeated shortly, but using four generators instead of three.

\medskip
Note the fact that $ H1 \cap H2 \cap H3 \cap H4 $ is trivial means that for $ \{ r,s,t,u \} = \{ 1,2,3,4 \} $, if $ h_{rst} \in Hr \cap Hs \cap Ht $ and $ h_{rsu} \in Hr \cap Hs \cap Hu $ are non-identity elements, then $ h_{rst}  \neq h_{rsu} $. Combining this with the fact that any three such elements generated an $ S_{4} $, picking an element from each of the four triple-intersections must generate a group that properly contains each $ S_{4} $ generated by three elements.

\medskip
If we pick \textit{any} stabilizer $ H \in \Delta ^{*} $, the double intersections of stabilizers that include $ H $ must be isomorphic to the subgroups in one of the $3$-tuples in TupsSorted. Also, for a particular $ 3 $-tuple, corresponding triple intersections must have elements which can form products in the same way as the above 12 relations.

\medskip
To represent what is happening, we look at a group generated by four elements and for each subset of three elements apply one of the sets of relations from ``Presentations" to them.

\medskip
I have tried to put this code in a format so you can copy and paste it into notepad (or some other text editor) before putting it into GAP. Toward the end of the below code, there are two cases where information is printed and it spans multiple lines. If there are any issues when trying to put this into GAP, please delete the $\backslash$ at the end of those lines and combine the whole print command into one long line in notepad.
\begin{lstlisting}
for i in [1..Length(Presentation)] do
for j in [i..Length(Presentation)] do
for k in [j..Length(Presentation)] do
for l in [k..Length(Presentation)] do

P:=Presentation[i];
Q:=Presentation[j];
R:=Presentation[k];
S:=Presentation[l];

# Each presentation simulates what is happening in one of the groups H1, H2, H3 or H4. 
# Attached to each presentation in Presentation is a triple containing the  
# subgroup structure of each double intersection corresponding to the presentation from earlier.
# We will take P to represent H1, Q to be H2, R to be H3 and S to be H4.
# We then take P[12][1] to be H12, P[12][2] to be H13 and P[12][3] to be H14.
# Similarly Q[12][1] will be chosen to be H12, Q[12][2] as H23 and Q[12][3] as H24
# Also R[12][1] is H13, R[12][2] is H23 and R[12][3] is H34.
# Finally S[12][1] is H14, S[12][2] is H24 and S[12][3] is H34.
# For any group to be generated we want to make sure corresponding double
# intersections match.

if 

P[12][1]=Q[12][1] and P[12][2]=R[12][1] and P[12][3]=S[12][1] and 
Q[12][2]=R[12][2] and Q[12][3]=S[12][2] and R[12][3]=S[12][3] 

then

# Since we are running through all combinations of presentation
# and since a presentation for every possible triple of 
# subgroups earlier was created, it does not matter the subgroups 
# are matched in the way above.
# Next the process of generating a group on four elements is started.

# In a similar process to how we assigned subgroups in P,Q,R and S to other subgroups, 
# we want to say which triple intersections the generator involutions lie in,
# then apply the appropriate relations from the presentation to the elements
# and make sure they match up correctly with presentations from other subgroups.
# Our generators will be labeled f.1, f.2, f.3 and f.4.

# Going back to how the list "Presentation" was created, if y is in Presentation
# the element g1 was the involution corresponding to the intersection of y[12][1]
# and y[12][2].

# The element g2 was the involution corresponding to the intersection of y[12][1]
# and y[12][3].

# The element g3 was the involution corresponding to the intersection of y[12][2]
# and y[12][3].

# For each of the presentations P,Q,R and S we will determine which elements
# "g1", "g2" and "g3" need to be substituted for out of f.1, f.2, f.3 and f.4
# and then reformulate the products of these elements and their orders to create
# the relations for the group that is being generated.

# Take f.1 to be the involution in H123. This lies in P[12][1]=Q[12][1], P[12][2]=R[12][1] 
# and Q[12][2]=R[12][2]. So when matching the products of elements of in our setting up of
# "Presentation" with the presentation P, we want to replace "g1" with f.1. Similarly for Q we  
# want to replace "g1" with f.1 again. And for R we also want to replace "g1" with f.1.

# Take f.2 to be the involution in H124. This lies in P[12][1]=Q[12][1], P[12][3]=S[12][1]
# and Q[12][3]=S[12][2]. So when matching the products of elements of in our setting  up
# of "Presentation" with the presentation P, we want to replace "g2" with f.2. Similarly 
# for Q we want to replace "g2" with f.2 again. And for S we want to replace "g1" with f.2.

# Take f.3 to be the involution in H134. This lies in P[12][2]=R[12][1], P[12][3]=S[12][1] 
# and R[12][3]=S[12][3]. So when matching the products of elements of in our setting up of 
# "Presentation" with the presentation P, we want to replace "g3" with f.3. Similarly for 
# R we want to replace "g2" with f.3. And for S we also want to replace "g2" with f.3.

# Take f.4 to be the involution in H234. This lies in Q[12][2]=R[12][2], Q[12][3]=S[12][2]
# and R[12][3]=S[12][3]. So when matching the products of elements of in our setting up
# of "Presentation" with the presentation Q, we want to replace "g3" with f.4. Similarly
# for R we want to replace "g3" with f.4. And for S we also want to replace "g3" with f.4.

# Relations are created for each presentation, then a group is attempted to be created.

f:=FreeGroup("a", "b", "c", "d");;

rels:=[

f.1^P[1], f.2^P[2], f.3^P[3], (f.1*f.2)^P[4], (f.1*f.3)^P[5], (f.2*f.3)^P[6], 
(f.1*f.2*f.3*f.2)^P[7], (f.2*f.3*f.1*f.3)^P[8], (f.3*f.1*f.2*f.1)^P[9], 
(f.1*f.2*f.3)^P[10], (f.1*f.3*f.2*f.1*f.3)^P[11],

f.1^Q[1], f.2^Q[2], f.4^Q[3], (f.1*f.2)^Q[4], (f.1*f.4)^Q[5], (f.2*f.4)^Q[6], 
(f.1*f.2*f.4*f.2)^Q[7], (f.2*f.4*f.1*f.4)^Q[8], (f.4*f.1*f.2*f.1)^Q[9], 
(f.1*f.2*f.4)^Q[10], (f.1*f.4*f.2*f.1*f.4)^Q[11],

f.1^R[1], f.3^R[2], f.4^R[3], (f.1*f.3)^R[4], (f.1*f.4)^R[5], (f.3*f.4)^R[6], 
(f.1*f.3*f.4*f.3)^R[7], (f.3*f.4*f.1*f.4)^R[8], (f.4*f.1*f.3*f.1)^R[9], 
(f.1*f.3*f.4)^R[10], (f.1*f.4*f.3*f.1*f.4)^R[11],

f.2^S[1], f.3^S[2], f.4^S[3], (f.2*f.3)^S[4], (f.2*f.4)^S[5], (f.3*f.4)^S[6], 
(f.2*f.3*f.4*f.3)^S[7], (f.3*f.4*f.2*f.4)^S[8], (f.4*f.2*f.3*f.2)^S[9], 
(f.2*f.3*f.4)^S[10], (f.2*f.4*f.3*f.2*f.4)^S[11]

];;

g := f / rels;

ct := TryCosetTableInWholeGroup(TrivialSubgroup(g) : silent);

if ct=fail then

# If no group is generated, an error is returned with information on the double intersection
# of stabilizers, to be analysed later.

Print("Error! No group generated when we have H12 isomorphic to ", P[12][1], ", have H13 \ 
isomorphic to ", P[12][2],", have H14 isomorphic to ", P[12][3], ", have H23 isomorphic to " , \
Q[12][2], ", have H24 isomorphic to " , Q[12][3], " and have H34 isomorphic to ", R[12][3], ".", \
 "\n", "\n");
 
else

if Order(g)>24 then

# We are only interested in groups of order more than 24 since we trying to find a group X that
# properly contains an S4

Print("Group of order ", Order(g), " with group ID ",  IdGroup(g), ".", "\n",  "\n");

fi;
fi;
fi;
od;
od;
od;
od;
\end{lstlisting}
The output gives the groups
\begin{lstlisting}
Group of order 192 with group ID [ 192, 1493 ].

Group of order 168 with group ID [ 168, 42 ].

Group of order 120 with group ID [ 120, 34 ].
\end{lstlisting}
In GAP, the code
\begin{lstlisting}
IdGroup(SymmetricGroup(5));
\end{lstlisting}
gives group ID [120,34], telling us the group of order 120 generated is $ S_{5} $. The code
\begin{lstlisting}
IdGroup(PSL(2,7));
\end{lstlisting}
gives group ID [168,42], telling us the group of order 120 generated is $ PSL_{2}(7) $. For investigating the $ S_{4} $ actions of $ PSL_{2} (q) $ and $ PGL_{2} (q) $ we will not need to know anything more than the order of the group of order 192 that was generated.

\medskip
The output also gives the following errors;
\begin{lstlisting}
Error! No group generated when we have H12 isomorphic to C2 x C2, have H13 isomorphic to S3, have H14 isomorphic to S3, have H23 isomorphic to S3, have H24 isomorphic to S3 and have H34 isomorphic to D8.

Error! No group generated when we have H12 isomorphic to C2 x C2, have H13 isomorphic to S3, have H14 isomorphic to S3, have H23 isomorphic to S3, have H24 isomorphic to S3 and have H34 isomorphic to C2 x C2.
\end{lstlisting}
Let $ X^{ \prime } $ be one of the groups generated above that did not give an error with the relations we tried. It could be that there are additional relations that had not been considered. In the GAP code we chose four generators from the intersection of our stabilizers, so label these as $ a,b,c,d $. Lemma \ref{gnrtrhmmrphsm} shows that there exists a surjective homomorphism $ \phi : X ^{ \prime } \rightarrow \langle a , b , c , d \rangle $. Thus $  \text{im} ( \phi )  =  \langle a , b_ , c, d \rangle $.

\medskip
If $  X ^{ \prime } $ is one of $ S_{5} $ or $ PSL_{2} (7) $ then the only normal subgroups $ \ker ( \phi ) $ could potentially be are the trivial subgroup, the whole group or $ A_{5} $ (in the case of $ S_{5} $). In the latter two cases we would have $ | \text{im} ( \phi )  | \leq 2 $, which would prevent $ \text{im} ( \phi )  $ having an $ S_{4} $ subgroup. Hence $ \ker ( \phi ) $ is trivial and $ \langle a , b_ , c, d \rangle = X ^{ \prime }  $.

\medskip
If $ X ^{ \prime } $ is the group of order $ 192 $ then $ | \text{im} ( \phi ) | $ divides $ 192 $. For the purposes of dealing with the $ S_{4} $ actions of $ PSL_{2} (q) $ and $ PGL_{2} (q) $ this case does not need investigating further.

\medskip
The two errors that appeared will be handled in the main chapter on the $ S_{4} $ actions.

\newpage
\chapter{GAP Code for the Relational Complexity of the \texorpdfstring{$ S_{4} $}{S4} Action}\label{appb}
A function is created here to determine whether the relational complexity of the $ S_{4} $ actions of $ PSL_{2} (p) $ or $ PGL_{2} (p) $ is $ 3 $ or $ 4 $, where $ p $ is a prime with $ p\geq 7 $. If the relational complexity is $ 4 $ then output will be provided on the intersection of entries of some suitable $ 4 $-tuples to be more closely examined.

\medskip
Let $ \Omega $ be the conjugacy class of maximal $ S_{4} $ being acted on. Some facts are laid out that will be required for the function. We know from Theorem \ref{eopwcnhg} that $ RC(G, \Omega ) \in \{ 3,4 \} $. The comments directly before Definition \ref{almstndstdfn} tell us $ RC( G , \Omega ) = 4 $ if and only if there exists $ I, J \in \Omega ^{4} $ such that $ I \sim _{3} J $ and $ I \not\sim _{4} J $.

\medskip
Necessary conditions for $ I \not\sim _{4} J $:
\begin{itemize} 
\item The remarks before Definition \ref{almstndstdfn} point out the entries of $ I $ form an almost independent set.
\item By Lemma \ref{indsbstsntqual}, none of the entries of $ I $ are equal.
\item  Put $ I: = ( I_{1} , I_{2} , I_{3} , I_{4} ) $ and $ J: = ( J_{1} , J_{2} , J_{3} , J_{4} ) $. For $ r,s,t \in \{ 1,2,3,4 \} $ we have $ \{ I_{r} , I_{s} , I_{t} \} $ is independent if and only if $ I_{r} \cap I_{s} \cap I_{t} $ is not equal to any of $ I_{r} \cap I_{s}  $, $ I_{r} \cap I_{t}  $ or $ I_{s} \cap I_{t}  $ by Lemma \ref{scndfnt}.
\item Since every proper subset of $ \Delta := \{ I_{1} , I_{2} , I_{3} , I_{4}  \} $ is independent, it follows from Lemma \ref{stach}  the intersection of any two entries must be non-trivial.
\item Using Lemma \ref{frsnrtytql} we can assume $ I_{i} = J_{i} $ for each $ i \in \{ 1,2,3 \} $. We have $ I_{4} \neq J_{4} $, otherwise the identity sends $ I $ to $ J $.
\item There exists an element that sends $ (I_{r} , I_{s} , I_{t} ) $ to $ (J_{r} , J_{s} , J_{t} ) $.
\item If $ r,s \neq 4 $ then any element that sends $ (I_{r} , I_{s} , I_{4} ) $ to $ (J_{r} , J_{s} , J_{4} ) = (I_{r} , I_{s} , J_{4} ) $ stabilizes the first two entries, so is in $ I_{r} \cap I_{s} $.
\end{itemize}
With  $ I \sim _{3} J $ and all of the above in place, we will know any element that sends $ I $ to $ J $ must stabilize $ I_{1} $, $ I_{2} $ and $ I_{3} $. So a sufficient condition for $ I \not\sim _{4} J $ and therefore $ RC(G , \Omega ) = 4 $ is:
\begin{itemize}
\item There is no element in $ I_{1} \cap I_{2} \cap I_{3} $ that sends $ I_{4} $ to $ J_{4} $.
\end{itemize}
Each of the above conditions is going to be used in the code below to try find four suitable maximal $ S_{4} $ that could be used for the entries of $ I $ and $ J $ so that $ I \sim _{3} J $ and $ I \not \sim _{4} J $.

\medskip
I have tried to put this code in a format so you can copy and paste it into notepad (or some other text editor) before putting it into GAP. Toward the end of the below code, there are is a case where information is printed and it spans multiple lines. If there are any issues when trying to put this into GAP, please delete the $\backslash$ at the end of those lines and combine the whole print command into one long line in notepad.
\begin{lstlisting}
FindInt:=function(n)
local a,b,c,C,f,g,h,i,j,k,P,X;;

# The set X will later store information on the intersection of S4

X:=[];
Set(X);

# A conjugacy class of maximal S4 is looked for

C:=ConjugacyClassesMaximalSubgroups(n);
a:=0;
repeat
a:=a+1;
until StructureDescription(C[a][1])="S4" or a=Size(C);
if StructureDescription(C[a][1])="S4" then

# Using the earlier assumptions, four distinct S4 will be chosen from C.
# C[a][1] is fixed, corresponding tot he first entry of I. This is because
# the action is transitive, so we can act on I and J as needed to pick whatever
# element we want for the first entry of I.
# Then we run through the maximal S4 to correspond to the other entries, ensuring
# four distinct entries are picked each time.

i:=1;
repeat
i:=i+1;
if Size(Intersection(C[a][1],C[a][i]))>1 then
j:=i;
repeat
j:=j+1;
if

# C[a][i] represents the second entry of I and C[a][j] the third entry.
# It must be checked that the triple intersections of these entries
# are not equal to the double intersections.

not(Intersection(C[a][1],C[a][i],C[a][j])=Intersection(C[a][1],C[a][i])) and 
not(Intersection(C[a][1],C[a][i],C[a][j])=Intersection(C[a][i],C[a][j])) and 
not(Intersection(C[a][1],C[a][i],C[a][j])=Intersection(C[a][1],C[a][j])) 

then

k:=j;
repeat
k:=k+1;

# C[a][k] represents the fourth entry of I.
# Again triple intersections of S4 including C[a][k] cannot be equal to double
# intersections including the same S4.

if

not(Intersection(C[a][1],C[a][i],C[a][k])=Intersection(C[a][1],C[a][i])) and 
not(Intersection(C[a][1],C[a][i],C[a][k])=Intersection(C[a][i],C[a][k])) and 
not(Intersection(C[a][1],C[a][i],C[a][k])=Intersection(C[a][1],C[a][k])) and 
not(Intersection(C[a][1],C[a][j],C[a][k])=Intersection(C[a][1],C[a][j])) and 
not(Intersection(C[a][1],C[a][j],C[a][k])=Intersection(C[a][j],C[a][k])) and 
not(Intersection(C[a][1],C[a][j],C[a][k])=Intersection(C[a][1],C[a][k])) and 
not(Intersection(C[a][i],C[a][j],C[a][k])=Intersection(C[a][i],C[a][j])) and 
not(Intersection(C[a][i],C[a][j],C[a][k])=Intersection(C[a][j],C[a][k])) and 
not(Intersection(C[a][i],C[a][j],C[a][k])=Intersection(C[a][i],C[a][k]))

then

# Next a potential S4 is looked for to use as the fourth entry of J.
# This entry will be labelled as P. The entry P must be a conjugate of
# C[a][k] by a non-trivial element from the double intersections of C[a][1], 
# C[a][i], and C[a][j]. Also P cannot be equal to C[a][k].

for g in Elements(Intersection(C[a][1],C[a][i])) do
if not(ConjugateGroup(C[a][k],g)=C[a][k]) then
P:=ConjugateGroup(C[a][k],g);
for h in Elements(Intersection(C[a][1],C[a][j])) do
if ConjugateGroup(C[a][k],h)=P then
for f in Elements(Intersection(C[a][i],C[a][j])) do
if ConjugateGroup(C[a][k],f)=P then

# Now we want to check whether or not any element in the triple interaction 
# of the first three entries of I sends C[a][k] to P

c:=0;
for b in Elements(Intersection(C[a][1],(Intersection(C[a][i],C[a][j])))) do
if not(ConjugateGroup(C[a][k],b)=P) then
c:=c+1;
fi;
od;

# If there does not exist an element in the triple intersection sending
# C[a][k] to P, then the information on the structure of the double
# intersection of entries is recorded in X and if not previously recorded
# is printed to analyse later.

if c=Order(Intersection(C[a][1],(Intersection(C[a][i],C[a][j])))) then

if

not([StructureDescription(Intersection(C[a][1],C[a][i])) ,
StructureDescription(Intersection(C[a][1],C[a][j])) ,
StructureDescription(Intersection(C[a][i],C[a][j])) ,
StructureDescription(Intersection(C[a][1],C[a][k])) ,
StructureDescription(Intersection(C[a][i],C[a][k])) ,
StructureDescription(Intersection(C[a][j],C[a][k])) ] in X) 

then

AddSet(X, [StructureDescription(Intersection(C[a][1],C[a][i])) ,
StructureDescription(Intersection(C[a][1],C[a][j])) ,
StructureDescription(Intersection(C[a][i],C[a][j])) ,
StructureDescription(Intersection(C[a][1],C[a][k])) ,
StructureDescription(Intersection(C[a][i],C[a][k])) ,
StructureDescription(Intersection(C[a][j],C[a][k])) ] );

Print("\n", [StructureDescription(Intersection(C[a][1],C[a][i])) , \
StructureDescription(Intersection(C[a][1],C[a][j])) , \
StructureDescription(Intersection(C[a][i],C[a][j])) , \
StructureDescription(Intersection(C[a][1],C[a][k])) , \
StructureDescription(Intersection(C[a][i],C[a][k])) , \
StructureDescription(Intersection(C[a][j],C[a][k])) ] );

fi;
fi;
fi;
od;
fi;
od;
fi;
od;
fi;
until k=Size(C[a]);
fi;
until j=Size(C[a])-1;
fi;
until i=Size(C[a])-2;
fi;

# After running through all possible choices for the entries of I,
# if X is empty then there are no choices for the entries of J such
# that (I,J) is 3-subtuple complete but not 4-subtuple complete.
# Hence the relational complexity is 3.

if X=[] then

Print("\n No suitable almost independent sets found. The relational complexity is 3.");

# If X is not empty then suitable entries for I and J have been found
# so (I,J) is 3-subtuple complete but not 4-subtuple complete, showing
# the relational complexity is 4.

else

Print("\n The relational complexity is 4.");

fi;
end;
\end{lstlisting}
After putting the function into GAP, set a group with a conjugacy class of maximal $ S_{4} $ to be $ G $ then run the function. For example;
\begin{lstlisting}
G:=PSL(2,23);
FindInt(G);
\end{lstlisting}
If there are no almost independent sets satisfying the conditions covered in the function, then we get a message saying the relational complexity is $ 3 $. Otherwise we get output showing the structure description of the six intersections of stabilizers in an almost independent set that pass the tests.

\medskip
Note that if the relational complexity is 4 then the fact the program displays the intersection of entries of $ I $ before completing the whole calculation means you do not need to wait for it to finish checking everything to be certain it is 4.

\medskip
Unfortunately when the relational complexity is 3 the whole calculation must be finished, which can take quite a while for larger groups.

\newpage
\chapter{GAP Code for the Independent Set with \texorpdfstring{$ C_{3} $}{C3} Intersection.}\label{appc}
A similar process to Appendix \ref{appa} will be followed. This code is designed to study actions of a group $ X $ on cosets of a subgroup $ H_{1} $ that is isomorphic to $ A_{5} $. We want to find out what group $ X $ can be if it is generated by the point stabilizers of an independent set $ \Delta $ of size 3 whose point stabilizers have pairwise intersection isomorphic to $ A_{4} $ and triple intersection isomorphic to $ C_{3} $.

\medskip
Write $ \Delta = \{ \delta _{1} , \delta _{2} , \delta _{3} \} $. It is not difficult to show $ \Delta ^{x} $ is independent for each $ x \in X $. So we may assume $ H_{1} $ stabilizes $ \delta _{1} $. For $ i \in \{ 1, 2, 3\} $ let $ H_{i} $ be the stabilizer of $ \delta _{i} $. Let $ \Delta ^{*} = \{ H_{1} , H_{2} , H_{3} \} $.

\medskip
The general outline will be to pick two $ A_{4} $ subgroups of $ H_{1} $, use the fact they intersect in a $ C_{3} $ to choose a shared element of order $ 3 $ and then pick an element of order $ 2 $ from each $ A_{4} $. Combining the element of order $ 3 $ with one of the involutions generates an $ A_{4} $. Adding in the other involution generates $ H_{1} $.

\medskip
A presentation will be recorded for each of the triples of elements picked in the above way. These presentations will then be used to simulate how the $ A_{4} $ in each of the double intersections of stabilizers interact with each other and see what group is generated using four elements. 

\medskip
For double intersections of stabilizers that include $ H_{1} $, we first find subgroups of $ H_{1} $ of order $ 12 $ and pick out elements of order $ 2 $ and $ 3 $ from two $ A_{4} $. Any two $ A_{4} $ in $ A_{5} $ intersect in a $ C_{3} $. Looking at the structure of $ A_{5} $, if $ B_{1} , B_{2} , B_{3} \leq H_{1} $ are distinct $ A_{4} $ subgroups, then there exists elements of order $ 3 $ in $ B_{1} $ that sends $ B_{2} $ to $ B_{3} $ by conjugation. So it does not matter which two $ A_{4} $ we choose to look at in $ H_{1} $. The notation $ H1 $ will be used from now on to match the GAP code, and similarly for the other stabilizers.
\begin{lstlisting}
H1:=AlternatingGroup(5);
Poss1:=AllSubgroups(H1);;
Poss:=Filtered(Poss1, x->Order(x)=12);
Pair:=[];
Add(Pair, Poss[1]);
Add(Pair, Poss[2]);
E1:=Filtered(Elements(Pair[1]), x->Order(x)=2);
E2:=Filtered(Elements(Pair[2]), x->Order(x)=2);
E0:=Intersection(Elements(Pair[1]),Elements(Pair[2]));
E12:=Filtered(E0, x->Order(x)=3);
\end{lstlisting}
Now an element is picked from $ E1 $, $ E2 $ and $ E12 $ and then orders of products of these elements recorded. We run through all combinations of elements in these sets in this way and create a presentation.
\begin{lstlisting}
Presentation0:=[];

for g1 in E12 do;
for g2 in E1 do
for g3 in E2 do

B:=[];

Add(B, Order(g1));
Add(B, Order(g2));
Add(B, Order(g3));
Add(B, Order(g1*g2));
Add(B, Order(g1*g3));
Add(B, Order(g2*g3));
Add(B, Order(g1*g2*g3));
Add(B, Order(g1*g3*g2));
Add(B, Order(g1*g2*g1*g3));
Add(B, Order(g2*g1*g2*g3));
Add(B, Order(g3*g2*g3*g1));
Add(B, Order(g1*g3*g2*g1*g3));
Add(B, Order(g1*g2*g3*g1*g2));
Add(B, Order(g2*g3*g1*g2*g3));
Add(B, Order(g1*g2*g3*g2));
Add(B, Order(g1*g3*g2*g3));
Add(B, Order(g2*g1*g3*g1));

Add(Presentation0, B);

od;
od;
od;
\end{lstlisting}
Some of these presentations are duplicates, so can be removed.
\begin{lstlisting}
Presentation:=DuplicateFreeList(Presentation0);
\end{lstlisting}
This reduces the list to just three presentations. Picking any two of the two-point stabilizers $ H12 $, $ H13 $ or $ H23 $ gives two $ A_{4} $ that lies in a one-point stabilizer. For $ H2 $ or $H3 $, we can pick any two involutions and an element of order $ 3 $ in the same way as earlier and they will give the same presentations as created above with $ H1 $.

\medskip
So for each of the stabilizers we take a presentation, piece these presentations together and see what group is generated. This will be done for all combinations of presentation.
\begin{lstlisting}
for i in [1..Length(Presentation)] do
for j in [1..Length(Presentation)] do
for k in [1..Length(Presentation)] do

P:=Presentation[i];
Q:=Presentation[j];
R:=Presentation[k];

Unbind(f);
f:=FreeGroup("a", "b", "c", "d");;

Unbind(a);
Unbind(b);
Unbind(c);
Unbind(d);
Unbind(g1);
Unbind(g2);
Unbind(g3);

a:=f.1;
b:=f.2;
c:=f.3;
d:=f.4;

# A list called "rels0" will store our relations

rels0:=[];;

# In each of the presentations created earlier, the first entry listed represents
# our element of order 3. The variable "a" will be used for this element and needs
# to be assigned as the first element in each set of relations below.
# The elements of order two will be matched up arbitrarily between each presentation
# since we are running through all possible combinations of presentations.

for u in [1,2,3] do

if u=1 then
x:=P;
g1:=a;
g2:=b;
g3:=c;
fi;

if u=2 then
x:=Q;
g1:=a;
g2:=b;
g3:=d;
fi;

if u=3 then
x:=R;
g1:=a;
g2:=c;
g3:=d;
fi;

# The relations below match the relations from earlier that the
# presentations were taken from.

T:=[
g1^x[1],
g2^x[2],
g3^x[3],
(g1*g2)^x[4],
(g1*g3)^x[5],
(g2*g3)^x[6],
(g1*g2*g3)^x[7],
(g1*g3*g2)^x[8],
(g1*g2*g1*g3)^x[9],
(g2*g1*g2*g3)^x[10],
(g3*g2*g3*g1)^x[11],
(g1*g3*g2*g1*g3)^x[12],
(g1*g2*g3*g1*g2)^x[13],
(g2*g3*g1*g2*g3)^x[14],
(g1*g2*g3*g2)^x[15],
(g1*g3*g2*g3)^x[16],
(g2*g1*g3*g1)^x[17]
];

for t in [1..Length(T)] do

Add(rels0, T[t]);

od;
od;

# Duplicate relations are removed and a group attempted to be generated.

rels:=DuplicateFreeList(rels0);;

g:=f/rels;

# If a group fails to be generated then an error will be shown.
# Otherwise all groups of order greater than 60 printed.

ct:=TryCosetTableInWholeGroup(TrivialSubgroup(g) : silent);

if ct=fail then

Print("Error! No group generated. \n");

else

if Order(g)>60 then

# We are only interested in groups of order more than 60 since we trying to find a group X that
# properly contains an A5

Print("Group of order ", Order(g), " found with structure description ",  StructureDescription(g), ".", "\n", "\n");

fi;
fi;
od;
od;
od;
\end{lstlisting}
We see there are no combinations of relations that fail to generate a group and the only group generated with order more that $ 60 $ is $ A_{6} $. There may be other relations that have not been considered in this code. By Corollary \ref{gnrtrcrlry}, if any further relations were added, a group would be generated with order dividing $ | A_{6} | = 360 $.

\newpage
\chapter{GAP Code for the Height of the \texorpdfstring{$ A_{5} $}{A5} Action.}\label{appd}
This code is designed to study actions of a group $ X $ on cosets of a subgroup $ H_{1} $ that is isomorphic to $ A_{5} $. This follows the same idea as in Appendix \ref{appa}. We want to find out what group $ X $ can be if it is generated by the point stabilizers of an independent set $ \Delta $ of size 4. Write $ \Delta = \{ \delta _{1} , \delta _{2} , \delta _{3} , \delta _{4} \} $. It is not difficult to show $ \Delta ^{x} $ is independent for each $ x \in X $. So we may assume $ H_{1} $ stabilizes $ \delta _{1} $. For $ i \in \{ 1, 2, 3, 4 \} $ let $ H_{i} $ be the stabilizer of $ \delta _{i} $. Let $ \Delta ^{*} = \{ H_{1} , H_{2} , H_{3} , H_{4} \} $.

\medskip
The following assumptions will be made based on facts we know from the $ A_{5} $ actions of $ PSL_{2} (q) $ when $ q \geq 11 $;
\begin{itemize}
\item all double intersections of stabilizers in $ \Delta ^{*} $ have order at least $ 6 $ (see Lemma \ref{stach}, Corollary \ref{crlkfrnrmlz} and subgroup structure of $ A_{5} $), 
\item all triple intersections of stabilizers in $ \Delta ^{*} $ have order $ 2 $ (see Corollary \ref{indsub} and Corollary \ref{oiweuvw}),
\item the intersection $ H_{1} \cap H_{2} \cap H_{3} \cap H_{4} $ is trivial (see Lemma \ref{afvactntrvl}).
\end{itemize}

\medskip
To simulate double intersections that include $ H_{1} $, we first find proper subgroups of $ H_{1} $ of order $ 6 $ or more (the notation $ H1 $ will be used from now on to match the GAP code, and similarly for the other stabilizers).
\begin{lstlisting}
H1:=AlternatingGroup(5);
Poss1:=AllSubgroups(H1);;
Poss:=Filtered(Poss1, x->Order(x)>=6 and Order(x)<=12);
\end{lstlisting}
Let $ i, j , k \in \{ 1,2,3,4 \} $. We will express the intersection of $ Hi $ and $ Hj $ as $ Hij $. Also we write $ Hijk $ for the intersection of $ Hi $, $ Hj $ and $ Hk $.

\medskip
Recall from Lemma \ref{indsbstsntqual} that the point-wise stabilizers of any two distinct subsets of $ \Delta $ are not equal. Intersecting a pair of double intersections of stabilizers, both including $ H1 $, gives a triple intersection, which has order $ 2 $. We find all possible triple intersection of stabilizers in $ \Delta ^{*} $ with $ H1 $ and put the corresponding double intersections in a list of 3-tuples called ``Tups".
\begin{lstlisting}
Tups:=[];
for H12 in Poss do
for H13 in Poss do
for H14 in Poss do
H123:=Intersection(H12, H13);
H124:=Intersection(H12, H14);
H134:=Intersection(H13, H14);
if Order(H123)=2 and Order(H124)=2 and Order(H134)=2 then
if not(H123=H124) and not(H123=H134) and not(H124=H134) and not(H12=H123) and not(H13=H123) and not(H12=H124) and not(H14=H124) and not(H13=H134) and not(H14=H134) then
Add(Tups, [H12, H13, H14]);
fi;
fi;
od;
od;
od;
\end{lstlisting}
It can easily be shown $ \Delta ^{h} $ is independent for each $ h \in H1 $, meaning we only need to look at one element for each orbit that the $ 3 $-tuples in ``Tups" belong to. We reduce the size of ``Tups" by retaining one representative from each orbit under $ H$. The resulting list is called ``TupsSorted".
\begin{lstlisting}
TupsSorted:=[];
TupsOrb:=Orbits(H1,Tups,OnTuples);;
for i in [1..Size(TupsOrb)] do
	Add(TupsSorted, TupsOrb[i][1]);
od;
\end{lstlisting}
We end up with 38 possible configurations.

\medskip
For each 3-tuple in ``TupsSorted", the intersection of any two entries represents a triple intersection of elements of $ \Delta ^{*} $, including $ H1 $. Since triple intersections have order $ 2 $,  the non-identity element will be chosen from each triple intersection and then information on the order of products of these elements extracted to use for presentations later.

\medskip
In addition to the orders of products, we will want to keep track of the structure of the double intersections the elements ultimately come from, for use later. There is no particular method to the products chosen below, only that enough different products were tried until they worked to get the results we wanted later.
\begin{lstlisting}
Presentation0:=[];

for x in TupsSorted do


# First put the structure description of the groups in x in a set A

A:=[];

IntStructure:=[];
Add(IntStructure, StructureDescription(x[1]));
Add(IntStructure, StructureDescription(x[2]));
Add(IntStructure, StructureDescription(x[3]));
Add(A, IntStructure);

# Then pick the non-identity element from each group in x and record orders of products

g1:=Elements(Intersection(x[1], x[2]))[2];
g2:=Elements(Intersection(x[1], x[3]))[2];
g3:=Elements(Intersection(x[2], x[3]))[2];

B:=[];

Add(B, Order(g1));
Add(B, Order(g2));
Add(B, Order(g3));
Add(B, Order(g1*g2));
Add(B, Order(g1*g3));
Add(B, Order(g2*g3));
Add(B, Order(g1*g2*g3));
Add(B, Order(g1*g3*g2));
Add(B, Order(g1*g2*g1*g3));
Add(B, Order(g2*g1*g2*g3));
Add(B, Order(g3*g2*g3*g1));
Add(B, Order(g1*g3*g2*g1*g3));
Add(B, Order(g1*g2*g3*g1*g2));
Add(B, Order(g2*g3*g1*g2*g3));
Add(B, Order(g2*g1*g3*g2*g1));
Add(B, Order(g3*g2*g1*g3*g2));
Add(B, Order(g3*g1*g2*g3*g1));
Add(B, Order(g1*g2*g3*g2));
Add(B, Order(g1*g3*g2*g3));
Add(B, Order(g2*g1*g3*g1));

# Finally add B to A then A to Presentation0

Add(A, B);

Add(Presentation0, A);

od;
\end{lstlisting}
Now we have a list of presentations corresponding to the tuples in TupsSorted. There are many duplicate entries in Presentation0, so they can be removed.
\begin{lstlisting}
Presentation:=DuplicateFreeList(Presentation0);
\end{lstlisting}
The list ``Presentation" is displayed so we can check to see that picking the second element from each triple intersection did indeed give non-identity elements as we wanted (shown by the first three entries of each presentation).

\medskip
Also make note of the fact that for any triple of subgroups attached to a presentation, we can can permute the three subgroups and there is a presentation paired with the new triple. This will be important shortly.

\medskip
Shortly we will need to be sure that for each of the orders of products in Presentation, a group generated on
three elements satisfying the above relations generates an $ A_{5} $ (also a useful check to see if we need to add more relations).
\begin{lstlisting}
V:=[];
Set(V);

for x in Presentation do

f:=FreeGroup("a", "b", "c");;

Unbind(g1);
Unbind(g2);
Unbind(g3);

g1:=f.1;
g2:=f.2;
g3:=f.3;

rels:=[
g1^x[2][1],
g2^x[2][2],
g3^x[2][3],
(g1*g2)^x[2][4], 
(g1*g3)^x[2][5], 
(g2*g3)^x[2][6], 
(g1*g2*g3)^x[2][7], 
(g1*g3*g2)^x[2][8], 
(g1*g2*g1*g3)^x[2][9], 
(g2*g1*g2*g3)^x[2][10], 
(g3*g2*g3*g1)^x[2][11], 
(g1*g3*g2*g1*g3)^x[2][12],
(g1*g2*g3*g1*g2)^x[2][13],
(g2*g3*g1*g2*g3)^x[2][14],
(g2*g1*g3*g2*g1)^x[2][15],
(g3*g2*g1*g3*g2)^x[2][16],
(g3*g1*g2*g3*g1)^x[2][17],
(g1*g2*g3*g2)^x[2][18],
(g1*g3*g2*g3)^x[2][19],
(g2*g1*g3*g1)^x[2][20]
];;


g := f / rels;
ct := TryCosetTableInWholeGroup(TrivialSubgroup(g) : silent);

if ct=fail then
Print("Error! No group generated for ", x, " \n");

else

AddSet(V, StructureDescription(g));

fi;

od;

V;
\end{lstlisting}
We see that each of the sets of relations generates an $ A_{5} $.

\medskip
If we pick \textit{any} stabilizer $ H \in \Delta ^{*} $, the double intersections of stabilizers that include $ H $ must be isomorphic to the subgroups in one of the $3$-tuples in TupsSorted. Also, for a particular $ 3 $-tuple, corresponding triple intersections must have elements which can form products in the same way as the above 12 relations.

\medskip
Note the fact that $ H1 \cap H2 \cap H3 \cap H4 $ is trivial means that for $ \{ r,s,t,u \} = \{ 1,2,3,4 \} $, if $ h_{rst} \in Hr \cap Hs \cap Ht $ and $ h_{rsu} \in Hr \cap Hs \cap Hu $ are non-identity elements, then $ h_{rst}  \neq h_{rsu} $. Combining this with the fact that any three such elements generated an $ A_{5} $, picking an element from each of the four triple-intersections in $ \Delta^{*} $ must generate a group that properly contains each $ A_{5} $ generated by three elements.

\medskip
To represent what is happening, we look at a group generated by four elements and for each subset of three elements apply one of the sets of relations from ``Presentations" to them.

\medskip
This next piece of code takes a few hours to run. At two points it will generate a group of order more than 15000 which took around three hours to generate the first group and around one hour to generate the other. The rest of the groups generated do not take too long though. To let you see that GAP is working, I have included a counter that will be printed whenever GAP is trying to generate a group. The groups that take a while to generate occur when the counter hits 688 and 1679.

\medskip
I have tried to put this code in a format so you can copy and paste it into notepad (or some other text editor) before putting it into GAP. Toward the end of the below code, there are two cases where information is printed and it spans multiple lines. If there are any issues when trying to put this into GAP, please delete the $\backslash$ at the end of those lines and combine the whole print command into one long line in notepad.
\begin{lstlisting}
# The set V will store the structure of the groups generated

V:=[];
Set(V);

# The set W will store information on the presentations that fail to generate a group

W:=[];
Set(W);

# Start counter to track number of groups attempted to be generated

counter:=1;

# Run through each presentation and match elements up between presentations to see
# what group is generated.

for i in [1..Length(Presentation)] do
for j in [i..Length(Presentation)] do
for k in [j..Length(Presentation)] do
for l in [k..Length(Presentation)] do

P:=Presentation[i];
Q:=Presentation[j];
R:=Presentation[k];
S:=Presentation[l];

counter:=counter+1;

# Each presentation simulates what is happening in one of the groups H1, H2, H3 or H4. 
# Attached to each presentation in Presentation is a triple containing the  
# subgroup structure of each double intersection corresponding to the presentation from earlier.
# We will take P to represent H1, Q to be H2, R to be H3 and S to be H4.
# We then take P[1][1] to be H12, P[1][2] to be H13 and P[1][3] to be H14.
# Similarly Q[1][1] will be chosen to be H12, Q[1][2] as H23 and Q[1][3] as H24
# Also R[1][1] is H13, R[1][2] is H23 and R[1][3] is H34.
# Finally S[1][1] is H14, S[1][2] is H24 and S[1][3] is H34.
# For any group to be generated we want to make sure corresponding double
# intersections match.

if

P[1][1]=Q[1][1] and P[1][2]=R[1][1] and P[1][3]=S[1][1] and 
Q[1][2]=R[1][2] and Q[1][3]=S[1][2] and R[1][3]=S[1][3]

then

# Since we are running through all combinations of presentation
# and since a presentation for every possible triple of 
# subgroups earlier was created, it does not matter the subgroups 
# are matched in the way above.
# Next the process of generating a group on four elements is started.

Print(counter, "\n \n");

f:=FreeGroup("a", "b", "c", "d");;

Unbind(a);
Unbind(b);
Unbind(c);
Unbind(d);

# Set the generators as a,b,c and d

a:=f.1;
b:=f.2;
c:=f.3;
d:=f.4;

rels0:=[];;

# In a similar process to how we assigned subgroups in P,Q,R and S to other subgroups, 
# we want to say which triple intersections the involutions lie in,
# then apply the appropriate relations from the presentation to the elements
# and make sure they match up correctly with presentations from other subgroups.

# Take a to be the involution in H123.
# Take b to be the involution in H124.
# Take c to be the involution in H134.
# Take b to be the involution in H234.

# Going back to the way the lists "Presentation0" and "Presentation" were created,
# for a triple of subgroups Y, 
# element g1 was defined to be the involution in the intersection of Y[1] and Y[2],
# element g2 was defined to be the involution in the intersection of Y[1] and Y[3],
# element g3 was defined to be the involution in the intersection of Y[2] and Y[3].

# Taking each of our four presentations in turn, the same labels will be used
# again to create relations for our generated group, where the relations used
# earlier will be repeated here.
# For example, the relation P will be picked, set g1:=a since both a and g1 are the
# involution in the intersection of P[1][1] and P[1][2].
# Similarly g2:=b since both labels are for the involution in intersection
# of P[1][1] and P[1][3].
# Also g3:=c since both labels are for the involution in intersection
# of P[1][2] and P[1][3].
# Relations are then generated for P based on the presentation for P and stored in "rel0".
# Then the next presentation Q is dealt with in the same manner.
# The variables g1, g2, g3 are assigned differently for each presentation
# and is based on the equalities P[1][1]=Q[1][1] etc above.
# Once relations are created for each presentation, a group is attempted to be created.


for y in [1,2,3,4] do

if y=1 then
x:=P;
g1:=a;
g2:=b;
g3:=c;
fi;

if y=2 then
x:=Q;
g1:=a;
g2:=b;
g3:=d;
fi;

if y=3 then
x:=R;
g1:=a;
g2:=c;
g3:=d;
fi;

if y=4 then
x:=S;
g1:=b;
g2:=c;
g3:=d;
fi;

T:=[
g1^x[2][1],
g2^x[2][2],
g3^x[2][3],
(g1*g2)^x[2][4], 
(g1*g3)^x[2][5], 
(g2*g3)^x[2][6], 
(g1*g2*g3)^x[2][7], 
(g1*g3*g2)^x[2][8], 
(g1*g2*g1*g3)^x[2][9], 
(g2*g1*g2*g3)^x[2][10], 
(g3*g2*g3*g1)^x[2][11], 
(g1*g3*g2*g1*g3)^x[2][12],
(g1*g2*g3*g1*g2)^x[2][13],
(g2*g3*g1*g2*g3)^x[2][14],
(g2*g1*g3*g2*g1)^x[2][15],
(g3*g2*g1*g3*g2)^x[2][16],
(g3*g1*g2*g3*g1)^x[2][17],
(g1*g2*g3*g2)^x[2][18],
(g1*g3*g2*g3)^x[2][19],
(g2*g1*g3*g1)^x[2][20]
];

for t in [1..Length(T)] do

Add(rels0, T[t]);

od;
od;

rels:=DuplicateFreeList(rels0);;

g := f / rels;

# Information on any failed attempts is printed, along with any groups
# generated of order 60 or more. This is to see progress is being made.
# Errors and suitable groups are stored and once the process is done
# the will be printed together so we can see final results.

ct := TryCosetTableInWholeGroup(TrivialSubgroup(g) : silent);

if ct=fail then

AddSet(W, [P[1][1], P[1][2], P[1][3], Q[1][2], Q[1][3], R[1][3] ]);

Print("Error! No group generated when we have H12 isomorphic to ", P[1][1], ", have H13 \
isomorphic to ", P[1][2],", have H14 isomorphic to ", P[1][3], ", have H23 isomorphic to " , \
 Q[1][2], ", have H24 isomorphic to " , Q[1][3], " and have H34 isomorphic to ", R[1][3], \
  ".", "\n", "\n");

else

V1:=[Order(g),StructureDescription(g)];

if V1[1]>60 then

# We are only interested in groups of order more than 60 since we trying to find a group X that
# properly contains an A5

Print("Group of order ", V1[1], " with structure description ",  V1[2], ".", "\n \n");

AddSet(V,V1);

fi;
fi;
fi;

od;
od;
od;
od;

# The final results are printed together with each group or error shown only once
# to make the output easier to see.

for v in V do

Print("Group of order ", v[1], " with structure description ",  v[2], ".", "\n \n");

od;

for w in W do

Print("Error! No group generated when we have H12 isomorphic to ", w[1], ", have H13 isomorphic \
 to ", w[2],", have H14 isomorphic to ", w[3], ", have H23 isomorphic to " , w[4], ", \
 have H24 isomorphic to " , w[5], " and have H34 isomorphic to ", w[6], ".", "\n", "\n");

od;
\end{lstlisting}
The output shows the groups generated are;
\begin{lstlisting}
Group of order 660 with structure description PSL(2,11).

Group of order 960 with structure description (C2 x C2 x C2 x C2) : A5.

Group of order 1080 with structure description C3 . A6.

Group of order 3420 with structure description PSL(2,19).

Group of order 15360 with structure description ((C2 x C2 x Q8) : Q8) : A5.
\end{lstlisting}
There are two configuration of presentation that do not generate a group, which are shown in the output as;
\begin{lstlisting}
Error! No group generated when we have H12 isomorphic to A4, have H13 isomorphic to S3, have H14 isomorphic to D10, have H23 isomorphic to D10, have H24 isomorphic to D10 and have H34 isomorphic to A4.

Error! No group generated when we have H12 isomorphic to A4, have H13 isomorphic to S3, have H14 isomorphic to D10, have H23 isomorphic to D10, have H24 isomorphic to S3 and have H34 isomorphic to A4.
\end{lstlisting}
Suppose $ q \geq 11 $ and $ PSL_{2} (q) $ has maximal $ A_{5} $ subgroups of the type defined in Section \ref{sctionafv}. If $ PSL_{2} (q) $ acts on the cosets of a maximal $ A_{5} $ subgroup and has an independent set of size $ 4 $, then either the stabilizers of the points intersect in the same way as one of the errors or Corollary \ref{gnrtrcrlry} tells us $ | PSL_{2} (q) | $ divides the order of one groups that have been generated.

\medskip
Our choice of $ q $ means $ q $ is a power of some prime $ p \geq 7 $ and $ | PSL_{2} (q) | $ is divisible by $ p $. The prime factors of $ 960 $, $ 1080 $ and $ 15360 $ are $2$, $ 3 $ and $ 5 $. Hence $ | PSL_{2} (q) | $ does not divide these orders.

\medskip
Similarly the only choices of $ q $ where $ | PSL_{2} (q) | $ divides $ 660 $ or $ 3420 $ are $ q = 11 $ or $ q = 19 $. The tables in \cite{WISCONS1} show the height of the $ A_{5} $ actions of $ PSL_{2} (11) $ and $ PSL_{2} (19) $ is $ 3 $. So no independent set of size $ 4 $ exists here either (the reason these two groups pop out of the GAP code is that there are two conjugacy classes of $ A_{5} $ in each group, which was not taken into account when setting this up).

\medskip
From this we conclude the only way the $ A_{5} $ action of $ PSL_{2} (q) $ can have an independent set of size $ 4 $ is if the stabilizers intersect in the way given by one of the errors above.
\end{appendices}

\newpage
\bibliographystyle{plain}
\bibliography{bibli}

\end{document}